\newcommand{\ztt}{\mbox{\usefont{T2A}{\rmdefault}{m}{n}\cyrc}}
\newcommand{\sh}{\mbox{\usefont{T2A}{\rmdefault}{m}{n}\cyrsh}}
\newcommand{\ch}{\mbox{\usefont{T2A}{\rmdefault}{m}{n}\cyrch}}
\newcommand{\ya}{\mbox{\usefont{T2A}{\rmdefault}{m}{n}\cyrya}}
\newcommand{\ur}{\mbox{\usefont{T2A}{\rmdefault}{m}{n}\cyryu}}
\newcommand{\sch}{\mbox{\usefont{T2A}{\rmdefault}{m}{n}\cyrshch}}
\newcommand{\Sch}{\mbox{\usefont{T2A}{\rmdefault}{m}{n}\CYRSHCH}}
\newcommand{\zh}{\mbox{\usefont{T2A}{\rmdefault}{m}{n}\cyrzh}}
\newcommand{\comment}[1]{}
\numberwithin{equation}{section}
\newtheorem{theorem}{Theorem}[section]
\newtheorem{lemma}[theorem]{Lemma}
\newtheorem{coro}[theorem]{Corollary}
\newtheorem{hypothesis}[theorem]{Hypothesis}
\newtheorem{conjecture}[theorem]{Conjecture}
\newtheorem{prop}[theorem]{Proposition}
\theoremstyle{definition}
\newtheorem{definition}[theorem]{Definition}
\newtheorem{example}[theorem]{Example}
\newtheorem{question}[theorem]{Question}
\theoremstyle{remark}
\newtheorem{notation}[theorem]{Notation}
\newtheorem{remark}[theorem]{Remark}
\newtheorem{paso}{Step}
\newtheorem{case}{Case}
\newtheorem{kase}{Case}
\newtheorem{casso}{Step}
\newtheorem{caso2}{Step}
\def\pf{\begin{proof}}
\def\epf{\end{proof}}
\newcommand{\fidos}{\hspace{-1pt}\frac{2i+1}{2}}
\newcommand{\fhdos}{\hspace{-1pt}\frac{2h+1}{2}}
\newcommand{\fkdos}{\hspace{-1pt}k+\frac{1}{2}}
\newcommand{\fudos}{\hspace{-1pt}\frac{3}{2}}
\newcommand{\futres}{\hspace{-1pt}\frac{5}{2}}
\newcommand{\vi}{\textbf{(i)} }
\newcommand{\vii}{\textbf{(ii)} }
\newcommand{\ba}{ \mathbf{a}}
\newcommand{\bk}{ \mathbf{k}}
\newcommand{\bm}{ \mathbf{m}}
\newcommand{\bn}{ \mathbf{n}}
\newcommand{\ku}{ \Bbbk}
\newcommand{\kut}{ \mathbf{k}^{\times}}
\newcommand{\G}{\mathbb G}
\newcommand{\gb}{\mathbf g}
\newcommand{\ghost}{\mathscr{G}}
\newcommand{\inc}{\mathscr{I}}
\newcommand{\as}{a}
\newcommand{\qmb}{\mathtt{q}}
\newcommand{\I}{\mathbb I}
\newcommand{\Iw}{\mathbb I^{\dagger}}
\newcommand{\Idd}{\mathbb I^{\ddagger}}
\newcommand{\N}{\mathbb N}
\newcommand{\bq}{\mathbf{q}}
\newcommand{\Z}{\mathbb Z}
\newcommand{\zt}{\Z^{\theta}}
\newcommand{\cA}{\mathcal{A}}
\newcommand{\cB}{\mathcal{B}}
\newcommand{\cBt}{\widetilde{\mathcal{B}}}
\newcommand{\D}{\mathcal{D}}
\newcommand{\cJ}{\mathcal{J}}
\newcommand{\cK}{\mathcal{K}}
\newcommand{\cR}{\mathcal{R}}
\newcommand{\cV}{\mathcal{V}}
\newcommand{\cW}{\mathcal{W}}
\newcommand{\X}{\mathcal{X}}
\newcommand{\g}{\mathfrak g}
\newcommand{\Bg}{\mathfrak B}
\newcommand{\lstr}{\mathfrak L}
\newcommand{\cyc}{\mathfrak C}
\newcommand{\pos}{\mathfrak P}
\newcommand{\eny}{\mathfrak E}
\newcommand\ad{\operatorname{ad}}
\newcommand{\diag}{\operatorname{diag}}
\newcommand{\id}{\operatorname{id}}
\newcommand{\gr}{\operatorname{gr}}
\newcommand{\GK}{\operatorname{GKdim}}
\newcommand{\ord}{\operatorname{ord}}
\newcommand{\soc}{\operatorname{soc}}
\newcommand{\gkv}{\mathfrak d}
\def\ydh{{}^{H}_{H}\mathcal{YD}}
\def\ydk{{}^{K}_{K}\mathcal{YD}}
\newcommand{\Bdiag}{\mathcal{B}^\mathrm{diag}}
\newcommand{\Vdiag}{\mathcal{V}^\mathrm{diag}}
\newcommand{\NA}{\mathcal{B}}
\newcommand{\toba}{\mathcal{B}}
\newcommand{\ot}{\otimes}
\newcommand{\roots }{\boldsymbol{\Delta }}
\newcommand{\ydG}{{}^{\ku \Gamma }_{\ku \Gamma }\mathcal{YD}}
\newcommand{\ydz}{{}^{\ku \Z}_{\ku \Z}\mathcal{YD}}
\newcounter{tabla}\stepcounter{tabla}
\begin{document}

\title[On finite GK-dimensional Nichols algebras]{On finite GK-dimensional Nichols algebras over abelian groups}

\author[Andruskiewitsch; Angiono; Heckenberger]
{Nicol\'as Andruskiewitsch, Iv\'an Angiono, Istv\'an Heckenberger}

\address{FaMAF-CIEM (CONICET), Universidad Nacional de C\'ordoba,
Medina A\-llen\-de s/n, Ciudad Universitaria, 5000 C\' ordoba, Rep\'
ublica Argentina.} \email{(andrus|angiono)@famaf.unc.edu.ar}

\address{Philipps-Universität Marburg,
	Fachbereich Mathematik und Informatik,
	Hans-Meerwein-Straße,
	D-35032 Marburg, Germany.} \email{heckenberger@mathematik.uni-marburg.de}

\thanks{\noindent 2010 \emph{Mathematics Subject Classification.}
	16T20, 17B37. \newline The work of N. A. and I. A.  was partially supported by CONICET,
Secyt (UNC), the MathAmSud project GR2HOPF. The work of I. A. was partially supported by ANPCyT (Foncyt).
The work of N. A., respectively I. A., was partially done during a visit to the University of Marburg, respectively the MPI (Bonn), supported by the Alexander von Humboldt Foundation.}

\begin{abstract}
We  contribute to the classification of Hopf algebras with finite Gelfand-Kirillov dimension, $\GK$ for short, through the study of Nichols algebras over abelian groups.
We deal first with braided vector spaces over $\mathbb Z$ with the generator acting as a single Jordan block and show that the corresponding Nichols algebra
has finite $\GK$ if and only if the size of the block is 2 and the eigenvalue is $\pm 1$; when this is 1, we recover the quantum Jordan plane.
We consider next a class of braided vector spaces that are direct sums of blocks and points that contains those of diagonal type. We conjecture that a
Nichols algebra of diagonal type has finite $\GK$ if and only if the corresponding generalized root system is finite. Assuming the validity of this conjecture,
we classify all braided vector spaces in the mentioned class whose
Nichols algebra has finite $\GK$. Consequently we present several new examples of  Nichols algebras with finite $\GK$,
including two not in the class alluded to above. We determine which among these Nichols algebras are domains.
\end{abstract}

\maketitle

\begin{quote}{\textit{As you set out for Ithaka \newline
			hope the voyage is a long one, \newline
			full of adventure, full of discovery.\newline
			Laistrygonians and Cyclops,\newline
			angry Poseidon—don’t be afraid of them: \newline
			you’ll never find things like that on your way \newline
			as long as you keep your thoughts raised high, \newline
			as long as a rare excitement \newline
			stirs your spirit and your body. \newline
			Laistrygonians and Cyclops, \newline
			wild Poseidon—you won’t encounter them \newline
			unless you bring them along inside your soul, \newline
			unless your soul sets them up in front of you.
		} \newline \textsc{C. P. Cavafy}}\end{quote}

\setcounter{tocdepth}{2}
\newpage
\tableofcontents

\newpage
\section{Introduction}\label{section:introduction}

\subsection{Antecedents}\label{subsec:intro-indecomposable}
Since the inception of quantum groups in the early 80's of the last century, the natural question of their characterization in structural terms is present. A first illuminating contribution was the intrinsic description by Lusztig  of the positive part $U^+_q(\g)$ of the quantized enveloping algebra of a simple  finite-dimensional Lie algebra $\g$  as what is called today a Nichols algebra \cite{L-libro}, see also \cite{R cras,Sbg}.
Then the pioneering paper \cite{R quantum groups} explained that finite Gelfand-Kirillov dimension is a crucial requirement to single out the $U^+_q(\g)$ (and their multiparametric versions) among the Nichols algebras of diagonal type, assuming $q >0$. The  classification of pointed Hopf algebras with finite $\GK$ that are domains, with abelian group of grouplikes and infinitesimal braiding of diagonal type was achieved in \cite{AS-crelle} starting from this result; the extension to $q$ generic was obtained in \cite{AA} by means of the theory in \cite{H-inv}. A finer characterization of quantum groups was offered further in \cite{ARS} in terms of representation theory. In all the mentioned articles, the starting point is a braided vector space of diagonal type. However there are very natural braided vector spaces not of diagonal type but related to natural examples of Hopf algebras, e.g. the quantum Jordan plane, see \S \ref{subsection:jordanian}. 
This paper starts a systematic treatment of Nichols algebras of braided vector spaces over abelian groups that are not of diagonal type.

We point out that the
classification of finite-dimensional Nichols algebras of diagonal type was
performed using the Weyl groupoid \cite{H-inv,H-classif}. However, this theory does not apply directly
to the braided vector spaces under our consideration, since these are not direct sums of simple Yetter-Drinfeld modules.

The last years witnessed a development of the interest on the classification and understanding of Hopf algebras with controlled growth, see \cite{BGLZ}. For instance, the study of noetherian Hopf algebras was raised and discussed in \cite{B-seattle}; updates of the state-of-the-art were given in \cite{B-turkish,BG,G-survey}. The first results on classification of Hopf algebras with low $\GK$
appeared in \cite{LWZ,BZ,Liu,GZ,WZZ,Z}; see an account in \cite{G-survey}.
Connected Hopf algebras with finite Gelfand-Kirillov dimension are quantum deformations of algebraic unipotent groups \cite{EG}.

The results of the present paper fit into the approach of the classification of pointed Hopf algebras through the lifting method \cite{AS Pointed HA}, as was the viewpoint  in \cite{AS-crelle, AA, ARS}; as said we focus on Nichols algebras with finite $\GK$ over abelian groups, one of the main steps of the lifting method. The explanation of how the remaining steps
can be performed is contained, in a special situation, in the article \cite{aah-jordan}.

\subsection{Points and blocks}\label{subsec:intro-history}
If $\theta \in\N$, then let $\I_\theta=\{1,2,\dots,\theta\}$. When $\theta$ is clear from the context we simply say $\I$.
If $\theta > \ell \in \N$, then we set $\I_{\ell, \theta}=\{\ell, \ell + 1,\dots,\theta\}$.
Let $\ku$ be an algebraically closed field of characteristic zero and $\Gamma$
an abelian group. We refer to \cite{AS Pointed HA} for the definitions of braided vector space, of
the category $\ydh$ of Yetter-Drinfeld modules over a Hopf algebra $H$ (always assumed with bijective antipode), and of the Nichols algebra $\NA(V)$ of $V \in \ydh$. Here we deal with the following question:

\begin{question}\label{question:nichols-finite-gkd} If  $V \in \ydG$, $\dim V < \infty$, when $\GK \NA(V) < \infty$?
\end{question}

As customary, we go back and forth between Yetter-Drinfeld modules and braided vector spaces. Note that the braided vector space $V$ is triangular as in \cite{Ufer}
(but we do not rely on the results of \cite{Ufer}). Also, not every triangular braiding arises from $\ydG$ for some abelian group $\Gamma$.

Braided vector spaces arising as Yetter-Drinfeld modules over abelian groups are direct sums of \emph{points} and \emph{blocks}:

\begin{itemize}[leftmargin=*]\renewcommand{\labelitemi}{$\diamond$}
	
	\item A \emph{point} of label $q\in \kut$ is a braided vector space $(V,c)$ of dimension 1 with $c = q\id$.
	Then $\NA(V)$ is isomorphic to a truncated polynomial ring $\ku[T]/\langle T^m\rangle$
	if $q$ is a root of 1 of order $m > 1$; or to a polynomial ring $\ku[T]$, otherwise.
	All irreducible objects in $\ydG$ are like this.
	
\medbreak
\item A \emph{block}  $\cV(\epsilon,\ell)$, where  $\epsilon\in \kut$ and $\ell \in \N_{\ge 2}$, is a braided vector space
	with a basis $(x_i)_{i\in\I_\ell}$ such that for $i, j \in \I_\ell  = \{1,2,\dots,\ell\}$, $1 < j$:
	\begin{align}\label{equation:basis-block}
		c(x_i \ot  x_1) &= \epsilon x_1 \ot  x_i,& c(x_i \ot  x_j) &=(\epsilon x_j+x_{j-1}) \ot  x_i.
	\end{align}
\end{itemize}

Here is our first significant result:

\begin{theorem}\label{theorem:blocks}  $\GK \NA(\cV(\epsilon,\ell)) < \infty$ if and only if
 $\ell = 2$ and $\epsilon \in \{\pm 1\}$.

	If this happens, then $\GK \NA(\cV(\epsilon,\ell)) = 2$.
\end{theorem}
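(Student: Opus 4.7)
The proof splits naturally into the sufficient direction (showing $\GK \NA(\cV(\epsilon,\ell)) = 2$ when $\ell = 2$ and $\epsilon \in \{\pm 1\}$) and the necessary direction. In both cases the starting point is the explicit computation of the kernel of the quantum symmetrizer $S_2 = \id + c$ on $V^{\otimes 2}$, which determines the degree-$2$ defining relations of $\NA(V)$. A direct calculation from \eqref{equation:basis-block} shows that for $\ell = 2$ the kernel is spanned by $\tfrac{1}{2} x_1 \otimes x_1 - x_1 \otimes x_2 + x_2 \otimes x_1$ when $\epsilon = 1$, by $x_1 \otimes x_1$ when $\epsilon = -1$, and is trivial when $\epsilon^2 \neq 1$. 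The first produces the classical Jordan plane relation $x_2 x_1 - x_1 x_2 + \tfrac{1}{2} x_1^2 = 0$; the second the relation $x_1^2 = 0$, which together with a cubic primitive extracted from $\ker S_3$ presents the super Jordan plane.

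For the sufficient direction I would then verify by a Bergman-diamond style argument that the above relations constitute a complete presentation, yielding the PBW basis $\{x_1^a x_2^b : a, b \geq 0\}$ in the Jordan case and a slightly larger explicit basis (with three PBW generators of degrees $1, 2, 1$) in the super Jordan case. Linear independence of the proposed basis is checked via skew-derivations on $T(V)$ that descend to $\NA(V)$, and a direct Hilbert-series count gives polynomial growth with $\GK = 2$ in each case.

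For the necessary direction I exploit the functoriality of $\NA$ under injective morphisms of Yetter--Drinfeld modules: since $\ku x_1 \oplus \ku x_2$ is always a $\ydz$-submodule of $\cV(\epsilon, \ell)$, there is an injection $\NA(\cV(\epsilon, 2)) \hookrightarrow \NA(\cV(\epsilon, \ell))$, and likewise $\NA(\cV(\epsilon, 3)) \hookrightarrow \NA(\cV(\epsilon, \ell))$ for $\ell \geq 3$. The problem therefore reduces to two subcases: (i) $\ell = 2$ with $\epsilon^2 \neq 1$, and (ii) $\ell = 3$ with $\epsilon = \pm 1$. In case (i) the absence of quadratic relations, combined with analysis of $\ker S_n$ for all $n$ via the braided shuffle recursion $S_n = (\id + c_{n-1,n} + c_{n-2,n-1}c_{n-1,n} + \cdots)(S_{n-1} \otimes \id)$, forces $\dim \NA(V)^n$ to grow exponentially in $n$. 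In case (ii) the Jordan (resp.\ super Jordan) relations tame the $x_1, x_2$ part, so I would exhibit an explicit infinite linearly independent family in $\NA(V)$ whose existence hinges on $x_3$---for instance monomials of the form $\{x_3^a\, w(x_2, x_3)\}$ for suitable words $w$---and detect non-vanishing by acting with suitable skew-derivations. The main obstacle is precisely case (ii): ruling out that higher-degree primitives involving $x_3$ might after all cut growth below exponential. The cleanest route, I expect, is to locate a free subalgebra of $\NA(V)$ on two carefully chosen elements built from $x_2$ and $x_3$ and verify its freeness through a duality argument using the braided pairing between $\NA(V)$ and $\NA(V^*)$.
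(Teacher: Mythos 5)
Your overall architecture matches the paper: prove the sufficient direction by presenting $\NA(\cV(1,2))$ as the Jordan plane and $\NA(\cV(-1,2))$ as the super Jordan plane (the paper's Propositions \ref{pr:1block} and \ref{pr:-1block}, which use exactly the skew-derivation and Hilbert-series argument you sketch), and reduce the necessary direction to the two subcases $\ell=2$, $\epsilon^2\neq 1$ and $\ell=3$, $\epsilon=\pm 1$ via the braided-subspace embeddings. Your computation of $\ker S_2$ is correct.

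The gap is in your case (i). You assert that the absence of quadratic relations, propagated through the shuffle recursion, forces $\dim \NA(V)^n$ to grow exponentially. This is false whenever $\epsilon$ is a root of unity of order $N\geq 3$: the line $\ku x_1$ is a braided point of label $\epsilon$, so $x_1^N=0$ in $\NA(V)$, and there are further relations in higher degree. Having no relations in degree $2$ tells you nothing about degrees $\geq 3$, and exponential growth does not follow. The case $\epsilon\notin\G_\infty$ is fine (the paper's Step \ref{prop:V-lambda-generic} shows $\NA=T(V)$, citing a freeness result), but the root-of-unity cases are genuinely hard. For $\epsilon\in\G'_N$ with $N\geq 4$ the paper degenerates to the diagonal braided vector space $\Vdiag$ (Lemma \ref{prop:gr br Hopf algebra}) and appeals to the diagonal theory: $\Vdiag$ is of indefinite Cartan type, giving infinite $\GK$. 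For $\epsilon\in\G'_3$ even this degeneration fails, since $\Vdiag$ is then of finite Cartan type $A_2$ and $\NA(\Vdiag)$ is finite-dimensional; the paper must do a hands-on analysis (Step \ref{le:infGK3}) of $\ker\partial_1\cap\ker\partial_2$ up to degree $4$ to exhibit a degree-$4$ primitive $w$ in $\Bdiag$, adjoin it, and obtain a braided vector space of affine Cartan type $\tilde A_2$. Your proposal has no mechanism to detect this.

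In case (ii) your route is also different from the paper's and left too vague to assess. You propose hunting for a free subalgebra on two elements built from $x_2, x_3$ and verifying freeness by a braided pairing. The paper's argument (\S\ref{pf:eigenvalue 1, size geq 3}) is more structural: pass to the Hopf quotient by the ideal generated by the primitive $x_1$, identify a new primitive $w$ (the Jordan or super-Jordan relation in the variables $x_2, x_3$), and observe that $x_2, x_3, w$ span a braided subspace of type ``block plus point'' with negative ghost, to which Lemma \ref{lemma:weak-not-discrete} applies. You correctly flag this case as the main obstacle, but the strategy you propose to close it is not demonstrated to work, whereas the paper's reduction to the already-established block-and-point result resolves it cleanly.
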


See Propositions  \ref{pr:1block}, \ref{pr:-1block} and
Theorems  \ref{th:infGK}, \ref{thm:eigenvalue 1, size geq 3}.
The Nichols algebra $\NA(\cV(1,2))$ is a quadratic domain and has a PBW-basis with 2 generators;
it appeared in the literature under the name of \emph{Jordan plane}. See \S \ref{subsection:jordanian}.
The Nichols algebra $\NA(\cV(-1,2))$ is not a domain and has a PBW-basis with 3 generators, one of them
of height 2; it is presented by  one quadratic and one cubic relations. We call it a \emph{super Jordan plane}. See \S \ref{subsection:super-jordanian}.
If $\epsilon \in \{\pm 1\}$, then we refer to $\cV(\epsilon,2)$ as an \emph{$\epsilon$-block}.

\subsection{The main result}\label{subsec:intro-sb}
\subsubsection{The class of braided vector spaces}\label{subsubsec:intro-class}
We start by fixing the class of braided vector spaces $(V, c)$  considered in this paper. First, we suppose that
\begin{align}\label{eq:bradinig-generalform}
V &=  V_{1} \oplus \dots \oplus V_t \oplus  V_{t + 1} \oplus \dots \oplus V_\theta,
\\ \label{eq:bradinig-generalform1}
c(V_i \otimes V_j) &=  V_j \otimes V_i,\, i,j\in \I_{\theta},
\end{align}
where  $V_h$ is a $\epsilon_h$-block, with $\epsilon_h^2 = 1$, for $h\in \I_t$; and $V_{i}$ is a $q_{ii}$-point, with $q_{ii} \in \ku^{\times}$, $i\in \I_{t+1, \theta}$.
For simplicity, we assume that $V_h$ is a $1$-block,  for $1 \le h \le t_+$,  and a $-1$-block,  for $t_+ + 1 \le h \le t_+ + t_- = t$, where $t_+ \in \{0, \dots, t\}$ is the number of $1$-blocks.
If $i \neq j \in \I_{\theta}$, then we set
$c_{ij} = c_{\vert V_i \otimes V_j}: V_i \otimes V_j \to V_j \otimes V_i$.

\smallbreak
If $i,j \in \I_{t+1, \theta}$, i.e. they are points, then there exists $q_{ij} \in \ku^{\times}$ such that
\begin{align*}
c_{ij} &= q_{ij} \tau,
\end{align*}
where $\tau$ is the usual flip. We fix $x_j\in V_j$, non-zero, for $j \in \I_{t+1, \theta}$.

\smallbreak
Let now $i\in \I_t$ and $j \in \I_{t+1, \theta}$ (a block and a point); set $B_{ij} = \{i, \fidos, j\}$.
Here is our second hypothesis: we  assume that there exists a basis $x_i, x_{\fidos}$ of $V_i$, $q_{ij}, q_{ji} \in \kut$ and $a_{ji} \in \ku$ such that the braiding is given  by

\begin{align}\label{eq:intro-braiding-block-point}
(c(x_r \otimes x_s))_{r, s\in B_{ij}} &=
\begin{pmatrix}
\epsilon_i x_i  \otimes x_i &  (\epsilon_i x_{\fidos} + x_i ) \otimes x_i & q_{ij} x_j  \otimes x_i
\\
\epsilon_i x_i  \otimes x_{\fidos} & (\epsilon_i x_{\fidos} + x_i ) \otimes x_{\fidos}& q_{ij} x_j  \otimes x_{\fidos}
\\
q_{ji} x_i  \otimes x_j &  q_{ji}(x_{\fidos} + a_{ji} x_i ) \otimes x_j& q_{jj} x_j  \otimes x_j
\end{pmatrix}.
\end{align}
Notice that
\begin{align}
c_{ij}c_{ji} = \id \iff q_{ij}q_{ji} = 1 \text{ and } a_{ji}=0.
\end{align}
The \emph{interaction} between the block $i$ and the point $j$ is $\inc_{ij} = q_{ij}q_{ji}$; it is
\begin{align*}
\text{weak if } q_{ij}q_{ji}&= 1, &\text{mild if } q_{ij}q_{ji}&= -1,&\text{strong if } q_{ij}q_{ji}&\notin \{\pm 1\}.
\end{align*}

We define the \emph{ghost} between $i$ and $j$ as $
\ghost_{ij} = \begin{cases} -2a_{ij}, &\epsilon_j = 1, \\
a_{ij}, &\epsilon_j = -1.
\end{cases}$.
So $c_{ij}c_{ji}$ is determined by the interaction and the ghost.
If $\ghost_{ij} \in \N$, then we say that the ghost is \emph{discrete}.

\medbreak
We next impose the form of the braidings between two different blocks.
For every  $i \neq h \in \I_t$, there exist
	$q_{ih}, q_{hi} \in \kut$ and $a_{ih}, a_{hi} \in \ku$ such that the braiding between $V_i$ and $V_h$ with respect to the basis $B_i$ and $B_h$ as above
	is given  by
	\begin{align*} 
	&\text{the braidings of $V_i\oplus \ku x_h$ and $V_h\oplus \ku x_i$ are given as in \eqref{eq:intro-braiding-block-point};}
	\\
	&c \left(x_{\fidos} \otimes x_{\fhdos} \right) =   q_{ih} \left(x_{\fhdos} + a_{ih} x_h \right) \otimes x_{\fidos};
	\\
	&c \left(x_{\fhdos} \otimes x_{\fidos} \right) =   q_{hi} \left(x_{\fidos} + a_{hi} x_i \right) \otimes x_{\fhdos}.
	\end{align*}

\medbreak
Finally, set $i \sim j$ when $c_{ij}c_{ji} \neq \id_{V_j	 \otimes V_i}$, $i \neq j \in \I_\theta$. Let
$\approx$ be the equivalence relation on $\I_\theta$ generated by $\sim$.
Here is our last assumption:  $V$ is \emph{connected}, i.e  $i \approx j$ for all $i, j \in \I_\theta$.

\begin{remark} If $V \in \ydG$, $\dim V < \infty$, then $V$ does not have necessarily the form \eqref{eq:bradinig-generalform},
	\eqref{eq:bradinig-generalform1} with $c_{ij}$ given by \eqref{eq:intro-braiding-block-point} for $i\in \I_t$
and $j \in \I_{t+1, \theta}$. 
In our opinion, the best way to understand the class of braided vector spaces defined above is to 
define, following \cite[Definition 2.1]{Grana}, a decomposition of a braided vector space $\cW$ as a family of subspaces
$(\cW_i)_{i \in I}$ such that $\cW =  \oplus \cW_i$, $c(\cW_i \otimes\cW_j) =  \cW_j \otimes \cW_i$, $i,j\in I$.
Thus, a braided vector space is of diagonal type when it has a decomposition with all subspaces of dimension 1, 
whereas for the class we consider here, all subspaces are either points (i.e. of dimension 1) or blocks (specifically Jordan or super Jordan).
	
However there are examples of decompositions $\cW =  \cW_1\oplus \cW_2$ where $\cW_2$ is a point and $\cW_1$ is of diagonal type,
but the braiding of $\cW$ is not of diagonal type, in rough terms because `the action of the group-like corresponding to $\cW_2$ on $\cW_1$ is not diagonal'. We call these examples \emph{a pale block plus a point}.
The classification of the Nichols algebras of such braidings
with finite $\GK$ when $\dim \cW_1 =2$, hence $\dim \cW =3$, is given in \S \ref{section:bvs-abgps};
see Table \ref{tab:finiteGK-intro-5}.
The classification of all Nichols algebras over abelian groups
with finite $\GK$ requires an inductive approach, parallel the one in the present paper;
the first step would be to deal with Question \ref{question:paleblock}.

Also, if $V$ is a Yetter-Drinfeld module oven an abelian
group and the decomposition in \eqref{eq:bradinig-generalform} is  of Yetter-Drinfeld
modules (where $V_1,\dots ,V_t$ are blocks and $V_{t+1}, \dots V_{\theta}$ are points as assumed) then \eqref{eq:bradinig-generalform1} and \eqref{eq:intro-braiding-block-point}  hold.
\end{remark}

\begin{remark} Let $U, W \in \ydh$ both finite-dimensional
	and assume that $c_{U, W}c_{W, U} = \id_{W \otimes U}$. Then $\cB(U \oplus W) \simeq  \cB(U) \otimes \cB(W)$ by \cite{Grana} (at least as graded vector spaces) and
	$\GK\cB(U \oplus W) = \GK\cB(U) + \GK \cB(W)$.
Hence it is enough to reduce to  connected $V$.
\end{remark}

\subsubsection{Diagonal type}\label{subsubsec:intro-dt}
We discuss here the case $t = 0$, i.e. $V$ is of diagonal type. A complete answer is not known presently, but we dare to propose:

\begin{conjecture}\label{conjecture:nichols-diagonal-finite-gkd} If $V$ is of diagonal type and $\GK \NA(V) < \infty$, then the corresponding root system is finite.
\end{conjecture}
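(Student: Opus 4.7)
The plan is to proceed by Weyl groupoid induction on the rank $\theta$ of $V$, reducing the problem to rank 2 and then invoking (and in part reproving) Heckenberger's classification of arithmetic root systems. I would first establish that Heckenberger's reflection functors preserve finite Gelfand-Kirillov dimension: for each $i \in \I_\theta$ the reflection $s_i(V)$ is again a diagonal braiding whose Nichols algebra is isomorphic to $\NA(V)$ as a graded vector space, up to a linear relabelling of degrees that does not alter growth. Consequently, if $\GK \NA(V) < \infty$, then the entire Weyl groupoid orbit consists of Nichols algebras of finite $\GK$, and it suffices to show that the orbit is finite.

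The base case $\theta = 2$ is the technical core. By Kharchenko's PBW theorem, $\NA(V)$ admits a PBW basis of iterated braided commutators of $x_1$ and $x_2$, whose homogeneous degrees realise the positive roots. For each rank $2$ braiding matrix $(q_{ij})$ whose root system is infinite, the strategy is to exhibit infinitely many PBW generators of unbounded height, so that the Hilbert series of $\NA(V)$ dominates that of a free algebra on two generators and $\GK \NA(V) = \infty$. Concretely, one examines the family of braided commutators $[\cdots[[x_1,x_2],x_2],\dots,x_2]$ and their mirrors, and shows that each non-zero such element has infinite height unless a precise resonance among the $q_{ij}$ occurs; arithmetic root systems are then characterised as exactly the cases where infinitely many such resonances accumulate.

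For the inductive step, assume the conjecture for ranks below $\theta$ and suppose that the root system of $V$ is infinite. Either some rank $2$ restriction $V_{ij}$ has infinite root system, in which case $\GK \NA(V_{ij}) = \infty$ by the base case and hence $\GK \NA(V) = \infty$ because $\NA(V_{ij})$ embeds as a subalgebra of $\NA(V)$; or every rank $2$ restriction is arithmetic, so that the Cartan integers $a_{ij}$ are all defined, the Weyl groupoid acts, and infiniteness of the global root system forces an infinite orbit of $V$ under the groupoid. One concludes by an orbit counting argument against Heckenberger's finite list of arithmetic root systems of rank $\theta$; any infinite orbit produces an infinite sub-orbit of rank $2$ restrictions, contradicting the base case together with step one.

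The hard part will be the rank $2$ base case at special parameter values, particularly when $q_{11}$ or $q_{22}$ is a root of unity and the interaction $q_{12}q_{21}$ lies on a resonance locus: here naive height estimates from the PBW basis are inconclusive and one must construct explicit families of linearly independent elements, possibly by exploiting the action of the Yetter-Drinfeld coaction or by bounding Hilbert series from below via the braided shuffle algebra. A secondary difficulty, pervasive in the inductive step, is to justify the reflection functor when $\dim \NA(V) = \infty$: one needs to know that $\min\{n \ge 0 : (\ad x_i)^{n+1} x_j = 0\}$ is finite for each pair $i \neq j$, which does not follow formally from $\GK \NA(V) < \infty$ and itself requires a Hilbert series argument of the flavour used in rank $2$.
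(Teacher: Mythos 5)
What you are attempting to prove is posed as a \emph{conjecture} in the paper, not a theorem. The authors state explicitly that ``a complete answer is not known presently,'' record only partial progress in Theorem~\ref{thm:nichols-diagonal-finite-gkd} (imported from the companion paper~\cite{diag}), and invoke the full statement in the rest of the article only under the name Hypothesis~\ref{hyp:nichols-diagonal-finite-gkd}. There is no proof here to compare your sketch against; you should present it as a proposed route to an open problem, and your first step (reflections preserve~$\GK$, \S\ref{subsubsection:diagonal-type-reflections}) and the reduction ``some rank two restriction has infinite root system $\Rightarrow \GK\NA(V)=\infty$'' are indeed the right opening moves, with the rank two base case being exactly the first clause of Theorem~\ref{thm:nichols-diagonal-finite-gkd}.

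The gap is in the remaining branch of your inductive step, and it is exactly where the conjecture is hard. When every rank two restriction of $V$ is arithmetic, it is \emph{false} that infiniteness of the global root system forces an infinite orbit of braiding matrices under the Weyl groupoid, and it is certainly false that such infiniteness ``produces an infinite sub-orbit of rank $2$ restrictions.'' The affine Cartan case already refutes both claims: for a braiding of Cartan type the reflections $\cR^i$ fix the braiding matrix, so the groupoid reaches only one object, and for an affine Cartan matrix such as $A_2^{(1)}$ or $D_\theta^{(1)}$ every rank two sub-Cartan matrix is of finite type $A_1\times A_1$ or $A_2$, hence every rank two restriction has a \emph{finite} root system. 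Your chain ``infinite root system $\Rightarrow$ infinite orbit $\Rightarrow$ infinite sub-orbit in rank two $\Rightarrow$ contradiction'' therefore fails at the first implication. This is precisely why the affine Cartan case is singled out as a separate item in Theorem~\ref{thm:nichols-diagonal-finite-gkd} requiring a direct Hilbert-series or PBW-height argument in ambient rank $\ge 3$; and beyond affine type one still faces indefinite Cartan type and the non-Cartan braidings with all rank two restrictions finite but global root system infinite, none of which your orbit count detects. The inductive step needs to be replaced by an analysis of the possible Cartan schemes of $V$, not an orbit-size argument.
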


This would imply the classification of the Nichols algebras of diagonal type with finite $\GK$ from \cite{H-classif}.
See \cite{A-jems, Ang-crelle} for the defining relations. There is  evidence on the validity of this Conjecture.

\begin{theorem}\label{thm:nichols-diagonal-finite-gkd} \cite{diag}
	Assume that $V$ is of diagonal type.
	If either its generalized root system is infinite and $\dim V = 2$, or else $V$ is of affine Cartan type, then $\GK \NA(V) = \infty$.
\end{theorem}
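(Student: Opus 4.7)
The plan is to treat the two hypotheses of the statement separately and in both cases to exploit Kharchenko's PBW theorem, which in the diagonal setting gives a basis of ordered monomials $\prod_{\beta} x_\beta^{n_\beta}$, $0 \le n_\beta < h_\beta$, indexed by the positive Kharchenko roots; for diagonal braidings these coincide with the positive real roots of the generalized root system $\realroots$.

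\textbf{Rank two with infinite root system.} Assume $\dim V = 2$ and the generalized root system is infinite. By Heckenberger's classification \cite{H-inv,H-classif} of rank-two bicharacters with finite root system, the matrix $(q_{ij})_{i,j \in \I_2}$ lies outside an explicit finite list. Hence the set of positive roots is infinite, and so is the set of PBW generators of $\NA(V)$. The key step is to convert this into super-polynomial growth of the Hilbert series. I would partition the positive roots $\beta = (\beta_1,\beta_2) \in \N^2$ by total $\N$-degree $|\beta| = \beta_1 + \beta_2$, show that the number of positive roots of total degree $\le n$ is unbounded as $n \to \infty$, and then count ordered PBW monomials of total degree $\le n$: each generator of finite height still contributes a nontrivial factor, and the growth of the resulting product over infinitely many factors cannot be bounded by any polynomial. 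A residual subtlety is handling the ``height $1$'' case via a direct argument or via Heckenberger's Weyl groupoid to produce enough root vectors of bounded degree.

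\textbf{Affine Cartan type.} Assume the Cartan matrix associated with $(q_{ij})$ is of affine type. If $\theta = 2$, this falls under the previous case since the associated rank-two root system is infinite. If $\theta \ge 3$, I would extract a rank-two sub-braiding with infinite root system: the affinity of the Cartan matrix together with standard Kac-Moody classification implies that either a principal $2 \times 2$ submatrix is already of infinite type, or successively removing nodes produces such a submatrix. The Nichols algebra $\NA(V_i \oplus V_j)$ of the corresponding rank-two sub-braiding is a graded subalgebra of $\NA(V)$, which is a standard structural property of Nichols algebras of diagonal type, so $\GK \NA(V) \ge \GK \NA(V_i \oplus V_j) = \infty$ by the rank-two case.

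\textbf{Main obstacle.} The hard part is the rank-two step, specifically the passage from ``infinitely many PBW generators'' to ``super-polynomial Hilbert growth''. A priori the heights $h_\beta$ could be as small as $2$ and the bidegrees $|\beta|$ could grow, so one needs a genuine lower bound: either a free polynomial subring of arbitrarily large rank, or a careful direct count of ordered monomials exploiting the bidegree distribution of $\realroots_+$. A secondary difficulty is verifying uniformly that every bicharacter outside Heckenberger's list does produce an infinite set of Kharchenko roots, rather than merely an infinite Weyl groupoid orbit; this follows from the identification of Kharchenko's roots with the generalized root system in the diagonal case, but must be invoked carefully for the bicharacters where the Cartan integers are borderline.
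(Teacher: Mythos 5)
Two things before the mathematics: the statement you were asked to prove is attributed to \cite{diag} (the reference [AAH2], ``On finite GK-dimensional Nichols algebras of diagonal type, in preparation''), so the present paper contains no proof of it at all; there is nothing here to compare your argument against. With that caveat, here is an assessment of your proposal on its own merits.

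Your overall strategy (restricted PBW basis for diagonal Nichols algebras, identify Kharchenko roots with $\realroots_+$, show there are infinitely many PBW generators, deduce super-polynomial growth) is the natural one, and the paper does supply a tool tailored exactly to the last step, namely Lemma~\ref{lemma:rosso-lemma19-gral}: if $(y_k)_{k\ge 0}$ are homogeneous, the ordered products $y_{i_1}\cdots y_{i_l}$ ($i_1<\cdots<i_l$) are linearly independent, \emph{and} $\deg y_i\le mi+p$ for some fixed $m,p$, then $\GK=\infty$. The obstacle you flag is precisely the hypothesis $\deg y_i\le mi+p$. In rank two of hyperbolic (indefinite) Cartan type, the real roots grow exponentially in degree, so you cannot take the $y_k$ to be root vectors for the real roots alone; one would have to argue that Kharchenko roots of imaginary type populate the positive cone densely enough to extract a subsequence with linearly bounded degree. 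This is plausible but nontrivial, and you do not supply it, so the rank-two case of your proof is genuinely incomplete rather than merely ``having a residual subtlety.''

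The affine case has a harder error. You propose to reduce $\theta\ge 3$ to the rank-two case by extracting a $2\times 2$ principal submatrix of infinite type, asserting that ``affinity \ldots implies that either a principal $2\times 2$ submatrix is already of infinite type, or successively removing nodes produces such a submatrix.'' This is false: an indecomposable generalized Cartan matrix is of affine type if and only if \emph{every} proper principal submatrix is of finite type (Vinberg's criterion, see e.g.\ \cite{K}). So for every affine Cartan matrix of rank $\ge 3$ (say $A_2^{(1)}$), all rank-two subdiagrams have finite root systems, $\NA(V_i\oplus V_j)$ is finite-dimensional, and the proposed reduction yields nothing. The affine case therefore cannot be deduced from the rank-two case by restriction; it needs its own argument, most naturally by exhibiting infinitely many root vectors with degrees bounded by multiples of the null root $\delta$ (which does grow linearly) and then invoking Lemma~\ref{lemma:rosso-lemma19-gral}. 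Until both this and the degree-growth bound in rank two are supplied, the proposal does not constitute a proof.
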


The converse implication (finite root system implies finite $\GK$)
is known, see \cite{H-inv}.
Braided vector spaces of  diagonal type play a crucial role in our approach;  in this text we  assume that

\begin{hypothesis}\label{hyp:nichols-diagonal-finite-gkd} Conjecture \ref{conjecture:nichols-diagonal-finite-gkd} is true.
\end{hypothesis}

This Hypothesis is used in the proof of our main result Theorem \ref{theorem:main} not as a restriction
on the sub-diagram of diagonal type but rather as a tool to discard possibilities. 
Namely, in several instances we 
obtain an auxiliary braided vector space of diagonal type that is discarded by Hypothesis \ref{hyp:nichols-diagonal-finite-gkd}. See \S \ref{subsubsection:about-the-proofs} for an exposition;
the Hypothesis is used in the proofs of
Lemmas \ref{lemma:mild-odd-order-2}, \ref{lemma:KJ-card2}, \ref{lemma:KJ-card-geq3}, \ref{lemma:superJordan-2-mild},
\ref{lemma:block-points-eps-1-aux1}, \ref{lemma:several-blocks-connected-components} and \ref{lemma:several-blocks-connected-components-2}
and in the proof of Theorem \ref{th:paleblock-point-resumen}.

\subsubsection{Flourished graphs}\label{subsubsec:intro-flourished-graph}
Towards our main classification result, we need to introduce a class of decorated graphs, extending the generalized Dynkin diagrams of \cite{H-classif},
designed to describe (some) braided vector spaces of the form 
\eqref{eq:bradinig-generalform}, \eqref{eq:bradinig-generalform1}.

\begin{definition}\label{def:flourished-graph}
	A flourished graph is a graph $\D$ with $\theta$  vertices and  the following decorations:
	\begin{itemize} [leftmargin=*]\renewcommand{\labelitemi}{$\circ$}
		\item The vertices have three kind of decorations $+$, $-$ and $q \in \ku^{\times}$; they are depicted respectively as $\boxplus$, $\boxminus$ and $\overset{q} {\circ}$. There are $t_+$ vertices of the first kind, and $t_-$ of the second.
		They are numbered respectively $1, \dots, t_+$; $t_+ + 1, \dots, t_+ + t_- =: t$; $t+1, \dots, \theta$ (with possibly different $q$'s). The vertices in $\I_t$ are called blocks, the remaining are called points.

		\smallbreak
		\item If $i \neq j$ are points, and there is an edge between them, then it is decorated by some $\widetilde{q}_{ij} \in \ku^{\times} - 1$: $\xymatrix{\overset{q_i}  {\circ} \ar  @{-}[r]^{\widetilde{q}_{ij}}  & \overset{q_j}  {\circ} }$.

		\smallbreak
		\item  If $h$ is a block and $j$ is a point, then an edge between $h$ and $j$ is decorated either by $\ghost_{hj}$ or by $(-, \ghost_{hj})$ for some $\ghost_{hj}\in \ku^{\times}$; or not decorated at all.
	\end{itemize}
	The full subgraph with vertices $\I_{t+1, \theta}$ is denoted $\D_{\diag}$; it is a generalized Dynkin diagram \cite{H-classif}.
	The set of its connected components is denoted $\X$.
\end{definition}

Let $V$ be as in \S \ref{subsubsec:intro-class}. We attach a flourished graph $\D$ to $V$ by the following rules. The set of vertices of $\D$ is $\I_\theta$. The edges obey the following rules:

\begin{itemize} [leftmargin=*]\renewcommand{\labelitemi}{$\bullet$}
	\item  If $h\in \I_{t}$, then   corresponding   vertex is depicted as $\boxplus$ when $\epsilon_h = 1$, respectively $\boxminus$ when $\epsilon_h = -1$.
	
	\smallbreak
	\item  If $j \in \I_{t+1, \theta}$,  then   the corresponding   vertex is depicted as $\overset{q_{jj}}  {\circ}$.
	
	\smallbreak
	\item There is an edge between $i$ and $j \in \I_{\theta}$ iff $i \sim j$.
	
	\smallbreak
	\item  If $i \in \I_{t}$, $j \in \I_{t+1, \theta}$, $\inc_{ij}$ is weak and $\ghost_{ij}\neq 0$,
	respectively $\inc_{ij}$ is mild, then the  edge between $i$ and $j$ is labelled by $\ghost_{ij}$, respectively
	by $(-, \ghost_{ij})$.
	
	\smallbreak
	\item If $ i\neq j \in \I_{t+1, \theta}$ and $q_{ij}q_{ji} \neq 1$, then the corresponding edge is decorated by $\widetilde{q}_{ij} = q_{ij}q_{ji}$.
\end{itemize}

\bigbreak
Here are the flourished graphs parametrizing Nichols algebras with finite $\GK$:

\begin{definition}\label{def:flourished-graph-admissible}
	A flourished graph is \emph{admissible} when it is not of diagonal type (i.e. $t>0$) and
	the following conditions hold.
	
	\begin{enumerate}[leftmargin=*,label=\rm{(\alph*)}]
		\item\label{item:noedges} There are no edges between blocks.
		
			\smallbreak
			\item\label{item:local}
			The only possible connections between a connected component $J \in \X$
			and one block are described in Table  \ref{tab:conn-comp} (the point connected with the block is black for emphasis).
\begin{table}[ht]
	\caption{Connecting components and blocks; $\ghost_{ij} \in \N$, $\omega \in \G_3'$.} \label{tab:conn-comp}
	\begin{center}
		\begin{tabular}{|c c|}
			
			\hline
			$\xymatrix{\boxplus \ar  @{-}[r]^{\ghost_{ij}}  & \overset{1}{\bullet}}$ \qquad $\xymatrix{\boxminus \ar  @{-}[r]^{\ghost_{ij}}  & \overset{1}{\bullet}}$	
			& $\xymatrix{\boxplus \ar  @{-}[r]^{1} &\overset{-1}{\bullet} \ar  @{-}[r]^{\omega^2 }  & \overset{\omega}{\circ}}$
			\\ \hline	
			$\xymatrix{\boxplus \ar  @{-}[r]^{\ghost_{ij}}  & \overset{-1}{\bullet}}$ \qquad $\xymatrix{\boxminus \ar  @{-}[r]^{\ghost_{ij}}  &\overset{-1}{\bullet} }$	&
			$\xymatrix{\overset{-1}{\circ} \ar  @{-}[r]^{\omega^2 }  & \overset{\omega}{\bullet} \ar  @{-}[r]^{1} & \boxplus }$
			\\ \hline
			$\xymatrix{\boxplus \ar  @{-}[r]^{1}  & \overset{\omega}{\bullet}}$	\qquad $\xymatrix{\boxminus \ar  @{-}[r]^{(-1, 1)}  &\overset{-1}{\bullet} }$  & $\xymatrix{\boxplus \ar  @{-}[r]^{1} &\overset{-1}{\bullet} \ar  @{-}[r]^{\omega}  & \overset{\omega^2}{\circ} \ar  @{-}[r]^{\omega}  & \overset{\omega^2}{\circ} }$
			\\ \hline
			$\xymatrix{\boxplus \ar  @{-}[r]^{1}  &\overset{-1}{\bullet} \ar  @{-}[r]^{-1}  & \overset{-1}{\circ}} \dots \xymatrix{
				\overset{-1}{\circ} \ar  @{-}[r]^{-1}  & \overset{-1}{\circ} }$
			& 	$\xymatrix{\boxplus \ar  @{-}[r]^{1} & \overset{-1}{\bullet} \ar  @{-}[r]^{\omega}  & \overset{\omega^2}{\circ} \ar  @{-}[r]^{\omega^2}  & \overset{\omega}{\circ} }$
			\\ \hline
			$\xymatrix{\boxplus \ar  @{-}[r]^{2}  &\overset{-1}{\bullet} \ar  @{-}[r]^{-1}  & \overset{-1}{\circ}}$ 	 &  $\xymatrix{\boxplus \ar  @{-}[r]^{1} &\overset{-1}{\bullet} \ar  @{-}[r]^{r^{-1}}  & \overset{r}{\circ}}$, $r \neq \pm 1$
			\\ \hline
			$\xymatrix{\boxplus \ar  @{-}[r]^{1}  &\overset{-1}{\bullet} \ar  @{-}[r]^{\omega}  & \overset{-1}{\circ}}$
			& $\xymatrix{\boxminus \ar  @{-}[r]^{(-1, 1)}  &\overset{-1}{\bullet} \ar  @{-}[r]^{-1}  & \overset{-1}{\circ}}$
			
			\\ \hline
		\end{tabular}
	\end{center}
	
\end{table}

		\smallbreak
		\item\label{item:point} Let $J \in \X$ (a connected component of $\D_{\diag}$). Then there is a unique $j \in J$ connected to a block.

		\smallbreak
		\item\label{item:concom} If $J \in \X$ has $\vert J \vert > 1$, then it is connected to a unique block $i$.
		
			\smallbreak
			\item\label{item:concom-1} If $J =\{j\} \in \X$ and $q_{jj} \in \G'_3$, then it is connected to a unique block $i$.
		
		\smallbreak
		\item\label{item:block} If a block $h$ is connected to a point $j$ by an edge labelled $(-, \ghost_{hj})$ for some $\ghost_{hj}\in \ku^{\times}$,
		then there is no other edge connecting $h$ with a point and there is no other edge connecting $j$ with a block.
	\end{enumerate}
\end{definition}

The next is the main result of this monograph.

\begin{theorem}\label{theorem:main} Let $V$ be a braided vector space as in \S \ref{subsubsec:intro-class}, not of diagonal type. Then $\GK \NA(V) < \infty$ if and only if its flourished graph $\D$ is admissible.
\end{theorem}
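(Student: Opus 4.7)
The plan is to prove both directions by a case-by-case reduction indexed by the structure of the flourished graph $\D$. Throughout we rely on two monotonicity principles for $\GK \NA(-)$: first, if $W \subseteq V$ is a sub-braided vector space compatible with the decomposition \eqref{eq:bradinig-generalform}, then $\NA(W) \hookrightarrow \NA(V)$ and hence $\GK \NA(W) \le \GK \NA(V)$; second, passing to an associated graded braiding with respect to a suitable filtration, or applying a Lusztig-type reflection, preserves finiteness of $\GK$. The ``only if'' direction will reduce via these principles either to a single-block situation covered by Theorem \ref{theorem:blocks}, or to an auxiliary diagonal sub-braiding whose generalized root system is infinite and is therefore discarded by Hypothesis \ref{hyp:nichols-diagonal-finite-gkd}.

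For the ``only if'' direction we inspect the failure of each condition (a)--(f) of Definition \ref{def:flourished-graph-admissible} separately. If (a) fails, two blocks $V_h$ and $V_i$ are connected; passing to a suitable associated graded braiding yields an auxiliary diagonal quotient whose root system is infinite, as handled in Lemmas \ref{lemma:several-blocks-connected-components} and \ref{lemma:several-blocks-connected-components-2}. For (b) we enumerate every block-point connection pattern not appearing in Table \ref{tab:conn-comp} and exhibit in each case either a block of size or eigenvalue excluded by Theorem \ref{theorem:blocks} (after a reflection), or a diagonal sub-diagram with infinite Heckenberger root system. Conditions (c), (d) and (e) are treated in parallel: if a diagonal component $J \in \X$ is attached to a block at two different points, or if two blocks attach to the same $J$, or if a singleton component with label a primitive third root of unity sits isolated from all blocks, then suitable reflections or linear reparametrizations produce a rank-$\ge 2$ diagonal sub-braiding with infinite root system (Lemmas \ref{lemma:KJ-card2}, \ref{lemma:KJ-card-geq3}, \ref{lemma:block-points-eps-1-aux1}). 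Condition (f), which isolates mildly-interacting edges, is handled analogously through Lemmas \ref{lemma:mild-odd-order-2} and \ref{lemma:superJordan-2-mild}.

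For the ``if'' direction we must construct a Nichols algebra of finite $\GK$ from every admissible $\D$. We assemble $\NA(V)$ from its pieces: every block contributes, by Theorem \ref{theorem:blocks}, a Jordan or super-Jordan factor with $\GK = 2$, and each component $J \in \X$ is of finite-root-system diagonal type and hence has finite $\GK$ equal to its number of positive roots. Admissibility guarantees that the interaction between these pieces is governed by exactly one of the rows of Table \ref{tab:conn-comp}, and for each row we write down a candidate defining ideal consisting of: the Jordan or super-Jordan relations inside each block, the quantum Serre relations inside each component $J$, and a short list of ``interaction'' relations forced by requiring that certain explicit homogeneous elements be skew-primitive. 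We then exhibit an explicit PBW basis for the resulting quotient algebra, verify linear independence via the skew-derivation method of \cite{AS Pointed HA, Ang-crelle}, and read off $\GK$ as the number of PBW generators of infinite height.

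The main obstacle is the ``if'' direction, specifically verifying linear independence of the PBW monomials in $\NA(V)$ for each row of Table \ref{tab:conn-comp}: there is no uniform shortcut because the interaction relations vary in shape (involving a discrete ghost $\ghost_{ij}$, a mild label, or a cubic super-Jordan-type commutator), and each must be shown to be skew-primitive modulo lower degree and compatible with the braided bialgebra structure inherited from the other pieces. A secondary difficulty on the ``only if'' side is constructing the auxiliary diagonal sub-braidings that feed Hypothesis \ref{hyp:nichols-diagonal-finite-gkd}, since sub-Yetter--Drinfeld modules of $V$ are rarely of diagonal type; one must pass to associated gradeds with respect to the radical filtration of each block and check that this operation embeds compatibly with $\NA$, which is precisely the technical content invoked in \S \ref{subsubsection:about-the-proofs}.
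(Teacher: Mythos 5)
You have the right high-level scaffolding---induct on the number of blocks and points, discard bad cases by locating auxiliary braidings with infinite $\GK$, and build PBW bases for the admissible ones---but two essential tools the paper relies on are absent from your plan, and without them several of the reductions you assert will not go through.

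First, the paper's workhorse is the \emph{splitting technique}: for a direct-summand decomposition $V = U \oplus W$, one has $\NA(V) \simeq K \# \NA(U)$ with $K = \NA(V)^{\mathrm{co}\,\NA(U)} = \NA(K^1)$ and $K^1 = \ad_c \NA(U)(W)$. The pivotal observation (Lemma \ref{lemma:braiding-K-weak-block-points} and its analogues in \S\ref{sec:yd-dim3}, \S\ref{sec:yd-dim>3}, \S\ref{section:YD>3-2blocks}, \S\ref{sec:yd-dim>3-2blocks}) is that in the discrete-ghost weak-interaction regime, $K^1$ is of \emph{diagonal type} with an explicit braiding matrix whose Dynkin diagram can be analyzed directly. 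This drives both directions: discarding is done by showing the diagram of $K^1$ has infinite root system, and for the converse the presentation and PBW basis of $\NA(V)$ are assembled from the known presentation of $\NA(K^1)$ (imported from \cite{Ang-crelle}) together with the decomposition $K \# \NA(U)$; see e.g.\ the proof of Proposition \ref{pr:lstr-a(10)1}, where linear independence of the PBW basis is a direct consequence of this tensor decomposition rather than a fresh derivation argument. Your proposal only mentions sub-braidings, flag filtrations, and reflections, which produce the much coarser $V^{\diag}$; that object is totally disconnected when the interaction is weak and loses all information about the ghost, so by itself it cannot discriminate between admissible and inadmissible diagrams, nor supply the upper bound on $\GK$ needed in the ``if'' direction.

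Second, you have no mechanism to handle the cases that do \emph{not} reduce to diagonal type. Lemmas \ref{lemma:weak-not-discrete} and \ref{lemma:mild-not-discrete} exclude non-discrete ghost by producing an infinite linearly independent family $\{y_{i_1}\cdots y_{i_l}\}$ of bounded-degree-growth monomials and invoking the Rosso-type criterion (Lemma \ref{lemma:rosso-lemma19-gral}). There is no filtration-and-reflection substitute here: $V^{\diag}$ for a non-discrete ghost is the same as for a discrete one, so the associated-graded route dead-ends. Relatedly, you misattribute the treatment of condition (a) to Lemmas \ref{lemma:several-blocks-connected-components} and \ref{lemma:several-blocks-connected-components-2}, which in fact establish (c) and (d); the argument for (a) is Theorem \ref{theorem:2blocks} in \S\ref{section:YD>3-2blocks}, and it again goes through $K^1$, locating inside it a three-dimensional braided subspace of the form ``block plus point with negative ghost'' and then applying Lemma \ref{lemma:weak-not-discrete}---a decidedly non-diagonal reduction.
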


\begin{table}[ht]
	\caption{Nichols algebras of a block and a point with finite $\GK$}\label{tab:finiteGK-intro-1}
	{\small
		\begin{gather}\notag
		\begin{tabular}{|c|c|c|}
		\hline $V$ & $\toba(V)$;  $\GK$  & generators and relations     \\
		\hline
		$\lstr(1, \ghost)$ 
		& \S \ref{subsubsection:lstr-11disc}; $\ghost + 3$   &  $\ku\langle x_1, x_2, x_3
		\vert x_2x_1-x_1x_2+\frac{1}{2}x_1^2,
		x_1x_3 - q_{12} \, x_3x_1,$
		\\
		$\xymatrix{\boxplus \ar  @{-}[r]^{\ghost}  & \overset{1}{\bullet}}$
		& & $z_{1+\ghost},  z_tz_{t+1} - q_{21}q_{22} \, z_{t+1}z_t,  0\le t < \ghost\rangle$
		\\ \hline
		$\lstr(-1, \ghost)$ & \S \ref{subsubsection:lstr-1-1disc}; $2$   &  $\ku\langle x_1, x_2, x_3
		\vert x_2x_1-x_1x_2+\frac{1}{2}x_1^2, $
		\\
		$\xymatrix{\boxplus \ar  @{-}[r]^{\ghost}  & \overset{-1}{\bullet}}$ & & $x_1x_3 - q_{12} \, x_3x_1, z_{1+\ghost},  z_t^2, \,  0\le t \le \ghost\rangle$
		\\ \hline
		$\lstr_{-}(1, \ghost)$ & \S \ref{subsubsection:lstr--11disc}; $\ghost + 3$
		&  $\ku\langle x_1, x_2, x_3\vert x_1^2, x_2x_{21}- x_{21}x_2 - x_1x_{21}$,
		\\
		$\xymatrix{\boxminus \ar  @{-}[r]^{\ghost}  & \overset{1}{\bullet}}$
		& & $x_1x_3 - q_{12} \, x_3x_1,  x_{21}x_3 - q_{12}^2 x_3x_{21}, z_{1+2\ghost},$
		\\
		& & $z_{2k+1}^2, z_{2k}z_{2k+1} - q_{21}q_{22} \, z_{2k+1}z_{2k},  0\le k < \ghost\rangle$
		\\ \hline
		$\lstr_{-}( -1, \ghost)$ & \S \ref{subsubsection:lstr--1-1disc}; $\ghost + 2$
		&  $\ku\langle x_1, x_2, x_3\vert x_1^2, x_2x_{21}- x_{21}x_2 - x_1x_{21}, x_3^2$,
		\\
		$\xymatrix{\boxminus \ar  @{-}[r]^{\ghost}  & \overset{-1}{\bullet}}$
		& & $x_1x_3 - q_{12} \, x_3x_1, x_{21}x_3 - q_{12}^2 x_3x_{21},z_{1+2\ghost},  $
		\\
		& & $z_{2k}^2, z_{2k-1}z_{2k} - q_{21}q_{22} \, z_{2k}z_{2k-1},  0 < k \le \ghost\rangle$
		\\ \hline
		$\lstr(\omega, 1)$ & \S \ref{subsubsection:lstr-1omega1}; $2$   &  $\ku\langle x_1, x_2, x_3
		\vert x_2x_1-x_1x_2+\frac{1}{2}x_1^2, $
		\\
		$\xymatrix{\boxplus \ar  @{-}[r]^{1}  & \overset{\omega}{\bullet}}$ 
		& & $x_1x_3 - q_{12} \, x_3x_1,\, z_{2}, \, x_3^3, \,  z_1^3,\, z_{1,0}^3\rangle$
		\\ \hline
		$\cyc_1$ & \S \ref{subsubsection:nichols-mild}; 2 &  $\ku\langle x_1, x_2, x_3
		\vert  x_1^2, x_2x_{21}- x_{21}x_2 - x_1x_{21}, $
		\\
		$\xymatrix{\boxminus \ar  @{-}[r]^{(-1, 1)}  &\overset{-1}{\bullet} }$
		& & $ x_3^2, f_0^2, \, f_1^2, \,  z_1^2, \, x_{21}x_3 - q_{12}^2 x_3x_{21},$
		\\
		& & $
		x_{2}z_1 + q_{12} z_1x_{2} - q_{12} \, f_0 x_2 - \frac{1}{2}f_1\rangle$
		\\ \hline
		\end{tabular}
		\end{gather} }
\end{table}

\subsubsection{Organization of the paper and scheme of the proof}
Section \ref{section:Preliminaries} is devoted to preliminaries. The next Sections contain several partial classification results.
In Section \ref{sec:yd-dim2} we discuss Nichols algebras of blocks and prove a large part of Theorem \ref{theorem:blocks}. In Section \ref{sec:yd-dim2} we classify Nichols algebras of a direct sum of a block and a point  with finite $\GK$, cf. Theorem \ref{thm:point-block}. Along the way, the techniques in this Section allow us to  finish the proof of Theorem \ref{theorem:blocks}. In Section \ref{sec:yd-dim>3} we classify Nichols algebras of a direct sum of a block and several points  with finite $\GK$, cf. Theorems \ref{thm:points-block-eps1} and \ref{thm:points-block-eps-1}.
In Section \ref{section:YD>3-2blocks} we show that the Nichols algebra of a direct sum of two blocks has finite $\GK$ if and only if the blocks commute in the braided sense. Section \ref{sec:yd-dim>3-2blocks} contains the discussion of the general case, while in Section \ref{section:bvs-abgps} we discuss Examples of admissible flourished graphs and  the braided vector spaces over abelian groups not in the class discussed in this monograph.

\medbreak Let us discuss now the general scheme of the proof of Theorem \ref{theorem:main},
assuming the results listed above and other technical Propositions.
We proceed inductively on $t$ and $\theta$. First, we observe that if $\GK \NA(V) < \infty$ and $i, j \in \I_t$ are two blocks, then $c_{ji}c_{ij} = \id$ by  \S \ref{section:YD>3-2blocks}, thus $\D$ satisfies \ref{item:noedges}.

\smallbreak
(i). \emph{There are no points, i.e. $t = \theta$}.
Hence $\theta = 1$ by the preceding observation (since we assume that $V$ is connected), i.e. it is a block.
Then  Theorem \ref{theorem:blocks} says that $V$ is a Jordan plane, or a super Jordan plane.

\smallbreak
(ii).  $\theta = 2$.
It is enough to consider $t = 1$ by hypothesis and the preceding observation, i.e. it is a block and a point.
Theorem \ref{thm:point-block} says that  $\GK \NA(V) < \infty$ iff  $V$  appears in Table \ref{tab:finiteGK-intro-1}.
Clearly, the corresponding flourished diagrams are those in Table  \ref{tab:conn-comp} with $\theta = 2$.

\smallbreak
(iii).  $\theta > 2$, $t =1$.
Theorems \ref{thm:points-block-eps1}, \ref{thm:points-block-eps-1} say that  $\GK \NA(V) < \infty$ if and only if  the connected components of $V_{\diag} := V_{t+1} \oplus \dots \oplus V_{\theta}$  appear in Tables \ref{tab:finiteGK-intro-1} and  \ref{tab:finiteGK-intro-2}, and  the existence of a component with mild interaction forces the diagram of $V_{\diag}$ to be connected.
This implies  \ref{item:local} and the first claim in  \ref{item:block}.

\smallbreak
(iv). Lemma \ref{lemma:several-blocks-connected-components} provides \ref{item:point} and Lemma \ref{lemma:several-blocks-connected-components-2} provides \ref{item:concom};  Theorem \ref{thm:variosblocks-unpto} implies \ref{item:concom-1}; the second claim in \ref{item:block} follows from Lemma \ref{lemma;several-blocks-1pt-weak}.

\smallbreak
Thus, we see that the flourished diagram of a $V$ with $\GK \NA(V) < \infty$ is admissible.
Conversely, if the flourished diagram of $V$ is admissible,  then $\GK \NA(V) < \infty$ by Theorem \ref{theorem:final}.

For illustration, we describe the Nichols algebras of one point and several blocks  in \S \ref{tab:finiteGK-intro-4}.

\begin{table}[ht]
	\caption{{\small Nichols algebras of one block and several points with finite $\GK$: connected components of $V_{\diag}$ not in Table \ref{tab:finiteGK-intro-1}}}\label{tab:finiteGK-intro-2}
	{\small
		\begin{gather}\notag
		\begin{tabular}{|c|c|c|}
		\hline $V$ & $\toba(V)$;  $\GK$  & generators and relations
		\\ \hline
		$\lstr(A(1\vert 0)_1; r)$, $r\in \G'_N$, $N>2$ 
		& \S \ref{subsubsection:lstr-a(10)1}; 2 &
		$\ku\langle x_i, i\in \Iw_{3} \vert
		x_{\fudos}x_1-x_1x_{\fudos}+\frac{1}{2}x_1^2, x_{21}, x_{\fudos\fudos 2}, $
		\\
		$\xymatrix{\boxplus \ar  @{-}[r]^{1}  &\overset{-1}{\bullet} \ar  @{-}[r]^{r^{-1}}  & \overset{r}{\circ}}$
		& &
		$x_2^2,x_{\fudos 2}^2, x_{13}, x_{\fudos 3},x_{332}, x_3^N, [x_{\fudos 2}, x_{23}]_c^N \rangle$ 	
		\\ \hline
		$\lstr(A(1\vert 0)_1; r)$, $r\notin \G_{\infty}$
		& \S \ref{subsubsection:lstr-a(10)1-bis}; 4 &
		$\ku\langle x_i, i\in \Iw_{3} \vert
		x_{\fudos}x_1-x_1x_{\fudos}+\frac{1}{2}x_1^2, x_{21}, x_{\fudos\fudos 2},$
		\\
		$\xymatrix{\boxplus \ar  @{-}[r]^{1}  &\overset{-1}{\bullet} \ar  @{-}[r]^{r^{-1}}  & \overset{r}{\circ}}$ & &
		$x_2^2, x_{\fudos 2}^2, x_{13}, x_{\fudos 3}, x_{332} \rangle$
		\\ \hline
		$\lstr(A(1\vert 0)_2; \omega)$, $\omega \in \G'_3$
		& \S \ref{subsubsection:lstr-a(10)2}; 2 &
		$\ku\langle x_i, i\in \Iw_{3} \vert
		x_{\fudos}x_1-x_1x_{\fudos}+\frac{1}{2}x_1^2, x_{21}, x_{\fudos\fudos 2}, x_2^2,$
		\\
		$\xymatrix{\boxplus \ar  @{-}[r]^{1}  &\overset{-1}{\bullet} \ar  @{-}[r]^{\omega}  & \overset{-1}{\circ}}$
		& & $x_{\fudos 2}^2, x_{13}, x_{\fudos 3},x_3^2, x_{23}^3, [x_{\fudos 2}, x_{3}]_c^3,$
		\\
		& & $[x_{\fudos 2}, x_{23}]_c^6, [[x_{\fudos 2}, x_{23}]_c, x_3]_c^3 \rangle$
		\\ \hline
		$\lstr(A(1\vert 0)_3; \omega)$, $\omega \in \G'_3$
		& \S \ref{subsubsection:lstr-a(10)3}; 2 &
		$\ku\langle x_i, i\in \Iw_{3} \vert
		x_{\fudos}x_1-x_1x_{\fudos}+\frac{1}{2}x_1^2, x_{21}, x_{\fudos\fudos 2},$
		\\
		$\xymatrix{\boxplus \ar  @{-}[r]^{1}  &\overset{\omega}{\bullet} \ar  @{-}[r]^{\omega^2 }  &\overset{-1}{\circ}} $
		& & $x_2^3, x_{\fudos 2}x_{\fudos 23}+q_{13}q_{23}x_{\fudos 23}x_{\fudos 2}, x_{\fudos 2}^3, [x_{\fudos 2}, x_2]_c^3,$
		\\
		& & $x_{13}, x_{\fudos 3},x_3^2, x_{223}, [x_{\fudos 2}, x_{23}]_c^6\rangle$
		\\ \hline
		$\lstr(A(2 \vert 0)_1; \omega)$, $\omega \in \G'_3$
		& \S \ref{subsubsection:lstr-a(20)1}; 2 &
		$\ku\langle x_i, i\in \Iw_{4} \vert
		x_{\fudos}x_1-x_1x_{\fudos}+\frac{1}{2}x_1^2, x_{21},x_{\fudos\fudos 2}, x_2^2,$
		\\
		$\xymatrix{\boxplus \ar  @{-}[r]^{1}  &\overset{-1}{\bullet} \ar  @{-}[r]^{\omega}  & \overset{\omega^2}{\circ} \ar  @{-}[r]^{\omega}   &\overset{\omega^2}{\circ} }$
		& & $x_{\fudos 2}^2, x_{13}, x_{\fudos 3}, x_{14}, x_{\fudos 4}, x_{24}, x_{332}, x_{334}, x_{443},x_3^3,$
		\\
		& & $ x_{34}^3, x_4^3, [x_{\fudos 23},x_2]_c^3, [[x_{\fudos 23},x_2]_c, [x_{\fudos 23},x_{24}]_c]_c^3,$
		\\
		& & $[x_{\fudos 23},x_{24}]_c^3, [ [x_{\fudos 23},x_2]_c, [[x_{\fudos 23},x_{24}]_c,x_2]_c]_c^3,$
		\\
		& & $[[x_{\fudos 23},x_{24}]_c,x_2]_c^3, [ [x_{\fudos 23},x_{24}]_c, [[x_{\fudos 23},x_{24}]_c,x_2]_c]_c^3, \rangle $
		\\ \hline
\end{tabular}
\end{gather} }
\end{table}

\begin{table*}[ht]
	\caption*{{\small \sc{Table \ref{tab:finiteGK-intro-2}} (continuation)}}
	{\small
		\begin{gather}\notag
		\begin{tabular}{|c|c|c|}
		\hline		
		$\lstr(D(2 \vert 1); \omega)$, $\omega \in \G'_3$
		& \S \ref{subsubsection:lstr-D(21)}; 2 &
		$\ku\langle x_i, i\in \Iw_{4} \vert
		x_{\fudos}x_1-x_1x_{\fudos}+\frac{1}{2}x_1^2, x_{21}, x_{\fudos\fudos 2}, x_2^2,$
		\\
		$\xymatrix{\boxplus \ar  @{-}[r]^{1}  &\overset{-1}{\bullet} \ar  @{-}[r]^{\omega}  & \overset{\omega^2}{\circ} \ar  @{-}[r]^{\omega^2}  & \overset{\omega}{\circ} }$
		& & $x_{\fudos 2}^2, x_{13}, x_{\fudos 3}, x_{14}, x_{\fudos 4}, x_{24}, [[x_{234},x_3]_c,x_3]_c, $
		\\
		& & $x_{332}, x_{443}, x_3^3, x_{34}^3, x_{334}^3, x_4^3, [x_{\fudos 234}, x_2]_c^3, x_{\fudos 234}^3,$
		\\
		& & $[[[x_{\fudos 234}, x_2]_c, x_3]_c, x_3]_c^3, [[x_{\fudos 234}, x_2]_c, x_3]_c^3,$
		\\
		& & $[[[x_{\fudos 234}, x_2]_c, x_3]_c, x_3]_c^3 \rangle$
		\\ \hline
		$\lstr(A_{2}, 2)$
		& \S \ref{subsubsection:lstr-a-22}; 2 &
		$\ku\langle x_i, i\in \Iw_{3} \vert
		x_{\fudos}x_1-x_1x_{\fudos}+\frac{1}{2}x_1^2, x_{21}, x_{\fudos\fudos\fudos 2}, x_2^2,$
		\\
		$\xymatrix{\boxplus \ar  @{-}[r]^{2}  &\overset{-1}{\bullet} \ar  @{-}[r]^{-1} &  \overset{-1}{\circ}}$
		& & $x_{\fudos 2}^2, \, x_{\fudos\fudos 2}^2, x_{13}, x_{\fudos 3}, x_3^2, x_{23}^2, x_{\fudos\fudos 23}^2, [x_{\fudos\fudos 23}, x_{\fudos 2}]_c^2,$
		\\
		& & $[x_{\fudos\fudos 23}, x_{\fudos 2}]_c, x_2]_c^2, [[x_{\fudos\fudos 23}, x_{\fudos 2}]_c, x_2]_c, x_3]_c^2,$
		\\
		& & $x_{\fudos 23}^2, [x_{\fudos 23},x_2]_c^2, [x_{\fudos\fudos 23},x_2]_c^2 \rangle$
		\\ \hline
		$\lstr(A_{\theta -1})$, $\theta > 2$	& \S \ref{subsubsection:lstr-a-n}; 2 &
		$\ku\langle x_i, i\in \Iw_{\theta} \vert
		x_{\fudos}x_1-x_1x_{\fudos}+\frac{1}{2}x_1^2, x_{21}, x_{\fudos\fudos 2}, x_2^2,$
		\\
		$\xymatrix{\boxplus \ar  @{-}[r]^{1}  &\overset{-1}{\bullet} \ar  @{-}[r]^{-1}  & \overset{-1}{\circ}} \dots \xymatrix{
			\overset{-1}{\circ} \ar  @{-}[r]^{-1}  & \overset{-1}{\circ} }$
		& & $x_{\fudos 2}^2, x_{1j}, x_{\fudos j} \, (j>2), x_{ij}^2 \, (2\le i\le j\le\theta),$
		\\
		& & $[x_{k-1 \, k \, k+1}, x_k] \,  (3\le k <\theta), \ya_{k\ell}^2 (k<\ell\in\I_\theta) \rangle$
		\\ \hline
		$\cyc_2$ 	& \S \ref{subsubsection:nichols-mild-cyc2}; 3 &
		$x_{\fudos}x_{\fudos 1}- x_{\fudos 1}x_{\fudos} - x_1x_{\fudos 1}, x_{\fudos}x_{12}+q_{12}x_{12}x_{\fudos}+x_{1 \fudos 2},$
		\\ 
		$\xymatrix{\boxminus \ar  @{-}[r]^{(-1, 1)}  &\overset{-1}{\bullet} \ar  @{-}[r]^{-1}  & \overset{-1}{\circ}}$ 
		& &
		$x_1^2, x_{\fudos}x_{\fudos 2}+q_{12}x_{\fudos 2}x_{\fudos}-\frac 1 2 x_{1 \fudos 2} - q_{12}x_{12}x_{\fudos},$
		\\ & &
		$x_{\fudos}x_{1 \fudos 2} -q_{12}x_{1 \fudos 2}x_{\fudos}, x_2^2, x_{12}^2, x_{\fudos 2}^2, x_{1 \fudos 2}^2,[x_{123}, x_2]_c, $
		\\ & &
		$[x_{\fudos 2}, x_{123}]_c^2, [x_1, [x_{\fudos 23}, x_2]_c]_c -q_{12}q_{13} x_{123} x_{12},$
		\\ & &
		$x_{123}x_{\fudos 23} + x_{\fudos 23}x_{123},x_3^2, x_{123}^2, x_{23}^2,x_{\fudos 23}^2, x_{1\fudos 23}^2,$
		\\ & &
		$ x_{13}, x_{\fudos 3}, [x_1, [x_{\fudos 2}, x_{123}]_c]_c -2q_{12}x_{12}x_{1\fudos 23},$
		\\ & &
		$[x_{\fudos}, [x_{\fudos 2}, x_{123}]_c]_c -2q_{12}x_{12}x_{1\fudos 23}+2q_{12}x_{\fudos 2}x_{1\fudos 23}.$
		\\ \hline
		\end{tabular}
		\end{gather} }
\end{table*}

\subsubsection{About the proofs}\label{subsubsection:about-the-proofs}
The proofs of the Theorems that support our main result involve two kind of arguments: first,
discarding Nichols algebras with infinite $\GK$; second, proving that the remaining have finite $\GK$, computing it and describing the relations and a PBW-basis.
For the first we either use  Lemmas \ref{lemma:rosso-lemma19-gral} and \ref{lemma:rosso-lemma19}  that are generalizations of results in \cite{R quantum groups}; or else reduce to the diagonal case by a variety of techniques, mainly:

\begin{itemize}[leftmargin=*]\renewcommand{\labelitemi}{$\heartsuit$}
		\item \emph{Filtrations of braided vector spaces}. Given a Yetter-Drinfeld module $V$ with a  flag of Yetter-Drinfeld submodules, 
		we consider the associated Yetter-Drinfeld module $\gr V$. Then $\GK \NA(\gr V) \leq \GK \NA(V)$. 
		See \S \ref{subsection:filtr-nichols}. If the flag is complete (as a flag of vector spaces), then
		$\gr V$ is of diagonal type and we apply Theorem \ref{thm:nichols-diagonal-finite-gkd}, or invoke  Hypothesis \ref{hyp:nichols-diagonal-finite-gkd}.   However there are instances where a more elaborated argument is needed: 
		we look at the filtration of $\cB = \NA(V)$ induced by the flag and correspondingly at $\gr \cB$; then we identify a braided subspace $U$
		of the space of primitive elements in $\gr \NA$ such that $\GK \NA(U) = \infty$; this does the job since clearly $\GK \NA(U) \leq \GK \NA(V)$.
		The space $U$ is not contained in $\gr V$ and derivations are used to verify that the extra elements are not 0.
		
\smallbreak		
		\item \emph{Splitting of Nichols algebras}.  We deal with decomposable Yetter-Drinfeld modules $V = U \oplus W$. Then 
		the natural projection $\NA(V) \twoheadrightarrow \NA(U)$ admits a section, namely the natural inclusion $\NA(U) \hookrightarrow \NA(V)$.
As in the usual context of Hopf algebras, the algebra $K=\NA (V)^{\mathrm{co}\,\NA (U)}$ is a braided Hopf algebra in a suitable category 
and $\cB(V)$ is reconstructed as $\NA(V) \simeq K \# \NA (U)$ by a sort of  braided bosonization \`a la Majid-Radford \cite[Lemma 3.2]{AHS}. In fact, $\GK \NA(V) = \GK K + \GK \NA (U)$ under suitable finiteness assumptions.
Furthermore, it turns out that $K$ is the Nichols algebra of $K^1 = \ad_c\NA (U) (W)$ \cite[Proposition 8.6]{HS}. 
That is, we are reduced to compute the braiding of $K^1$ (which is not the original braiding). In many cases, it happens that this braiding is of diagonal type, so we  apply  Theorem \ref{thm:nichols-diagonal-finite-gkd}  or invoke Hypothesis \ref{hyp:nichols-diagonal-finite-gkd}. 
However, the full strength of Hypothesis \ref{hyp:nichols-diagonal-finite-gkd} is not required: for instance, Theorem \ref{thm:point-block} (giving Table \ref{tab:finiteGK-intro-1})
requires only that the Nichols algebra of a rank 3 braided vector space of hyperbolic diagonal type has infinite $\GK$, and only at one step in the proof of Lemma \ref{lemma:mild-odd-order-2}.	 	
\end{itemize}

\smallbreak Notice that $K^1$ is not always of diagonal type;  for example $\cyc_1$, $\cyc_2$ and $\eny_{\star}$ are not, cf. \S \ref{subsubsection:nichols-mild}, \ref{subsubsection:nichols-mild-cyc2}, \ref{subsubsec:pale-block-case-1}; their Nichols algebras have to be computed in an ad-hoc manner. In other cases we reduce to previously investigated Nichols algebras. 

\medbreak
For the second kind or argument, we present by generators and relations and exhibit a PBW-basis of the Nichols algebras considered, so that the computation of the $\GK$ flows naturally (often this confirms a previous calculation by braided bosonization and reduction to the diagonal case). In the splitting context, if $K^1$ is of diagonal type, then the relations
of $\toba(V)$ can described in a straightforward way from the relations of $\NA(U)$, those of $K$ known by \cite{Ang-crelle},
and the action of the former on the latter. Otherwise, we guess the relations by looking at the associated graded  algebra and then prove that the guess was right; see for instance $\NA(\eny_{\star})$, \ref{subsubsec:pale-block-case-1}.

\medbreak
The notation for the braided vector spaces in the leftmost column in Table \ref{tab:finiteGK-intro-1}, respectively Table \ref{tab:finiteGK-intro-2}, 
respectively Table \ref{tab:finiteGK-intro-5}, is explained in \S \ref{subsection:point-block-presentation}, respectively Table \ref{tab:finiteGK-block-points-names} and \S \ref{subsection:YD>3-setting}, respectively 
\S \ref{subsubsec:pale-block-case-1}.
Notice that $\lstr$ stands for Laistrygonian,  $\cyc$ for Cyclop, $\pos$ for Poseidon and $\eny$ for Endymion.

\subsubsection{The Poseidon Nichols algebras}\label{tab:finiteGK-intro-4}	
Here we present $\toba(V)$, where  $V = \pos(\bq,\ghost)$ is the Poseidon braided vector space 
with parameters $\ghost = (\ghost_k) \in \N^t$ and $\bq = (q_{ij})_{i,j \in \I_{\theta}}$, such that 
$q_{ij}q_{ji} = 1$, if $i \neq j$, and $\epsilon_{i} := q_{ii} \in \G_2$ for all $i \in \I_{\theta}$.
Here $t\in \N_{\ge 2}$ and $\theta = t+1$. The braiding of $V$ is described by the diagram
$$ \xymatrix{ \boxplus \ar  @{-}[rrd]_{\ghost_1} & \dots  & \boxminus \ar  @{-}[d]^{\ghost_i} & \dots &
	\boxplus \ar  @{-}[lld]^{\ghost_t} 
	\\ & & \overset{\epsilon_{\theta}}{\bullet}. & & } $$
In the first line, one has $\boxplus$ or $\boxminus$ according to $\epsilon_{i} = 1$ or $-1$.
See \S \ref{subsection:YD>3-severalblocks-1pt-poseidon} for unexplained notation.
The presentation of $\toba \left(V\right)$ by generators and relations is

\begin{align}\label{eq:finiteGK-intro-4}	    
\begin{aligned}
\ku\langle x_i,&\, i\in \Idd
\vert  x_{i+\frac{1}{2} }x_i -x_ix_{i+\frac{1}{2} }+\frac{1}{2}x_i^2, \ i\in\I_t, \, \epsilon_{i}=1,
\\
&x_i^2, \quad x_{i+\frac{1}{2} }x_{i+\frac{1}{2} \, i}- x_{i+\frac{1}{2} \, i}x_{i+\frac{1}{2} } - x_ix_{i+\frac{1}{2} \, i}, \ i\in\I_t, \, \epsilon_{i}=-1;
\\		 
&x_ix_j - q_{ij} \, x_jx_i, \quad \lfloor i\rfloor \neq \lfloor j\rfloor  \in\I_t;
\\
&x_ix_{\theta} - q_{i \theta} \, x_{\theta} x_i, \quad	 
(\ad_c x_{i+\frac{1}{2}})^{1+|2a_i|}(x_{\theta}),\quad i \in\I_t;
\\ 	
&\sch_{\bm}\sch_{\bn} - p_{\bm,\bn} \, \sch_{\bn}\sch_{\bm}, \quad\bm  \neq \bn \in \cA;
\quad 	
\sch_{\bn}^2, \bn \in \cA, \, \epsilon_{\bn} = -1\rangle.
\end{aligned}	
	\end{align}
	
It can be shown that $\GK (\toba(V)) = 2t+ \vert \{\bm \in \cA:  p_{\bm,\bm}=1\}\vert$.

\begin{table}[ht]
	\caption{Nichols algebras of one pale block of dimension 2 plus one point with finite $\GK$, not covered by the classification; see \S \ref{subsubsec:pale-block-case-1}}\label{tab:finiteGK-intro-5}
	{\small
		\begin{gather}\notag
		\begin{tabular}{|c|c|c|}
		\hline $V$ & $\GK$  & generators and relations     
		\\\hline 
		$\eny_+(q)$ &   1  &		
		$\ku\langle x_1, x_2, x_3
		\vert x_1^2, \, x_2^2, \, x_1x_2 + x_2x_1$, $(x_2x_3 - q x_3x_2)^2$,
		\\ 	& &
		$x_3(x_2x_3 - q x_3x_2) - q^{-1} (x_2x_3 - q x_3x_2)x_3,$ $x_1x_3 - q x_3x_1\rangle$
		\\
		\hline
		$\eny_-(q)$ & 1  &		
		$\ku\langle x_1, x_2, x_3
		\vert x_1^2, \, x_2^2, \, x_1x_2 + x_2x_1$, $x_3 ^2$,
		\\ 	& &
		$x_3(x_2x_3 - q x_3x_2) + q^{-1} (x_2x_3 - q x_3x_2)x_3,$ $x_1x_3 - q x_3x_1\rangle$
		\\
		\hline
		$\eny_{\star}(q)$ & 2  &	$\ku\langle x_1, x_2, x_3
		\vert x_1^2, \, x_2^2, \, x_1x_2 + x_2x_1, x_3^2, x_{31}^2,x_{213}^2$,
        \\ 	& & $x_2 [x_{23},x_{13}]_c- q^2 [x_{23},x_{13}]_cx_2-q \, x_{13}x_{213}\rangle$
		\\ \hline
		\end{tabular}
		\end{gather} }
\end{table}

\medbreak
\subsection{Applications}\label{subsubsec:nichols-corollaries}
\subsubsection{Examples of Hopf algebras}\label{rem:examples-new}
Every Nichols algebra $\toba(V)$ for $V$ in Theorem \ref{theorem:main}, for instance those in
Tables \ref{tab:finiteGK-intro-1} or \ref{tab:finiteGK-intro-2}, can be realized in $\ydG$ for some finitely generated abelian group $\Gamma$. Same for those in Table \ref{tab:finiteGK-intro-5}.
Thus $\toba(V)\# \ku \Gamma$ is a new example of a pointed Hopf algebra with finite $\GK$. More examples would be available computing the liftings. See \cite{aah-jordan} for the case of the Jordan and super Jordan planes.
These examples could be worked out as test cases to verify conjectures on the homological behaviour of Hopf algebras with finite $\GK$, see e.g. \cite{BGLZ,B-seattle,B-turkish,BG,G-survey}.

\subsubsection{Domains}\label{rem:examples-domains}
Among the Nichols algebras either of blocks or else appearing in Tables \ref{tab:finiteGK-intro-1} or \ref{tab:finiteGK-intro-2}, or else in \S \ref{tab:finiteGK-intro-4},
the only domains are
\begin{itemize}[leftmargin=*]\renewcommand{\labelitemi}{$\diamond$}
	
	\item $\toba(\cV(1,2))$, see \cite{aah-jordan};

	\item $\toba(\lstr(1,\ghost))$, see Proposition \ref{pr:lstr-11disc-domain};

	\item $\toba(\pos(\bq,\ghost))$ where all diagonal entries in $\bq$ are 1, see Proposition \ref{pr:poseidon-domain}.	
\end{itemize}

\begin{theorem}
Let $V\in \ydG$,  $\dim V < \infty$, such that $\GK \NA(V) < \infty$. Then the following are equivalent:
\begin{enumerate}[leftmargin=*,label=\rm{(\alph*)}]
	\item\label{item:domain-a} $\NA(V)$ is a domain.
	
	\item\label{item:domain-b} $V$ is as in \eqref{eq:bradinig-generalform}, \eqref{eq:bradinig-generalform1},
	its flourished diagram is admissible,  all blocks are Jordan and all connected components $J\in \X$ are points with label 1.
\end{enumerate}
\end{theorem}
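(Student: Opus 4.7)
The plan is to prove the equivalence by combining the main classification Theorem~\ref{theorem:main} with a case analysis of the explicit Nichols algebra presentations in Tables~\ref{tab:finiteGK-intro-1}--\ref{tab:finiteGK-intro-2} and in formula~\eqref{eq:finiteGK-intro-4}. The principal tool is that whenever $W\subseteq V$ is a Yetter-Drinfeld submodule, $\NA(W)$ embeds as a braided subalgebra of $\NA(V)$ by \cite{Grana}, so any nonzero zero-divisor in $\NA(W)$ transports to one in $\NA(V)$.

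For \ref{item:domain-a} $\Rightarrow$ \ref{item:domain-b}, assume $\NA(V)$ is a domain. By Theorem~\ref{theorem:main}, $\D$ is admissible. I then rule out every admissible feature inconsistent with \ref{item:domain-b} by exhibiting a zero-divisor. \emph{First}, if some $V_i=\cV(-1,2)$ is super-Jordan, the super Jordan plane contains the nilpotent relation $x_1^2=0$, which transports through the embedding to $\NA(V)$; hence all blocks must be Jordan. \emph{Second}, if a component $J\in\X$ has $|J|>1$, then inspection of Table~\ref{tab:conn-comp} together with the corresponding diagonal Nichols subalgebras in Tables~\ref{tab:finiteGK-intro-1}--\ref{tab:finiteGK-intro-2} reveals that each admissible such $J$ carries power relations of the form $x_{ij}^2=0$ or $x_{ij}^3=0$, yielding zero-divisors. \emph{Third}, any admissible singleton $J=\{j\}$ with $q_{jj}\neq 1$ must satisfy $q_{jj}\in\{-1\}\cup\G_3'$; both cases already force $x_j^2=0$ or $x_j^3=0$ in the one-dimensional Nichols subalgebra on $\ku x_j$. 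Together with admissibility this establishes \ref{item:domain-b}.

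For \ref{item:domain-b} $\Rightarrow$ \ref{item:domain-a}, the hypotheses reduce the flourished diagram to a connected bipartite graph between $\boxplus$-blocks and $\overset{1}{\bullet}$-points with positive-integer ghost labels. The base cases are already established as domains: the Jordan plane itself (\cite{aah-jordan}); one block plus one point, $\lstr(1,\ghost)$ (Proposition~\ref{pr:lstr-11disc-domain}); and several blocks plus one point, $\pos(\bq,\ghost)$ with all $\epsilon_i=1$ (Proposition~\ref{pr:poseidon-domain}). The remaining multi-block, multi-point case is handled inductively by peeling off one label-$1$ point $x_j$: using the splitting $V=V_0\oplus\ku x_j$ and the braided bosonization $\NA(V)\simeq K\#\NA(V_0)$ of \S\ref{subsubsection:about-the-proofs}, the relative Nichols algebra $K=\NA(\ad_c\NA(V_0)(x_j))$ is identified, by explicit computation parallel to the Poseidon calculation in \S\ref{subsection:YD>3-severalblocks-1pt-poseidon}, as of diagonal type with all diagonal entries equal to $1$, hence a quantum polynomial ring and a domain. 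Combined with the inductive hypothesis on $V_0$, this exhibits $\NA(V)$ as an iterated skew-polynomial extension of domains, hence a domain.

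The main obstacle will be the inductive identification of $K$ in \ref{item:domain-b} $\Rightarrow$ \ref{item:domain-a}: one must verify explicitly that combining multiple label-$1$ points and multiple Jordan blocks via weak ghost interactions produces a relative braiding with only $1$s on the diagonal, extending the Poseidon calculation. A secondary subtlety in \ref{item:domain-a} $\Rightarrow$ \ref{item:domain-b} is that the embedding $\NA(W)\hookrightarrow\NA(V)$ must be applied to $2$-dimensional sub-blocks as well as to $1$-dimensional points, which is afforded by the general splitting of \cite{Grana}.
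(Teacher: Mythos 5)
Your proof has a genuine gap in the direction \ref{item:domain-a}\,$\Rightarrow$\,\ref{item:domain-b}. You begin by invoking Theorem~\ref{theorem:main} to conclude that $\D$ is admissible, but Theorem~\ref{theorem:main} is only stated for braided vector spaces $V$ that are already of the form \eqref{eq:bradinig-generalform}, \eqref{eq:bradinig-generalform1}. The theorem you are proving, however, takes an arbitrary $V\in\ydG$ of finite dimension and finite $\GK$, and part of what \ref{item:domain-b} asserts is precisely that $V$ has the form \eqref{eq:bradinig-generalform}, \eqref{eq:bradinig-generalform1}. You never show this. The paper closes this gap by the observation (end of \S\ref{subsect:pale-context}) that if $V$ is not of the prescribed form, then $V$ necessarily contains a braided subspace $W$ which is \emph{a pale block and a point}; by the classification in \S\ref{subsubsec:pale-block} (Theorem~\ref{th:paleblock-point-resumen}), the only such $W$ with finite $\GK$ are $\eny_\pm(q)$ and $\eny_\star(q)$, whose Nichols algebras all contain $x_1^2=0$ and hence are not domains. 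Transporting this zero-divisor through $\NA(W)\hookrightarrow\NA(V)$ rules out such $V$. Without this step your argument does not cover the general $V\in\ydG$ and only establishes the equivalence under the additional hypothesis that $V$ is as in \eqref{eq:bradinig-generalform}, \eqref{eq:bradinig-generalform1}.

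Regarding \ref{item:domain-b}\,$\Rightarrow$\,\ref{item:domain-a}, the paper simply cites Theorem~\ref{theorem:final-domain}, whose proof runs a filtration argument on all blocks simultaneously: write $\NA(V)\simeq K\#\NA(\cV_1)$ with $\cV_1=V_1\oplus\cdots\oplus V_t$ and $K$ of diagonal type with all diagonal entries $1$, then show that an appropriate filtration has $\gr\NA(V)$ a quantum polynomial ring (cf.\ Propositions~\ref{pr:lstr-11disc-domain}, \ref{pr:poseidon-domain}). Your plan to peel off one point at a time and treat $\NA(V)$ as an iterated smash product of domains is in the same spirit, but the step "hence $\NA(V)$ is an iterated skew-polynomial extension of domains" is not automatic: a braided bosonization $K\#\NA(V_0)$ of two domains need not be a domain, and the justification requires exactly the PBW-basis/filtration argument that the paper performs directly. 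So your route is not materially shorter, and the iterative formulation introduces an unverified claim that the one-shot filtration argument avoids.

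The zero-divisor hunt in your first direction (super-Jordan blocks give $x_1^2=0$; labels $-1$, $\omega$ give $x_j^2$ or $x_j^3=0$; components $|J|>1$ always carry a label $\neq 1$, per Table~\ref{tab:conn-comp}) is correct once the standard-form hypothesis is in place, and is effectively an alternative unwinding of the "only if" half of Theorem~\ref{theorem:final-domain}.
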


\pf Theorem \ref{theorem:final-domain} says  that if $V$ is as in \eqref{eq:bradinig-generalform}, \eqref{eq:bradinig-generalform1}
and its flourished diagram is admissible, then $\NA(V)$ is a domain if and only if all blocks are Jordan, i.e. $\boxplus$,
and all $J\in \X$ are points with label 1, i.e. $\overset{1}{\bullet}$.
Now if $V$ is not as in \eqref{eq:bradinig-generalform}, \eqref{eq:bradinig-generalform1}, then
it contains a braided subspace $W$ of the form \emph{a pale block and a point}, see the discussion at the end of \S \ref{subsect:pale-context}.
But then $W$ is one of $\eny_{\pm}$ or $\eny_{\star}$ that are not domains, so $\NA(V)$ could not be a domain.
\epf

This is a fundamental piece of information for the classification of
pointed Hopf algebras with finite $\GK$ that are domains.

\subsubsection{Co-Frobenius Hopf algebras}\label{re:cofrob}
A Hopf algebra is co-Frobenius if it admits a non-zero integral. See
\cite{ACE} for a list of equivalent characterizations-- and references for
them.  Here is an application of Theorem \ref{theorem:blocks}:

\begin{theorem}\label{theorem:cofrob} Let $H$ be a pointed Hopf algebra with $G(H)=: \Gamma$ abelian and let $V \in \ydG$ be its infinitesimal
	braiding. The following are equivalent:
	\begin{enumerate}
		\item\label{item:cofrob1} $H$ is co-Frobenius.
		\item\label{item:cofrob2} $\gr H \simeq \NA(V)$, where  $V$ is of diagonal type and $\dim \NA(V) < \infty$.
	\end{enumerate}
\end{theorem}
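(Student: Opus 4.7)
The plan is to combine Theorem~\ref{theorem:blocks} with the structural characterization of pointed co-Frobenius Hopf algebras from \cite{ACE}. That characterization states that, for $H$ pointed with $G(H)=\Gamma$ abelian, $H$ is co-Frobenius if and only if $\Gamma$ is finite and the graded Hopf algebra $\gr H$ is isomorphic to $\NA(V)\#\ku\Gamma$ with $\dim \NA(V)<\infty$; moreover the diagram of $\gr H$ is automatically the Nichols algebra of the infinitesimal braiding $V$. Consequently only the diagonal-type restriction on $V$ is new, and it is exactly where Theorem~\ref{theorem:blocks} intervenes.

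Given this input, \ref{item:cofrob2} $\Rightarrow$ \ref{item:cofrob1} is immediate: the assumed form of $\gr H$ makes it finite-dimensional (hence co-Frobenius), and by the cited results co-Frobenius-ness lifts from $\gr H$ to $H$.

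For \ref{item:cofrob1} $\Rightarrow$ \ref{item:cofrob2}, we extract from \cite{ACE} the isomorphism $\gr H \simeq \NA(V)\#\ku\Gamma$ with $\Gamma$ finite and $\dim \NA(V)<\infty$, and we need only rule out that $V$ is not of diagonal type. Since $\Gamma$ is abelian, $V$ decomposes as $V=\bigoplus_{g\in\Gamma}V_g$ with each $V_g$ a $\ku\Gamma$-submodule; if $V$ is not of diagonal type then some $V_g$ fails to be diagonalizable under the action of its own degree $g$, and one extracts a two-dimensional Yetter-Drinfeld subobject $W\subseteq V$ whose braiding is that of a block $\cV(\epsilon,2)$ for some $\epsilon\in\kut$. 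Functoriality of $\NA$ provides an embedding $\NA(W)\hookrightarrow\NA(V)$. By Theorem~\ref{theorem:blocks}, $\NA(\cV(\epsilon,2))$ has either $\GK=2$ (if $\epsilon=\pm1$) or $\GK=\infty$ (otherwise); in every case it is infinite-dimensional, contradicting $\dim\NA(V)<\infty$. Therefore $V$ is of diagonal type.

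The main subtlety is producing the block subobject $W\subseteq V$ from the non-diagonal hypothesis; this is a short Jordan-form argument inside the $\ku\Gamma$-module $V_g$, using that the YD braiding restricted to $V_g\otimes V_g$ is governed by the action of $g$ alone, so that a nontrivial Jordan block for $g$ with eigenvalue $\epsilon$ translates directly into the braided structure of $\cV(\epsilon,2)$. Once $W$ is in hand, the desired contradiction is immediate from Theorem~\ref{theorem:blocks}.
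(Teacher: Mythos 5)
Your reduction to Theorem~\ref{theorem:blocks} misses a case, and this is a genuine gap. You claim that if $V$ is not of diagonal type, then ``some $V_g$ fails to be diagonalizable under the action of its own degree $g$,'' so that a two-dimensional Yetter--Drinfeld subobject $W$ with the braiding of a block $\cV(\epsilon,2)$ sits inside $V$. This is false. Each $V_g$ can be diagonalizable under $g$ while $V$ still fails to be of diagonal type, because the obstruction may come from the action of some \emph{other} group element $h$ in the support of $V$ on $V_g$: there need be no simultaneous eigenbasis. In that situation the natural braided subspace is not a block at all but \emph{a pale block and a point}, as in \eqref{eq:braiding-paleblock-point}; its braiding restricted to $V_g$ is already diagonal (a multiple of the flip), so $\NA$ of that piece alone contradicts nothing. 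This is precisely why the paper invokes Theorem~\ref{th:paleblock-point-resumen} (which classifies pale-block-plus-point braidings of finite $\GK$, all of which have $\GK\geq 1$, hence are infinite-dimensional) alongside Theorem~\ref{theorem:blocks}. Without it your contradiction does not go through.

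Two smaller issues. First, your stated co-Frobenius characterization is incorrect: $\Gamma$ finite is \emph{not} part of it ($\ku\Gamma$ for any infinite group is cosemisimple, hence co-Frobenius). What \cite{ACE,AD} give, and what the paper uses, is: co-Frobenius $\iff$ the coradical filtration is finite $\iff$ the diagram $R$ of $H$ is finite-dimensional. Your (2)$\Rightarrow$(1) direction should argue via finiteness of $R$, not of $\gr H$. Second, you assert that the diagram is ``automatically'' $\NA(V)$. It is not: a priori one only has $\NA(V)\hookrightarrow R$, and $R$ could strictly contain the Nichols algebra. Concluding $R=\NA(V)$ requires the generation-in-degree-one result of \cite[Theorem 2]{Ang-crelle}, which applies only after one has established that $V$ is of diagonal type with $\dim\NA(V)<\infty$ --- so this must come at the end of the argument, not be built into the structural input.
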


\pf
A Hopf algebra is co-Frobenius if and only if its coradical filtration is
finite \cite{ACE, AD}. Since $H$ is pointed,
this is also equivalent to $\dim R < \infty$, where $R$ is the diagram of $H$
\cite{AD}. Thus \eqref{item:cofrob2} implies \eqref{item:cofrob1}.
Conversely, if $H$ is co-Frobenius then $\dim \NA(V) < \infty$ as $\NA(V)\hookrightarrow R$, hence $V$ is of diagonal type by
Theorems \ref{theorem:blocks} and \ref{th:paleblock-point-resumen}; cf. \S \ref{subsection:ydz}. Then $\NA(V) =  R$ by \cite[Theorem 2]{Ang-crelle}.
\epf

\section{Preliminaries}\label{section:Preliminaries}

\subsection{Conventions}\label{subsection:conventions}
The $\qmb$-numbers are the polynomials
\begin{align*}
(n)_\qmb &=\sum_{j=0}^{n-1}\qmb^{j}, & (n)_\qmb^!&=\prod_{j=1}^{n} (j)_\qmb, &
\binom{n}{i}_\qmb & =\frac{(n)_\qmb^!}{(n-i)_\qmb^!(i)_\qmb^!} \in \Z[\qmb],
\end{align*}
$n\in \N$, $0 \leq i \leq n$. If  $q\in\ku$, then $(n)_q$, $(n)_q^!$, $\binom{n}{i}_q$ 
denote the  evaluations of $(n)_\qmb$, $(n)_\qmb^!$, $\binom{n}{i}_\qmb$ at $\qmb = q$.

Let $\G_N$ be the group of $N$-th roots of unity, and $\G_N'$ the subset of primitive roots of order $N$;
$\G_{\infty} = \bigcup_{N\in \N} \G_N$.
All the vector spaces, algebras and tensor products  are over $\ku$.

If $V, W \in \ydh$,  then $c_{V, W}: V\otimes W \to W \otimes V$ denotes the corresponding braiding.
If $R$ is a Hopf algebra  in $\ydh$, then $R\# H$ is the bosonization of $R$ by $H$.
Let $\ad$ be the adjoint action of $H$ and $\ad_c$ the braided adjoint  action of $R$.
Then $\ad_c x\otimes \id = \ad (x\# 1)$ for $x\in R$. If $x\in R$ is primitive, then
$\ad_c x (y) = xy - \text{multiplication} \circ c (x\otimes y)$ for all $y\in R$.

\smallbreak Let $\Gamma$ be an abelian group. We denote by $\widehat \Gamma$ the group of  characters  of $\Gamma$.
The objects in $\ydG$ are the same as $\Gamma$-graded $\Gamma$-modules,
the $\Gamma$-grading is denoted $V = \oplus_{g\in \Gamma} V_g$.
If $g\in \Gamma$ and $\chi \in \widehat\Gamma$, then  the one-dimensional vector space $\ku_g^{\chi}$,
with action and coaction given by $g$ and $\chi$, is in $\ydh$.
Let  $W \in \ydG$ and $(w_i)_{i\in I}$ a basis of $W$ consisting of homogeneous elements of degree $g_i$, $i\in I$,
respectively. Then there are skew derivations $\partial_i$, $i\in I$, of $T(W)$ such that
for all $x,y\in T(W)$, $i, j \in I$
\begin{align}\label{eq:skewderivations}
\partial _i(w_j) & =\delta _{ij},&
\partial _i(xy) &= \partial _i(x)(g_i\cdot y)+x\partial _i(y).
\end{align}

Nichols algebras are graded Hopf algebras in $\ydh$, or also braided graded Hopf algebras, coradically graded and generated
in degree one. See \cite{AS Pointed HA} for alternative characterizations.

Given a braided vector space   with a basis $(x_i)$, we denote in $T(V)$, or $\NA(V)$, or any intermediate Hopf algebra,
	\begin{align}\label{eq:xijk}
		x_{ij} &= (\ad_c x_i)\, x_j,& x_{i_1i_2 \dots i_M} &= (\ad_c x_{i_1})\, x_{i_2 \dots i_M}.
	\end{align}

\subsection{Nichols algebras of diagonal type}\label{subsection:diagonal-type}
Let $V$ be a braided vector space of diagonal type with $\theta :=\dim V<\infty$.
Let $(x_i)_{i\in \I_\theta }$ be a basis of $V$ and $\bq = (q_{ij})_{i,j\in \I_\theta }\in \ku^{\theta \times \theta }$
such that $q_{ij}\ne 0$ and $c(x_i\otimes x_j)=q_{ij}x_j\otimes x_i$ for all $i,j\in \I = \I_\theta $.
Let $(\alpha_i)_{i\in \I}$ be the canonical basis of $\Z^\theta$ and $\chi$ the bilinear form  on $\Z^\theta$ such that
$\chi (\alpha _i,\alpha _j)=q_{ij}$ for all $i,j\in \I $; set $q_{\alpha \beta }=\chi (\alpha ,\beta)$
for $\alpha, \beta \in \Z^\theta $. We refer to \cite{H-inv} for proofs or references of the following facts:

\subsubsection{}\label{subsubsection:diagonal-type-ztgraded} $\NA (V)$ is $\Z^\theta $-graded with $\deg x_i=\alpha _i$ for all $i\in \I$.

\subsubsection{}\label{subsubsection:diagonal-type-pbw}  There is a totally ordered subset $L \subset \NA(V)$ consisting of $\Z^\theta$-homoge\-neous elements such that
\begin{align}
	\{ l_1^{m_1}\cdots l_k^{m_k} \,|\,k\in \N _0,l_1>\cdots >l_k\in L,0 < m_i< N_{l_i}\,\text{for all $i \in \I_k$}\}
\end{align}
is a linear basis of $\NA(V)$ (a so called restricted  PBW basis); here
$$ N_l= \min\{n\in \N_: (n)_{q_{\deg l,\deg l}}=0\} \in \N \cup \infty. $$

\subsubsection{}\label{subsubsection:diagonal-type-rootsystem} Let $\roots (V) = \{\deg l, l\in L\} \subset \zt$,  or just $\roots$ if $V$ is clear
from the context, be the set of positive roots of $\NA (V)$. If $L$ is finite, then $\roots$ and the multiplicities
of appearance of each root are uniquely determined.

\subsubsection{}\label{subsubsection:diagonal-type-reflections}
Let $i\in\I$ such that for all $j\neq i$, there exists $n\in\N_0$ such that
$(n+1)_{q_{ii}}(1-q_{ii}^n q_{ij}q_{ji})=0$.
Let $c_{ii} =2$ and
\begin{align}\label{eq:defcij}
	c_{ij}&:=-\min\{n\in\N_0:(n+1)_{q_{ii}}(1-q_{ii}^nq_{ij}q_{ji})=0\},& & &j\neq i.
\end{align}
Let $s_i\in GL(\zt)$ be given by $s_i (\alpha_j) = \alpha_j - c_{ij}\alpha_i$, $j\in \I$.
The reflection at the vertex $i$ of $\bq$ is the matrix $\cR^i(\bq) = (t_{jk})_{j,k\in\I}$, where
\begin{align}\label{eq:def-rho-ij}
	t_{jk}&:= \chi(s_i(\alpha_j), s_i(\alpha_k)) =  q_{jk}q_{ik}^{-c_{ij}}q_{ji}^{-c_{ik}}q_{ii}^{c_{ij} c_{ik}}, & j,k&\in\I.
\end{align}

\begin{theorem} \cite{H-inv, AA}
	If $\cR^i(V)$ is the braided vector space corresponding to $\cR^i(\bq)$, then $\GK \toba(\cR^i(V)) = \GK \toba(V)$. \qed
\end{theorem}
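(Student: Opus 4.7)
The plan is to deduce the GK-dimension equality from a Majid--Radford bosonization of $\toba(V)$ along $x_i$, combined with a computation of the resulting subquotient as a Nichols algebra of diagonal type with matrix $\cR^i(\bq)$ restricted to $\I \setminus \{i\}$; then comparing with the analogous construction applied to $\toba(\cR^i(V))$ at vertex $i$.

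First I would realize $V$ in $\ydG$ for the free abelian group $\Gamma = \langle K_j : j \in \I \rangle$ with $x_j$ homogeneous of degree $K_j$ and $K_j \cdot x_l = q_{jl} x_l$. Let $A := \toba(\ku x_i) \hookrightarrow \toba(V)$, which is either $\ku[x_i]$ or $\ku[x_i]/(x_i^{N_i})$ according as $q_{ii}$ has finite order $N_i$ or not. The braided Majid--Radford splitting gives $\toba(V) \cong K \# A$, where $K$ is the subalgebra of right $A$-coinvariants; by the structure theorem for Nichols subalgebras, $K \cong \toba(K^1)$ with $K^1 = \ad_c(A)\bigl(\bigoplus_{j \neq i}\ku x_j\bigr)$. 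A computation with the skew-derivations \eqref{eq:skewderivations} shows $K^1$ has basis $\{y_j := (\ad_c x_i)^{-c_{ij}}(x_j) : j \neq i\}$; indeed, the defining minimality of $c_{ij}$ in \eqref{eq:defcij} is precisely the condition $(\ad_c x_i)^{-c_{ij}+1}(x_j) = 0 \neq (\ad_c x_i)^{-c_{ij}}(x_j)$.

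Next I would compute the braiding of $U := K^1$. The element $y_j$ has $\Z^\theta$-degree $s_i(\alpha_j) = \alpha_j - c_{ij}\alpha_i$, so the scalar by which $y_k$ braids past $y_j$ is $\chi(s_i(\alpha_j), s_i(\alpha_k)) = t_{jk}$, the $(j,k)$-entry of $\cR^i(\bq)$ by \eqref{eq:def-rho-ij}. Hence $U$ is of diagonal type with braiding matrix obtained by restricting $\cR^i(\bq)$ to $\I \setminus \{i\}$. Applying the same construction to $\toba(\cR^i(V))$ at vertex $i$ yields $\toba(\cR^i(V)) \cong \toba(U') \# A'$ with $U'$ of diagonal type and braiding matrix $\cR^i(\cR^i(\bq))\vert_{\I \setminus \{i\}} = \bq\vert_{\I \setminus \{i\}}$; moreover $A' \cong A$ since $t_{ii} = q_{ii}$. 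Thus $U \cong U'$ and $A \cong A'$, and additivity of GK-dimension over the bosonization, via the PBW basis of \S \ref{subsubsection:diagonal-type-pbw}, gives
\begin{align*}
\GK \toba(V) = \GK \toba(U) + \GK A = \GK \toba(U') + \GK A' = \GK \toba(\cR^i(V)).
\end{align*}

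The main obstacle will be the identification $K = \toba(K^1)$ with the prescribed generators $y_j$. One must verify the nonvanishing of $y_j$ in $\toba(V)$ (by evaluating $\partial_i^{-c_{ij}}(y_j)$, nonzero thanks to the minimality condition on $c_{ij}$), and that $\ad_c(A)$ applied to $\bigoplus_{j \neq i} \ku x_j$ exhausts the degree-one part of $K$ and nothing more. Both are classical for diagonal type and follow from the skew-derivation machinery of \eqref{eq:skewderivations}, but require careful bookkeeping of $\Z^\theta$-degrees to ensure $\dim K^1 = \theta - 1$ and that $K^1$ generates $K$ as an algebra.
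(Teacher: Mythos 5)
The statement under review is cited in the paper from \cite{H-inv,AA} with no proof given, so there is no in-paper argument to compare against; what follows assesses your proposal on its own terms.

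The central step is incorrect. You assert that $K^1 = \ad_c(A)\bigl(\bigoplus_{j\neq i}\ku x_j\bigr)$ has basis $\{y_j = (\ad_c x_i)^{-c_{ij}}(x_j) : j\neq i\}$ and is of diagonal type with braiding matrix $\cR^i(\bq)\vert_{\I\setminus\{i\}}$. Neither is right. First, $K^1$ is spanned by the entire chains $(\ad_c x_i)^k(x_j)$ for $0\le k\le -c_{ij}$, which are nonzero and linearly independent (distinct $\Z^\theta$-degrees), so $\dim K^1 = \sum_{j\neq i}(1-c_{ij})$, strictly larger than $\theta-1$ whenever $i$ is connected to another vertex; cf.\ Remark~\ref{lemma:K-basis} for the analogous count in the block-plus-point case. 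Second, and more seriously, the decomposition $\toba(V)\cong\toba(K^1)\#A$ of \cite[Prop.~8.6]{HS} takes $K^1$ as a Yetter--Drinfeld module over $A\#\ku\Gamma$, not over $\ku\Gamma$, and that braiding is \emph{not} diagonal here. Concretely, writing $z_0 = x_j$ and $z_1 = \ad_c x_i(x_j)$ for $j$ with $c_{ij}\le -1$, the coaction is $\delta(z_1) = g_ig_j\otimes z_1 + (1-q_{ij}q_{ji})\,x_ig_j\otimes z_0$ and one computes
\begin{align*}
c(z_1\otimes z_0) = q_{ij}q_{jj}\,z_0\otimes z_1 + (1-q_{ij}q_{ji})q_{jj}\,z_1\otimes z_0,
\end{align*}
with a nonvanishing coefficient on $z_1\otimes z_0$. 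Your formula $c(y_j\otimes y_k)=\chi(s_i(\alpha_j),s_i(\alpha_k))\,y_k\otimes y_j$ computes the $\ku\Gamma$-Yetter--Drinfeld braiding, not the $A\#\ku\Gamma$ one that enters the bosonization. The mechanism that makes $K^1$ diagonal in the block-plus-point sections of the paper (Lemma~\ref{lemma:braiding-K-weak-block-points} and the proof of Lemma in \S\ref{subsubsection:weak-classification}) is precisely $\ad_c(x_1)(K^1)=0$, equation \eqref{eq:adx1-zn}, which kills all lower-order terms of the coaction; that identity holds because $x_1$ is the socle vector of a Jordan block, and has no analogue when $A=\toba(\ku x_i)$ is a point, where $\ad_c(x_i)$ raises the chain degree.

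Even granting the wrong dimension and the wrong braiding, the concluding claim $U\cong U'$ does not hold: you compute $U$ with braiding $\cR^i(\bq)\vert_{\I\setminus\{i\}}$ and $U'$ with braiding $\cR^i(\cR^i(\bq))\vert_{\I\setminus\{i\}} = \bq\vert_{\I\setminus\{i\}}$, and these are genuinely different diagonal braidings ($t_{jj} = q_{jj}(q_{ij}q_{ji})^{-c_{ij}}q_{ii}^{c_{ij}^2}\neq q_{jj}$ in general), so the chain of equalities is broken at its last link. The actual proof in the cited references does not compare the two bosonizations naively: it establishes that the positive roots and their diagonal labels $q_{\beta\beta}$ for $\toba(\cR^i(V))$ are the $s_i$-images of those for $\toba(V)$, hence the restricted PBW bases (\S\ref{subsubsection:diagonal-type-pbw}, \S\ref{subsubsection:diagonal-type-rootsystem}) have the same number of generators of infinite height, giving equality of $\GK$. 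The reflection theorem of \cite{AHS, HS} can be recast in bosonization language, but it requires putting the \emph{dual} of $\ku x_i$ in the $i$-th slot of the reflected module and a nontrivial comparison of coinvariant algebras, not a naive matching of $K^1$ with a $(\theta-1)$-dimensional diagonal braided space.
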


\subsection{On the Gelfand-Kirillov dimension}

\subsubsection{Basic facts}\label{subsection:basic-defs}
Our main reference for this topic is \cite{KL}. Let $A$ be a finitely
generated $\ku$-algebra.
If $V$ is a finite-dimensional generating subspace of $A$ and
$A_n = \sum_{0\le j \le n} V^j$, then
\begin{align}\label{eq:def-GKdim}
 \GK A :=  \overline\lim_{n \to \infty} \log_n \dim A_n.
\end{align}
Then  $\GK A$ does not depend on the choice of $V$ \cite[1.1]{KL}. In general,
if $A$ is not finitely generated,
then
\begin{align}\label{eq:def-GKdim-gral}
 \GK A :=  \sup\{\GK B\vert B\subseteq A, \ B \text{ finitely generated}\}.
\end{align}

Suppose that $\GK A<\infty $.
We say that a finite-dimensional subspace $V\subseteq A$ is
\textit{GK-deterministic} if
$\GK A=\lim_{n \to \infty} \log_n \dim \sum_{0\le j\le n}V^j$.
Clearly, if $V$ is a GK-deterministic subspace of $A$, then any finite-dimensional
subspace of $A$ containing $V$ is GK-deterministic.

Let $A$ and $B$ be two algebras.
Then
\begin{align*}
\GK (A\otimes B) \leq \GK A + \GK B,
\end{align*}
but the equality does not hold in general. For instance,
it does hold when $A$ or $B$ has a GK-deterministic subspace, see \cite[Proposition 3.11]{KL}.
We need the smash product version of this result.

\begin{lemma}\label{lemma:GKdim-smashproduct}  Let $K$ be a Hopf algebra, $R$ a
  Hopf algebra in $\ydk$, $A$ a $K$-module algebra and
$B$ an $R$-module algebra in $\ydk$. Assume that the actions of $K$ on $A$, of
$K$ on $B$, of $K$ on $R$, and of $R$ on $B$ are locally finite.

\begin{enumerate}[leftmargin=*,label=\rm{(\alph*)}]
  \item\label{item:GKdim-smashproduct}
 $\GK A\#K \le \GK A+\GK K$. If either $K$ or $A$ has a GK-deterministic subspace, then $\GK A\# K = \GK A + \GK K$.

\smallbreak
  \item\label{item:GKdim-smashproduct-braided}
  $\GK B\# R \le \GK B + \GK R$.
 If either $K$ or $B$ has a GK-deterministic subspace, then
$\GK B\# R = \GK B + \GK R$.
  \end{enumerate}
\end{lemma}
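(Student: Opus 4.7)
The approach is to mimic the classical tensor-product bound $\GK(A\otimes B)\le \GK A+\GK B$ of \cite[Prop.~3.11]{KL}, adapted to the smash product by handling the commutation $ka=\sum(k_{(1)}\cdot a)k_{(2)}$ in $A\# K$. The key structural fact I will exploit is that $A\# K=A\otimes K$ as vector spaces, so dimensions of ``separated'' products factor cleanly; the whole content is to show that an arbitrary length-$n$ product of generators lies in a ``separated'' region of polynomial size in $n$.

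For the upper bound in (a), I would first pick a finite-dimensional generating subspace $V_A\subseteq A$ and enlarge it to be $K$-stable; this is possible and still finite-dimensional by local finiteness of the action, since $K\cdot V_A$ is a finite sum of finite-dimensional $K$-submodules. Next I would pick a finite-dimensional generating subspace $V_K\subseteq K$, contain $1$, and enlarge it to be a subcoalgebra via the Fundamental Theorem of Coalgebras. This guarantees $\Delta(V_K^j)\subseteq V_K^j\otimes V_K^j$ for every $j$, and combined with $K$-stability of $V_A$ the smash-product relation yields $V_K^j\cdot V_A\subseteq V_A\cdot V_K^j$. Consequently the spaces $F_n:=\sum_{i+j\le n}V_A^iV_K^j$ form an algebra filtration with $F_nF_m\subseteq F_{n+m}$, and a quick induction on $n$ shows $V^n\subseteq F_n$ for $V:=V_A+V_K$. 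Since the vector-space isomorphism $A\# K\cong A\otimes K$ identifies $V_A^nV_K^n$ with $V_A^n\otimes V_K^n$, one has $\dim F_n\le \dim V_A^n\cdot\dim V_K^n$, bounded by a polynomial of degree $\GK A+\GK K+\varepsilon$ for every $\varepsilon>0$; taking $\limsup_n\log_n$ yields the desired inequality.

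For the equality under the deterministic hypothesis, I would assume without loss of generality that $K$ has a GK-deterministic subspace, and arrange $V_K$ to contain it while remaining a finite-dimensional subcoalgebra. Then $V^{2n}\supseteq V_A^n V_K^n$, and the same vector-space identification gives $\dim V^{2n}\ge \dim V_A^n\cdot\dim V_K^n$. Taking $\log_{2n}$ and using that one factor has a genuine limit, via $\limsup(a_n+b_n)\ge \limsup a_n+\lim b_n$, produces the matching lower bound $\GK A+\GK K$.

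Part (b) follows the identical template inside $\ydk$. The braided smash product $B\# R$ is $B\otimes R$ as a vector space, with commutation now governed by the coproduct of $R$ in $\ydk$ and the braiding of $\ydk$. I would enlarge a generating $V_R\subseteq R$ to a finite-dimensional Yetter-Drinfeld subcoalgebra: a subcoalgebra by FTC, a $K$-submodule by the assumed local finiteness of the $K$-action on $R$, and a $K$-subcomodule by the automatic local finiteness of any $K$-coaction. Similarly I would enlarge $V_B$ to be both $K$-stable and $R$-stable, using the two relevant local-finiteness hypotheses. The inductive step $V^n\subseteq F_n$ and the dimension comparison then proceed exactly as before. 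The main obstacle throughout is that without the subcoalgebra enlargement, $\Delta(V_K^j)$ could land in arbitrarily high-degree filtration pieces, so that $k_{(2)}$ would escape the bound needed for $V_K^j V_A\subseteq V_A V_K^j$; resolving this via the Fundamental Theorem of Coalgebras is the heart of the argument, and the remainder is a faithful smash-product adaptation of the classical tensor-product proof.
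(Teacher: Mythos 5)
Your argument is correct and follows essentially the same route as the paper's: enlarge the generating subspace of $A$ (resp.\ $B$) to be $K$-stable using local finiteness, enlarge the generating subspace of $K$ (resp.\ $R$) to a (Yetter--Drinfeld) subcoalgebra via the fundamental theorem of coalgebras, and then bound $\sum_i V^i$ inside $\sum_{i,j} V_A^i \# V_K^j$ to get both inequalities. The paper is terser and does not spell out the filtration $F_n$ or the $\limsup$--$\lim$ splitting for the GK-deterministic case, but the underlying mechanism is identical.
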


\pf
(a) Let $X\subseteq A\#K$ be a finite-dimensional subspace
and let $V\subseteq A$, $W\subseteq K$ be finite-dimensional subspaces such
that $X\subseteq V\# W$. Since the action of $K$ on $A$ is locally
finite, we may assume that $V$ is a $K$-submodule of $A$. We may further
assume that $W$ is a subcoalgebra of $K$. Then
\begin{align*}
\sum_{i=0}^nX^i\subseteq \sum_{i,j=0}^nV^i\#W^j
\end{align*}
for any $n\in \N$ by the definition of the smash product $A\#K$.
Therefore $\GK A\#K\le \GK A+\GK K$. Assume now that $A$ has a
GK-deterministic subspace. One can argue similarly if $K$ has a
GK-deterministic subspace. Then we may choose $V$ in such a way that it is
additionally GK-deterministic. In that case the above formula also shows that
\begin{align*}
\GK A\#K=\GK A+\GK K.
\end{align*}

(b) Analogous to (a), but there is a slight point since
the comultiplication $R\to R\otimes R$ is not an
algebra map to the tensor product algebra. As the appropriate
algebra structure of $R\otimes R$ is defined via the Yetter-Drinfeld
structure of $R$, cf. \cite[Remark 1.8 (i)]{AHS}, we may apply the argument in (a) only when
 the actions of $K$ on $R$ and $B$ are locally finite.
\epf

\begin{remark}\label{rem:examples-GKdterministic} The algebra $A$
	has a  GK-deterministic subspace when:
	\begin{itemize} [leftmargin=*]\renewcommand{\labelitemi}{$\circ$}
		\item $\GK A \leq 2$, cf. the proof of \cite[Proposition 3.12]{KL}.
		
		\item $A = U(\g)$, where $\g$ is a finite-dimensional Lie algebra.
		
		\item $A = \ku G$, where $G$ is a finitely generated group.
	\end{itemize}
\end{remark}

\begin{example}\label{example:locally-finite} Let $\Gamma = \Z^2$, $g=(1,0), h=(0,1)$.
	Let $W\in \ydG$
	with basis $(w_i)_{i\in \Z }$ such that $W=W_g$,
	$g\cdot w_i = -w_i$ and $h\cdot w_i=w_{i+1}$ for all $i\in \Z$.
	Then the action of $\Gamma $ on $W$ is not locally finite, but
	$\NA (W)=\Lambda W$ and $\GK \NA (W)=0$. Furthermore
	\begin{align*}
	\GK (\NA (W) \# \ku \Gamma) \overset{\ast} = \infty > 2 = \GK \NA (W) + \GK \ku \Gamma.
	\end{align*}
$\ast$: Let $V = \langle g^{\pm 1}, h^{\pm 1}, w_0\rangle$;  then $\Lambda(\langle w_0, w_1 \dots w_{n-1}\rangle) \subset V^{n^2}$, hence
$2^{n} \leq \dim V^{n^2}$ and the claim follows.	
In particular,  the assumption on local finiteness in Lemma \ref{lemma:GKdim-smashproduct} \ref{item:GKdim-smashproduct} can not be omitted.
\end{example}

\subsubsection{A  criterium for infinite $\GK$}\label{subsection:infinite-criterium}
The next statement is inspired by \cite[Lemma 19]{R quantum groups}; in
many cases treated here, the infiniteness of the $\GK$ relies on it.

\begin{lemma}\label{lemma:rosso-lemma19-gral}  Let $\NA = \oplus_{n \ge 0} \NA^n$
be a finitely generated graded algebra with $\NA ^0 = \ku$
and $(y_k)_{k\ge 0}$  a family of homogeneous elements such that
\begin{align} \label{equation:S-LI}
\{y_{i_1}\dots y_{i_l}: i_j \in\N, i_1 <  \dots < i_l  \}
\end{align}
is linearly independent and there exist $m, p\in \N$ such that
\begin{align} \label{eq:bound-degree}
\deg y_i \leq mi + p, \, \forall i\in \N.
\end{align}
Then $\GK\cB = \infty$.
\end{lemma}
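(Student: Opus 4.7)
The plan is to exploit the exponential-vs-quadratic gap between the number of ordered products of the $y_k$ available in $\cB$ and their degrees, which forces $\dim V^{\le n}$ (for any finite-dimensional homogeneous generating subspace $V$) to grow faster than any polynomial in $n$.

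First I would choose a finite-dimensional $\Z$-homogeneous generating subspace $V\subseteq \bigoplus_{i=1}^{D} \cB^{i}$ of $\cB$, which exists since $\cB$ is graded with $\cB^{0}=\ku$ and finitely generated. Because every generator lies in strictly positive degree, a product of $j$ generators has total degree at least $j$; therefore every homogeneous element of degree $\le n$ lies in $V^{\le n}:=\sum_{j=0}^{n} V^{j}$, and in particular
\[
\dim V^{\le n}\;\ge\;\dim \bigoplus_{i=0}^{n} \cB^{i}.
\]
The task is reduced to lower-bounding the latter dimension.

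Next I would produce many linearly independent elements of bounded degree. For each $k\in\N$, consider the $2^{k}$ ordered products
\[
y_{i_{1}}y_{i_{2}}\cdots y_{i_{l}},\qquad l\in\{0,1,\ldots ,k\},\ 1\le i_{1}<i_{2}<\cdots <i_{l}\le k,
\]
which are linearly independent by \eqref{equation:S-LI}. By \eqref{eq:bound-degree}, each such product is homogeneous of degree at most
\[
\sum_{i=1}^{k}(mi+p)\;=\;\frac{mk(k+1)}{2}+pk\;\le\;Mk^{2}
\]
for some constant $M$ depending only on $m$ and $p$. Consequently $\dim V^{\le Mk^{2}}\ge 2^{k}$.

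Finally, setting $n=Mk^{2}$, so that $k=\lfloor\sqrt{n/M}\rfloor$, one obtains $\dim V^{\le n}\ge 2^{\lfloor\sqrt{n/M}\rfloor}$, which grows faster than any polynomial in $n$. Hence $\GK\cB=\overline{\lim }_{n\to\infty }\log_{n}\dim V^{\le n}=\infty$. I do not anticipate a real obstacle: the only subtlety is to reconcile the generating-subspace filtration used in the definition of $\GK$ with the intrinsic grading of $\cB$ (in which the hypothesis $\deg y_{i}\le mi+p$ is stated), and this is handled by choosing $V$ homogeneous of strictly positive degree so that $\bigoplus_{i\le n}\cB^{i}\subseteq V^{\le n}$.
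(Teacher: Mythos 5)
Your proof is correct and takes essentially the same route as the paper's: exhibit $2^k$ linearly independent ordered products lying in degrees $O(k^2)$, compare this exponential count against the polynomial degree bound, and translate the cumulative-dimension estimate into a statement about a homogeneous generating subspace $V$ via the inclusion $\bigoplus_{i\le n}\cB^i\subseteq V^{\le n}$. The only cosmetic difference is that the paper bounds the degree by $(m+p)i_l^2$ in terms of the largest index $i_l$, while you bound it by $Mk^2$ in terms of the cutoff $k$; both give the same $2^{c\sqrt{n}}$ lower bound.
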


\pf Let  $1\le i_1<i_2<\cdots <i_l$; by \eqref{equation:S-LI}, we have
$$\deg y_{i_1}\cdots y_{i_l} \le  m(i_1+\cdots +i_l)+ lp \le m\frac{i_l(i_l+1)}{2} + lp \le (m+p) i_l^2.$$
Thus for each $M\in\N$, the monomials $y_{i_1}y_{i_2}\cdots y_{i_l}$ with
$$1\le i_1<\cdots <i_l\leq \lfloor\sqrt{M/(m+p)}\rfloor$$
have degree $\leq M$. Let $G$ be a  finite set of homogeneous generators of $\cB$
and let $\cB_M$ be the subspace generated by products of $j$ elements in $G$, $0\le j \le M$.
Then $\cB_M \supseteq \bigoplus _{k=0}^M\cB^k$
since $\cB^0=\ku 1$.
Therefore
\begin{align*}
\dim \cB_M \ge \sum _{k=0}^M\dim \cB^k\ge
2^{\lfloor\sqrt{M/(m+p)}\rfloor}
\end{align*}
by \eqref{equation:S-LI}. Hence
$\GK\cB = \infty$. \epf

Let $(W,c)$ be a braided vector space  and $x_1, x_2 \in W$ linearly independent.
Let $\cB$ be a pre-Nichols algebra of $W$.
Set $y_i = (\ad_cx_1)^i x_2 \in \cB$; clearly $\deg y_i = i +1$, so Lemma \ref{lemma:rosso-lemma19-gral} implies

\begin{lemma}\label{lemma:rosso-lemma19} If the  set  \eqref{equation:S-LI}
is linearly independent, then  $\GK \NA  = \infty$. \qed
\end{lemma}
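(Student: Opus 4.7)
The plan is to deduce this lemma as an immediate specialization of Lemma \ref{lemma:rosso-lemma19-gral}. First I would check that the standing hypotheses of that lemma apply. Any pre-Nichols algebra $\cB$ of $W$ inherits the $\N_0$-grading from the tensor algebra $T(W)$, so $\cB = \bigoplus_{n \ge 0}\cB^n$ with $\cB^0=\ku$ and $\cB^1 = W$; if $\dim W = \infty$ then $\GK\cB = \infty$ trivially, so we may assume $\dim W < \infty$, in which case $\cB$ is finitely generated by any basis of $W$. This matches the framework $\cB = \bigoplus_{n \ge 0}\cB^n$, $\cB^0 = \ku$, $\cB$ finitely generated, of the preceding lemma.

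Next I would verify the two hypotheses for the family $(y_i)_{i \ge 0}$. The linear independence of the products in \eqref{equation:S-LI} is exactly the assumption of the current statement. For the degree bound \eqref{eq:bound-degree}, since $x_1, x_2 \in \cB^1$ and $x_1$ is a primitive homogeneous element of degree $1$, each application of the braided adjoint action $\ad_c x_1$ raises the $\N_0$-degree by $1$. Consequently each $y_i = (\ad_c x_1)^i x_2$ is homogeneous of degree $i + 1$, and \eqref{eq:bound-degree} holds with $m = p = 1$.

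Having checked both hypotheses, I would conclude by invoking Lemma \ref{lemma:rosso-lemma19-gral}, which yields $\GK\cB = \infty$. There is no real obstacle here: the substantive combinatorial content, namely the lower bound
\[
\dim \cB_M \ \ge \ 2^{\lfloor\sqrt{M/(m+p)}\rfloor}
\]
obtained from multiplying increasing subsequences of the $y_i$'s, is already established in the preceding lemma, and the present statement is merely its specialization to the elementary family $y_i = (\ad_c x_1)^i x_2$, whose $\N_0$-homogeneity under the natural grading of any pre-Nichols algebra is automatic.
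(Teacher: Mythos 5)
Your proof is correct and follows exactly the paper's route: the paper states this lemma as an immediate corollary of Lemma \ref{lemma:rosso-lemma19-gral}, after observing that $\deg y_i = i+1$, which is precisely what you verify (with $m=p=1$). Your additional remarks checking finite generation and the grading hypotheses merely spell out what the paper leaves implicit.
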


Now assume that $\langle x_1, x_2 \rangle$ is a braided subspace of $W$ of diagonal type,
that is $c(x_i\otimes x_j)=q_{ij}x_j\otimes x_i$, where  $q_{ij} \in \ku^{\times}$ for all $i,j\in \I_2$. Set
\begin{align}\label{eq:muk}
\mu_k = \prod _{i=0}^{k-1}(1-q_{11}^iq_{12}q_{21}).
\end{align}

\begin{lemma}\label{lemmata:rosso} \cite[Lemma 14,\,Corollary 18]{R quantum groups}
\begin{enumerate}[leftmargin=*,label=\rm{(\alph*)}]
  \item\label{item:diagonal-relations}  If $k\in \N$, then
  $y_k=0$ iff $\mu_k x_1^k =0$,  iff $(k)_{q_{11}}^! \mu_k =0$.
  \item\label{lemma:rosso-cor18}  If $y_k\neq 0$ for all $k\in \N$, then the  set
\eqref{equation:S-LI} is linearly independent. \qed
\end{enumerate}
\end{lemma}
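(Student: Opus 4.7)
My plan is to exploit the two skew derivations $\partial_1,\partial_2$ from \eqref{eq:skewderivations}. For \ref{item:diagonal-relations}, I would show by induction on $k$ that in $\NA(V)$ one has
\begin{align*}
\partial_1(y_k)=0,\qquad \partial_2(y_k)=\mu_k\,x_1^k.
\end{align*}
The base $k=1$ is a direct calculation: $y_1=x_1x_2-q_{12}x_2x_1$ gives $\partial_1(y_1)=q_{12}x_2-q_{12}x_2=0$ and $\partial_2(y_1)=x_1-q_{12}q_{21}x_1=\mu_1x_1$. For the inductive step, since $y_k$ is $\Z^2$-homogeneous of degree $k\alpha_1+\alpha_2$, one has $y_{k+1}=\ad_c x_1(y_k)=x_1y_k-q_{11}^k q_{12}\,y_k x_1$; applying \eqref{eq:skewderivations} together with $g_2\cdot x_1=q_{21}x_1$ yields $\partial_2(y_{k+1})=x_1\partial_2(y_k)-q_{11}^kq_{12}q_{21}\,\partial_2(y_k)x_1=\mu_k(1-q_{11}^kq_{12}q_{21})x_1^{k+1}=\mu_{k+1}x_1^{k+1}$, and similarly $\partial_1(y_{k+1})=0$. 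Because $\NA(V)$ is the quotient of $T(V)$ by the largest graded ideal annihilated by all iterated skew-derivations, $y_k=0$ in $\NA(V)$ iff $\partial_2(y_k)=\mu_k x_1^k=0$. The second equivalence is then immediate, since $x_1^k=0$ in $\NA(V)$ iff the height of $x_1$ is $\le k$, iff $(k)_{q_{11}}^{!}=0$.

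For \ref{lemma:rosso-cor18}, the hypothesis together with \ref{item:diagonal-relations} gives $\mu_k\neq0$ and $(k)_{q_{11}}^{!}\neq0$ for every $k\geq 1$; in particular $x_1$ has infinite height, so $\ku[x_1]\hookrightarrow\NA(V)$. The plan is to argue by induction on $l$, the length of the increasing monomial. For $l=1$ the family $(y_k)_{k\geq 0}$ is linearly independent because their images $\partial_2(y_k)=\mu_k x_1^k$ are linearly independent in $\ku[x_1]$. For the inductive step, suppose $\sum_{\mathbf{i}} c_{\mathbf{i}}\,y_{i_1}\cdots y_{i_l}=0$, summed over strictly increasing $\mathbf{i}=(i_1<\cdots<i_l)$. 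Iteratively applying $\partial_2$ and using the braided Leibniz rule
\begin{align*}
\partial_2(y_{i_1}\cdots y_{i_l})=\sum_{r=1}^{l}\Big(\prod_{s>r}q_{21}^{i_s}q_{22}\Big)\mu_{i_r}\,y_{i_1}\cdots y_{i_{r-1}}x_1^{i_r}y_{i_{r+1}}\cdots y_{i_l}
\end{align*}
reduces the identity, via the $\Z^2$-grading, to a finite system of relations among shorter monomials $y_{j_1}\cdots y_{j_{l-1}}$ whose coefficients are prescribed nonzero $q$-scalars times the $c_{\mathbf{i}}$. The inductive hypothesis then forces every $c_{\mathbf{i}}$ to vanish.

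The main obstacle is the bookkeeping: after one $\partial_2$, the intermediate factor $x_1^{i_r}$ sits inside a product of $y$'s, and it must be pushed to one side using only the already established relations (in particular, the braided commutation between $x_1$ and each $y_j$, which can be iterated using $y_{j+1}=x_1y_j-q_{11}^jq_{12}y_j x_1$). Doing this carefully produces new $y$-monomials whose indices may no longer be strictly increasing, so one must set up the induction in the right order, for instance using the lex ordering on $(l,i_l,i_{l-1},\dots)$ so that the leading term in each inductive step is unambiguous. An alternative, cleaner route is to invoke Kharchenko's PBW theorem for Nichols algebras of diagonal type: under the running hypothesis, each Lyndon word $x_1^{k}x_2$ gives a PBW generator whose root vector is a nonzero scalar multiple of $y_k$, and distinct increasing products of PBW generators are automatically linearly independent.
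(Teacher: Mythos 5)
Note first that the paper does not prove Lemma \ref{lemmata:rosso}: it is cited from [R quantum groups, Lemma 14, Corollary 18] and no argument is given, so there is no internal proof to compare against; I am assessing your proposal on its own.

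Part \ref{item:diagonal-relations} is correct. Your computation gives $\partial_1(y_k)=0$ and $\partial_2(y_k)=\mu_k x_1^k$, every other $\partial_i$ kills $y_k$ for degree reasons, and the chain of equivalences follows from the characterisation of $\NA$ via skew-derivations together with $x_1^k=0\iff(k)_{q_{11}}^!=0$.

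In part \ref{lemma:rosso-cor18} the obstacle you flag is a real gap, and the lex-ordering device does not cure it. Commuting $x_1^{i_r}$ through the remaining $y_{i_s}$'s via $x_1y_j=y_{j+1}+q_{11}^jq_{12}\,y_jx_1$ produces monomials $y_{j_1}\cdots y_{j_{l-1}}x_1^m$ whose indices are neither distinct nor increasing, and the $y_m$'s do \emph{not} $q$-commute with each other ($[y_i,y_j]_c$ is a root vector of degree $(i+j)\alpha_1+2\alpha_2$, generically nonzero); so these monomials do not lie in the span of \eqref{equation:S-LI} times $\ku[x_1]$, and the inductive hypothesis does not apply to the rewritten relation. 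The repair is to avoid rewriting altogether: take a $\Z^\theta$-homogeneous relation $\sum_{\mathbf i}c_{\mathbf i}\,y_{i_1}\cdots y_{i_l}=0$, set $M:=\max\{i_l: c_{\mathbf i}\neq0\}$, and apply $\partial_1^M\partial_2$ instead of just $\partial_2$. Since $\partial_1(y_j)=0$, the derivation $\partial_1$ acts on any word in the letters $\{y_j\}\cup\{x_1\}$ only through the $x_1$'s (by the Leibniz rule \eqref{eq:skewderivations}); hence in your displayed formula for $\partial_2(y_{i_1}\cdots y_{i_l})$ the summand carrying the inner factor $x_1^{i_r}$ is annihilated by $\partial_1^M$ whenever $i_r<M$. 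Because the indices increase strictly, only the summands with $r=l$ and $i_l=M$ survive, yielding
\begin{align*}
0=\mu_M\,(M)_{q_{11}}^!\sum_{\mathbf i:\,i_l=M}c_{\mathbf i}\,y_{i_1}\cdots y_{i_{l-1}}.
\end{align*}
By hypothesis and \ref{item:diagonal-relations} the scalar $\mu_M(M)_{q_{11}}^!$ is nonzero, so the inductive hypothesis on $l$ forces $c_{\mathbf i}=0$ for every $\mathbf i$ with $i_l=M$; repeating with the next largest value of $i_l$ finishes. This is elementary, self-contained, and avoids the PBW theorem that you invoke as an alternative (which, to be clear, also works and is closer in spirit to [R quantum groups]).
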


\begin{lemma}\label{lemma:points-trivial-braiding}
 If  $\langle x_1, x_2 \rangle$ is a braided subspace of $W$ of diagonal type as above,
  $q_{11} = 1$ and $q_{12}q_{21} \neq 1$, then $\GK \NA(W) = \infty$.
\end{lemma}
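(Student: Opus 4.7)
The plan is to apply Lemma \ref{lemma:rosso-lemma19} to the braided subspace $\langle x_1,x_2\rangle$, which reduces the problem to showing that the elements $y_k = (\ad_c x_1)^k x_2$ are all nonzero in $\NA(W)$, and then invoking Lemma \ref{lemmata:rosso} \ref{lemma:rosso-cor18} to obtain the required linear independence.

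First, I would compute $\mu_k$ under the hypothesis $q_{11}=1$. Since every factor collapses, we get
\begin{align*}
\mu_k = \prod_{i=0}^{k-1}(1 - q_{11}^i q_{12}q_{21}) = (1 - q_{12}q_{21})^k,
\end{align*}
which is nonzero for every $k\in\N$ by the assumption $q_{12}q_{21}\neq 1$. Also $(k)_{q_{11}}^! = k!$ is nonzero since $\car \ku = 0$. Hence by Lemma \ref{lemmata:rosso} \ref{item:diagonal-relations}, $y_k \neq 0$ for all $k\in\N$.

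Then Lemma \ref{lemmata:rosso} \ref{lemma:rosso-cor18} gives the linear independence of the set \eqref{equation:S-LI}, and Lemma \ref{lemma:rosso-lemma19} applied to the pre-Nichols algebra $\NA(W)$ yields $\GK\NA(W) = \infty$. There is no real obstacle here: once one observes that $q_{11}=1$ makes $\mu_k$ into a pure power of $1-q_{12}q_{21}$, the statement is an immediate specialization of the general Rosso-type criterion already recorded in the preliminaries.
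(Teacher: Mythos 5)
Your proof is correct and follows the same route the paper takes: compute $\mu_k$ under $q_{11}=1$, observe it is a pure power of $1-q_{12}q_{21}$ and hence nonzero, then chain Lemma \ref{lemmata:rosso} with Lemma \ref{lemma:rosso-lemma19}. If anything, you are slightly more careful than the paper's one-line proof, which writes $(k)_{q_{11}}^!\mu_k = (1-q_{12}q_{21})^k$ (omitting the harmless $k!$ factor), whereas you separate the two factors and note each is nonzero.
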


\pf By assumption, $(k)_{q_{11}}^!\mu_k = (1-q_{12}q_{21})^k \neq 0$ for all $k$,
hence the  set  \eqref{equation:S-LI} is linearly independent by Lemma \ref{lemmata:rosso}; then Lemma \ref{lemma:rosso-lemma19} applies.
\epf

\section{Yetter-Drinfeld modules of dimension 2}\label{sec:yd-dim2}

\subsection{Indecomposable modules and blocks}\label{subsection:indec-2}

Let $\Gamma$ be an abelian group,
$g\in \Gamma$,  $\chi\in \widehat\Gamma$ and $\eta: \Gamma \to \ku$ a $(\chi, \chi)$-derivation, i.~e.
\begin{align*}
\eta(ht) &= \chi(h)\eta(t) + \eta(h)\chi(t),& h,t &\in \Gamma.
\end{align*}
Let $\cV_g(\chi, \eta) \in \ydG$ be a vector space of dimension 2,
homogeneous of degree $g$ and with action of $\Gamma$ given in a basis $(x_i)_{i\in\I_2}$ by
\begin{align}\label{equation:basis-triangular-gral}
h\cdot x_1 &= \chi(h) x_1,& h\cdot x_2&=\chi(h) x_2 + \eta(h)x_{1},
\end{align}
for all $h \in\Gamma$.
Then $\cV_g(\chi, \eta)$ is indecomposable in $\ydG \iff \eta\neq 0$.
As a braided vector space, $\cV_g(\chi, \eta)$ is either of diagonal type, when $\eta(g) =0$, or else isomorphic to $\cV(\epsilon,2)$,
$\epsilon = \chi(g)$ (indecomposability as Yetter-Drinfeld module is not the same as indecomposability as braided vector space).

\begin{lemma}\label{lema:indec2-abgroup}
Let $V\in \ydG$, $\dim V =2$. Then either $V$ is of diagonal type or else
$V \simeq \cV_g(\chi, \eta)$ for unique $g$, $\chi$ and $\eta$ (up to non-zero scalars) with $\eta(g) \neq 0$.
\end{lemma}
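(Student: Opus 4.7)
The plan is to exploit the $\Gamma$-grading and the abelianness of $\Gamma$ to reduce the classification to a standard fact about 2-dimensional representations of an abelian group. Since $V \in \ydG$, we have $V = \bigoplus_{g\in \Gamma} V_g$, and each homogeneous component $V_g$ is automatically a $\Gamma$-submodule (because the action of $\Gamma$ preserves the $\Gamma$-grading in $\ydG$). Because $\dim V = 2$, the support of the grading has either two elements or one.

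If the support consists of two distinct elements $g_1 \neq g_2$, then $V = V_{g_1} \oplus V_{g_2}$ with each $V_{g_i}$ one-dimensional. Each $V_{g_i}$ is then automatically a character $\ku_{g_i}^{\chi_i}$ for some $\chi_i \in \widehat\Gamma$, and picking a non-zero vector $x_i \in V_{g_i}$ gives a basis in which the braiding $c(x_i \otimes x_j) = (g_j \cdot x_i) \otimes x_j = \chi_i(g_j) x_j \otimes x_i$ is diagonal. So $V$ is of diagonal type, and cannot be of the form $\cV_g(\chi,\eta)$ because the latter is concentrated in a single degree.

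If the support is a single $g \in \Gamma$, then $V = V_g$ and all of $\Gamma$ acts on the 2-dimensional space $V_g$. Since $\Gamma$ is abelian and $\ku$ algebraically closed, the image of $\Gamma$ in $\End(V_g)$ lies in a commuting subalgebra, so $V_g$ has a simultaneous eigenvector for $\Gamma$: pick $x_1 \in V_g$ non-zero with $h \cdot x_1 = \chi(h) x_1$ for some $\chi\in\widehat\Gamma$. If the action of $\Gamma$ on $V_g$ is semisimple, then $V_g$ splits as a sum of two character eigenlines and $V$ is of diagonal type as in the previous case (possibly with coinciding characters). Otherwise, complete $x_1$ to a basis $(x_1,x_2)$ of $V_g$; triangularity forces $h\cdot x_2 = \chi'(h) x_2 + \eta(h) x_1$ for some function $\eta:\Gamma\to\ku$ and a character $\chi'$. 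Computing $(ht)\cdot x_2$ in two ways forces $\chi' = \chi$ (equating the $x_2$-coefficient: $\chi'(ht) = \chi'(h)\chi'(t)$, and equating with the action on $x_1$ line via non-semisimplicity one checks $\chi=\chi'$; if $\chi'\neq \chi$ there would be a second eigenline, contradicting non-semisimplicity) and the $(\chi,\chi)$-derivation identity $\eta(ht) = \chi(h)\eta(t) + \eta(h)\chi(t)$. Non-semisimplicity of the action gives $\eta \neq 0$; evaluating the identity at $h=t=g$ and using $\eta \neq 0$ together with the indecomposability guaranteed by our setup yields $\eta(g)\neq 0$ (if $\eta(g)=0$ then $g$ would act diagonalizably on $V_g$, and since $\Gamma$ is abelian so would all of $\Gamma$, contradicting non-semisimplicity).

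For uniqueness, the degree $g$ is forced by the grading. The invariant line $\ku x_1$ is unique when the action is not semisimple (it is the generalized eigenspace of any non-diagonalizable $h$ intersected with the eigenspace), so $\chi$ is determined as its weight. A change $x_2 \mapsto \alpha x_2 + \beta x_1$ with $\alpha\in\ku^\times$ replaces $\eta$ by $\alpha\eta$, while rescaling $x_1$ by $\lambda$ replaces $\eta$ by $\lambda^{-1}\eta$, so $\eta$ is determined up to a non-zero scalar. The main subtlety I anticipate is justifying that $\chi' = \chi$ and $\eta(g)\neq 0$ simultaneously from non-semisimplicity; both follow because a non-semisimple action of the commutative group $\Gamma$ on a 2-dimensional space with a unique eigenline must have both diagonal entries equal to the eigenvalue on that line, and the off-diagonal entry for $g$ must be non-zero (otherwise $g$ acts by a scalar and combines with the rest of $\Gamma$ into a semisimple action).
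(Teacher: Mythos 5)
There is a genuine gap in the final step, where you try to deduce $\eta(g)\neq 0$ from non-semisimplicity of the $\Gamma$-action. The parenthetical claim ``if $\eta(g)=0$ then $g$ would act diagonalizably on $V_g$, and since $\Gamma$ is abelian so would all of $\Gamma$'' is false. If $\eta(g)=0$, then $g$ acts by the scalar $\chi(g)$, and a scalar matrix commutes with \emph{every} operator, so nothing prevents some other $h\in\Gamma$ from acting by a nontrivial Jordan block. Concretely, take $\Gamma=\Z^2$ with generators $g,h$, $\chi$ trivial, $\eta(g)=0$, $\eta(h)=1$: this is an indecomposable $V\in\ydG$ concentrated in degree $g$ with non-semisimple $\Gamma$-action and $\eta(g)=0$. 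In that situation $V$ is perfectly consistent and is in fact of diagonal type as a \emph{braided vector space} (the braiding only sees the action of $g$, which is scalar), so it falls under the lemma's first alternative — but your argument incorrectly tries to rule it out rather than routing it there.

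The source of the trouble is that your case split — semisimple versus non-semisimple $\Gamma$-action — does not line up with the lemma's dichotomy, which is about whether the \emph{braiding} is diagonal. The paper instead assumes from the start that $V$ is not of diagonal type; this immediately forces $V$ to be indecomposable (else the two summands give a diagonal braiding), hence homogeneous of a single degree $g$, and then ``not of diagonal type'' is \emph{exactly} the statement that $g$ acts non-diagonalizably, yielding $\chi_1(g)=\chi_2(g)$ and $\eta(g)\neq 0$ in one stroke. After that the paper derives $\chi_1=\chi_2$ on all of $\Gamma$ from the derivation identity applied to $\eta(hg)=\eta(gh)$, using $\eta(g)\neq 0$. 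Your eigenline argument for $\chi'=\chi$ (if $\chi'\neq\chi$ then some $h$ has two distinct eigenvalues, and commutativity forces a diagonal action) is a correct alternative to that last step, but it only becomes usable once the case split is corrected so that you are working under the hypothesis that the braiding is non-diagonal rather than merely that the module is indecomposable.
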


\pf Assume that $V$ is not of diagonal type; then $V$ is indecomposable. Since
$\ku \Gamma$ is cosemisimple and $\Gamma$ is abelian, there exists $g \in \Gamma$ such that $V$
is homogeneous of degree $g$. As $V$ is not simple,
$\soc V \simeq \ku_{g}^{\chi_1}$ and $V/\soc V \simeq \ku_{g}^{\chi_2}$
for some $\chi_1, \chi_2 \in \widehat\Gamma$. Pick $x_1\in \soc V -0$ and $x_2\in V_{g_2} - \soc V$;  then
$h\cdot x_2 = \chi_2(h) x_2 + \eta(h)x_{1}$ for all $h \in\Gamma$, where $\eta$ is a $(\chi_1, \chi_2)$-derivation.
Since  $V$ is not of diagonal type,  $ \chi_1(g) =  \chi_2(g)$ and $\eta(g) \neq 0$. Now
\begin{align*}
\chi_1(h)\eta(g) + \eta(h)\chi_2(g) =  \eta(hg) = \chi_1(g)\eta(h) + \eta(g)\chi_2(h)
\end{align*}
(since $\Gamma$ is abelian), hence $\chi_1(h) = \chi_2(h)$ for all $h\in \Gamma$.
\epf

Here is our first result on Nichols algebras of blocks.

\begin{theorem} \label{th:infGK}
Let $\epsilon\in\ku^{\times}$, $\ell \ge2$. If $\GK \cB(\cV(\epsilon,\ell)) < \infty$, then $\epsilon = \pm 1$.
\end{theorem}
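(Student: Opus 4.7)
The plan is to first reduce to the rank-$2$ case, then pass to a diagonal associated graded, and finally treat separately the single exceptional case where that graded braiding has finite root system. First I would observe that $W := \ku x_1 + \ku x_2$ is a braided subspace of $\cV(\epsilon,\ell)$: in \eqref{equation:basis-block} the only ``lower-index'' shift $x_{j-1}$ appearing in $c(x_i \otimes x_j)$ with $i,j \in \I_2$ occurs for $j = 2$ and produces $x_1 \in W$. As a braided vector space $W \simeq \cV(\epsilon,2)$, so the inclusion $\NA(\cV(\epsilon,2)) \hookrightarrow \NA(\cV(\epsilon,\ell))$ yields $\GK \NA(\cV(\epsilon,2)) \le \GK \NA(\cV(\epsilon,\ell))$. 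It therefore suffices to prove $\GK \NA(\cV(\epsilon,2)) = \infty$ whenever $\epsilon \notin \{\pm 1\}$. I would realize $V := \cV(\epsilon,2)$ in $\ydz$ via Lemma \ref{lema:indec2-abgroup}, writing $\Z = \langle g \rangle$, placing both $x_i$ in degree $g$, with $g \cdot x_1 = \epsilon x_1$ and $g \cdot x_2 = \epsilon x_2 + x_1$.

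Next I would exploit the complete Yetter--Drinfeld flag $0 \subset \ku x_1 \subset V$. The associated graded $\gr V$ is of diagonal type with braiding matrix constantly equal to $\epsilon$, so $q_{11} = q_{22} = \epsilon$ and $q_{12}q_{21} = \epsilon^2$. By the standard filtration principle for Nichols algebras (cf.\ \S \ref{subsection:filtr-nichols}) one has $\GK \NA(\gr V) \le \GK \NA(V)$, so it suffices to force $\GK \NA(\gr V) = \infty$. Applying Lemma \ref{lemmata:rosso}\ref{item:diagonal-relations} to $y_k := (\ad_c x_1)^k \bar x_2 \in \NA(\gr V)$, the element $y_k$ vanishes iff $(k)_\epsilon^! \prod_{i=0}^{k-1}(1 - \epsilon^{i+2}) = 0$. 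If $\epsilon \notin \G_{\infty}$, no factor ever vanishes, so every $y_k$ survives and Lemma \ref{lemmata:rosso}\ref{lemma:rosso-cor18} together with Lemma \ref{lemma:rosso-lemma19} yield $\GK \NA(\gr V) = \infty$. If $\epsilon \in \G_N'$ with $N \ge 4$, the formula in \S \ref{subsubsection:diagonal-type-reflections} shows that $|c_{12}| = N - 2 \ge 2$, so the Cartan matrix $\begin{pmatrix}2 & c_{12} \\ c_{12} & 2\end{pmatrix}$ is either affine ($N = 4$) or hyperbolic ($N \ge 5$); in either case the generalized root system of $\gr V$ is infinite and Theorem \ref{thm:nichols-diagonal-finite-gkd} closes the case.

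The delicate remaining case is $\epsilon = \omega \in \G_3'$, where $\NA(\gr V)$ is of finite Cartan type $A_2$ and the bare filtration argument is inconclusive. Here I would work directly with $\NA(V)$ and produce extra primitives in $\gr \NA(V)$ beyond the image of $V$. The key input is that $\NA(\ku x_1) = \ku[x_1]/(x_1^3)$ forces $x_1^3 = 0$ in $\NA(V)$; a short direct computation then yields $(\ad_c x_1)^2 x_2 = x_1^2 x_2 + x_1 x_2 x_1 + x_2 x_1^2$, and applying the skew-derivation $\partial_1$ produces $(\omega - \omega^2)\, x_1^2 \neq 0$. Hence the would-be quantum Serre relation fails in $\NA(V)$, and the class of this element in $\gr \NA(V)$ is a primitive outside the image of $\gr V$. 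Using such extra primitives together with suitable old generators I would assemble a braided subspace $U \subseteq \gr \NA(V)$ whose Nichols algebra has infinite GK, verified through Lemma \ref{lemma:rosso-lemma19-gral} once the relevant products are shown to be linearly independent, the checks being carried out via the skew-derivations $\partial_1, \partial_2$. The hard part will be precisely this last step: isolating the correct $U$ and establishing the PBW-type linear independence needed to invoke Lemma \ref{lemma:rosso-lemma19-gral}. Once that is done, the chain $\GK \NA(U) \le \GK \gr \NA(V) = \GK \NA(V)$ finishes the proof.
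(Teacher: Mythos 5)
Your reduction to $\ell = 2$ via the braided subspace $\ku x_1 + \ku x_2 \simeq \cV(\epsilon,2)$ is correct and matches the paper, as does the passage to the diagonal braiding $V^{\diag}$ of constant matrix $\epsilon$ through the complete flag $\ku x_1 \subset V$. For $\epsilon\notin\G_\infty$ you apply Lemma~\ref{lemmata:rosso} and Lemma~\ref{lemma:rosso-lemma19} directly; the paper instead cites a freeness result of Flores de Chela--Green to conclude that $\NA(V^{\diag})$ is the whole tensor algebra. Both routes work (yours is arguably more self-contained), and your treatment of $\epsilon\in\G'_N$, $N\ge 4$, via the affine/indefinite Cartan matrix is exactly the paper's Step~\ref{prop:V-lambda-N>3}.

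The genuine gap is in the case $\epsilon=\omega\in\G'_3$, and it lies precisely where you flagged the hard step. Your candidate extra primitive $z := (\ad_c x_1)^2 x_2$ is indeed nonzero in $\NA(V)$ (your derivation computation is correct, using $x_1^3=0$) and indeed becomes a new primitive in $\Bdiag=\gr\NA(V)$. But it is useless for the purpose at hand: its $\Z^2$-degree is $2\alpha_1+\alpha_2$, hence its $\Z$-degree is $g^3$, and since $\omega^3=1$ the element $z$ braids \emph{trivially} with $\bar x_1$, with $\bar x_2$, and with itself. Explicitly
\[
\chi(2\alpha_1+\alpha_2,\,\alpha_1)\,\chi(\alpha_1,\,2\alpha_1+\alpha_2)
= q_{11}^4 q_{12} q_{21} = \omega^6 = 1,
\]
and similarly for $\alpha_2$ and for $z$ with itself. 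Consequently any braided subspace $U$ you build from $\bar x_1,\bar x_2$ and $z$ satisfies $\NA(U)\simeq \NA(\ku\bar x_1+\ku\bar x_2)\otimes \ku[z]$, which has $\GK = 1$; Lemma~\ref{lemma:rosso-lemma19-gral} cannot produce the required linearly independent family. The same problem kills the other obvious candidate $x_2^3$, whose $\Z$-degree is also $g^3$.

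The paper's Step~\ref{le:infGK3} avoids this trap by first computing $\ker\partial_1\cap\ker\partial_2$ up to degree $4$, and then using the primitive $w := x_1x_2^3 - x_2^3x_1$ of $\Z^2$-degree $\alpha_1+3\alpha_2$, i.e.\ $\Z$-degree $g^4$. Since $3\nmid 4$, its braiding is nontrivial: the span of $\bar x_1$, $\bar x_2$, $w$ is of diagonal type with all vertex labels $\omega$ and all edge labels $\omega^2$, hence of affine Cartan type $\widetilde{A}_2$, and Theorem~\ref{thm:nichols-diagonal-finite-gkd} finishes the case. So the missing idea in your proposal is not just ``find a linearly independent family,'' but to pick a new primitive whose total $\Z$-degree is $\not\equiv 0\pmod 3$; without this constraint the braiding you obtain is unavoidably trivial and the argument collapses.
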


The proof of Theorem \ref{th:infGK} is given in \S \ref{subsection:blockneqpm1-1stpart}.

\medbreak
The Nichols algebras $\NA(\cV(1,2))$ and $\NA(\cV(-1,2))$ have both $\GK = 2$, see \S \ref{subsection:jordanian} and \ref{subsection:super-jordanian}.
We conclude:

\begin{coro} \label{cor:indecomp-2} Let $g$, $\chi$, $\eta$ be as above and assume that $\eta(g) \neq 0$.
Then $\GK \cB(\cV_g(\chi,\eta)) < \infty \iff \chi(g) = \pm 1$.
\end{coro}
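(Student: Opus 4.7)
The plan is to reduce the statement, via the explicit description of the braided vector space underlying $\cV_g(\chi, \eta)$, to Theorem \ref{th:infGK} combined with the computed values of $\GK$ for the Jordan and super Jordan planes.

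First I would justify the observation already recorded in the paragraph following \eqref{equation:basis-triangular-gral}: when $\eta(g) \neq 0$, the braided vector space underlying $\cV_g(\chi, \eta)$ is isomorphic to $\cV(\chi(g), 2)$. Concretely, since $V$ is $\Gamma$-homogeneous of degree $g$, the braiding on $\cV_g(\chi, \eta)$ is $c(v \otimes w) = g \cdot w \otimes v$, so the only effect of the nonzero value $\eta(g)$ is to produce the off-diagonal contribution $\eta(g)\, x_1$ inside $g \cdot x_2$. Rescaling the socle generator via $x_1 \mapsto \eta(g)\, x_1$ turns this coefficient into $1$ and identifies the resulting braiding with that of \eqref{equation:basis-block} for $\epsilon = \chi(g)$ and $\ell = 2$. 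Since $\NA(V)$ is an invariant of the braided vector space $V$ alone, this gives a graded algebra isomorphism $\NA(\cV_g(\chi, \eta)) \cong \NA(\cV(\chi(g), 2))$, so in particular the two Gelfand--Kirillov dimensions coincide.

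With this identification in place, the forward direction ($\GK < \infty \Rightarrow \chi(g) \in \{\pm 1\}$) is immediate from Theorem \ref{th:infGK} applied with $\ell = 2$. For the converse, it suffices to cite the explicit computations in \S \ref{subsection:jordanian} and \S \ref{subsection:super-jordanian}, which yield $\GK \NA(\cV(1,2)) = \GK \NA(\cV(-1,2)) = 2 < \infty$. The only non-routine ingredient is Theorem \ref{th:infGK} itself, whose proof is deferred to \S \ref{subsection:blockneqpm1-1stpart}; everything else in the corollary is a basis change or a direct appeal to the computation of the (super) Jordan plane, so I do not anticipate any genuine obstacle in this step.
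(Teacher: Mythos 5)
Your argument is correct and coincides with the paper's own route: the paper records the identification of the underlying braided vector space with $\cV(\chi(g),2)$ in the paragraph after \eqref{equation:basis-triangular-gral}, and then derives the corollary directly from Theorem \ref{th:infGK} together with the computations $\GK\NA(\cV(\pm 1,2))=2$ in \S\ref{subsection:jordanian} and \S\ref{subsection:super-jordanian}. Your rescaling $x_1\mapsto \eta(g)x_1$ is exactly the normalization the paper has in mind, so there is nothing to add.
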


Assume that $\epsilon \neq \pm 1$. To prove the Theorem, it suffices to consider the case $\ell =2$.
Following \cite[Remark 4.5]{GH}, we attach a braided Hopf algebra of diagonal type to $\cB(\cV(\epsilon,\ell))$
in \S \ref{subsection:filtr-nichols}.
Then we prove the Theorem for $\epsilon \notin \G_{\infty}$ or $\epsilon \in \G_{N}'$, $N >3$ in \S \ref{subsection:blockneqpm1-1stpart}.
A separate proof is required when $N=3$, see Step~\ref{le:infGK3}.
Throughout this Section, $\cV = \cV(\epsilon,2)$ is the braided vector space with basis $x_1$, $x_2$ such that
\begin{align}\label{eq:bloque2}
c(x_i \otimes x_1) &= \epsilon x_1 \otimes x_i,& c(x_i \otimes x_2) &= (\epsilon x_2 + x_1) \otimes x_i,& i&\in \I_2.
\end{align}
We realize it as $\cV_1(\epsilon,2) \in \ydz$ when needed.  As in \eqref{eq:xijk}, we denote
\begin{align}\label{eq:x12}
x_{21} &= (\ad_c x_2)\, x_1 = x_2x_1 - \epsilon x_1 x_2.
\end{align}

\subsection{The Jordan plane}\label{subsection:jordanian}

Here we consider  $\cV=\cV(1,2)$. The Nichols algebra $\cB(\cV)$ turns out to be a quadratic algebra well-studied in the literature,
the so-called Jordan plane.
Together with the quantum planes, the Jordan plane exhausts the classification of the AS-regular algebras of dimension 2,  see \cite{ArS}.
From a Hopf-theoretical viewpoint, it is associated to the quantum Jordan $SL(2)$ defined in \cite{G} and independently  but later in \cite{DMMZ, zak};
and in dual form independently in \cite{LM, ohn}. The appellation \emph{Jordan} comes
from \cite{DMMZ}\footnote{We thank Christian Ohn for clarifications on this story.}.

\begin{prop} \label{pr:1block}
 $\cB(\cV(1,2))$ is presented by generators $x_1,x_2$ and relation
\begin{align}\label{eq:rels B(V(1,2))}
&x_2x_1-x_1x_2+\frac{1}{2}x_1^2.
\end{align}
$\GK \cB(\cV(1,2)) = 2$ and $B=\{ x_1^a x_2^b: a,b\in\N_0\}$ is a basis of $\cB(\cV(1,2))$.
\end{prop}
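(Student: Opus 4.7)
The plan is to verify the stated relation, identify a spanning set, and prove its linear independence; the $\GK$ computation will then follow from the Hilbert series. First, I would realize $\cV$ as an object of $\ydz$ via Lemma \ref{lema:indec2-abgroup} with a generator $g$ of $\Z$ acting as $g\cdot x_1 = x_1$, $g\cdot x_2 = x_2 + x_1$, so that the braided coproduct of $T(\cV)$ is fully determined by \eqref{eq:bloque2}. A direct computation using $c(x_2\otimes x_1) = x_1\otimes x_2$, $c(x_1\otimes x_2) = (x_2+x_1)\otimes x_1$ and $c(x_1\otimes x_1) = x_1\otimes x_1$ yields
\begin{align*}
\Delta(r) = r\otimes 1 + 1\otimes r, \qquad r := x_2x_1-x_1x_2+\tfrac{1}{2}x_1^2,
\end{align*}
so $r$ is primitive of degree $2$ in $T(\cV)$; hence $r=0$ in $\cB(\cV)$ by the universal property of the Nichols algebra.

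Set $A := T(\cV)/\langle r\rangle$, and interpret the relation as the rewriting rule $x_2x_1 \rightsquigarrow x_1x_2 - \tfrac{1}{2}x_1^2$. Since the left-hand side $x_2x_1$ admits no non-trivial self-overlap, Bergman's Diamond Lemma (alternatively, a direct induction on word length, pushing each $x_1$ to the left past every $x_2$) shows that $B = \{x_1^ax_2^b: a,b\in\N_0\}$ is a basis of $A$; in particular $B$ spans $\cB(\cV)$ through the canonical surjection $\pi: A \twoheadrightarrow \cB(\cV)$.

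The main obstacle is to prove that $\pi$ is injective, equivalently that $B$ is linearly independent in $\cB(\cV)$. For this I would use the skew derivations $\partial_1, \partial_2$ on $\cB(\cV)$ of \eqref{eq:skewderivations} associated to the basis $(x_1, x_2)$ and the $\Z$-grading. Since both $x_i$ are homogeneous of degree $g$ and $g\cdot x_2 = x_2 + x_1$, the Leibniz rule gives by induction on $b$
\begin{align*}
\partial_2(x_1^a x_2^b) = b\, x_1^a x_2^{b-1} + (\text{terms of strictly lower degree in } x_2).
\end{align*}
Moreover, $\ku x_1$ is a Yetter-Drinfeld submodule of $\cV$ with trivial braiding, so the subalgebra of $\cB(\cV)$ generated by $x_1$ is the polynomial ring $\ku[x_1]$, whence $\{x_1^a: a\in\N_0\}$ is linearly independent. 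Now if $\sum c_{a,b}\, x_1^ax_2^b = 0$ in $\cB(\cV)$ and $b_0$ is the largest index $b$ with some $c_{a,b_0}\neq 0$, then iterating $\partial_2$ annihilates all monomials of $x_2$-degree $< b_0$, so applying $\partial_2^{b_0}$ yields $b_0!\sum_a c_{a,b_0}\, x_1^a = 0$, forcing $c_{a,b_0} = 0$ for all $a$; iterating downward on $b_0$ proves linear independence.

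Combining the above, $\pi: A \xrightarrow{\sim} \cB(\cV)$, so $\cB(\cV)$ is presented by the single relation \eqref{eq:rels B(V(1,2))} and has basis $B$. Since $\dim \cB(\cV)^n = n+1$, the accumulated dimensions grow as $\binom{n+2}{2}$, whence $\GK \cB(\cV) = 2$.
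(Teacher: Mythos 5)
Your proof is correct and follows essentially the same route as the paper: verify the relation holds (you check primitivity, the paper uses the skew derivations $\partial_1,\partial_2$ — for a degree-$2$ element these are equivalent criteria for membership in $\cJ(\cV)$), observe $B$ spans via the one-relation rewriting, prove linear independence using $\partial_2$ and $\operatorname{char}\ku=0$, and compute $\GK$ from the Hilbert series. The one step worth sharpening is the identity $\partial_2^{b_0}(x_2^{b_0})=b_0!$: your leading-term remark does not by itself rule out contributions from the lower-order correction terms after $b_0$ iterations, but the claim is true and has a clean justification — $\partial_2^{b_0}(x_2^{b_0})$ is a scalar by degree count, and the algebra map $T(\cV)\to\ku[x_2]$, $x_1\mapsto 0$, intertwines $\partial_2$ with the usual derivative $d/dx_2$, so the scalar is $b_0!$.
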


\pf Let $\cB$ be the algebra presented by \eqref{eq:rels B(V(1,2))}. The monomials $x_1^a x_2^b$ generate the vector space $\cB$.
Using skew derivations, cf. \eqref{eq:skewderivations}, we see that
\eqref{eq:rels B(V(1,2))} holds in $\cB(\cV(1,2))$ and that $B \subset \cB(\cV(1,2))$ is linearly independent.
(The latter conclusion holds because $\Bbbk $ has characteristic $0$).
Thus there is an algebra isomorphism $\cB \to \cB(\cV(1,2))$. Since $\cB(\cV(1,2))$ has the same Hilbert series as a polynomial ring
in 2 variables,  $\GK \cB(\cV(1,2)) = 2$.
\epf

A recursive argument starting from \eqref{eq:rels B(V(1,2))} shows that in $\cB(\cV(1,2))$
\begin{align}\label{eq:relations B(W) - case 1}
x_2x_1^n &=x_1^n x_2-\frac{n}{2}x_1^{n+1}, &n \in \N.
\end{align}
Since $x_{21} = -\frac{1}{2}x_1^2$ by \eqref{eq:rels B(V(1,2))}, the case $n =2$ implies
\begin{align}\label{eq:relations B(W) - x12}
x_{21}x_2 &= (x_1 + x_2) x_ {21}.
\end{align}

\subsection{The  super Jordan plane}\label{subsection:super-jordanian}
We are tempted to call the next example a \emph{super Jordan plane} albeit it
does not coincide apparently
with some objects with similar names in the literature.
After appearance of this paper, the cohomology of the super Jordan plane was determined in 
\cite{RS}. In particular, it has infinite global dimension what corroborates its absence from the list in \cite{ArS}.

\begin{prop} \label{pr:-1block} The defining ideal
$\cJ(\cV(-1,2))$ is generated by
\begin{align}\label{eq:rels-B(V(-1,2))-1}
&x_1^2, \\
\label{eq:rels-B(V(-1,2))-2}
&x_2x_{21}- x_{21}x_2 - x_1x_{21}.
\end{align}
$\GK \cB(\cV(-1,2)) = 2$ and $B=\{x_1^a x_{21}^bx_2^c: a\in\{0,1\},
b,c\in\N_0\}$ is a basis of $\cB(\cV(-1,2))$.
\end{prop}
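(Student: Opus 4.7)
The plan is to parallel the argument for the Jordan plane in Proposition \ref{pr:1block}, but mediated by the auxiliary element $x_{21}$ to accommodate the extra cubic relation. Let $\widetilde{\cB}$ be the algebra presented by generators $x_1, x_2$ with only the relations \eqref{eq:rels-B(V(-1,2))-1} and \eqref{eq:rels-B(V(-1,2))-2}. I will first show that these relations hold in $\cB(\cV(-1,2))$, giving a graded surjection $\pi \colon \widetilde{\cB} \twoheadrightarrow \cB(\cV(-1,2))$; then that $B$ spans $\widetilde{\cB}$; and finally that the image of $B$ is linearly independent in $\cB(\cV(-1,2))$. The relation $x_1^2 \in \cJ(\cV(-1,2))$ is immediate from $c(x_1 \otimes x_1) = -x_1 \otimes x_1$ together with $(2)_{-1} = 0$. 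For the cubic relation, realize $\cV(-1,2)$ as an object of $\ydz$ via Lemma \ref{lema:indec2-abgroup} and use the skew derivations $\partial_i$ from \eqref{eq:skewderivations}; the easy identities $\partial_1(x_{21}) = x_1$ and $\partial_2(x_{21}) = 0$ reduce checking $x_2 x_{21} - x_{21} x_2 - x_1 x_{21} \in \cJ(\cV(-1,2))$ to a finite verification which, using the $\Z$-action $g\cdot x_1 = -x_1$, $g\cdot x_2 = -x_2 + x_1$, yields values in the ideal generated by $x_1^2$.

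To see that $B$ spans $\widetilde{\cB}$, I will derive three reduction rules in $\widetilde{\cB}$:
\begin{align*}
x_2 x_1 &= x_{21} - x_1 x_2, & x_{21} x_1 &= x_1 x_{21}, & x_{21} x_2 &= x_2 x_{21} - x_1 x_{21}.
\end{align*}
The first is the definition of $x_{21}$ (with $\epsilon = -1$), the second follows by expanding and using $x_1^2 = 0$, and the third is \eqref{eq:rels-B(V(-1,2))-2}. Together with $x_1^2 = 0$, these rules let any word in $x_1, x_2$ be rewritten, by a standard induction on length, as a linear combination of elements of $B$. Counting triples $(a,b,c)$ with $a \in \{0,1\}$ and $a + 2b + c = n$ yields $\dim \widetilde{\cB}_n \le n+1$.

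The central step, and the main obstacle, is the linear independence of $B$ in $\cB(\cV(-1,2))$. I plan to argue by iterated application of $\partial_1, \partial_2$, exploiting that $\partial_2$ kills $x_1$ and $x_{21}$ and sends $x_2$ to $1$, while $\partial_1$ kills $x_2$ and sends $x_{21}$ to $x_1$. Given a hypothetical relation $\sum \lambda_{a,b,c} x_1^a x_{21}^b x_2^c = 0$, apply $\partial_2$ iteratively to peel off factors of $x_2$ and then $\partial_1$ to handle $x_{21}^b x_1^a$. The bookkeeping is delicate because the action $g \cdot x_2 = -x_2 + x_1$ introduces nontrivial corrections in the twisted Leibniz rule \eqref{eq:skewderivations} when applied to $x_2^c$, producing polynomials in both $x_1$ and $x_2$; nevertheless, a double induction on $(c,b)$ should force all $\lambda_{a,b,c}$ to vanish. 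Combining the three steps, $\pi$ is an isomorphism and $B$ is a basis of $\cB(\cV(-1,2))$; since $\dim \cB(\cV(-1,2))_n = n+1$, the sum $\sum_{k \le n} \dim \cB(\cV(-1,2))_k = \binom{n+2}{2}$ grows quadratically in $n$, so $\GK \cB(\cV(-1,2)) = 2$.
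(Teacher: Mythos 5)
The first two steps of your outline (verifying the relations via $\partial_1,\partial_2$ and showing $B$ spans by straightening monomials) match the paper's proof in spirit and are fine, modulo minor care about the direction of the rewriting rules; but the linear-independence step, which you yourself flag as the main obstacle, has a genuine gap rather than just delicate bookkeeping.

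Your plan to ``apply $\partial_2$ iteratively to peel off factors of $x_2$'' cannot work as stated because of the twist $g\cdot x_2=-x_2+x_1$. With the convention \eqref{eq:skewderivations} one computes $\partial_2(x_2)=1$, $\partial_2(x_2^2)=x_1$, $\partial_2(x_2^3)=x_2^2-x_1x_2$, and in general the coefficient of $x_2^{c-1}$ in $\partial_2(x_2^c)$ satisfies $\alpha_c=1-\alpha_{c-1}$, so it is $0$ whenever $c$ is even. Thus $\partial_2$ does not reduce $x_2^c$ to a multiple of $x_2^{c-1}$; it produces elements scattered across the whole of degree $c-1$, and these mix with the $x_1^a x_{21}^b$ prefix upon re-expressing in terms of $B$. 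A minimality-plus-$\partial_2$ argument therefore does not cleanly decouple the parameters $a,b,c$, and the ``double induction'' you invoke is not supplied and is unlikely to go through in the proposed order.

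The paper avoids this by two devices you do not have. First, it replaces $B$ by the reversed monomials $B'=\{x_2^c x_{21}^b x_1^a\}$, justified by a Hilbert-series comparison; the point is that $\partial_1$ kills $x_2$ and acts transparently on the tail $x_{21}^b x_1^a$, yielding the clean identities $\partial_1(x_2^c x_{21}^b)=b\,x_2^cx_{21}^{b-1}x_1$ and $\partial_1(x_2^c x_{21}^b x_1)=x_2^c x_{21}^b$. Applying $\partial_1$ to a minimal nontrivial dependence then forces all coefficients with $a>0$ or $b>0$ to vanish in one stroke, reducing the problem to the independence (equivalently, nonvanishing) of the pure powers $x_2^n$. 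Second, the nonvanishing $x_2^n\neq 0$ is itself the computational heart of the argument: the paper proves $\partial_1^{\,n-1}\partial_2(x_2^n)\ne 0$ by a careful nested induction, producing the explicit nonzero scalars $\mu_{2k}=(-1)^k(k!)^2$ and $\mu_{2k+1}=(-1)^k k!(k+1)!$. That computation is exactly what your sketch omits, and it does not reduce to routine bookkeeping. To repair your proof you would need either to reorganize the derivation argument along these lines (reverse the monomial order and lead with $\partial_1$), or to find a genuinely different certificate of independence; as written, the key step is a conjecture rather than a proof.
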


In presence of \eqref{eq:rels-B(V(-1,2))-1}, \eqref{eq:rels-B(V(-1,2))-2} is
equivalent to the quantum Serre relation
\begin{align}\label{eq:rels-B(V(-1,2))-serre}
(\ad_cx_2)^2 x_1-x_1 (\ad_cx_2) (x_1).
\end{align}

\pf Both \eqref{eq:rels-B(V(-1,2))-1} and \eqref{eq:rels-B(V(-1,2))-2} are 0 in $\cB(\cV(-1,2))$
being annihilated by the skew derivations $\partial_1$ and $\partial_2$, cf. \eqref{eq:skewderivations}.
Notice that $\partial_1(x_{21})=x_1$, $\partial_2(x_{21})=0$.
Hence the quotient $\cBt$ of $T(V)$ by \eqref{eq:rels-B(V(-1,2))-1} and \eqref{eq:rels-B(V(-1,2))-2} projects onto $\cB(\cV(-1,2))$. Now  by \eqref{eq:rels-B(V(-1,2))-1} we have
\begin{align}\label{eq:rels-B(V(-1,2))-dos}
x_{21}x_1 = x_1x_2x_1 = x_1x_{21}.
\end{align}
Hence the subspace $I$ spanned by $B$ is a left ideal of $\cBt$, by  \eqref{eq:rels-B(V(-1,2))-2}, \eqref{eq:rels-B(V(-1,2))-dos}.
Since $1\in I$, $\cBt$ is spanned by $B$. To prove that $\cBt \simeq \cB(\cV(-1,2))$, it remains to show that
$B$ is linearly independent in $\cB(\cV(-1,2))$. We claim that this is equivalent to prove that $B'=\{x_2^c x_{21}^b x_1^a: a\in\{0,1\}, b,c\in\N_0\}$ is linearly independent.
Indeed, $\cBt$ is spanned by $B'$ since the subspace spanned $B'$ is also a left ideal; if $B'$ is linearly independent, then
the Hilbert series of $\cBt$ is $(1-t)^{-1}(1-t^2)^{-1}(1+t)=(1-t)^{-2}$, so $B$ should be linearly independent and vice versa.
Now suppose that there is a non-trivial linear combination
of elements of $B'$ in $\cB(\cV(-1,2))$, say of minimal degree. As
\begin{align} \label{eq:-1del1}
\partial_1(x_2^c x_{21}^b)&= b \,  x_2^c x_{21}^{b-1} x_1, & \partial_1(x_2^c x_{21}^b x_1)&= x_2^c x_{21}^b,
\end{align}
such linear combination does not have terms with $a$ or $b$ greater than 0. We claim that the elements $x_2^c$, $c\in\N_0$,
are linearly independent, yielding a contradiction.
Indeed, it is enough to show that $\partial_1^{n-1}\partial_2(x_2^n)\neq 0$ for all $n\in\N$.
We argue by induction. For $n=1$, $\partial_2(x_2)=1$. Now assume that $\partial_1^{n-1}\partial_2(x_2^n)=\mu_n\in\ku^\times$.
Then
\begin{align*}
\partial_1^{n}\partial_2(x_2^{n+1}) \overset \clubsuit = \partial_1^{n-1}\partial_2(x_2^n)\sum_{k=1}^n \partial_1(g^k\cdot x_2)
\overset\star=\mu_n\left(\sum_{k=1}^n (-1)^{k+1}k \right).
\end{align*}
To establish $\clubsuit$, we prove by induction on $j$ that
\begin{align*}
\partial_1^{n}\partial_2(x_2^{n+1})
= \partial_1^{n-j} &\left(\partial_1^{j} \partial_2(x_2^n) \, g^{j+1}\cdot x_2 \right)
+  \partial_1^{n-1}\partial_2(x_2^n) \sum_{k=1}^{j} \partial_1(g^k\cdot x_2).
\end{align*}
Indeed, $\partial_1^{n}\partial_2(x_2^{n+1})
= \partial_1^{n} \left(\partial_2(x_2^{n})\, g\cdot x_2 + x_2^n \right) =  \partial_1^{n} (\partial_2(x_2^{n})\,g\cdot x_2 )$
is the step $j =0$;  and the recursive step follows from
$\partial_1^{n-j} \left(\partial_1^{j} \partial_2(x_2^n) \, g^{j+1}\cdot x_2 \right) =$
\begin{align*}
&= \partial_1^{n-j - 1}
\left(\partial_1^{j +1} \partial_2(x_2^n) \, g^{j+2}\cdot x_2
+ \partial_1^{j} \partial_2(x_2^n) \partial_1( g^{j+1}\cdot x_2)
\right) \\
&= \partial_1^{n-j - 1}
\left(\partial_1^{j +1} \partial_2(x_2^n) \, g^{j+2}\cdot x_2\right) +  \partial_1^{n-1}\partial_2(x_2^n) \partial_1(g^{j+1}\cdot x_2).
\end{align*}
Taking $j = n$, we get $\clubsuit$. Now from $\star$ we compute $\mu_{2k}=(-1)^k (k!)^2$, $\mu_{2k+1}=(-1)^k k!(k+1)!$.
This proves the claim.
Then $B$ is a basis of $\cB(\cV(-1,2))$ and $\cBt=\cB(\cV(-1,2))$. The computation of $\GK$ follows from the Hilbert series at once.
\epf

\subsection{Filtrations of Nichols algebras}\label{subsection:filtr-nichols}

Let $H$ be a Hopf algebra. The category of (increasingly) filtered objects in $\ydh$ is tensor. For,
if $V, W \in \ydh$ and $(V_{i})_{i\in \N_0}$, $(W_{j})_{j\in \N_0}$
are ascending filtrations of Yetter-Drinfeld submodules of $V$ and $W$, respectively, then $V \otimes W$ is filtered by
\begin{align*}
 (V\otimes W)_k &= \sum_{j\le k} V_j\otimes W_{k-j},& k&\in \N_{0}.
\end{align*}
So we may consider filtered Hopf algebras in $\ydh$ by requiring the structure maps to be  of filtered objects.
The following facts are standard.

\begin{remark}\label{obs:filtered} Let $\cB$ be filtered Hopf algebra in $\ydh$, with  filtration $(\cB_{i})_{i\in \N_0}$.

\begin{enumerate}\renewcommand{\theenumi}{\alph{enumi}}
\renewcommand{\labelenumi}{(\theenumi)}
  \item\label{obs:filtered-1} The associated graded object $\gr \cB = \oplus_{i\in \N_0} \gr^i \cB$, where $\gr^i \cB = \cB_{i} / \cB_{i-1}$,
  is a graded Hopf algebra in $\ydh$.

  \item\label{obs:filtered-2} If $\pi: \cB \to \mathcal P$ is an epimorphism of Hopf algebras in $\ydh$, then $\mathcal P$ is a filtered Hopf algebra
  with $\mathcal P_i = \pi(\cB_i)$, $i\in \N_0$.

  \item\label{obs:filtered-3}  If $\cB = \oplus_{n\in \N_0} \cB^n$ is also a graded Hopf algebra in $\ydh$, with each term of the filtration
  a graded subobject:  $\cB_{i} = \oplus_{n\in \N_0} \cB_i^n$ for all $i$, then $\gr \cB = \oplus_{i, n\in \N_0} \gr^i \cB^n$ is a
  $\N_0\times \N_0$-graded Hopf algebra  in $\ydh$. (In this situation we say that $\cB$ is a graded filtered Hopf algebra).
  \end{enumerate}
\end{remark}

Let now $0 =V_0 \subsetneq V_1 \dots \subsetneq V_d = V\in \ydh$ be a flag of Yetter-Drinfeld submodules, $\dim V = \theta$;
it induces a filtration of   $T = T(V)$ that becomes a graded filtered algebra in $\ydh$; let $T^n_{m}$ be the $m$-th term of the filtration of the homogeneous component $T^n$.

\begin{lemma}\label{lemma:filtered}
\begin{enumerate}\renewcommand{\theenumi}{\alph{enumi}}
\renewcommand{\labelenumi}{(\theenumi)}
  \item\label{obs:filtered-4} $T(V)$  is a graded filtered Hopf algebra in $\ydh$.

  \item\label{obs:filtered-5} Every pre-Nichols algebra $\cB$ of $V$ is a graded filtered Hopf algebra in $\ydh$ and $\gr \cB$ is a
pre-Nichols algebra of $\gr V$.

 \item\label{obs:filtered-6} $\GK \NA(\gr V) \leq \GK \NA(V)$.
\end{enumerate}
\end{lemma}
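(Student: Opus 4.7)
The three parts cascade: (c) reduces to (b), and (b) reduces to (a), so the only substantive verification is in (a). For (a), I would take as $T^n_m$ the subspace $\sum_{a_1+\cdots+a_n\le m} V_{a_1}\otimes\cdots\otimes V_{a_n}$ of $V^{\otimes n}$, induced by the flag $0=V_0\subsetneq\cdots\subsetneq V_d=V$ in $\ydh$. That concatenation sends $T^n_a\otimes T^k_b$ into $T^{n+k}_{a+b}$ is immediate, so $T(V)$ becomes a graded filtered algebra. The key point is that the braided coproduct is filtered. On a monomial $v_1\cdots v_n$ with $v_i\in V_{a_i}$, $\Delta(v_1\cdots v_n)$ is computed as an iterated braided product in $T(V)\otimes T(V)$, applying braidings $c_{V_{a_i},V_{a_j}}$ to reorder tensor factors. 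Because each $V_i$ is a Yetter-Drinfeld submodule, naturality gives $c(V_{a_i}\otimes V_{a_j})\subseteq V_{a_j}\otimes V_{a_i}$, and a direct expansion of the shuffle formula shows that every summand of $\Delta(v_1\cdots v_n)$ lies in $T^p_b\otimes T^{n-p}_{c}$ with $b+c=a_1+\cdots+a_n\le m$. The antipode is filtered by induction on the tensor degree via the convolution identity $S\ast\mathrm{id}=\eta\varepsilon$.

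For (b), let $\pi:T(V)\twoheadrightarrow\cB$ be a pre-Nichols algebra of $V$, and set $\cB^n_m:=\pi(T^n_m)$. By Remark \ref{obs:filtered} this is a graded filtration and $\gr\cB=\bigoplus_{n,m}\cB^n_m/\cB^n_{m-1}$ is an $\N_0\times\N_0$-graded Hopf algebra in $\ydh$. By construction $\gr\cB^0=\ku$ and $\gr\cB^1=\gr V$, and $\gr\cB$ is generated in degree one because $\cB$ is; hence $\gr\cB$ is a pre-Nichols algebra of $\gr V$, proving (b).

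For (c), apply (b) to $\cB=\NA(V)$: $\gr\NA(V)$ is a pre-Nichols algebra of $\gr V$. Since $\NA(\gr V)$ is the smallest pre-Nichols algebra of $\gr V$ (its defining ideal being the maximal graded Hopf ideal of $T(\gr V)$ with trivial intersection with $\gr V$), there is a surjection of graded Hopf algebras $\gr\NA(V)\twoheadrightarrow\NA(\gr V)$. Combined with the equality $\dim\NA(V)^n=\dim\gr\NA(V)^n$ for each $n$, this yields $\dim\NA(\gr V)^n\le\dim\NA(V)^n$ and therefore $\GK\NA(\gr V)\le\GK\NA(V)$ via \eqref{eq:def-GKdim}. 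The main obstacle is the verification in (a) that the braided coproduct stays inside the tensor-product filtration; it rests on naturality of the Yetter-Drinfeld braiding restricted to each $V_i$, and requires patient bookkeeping with the shuffle formula in the braided (rather than symmetric) category.
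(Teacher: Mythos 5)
Your proposal is correct and runs along essentially the same lines as the paper's proof: reduce to the filtered compatibility of $\Delta$ on $T(V)$, observe that $\gr\cB$ is again generated in degree one, and pass from a pre-Nichols surjection plus equality of Hilbert series to the $\GK$ bound. Where the paper establishes filteredness of $\Delta$ by a short induction on $n$ using that $\Delta$ is an algebra map together with $\Delta(V_i)\subseteq V_i\otimes\ku+\ku\otimes V_i$ and $c(T^i_k\otimes T^j_l)=T^j_l\otimes T^i_k$, you expand the braided shuffle formula directly; both verifications hinge on the same fact that each $V_i$ is a Yetter-Drinfeld submodule, so the braiding permutes levels without raising them. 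For part (c) the paper simply invokes \cite[6.5]{KL}, whereas you spell out the Hilbert-series comparison $\dim\NA(\gr V)^n\le\dim\gr\NA(V)^n=\dim\NA(V)^n$; this is a clean, self-contained replacement for the citation (valid here because the filtration on each fixed graded piece $\cB^n$ is finite and both algebras are connected graded and generated in degree one, so summing dimensions over degrees $\le n$ compares growth functions directly).
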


\pf As $\Delta(V_i)\subseteq V_i\ot \ku + \ku \ot V_i$,
$T^n_{m} \subseteq \sum_{\substack{i + j = n\\ k+l = m}} T^i_{k} T^{j}_{l}$ and
$c(T^i_{k} \ot T^j_{l}) = T^j_{l} \ot T^i_{k}$, we have
\begin{align*}
\Delta(T^n_{m})\subseteq \sum_{\substack{i + j = n \\ k+l = m}} T^i_{k} \ot T^{j}_{l},
\end{align*}
by induction on $n$; \eqref{obs:filtered-4} follows. Now the first claim of \eqref{obs:filtered-5}
is immediate, see Remark \ref{obs:filtered} \eqref{obs:filtered-2}, \eqref{obs:filtered-3}.
As $\cB$ is generated by $ \cB^1 = V$, the algebra $\gr \cB$
is generated by $\gr^1 \cB = \gr V$; \eqref{obs:filtered-5} is proved. Now \eqref{obs:filtered-6} follows by \cite[6.5]{KL}.
\epf

The flag $0 =V_0 \subsetneq V_1 \dots \subsetneq V_d = V\in \ydh$ of Yetter-Drinfeld submodules is \emph{complete} if it has no proper
refinement. If $\dim V_i=\dim V_{i-1}+1$ for all $i$, i.e. it is flag of vector
spaces, then  it is \emph{absolutely complete}.
If the flag is absolutely complete, then $V^{\textrm{diag}} := \gr V$ is of diagonal type. If $\cB$ is a pre-Nichols algebra of $V$, then
 $\Bdiag := \gr \cB$ is a pre-Nichols algebra of  $V^{\textrm{diag}}$.

\begin{lemma}\label{prop:gr br Hopf algebra} Let $\epsilon\in\ku^{\times}$, $\ell \ge2$.
If $\cB$ is a pre-Nichols algebra of $\cV(\epsilon,\ell)$, then
$\GK \NA(\Vdiag) \leq \GK \NA$, where $\Vdiag$ is the braided vector space of diagonal type
with matrix $(q_{ij})_{i,j\in\I}$, $q_{ij}=\epsilon$ for all $i,j\in\I$.
\end{lemma}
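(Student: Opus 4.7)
The plan is to realize $\cV(\epsilon,\ell)$ as a Yetter-Drinfeld module equipped with an absolutely complete flag of Yetter-Drinfeld submodules whose associated graded braided vector space is exactly $\Vdiag$, and then to invoke Lemma~\ref{lemma:filtered}. Take $\Gamma = \Z$ with generator $g$ and the character $\chi \in \widehat{\Gamma}$ with $\chi(g) = \epsilon$; put a YD structure on $\cV(\epsilon,\ell)$ by declaring it $\Gamma$-homogeneous of degree $g$ with $g$-action triangular in the basis $(x_i)_{i\in\I_\ell}$:
\[
g\cdot x_1 = \epsilon\, x_1, \qquad g\cdot x_j = \epsilon\, x_j + x_{j-1}, \qquad 2 \le j \le \ell.
\]
Since $V$ is $\Gamma$-homogeneous, the braiding induced by this YD structure is $c(u\ot v) = (g\cdot v)\ot u$, which reproduces \eqref{equation:basis-block}. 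The triangularity of the $g$-action implies that each $V_i := \langle x_1,\dots,x_i\rangle$ is $g$-stable, hence a Yetter-Drinfeld submodule, and the chain $0 = V_0 \subsetneq V_1 \subsetneq \cdots \subsetneq V_\ell = \cV(\epsilon,\ell)$ is absolutely complete in the sense of \S~\ref{subsection:filtr-nichols}.

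On the associated graded $\gr V$, the element $g$ now acts by the scalar $\epsilon$ on each quotient $V_i/V_{i-1}$, so the induced braiding has matrix entries $q_{ij} = \epsilon$ for all $i,j \in \I_\ell$; thus $\gr V \cong \Vdiag$. By Lemma~\ref{lemma:filtered}\eqref{obs:filtered-5}, $\gr\cB$ is a pre-Nichols algebra of $\gr V$, so there is a graded surjection $\gr\cB \twoheadrightarrow \NA(\Vdiag)$, giving $\GK\NA(\Vdiag) \le \GK \gr\cB$. Finally, since $\cB$ is graded with finite-dimensional homogeneous components and the filtration respects the grading, $\cB$ and $\gr\cB$ share the same Hilbert series, whence $\GK\gr\cB = \GK\cB$. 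The claim follows by composing these inequalities.

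There is no substantial obstacle here: the result is essentially a direct specialization of Lemma~\ref{lemma:filtered}\eqref{obs:filtered-6} to the flag constructed above, and the only genuinely new observation is the identification $\gr V \cong \Vdiag$, which reduces to the fact that a Jordan block degenerates to its diagonal on the associated graded.
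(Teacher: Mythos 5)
Your proof is correct and follows the same route as the paper: realize $\cV(\epsilon,\ell)$ in $\ydz$ with the natural triangular flag $V_k = \langle x_1,\dots,x_k\rangle$, observe $\gr V \cong \Vdiag$, and invoke Lemma~\ref{lemma:filtered}. The only cosmetic difference is that you justify $\GK\gr\cB = \GK\cB$ directly by equality of Hilbert series, whereas the paper compresses this step into the blanket reference to Lemma~\ref{lemma:filtered}; both are sound.
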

\pf  $\cV = \cV(\epsilon,\ell)$ has a complete flag
given by $V_k = \langle x_i: i\in \I_k \rangle$, $k\in \I_\ell$. Since
$c(x_i \ot x_j) = \epsilon x_{j} \ot x_{i} + V_{i-1} \ot x_{j}$, for  $i,j\in \I_\ell$, Lemma \ref{lemma:filtered} applies.
\epf

\subsection{Proof of Theorem \ref{th:infGK}}\label{subsection:blockneqpm1-1stpart}

\begin{paso} \label{prop:V-lambda-generic}
If $\epsilon\notin \G_{\infty}$, then $\cB(\cV(\epsilon,\ell)) = T(\cV(\epsilon,\ell))$.
\end{paso}

This was claimed without proof in \cite[3.20 (a)]{AS Pointed HA}.

\pf Let $\cB = \cB(\cV(\epsilon,\ell))$; then $\Bdiag$ is a pre-Nichols algebra of $\Vdiag$ as in Lemma  \ref{prop:gr br Hopf algebra}.
Now $\cB(\Vdiag) \simeq T(\Vdiag)$ by \cite[5.2 (b)]{FG}, hence  $\Bdiag \simeq T(\Vdiag)$ and the claim follows,
say by looking at the Hilbert series.
\epf

\begin{paso} \label{prop:V-lambda-N>3}
If  $\epsilon\in\G'_N$, $N\geq 4$, then $\GK \cB(\cV(\epsilon,\ell)) = \infty$ for all $\ell\ge2$.
\end{paso}

\pf We may assume that $\ell = 2$.
  By Lemma~\ref{prop:gr br Hopf algebra}, it suffices to show that $\GK \cB (\Vdiag) = \infty$.
Now $\Vdiag$ is of Cartan type with Cartan matrix
$$ A = \begin{pmatrix} 2 & 2-N \\ 2-N & 2\end{pmatrix}. $$
 Thus Theorem \ref{thm:nichols-diagonal-finite-gkd} applies.
\epf

\begin{paso} \label{le:infGK3}
Let $\epsilon\in\G'_3$. Then $\GK \cB(\cV(\epsilon,2)) = \infty$.
\end{paso}

\begin{proof}
Let $V=\cV(\epsilon, 2)$.
We determine first $\dim \NA^n (V)$ for $n\le 4$. To do so, we compute inductively for $n\le 4$
\begin{align*}
& &J(n) &:=\ker \partial _2:T^n(V)\to \NA^{n-1}(V),\\ &\text{and}
&I(n) &:=J(n)\cap \big(\ker \partial _1:T^n(V)\to \NA^{n-1}(V)\big),
\end{align*}
so that $\NA^n (V)=T^n(V)/I(n)$, $n\le 4$.
Now $\ker \partial _2$ is a subalgebra of $T(V)$, since
$\partial _2$ is a skew-derivation, and it is invariant
under the action of $g$ since
$$g^{-1}\partial _2g=\epsilon \partial _2.$$

Clearly $J(1)=\ku x_1$ and $I(1)=0$. So let $n=2$. Note that
$x_1^2\in J(2)$. Further, $\epsilon ^2+\epsilon +1=0$ by assumption.
  Let $x_{21}=x_2x_1-\epsilon x_1x_2$. Since
\begin{align*}
\partial _2(x_1x_2)=&\;x_1,&
\partial _2(x_2x_1)=&\;\epsilon x_1,&
\partial _2(x_2^2)=&\;-\epsilon ^2x_2+x_1,
\end{align*}
  we conclude that $J(2)=\ku x_1^2+\ku x_{21}$. We record that
\begin{align} \label{eq:gx21}
g\cdot x_{21}=\epsilon ^2x_{21}+(\epsilon -\epsilon ^2)x_1^2.
\end{align}
Further,
$$ \partial _1(x_1^2)=-\epsilon ^2x_1,\quad \partial _1(x_{21})=(1-\epsilon ^2)x_2-\epsilon x_1$$
and hence $I(2)=0$.

Now let $n=3$. Then $x_1^3,x_1x_{21},x_{21}x_1\in J(3)$. Moreover,
\begin{align*}
\partial _2(x_1^2x_2)=&\;x_1^2,\\
\partial _2(x_2^2x_1)=&\;-x_2x_1+\epsilon x_1^2,\\
\partial _2(x_2x_1x_2)=&\;x_2x_1+\epsilon ^2x_1x_2+\epsilon x_1^2,\\
\partial _2(x_1x_2^2)=&\;-\epsilon ^2x_1x_2+x_1^2,\\
\partial _2(x_2^3)=&\;-\epsilon ^2x_2x_1+\epsilon x_1x_2+x_1^2.
\end{align*}
Let
\begin{align*}
z_1=&\;x_2^2x_1+x_2x_1x_2+x_1x_2^2+(\epsilon ^2-\epsilon )x_1^2x_2,\\
z_2=&\;x_2^3-\epsilon ^2x_2^2x_1+\epsilon ^2x_1x_2^2-\epsilon ^2x_1^2x_2.
\end{align*}
Then $J(3)=\ku x_1^3+\ku x_1x_{21}+\ku x_{21}x_1+\ku z_1+\ku z_2$.
We record that
\begin{align*}
  g\cdot z_1=&\;z_1+2\epsilon ^2(x_2x_1^2+x_1x_2x_1+x_1^2x_2) + (4\epsilon -1)x_1^3,\\
  g\cdot z_2=&\;z_2+\epsilon ^2z_1+\epsilon x_1x_{21} + (1-\epsilon)x_1^3.
\end{align*}
Further,
\begin{align*}
\partial_1(x_1^3)=&\;0,\\
\partial_1(x_1x_{21})=&\;\epsilon ^2x_2x_1-\epsilon ^2x_1x_2-\epsilon ^2x_1^2,\\
\partial_1(x_{21}x_1)=&\;\epsilon x_2x_1-\epsilon x_1x_2-\epsilon ^2x_1^2,\\
\partial_1(z_1)=&\;-\epsilon ^2x_2x_1+(2\epsilon -\epsilon ^2)x_1x_2+(2-\epsilon
)x_1^2,\\
\partial_1(z_2)=&\;(\epsilon -\epsilon ^2)x_2^2+x_2x_1-\epsilon x_1x_2-x_1^2
\end{align*}
and hence $I(3)=\ku x_1^3$. At this place we used that $\mathrm{char}\,\ku\ne
2$.

Finally, let us assume that $n=4$. Since $\ker \partial _2$ is an algebra, we
conclude first that
$$x_1^4,x_1^2x_{21},x_1x_{21}x_1,x_{21}x_1^2,x_{21}^2,x_1z_1,x_1z_2,z_1x_1,z_2x_1\in
J(4).$$
Moreover, for $\partial_2:T^4(V)\to \NA^3 (V)$ we also obtain that
\begin{align*}
\partial _2(x_1^3x_2)=&\;0,\\
\partial _2(x_1^2x_2^2)=&\;-\epsilon ^2x_1^2x_2,\\
\partial _2(x_1x_2x_1x_2)=&\;x_1x_2x_1+\epsilon ^2x_1^2x_2,\\
\partial _2(x_2x_1x_1x_2)=&\;x_2x_1^2+x_1^2x_2,\\
\partial _2(x_2x_1x_2x_2)=&\;-\epsilon ^2 x_2x_1x_2+x_1x_2^2
+x_2x_1^2+\epsilon^2x_1x_2x_1+\epsilon^2x_1^2x_2,\\
\partial _2(x_2x_2x_1x_2)=&\;x_2^2x_1
  -\epsilon x_2x_1x_2-x_2x_1^2+\epsilon^2x_1^2x_2,\\
\partial _2(x_2x_2x_2x_2)=&\;x_2^3-x_2x_1x_2+\epsilon ^2x_1x_2^2
-\epsilon ^2x_2x_1^2+\epsilon x_1x_2x_1+\epsilon x_1^2x_2.
\end{align*}
  These formulas immediately imply that the elements
$$x_1^4,x_1^2x_{21},x_1x_{21}x_1,x_{21}x_1^2,x_{21}^2,x_1z_1,x_1z_2,z_1x_1,z_2x_1,x_1^3x_2$$
form a basis of $J(4)$.
Since $I(V)=\oplus _{n\ge 2}I(n)$ is a two-sided ideal and since $x_1^3\in
I(3)$, it is clear that
$x_1^4,x_1^3x_2,x_2x_1^3\in I(4)$. Moreover,
\begin{align*}
\partial_1(x_1^2x_{21})=&\;-\epsilon x_1x_2x_1+x_1^2x_2,\\
\partial_1(x_1x_{21}x_1)=&\;x_2x_1^2-\epsilon x_1^2x_2,\\
\partial_1(x_{21}^2)=&\;(\epsilon ^2-\epsilon )x_2^2x_1
  +(1-\epsilon )x_1x_2^2\\
  &\;+2\epsilon x_2x_1^2 +(\epsilon ^2-1)x_1x_2x_1+\epsilon x_1^2x_2,\\
\partial_1(x_1z_1)=&\;x_2^2x_1+x_2x_1x_2+x_1x_2^2\\
  &\;+2\epsilon ^2x_2x_1^2+\epsilon ^2x_1x_2x_1+(\epsilon ^2-1)x_1^2x_2,\\
\partial_1(z_1x_1)=&\;x_2^2x_1+x_2x_1x_2+x_1x_2^2\\
  &\;-x_2x_1^2+(2\epsilon ^2-1)x_1x_2x_1+(\epsilon ^2-\epsilon )x_1^2x_2,\\
\partial_1(x_1z_2)=&\;x_2^3+\epsilon ^2x_2x_1x_2-x_1x_2^2
  -\epsilon ^2x_1x_2x_1+(\epsilon -\epsilon ^2)x_1^2x_2,\\
\partial_1(z_2x_1)=&\;x_2^3-x_2^2x_1+\epsilon ^2x_1x_2^2
      +\epsilon x_2x_1^2-\epsilon ^2x_1x_2x_1-\epsilon ^2x_1^2x_2.
\end{align*}
Let
$$ z_3=z_1x_1-x_1z_1+(\epsilon ^2-\epsilon )x_1x_{21}x_1
  +(\epsilon -\epsilon ^2)x_1^2x_{21} \in T^4(V).$$
The above formulas imply that $x_1^4,x_1^3x_2,x_2x_1^3,z_3$ form a basis of
$I(4)$.

Consider now the $\N _0^2$-graded algebra $\Bdiag $. The elements $x_2^3$ and
$$w:=x_1x_2^3-x_2^3x_1$$
are primitive and non-zero in $\Bdiag $, since $I(3)$ has
no non-zero element of $\N_0^2$-degree $3\alpha_2$ and $I(4)$ has no non-zero element
of $\N _0^2$-degree $\alpha_1+3\alpha_2$. Let $\tilde{\cB}_1$ be the
braided Hopf subalgebra of $\Bdiag $ generated by $x_1,x_2,w$. Let $\tilde{\cB}_2$ be the
graded braided Hopf algebra associated to the natural Hopf algebra filtration of
$\tilde{\cB }_1$, where the generators $x_1,x_2,w$ have degree one. Let
$\tilde{\cB}_3$ be the Nichols algebra quotient of $\tilde{\cB}_2$. Then
$$ \GK \tilde{\cB}_3\le \GK \tilde{\cB}_2\le \GK \tilde{\cB}_1\le \GK\Bdiag
=\GK \cB (V).$$
  On the other hand, the Dynkin diagram of the degree one part of
$\tilde{\cB}_3$ is a triangle with vertices labeled by $\epsilon $ and edges
labeled by $\epsilon ^{-1}$. Thus $\tilde{\cB}_3$ is of Cartan type with Cartan
matrix of (affine) type $\tilde{A}_2$.
Thus
$$\GK \cB (V)=\GK\tilde{\cB}_3=\infty $$
by Theorem \ref{thm:nichols-diagonal-finite-gkd}.
\end{proof}

\section{Yetter-Drinfeld modules of dimension 3}\label{sec:yd-dim3}
	\subsection{The setting}\label{subsection:YD3-setting}
Let $\Gamma $ be an abelian group. In this Section we consider $V \in \ydG$, $\dim V =3$, such that
the corresponding braided vector space is not of diagonal type. So, $V$ is not semisimple and we have three possibilities
that we discuss in \S \ref{subsubsection:YD3-decomp}, \ref{subsubsection:YD3-3points-notdiag} and \ref{subsubsection:YD3-indecomp}.

\subsubsection{A block and a point}\label{subsubsection:YD3-decomp} Here
$V = \cV_{g_1}(\chi_1, \eta) \oplus \ku_{g_2}^{\chi_2}$, where
$g_1, g_2\in \Gamma$,  $\chi_1, \chi_2\in \widehat\Gamma$ and $\eta: \Gamma \to \ku$ is a $(\chi_1, \chi_1)$-derivation.
Here $\cV_{g_1}(\chi_1, \eta) \in \ydG$ is  indecomposable with basis $(x_i)_{i\in\I_2}$ and action given by
\eqref{equation:basis-triangular-gral}; while $\ku_{g_2}^{\chi_2}\in \ydG$ is irreducible with base $(x_3)$.
Also $\eta(g_1) \neq 0$, otherwise this is discussed in \S \ref{subsubsection:YD3-3points-notdiag}, and then we may suppose that  $\eta(g_1) = 1$ by normalizing $x_1$.
Let
\begin{align*}
q_{ij}&= \chi_j(g_i),& i,j&\in \I_2;& \epsilon &=q_{11};& a&= q_{21}^{-1}\eta(g_2).
\end{align*}
Then the braiding is given in the  basis $(x_i)_{i\in\I_3}$ by
\begin{align}\label{eq:braiding-block-point}
(c(x_i \otimes x_j))_{i,j\in \I_3} &=
\begin{pmatrix}
\epsilon x_1 \otimes x_1&  (\epsilon x_2 + x_1) \otimes x_1& q_{12} x_3  \otimes x_1
\\
\epsilon x_1 \otimes x_2 & (\epsilon x_2 + x_1) \otimes x_2& q_{12} x_3  \otimes x_2
\\
q_{21} x_1 \otimes x_3 &  q_{21}(x_2 + a x_1) \otimes x_3& q_{22} x_3  \otimes x_3
\end{pmatrix}.
\end{align}
Let $V_1 = \cV_{g_1}(\chi_1, \eta)$, $V_2 =\ku_{g_2}^{\chi_2}$. If $\epsilon^2 = 1$, then
\begin{align}
c^2_{\vert V_1 \otimes V_2} = \id \iff q_{12}q_{21} = 1 \text{ and } a=0.
\end{align}
The scalar $q_{12}q_{21}$ will be called the \emph{interaction} between the block and the point.
The interaction is
\begin{align*}
\text{weak if } q_{12}q_{21}&= 1, &\text{mild if } q_{12}q_{21}&= -1,&\text{strong if } q_{12}q_{21}&\notin \{\pm 1\}.
\end{align*}
So $c^2_{\vert V_1 \otimes V_2}$ is determined by the interaction and the (somewhat hidden) parameter $a$.
We introduce a normalized version of $a$, called the \emph{ghost}:
\begin{equation}\label{eq:discrete-ghost}
\ghost = \begin{cases} -2a, &\epsilon = 1, \\
a, &\epsilon = -1.
    \end{cases}
\end{equation}
If $\ghost \in \N$, then we say that the ghost is \emph{discrete}.

\begin{theorem}\label{thm:point-block} Let $V$ be a braided vector space with braiding \eqref{eq:braiding-block-point}.
Then the following are equivalent:
\begin{enumerate}
	\item $\GK \NA (V) < \infty$.
	
	\item $V$ is as in Table \ref{tab:finiteGK-block-point}.
\end{enumerate}	
\end{theorem}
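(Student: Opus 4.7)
The plan is to classify by the shape of the braiding \eqref{eq:braiding-block-point}, using two structural reductions and two disqualification tools, and finally building explicit presentations for the surviving cases.

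\textbf{Reductions and set-up.} Since $V$ contains $V_1$ as a braided subspace and $\NA(V_1) \hookrightarrow \NA(V)$, Theorem \ref{theorem:blocks} forces $\epsilon = q_{11} \in \{\pm 1\}$, so $V_1$ is a Jordan or super Jordan plane. The main tool is the braided splitting of \cite{AHS,HS}: writing $\NA(V) \simeq K \# \NA(V_1)$ with $K = \NA(K^1)$ and $K^1 = \ad_c\NA(V_1)(V_2) \subset \NA(V)$, one has by Lemma \ref{lemma:GKdim-smashproduct}\ref{item:GKdim-smashproduct-braided} that $\GK \NA(V) = \GK K + \GK \NA(V_1) = \GK K + 2$ (the block contributes $2$, and the local finiteness hypotheses hold because $K^1$ is generated over $V_2$ by iterated $\ad_c$ of the two block generators). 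Thus the question reduces to computing the braided vector space $K^1$ (spanned by iterated $\ad_c$ images of $x_3$) and deciding when $\NA(K^1)$ has finite $\GK$.

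\textbf{Ruling out the bad cases.} I would run through the parameter list $(\epsilon, q_{12}q_{21}, q_{22}, \ghost)$ and discard bad configurations in three layers. First, \emph{strong interaction} ($q_{12}q_{21} \notin \{\pm 1\}$): take the absolutely complete flag $\ku x_1 \subset V_1 \subset V$, apply Lemma \ref{prop:gr br Hopf algebra}/Lemma \ref{lemma:filtered} to obtain $\Vdiag$ of diagonal type with matrix built from $\epsilon, q_{22}, q_{12}q_{21}$, and then invoke Hypothesis \ref{hyp:nichols-diagonal-finite-gkd}; when $\epsilon = 1$ the subdiagram on $\{x_1,x_3\}$ has $q_{11}=1$, $q_{13}q_{31} \ne 1$, and Lemma \ref{lemma:points-trivial-braiding} already gives $\GK = \infty$. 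Second, \emph{non-discrete ghost} in the weak case: compute $\partial_3((\ad_c x_2)^i x_3)$ to see these elements never vanish when $\ghost \notin \N_0$, and apply Rosso's Lemma \ref{lemma:rosso-lemma19} (with degrees $\deg y_i = i+1$) to the sequence $y_i = (\ad_c x_2)^i x_3$ after verifying linear independence of the monomials \eqref{equation:S-LI}. Third, \emph{mild or strong cases with forbidden $q_{22}$}: identify inside $K^1$ a rank-two diagonal subspace whose Cartan-type or root-system invariants force infinite $\GK$, again by Hypothesis \ref{hyp:nichols-diagonal-finite-gkd}. Across all subcases this collapses the allowed data onto the rows of Table \ref{tab:finiteGK-intro-1}.

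\textbf{Building the Nichols algebras of the surviving cases.} For each row of Table \ref{tab:finiteGK-intro-1} the plan is the same three-step verification, modelled on the proofs of Propositions \ref{pr:1block} and \ref{pr:-1block}. Step one: verify the listed relations hold in $\NA(V)$ by applying the skew derivations $\partial_1,\partial_2,\partial_3$ from \eqref{eq:skewderivations} and checking they all annihilate the candidate. Step two: show the proposed PBW monomials span the quotient $\widetilde{\cB}$ of $T(V)$ by the relations; this follows by writing down straightening rules from the relations (typical identities such as \eqref{eq:relations B(W) - case 1} and \eqref{eq:relations B(W) - x12} suffice for $\epsilon = 1$ and analogous ones with $x_{21}$ central up to sign for $\epsilon = -1$). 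Step three: show these monomials are linearly independent in $\NA(V)$ by a derivation/duality argument; from a PBW basis the Hilbert series is immediate and yields the stated $\GK$. In the cases where $K^1$ is of diagonal type (the $\lstr$ rows), one can alternatively read off the presentation from the known relations of $\NA(K^1)$ by \cite{Ang-crelle} and then lift them to $\NA(V)$ using $K^1 = \ad_c \NA(V_1)(V_2)$.

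\textbf{Main obstacle.} The genuinely hard case is the mild interaction $\cyc_1$: here $K^1$ is \emph{not} of diagonal type (it contains a two-dimensional $\epsilon'$-block built from $f_0, f_1$ together with further primitives), so the diagonal-type reduction via Hypothesis \ref{hyp:nichols-diagonal-finite-gkd} does not directly apply and the PBW basis must be guessed and then checked by hand. The strategy for this row is to pass to $\gr \NA(V)$ with respect to a carefully chosen filtration, identify the primitive generators of $\gr K$ and their quadratic/cubic relations, compute $\GK \gr \NA = 2$ from the resulting Hilbert series, and then lift the relations back to $\NA(V)$; the subtle point is that the last mixed relation $x_2 z_1 + q_{12} z_1 x_2 - q_{12} f_0 x_2 - \tfrac{1}{2} f_1$ involves coefficients that are only pinned down after inspecting the action of $\partial_2$ on cubic primitives, and showing that no further cubic relation is needed is what will require the most delicate bookkeeping.
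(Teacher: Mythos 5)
Your plan is essentially the paper's proof: reduce to $\epsilon\in\{\pm1\}$ via Theorem \ref{theorem:blocks}, kill strong interaction, split according to weak/mild interaction and discrete/non-discrete ghost, use the bosonization $\NA(V)\simeq K\#\NA(V_1)$ together with Lemma \ref{lemma:GKdim-smashproduct} (and the subalgebra inclusion $K\hookrightarrow\NA(V)$ for the infinite-$\GK$ direction), and then exhibit presentations and PBW bases row by row via the skew-derivation machinery, exactly as in \S\ref{subsection:point-block-presentation}. The only place you take a different (and slightly heavier) route is Lemma \ref{le:strong}: you propose to dispatch strong interaction for $\epsilon=-1$ by appealing to Hypothesis \ref{hyp:nichols-diagonal-finite-gkd}, whereas the paper avoids the Hypothesis altogether there by refining the filtration so that $x_{21}$ becomes a degree-one generator of a $4$-dimensional diagonal subspace whose vertex has label $1$, which makes Lemma \ref{lemma:points-trivial-braiding} apply directly; this is worth knowing because Theorem \ref{thm:point-block} then depends on the Hypothesis in only one isolated step (inside Lemma \ref{lemma:mild-odd-order-2}), a fact the paper emphasizes in \S\ref{subsubsection:about-the-proofs}. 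Two small imprecisions, neither fatal: the non-discrete ghost argument invokes Lemma \ref{lemma:rosso-lemma19-gral} rather than Lemma \ref{lemma:rosso-lemma19} (the latter is the diagonal-pair special case, and here the $z_n$'s are not obtained from a rank-two diagonal subspace), and the equality $\GK\NA(V)=\GK K+2$ via Lemma \ref{lemma:GKdim-smashproduct}\ref{item:GKdim-smashproduct-braided} does require verifying the local-finiteness hypotheses, as recorded in Remark \ref{remark:GK-block-point-K}, rather than being automatic.
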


\begin{table}[ht]
\caption{Nichols algebras of a block and a point with finite $\GK$}\label{tab:finiteGK-block-point}
\begin{center}
\begin{tabular}{|c|c|c|c|c|c|}
\hline \scriptsize{interaction} & $\epsilon$  & $q_{22}$  & $\ghost$ & $\NA(V)$, \S  & $\GK$  \\
\hline
weak & $\pm 1$ & $1$ or $\notin \G_{\infty}$ & $0$ & $\NA(\cV(\epsilon, 1)) \underline{\otimes} \NA(\ku x_3)$  &   3
\\ \cline{3-3}\cline{6-6}
& & $\in \G_{\infty} - \{1\}$ &  &  &  2
\\ \cline{2-6}
& $1$  & $1$  & \small{discrete}   & $\cB(\lstr( 1, \ghost))$, \ref{subsubsection:lstr-11disc} &    $\ghost + 3$
\\\cline{3-6}
& & $-1$  & \small{discrete}  &  $\cB(\lstr( -1, \ghost))$, \ref{subsubsection:lstr-1-1disc}  &   $2$
\\ \cline{3-6}
& & $\in \G'_3$  &  1 & $\cB(\lstr( \omega, \ghost))$, \ref{subsubsection:lstr-1omega1} &   $2$
\\
\cline{2-6}
& $-1$ & $1$  & \small{discrete} &  $\cB(\lstr_{-}( 1, \ghost))$, \ref{subsubsection:lstr--11disc} &    $\ghost + 3$
\\\cline{3-6}
& & $-1$  &  \small{discrete} &  $\cB(\lstr_{-}(- 1, \ghost))$, \ref{subsubsection:lstr--1-1disc} &   $\ghost + 2$
\\ \hline
mild & $-1$  & $-1$  &   1 & $\cB(\cyc_1)$, \ref{subsubsection:nichols-mild} &    $2$

\\\hline
\end{tabular}
\end{center}
\end{table}

\noindent \emph{Scheme of the proof.} First,
$\epsilon^2 = 1$ and the interaction is not strong, by Corollary \ref{cor:indecomp-2} and Lemma \ref{le:strong}.
If $a = 0$ and $q_{12}q_{21} = 1$, then $\NA(V) \simeq \NA(\cV(\epsilon, 1)) \underline{\otimes} \NA(\ku x_3)$ has either $\GK =2$
if $q_{22} \in \G'_N$, $N > 1$, or 3 otherwise.
For the rest, Theorems \ref{thm:pm1bp} and \ref{thm:pm1bp-mild} apply.
\qed

\subsubsection{A pale block and a point}\label{subsubsection:YD3-3points-notdiag} 
Here is the second possibility in dimension 3: again
$V = \cV_{g_1}(\chi_1, \eta) \oplus \ku_{g_2}^{\chi_2}$ is a Yetter-Drinfeld module over $\ku \Gamma$, where
$g_1, g_2\in \Gamma$,  $\chi_1, \chi_2\in \widehat\Gamma$ and $\eta: \Gamma \to \ku$ is a $(\chi_1, \chi_1)$-derivation, as in \S \ref{subsubsection:YD3-decomp}.
To be out of \S \ref{subsubsection:YD3-decomp} we need $\eta(g_1) = 0$; thus the braiding will \emph{not} 
be of the form \eqref{eq:braiding-block-point}.
Then $\eta(g_2) \neq 0$ because $V$ is assumed not of diagonal type.
Let
$q_{ij}= \chi_j(g_i)$, $i,j\in \I_2$; $\epsilon =q_{11}$;
by normalizing $x_1$, we assume that $q_{21}^{-1}\eta(g_2) = 1$.
Then the braiding is given in the  basis $(x_i)_{i\in\I_3}$ by
\begin{align}\label{eq:braiding-paleblock-point}
(c(x_i \otimes x_j))_{i,j\in \I_3} &=
\begin{pmatrix}
\epsilon x_1 \otimes x_1&  \epsilon x_2  \otimes x_1& q_{12} x_3  \otimes x_1
\\
\epsilon x_1 \otimes x_2 & \epsilon x_2  \otimes x_2& q_{12} x_3  \otimes x_2
\\
q_{21} x_1 \otimes x_3 &  q_{21}(x_2 +  x_1) \otimes x_3& q_{22} x_3  \otimes x_3
\end{pmatrix}.
\end{align}

Since $\langle x_1, x_2\rangle$ is of diagonal Cartan type, we may assume that $\epsilon \in \G_2 \cup \G_3$.
This case is solved in \S \ref{subsection:ydz}, see Theorem \ref{th:paleblock-point-resumen}.

\begin{notation}
A braided vector space of dimension 3 whose  braiding is given in a  basis $(x_i)_{i\in\I_3}$ by
\eqref{eq:braiding-paleblock-point} is called \emph{a pale block and a point}. Indeed, 
$(x_i)_{i\in\I_2}$ form a braided subspace 
that in itself is of diagonal type, but becomes a block only together with $x_3$.
\end{notation}

\subsubsection{Indecomposable of dimension 3}\label{subsubsection:YD3-indecomp}
Assume that $V$ is indecomposable but not of diagonal type. Then
$V = V_g$ is homogeneous of degree $g \in \Gamma$ where the action of $g$ in some basis is given  by
$\begin{pmatrix}
\epsilon_1 &  1 & 0
\\
0 & \epsilon_1 & \eta
\\
0 &  0 & \epsilon_2
\end{pmatrix}$, where $\epsilon_1, \epsilon_2\in \ku^{\times}$ and  $\eta \in \ku$.
If $\eta = 0$, then the braided vector space $V$ is as in \ref{subsubsection:YD3-decomp}.
If $\eta \neq 0$, then $\epsilon_1= \epsilon_2 =:\epsilon$ so that $V$ is isomorphic to $\cV(\epsilon, 3)$ as braided vector space. We may assume that $\epsilon \in \{\pm 1\}$, by Theorem \ref{th:infGK}.

\begin{theorem}\label{thm:eigenvalue 1, size geq 3} If $\epsilon \in \{\pm 1\}$, then
$\GK \cB(\cV(\epsilon,\ell))=\infty$ for all $\ell \ge3$.
\end{theorem}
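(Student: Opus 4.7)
The plan is to reduce to $\ell=3$ and then pass to an associated graded of $V=\cV(\epsilon,3)$ that falls outside the block-plus-point classification of Theorem~\ref{thm:point-block}. For the reduction, note that $\cV(\epsilon,3)=\langle x_1,x_2,x_3\rangle$ sits inside $\cV(\epsilon,\ell)$ as a braided subspace, so $\NA(\cV(\epsilon,3))$ embeds in $\NA(\cV(\epsilon,\ell))$ as the sub-Nichols algebra generated by $x_1,x_2,x_3$; consequently $\GK\NA(\cV(\epsilon,\ell))\geq \GK\NA(\cV(\epsilon,3))$ and it is enough to treat $\ell=3$.

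Realising $V$ in $\ydz$, I take the Yetter--Drinfeld flag $0\subsetneq V_1\subsetneq V$ with $V_1=\langle x_1,x_2\rangle\simeq \cV(\epsilon,2)$ (this really is a Yetter--Drinfeld submodule, since $g\cdot x_2=\epsilon x_2+x_1\in V_1$) and with $V/V_1=\ku\bar x_3$ satisfying $g\cdot\bar x_3=\epsilon\bar x_3$. By Lemma~\ref{lemma:filtered},
\[
\GK\NA(\gr V)\leq \GK\NA(V),
\]
so it suffices to show $\GK\NA(\gr V)=\infty$, where $\gr V=V_1\oplus\ku\bar x_3$. Reading off the top-filtration parts of the formulas \eqref{equation:basis-block}, I find that $\gr V$ is a \emph{block plus a point} in the sense of \S\ref{subsubsection:YD3-decomp}: the block $V_1$ retains its eigenvalue $\epsilon$, the point has self-braiding $q_{22}=\epsilon$, and $q_{12}=q_{21}=\epsilon$, so the interaction $q_{12}q_{21}=1$ is weak. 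The delicate point will be the filtration bookkeeping: the summand $x_1\otimes x_3$ appearing in $c(x_3\otimes x_2)=(\epsilon x_2+x_1)\otimes x_3$ lives at the same filtration degree as the leading summand $\epsilon x_2\otimes x_3$, so it survives in the graded and produces the off-diagonal parameter $a=\epsilon$ in the normalization~\eqref{eq:braiding-block-point}.

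Plugging this into \eqref{eq:discrete-ghost} gives $\ghost=-2$ when $\epsilon=1$ and $\ghost=-1$ when $\epsilon=-1$; in both cases $\ghost\notin\N$, so $\gr V$ is absent from Table~\ref{tab:finiteGK-block-point}. Theorem~\ref{thm:point-block} then yields $\GK\NA(\gr V)=\infty$, and combined with the filtration inequality this forces $\GK\NA(V)=\infty$, as desired. The one step that requires care is the verification that $a=\epsilon$ in $\gr V$: one must confirm that the lower-order term $x_1\otimes x_3$ in $c(x_3\otimes x_2)$ is not cancelled by some other ingredient of the filtration, i.e.\ that the associated graded braiding truly retains a nonzero mixing parameter.
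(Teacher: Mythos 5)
Your proof is correct, and it takes a genuinely different route from the paper. The paper passes to the quotient $\cBt=\cB(\cV(\epsilon,3))/(x_1)$, constructs an explicit primitive $w$ there (the image of the Jordan, resp.\ super Jordan, relation), and views the span of the primitives $\bar x_2,\bar x_3,w$ as a \emph{block-and-point} with the block $\langle\bar x_2,\bar x_3\rangle$ sitting on top and the point $w$ in higher $\Z$-degree; the ghost comes out negative and Lemma~\ref{lemma:weak-not-discrete} applies. You instead take the Yetter--Drinfeld flag $0\subsetneq V_1=\langle x_1,x_2\rangle\subsetneq V$, form $\gr V$ via Lemma~\ref{lemma:filtered}, and observe that $\gr V$ is a \emph{block-and-point} with the block on the bottom and the point $\bar x_3$ in higher filtration degree; again the ghost is negative ($-2$ for $\epsilon=1$, $-1$ for $\epsilon=-1$) and the same lemma applies. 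Your approach is cleaner in two respects: it uses only the general filtration machinery rather than an ad hoc quotient, and it sidesteps entirely the verification that $w\neq 0$ in $\cBt$ (which the paper merely asserts ``by direct computation''). One small remark on your closing caveat: the term $x_1\ot\bar x_3$ appearing in $c(\bar x_3\ot x_2)$ is \emph{not} a lower-order term --- since $x_1$ and $x_2$ both sit in $V_1=\gr^1 V$, the summands $\epsilon x_2\ot\bar x_3$ and $x_1\ot\bar x_3$ have identical filtration degree, so nothing is discarded and no cancellation can occur; the mixing parameter $a=\epsilon$ is read off immediately from the formula $c(\bar x_3\ot x_2)=q_{21}(x_2+a\,x_1)\ot\bar x_3$ with $q_{21}=\epsilon$. (You write this correctly in the body of the argument and the concern is raised only in the last sentence, so this is a cosmetic slip rather than a gap.)
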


The proof is given in \S \ref{pf:eigenvalue 1, size geq 3}. This completes the proof of Theorem \ref{theorem:blocks}.
We next summarize  Theorems \ref{thm:point-block} and \ref{thm:eigenvalue 1, size geq 3}, and the discussion above.

\begin{coro}\label{cor:indec-3} Let $\Gamma $ be an abelian group, $V \in \ydG$, $\dim V =3$, not of diagonal type.
If $\GK \cB(V) < \infty$, then $V$ is as listed in Table \ref{tab:finiteGK-block-point}.
\end{coro}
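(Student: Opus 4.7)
The plan is to deduce this corollary as a case analysis built directly on the structural trichotomy set up in \S \ref{subsection:YD3-setting}. Since $V$ is not of diagonal type, $V$ must fall in exactly one of the three classes discussed there: (i) a block plus a point (\S \ref{subsubsection:YD3-decomp}), (ii) a pale block plus a point (\S \ref{subsubsection:YD3-3points-notdiag}), or (iii) a single indecomposable Yetter-Drinfeld module of dimension $3$ (\S \ref{subsubsection:YD3-indecomp}). That these cases are exhaustive follows from the semisimplicity analysis over the abelian group $\Gamma$ (in the spirit of Lemma \ref{lema:indec2-abgroup}), so no further combinatorial work is needed at this level.

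For case (i), the braiding has the form \eqref{eq:braiding-block-point}, and Theorem \ref{thm:point-block} furnishes the complete classification: $\GK \NA(V)<\infty$ forces $V$ to appear in Table \ref{tab:finiteGK-block-point}. For case (iii), $V$ is isomorphic as a braided vector space to $\cV(\epsilon,3)$ for some $\epsilon \in \kut$. Theorem \ref{th:infGK} already rules out $\epsilon \notin \{\pm 1\}$, and the two remaining possibilities are killed by Theorem \ref{thm:eigenvalue 1, size geq 3}, which asserts $\GK \NA(\cV(\pm 1,\ell)) = \infty$ for $\ell \ge 3$. Hence case (iii) contributes nothing to the finite-$\GK$ list.

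For case (ii), the braiding takes the form \eqref{eq:braiding-paleblock-point}. This falls outside the class \eqref{eq:bradinig-generalform}--\eqref{eq:intro-braiding-block-point} singled out in \S \ref{subsubsec:intro-class}, and its analysis is taken up separately in \S \ref{subsection:ydz} via Theorem \ref{th:paleblock-point-resumen}; the surviving examples are recorded in Table \ref{tab:finiteGK-intro-5}. In this way every pale-block-plus-point braiding with finite $\GK$ is either accounted for in the combined tables or excluded.

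There is no serious obstacle in the proof itself: the content has been packaged into Theorems \ref{thm:point-block}, \ref{th:paleblock-point-resumen}, \ref{thm:eigenvalue 1, size geq 3} and \ref{th:infGK}, and the corollary is simply the synthesis of their conclusions against the trichotomy of \S \ref{subsection:YD3-setting}. The only mildly delicate point I would verify is that no indecomposable $V \in \ydG$ of dimension $3$ has been overlooked in the passage from the action of $\Gamma$ to the braiding, i.e.\ that the Jordan form analysis in \S \ref{subsubsection:YD3-indecomp} is complete; this is essentially a book-keeping check on generalized eigenspaces of the action of the cocycle element $g$.
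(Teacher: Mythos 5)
Your trichotomy and the treatment of cases (i) and (iii) match the paper's intended summary (the paper's own ``proof'' is just the sentence ``We next summarize Theorems \ref{thm:point-block} and \ref{thm:eigenvalue 1, size geq 3}, and the discussion above''), and your reasoning there is correct. But in case (ii) there is a genuine gap between what you prove and what the corollary claims. The pale-block-and-point braidings with finite $\GK$ surviving Theorem \ref{th:paleblock-point-resumen} --- the Endymion examples $\eny_{\pm}(q)$ and $\eny_{\star}(q)$ --- do arise as Yetter-Drinfeld modules over an abelian group (the paper realizes them over $\Gamma=\Z^2$), are not of diagonal type, have $\GK \NA(V)<\infty$, and yet appear in Table \ref{tab:finiteGK-intro-5}, \emph{not} in Table \ref{tab:finiteGK-block-point}. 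These are not disguised entries of Table \ref{tab:finiteGK-block-point} under a change of basis: for instance the Hilbert series of $\NA(\eny_{\star}(q))$ and of the only $\GK=2$ candidate in that table with a PBW generator of degree $3$, namely $\NA(\cyc_1)$, already disagree in degree $2$ ($\dim\NA^2$ is $5$ versus $7$).

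Consequently your case (ii) does not close the argument for the corollary as written. Your concluding phrase ``accounted for in the combined tables'' is doing silent work that the corollary's wording does not authorize, since the corollary's conclusion references only Table \ref{tab:finiteGK-block-point}. You should make the point explicit: either the corollary must be read with its hypotheses implicitly restricted to cases (i) and (iii), i.e.\ to braidings of the form \eqref{eq:braiding-block-point} or $\cV(\epsilon,3)$, or its conclusion should be enlarged to allow Table \ref{tab:finiteGK-intro-5} as well. As the corollary is phrased, your proposal proves a weaker statement (``$V$ lies in Table \ref{tab:finiteGK-block-point} or Table \ref{tab:finiteGK-intro-5}'') rather than the literal one.
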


\subsubsection{}\label{subsubsection:YD3-notation} We fix the notation for the rest of this Section.
Let $(V, c)$ be a braided vector space of dimension 3, with a basis $(x_i)_{i\in\I_3}$ and braiding given  by \eqref{eq:braiding-block-point},
for some $q_{ij} \in \kut$, $i,j\in \I_2$, $a \in \ku$; and $\epsilon \in \{\pm 1\}$. The braided subspace spanned by $x_1, x_2$ is $\simeq \cV(\epsilon, 2)$
and we may use the material from \S \ref{subsection:indec-2}, \ref{subsection:jordanian}, \ref{subsection:super-jordanian},
e. g. $x_{21} = x_2x_1 - \epsilon x_1 x_2$.
We realize $(V, c)$ as a Yetter-Drinfeld module $\cV_{g_1}(\chi_1, \eta) \oplus \ku_{g_2}^{\chi_2}$
over some abelian group $\Gamma$ with suitable $g_1, g_2$,  $\chi_1, \chi_2$ and $\eta$; for instance $\Gamma = \Z^2$ would do.
Here $V_1 = \cV_{g_1}(\chi_1, \eta)$ is spanned by $x_1$ and $x_2$, while $V_2 = \ku_{g_2}^{\chi_2}$ is spanned by $x_3$.

In order to analyze the structure of $\NA (V)$, we consider
$K=\NA (V)^{\mathrm{co}\,\NA (V_1)}$, see \S \ref{subsubsection:weak-classification}, \ref{subsection:mild}.
By the general theory, see \cite[Proposition 8.6]{HS}, and also \cite[Lemma 3.2]{AHS}, 
$K = \oplus_{n\ge 0} K^n$ inherits the grading of $\NA (V)$;
$\NA(V) \simeq K \# \NA (V_1)$ and $K$ is the Nichols algebra of
\begin{align} \label{eq:1bpK^1}
  K^1= \ad_c\NA (V_1) (V_2).
\end{align}
Now $K^1\in {}^{\NA (V_1)\# \ku \Gamma}_{\NA (V_1)\# \ku \Gamma}\mathcal{YD}$ with the adjoint action
and the coaction given by
\begin{align} \label{eq:coaction-K^1}
\delta &=(\pi _{\NA (V_1)\#  \ku \Gamma}\otimes \id)\Delta _{\NA (V)\#  \ku \Gamma}.
\end{align}

For further use, we introduce
\begin{align}\label{eq:zn}
z_n &:= (\ad_c x_2)^n x_3,& n&\in\N_0.
\end{align}
\begin{remark}\label{remark:GK-block-point-K}
Assume that $\GK \toba(V_1) < \infty$ (hence, it is 2) and that the action of
$\toba(V_1)$ on $K$ is locally finite; then by Lemma \ref{lemma:GKdim-smashproduct}
and Remark \ref{rem:examples-GKdterministic},
\begin{align*}
\GK\NA(V) = \GK (K \# \NA (V_1))  = \GK K + 2.
\end{align*}
\end{remark}

\subsubsection{Strong interaction}\label{subsubsection:strong}
The following lemma simplifies significantly our problem.

\begin{lemma} \label{le:strong}
If the interaction is strong, then $\GK \NA (V)=\infty $.
\end{lemma}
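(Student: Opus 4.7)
The plan is to exhibit either a braided subspace of $V$ or a distinguished family of elements of $\NA(V)$ whose existence forces $\GK \NA(V)=\infty$, via Lemma \ref{lemma:rosso-lemma19} or its generalization Lemma \ref{lemma:rosso-lemma19-gral}.

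The first natural attempt is to consider the braided subspace $U=\langle x_1,x_3\rangle\subseteq V$: by direct inspection of \eqref{eq:braiding-block-point}, it is of diagonal type with matrix $\left(\begin{smallmatrix}\epsilon & q_{12}\\ q_{21} & q_{22}\end{smallmatrix}\right)$, since the parameter $a$ only couples $x_3$ with $x_2$ and so drops out of $U$. The subalgebra of $\NA(V)$ generated by $U$ is a pre-Nichols algebra of $U$ and hence surjects onto $\NA(U)$, yielding $\GK \NA(U)\leq \GK \NA(V)$. When $\epsilon=1$, strong interaction gives $q_{11}=1$ and $q_{12}q_{21}\neq 1$, so Lemma \ref{lemma:points-trivial-braiding} applies immediately and we are done.

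For $\epsilon=-1$ the subspace $U$ alone is not sufficient, because Heckenberger's rank-$2$ classification contains finite-root-system cases with $q_{11}=-1$ and $q_{12}q_{21}\notin\{\pm 1\}$ (notably the super $A_2$ configuration $q_{22}=(q_{12}q_{21})^{-1}$, for which $\GK \NA(U)<\infty$). In that case the plan is to apply Lemma \ref{lemma:rosso-lemma19-gral} directly inside $\NA(V)$ to the family $z_n := (\ad_c x_2)^n x_3$ from \eqref{eq:zn}, whose degrees $\deg z_n=n+1$ grow linearly. The key input coming from the block is that by Proposition \ref{pr:-1block} the powers $x_2^n$ remain nonzero in $\NA(V_1)\subseteq \NA(V)$; this rescues us from the obstruction $(k)_{-1}^{!}=0$ that prevented Lemma \ref{lemmata:rosso} from concluding in the 2-dimensional diagonal subspace $U$. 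Using the skew-derivations $\partial _1,\partial_2,\partial _3$ of \eqref{eq:skewderivations} and the hypothesis $q_{12}q_{21}\notin\{\pm 1\}$, one checks by induction on $\ell$ that $z_n\neq 0$ for every $n\geq 0$ and that the products $z_{i_1}\cdots z_{i_\ell}$ with $i_1<\dots <i_\ell$ are linearly independent in $\NA(V)$; Lemma \ref{lemma:rosso-lemma19-gral} then gives $\GK \NA(V)=\infty$.

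The principal obstacle is the $\epsilon=-1$ case: one cannot reduce the problem to a 2-dimensional diagonal subspace (the super $A_2$ configuration is an obstruction), nor to a 3-dimensional diagonal associated graded (the corresponding rank-$3$ diagram, with two decoupled $-1$-vertices connected to a common $q_{22}$-vertex by arcs $q_{12}q_{21}$, can still present a finite root system, e.g. $A_3$). Thus the argument must be carried out directly in the mixed block-and-point setting, and the linear independence of the $z_{i_1}\cdots z_{i_\ell}$ is the heart of the matter, resting on the interplay between the super-Jordan relation of $V_1$ and the strong off-diagonal interaction.
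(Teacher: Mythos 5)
Your argument for $\epsilon=1$ is correct and actually more economical than the paper's at that sub-case: the diagonal subspace $U=\langle x_1,x_3\rangle$ has $q_{11}=1$ and $q_{12}q_{21}\neq 1$, and since a braided subspace of $V$ generates inside $\NA(V)$ a copy of its own Nichols algebra, Lemma~\ref{lemma:points-trivial-braiding} finishes immediately. You have also correctly diagnosed why neither $U$ nor the full $3$-dimensional associated graded suffices when $\epsilon=-1$.

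The gap is the $\epsilon=-1$ case. Asserting that $z_n=(\ad_c x_2)^n x_3\neq 0$ for all $n$ and that the ordered products $z_{i_1}\cdots z_{i_\ell}$ are linearly independent is not something you can wave off with ``one checks by induction on $\ell$'': that is the entire content of the claim, and it is genuinely delicate. Lemma~\ref{lemmata:rosso}\ref{lemma:rosso-cor18} --- which is what normally upgrades ``$y_k\neq 0$'' to ``the monomials are independent'' --- requires the two elements you are bracketing to span a diagonal-type braided subspace; here $x_2$ is part of a block and has non-diagonal braiding with $x_1$, so that lemma simply does not apply. Moreover the closed formula $\partial_3(z_{2k})=\mu_{2k}x_{21}^k$, $\partial_3(z_{2k+1})=\mu_{2k+1}x_1x_{21}^k$ of Lemma~\ref{lemma:derivations-zn}, which is what controls non-vanishing and independence in the analogous weak-interaction arguments, is derived under the hypothesis $q_{12}q_{21}=1$ and is false in the strong case; you would have to redo that computation from scratch, track how the strong interaction perturbs the coefficients, and then still establish linear independence of arbitrary ordered products. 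None of this is carried out, and the appeal to ``$x_2^n\neq 0$ in the super Jordan plane'' does not by itself do the job, because $\ad_c x_2$ is not multiplication by $x_2$.

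The paper's route is different and avoids all of this. It works uniformly in $\epsilon\in\{\pm 1\}$ by a two-stage filtration: first pass to $\Bdiag$ via the complete flag (so $x_{21}=x_2x_1-\epsilon x_1x_2$ becomes primitive there), then refine by the Hopf filtration in which $x_1,x_2,x_3,x_{21}$ all have degree one. The resulting degree-one piece $\widetilde V$ is a $4$-dimensional diagonal braided vector space, and --- precisely because $\epsilon^2=1$ --- the vertex of $x_{21}$ carries self-braiding $g_1^2\cdot x_{21}=x_{21}=1\cdot x_{21}$, while its edge to $x_3$ is $q_{12}^2q_{21}^2\neq 1$. Then Lemma~\ref{lemma:points-trivial-braiding}, applied to the pair $\{x_{21},x_3\}$ inside $\widetilde V$, gives $\GK\NA(\widetilde V)=\infty$, and the chain $\GK\NA(\widetilde V)\le\GK\widetilde{\NA}\le\GK\NA(V)$ concludes. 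In short: the new vertex $x_{21}$ with label $1$ is exactly the degree-of-freedom you were missing; you identified the right obstacle but stopped one step short of the construction that removes it.
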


\begin{proof} By assumption, $q_{12}^2q_{21}^2\ne 1$.
Let $\Bdiag$ be the braided Hopf algebra of diagonal type
associated to the natural flag of Yetter-Drinfeld submodules of $V$, see \S \ref{subsection:filtr-nichols}.
The class of $x_{21}\in \Bdiag $ is non-zero and primitive since
$$ \Delta (x_{21})=x_{21}\otimes 1+1\otimes x_{21} -\epsilon x_1\otimes x_1 $$
in $\NA (V)$. Consider the filtration of $\Bdiag$, where $x_1$, $x_2$, $x_3$ and
$x_{21}$ have degree one. Let $\widetilde{\cB}$ be the associated graded braided Hopf algebra;
$\widetilde V :=\widetilde{\cB}^1$, generated by $x_1$, $x_2$, $x_3$, and $x_{21}$, is of diagonal type, with
Dynkin diagram  consisting of four vertices. Since $g_1^2\cdot
x_{21}=x_{21}$, the vertex
of $x_{21}$ has label $1$. Moreover, this vertex is connected by an edge with the
vertex of $x_3$, which is labeled by $q_{12}^2q_{21}^2$, since
\begin{align*}
 g_1^2\cdot x_3 &= q_{12}^2x_3,& g_2\cdot x_{21} &= q_{21}^2x_{21}.
\end{align*}
Since $q_{12}^2q_{21}^2\ne 1$,  $\GK \NA(\widetilde{V}) = \infty$ by
Lemma \ref{lemma:points-trivial-braiding}. The Lemma follows because
$\GK \NA(\widetilde{V}) \le \GK \widetilde{\NA}\le \GK \NA (V)$.
\end{proof}

\subsection{Weak interaction}\label{subsection:weak}
\emph{We assume that $q_{12}q_{21} = 1$}.

\subsubsection{}\label{subsubsection:weak-preliminaries}
We establish first a series of useful formulae.

\begin{lemma} \label{le:-1bpz}The following hold in $\NA(V)$ for all $n\in\N_0$:
\begin{align}
\label{eq:-1block+point}
g_1\cdot z_n &= \epsilon^nq_{12}z_n,& x_1z_n &= \epsilon^n q_{12}z_nx_1,& x_{21}z_n &= q_{12}^2z_nx_{21},
\\\label{eq:-1block+point-bis}
x_{21}^n x_2 &= (n\epsilon x_1 + x_2) x^n_{21},& g_2\cdot z_n &= q_{21}^nq_{22}z_n.&
\end{align}
\end{lemma}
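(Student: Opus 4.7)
The plan is to deduce all five identities from the recursion $z_{n+1}=x_2 z_n-(g_1\cdot z_n)x_2$ (which encodes $z_n=(\ad_c x_2)^n x_3$ together with the Yetter-Drinfeld braiding), the presentations of $\NA(\cV(\pm 1,2))$ from Propositions \ref{pr:1block} and \ref{pr:-1block}, and the weak interaction hypothesis $q_{12}q_{21}=1$. I shall begin with $x_{21}^n x_2=(n\epsilon x_1+x_2)x_{21}^n$, which lives inside $\NA(V_1)\subseteq \NA(V)$. The case $n=1$ is \eqref{eq:relations B(W) - x12} when $\epsilon=1$, and follows directly from \eqref{eq:rels-B(V(-1,2))-2} when $\epsilon=-1$. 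The inductive step $x_{21}^{n+1}x_2=x_{21}(n\epsilon x_1+x_2)x_{21}^n$ requires the commutation $x_1 x_{21}=x_{21}x_1$, which is immediate when $\epsilon=1$ (since $x_{21}=-\tfrac12 x_1^2$ by \eqref{eq:rels B(V(1,2))}) and is \eqref{eq:rels-B(V(-1,2))-dos} when $\epsilon=-1$.

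Next I shall dispatch the base cases $n=0$ of the three identities in \eqref{eq:-1block+point}. The identity $g_1\cdot x_3=q_{12}x_3$ is the definition. For $x_1 x_3=q_{12}x_3 x_1$, I compute the skew-derivations of \eqref{eq:skewderivations} on $y:=x_1 x_3-q_{12}x_3 x_1$: the identities $\partial_1(y)=\partial_2(y)=0$ are immediate, while $\partial_3(y)=(1-q_{12}q_{21})x_1=0$ uses precisely the weak interaction hypothesis; since $y$ is of positive degree with all skew-derivations vanishing, $y=0$ in $\NA(V)$. An analogous computation yields $x_{21}x_3=q_{12}^2 x_3 x_{21}$: here the essential inputs are $\partial_1(x_{21})=-\epsilon x_1$, $\partial_2(x_{21})=0$, and $g_2\cdot x_{21}=q_{21}^2 x_{21}$ (in which the correction $a(1-\epsilon)x_1^2$ vanishes either trivially when $\epsilon=1$, or by \eqref{eq:rels-B(V(-1,2))-1} when $\epsilon=-1$).

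With the base cases in hand, the three identities of \eqref{eq:-1block+point} follow by simultaneous induction on $n$. Since $z_n$ has $\ku\Gamma$-coaction $g_1^n g_2$, the inductive hypothesis $g_1\cdot z_n=\epsilon^n q_{12}z_n$ turns the braiding formula $c(x_2\otimes z_n)=(g_1\cdot z_n)\otimes x_2$ into $c(x_2\otimes z_n)=\epsilon^n q_{12}z_n\otimes x_2$, giving the concrete recursion $z_{n+1}=x_2 z_n-\epsilon^n q_{12}z_n x_2$. Expanding $g_1\cdot z_{n+1}$ through $g_1\cdot x_2=\epsilon x_2+x_1$ produces extraneous terms in $x_1 z_n$ and $z_n x_1$ that cancel exactly by the inductive hypothesis $x_1 z_n=\epsilon^n q_{12}z_n x_1$, yielding $g_1\cdot z_{n+1}=\epsilon^{n+1}q_{12}z_{n+1}$. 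The step for $x_1 z_{n+1}$ is obtained by left-multiplying the recursion by $x_1$ and substituting the two inductive hypotheses. The step for $x_{21}z_{n+1}$ is the longest: it uses $x_{21}x_2=(\epsilon x_1+x_2)x_{21}$ (the case $n=1$ of the first identity of \eqref{eq:-1block+point-bis}, already proved), the inductive hypothesis, and $x_1 x_{21}=x_{21}x_1$; after substitution, the terms proportional to $z_n x_1 x_{21}$ cancel, leaving $q_{12}^2 z_{n+1}x_{21}$.

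Finally, $g_2\cdot z_n=q_{21}^n q_{22}z_n$ is obtained by a separate induction on $n$. Using the recursion for $z_{n+1}$ and $g_2\cdot x_2=q_{21}(x_2+ax_1)$, the potential correction in the expansion of $g_2\cdot z_{n+1}$ collapses to a multiple of $x_1 z_n-\epsilon^n q_{12}z_n x_1$, which vanishes by the (already proved) second identity of \eqref{eq:-1block+point}; hence the diagonal action persists. The main obstacle throughout is bookkeeping: tracking the Jordan-block action of $g_1$ on $x_2$ through the braided commutators that build $z_n$ and invoking the weak interaction hypothesis at the correct moments; each individual inductive step is short once the recursion $z_{n+1}=x_2 z_n-\epsilon^n q_{12}z_n x_2$ is in hand.
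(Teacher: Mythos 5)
Your proposal is correct and follows essentially the same route as the paper's proof: the base cases $x_1x_3=q_{12}x_3x_1$ and $x_{21}x_3=q_{12}^2x_3x_{21}$ are settled by checking that the skew-derivations $\partial_1,\partial_2,\partial_3$ annihilate the relevant differences (using the weak interaction $q_{12}q_{21}=1$ and, when $\epsilon=-1$, the relation $x_1^2=0$), and then \eqref{eq:-1block+point} is established by a simultaneous induction on the recursion $z_{n+1}=x_2z_n-\epsilon^n q_{12}z_nx_2$, with $g_2\cdot z_n$ handled last once the second identity of \eqref{eq:-1block+point} is available. The only differences are cosmetic — you make the derivation computations for the base cases explicit and isolate $x_{21}^n x_2=(n\epsilon x_1+x_2)x_{21}^n$ as a self-contained preliminary, whereas the paper interleaves it; the logical content is the same.
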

\pf \eqref{eq:-1block+point}: the case $n =0$ is $g_1\cdot x_3 = q_{12}x_3$, by hypothesis,
$x_1x_3 = q_{12}x_3x_1$, because the interaction is weak, and $ x_{21}x_3 = q_{12}^2x_3x_{21}$; this last
equality can be checked with derivations, for instance $\partial_3(x_{21}x_3 - q_{12}^2x_3x_{21}) = a(\epsilon - 1)x_1^2$.
Now suppose that \eqref{eq:-1block+point} holds for $n$. Then
\begin{align}
\label{eq:-1block+point-z2}
x_2z_n &= \epsilon^nq_{12}z_nx_2 + z_{n+1}.
\end{align}
We compute
\begin{align*}
g_1\cdot z_{n+1} & \overset{\star}= g_1\cdot (x_2 z_n - \epsilon^{n}q_{12}z_n x_2)
\overset{\star}= (\epsilon x_2+x_1)\epsilon^nq_{12}z_n - \epsilon^{2n}q_{12}^2z_n(\epsilon x_2+x_1)\\
&= \epsilon^{n+1}q_{12}\left[(x_2z_n - \epsilon^{n}q_{12}z_nx_2) + (x_1z_n - \epsilon^{n}q_{12}z_nx_1) \right]
\overset{\star}= \epsilon^{n+1}q_{12}z_{n+1}
\end{align*}
where we applied in $\star$ either of the equalities in \eqref{eq:-1block+point} for $n$. Similarly,
\begin{align*}
x_1 z_{n+1} &= x_1( x_2 z_n - \epsilon^{n}q_{12}z_n x_2)
= \epsilon (-x_{21}+ x_2x_1)z_n - \epsilon^{2n}q_{12}^2 z_n x_1 x_2 \\
&= - \epsilon q_{12}^2z_n x_{21} + \epsilon^{n+1}q_{12}x_2z_nx_1 -q_{12}^2 z_n x_1 x_2 \\
&= \epsilon^{n+1}q_{12}x_2z_nx_1 - q_{12}^2z_n (\epsilon  x_{21} + x_1 x_2) \\
& = \epsilon^{n+1}q_{12}x_2z_nx_1 - \epsilon  q_{12}^2z_n  x_2 x_1 \\
&=  \epsilon^{n+1}q_{12}(x_2z_nx_1 - \epsilon^n  q_{12} z_n  x_2) x_1 =\epsilon^{n+1}q_{12} z_{n+1}x_1.\end{align*}
Also $x_{21}x_2 = (\epsilon x_1 + x_2) x_ {12}$ by \eqref{eq:relations B(W) - x12} and \eqref{eq:rels-B(V(-1,2))-2}. Hence
the first equation in \eqref{eq:-1block+point-bis} follows by induction; and also
\begin{align*}
x_{21}z_{n+1}&= x_{21}( x_2 z_n - \epsilon^{n} q_{12}z_n x_2)
=(\epsilon x_1 + x_2)x_{21}z_n - \epsilon^{n} q_{12}^3 z_n x_{21} x_2  \\
&=q_{12}^2(\epsilon x_1 + x_2)z_nx_{21} - \epsilon^{n} q_{12}^3z_n (\epsilon x_1 + x_2)x_{21}\\
&=q_{12}^2(x_2z_n - \epsilon^{n} q_{12}z_nx_2)x_{21} = q_{12}^2z_{n+1} x_{21}.
\end{align*}
For the second equation in  \eqref{eq:-1block+point-bis}, we compute using \eqref{eq:-1block+point}
\begin{align*}
g_2\cdot z_{n+1} &= g_2\cdot (x_2 z_n  - \epsilon^{n} q_{12}z_n x_2) \\
&=q_{21}(x_2+ax_1)q_{21}^nq_{22}z_n  - \epsilon^{n}  q_{12}q_{21}^{n+1}q_{22}z_n(x_2+ax_1)\\
&=q_{21}^{n+1}q_{22}(x_2z_n - \epsilon^{n} q_{12}z_nx_2) = q_{21}^{n+1}q_{22}z_{n+1}.
\end{align*}
which completes the proof.
\epf

We define recursively a family $(\mu_n)_{n \in\N_0}$ by
\begin{align}\label{eq:def-mu-n}
\mu_0 &= 1, & \mu_{2k + 1} &=- (a + k\epsilon) \mu_{2k},& \mu_{2k} &= (a + k +  \epsilon (a + k - 1))\mu_{2k - 1}.
\end{align}
Thus
\begin{align*}
\mu_{n + 1} &= \begin{cases} (2a + n)\mu_n &\text{if $n$ is odd,} \\
    -\frac{2a + n}{2}  \mu_n &\text{if $n$ is even,}
   \end{cases} & &\text{when}& \epsilon &= 1;
   \\
\mu_{n + 1} &= \begin{cases} \mu_n &\text{if $n$ is odd,} \\
     -(a - \frac n{2}) \mu_n &\text{if $n$ is even,}
   \end{cases} & &\text{when}& \epsilon &= -1.
\end{align*}
Therefore, if $\ghost$ is discrete, then $\mu_n =0 \iff n > \vert 2a\vert$, while
\begin{align}\label{eq:a-condition}
 & \ghost \notin \N_0& &\implies  \mu_n \neq 0& \text{for all } n &\in \N_0.
\end{align}

\begin{lemma}\label{lemma:derivations-zn}
For all $k\in\N_0$, $\partial_1(z_k) =\partial_2(z_k) =  0$,
\begin{align}\label{eq:derivations-zn}
\partial_3(z_{2k}) &= \mu_{2k} x_{21}^k,&  \partial_3(z_{2k+1}) &= \mu_{2k + 1} x_1x_{21}^k.
\end{align}
Therefore, if $\ghost$ is discrete, then $z_n =0 \iff n > \vert 2a\vert$.
\end{lemma}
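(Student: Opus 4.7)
The plan is to argue by induction on $n$ using the recursion $z_{n+1}=x_2z_n-\epsilon^nq_{12}z_nx_2$ from \eqref{eq:-1block+point-z2}, together with the skew-derivation rule \eqref{eq:skewderivations}. The base case $n=0$ is immediate since $z_0=x_3$ and $\partial_i(x_3)=\delta_{i3}$ with $x_{21}^0=1$. For the inductive step I would apply each $\partial_i$ separately.

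For $\partial_1$ and $\partial_2$ the computation is short. Both $x_1$ and $x_2$ sit in $V_1$ which is homogeneous of degree $g_1$, so both derivations carry the twist by $g_1$. Using $g_1\cdot z_n=\epsilon^nq_{12}z_n$ from \eqref{eq:-1block+point} and $\partial_1(x_2)=0$, the rule gives $\partial_1(z_{n+1})=x_2\partial_1(z_n)-\epsilon^nq_{12}\partial_1(z_n)(g_1\cdot x_2)$, which vanishes by induction. For $\partial_2$, the extra terms coming from $\partial_2(x_2)=1$ cancel exactly: one gets $\epsilon^nq_{12}z_n-\epsilon^nq_{12}z_n=0$ after invoking $g_1\cdot z_n=\epsilon^nq_{12}z_n$ and induction.

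The core of the proof is the computation of $\partial_3$. Here I would use $\partial_3(x_2)=0$ and the twist by $g_2$ for $\partial_3$, giving
\begin{align*}
\partial_3(z_{n+1})=x_2\partial_3(z_n)-\epsilon^nq_{12}\,\partial_3(z_n)\,(g_2\cdot x_2),
\end{align*}
with $g_2\cdot x_2=q_{21}(x_2+ax_1)$ and $q_{12}q_{21}=1$. Now I split into the two parities. In the even case I substitute $\partial_3(z_{2k})=\mu_{2k}x_{21}^k$ and apply \eqref{eq:-1block+point-bis}, which gives $x_2x_{21}^k-x_{21}^kx_2=-k\epsilon x_1x_{21}^k$; together with the fact that $x_1$ commutes with $x_{21}$ in both $\NA(\cV(1,2))$ and $\NA(\cV(-1,2))$ (immediate from $x_{21}=-\tfrac12x_1^2$ in the Jordan case and from \eqref{eq:rels-B(V(-1,2))-dos} in the super Jordan case), this produces $-(a+k\epsilon)\mu_{2k}\,x_1x_{21}^k=\mu_{2k+1}x_1x_{21}^k$, matching the recursion. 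In the odd case I substitute $\partial_3(z_{2k+1})=\mu_{2k+1}x_1x_{21}^k$ and, after moving $x_2$ past $x_1x_{21}^k$ using $x_2x_1=\epsilon x_1x_2+x_{21}$ and \eqref{eq:-1block+point-bis}, I arrive at
\begin{align*}
\partial_3(z_{2k+2})=\mu_{2k+1}\bigl[x_{21}^{k+1}-(k+\epsilon a)x_1^2x_{21}^k\bigr].
\end{align*}
The unified relation $x_1^2=-(1+\epsilon)x_{21}$ (valid in both blocks) then collapses this to $\mu_{2k+1}\bigl[1+(1+\epsilon)(k+\epsilon a)\bigr]x_{21}^{k+1}$, and a direct check shows the bracket equals $(a+k+1)+\epsilon(a+k)=\mu_{2k+2}/\mu_{2k+1}$. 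The uniform treatment via $x_1^2=-(1+\epsilon)x_{21}$ is the trick that avoids splitting into two separate cases throughout; this is the step I expect to be the most delicate to present cleanly.

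For the final equivalence, since $x_{21}^k$ and $x_1x_{21}^k$ are nonzero in $\NA(V_1)$ (by the PBW bases from Propositions~\ref{pr:1block} and \ref{pr:-1block}), we have $\partial_3(z_n)=0$ iff $\mu_n=0$. Combined with $\partial_1(z_n)=\partial_2(z_n)=0$ and the standard fact that an element of positive degree in a Nichols algebra is zero iff all its skew derivations vanish, this yields $z_n=0\iff\mu_n=0$. A short inspection of the defining recursion of $\mu_n$, together with the definition \eqref{eq:discrete-ghost} of $\ghost$ (so that $|2a|=\ghost$ when $\epsilon=1$ and $|2a|=2\ghost$ when $\epsilon=-1$), shows that when $\ghost$ is discrete, $\mu_n=0$ exactly for $n>|2a|$, proving the final assertion.
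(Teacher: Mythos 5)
Your proof is correct and follows essentially the same route as the paper's: induction via the recursion $z_{n+1}=x_2z_n-\epsilon^nq_{12}z_nx_2$, the identities $x_{21}^kx_1=x_1x_{21}^k$ and $x_{21}^kx_2=(k\epsilon x_1+x_2)x_{21}^k$ from Lemma~\ref{le:-1bpz}, and the unified relation $x_1^2=-(1+\epsilon)x_{21}$ to collapse the even-index step, which is precisely the device the paper uses. Incidentally, your explicit note that $\partial_2$ twists by $g_1$ (the degree of $x_2$) is the correct reading; the paper's display writes $g_2\cdot z_k$ at this point, which is a typo for $g_1\cdot z_k$, and your version resolves it properly.
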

\pf
The claims for $k=0$ follow at once because $\partial_i(x_j)= \delta_{ij}$. Arguing recursively,
\begin{align*}
&\partial_1(z_{k+1}) \overset{\eqref{eq:-1block+point-z2}}= \partial_1(x_2 z_k - \epsilon^{k}q_{12}z_k x_2) = 0,
\\
&\partial_2(z_{k+1}) \overset{\eqref{eq:-1block+point-z2}}= \partial_2(x_2 z_k - \epsilon^{k}q_{12}z_k x_2)
= g_2 \cdot z_k - \epsilon^{k}q_{12}z_k \overset{\eqref{eq:-1block+point}}= 0
\end{align*}
Also
\begin{align*}
\partial_3(z_{2k+1})&= \partial_3(x_2 z_{2k} - q_{12} z_{2k} x_2)
= \mu_{2k} \left(x_2 x_{21}^k - q_{12}q_{21} \,x_{21}^k (x_2+ ax_1) \right) \\
\overset{\eqref{eq:-1block+point-bis}} = & \mu_{2k} \left(x_2 x_{21}^k -  (k\epsilon x_1 + x_2) x_{21}^k -  ax_{21}^k x_1 \right)
= - (a + k\epsilon) \mu_{2k}\, x_1x_{21}^k;
\end{align*}
\begin{align*}
\partial_3(z_{2k})&= \partial_3(x_2 z_{2k-1} - q_{12}\epsilon\,  z_{2k-1} x_2)
\\ &= \mu_{2k-1} \left(x_2 x_1x_{21}^{k-1} - \epsilon \,x_1x_{21}^{k-1} (x_2+ ax_1) \right)
\\ &= \mu_{2k-1} \left(x_2 x_1x_{21}^{k-1} - \epsilon \,x_1  ((k-1)\epsilon x_1 + x_2) x_{21}^{k-1} - \epsilon a \,x_1^2 x_{21}^{k-1}  \right)
\\ &= \mu_{2k-1} \left(x_{21}^{k} - (k-1 +  \epsilon a) x_1^2 x_{21}^{k-1} \right)
\\ &= \mu_{2k-1} \left(1  + (k-1 +  \epsilon a) (1 + \epsilon) \right) x_{21}^{k}
\\ &=  \left(a + k +  \epsilon (a + k - 1) \right) \mu_{2k-1} x_{21}^{k}
\end{align*}
where we used $x_{21}x_1 =  x_1 x_ {12}$,  by \eqref{eq:relations B(W) - x12} and \eqref{eq:rels-B(V(-1,2))-dos},
and $x_1^2 =  -(1 + \epsilon) x_ {12}$.
\epf

If $\epsilon = 1$, then \eqref{eq:derivations-zn} says that
\begin{align}\label{eq:derivations-zn-eps1}
\partial_3(z_{2k}) &= \frac{(-1)^k}{2^k} \mu_{2k} x_{1}^{2k}, &
\partial_3(z_{2k+1}) &= \frac{(-1)^k}{2^k}\mu_{2k + 1} x_{1}^{2k + 1}.
\end{align}

\begin{lemma}\label{lemma:weak-not-discrete}
If the ghost is not discrete, then $\GK \cB(V)=\infty$.
\end{lemma}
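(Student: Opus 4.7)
The plan is to apply Lemma~\ref{lemma:rosso-lemma19-gral} to the graded algebra $\NA(V)$ with the family $y_k=z_k$, $k\in \N$. The linear degree bound $\deg z_k=k+1$ holds trivially. By \eqref{eq:a-condition}, the hypothesis $\ghost\notin \N_0$ forces $\mu_n\neq 0$ for every $n\in\N_0$, so Lemma~\ref{lemma:derivations-zn} gives $z_n\neq 0$ for all $n$. Each $z_n$ is $\Gamma$-homogeneous of degree $g_1^n g_2$, and these degrees are pairwise distinct, so the $z_n$ are linearly independent in $\NA(V)$. The essential obstacle is to establish the linear independence of the increasing products $\{z_{i_1}z_{i_2}\cdots z_{i_l}: 1\le i_1<i_2<\cdots<i_l\}$.

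To achieve this, I would exploit the Majid--Radford decomposition $\NA(V)\simeq K\#\NA(V_1)$, where $K=\NA(K^1)$ and $K^1=\ad_c\NA(V_1)(V_2)$ is the space of primitive elements of $K$; in particular each $z_n$ lies in $K^1$. Since $z_m$ is $\Gamma$-homogeneous, the braiding of $\NA(V)$ gives, using Lemma~\ref{le:-1bpz} and $q_{12}q_{21}=1$,
\begin{align*}
c(z_m\otimes z_n)=(g_1^m g_2)\cdot z_n\otimes z_m=\epsilon^{mn}q_{12}^{m-n}q_{22}\,z_n\otimes z_m.
\end{align*}
Thus $W:=\mathrm{span}\{z_n:n\ge 0\}\subseteq K^1$ is a countably infinite-dimensional braided subspace of diagonal type.

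Since $W$ sits inside the space of primitive elements of the Nichols algebra $K$, the sub-braided-Hopf-algebra $\langle W\rangle_K\subseteq K$ generated by $W$ is itself a Nichols algebra, canonically isomorphic to $\NA(W)$: it is generated in $K$-degree $1$ by $W$, and any primitive of $\langle W\rangle_K$ in $K$-degree $\ge 2$ would be primitive in $K$ and hence lie in $K^1$, which is concentrated in $K$-degree $1$, forcing it to vanish. Hence $\NA(W)\hookrightarrow K\subseteq \NA(V)$. Inside the diagonal Nichols algebra $\NA(W)$, each $z_n$ is a simple root vector of height $\ord(\epsilon^n q_{22})\ge 2$ (possibly $\infty$), so by \S\ref{subsubsection:diagonal-type-pbw} every monomial $z_{i_1}\cdots z_{i_l}$ with $i_1<\cdots<i_l$, in which each $z_{i_k}$ occurs with multiplicity $1$, belongs to a PBW basis of $\NA(W)$. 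Such monomials are therefore linearly independent in $\NA(W)$, and consequently in $\NA(V)$. Lemma~\ref{lemma:rosso-lemma19-gral} now yields $\GK\NA(V)=\infty$.
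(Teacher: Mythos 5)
Your proof is correct and rests on the same final criterion (Lemma~\ref{lemma:rosso-lemma19-gral}), but arrives at the linear independence of the increasing products by a genuinely different route. The paper stays inside $\NA(V)$, works only with the even-indexed $z_{2n}$, and deduces linear independence directly from the derivation formulas of Lemma~\ref{lemma:derivations-zn} via a minimal-degree argument; this needs only the material of \S\ref{subsubsection:weak-preliminaries}. You instead pass through the bosonization $\NA(V)\simeq K\#\NA(V_1)$, recognize that $W=\mathrm{span}\{z_n\}$ sits inside $K^1$ as a braided subspace of diagonal type, and invoke PBW theory for diagonal Nichols algebras. This is more conceptual --- it anticipates exactly the diagonal analysis of $K^1$ that \S\ref{subsubsection:weak-classification} performs in the discrete case --- but it imports more machinery.

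Two caveats. First, to conclude that the $\ydG$-braiding you compute on $W$ is the same braiding $W$ carries as a subspace of $K^1\in {}^{\NA(V_1)\#\ku\Gamma}_{\NA(V_1)\#\ku\Gamma}\mathcal{YD}$ (which is what the embedding $\NA(W)\hookrightarrow K$ actually requires), you are tacitly using $\ad_c x_1(z_n)=\ad_c x_{21}(z_n)=0$, i.e.\ \eqref{eq:adx1-zn} and \eqref{eq:adx12-zn}; these appear in the paper only after the Lemma in question, though their short proofs never use discreteness, so this is harmless but worth saying. Second, the citation of \S\ref{subsubsection:diagonal-type-pbw} is loose: that statement asserts only that \emph{some} totally ordered $L$ exists (and is formulated for finite rank), so it does not by itself certify that the increasingly ordered $z_{i_1}\cdots z_{i_l}$ are PBW monomials. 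The cleanest fix avoids PBW theory entirely: since the monomials $z_{i_1}\cdots z_{i_l}$ with distinct indices lie in pairwise distinct multidegree components of $\NA(W)$, it suffices to show each is nonzero, and this follows from $\partial_{z_{i_1}}\cdots\partial_{z_{i_l}}(z_{i_1}\cdots z_{i_l})=1$ using the skew-derivations of $\NA(W)$ dual to the basis $\{z_n\}$.
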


\pf  By hypothesis and \eqref{eq:a-condition}, $\deg z_n = n +1$. We claim that the set
$$ S= \{z_{2n_1}\cdots z_{2n_k}: k,n_i \in \N, n_1<\dots<n_k\} $$
is linearly independent.
Otherwise, there exists a non-trivial linear combination of elements of $S$, that we assume  it is of minimal degree.
By Lemmas \ref{le:-1bpz} and \ref{lemma:derivations-zn}, and \eqref{eq:-1del1} when $\epsilon =-1$,
\begin{align*}
\partial_1^{n_k}\partial_3(z_{2n_1}\dots z_{2n_k}) &= n_k!\, \mu_{2n_k}z_{2n_1}\dots z_{2n_{k-1}},& \epsilon &= -1,
\\
\partial_1^{2n_k}\partial_3(z_{2n_1}\dots z_{2n_k}) &= \frac{(-1)^{n_k}}{2^{n_k}} (2n_k)! \,
\mu_{2n_k}z_{2n_1}\dots z_{2n_{k-1}},& \epsilon &= 1.
\end{align*}
Hence the coefficient of $z_{2n_k}$, where $n_k$ is maximal, of the linear
combination is again a linear dependence by \eqref{eq:a-condition}, a contradiction; the claim follows.
Since $\cB(V)$ is finitely generated,  $\GK \cB(V)=\infty$ by  Lemma \ref{lemma:rosso-lemma19-gral}.
\epf

\subsubsection{Proof of Theorem \ref{thm:eigenvalue 1, size geq 3}}\label{pf:eigenvalue 1, size geq 3}

It is enough to assume that $\ell = 3$.
The ideal $ I $ generated by $x_1$ in $\cB(\cV(\epsilon,3))$ is a
coideal since $x_1$ is primitive, and a Yetter-Drinfeld submodule since $\ku x_1$ is stable by the $\Z$-action.
Then $\cBt=\cB(\cV(\epsilon,3))/ I $ is a $\N_0$-graded Hopf algebra in $\ydz$, generated by (the images of) $x_2,x_3$.
Let
\begin{align*}
w &= \begin{cases} x_3x_2-x_2x_3+\frac{1}{2}x_2^2 \in \cBt_{\gb^2}, & \epsilon = 1, \\
(\ad_c x_3)^2 (x_2) - x_2 (\ad_c x_3)(x_2) \in \cBt_{\gb^3}, & \epsilon = -1.   \end{cases}
\end{align*}
We claim that $w\neq 0$. Indeed,
$(\ad_c x_3)^2 x_2-x_2 (\ad_c x_3)(x_2) \notin  I $ by direct computation of of the component of degree $3$ of $\cB(\cV(-1,3))$,
and similarly for $\epsilon = 1$.
Now $w$ is a primitive element in $\cBt$, hence the Nichols algebra of the span $V$ of
the linearly independent primitive elements $x_2$, $x_3$ and $w$ is a quotient of $\cBt$. But  $V$ is as
in \eqref{eq:braiding-block-point} for
\begin{align*}
q_{12}=q_{21}=q_{22} &=1, & a &=2, & \text{ if }\epsilon &= 1, \\
q_{12}=q_{21}=q_{22} &= -1,&  a &=-1, &\text{ if } \epsilon &= -1.
\end{align*}

Since $\GK \cB(V) \leq \GK \cBt \leq \GK \cB(\cV(\epsilon,3))$, Lemma \ref{lemma:weak-not-discrete}
implies that $\GK \cB(\cV(\epsilon,3)) = \infty$.
\qed

\subsubsection{}\label{subsubsection:weak-classification}
Recall
$K=\NA (V)^{\mathrm{co}\,\NA (V_1)} \simeq \cB(K^1)$, $K^1= \ad\NA (V_1)(V_2)$, cf. \S \ref{subsubsection:YD3-notation}.

\begin{remark} \label{lemma:K-basis} If $\ghost$ is discrete, then the family $(z_n)_{0\le n\le \vert 2a\vert}$ is a basis of $K^1$.
\end{remark}

Indeed,  $(z_n)_{0\le n\le \vert 2a\vert}$ is linearly independent,
because the $z_n$'s are homogeneous of distinct degrees, and  are $\neq 0$ by \eqref{eq:derivations-zn}.
We have for all $n\in\N_0$
\begin{align}\label{eq:adx1-zn}
\ad_c x_1 (z_n) =  x_1z_n - g_1\cdot z_n x_1 \overset{\eqref{eq:-1block+point}}= \epsilon^n q_{12}z_nx_1 -  \epsilon^nq_{12}z_nx_1 = 0,
\\
\label{eq:adx12-zn}
\ad_c x_{21} (z_n) = \ad_c  x_2 \ad_c x_1 (z_n) - \epsilon \ad_c x_1 \ad_c x_2 (z_n) = 0.
\end{align}
Then $K^1$ is generated by $z_n$, $n\in \N_0$, and the Remark follows.

\smallbreak
If $\epsilon = 1$, then we define    recursively
$\nu_{k,n}$  as follows:  $\nu_{n,n}=1$,
\begin{align*}
\nu_{0, n+1} &= -  \big(\frac{n}{2}+ a \big)\nu_{0,n}, &
\nu_{k,n+1}&=\nu_{k-1,n}-\left(\frac{n+k}{2}+ a\right)\nu_{k,n}, & &  1\le k\le n.
\end{align*}

\begin{lemma} \label{le:zcoact}
The coaction \eqref{eq:coaction-K^1} on $z_n$, $n\in \N _0$, is given by \eqref{eq:coact-zn}, when $\epsilon = 1$,
and by \eqref{eq:coact-zn-even}, \eqref{eq:coact-zn-odd}, when $\epsilon = -1$:

\begin{align} \label{eq:coact-zn}
\delta (z_{n}) &= \sum _{k=0}^n \nu_{k,n}\, x_1^{n-k}g_1^{k} g_2 \otimes z_k.
\end{align} \begin{align}\label{eq:coact-zn-even}
\delta (z_{2n}) &=
\sum _{k=1}^n k\binom n k \mu_{k,n} \,
x_1x_{21}^{n-k}g_1^{2k-1}g_2\otimes z_{2k-1}
\\ \notag&\qquad  + \sum _{k=0}^n \binom n k\mu_{k, n} x_{21}^{n-k}g_1^{2k}g_2\otimes z_{2k},
\\ \label{eq:coact-zn-odd}
\delta (z_{2n+1}) &=
\sum _{k=0}^n \binom nk \mu_{k, n+1} \,   x_1x_{21}^{n-k}g_1^{2k}g_2\otimes z_{2k}
\\ \notag& \qquad + \sum _{k=0}^n \binom nk \mu_{k+1, n+1} x_{21}^{n-k}g_1^{2k+1}g_2\otimes z_{2k+1}.
\end{align}
\end{lemma}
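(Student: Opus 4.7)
The plan is to prove the three formulas simultaneously by induction on $n$, using the recursion $z_{n+1} = x_2 z_n - \epsilon^n q_{12} z_n x_2$ from \eqref{eq:-1block+point-z2}. The base case $n=0$ reduces to the identity $\delta(x_3) = g_2 \otimes x_3$, which matches each of \eqref{eq:coact-zn}, \eqref{eq:coact-zn-even}, \eqref{eq:coact-zn-odd} provided $\nu_{0,0} = \mu_{0,0} = 1$; these normalizations will be read off as a byproduct of the induction. The inductive step rests on the fact that $\pi \otimes \id: \mathcal{H} \otimes \mathcal{H} \to H \otimes \mathcal{H}$ is an algebra homomorphism (writing $\mathcal{H} = \NA(V) \# \ku\Gamma$, $H = \NA(V_1) \# \ku\Gamma$), and that $\pi$ is the identity on $x_2$ and on $g_1$; applying it to $\Delta(z_{n+1}) = \Delta(x_2)\Delta(z_n) - \epsilon^n q_{12} \Delta(z_n)\Delta(x_2)$ together with $\Delta(x_2) = x_2 \otimes 1 + g_1 \otimes x_2$ produces the key identity
\begin{align*}
\delta(z_{n+1}) = (x_2 \otimes 1 + g_1 \otimes x_2)\, \delta(z_n) - \epsilon^n q_{12}\, \delta(z_n)\, (x_2 \otimes 1 + g_1 \otimes x_2).
\end{align*}

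Substituting the inductive form of $\delta(z_n)$ yields four summands to simplify. The two summands whose second tensor factor carries an $x_2$ on the right combine via \eqref{eq:-1block+point-z2} (which replaces $x_2 z_k - \epsilon^k q_{12} z_k x_2$ by $z_{k+1}$), and a shift $k \mapsto k-1$ produces the ``raising'' contribution $\nu_{k-1, n}$ (resp.\ the analogue for $\mu_{k, n+1}$) to the recursion. The two summands with $x_2$ in the first tensor factor must be pushed back into the normal form prescribed by the statement. For this I would use: the Jordan or super Jordan relations of Propositions \ref{pr:1block} and \ref{pr:-1block}, giving $x_2 x_1^m = x_1^m x_2 - \tfrac{m}{2} x_1^{m+1}$ via \eqref{eq:relations B(W) - case 1} when $\epsilon=1$, or $x_2 x_{21}^m = x_{21}^m x_2 + m x_1 x_{21}^m$ via \eqref{eq:-1block+point-bis} together with $x_1^2=0$ when $\epsilon=-1$; the module relation $g_1 x_2 = (\epsilon x_2 + x_1) g_1$, iterated to $x_2 g_1^k = g_1^k x_2 - k\epsilon^{k-1} x_1 g_1^k$; and the commutation $g_1^k g_2\, x_2 = \chi_1(g_1^k g_2) x_2 g_1^k g_2 + \eta(g_1^k g_2) x_1 g_1^k g_2$, with cocycle values $\chi_1(g_1^k g_2) = \epsilon^k q_{21}$ and $\eta(g_1^k g_2) = q_{21}(\epsilon^k a + k \epsilon^{k-1})$ computed from the derivation property of $\eta$. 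The weak-interaction assumption $q_{12}q_{21}=1$ then forces the residual terms that still contain an $x_2$ in the first tensor factor to cancel exactly.

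For $\epsilon = 1$, matching the coefficient of $x_1^{(n+1)-k} g_1^k g_2 \otimes z_k$ gives precisely
\begin{align*}
\nu_{k, n+1} = \nu_{k-1, n} - \Big(\tfrac{n+k}{2} + a\Big) \nu_{k, n},
\end{align*}
with the boundary conventions $\nu_{-1, n} = \nu_{n+1, n} = 0$ implying both $\nu_{0, n+1} = -(\tfrac{n}{2} + a)\nu_{0, n}$ and $\nu_{n+1, n+1} = \nu_{n, n} = 1$; this is exactly the recursion specified just before the Lemma, so the induction closes in this case.

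The main obstacle is the bookkeeping for $\epsilon = -1$. The basis of $H$ uses $\{x_1^a x_{21}^b g : a \in \{0,1\},\, b \ge 0,\, g \in \Gamma\}$, and moving $x_2$ past a power $x_{21}^{n-k}$ produces both a pure $x_{21}^{n-k}$ contribution and a correction $(n-k)\,x_1 x_{21}^{n-k}$; consequently the even- and odd-indexed sums in \eqref{eq:coact-zn-even}--\eqref{eq:coact-zn-odd} become coupled by the induction, and one must simultaneously track the transition from the $z_{2n}$-formula to the $z_{2n+1}$-formula and back. The quantities $\mu_{k, n}$ have to be defined as the unique solution of the resulting two-index recursion with $\mu_{0,0} = 1$, and one must verify that the binomial weights $\binom{n}{k}$ and the extra factor $k$ in \eqref{eq:coact-zn-even} reproduce themselves at each stage (the factor $k$ arising precisely from the $(n-k) x_1 x_{21}^{n-k}$ correction above combined with the $-k\epsilon^{k-1} x_1 g_1^k$ correction from $x_2 g_1^k$). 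This combinatorial verification, together with the odd/even alternation, is the technical crux of the proof.
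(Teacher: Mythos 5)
Your proposal follows the paper's own proof essentially step by step: induction on $n$, the key identity $\delta(z_{n+1}) = (x_2\otimes 1+g_1\otimes x_2)\delta(z_n) - \epsilon^n q_{12}\,\delta(z_n)(x_2\otimes 1+g_1\otimes x_2)$ obtained because $\pi\otimes\id$ is an algebra map, then sorting the four summands into a ``raising'' part (via \eqref{eq:-1block+point-z2}) and a ``normal form'' part. Your derivation of the recursion $\nu_{k,n+1}=\nu_{k-1,n}-(\tfrac{n+k}{2}+a)\nu_{k,n}$, and your observation that the weak interaction $q_{12}q_{21}=1$ is precisely what makes the $x_2$ that lands in the first tensor factor cancel, both match the paper's argument for $\epsilon=1$. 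Your treatment of the $\epsilon=-1$ case is only a sketch, which is fair — the paper likewise writes out only the passage $z_{2n}\to z_{2n+1}$ (splitting into terms $A,\dots,H$) and leaves the even step as ``similar.''

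One small slip worth flagging: your claimed unified commutation formula $x_2 g_1^k = g_1^k x_2 - k\epsilon^{k-1}x_1 g_1^k$ is correct for $\epsilon=1$ but not for $\epsilon=-1$; iterating $g_1 x_2 = (\epsilon x_2 + x_1)g_1$ and $g_1 x_1 = \epsilon x_1 g_1$ gives $x_2 g_1^k = \epsilon^{-k} g_1^k x_2 - k\,x_1 g_1^k$, i.e.\ the $g_1^k x_2$ term must carry the factor $\epsilon^{-k}$ (the paper writes the equivalent left-normalized form $g_1^j x_2 = (-1)^j(x_2-jx_1)g_1^j$). Since for $\epsilon=-1$ you only gesture at the computation, this doesn't undermine the structure of the argument, but it would need to be corrected before the $\epsilon=-1$ bookkeeping could actually be carried out.
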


\begin{proof}
We proceed by induction on $n$. Since $\delta (z_0)= g_2\otimes z_0$, the claim is clear for $n=0$.
Assume that $\epsilon = 1$ and that \eqref{eq:coact-zn} holds for $n$. Then
\begin{multline*}\allowdisplaybreaks
\delta (z_{n+1}) \overset{\eqref{eq:-1block+point-z2}} = (x_2\otimes 1+g_1\otimes x_2)\delta (z_{n})
-q_{12}\delta (z_{n})(x_2\otimes 1+g_1\otimes x_2)
\displaybreak[0]\\
= \sum _{k=0}^n \nu_{k,n}
\big(x_2 x_1^{n-k}g_1^{k} g_2 \otimes z_k +  x_1^{n-k}g_1^{k + 1} g_2 \otimes x_2 z_k
\displaybreak[0]\\
\qquad - q_{12} x_1^{n-k}g_1^{k} g_2 x_2 \otimes z_k - q_{12} x_1^{n-k}g_1^{k + 1} g_2 \otimes z_k x_2 \big)
\displaybreak[0]\\
\overset{\star} = -\sum _{k=0}^n \big(\frac{n+k}{2}+a \big) \nu_{k,n} x_1^{n +1 -k} g_1^kg_2\otimes z_k
+\sum _{k=1}^{n + 1} \nu_{k - 1,n} \, x_1^{n + 1 -k} g_1^kg_2\otimes z_{k}
\displaybreak[0]\\
= -  \big(\frac{n}{2}+ a \big)\nu_{0,n} x_1^{n+1} \otimes z_0 +
\sum _{k=1}^n \left[\nu_{k-1,n}- \big( \frac{n+k}{2}+ a \big)\nu_{k,n} \right] x_1^{n + 1 -k} g_1^kg_2\otimes z_k
\displaybreak[0]\\
+ \nu_{n,n} g_1^{n+1}g_2\otimes z_{n+1}.
\end{multline*}
Here, we use  \eqref{eq:relations B(W) - case 1} and the identity $x_2 g_1^k = g_1^kx_2 - kx_1g_1^k$ in $\star$.
But this is \eqref{eq:coact-zn} for $n+1$.
Assume next that $\epsilon =-1$  and that \eqref{eq:coact-zn-even} holds for $n$.
Then
\begin{multline*}\allowdisplaybreaks
\delta (z_{2n+1}) \overset{\eqref{eq:-1block+point-z2}} = (x_2\otimes 1+g_1\otimes x_2)\delta (z_{2n})
- q_{12}\delta (z_{2n})(x_2\otimes 1+g_1\otimes x_2)
\displaybreak[0]\\
= \underbrace{\sum _{k=1}^n k\binom n k \mu_{k,n} \, x_2 x_1x_{21}^{n-k}g_1^{2k-1}g_2\otimes z_{2k-1}}_{A}
+ \underbrace{\sum _{k=0}^n \binom n k\mu_{k, n}  x_2 x_{21}^{n-k}g_1^{2k}g_2\otimes z_{2k}}_{B}
\displaybreak[0]\\
\underbrace{\sum_{k=1}^n k\binom n k \mu_{k,n}  g_1 x_1x_{21}^{n-k}g_1^{2k-1}g_2\otimes x_2 z_{2k-1}}_{C}
 + \underbrace{\sum _{k=0}^n \binom n k\mu_{k, n} g_1 x_{21}^{n-k}g_1^{2k}g_2\otimes x_2 z_{2k}}_{D}
\displaybreak[0] \\
-q_{12} \Big( \underbrace{\sum _{k=1}^n k\binom n k \mu_{k,n} \,
x_1x_{21}^{n-k}g_1^{2k-1}g_2 x_2\otimes z_{2k-1}}_{E}
\displaybreak[0]\\
+ \underbrace{\sum _{k=0}^n \binom n k\mu_{k, n} x_{21}^{n-k}g_1^{2k}g_2 x_2 \otimes z_{2k}}_{F}
\displaybreak[0]\\
+ \underbrace{\sum _{k=1}^n k\binom n k \mu_{k,n}
x_1x_{21}^{n-k}g_1^{2k-1}g_2g_1\otimes z_{2k-1} x_2}_{G}
\displaybreak[0]\\
+ \underbrace{\sum _{k=0}^n \binom n k\mu_{k, n} x_{21}^{n-k}g_1^{2k}g_2g_1\otimes z_{2k} x_2}_{H}\Big).
\end{multline*}

Now $A -q_{12} E = \sum _{k=1}^n k\binom n k \mu_{k,n}\, I \otimes z_{2k-1}$ where

\begin{multline*}\allowdisplaybreaks
I \overset{\eqref{eq:x12}, \eqref{eq:braiding-block-point}}= x_{21}^{n + 1 -k}g_1^{2k-1}g_2 -
x_1 x_2 x_{21}^{n-k}g_1^{2k-1}g_2 - x_1x_{21}^{n-k} g_1^{2k-1} (x_2 + a x_1) g_2
\\
 \overset{\star} = \left(x_{21}^{n + 1 -k} -  x_1 x_2 x_{21}^{n-k} - x_1x_{21}^{n-k} \left(-x_2 + (2k - 1 - a) x_1 \right)  \right) g_1^{2k-1}g_2
\\
 \overset{\eqref{eq:rels-B(V(-1,2))-dos}, \eqref{eq:-1block+point-bis}} = \left(x_{21}^{n + 1 -k} -  x_1 x_2 x_{21}^{n-k} + x_1 (x_2 + (k - n) x_1) x_{21}^{n-k} \right) g_1^{2k-1}g_2
\\
 \overset{\eqref{eq:rels-B(V(-1,2))-1}}= x_{21}^{n + 1 -k} g_1^{2k-1}g_2.
\end{multline*}

Here and below, we use in $\star$ the identity $g_1^j x_2 = (-1)^j(x_2  - jx_1)g_1^j$.

Now  $B -q_{12} F =  \displaystyle\sum _{k=0}^n \binom n k\mu_{k, n} J \otimes z_{2k}$, where

\begin{align*}
J &=  x_2 x_{21}^{n-k}g_1^{2k}g_2 -   x_{21}^{n-k}g_1^{2k} (x_2 + a x_1) g_2
\\&\overset{\star}= \left(x_2 x_{21}^{n-k} -   x_{21}^{n-k} \left(x_2 - (2k - a) x_1 \right)
\right) g_1^{2k}g_2
\\
& \overset{\eqref{eq:rels-B(V(-1,2))-dos}, \eqref{eq:-1block+point-bis}} =  (n + k - a) x_1 x_{21}^{n-k} g_1^{2k}g_2.
\end{align*}

Also $\displaystyle C -q_{12} G = \sum _{k=1}^n k\binom n k \mu_{k,n} K$ where
\begin{align*}
K &= g_1 x_1x_{21}^{n-k}g_1^{2k-1}g_2\otimes x_2 z_{2k-1}
-q_{12} x_1x_{21}^{n-k}g_1^{2k-1}g_2g_1\otimes z_{2k-1} x_2
\\
&= - x_1 g_1 x_{21}^{n-k}g_1^{2k-1}g_2\otimes (z_{2k} - q_{12} z_{2k-1}x_2 ) -q_{12} x_1x_{21}^{n-k}g_1^{2k}g_2\otimes z_{2k-1} x_2
\\
& \overset{\ast}= - x_1  x_{21}^{n-k}g_1^{2k}g_2\otimes z_{2k},
\end{align*}
where in $\ast$ we use  $g_1 x_{21} = x_{21}g_1$. Next $\displaystyle D -q_{12} H
= \sum _{k=0}^n \binom n k\mu_{k, n} L$, where
\begin{align*}
L &= x_{21}^{n-k}g_1^{2k + 1}g_2\otimes (z_{2k + 1} + q_{12} z_{2k}x_2 ) -q_{12} x_{21}^{n-k}g_1^{2k + 1}g_2 \otimes z_{2k} x_2
\\ &= x_{21}^{n-k}g_1^{2k + 1}g_2\otimes z_{2k + 1}.
\end{align*}

Putting together the previous computations,
\begin{align*}
(B + C) - q_{12} (F + G) & = \sum _{k=0}^n \binom n k\mu_{k, n} (n - a) x_1 x_{21}^{n-k} g_1^{2k}g_2 \otimes z_{2k};
\\
(A + D) - q_{12} (E + H) & = \sum _{k=0}^{n-1} (k + 1)\binom{n}{k + 1} \mu_{k+ 1,n}\, x_{21}^{n  -k} g_1^{2k+1}g_2 \otimes z_{2k+1}
\\
& \qquad + \sum _{k=0}^n \binom n k\mu_{k, n} x_{21}^{n-k}g_1^{2k + 1}g_2\otimes z_{2k + 1}
\\& = \sum _{k=0}^{n} (k + 1)\binom{n}{k + 1}\binom n k\mu_{k + 1, n + 1}
\end{align*}
because
\begin{align*}
(k + 1)\binom{n}{k + 1} \mu_{k+ 1,n} + \binom n k\mu_{k, n} = \tfrac{n! \mu_{k+ 1,n}}{k! (n-k-1)!}\left(1 + \tfrac{k-a}{n-k} \right)
= \binom n k\mu_{k + 1, n + 1}.
\end{align*}

This gives \eqref{eq:coact-zn-odd}.
The proof of \eqref{eq:coact-zn-even} for $\delta(z_{2n+2})$ is similar.
\end{proof}

\smallbreak
We are ready for the main result of this Subsection. Recall that the interaction is weak.
We also assume $\ghost \neq 0$, otherwise $ \NA (V) \simeq  \NA (V_1) \underline{\otimes}  \NA (V_2)$.

\smallbreak
\begin{theorem} \label{thm:pm1bp}
$\GK \NA (V)$ is finite if and only if $\epsilon$, $q_{22}$ and $\ghost$ are as in Table \ref{tab:weak-discrete};
in such case, $\GK \NA (V) = \GK K + 2$.
\end{theorem}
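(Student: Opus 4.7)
The plan is to combine the bosonization $\NA(V)\simeq K\#\NA(V_1)$ of \S \ref{subsubsection:YD3-notation} with Lemma \ref{lemma:GKdim-smashproduct} to reduce the problem to computing $\GK K$, where $K=\NA(K^1)$ and $K^1=\ad_c\NA(V_1)(V_2)$. Lemma \ref{lemma:weak-not-discrete} disposes of the non-discrete ghost case, so I may assume $\ghost\in\N_0$; Remark \ref{lemma:K-basis} then identifies $K^1$ with the $(\vert 2a\vert+1)$-dimensional subspace spanned by $z_0,\dots,z_{\vert 2a\vert}$.

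The crucial computation is the braiding on $K^1$, which I would extract from the coaction formulas of Lemma \ref{le:zcoact}. The key observation is that every summand of $\delta(z_n)$ other than the purely grouplike one carries a leftmost factor of $x_1$ or $x_{21}$, and both $\ad_c x_1$ and $\ad_c x_{21}$ annihilate each $z_m$ by \eqref{eq:adx1-zn}--\eqref{eq:adx12-zn}. Hence only the grouplike term survives in $c(z_n\otimes z_m)=\sum z_{n(-1)}\cdot z_m\otimes z_{n(0)}$, and after using \eqref{eq:-1block+point}--\eqref{eq:-1block+point-bis} together with $q_{12}q_{21}=1$ one arrives at
\begin{align*}
c(z_n\otimes z_m)=\epsilon^{mn}\,q_{12}^{n-m}\,q_{22}\,z_m\otimes z_n.
\end{align*}
Thus $K^1$ is of diagonal type, with diagonal entries $\widetilde q_{nn}=\epsilon^n q_{22}$ and off-diagonal products $\widetilde q_{nm}\widetilde q_{mn}=q_{22}^2$; its generalized Dynkin diagram is a complete graph with uniform edge-label $q_{22}^2$ and vertex-labels that are constant (for $\epsilon=1$) or $2$-periodic (for $\epsilon=-1$) along the diagonal.

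Invoking Hypothesis \ref{hyp:nichols-diagonal-finite-gkd} (together with the unconditional part of Theorem \ref{thm:nichols-diagonal-finite-gkd}), a finite check against the list in \cite{H-classif} rules out all $q_{22}\notin\{1,-1\}\cup \G_3'$, and in the $\G_3'$ case forces rank $2$ (so $\epsilon=1$ and $\ghost=1$), since the rank $\ge 3$ complete graphs with the described labelling are all of affine or wild Cartan type. This produces exactly the list of Table \ref{tab:weak-discrete}. For the admissible cases, $K^1$ either has $q_{22}^2=1$, in which case all $c_{ij}c_{ji}=\id$ and $\NA(K^1)\cong S(W_+)\otimes \Lambda(W_-)$ where $W_\pm$ are the $\pm 1$-eigenspaces of the diagonal of $K^1$, or $q_{22}\in\G_3'$ with $\dim K^1=2$, in which case $\NA(K^1)$ is the finite-dimensional Nichols algebra of Cartan type $A_2$. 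The resulting $\GK K$ equals $\dim W_+$ (respectively $0$), matching Table \ref{tab:finiteGK-block-point} exactly.

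To finish I need the formula $\GK\NA(V)=\GK K+2$, which follows from Lemma \ref{lemma:GKdim-smashproduct} once local finiteness of the $\NA(V_1)$-action on $K$ is verified; this is automatic because $\ad_c x_2$ sends $z_n\mapsto z_{n+1}$ with $z_k=0$ for $k>\vert 2a\vert$, $\ad_c x_1$ and $\ad_c x_{21}$ annihilate $K^1$, and the $\ad_c$-action propagates to all of $K$ by the braided Leibniz rule. The main obstacle I anticipate is the bookkeeping in the $\epsilon=-1$ case, where the split coaction \eqref{eq:coact-zn-even}--\eqref{eq:coact-zn-odd} is intricate and the alternating vertex labels produce the parity-dependent count $\GK K=\ghost$ vs.\ $\ghost+1$ according to $q_{22}$; however, once the key vanishing $\ad_c x_1(z_m)=\ad_c x_{21}(z_m)=0$ is in hand, the conceptual argument is identical across cases.
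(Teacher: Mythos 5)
Your proposal is correct and follows essentially the same route as the paper: bosonize as $K\#\NA(V_1)$, reduce to discrete ghost via Lemma \ref{lemma:weak-not-discrete}, compute the diagonal braiding on $K^1 = \langle z_0,\dots,z_{\vert 2a\vert}\rangle$ from Lemma \ref{le:zcoact} and the vanishing $\ad_c x_1(z_m)=\ad_c x_{21}(z_m)=0$, classify the resulting generalized Dynkin diagram, and conclude with Lemma \ref{lemma:GKdim-smashproduct}. The one place you depart from the paper is in the case analysis of the diagonal braiding: you invoke Hypothesis \ref{hyp:nichols-diagonal-finite-gkd} wholesale to rule out the bad diagrams, whereas the paper handles each case unconditionally by exhibiting an explicit affine- or wild-Cartan subdiagram (or a non-reflection-closed one) and applying only the proven Theorem \ref{thm:nichols-diagonal-finite-gkd} and Lemma \ref{lemma:points-trivial-braiding}. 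This matters: the introduction (\S \ref{subsubsection:about-the-proofs}) records that Theorem \ref{thm:point-block}, of which this is the weak-interaction half, needs the Hypothesis only at a single step in Lemma \ref{lemma:mild-odd-order-2}, so your version is logically weaker than the paper's for this theorem even though both arrive at Table \ref{tab:weak-discrete}. Everything else, including the eigenspace count $\GK K=\dim W_+$ and the local-finiteness check needed for Lemma \ref{lemma:GKdim-smashproduct}, matches the paper.
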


\begin{table}[h]
\caption{Nichols algebras with finite $\GK$, weak interaction}\label{tab:weak-discrete}
\begin{center}
\begin{tabular}{|c|c|c|c|}
\hline $\epsilon$  & $q_{22}$  & $\ghost$ & $\GK K$  \\
\hline
$1$  & $1$  & discrete   &   $\ghost + 1$
\\\cline{2-4}
&  $-1$  & discrete  &    $0$
\\ \cline{2-4}
&  $\in \G'_3$  &  1 &  $0$
\\
\hline
$-1$ & $1$  & discrete &   $\ghost + 1$
\\\cline{2-4}
&  $-1$  &  discrete &   $\ghost$
\\ \hline
\end{tabular}
\end{center}
\end{table}

\begin{proof} By Lemma \ref{lemma:weak-not-discrete}, we may assume that the ghost is discrete.
We claim that the braided vector space  $K^1$ is of diagonal type
with  braiding matrix
\begin{align*}
(p_{ij})_{0\le i,j\le 2\vert a \vert} &= (\epsilon^{ij}q_{12}^iq_{21}^jq_{22})_{0\le i,j\le 2\vert a \vert}.
\end{align*}
Hence, the corresponding generalized  Dynkin diagram  has labels
\begin{align*}
p_{ii}&= \epsilon^iq_{22},& p_{ij}p_{ji}&= q_{22}^2, & &i\neq j\in \{0\} \cup \I_{2\vert a \vert}.
\end{align*}

Indeed, by Remark \ref{lemma:K-basis} it is enough to compute
\begin{align*}
c(z_i \otimes z_j) &= g_1^ig_2 \cdot z_j \otimes z_i = \epsilon^{ij}q_{12}^iq_{21}^jq_{22} z_j \otimes z_i,
\end{align*}
by Lemmas \ref{le:zcoact} and \ref{le:-1bpz}, \eqref{eq:adx1-zn} and \eqref{eq:adx12-zn}. We proceed then case by case.

\begin{case} \label{case:1}
$q_{22}^2=1$.
\end{case}
Here the Dynkin diagram of $K^1$ is totally disconnected with
vertices $i\in \I_{2\vert a \vert}$ labeled with $\epsilon^iq_{22}$.
The vertices with label $1$, respectively $-1$, contribute with $1$, respectively $0$, to $\GK \NA (K^1)$.

\begin{case} \label{case:2}
$\epsilon = 1$, $q_{22}\in\G_3'$, $\ghost = 1$.
\end{case}
The Dynkin diagram is of Cartan type $A_2$, so $\NA (K^1)$ is finite-dimensional.

\begin{case} \label{case:3}
$\epsilon = 1$, $q_{22}^2\ne 1$, $q_{22}\notin\G_3'$.
\end{case}

Since $\ghost \ge 1$, the Dynkin diagram  has at
least $2$ vertices. The Dynkin subdiagram corresponding to the vertices $0$ and $1$
has labels $q_{22}$ on the vertices and $q_{22}^2$ on the edge.
If $q_{22} \notin \G_{\infty}$, then $K^1$ does not admit all reflections; hence $\GK \NA (K^1)=\infty $.
 If $q_{22}\in \G_M'$ with $M\ge 4$, then $K^1$ is of Cartan type with Cartan matrix
$ \begin{pmatrix} 2 & 2-M \\ 2-M & 2\end{pmatrix}$.
Then $\GK \NA (K^1)=\infty $ by Theorem \ref{thm:nichols-diagonal-finite-gkd}.

\begin{case} \label{case:4}
$\epsilon = 1$, $q_{22}\in\G_3'$, $\ghost > 1$.
\end{case}
The Dynkin subdiagram of $K^1$ corresponding
to the vertices $0$, $1$ and $2$ has labels $q_{22}$ on the vertices, respectively,
and $q_{22}^{-1}$ on the edge between them. This Dynkin diagram is of Cartan type with
affine Cartan matrix $A_2^{(1)}$, and $\GK \NA (K^1)=\infty $ by Theorem \ref{thm:nichols-diagonal-finite-gkd}.

\begin{case} \label{step:-1bp} $\epsilon = -1$,  $q_{22}^2\neq 1$.
\end{case}

Since $a\neq 0$, the Dynkin diagram of $K^1$
has at least $3$ vertices. The Dynkin subdiagram corresponding to the vertices $0$ and $2$
has labels $q_{22}$ on the vertices and $q_{22}^2$ on the edge. If $q_{22}$ is
not a root of $1$, then $K^1$ does not admit all reflections and then $\GK
\NA (K^1)=\infty $. If $q_{22}\in \G_N'$ with $N\ge 4$, then $K^1$ is of
Cartan type with Cartan matrix
$$ \begin{pmatrix} 2 & 2-N \\ 2-N & 2\end{pmatrix}. $$
Then $\GK \NA (K^1)=\infty $ by Theorem \ref{thm:nichols-diagonal-finite-gkd}.
Finally, if $q_{22}\in \G_3'$ then the Dynkin subdiagram of $K^1$
corresponding to the vertices $0$ and $1$ has labels $q_{22}$ and $-q_{22}$ on
the vertices, respectively, and $q_{22}^{-1}$ on the edge between them.
This Dynkin diagram is of Cartan type with affine Cartan matrix
$ \begin{pmatrix} 2 & -1 \\ -4 & 2\end{pmatrix}$.
Therefore $\GK \NA (K^1)=\infty $ by Theorem \ref{thm:nichols-diagonal-finite-gkd}.

The last claim follows from the decomposition
$ \NA (V)\simeq \NA (K^1) \#\NA (V_1)$, Lemma \ref{lemma:GKdim-smashproduct}
and Proposition~\ref{pr:1block}, see Remark \ref{remark:GK-block-point-K}.
\end{proof}

\subsection{The Nichols algebras with finite $\GK$}\label{subsection:point-block-presentation}
Here we describe a presentation by generators and relations and exhibit an explicit PBW basis of the Nichols algebras in
Theorem \ref{thm:pm1bp}. We denote the braided vector space with braiding \eqref{eq:braiding-block-point} by
\begin{align*}
	&\lstr(q_{22}, \ghost),& &\text{if the interaction is weak, }& &\epsilon = 1;
	\\
	&\lstr_{-}(q_{22}, \ghost),& &\text{if the interaction is weak, } & &\epsilon = -1;
	\\
	&\cyc_1,& &\text{if the interaction is mild,}& &\epsilon = q_{22} = -1, \quad \ghost = 1. \label{page:cyclope}
\end{align*}

Recall the relations of the Jordan and super Jordan planes:
\begin{align*}
	\tag{\ref{eq:rels B(V(1,2))}} &x_2x_1-x_1x_2+\frac{1}{2}x_1^2,
	\\
	\tag{\ref{eq:rels-B(V(-1,2))-1}} &x_1^2,
	\\
	\tag{\ref{eq:rels-B(V(-1,2))-2}} & x_2x_{21}- x_{21}x_2 - x_1x_{21}.
\end{align*}

\begin{remark}\label{rem:xk qcommutes with z_k}
	Assume that the interaction is weak. Let
	\begin{align*}
		y_{2k} &=x_{21}^k, &  y_{2k+1} &= x_1x_{21}^k, & k & \in \N_0
	\end{align*}
	By Lemma \ref{le:-1bpz}
	\begin{align}\label{eq:yn zt commute}
		\partial_3(z_t)&=\mu_t y_t, & z_t y_n &= \epsilon^{nt}q_{21}^n y_n z_t, & &t, n \in\N_0.
	\end{align}
\end{remark}

\begin{lemma}\label{lemma:relations L(pm1,pm1,G)}
	Assume that $\epsilon^2=q_{22}^2=1$. In $\cB(\lstr(q_{22}, \ghost))$, or correspondingly
	$\cB_{-}(\lstr(q_{22}, \ghost))$
	\begin{align}
		\label{eq:q-serre}
		z_{|2a|+1}&=0, \\
		z_t z_{t+1} &= q_{21}q_{22} z_{t+1} z_t & t\in\N_0,& \, t<|2a|, \label{eq:zt zt+1 qcommute}\\
		z_t^2&=0 & t\in\N_0&, \,  \epsilon^tq_{22}=-1. \label{eq:zt square is 0} \\
		\partial_3(z_t^{n+1})&= \mu_{t}q_{21}^{nt}q_{22}^n n \, y_t z_t^n, & n,t\in\N_0,& \,  \epsilon^tq_{22}=1. \label{eq:zt n partial 3}
	\end{align}
\end{lemma}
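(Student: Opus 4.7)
The plan is to handle the four relations in turn: \eqref{eq:q-serre} is immediate from Lemma \ref{lemma:derivations-zn}; \eqref{eq:zt zt+1 qcommute} and \eqref{eq:zt square is 0} will come from the diagonal-type description of $K^1$ inside $\NA(V)$ established during the proof of Theorem \ref{thm:pm1bp}; and \eqref{eq:zt n partial 3} will be a direct induction on $n$ via the skew-derivation rule.

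For \eqref{eq:q-serre}: since the ghost is discrete, the last assertion of Lemma \ref{lemma:derivations-zn} already gives $z_n=0$ for every $n>|2a|$, which covers the case $n=|2a|+1$.

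For \eqref{eq:zt zt+1 qcommute} and \eqref{eq:zt square is 0}, the plan is to use that $K\simeq \cB(K^1)$ and that, as shown in the proof of Theorem \ref{thm:pm1bp}, $K^1$ is a braided vector space of diagonal type in the basis $(z_n)$ with braiding matrix
\begin{align*}
p_{ij}=\epsilon^{ij}\,q_{12}^{\,i}\,q_{21}^{\,j}\,q_{22}.
\end{align*}
Using the weak-interaction hypothesis $q_{12}q_{21}=1$ and that $t(t+1)$ is even, a direct computation yields
\begin{align*}
p_{t,t+1} &= q_{21}q_{22}, & p_{t,t+1}\,p_{t+1,t} &= (q_{12}q_{21})^{2t+1}\,q_{22}^{\,2}=1,
\end{align*}
so the braided commutator $(\ad_c z_t)(z_{t+1}) = z_t z_{t+1} - q_{21}q_{22}\,z_{t+1} z_t$ is primitive in $\cB(K^1)$; being homogeneous of degree $2$ in a Nichols algebra it must vanish, which is \eqref{eq:zt zt+1 qcommute}. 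Analogously, $p_{tt}=\epsilon^t q_{22}$; when this equals $-1$, the braided binomial $1+p_{tt}$ vanishes in $\Delta(z_t^2)$, so $z_t^2$ is likewise primitive in $\cB(K^1)$ and hence zero, establishing \eqref{eq:zt square is 0}.

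Finally, for \eqref{eq:zt n partial 3} I will induct on $n$, starting from the base case $\partial_3(z_t)=\mu_t y_t$ provided by Lemma \ref{lemma:derivations-zn}. The inductive step will use the skew-derivation rule \eqref{eq:skewderivations}, namely
\begin{align*}
\partial_3(z_t^{n+1}) = \partial_3(z_t)\,(g_3\cdot z_t^n) + z_t\,\partial_3(z_t^n),
\end{align*}
combined with $g_3\cdot z_t^n = q_{21}^{tn} q_{22}^n\, z_t^n$ from Lemma \ref{le:-1bpz} and the commutation $z_t y_t = \epsilon^t q_{21}^t\,y_t z_t$ from Remark \ref{rem:xk qcommutes with z_k}. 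The hypothesis $\epsilon^t q_{22}=1$ together with $q_{22}^{\,2}=1$ forces $\epsilon^t=q_{22}$, whence $z_t y_t = q_{22} q_{21}^t\,y_t z_t$; substituting the inductive hypothesis into the identity above and collecting the powers of $q_{21}^t q_{22}$ produces the stated formula. The only delicate point is the bookkeeping of the scalar coefficient, which is the main source of potential error but presents no conceptual obstacle beyond the commutations already recorded.
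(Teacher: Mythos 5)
Your approach to \eqref{eq:q-serre} and to \eqref{eq:zt n partial 3} mirrors the paper's: \eqref{eq:q-serre} is indeed immediate from Lemma~\ref{lemma:derivations-zn}, and \eqref{eq:zt n partial 3} is proved by the same induction via the skew-derivation rule (one small slip: you write $g_3$, but in the paper's notation the group-like of degree of $x_3$ is $g_2$, as in Lemma~\ref{le:-1bpz}). You leave the scalar bookkeeping in the inductive step unverified; had you carried it out, you would have noticed that the coefficient in the stated formula \eqref{eq:zt n partial 3} should be $n+1$ rather than $n$ (your base case $\partial_3(z_t)=\mu_ty_t$ already contradicts the printed formula at $n=0$, and the paper's own inductive step, which ends in a factor $n+\epsilon^{t(n+1)}q_{22}^{n+1}=n+1$, confirms this). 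This is a typo in the statement, not an error you introduced, but a fully worked proof would have surfaced it.

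Your treatment of \eqref{eq:zt zt+1 qcommute} and \eqref{eq:zt square is 0} is a genuinely different route from the paper's. The paper computes $\partial_3$ of the purported relation directly (using Lemma~\ref{le:-1bpz}, Lemma~\ref{lemma:derivations-zn} and Remark~\ref{rem:xk qcommutes with z_k}) and checks it vanishes, then concludes the relation holds in the Nichols algebra because all skew derivations kill it. You instead invoke the diagonal structure of $K^1$ established in the proof of Theorem~\ref{thm:pm1bp}, compute $p_{t,t+1}=q_{21}q_{22}$, $p_{t,t+1}p_{t+1,t}=1$ and $p_{tt}=\epsilon^t q_{22}$ (using $\epsilon^{t^2}=\epsilon^t$ since $\epsilon^2=1$), and then use the standard facts that in a Nichols algebra of diagonal type $(\ad_c z_t)(z_{t+1})$ is primitive when $p_{t,t+1}p_{t+1,t}=1$ and $z_t^2$ is primitive when $p_{tt}=-1$, and primitives of degree $\ge 2$ vanish. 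Both arguments are correct and the relation holds trivially when $t\ge|2a|$ since one factor is already zero. Your route is conceptually cleaner and avoids the explicit derivation computation, at the cost of depending on the diagonal description of $K^1$ from Theorem~\ref{thm:pm1bp}; the paper's route is more self-contained and is the same style of computation used throughout \S4.4, so it localizes the dependence to the small formulas already proven.
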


\begin{proof} Lemma \ref{lemma:derivations-zn} contains \eqref{eq:q-serre}.
	Using Lemmas \ref{le:-1bpz}, \ref{lemma:derivations-zn}, and Remark \ref{rem:xk qcommutes with z_k}, we compute
	\begin{align*}
		\partial_3 & (z_{k} z_{k+1}  -q_{21}q_{22} z_{k+1} z_{k})= \mu_{k} y_k(g_2\cdot z_{k+1}) + \mu_{k+1} z_k y_{k+1} \\
		&  -q_{21}q_{22} \big(  \mu_{k+1} y_{k+1} (g_2\cdot z_{k}) + \mu_{k} z_{k+1} y_{k} \big) = \mu_{k+1}q_{21}^{k+1}(1- q_{22}^2)  y_{k+1} z_{k}=0,
	\end{align*}
	so \eqref{eq:zt zt+1 qcommute} holds for all $t\in\N_0$ by the same result.
	
	Now assume that $t\in\N_0$ is such that $\epsilon^tq_{22}=-1$.
	\begin{align*}
		\partial_3(z_t^2)=\mu_t y_t (g_2\cdot z_t)+\mu_t z_t y_t= \mu_t q_{21}^t (q_{22}+\epsilon^t) y_tz_t=0,
	\end{align*}
	so \eqref{eq:zt square is 0} also follows.
	
	Assume that $t\in\N_0$ is such that $\epsilon^tq_{22}=1$. To prove \eqref{eq:zt n partial 3} we use induction on $n$. The case $n=0$ follows by Lemma \ref{lemma:derivations-zn}. Now assume it holds for $n$ and compute
	\begin{align*}
		\partial_3(z_t^{n+2}) &=\mu_{t}q_{21}^{nt}q_{22}^n n \, y_tz_t^n (g_2\cdot z_t)+\mu_t z_t^{n+1} y_t \\
		& = \mu_{t}q_{21}^{(n+1)t}q_{22}^{n+1} (n+\epsilon^{t(n+1)}q_{22}^{n+1}) \, y_tz_t^n.
	\end{align*}
	Then \eqref{eq:zt n partial 3} holds for all $n\in\N_0$.
\end{proof}

\begin{lemma}\label{lemma:x1, x12 commute with zt}
	Let $\cB$ be a quotient algebra of $T(V)$. Assume that $x_1x_3=q_{12}x_3x_1$, and either
	\begin{enumerate}
		\item[(a)] \eqref{eq:rels B(V(1,2))}, or else
		\item[(b)] \eqref{eq:rels-B(V(-1,2))-2}, $x_{21}x_3 =q_{12}^2 x_3x_{21}$
	\end{enumerate}
	hold in $\cB$. Then for all $n\in\N_0$,
	\begin{align}\label{eq:x1, x12 commute with zt}
	x_1z_n &= \epsilon^n q_{12}z_nx_1 \\
	x_{21}z_n &= q_{12}^2z_nx_{21}.
	\end{align}	
\end{lemma}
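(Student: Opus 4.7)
The plan is to induct on $n$, reproducing the calculations of Lemma~\ref{le:-1bpz} but taking care that they rely only on the relations available in $\cB$. For the base case $n=0$, the identity $x_1 z_0 = q_{12} z_0 x_1$ is by hypothesis. For $x_{21} z_0 = q_{12}^2 z_0 x_{21}$: in case~(b) this is assumed directly, while in case~(a) the relation \eqref{eq:rels B(V(1,2))} forces $x_{21} = -\tfrac{1}{2} x_1^2$, so two applications of $x_1 x_3 = q_{12} x_3 x_1$ give the result.

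For the inductive step, I would use the recursion $z_{n+1} = x_2 z_n - \epsilon^n q_{12} z_n x_2$ from \eqref{eq:-1block+point-z2}; this is valid in any quotient of $T(V)$ since it comes from the braided Hopf algebra structure of $T(V)$ together with the identity $g_1\cdot z_n = \epsilon^n q_{12} z_n$, both of which are formal consequences of the braiding \eqref{eq:braiding-block-point} and independent of the relations of $\cB$. Assuming both identities for $n$, expanding $x_1 z_{n+1}$ and applying the inductive hypothesis to $x_1 z_n$ reduces the computation to rewriting $x_1 x_2 z_n - q_{12}^2 z_n x_1 x_2$. In case~(a) one replaces $x_1 x_2 = x_2 x_1 - x_{21}$ via \eqref{eq:rels B(V(1,2))}; in case~(b) one uses the same substitution but with $x_{21} = x_2 x_1 + x_1 x_2$ rewritten using \eqref{eq:rels-B(V(-1,2))-2}. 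In both cases the $x_{21}$-terms cancel because of the $n=0$ identity for $x_{21}$, and reassembling the recursion produces $\epsilon^{n+1} q_{12} z_{n+1} x_1$.

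The analogous calculation for $x_{21} z_{n+1}$ hinges on the auxiliary identity $x_{21} x_2 = (\epsilon x_1 + x_2) x_{21}$, which under hypothesis~(a) follows from \eqref{eq:rels B(V(1,2))} applied twice (this is the case $n=2$ of \eqref{eq:relations B(W) - case 1}), and under~(b) is just a rewriting of \eqref{eq:rels-B(V(-1,2))-2}. Plugging this into $x_{21} z_{n+1} = x_{21} x_2 z_n - \epsilon^n q_{12} x_{21} z_n x_2$ and invoking the inductive hypothesis for \emph{both} commutation relations produces $q_{12}^2 z_{n+1} x_{21}$ after reassembling the recursion.

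The (minor) obstacle is bookkeeping: one must check that the extra $x_1$-term arising from $x_{21} x_2 = (\epsilon x_1 + x_2) x_{21}$ is exactly what is needed to absorb the corrective term produced when commuting $x_2$ past $z_n$ via the recursion, so that only the expected $z_{n+1} x_{21}$ survives. This is the same cross-term cancellation as in the proof of Lemma~\ref{le:-1bpz}; the key point is that no further relation beyond those explicitly listed in~(a) or~(b) is required, which is precisely why the statement makes sense for an arbitrary quotient $\cB$ of $T(V)$.
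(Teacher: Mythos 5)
Your overall plan matches the paper's: reproduce the induction of Lemma~\ref{le:-1bpz} and verify that only the hypothesized relations are needed. The base cases and the auxiliary identity $x_{21}x_2 = (\epsilon x_1 + x_2)x_{21}$ are handled correctly in both cases~(a) and~(b).

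There is, however, one genuinely incorrect claim. You assert that the recursion $z_{n+1} = x_2 z_n - \epsilon^n q_{12} z_n x_2$ and the identity $g_1 \cdot z_n = \epsilon^n q_{12} z_n$ are ``formal consequences of the braiding, independent of the relations of $\cB$.'' They are not. What is formal is only the definitional identity $z_{n+1} = \ad_c x_2(z_n) = x_2 z_n - (g_1\cdot z_n)x_2$. Computing $g_1\cdot z_{n+1}$ directly from $g_1\cdot x_2 = \epsilon x_2 + x_1$ gives
\[
g_1\cdot z_{n+1} = \epsilon^{n+1}q_{12}\, z_{n+1} + \epsilon^n q_{12}\,(\ad_c x_1)(z_n),
\]
with $(\ad_c x_1)(z_n) = x_1 z_n - \epsilon^n q_{12}z_n x_1$, so $g_1\cdot z_n = \epsilon^n q_{12}z_n$ is \emph{equivalent} (inductively) to the very relation $x_1 z_n = \epsilon^n q_{12}z_n x_1$ you are trying to prove; it is not a free prerequisite. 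Concretely, in $T(V)$ itself one already has $g_1\cdot z_1 = \epsilon q_{12}z_1 + q_{12}(x_1 x_3 - q_{12}x_3 x_1) \neq \epsilon q_{12}z_1$, and it is only in a quotient where $x_1 x_3 = q_{12}x_3 x_1$ holds that the simplified recursion begins to hold. The correct structure is therefore a three-term simultaneous induction, as in the proof of \eqref{eq:-1block+point}: prove $g_1\cdot z_n = \epsilon^n q_{12}z_n$, $x_1 z_n = \epsilon^n q_{12}z_n x_1$, and $x_{21}z_n = q_{12}^2 z_n x_{21}$ together. Once this is set up, the cross-term cancellations you describe go through and use nothing beyond $x_1 x_3 = q_{12}x_3 x_1$ and the relations in (a) or (b), so the conclusion is sound; you just cannot decouple the action identity from the commutation relations.
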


\begin{proof}
	By hypothesis, $x_1z_0 =q_{12}z_0x_1$ since $z_0=x_3$. Now $x_{21}z_0 = q_{12}^2z_0x_{21}$ holds in \emph{(a)}
	by the previous relation and \eqref{eq:rels B(V(1,2))}, while it holds in \emph{(b)} by hypothesis.
	The inductive step follows as in the proof of Lemma \ref{le:-1bpz}.
\end{proof}

\begin{lemma}\label{lemma:zt zk}
	Let $\cB$ be a quotient algebra of $T(V)$, $\epsilon^2=q_{22}^2=1$.
	
	\medbreak
	\noindent \emph{\vi} Assume that \eqref{eq:zt zt+1 qcommute} and \eqref{eq:zt square is 0} hold in $\cB$.
	Then for $0\le t<k\le 2|a|$,
	\begin{align}\label{eq:bracket ztzk}
		z_tz_k&=\epsilon^{tk} q_{21}^{k-t}q_{22} z_kz_t.
	\end{align}

	\medbreak
	\noindent \emph{\vii} Assume that $z_t^2=0$ in $\cB$ for $t\in\N_0$ such that $\epsilon^t q_{22}=-1$. Then
	$z_tz_{t+1}=q_{21}q_{22} z_{t+1}z_t$ in $\cB$.
\end{lemma}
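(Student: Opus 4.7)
Both parts rest on the single identity $x_2 z_n = \epsilon^n q_{12} z_n x_2 + z_{n+1}$, valid in any quotient of $T(V)$ since $z_{n+1} = (\ad_c x_2)(z_n)$, together with $q_{12}q_{21} = 1$ (weak interaction) and $\epsilon^2 = 1$.

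For \vii the conclusion follows from a one-step manipulation: multiplying $z_{t+1} = x_2 z_t - \epsilon^t q_{12} z_t x_2$ on the left, and then on the right, by $z_t$ and using $z_t^2 = 0$ yields $z_t z_{t+1} = z_t x_2 z_t$ and $z_{t+1} z_t = -\epsilon^t q_{12} z_t x_2 z_t$, whence
\begin{align*}
z_t z_{t+1} = -\epsilon^t q_{21}\, z_{t+1} z_t.
\end{align*}
The hypothesis $\epsilon^t q_{22} = -1$ rewrites $-\epsilon^t$ as $q_{22}$, giving the claim.

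For \vi I will argue by induction on $n := k - t$, the case $n = 1$ being precisely \eqref{eq:zt zt+1 qcommute}. The plan for $n = 2$ is to expand $z_t z_{t+2} = z_t x_2 z_{t+1} - \epsilon^{t+1} q_{12}\, z_t z_{t+1} x_2$, rewrite $z_t x_2 = \epsilon^t q_{21}(x_2 z_t - z_{t+1})$, use the $n=1$ identity to swap $z_t$ and $z_{t+1}$, and then push $x_2$ back to the right by iterating the basic identity. After collecting terms one obtains
\begin{align*}
z_t z_{t+2} = q_{21}\bigl(\epsilon q_{22} - \epsilon^t\bigr)\, z_{t+1}^2 + \epsilon^t q_{21}^2 q_{22}\, z_{t+2} z_t.
\end{align*}
The scalar in front of $z_{t+1}^2$ vanishes exactly when $\epsilon^{t+1} q_{22} = 1$; in the complementary case $\epsilon^{t+1} q_{22} = -1$ the hypothesis \eqref{eq:zt square is 0} applied at index $t+1$ forces $z_{t+1}^2 = 0$. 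Either way only the desired term survives, and $t(t+2) \equiv t \pmod 2$ matches the exponent of $\epsilon$ required by \eqref{eq:bracket ztzk}.

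For the inductive step $n \ge 2$, the plan is to expand $z_t z_{t+n+1}$ using the basic identity for $z_{t+n+1}$ and the rewrite $z_t x_2 = \epsilon^t q_{21}(x_2 z_t - z_{t+1})$, then apply the inductive hypothesis at differences $n$ and $n-1$ to $z_t z_{t+n}$ and $z_{t+1} z_{t+n}$ respectively, and finally push $x_2$ back to the right through $z_{t+n}$ and $z_t$ by iterating the basic identity. All auxiliary contributions of the shape $z_{t+n} z_t x_2$ and $z_{t+n} z_{t+1}$ have scalar coefficients that cancel identically thanks to $q_{12} q_{21} = 1$ and $\epsilon^2 = 1$, and what remains is exactly $\epsilon^{t(t+n+1)} q_{21}^{n+1} q_{22}\, z_{t+n+1} z_t$, as desired. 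The main technicality, and the reason the $n = 2$ base case must be isolated, is that the inductive hypothesis applied to $z_{t+1} z_{t+n}$ requires $n - 1 \ge 1$; this explains why \eqref{eq:zt square is 0} is invoked only at that one base case.
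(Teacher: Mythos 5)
Your proof of part (ii) is correct and essentially the same as the paper's: both observe that $z_t z_{t+1}$ and $z_{t+1}z_t$ reduce to scalar multiples of $z_t x_2 z_t$ once $z_t^2=0$ kills the other term, and the hypothesis $\epsilon^t q_{22}=-1$ supplies the scalar.

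For part (i) your proof is also correct, but it takes a genuinely different route from the paper's. The paper performs a double induction: the outer induction is on $t+k$, at each level it isolates the pair of minimal difference ($k-t=1$ via the lemma hypothesis when the level is odd, $k-t=2$ via a direct computation when the level is even), and then climbs through the remaining pairs of a fixed level by applying $\ad_c x_2$ to the already-established identity at the previous level, which (by the twisted Leibniz rule) produces a linear relation between the pair at difference $d-1$ and the pair at difference $d+1$. Your proof inducts directly on the difference $k-t$, with two base cases ($n=1$ from the hypothesis and $n=2$ from an explicit expansion), and an algebraic inductive step that substitutes the expressions for $z_{t+n+1}$ and $z_t x_2$, applies the inductive hypothesis at differences $n$ and $n-1$, and shows the auxiliary $z_{t+n}z_t x_2$ and $z_{t+n}z_{t+1}$ contributions cancel. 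I checked your displayed $n=2$ formula and the scalar bookkeeping in the inductive step and they are all correct. Your observation that \eqref{eq:zt square is 0} enters only through the $n=2$ base case (and that the reason $n=2$ must be isolated is that $n-1\ge 1$ is needed for the inductive step) is accurate and is a nice feature of your scheme: it keeps the two hypotheses cleanly separated. The paper's approach has the advantage that each step is a single application of the Leibniz rule for $\ad_c x_2$ rather than a longer computation, but at the cost of a parity case split within each level; your version avoids that case split and has a more transparent inductive structure. One small stylistic point: in the inductive step you state the cancellation claim without writing out the exponent bookkeeping; for a final write-up you would want to display the two opposing coefficients of $z_{t+n}z_t x_2$ (both equal to $\epsilon^{n(t+1)}q_{21}^{n-1}q_{22}$ after using $q_{12}q_{21}=1$, $\epsilon^2=1$ and $\epsilon^{t(t+1)}=1$) so that the reader can see the cancellation is exact and not merely formal.
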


In other words, \vii says that \eqref{eq:zt square is 0} for a specific $t$ implies \eqref{eq:zt zt+1 qcommute} for $t$.

\begin{proof}
	\vi
	We argue by induction on $n=t+k$. For $n=1$, we have $t=0$, $k=1$, and $z_0z_1=q_{21}q_{22}z_1z_0$ by hypothesis.
	Now assume \eqref{eq:bracket ztzk} holds for $n\geq 1$.
	If $n$ is even and $k-1=t=\frac{n}{2}$, then $z_tz_{t+1}=q_{21}q_{22} z_{t+1}z_t$ by hypothesis.
	If $n$ is odd and $t=k-2=\frac{n-1}{2}$, then
	\begin{align*}
		0 & = \ad_c x_2 \left( z_tz_{t+1} - q_{21}q_{22} z_{t+1}z_t \right) = \ad_c x_2 (z_tz_{t+1}-q_{21}q_{22} z_{t+1}z_t) \\
		& = z_{t+1}^2 +\epsilon^tq_{12} z_tz_{t+2} - q_{21}q_{22} (z_{t+2}z_t+\epsilon^{t+1}q_{12} z_{t+1}^2) \\
		& = (1-\epsilon^{t+1}q_{22})z_{t+1}^2+ \epsilon^{t+1}q_{12} (z_tz_{t+2}-\epsilon^t q_{21}^2q_{22} z_{t+2}z_t).
	\end{align*}
	That is, $z_tz_{t+2}-\epsilon^t q_{21}^2q_{22} z_{t+2}z_t = q_{21}(q_{22}-\epsilon^{t+1})z_{t+1}^2$. If $\epsilon^{t+1} q_{22}=-1$,
then $z_{t+1}^2$ by hypothesis; otherwise $\epsilon^{t+1}=q_{22}$. Hence $z_tz_{t+2}=\epsilon^t q_{21}^2q_{22} z_{t+2}z_t$.
	
Finally, if $k-t>2$,
\begin{multline*}
0 = \ad_c x_2 \left( z_tz_k-\epsilon^{tk} q_{21}^{k-t}q_{22} z_kz_t \right)  \\
= z_{t+1}z_k-\epsilon^{(t+1)k} q_{21}^{k-t-1}q_{22} z_kz_{t+1}
+ \epsilon^tq_{12}\left( z_tz_{k+1}-\epsilon^{t(k+1)} q_{21}^{k+1-t}q_{22} z_{k+1} z_t \right),
\end{multline*}
and the proof follows recursively.
	
	\medbreak
	\noindent\vii Using the definition of $z_{t+1}$,
	\begin{align*}
		z_t z_{t+1} -q_{21}q_{22} z_{t+1}z_t &= z_t(x_2z_t-\epsilon^tq_{12} z_tx_2)- q_{21}q_{22} (x_2z_t-\epsilon^tq_{12} z_tx_2) z_t \\
		&= (1+\epsilon^tq_{22}) z_t x_2 z_t =0.
	\end{align*}
\end{proof}

\subsubsection{The Nichols algebra $\cB(\lstr( 1, \ghost))$}\label{subsubsection:lstr-11disc}
Recall that $ z_n = (ad_c x_2)^n x_3$.

\begin{prop} \label{pr:lstr-11disc} Let $\ghost \in \N$. The algebra
	$\cB(\lstr( 1, \ghost))$ is presented by generators $x_1,x_2, x_3$ and relations \eqref{eq:rels B(V(1,2))},
	\begin{align}
		x_1x_3&=q_{12} \, x_3x_1,  \label{eq:lstr-rels&11disc-1} \\
		z_{1+\ghost}&=0,  \label{eq:lstr-rels&11disc-qserre} \\
		z_tz_{t+1}&=q_{12}^{-1} \, z_{t+1}z_t, & 0\le & t < \ghost. \label{eq:lstr-rels&11disc-2}
	\end{align}
	$\cB(\lstr( 1, \ghost))$ has a PBW-basis
	\begin{align*}
		B=\{ x_1^{m_1} x_2^{m_2} z_{\ghost}^{n_{\ghost}} \dots z_1^{n_1} z_0^{n_0}: m_i, n_j \in\N_0\};
	\end{align*}
	hence $\GK \cB(\lstr( 1, \ghost)) = 3+\ghost$.
\end{prop}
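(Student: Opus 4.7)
The plan is to follow the standard presentation-plus-PBW strategy used throughout the paper: verify the listed relations hold in $\cB(\lstr(1,\ghost))$ so that the algebra $\widetilde{\cB}$ defined by the presentation surjects onto $\cB(\lstr(1,\ghost))$; prove $\widetilde{\cB}$ is spanned by $B$; prove $B$ is linearly independent in $\cB(\lstr(1,\ghost))$ via the splitting $\cB(V)\simeq K\#\cB(V_1)$; and then read off $\GK$ from the Hilbert series.

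For the first step, each relation is essentially already established in the preceding preparatory lemmas. Relation \eqref{eq:rels B(V(1,2))} holds because $\langle x_1,x_2\rangle\simeq\cV(1,2)$ is a Jordan plane (Proposition~\ref{pr:1block}); \eqref{eq:lstr-rels&11disc-1} is the $n=0$ instance of the first formula of \eqref{eq:-1block+point} in Lemma~\ref{le:-1bpz}; \eqref{eq:lstr-rels&11disc-qserre} is immediate from Lemma~\ref{lemma:derivations-zn}, since $\ghost=-2a=\vert 2a\vert\in\N$ forces $\mu_{\ghost+1}=0$; and \eqref{eq:lstr-rels&11disc-2} is Lemma~\ref{lemma:relations L(pm1,pm1,G)} specialized to $q_{22}=1$, $q_{21}=q_{12}^{-1}$. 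Hence there is an algebra epimorphism $\pi\colon\widetilde{\cB}\twoheadrightarrow\cB(\lstr(1,\ghost))$.

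For the spanning, I would apply Lemma~\ref{lemma:x1, x12 commute with zt} inside $\widetilde{\cB}$ to get $x_1z_n=q_{12}z_nx_1$ and $x_{21}z_n=q_{12}^2z_nx_{21}$ for all $n$, and then Lemma~\ref{lemma:zt zk}\vi (its hypothesis $z_t^2=0$ when $\epsilon^tq_{22}=-1$ is vacuous since $\epsilon=q_{22}=1$) to upgrade \eqref{eq:lstr-rels&11disc-2} to $z_tz_k=q_{12}^{t-k}z_kz_t$ for all $0\le t<k\le\ghost$. Together with the recursive identity $x_2z_n-q_{12}z_nx_2=z_{n+1}$ (from \eqref{eq:-1block+point-z2}, which is available in $\widetilde{\cB}$ via the defining relations), the Jordan plane relation, and the truncation \eqref{eq:lstr-rels&11disc-qserre}, these rewriting rules reduce any monomial to one in $B$; termination of the rewriting is the routine but slightly delicate point, since $x_2z_n$ produces the new letter $z_{n+1}$ rather than merely transposing.

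For linear independence in $\cB(V)$, I would use the splitting of Nichols algebras $\cB(V)\simeq K\#\cB(V_1)$ with $K=\cB(K^1)$. By the computation in the proof of Theorem~\ref{thm:pm1bp} (Case~\ref{case:1}), the braiding on $K^1=\mathrm{span}(z_0,\dots,z_\ghost)$ is diagonal with $p_{ij}=q_{12}^{i-j}$, so all vertex labels are $1$ and $p_{ij}p_{ji}=1$; thus $K$ is a quantum polynomial ring on $z_0,\dots,z_\ghost$ with PBW basis $\{z_\ghost^{n_\ghost}\cdots z_0^{n_0}\}$. Combined with the Jordan plane basis $\{x_1^{m_1}x_2^{m_2}\}$ of $\cB(V_1)$ from Proposition~\ref{pr:1block}, this realizes $B$ as a basis of $K\#\cB(V_1)$. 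Consequently $\pi$ sends a spanning set to a basis, so $\pi$ is an isomorphism and $B$ is a PBW basis of $\cB(\lstr(1,\ghost))$. The Hilbert series is $(1-t)^{-(\ghost+3)}$, giving $\GK\cB(\lstr(1,\ghost))=\ghost+3$ (consistent with $\GK K+\GK\cB(V_1)=(\ghost+1)+2$ via Lemma~\ref{lemma:GKdim-smashproduct} once local finiteness is checked). The main obstacle is the spanning argument: one must verify that repeated application of $x_2z_n\mapsto q_{12}z_nx_2+z_{n+1}$ terminates and jointly with the commutation rules yields the prescribed normal form, which requires a careful induction controlled by \eqref{eq:lstr-rels&11disc-qserre}.
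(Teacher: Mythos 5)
Your proposal is correct in substance and reaches the right conclusion, but it diverges from the paper's proof in the linear-independence step. The paper establishes linear independence of $B$ in $\cB(\lstr(1,\ghost))$ directly via skew derivations: for a minimal-degree linear relation $\mathtt{S}$, the formulas for $\partial_1$ and $\partial_2$ on PBW monomials force $m_1=m_2=0$, and then $\partial_3$ together with \eqref{eq:zt n partial 3} produces a smaller nontrivial relation, a contradiction. You instead invoke the splitting $\cB(V)\simeq K\#\cB(V_1)$ with $K=\cB(K^1)$, identify $K^1$ as a rank-$(\ghost{+}1)$ quantum linear space with all vertex labels $1$ (from the braiding matrix $p_{ij}=q_{12}^{i-j}$ computed in the proof of Theorem~\ref{thm:pm1bp}), and conclude $K$ is a quantum polynomial ring with PBW basis $\{z_\ghost^{n_\ghost}\cdots z_0^{n_0}\}$. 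Both are valid; yours is more structural (it re-uses the bosonization machinery the paper develops anyway), whereas the paper's is more hands-on and self-contained within this subsection. Your approach has one small unaddressed wrinkle: the multiplication isomorphism $K\#\cB(V_1)\to\cB(V)$ yields $\{z_\ghost^{n_\ghost}\cdots z_0^{n_0}x_1^{m_1}x_2^{m_2}\}$ as a basis, and passing to the ordering in $B$ requires a (triangular change of basis / filtration) remark, since $x_2$ and $z_n$ do not $q$-commute.

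Two minor inaccuracies worth flagging. First, the Hilbert series is not $(1-t)^{-(\ghost+3)}$: the generator $z_n$ has degree $n+1$, so the correct series is $(1-t)^{-2}\prod_{n=0}^{\ghost}(1-t^{n+1})^{-1}$. The conclusion $\GK=\ghost+3$ is unaffected, since what matters is the number of PBW generators of infinite height. Second, concerning spanning: your framing as a rewriting system (with the worry about termination of $x_2z_n\mapsto q_{12}z_nx_2+z_{n+1}$) is morally the same as the paper's argument, but the paper sidesteps termination cleanly by showing that $I=\operatorname{span}(B)$ is a \emph{right ideal} of $\cBt$ containing $1$: one only needs $bx_i\in I$ for $b\in B$ and $i\in\{1,2,3\}$, which is settled by induction on $\deg b$ using \eqref{eq:x1, x12 commute with zt}, \eqref{eq:bracket ztzk}, the relation $z_tx_2=q_{21}(x_2z_t-z_{t+1})$, and the truncation \eqref{eq:lstr-rels&11disc-qserre}; then $\cBt=1\cdot\cBt\subseteq I$ immediately.
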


\pf Relations \eqref{eq:lstr-rels&11disc-1}, \eqref{eq:lstr-rels&11disc-qserre} are 0 in $\cB(\lstr( 1, \ghost))$
being annihilated by $\partial_i$, $i=1,2,3$, and \eqref{eq:lstr-rels&11disc-2} holds by Lemma \ref{lemma:relations L(pm1,pm1,G)}.
Hence the quotient $\cBt$ of $T(V)$
by \eqref{eq:rels B(V(1,2))}, \eqref{eq:lstr-rels&11disc-1}, \eqref{eq:lstr-rels&11disc-qserre} and \eqref{eq:lstr-rels&11disc-2}
projects onto $\cB(\lstr( 1, \ghost))$.
Then \eqref{eq:bracket ztzk} holds in $\cBt$.

We claim that the subspace $I$ spanned by $B$ is a right ideal of $\cBt$. Indeed,
\begin{itemize}
	\item  $Ix_1\subseteq I$ follows by Lemma \ref{lemma:x1, x12 commute with zt},
	\item  $Ix_2\subseteq I$ since $z_t x_2=\epsilon^tq_{21}(x_2z_t-z_{t+1})$, so we use \eqref{eq:lstr-rels&11disc-qserre}, \eqref{eq:bracket ztzk},
\end{itemize}
and $I x_3\subseteq I$ by definition. Since $1\in I$, $\cBt$ is spanned by $B$.

To prove that $\cBt \simeq \cB(\lstr( 1, \ghost))$, it remains to show that
$B$ is linearly independent in $\cB(\lstr( 1, \ghost))$. For, suppose that there is a non-trivial linear combination $\mathtt{S}$
of elements of $B$ in $\cB(\lstr( 1, \ghost))$, say of minimal degree. Now
\begin{align*}
	\partial_1(x_1^{m_1} x_2^{m_2} z_{\ghost}^{n_{\ghost}} \dots z_1^{n_1} z_0^{n_0})&= m_1 \, q_{12}^{\sum n_i} \, x_1^{m_1-1} x_2^{m_2}
	z_{\ghost}^{n_{\ghost}} \dots z_1^{n_1} z_0^{n_0}, \\
	\partial_2(x_1^{m_1} x_2^{m_2} z_{\ghost}^{n_{\ghost}} \dots z_1^{n_1} z_0^{n_0})&= m_2 \, q_{12}^{\sum n_i} \, x_1^{m_1} x_2^{m_2-1}
	z_{\ghost}^{n_{\ghost}} \dots z_1^{n_1} z_0^{n_0},
\end{align*}
since $\partial_1$, $\partial_2$ are skew derivations, so we apply Lemma \ref{le:-1bpz} and $\partial_2(z_t)=0$.
Then such linear combination does not have terms with $m_1$ or $m_2$ greater than 0. Let $k$ be maximal such that
$z_{k}^{n_{k}} \dots z_1^{n_1} z_0^{n_0}$ has non-zero coefficient in $\mathtt{S}$ for some $k\geq 1$, and for
such $k$ fix the maximal $n_k$. By \eqref{eq:zt n partial 3}, $y_k z_{k}^{n_{k}-1} \dots z_1^{n_1} z_0^{n_0}$
has non-zero coefficient in $\partial_3(\mathtt{S})$, and $\partial_3(\mathtt{S})$ is also a non-trivial linear
combination of elements of $B$, a contradiction.
Then $B$ is a basis of $\cB(\lstr( 1, \ghost))$ and $\cBt=\cB(\lstr( 1, \ghost))$.
The computation of $\GK$ follows from the Hilbert series at once.
\epf

\begin{prop} \label{pr:lstr-11disc-domain} Let $\ghost \in \N$. The algebra
	$\cB(\lstr( 1, \ghost))$ is a domain.
\end{prop}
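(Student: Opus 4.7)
My plan is to realize $\cB(\lstr(1, \ghost))$ as an iterated Ore extension and invoke the standard fact that, for a domain $R$ and an automorphism $\sigma$ of $R$, the skew polynomial ring $R[y;\sigma,\delta]$ is again a domain.

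First, let $A_0$ be the subalgebra generated by $z_0, \ldots, z_\ghost$. By Proposition \ref{pr:lstr-11disc} and Lemma \ref{lemma:zt zk}, the restriction of the PBW basis to $A_0$ is $\{z_0^{n_0} z_1^{n_1} \cdots z_\ghost^{n_\ghost} : n_j\in\N_0\}$, so $A_0$ is the quantum affine space on $\ghost + 1$ generators with $q$-commutation relations $z_t z_k = q_{12}^{-(k-t)} z_k z_t$, a noetherian domain. Next, I form $A_1 = A_0[x_1; \sigma_1]$ with $\sigma_1(z_n) = q_{12} z_n$; this is an automorphism of $A_0$ because simultaneous scaling preserves the $q$-commutation relations, and the Ore relation $x_1 z_n = q_{12} z_n x_1$ matches \eqref{eq:lstr-rels&11disc-1}. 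Finally, I form $A_2 = A_1[x_2; \sigma_2, \delta_2]$, extending $\sigma_2(x_1) = x_1$ and defining $\delta_2$ on generators by $\delta_2(x_1) = -\tfrac{1}{2} x_1^2$, $\delta_2(z_n) = z_{n+1}$ for $n < \ghost$, and $\delta_2(z_\ghost) = 0$. The resulting Ore relations give \eqref{eq:rels B(V(1,2))}, the recursive identity $z_{n+1} = x_2 z_n - q_{12} z_n x_2$, and the truncation \eqref{eq:lstr-rels&11disc-qserre}, while \eqref{eq:lstr-rels&11disc-2} is inherited from the $q$-commutation of the $z_i$'s already imposed in $A_0$.

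The core technical point is that $\sigma_2$ and $\delta_2$ must be well-defined on $A_1$; this reduces to checking that $\delta_2$ annihilates each defining relation of $A_1$. Applying the $\sigma_2$-derivation rule to $z_t z_k - q_{12}^{-(k-t)} z_k z_t$ produces a combination of the same $q$-commutation relations at shifted indices, using $\delta_2(z_\ghost) = 0$ in the boundary case $k = \ghost$; applying it to $x_1 z_n - q_{12} z_n x_1$ yields the analogous relation at index $n+1$ together with its $x_1^2$-version, both of which hold in $A_1$. These compatibilities are the principal hurdle but are routine direct computations in characteristic zero.

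With $\sigma_2$ and $\delta_2$ well-defined, $A_2$ is an iterated Ore extension of $\ku$ whose generators satisfy exactly the defining relations of $\cB(\lstr(1, \ghost))$, yielding a surjective algebra map $A_2 \twoheadrightarrow \cB(\lstr(1, \ghost))$. As an iterated Ore extension, $A_2$ has the ordered-monomial PBW basis $\{z_0^{n_0} \cdots z_\ghost^{n_\ghost} x_1^{m_1} x_2^{m_2}\}$, which has the same Hilbert series as the PBW basis of Proposition \ref{pr:lstr-11disc}; therefore the surjection is an isomorphism. Since each step in the construction of $A_2$ was an Ore extension of a domain by an automorphism, $A_2$ is a noetherian domain, and hence so is $\cB(\lstr(1, \ghost))$.
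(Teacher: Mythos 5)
Your proof is correct, and it takes a genuinely different route from the paper. Where the paper works with the auxiliary algebra $\Bg$ (presented on generators $X_1, X_{\frac{3}{2}}, Z_0, \dots, Z_{\ghost}$), filters it so that $X_1$ sits in degree $0$, and then identifies the associated graded algebra with a quantum affine space (hence a domain, which forces $\Bg$ to be a domain), you instead realize $\cB(\lstr(1,\ghost))$ directly as an iterated Ore extension $\ku \subset A_0 \subset A_0[x_1;\sigma_1] \subset A_1[x_2;\sigma_2,\delta_2]$ and invoke that Ore extensions by automorphisms preserve the domain property. Both arguments exploit the same underlying fact — that the relations of $\Bg$ (equivalently, your $A_2$) are of iterated skew-polynomial type — but they package it differently: your approach avoids passing to an associated graded object and instead pushes all the work into the compatibility checks for $\sigma_2$ and $\delta_2$, which you correctly identify as the crux and which indeed go through (I verified the two boundary cases $k=\ghost$ and the $x_1 z_n$ relation myself). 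Two minor points worth tightening in a final write-up: you should state explicitly that $\sigma_2(z_n) = q_{12} z_n$, since the Ore relation $x_2 z_n = q_{12} z_n x_2 + z_{n+1}$ requires it; and when you say the restriction of the PBW basis to $A_0$ is $\{z_0^{n_0}\cdots z_{\ghost}^{n_{\ghost}}\}$, note that the basis in Proposition \ref{pr:lstr-11disc} lists the $z$'s in the reverse order, which is harmless since the $z_t$ $q$-commute, but it deserves a word. With those small clarifications, the argument is complete and arguably more self-contained than the paper's, at the cost of carrying out the derivation compatibility computations rather than letting the grading kill the lower-order terms.
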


\pf
Consider the algebra $\Bg$ generated by $(X_i)_{i\in \I_3}$, $(Z_t)_{t\in \I_{\ghost}}$ with the following relations (where $Z_0 = X_3$ for convenience):
\eqref{eq:rels B(V(1,2))},
\eqref{eq:lstr-rels&11disc-1}, \eqref{eq:lstr-rels&11disc-2}  (with $X_i$ in the place of $x_i$ and $Z_t$ in the place of $z_t$),
$X_2Z_t - q_{12}Z_t X_2 = Z_{t+1}$, $0 \le  t < \ghost$, $X_2 Z_\ghost - q_{12} Z_\ghost X_2 = 0$.
Clearly the assignments $X_i \leftrightarrow x_i$ and $Z_t \leftrightarrow z_t$ provide an algebra isomorphism
$\Bg \simeq \cB(\lstr( 1, \ghost))$; in particular \eqref{eq:x1, x12 commute with zt} and \eqref{eq:bracket ztzk} hold in $\Bg$.
Consider the filtration of $\Bg$ where $\deg X_1 =0$ and all the other defining generators having degree 1. We claim that
$\gr \Bg$ is presented by $(\overline{X}_i)_{i\in \I_3}$, $(\overline{Z}_t)_{t\in \I_{\ghost}}$ with the relations
\begin{align}
\label{eq:grBg-1}
\overline{X}_2\overline{X}_1 - \overline{X}_1 \overline{X}_2 &=0, \\
\label{eq:grBg-2}
\overline{X}_1\overline{Z}_t - q_{12} \overline{Z}_t \overline{X}_1 &=0, & 0\le & t \le  \ghost,
\\
\label{eq:grBg-5}
\overline{X}_2\overline{Z}_t - q_{12}\overline{Z}_t \overline{X}_2 &= 0, & 0\le & t \le  \ghost,
\\
\label{eq:grBg-6}
\overline{Z}_t\overline{Z}_k-  q_{21}^{k-t} \overline{Z}_k\overline{Z}_t &= 0, & 0\le & t<k\le \ghost,\end{align}
where $\overline{Z}_0 = \overline{X}_3$ for convenience.
Indeed, the algebra $\widetilde \Bg$ with the mentioned presentation admits a surjective algebra homomorphism onto $\gr \Bg$.
But $\widetilde \Bg$ is a quantum polynomial ring, hence it has a PBW-basis analogous to $B$ above and the claim follows.
Now $\widetilde \Bg$ is a domain, hence so is $\Bg \simeq \cB(\lstr( 1, \ghost))$.
\epf

\subsubsection{The Nichols algebra $\cB(\lstr( -1, \ghost))$}\label{subsubsection:lstr-1-1disc}

\begin{prop} \label{pr:lstr1-1disc} Let $\ghost \in \N$. The algebra
	$\cB(\lstr( -1, \ghost))$ is presented by generators $x_1,x_2, x_3$ and relations  \eqref{eq:rels B(V(1,2))},
	\eqref{eq:lstr-rels&11disc-1}, \eqref{eq:lstr-rels&11disc-qserre} and
	\begin{align}\label{eq:lstr-rels&1-1disc}
		z_t^2&=0, & 0\le& t\le \ghost.
	\end{align}
	The set
	\begin{align*}
		B=\{ x_1^{m_1} x_2^{m_2} z_{\ghost}^{n_{\ghost}} \dots z_1^{n_1} z_0^{n_0}: n_i \in\{0,1\}, m_j \in\N_0 \}
	\end{align*}
	is a basis of $\cB(\lstr( -1, \ghost))$ and $\GK \cB(\lstr( -1, \ghost)) = 2$.
\end{prop}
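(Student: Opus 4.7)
The plan is to follow closely the strategy employed in the proof of Proposition \ref{pr:lstr-11disc}. Since $\epsilon = 1$ and $q_{22} = -1$, the equality $\epsilon^t q_{22} = -1$ holds for \emph{every} $t \in \N_0$. Consequently \eqref{eq:zt square is 0} of Lemma \ref{lemma:relations L(pm1,pm1,G)} provides \eqref{eq:lstr-rels&1-1disc} in $\cB(\lstr(-1,\ghost))$ for all $0 \le t \le \ghost$, while \eqref{eq:rels B(V(1,2))} and \eqref{eq:lstr-rels&11disc-1} hold by the definition of the braiding of $V$ and direct computation with the skew derivations, and \eqref{eq:lstr-rels&11disc-qserre} follows from Lemma \ref{lemma:derivations-zn} since the ghost is discrete. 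Thus the quotient $\cBt$ of $T(V)$ by the listed relations surjects onto $\cB(\lstr(-1,\ghost))$.

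Next I would show that the span $I$ of $B$ is a right ideal of $\cBt$, which together with $1 \in B$ forces $\cBt = I$. Lemma \ref{lemma:x1, x12 commute with zt} yields $x_1 z_t = q_{12} z_t x_1$, so $I \cdot x_1 \subseteq I$. Lemma \ref{lemma:zt zk} \vii, applied to each $t$, produces \eqref{eq:zt zt+1 qcommute} in $\cBt$, and Lemma \ref{lemma:zt zk} \vi then promotes this to the general $q$-commutation \eqref{eq:bracket ztzk}; combined with $z_t^2 = 0$ this yields $I \cdot x_3 = I\cdot z_0 \subseteq I$. To handle right multiplication by $x_2$, I would move $x_2$ leftward through a monomial in $B$ using $z_t x_2 = q_{21}(x_2 z_t - z_{t+1})$ together with Lemma \ref{le:-1bpz}; the correction terms $z_{t+1}$ cascade upward along the $z$-string and are either absorbed using the commutation relations, or killed by \eqref{eq:lstr-rels&1-1disc} when they collide with an existing factor $z_{t+1}$, or killed by \eqref{eq:lstr-rels&11disc-qserre} once the index exceeds $\ghost$.

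Linear independence of $B$ in $\cB(\lstr(-1,\ghost))$ would follow by the minimal-degree argument from Proposition \ref{pr:lstr-11disc}. If $\mathtt{S}$ is a non-trivial linear dependence of minimal degree, then applying $\partial_1$ and $\partial_2$, which annihilate every $z_t$ by Lemma \ref{lemma:derivations-zn} and act on $x_1,x_2$ exactly as in the proof of Proposition \ref{pr:lstr-11disc}, forces $m_1 = m_2 = 0$ in every surviving monomial. One is reduced to a dependence among pure $z$-monomials $z_\ghost^{n_\ghost}\cdots z_0^{n_0}$ with $n_i \in \{0,1\}$. Picking the largest $k$ for which $n_k=1$ appears in $\mathtt{S}$ and applying $\partial_3$, using $\partial_3(z_k) = \mu_k y_k$ with $\mu_k \ne 0$ by the discreteness of $\ghost$ and \eqref{eq:yn zt commute} to push $y_k$ past the remaining $z_i$'s, produces a non-trivial dependence of strictly lower degree, contradicting minimality.

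The main obstacle is the combinatorial bookkeeping required to verify $I \cdot x_2 \subseteq I$: the correction terms produced by $z_t x_2 = q_{21}(x_2 z_t - z_{t+1})$ propagate through the entire $z$-string, and one must confirm that all resulting monomials can be rearranged into the ordered standard form using the $q$-commutation, height-two, and Serre-type relations simultaneously. Once the PBW basis is established, the Hilbert series factors as $(1-T)^{-2}\prod_{t=0}^{\ghost}(1+T^{t+1})$, giving $\GK \cB(\lstr(-1,\ghost)) = 2$ immediately.
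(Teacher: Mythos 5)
Your proposal follows essentially the same route as the paper: verify the stated relations via $\partial_i$ and Lemmas \ref{lemma:derivations-zn}, \ref{lemma:relations L(pm1,pm1,G)}, use Lemma \ref{lemma:zt zk} \vii\ then \vi\ to derive \eqref{eq:bracket ztzk} in the candidate algebra $\cBt$, show the span of $B$ is a right ideal, and prove linear independence by the minimal-degree argument with $\partial_1,\partial_2$ killing the $x_1,x_2$ exponents and $\partial_3$ then stripping the top $z$-factor. The only minor difference is that you spell out the correction-term cascade for $I\cdot x_2\subseteq I$ in more detail and note $I\cdot x_3\subseteq I$ via $z_0^2=0$ where the paper just says "by definition"; both are harmless elaborations of the same computation.
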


\pf
Relations \eqref{eq:lstr-rels&11disc-1}, \eqref{eq:lstr-rels&11disc-qserre} are 0 in $\cB(\lstr( -1, \ghost))$
being annihilated by $\partial_i$, $i=1,2,3$, and \eqref{eq:lstr-rels&1-1disc} holds by Lemma \ref{lemma:relations L(pm1,pm1,G)}.
Hence the quotient $\cBt$ of $T(V)$
by \eqref{eq:rels B(V(1,2))}, \eqref{eq:lstr-rels&11disc-1}, \eqref{eq:lstr-rels&11disc-qserre} and \eqref{eq:lstr-rels&1-1disc}
projects onto $\cB(\lstr( -1, \ghost))$.
Then Lemma \ref{lemma:zt zk} \vii holds, so also \eqref{eq:bracket ztzk} holds in $\cBt$ by Lemma \ref{lemma:zt zk}\vi.

We claim that the subspace $I$ spanned by $B$ is a right ideal of $\cBt$. Indeed,
$Ix_1\subseteq I$ follows by Lemma \ref{lemma:x1, x12 commute with zt}, $Ix_2\subseteq I$ by \eqref{eq:lstr-rels&11disc-qserre},
\eqref{eq:bracket ztzk} and \eqref{eq:lstr-rels&1-1disc}, and $I x_3\subseteq I$ by definition. Since $1\in I$, $\cBt$ is spanned by $B$.

To prove that $\cBt \simeq \cB(\lstr( -1, \ghost))$, it remains to show that
$B$ is linearly independent in $\cB(\lstr( -1, \ghost))$. For, suppose that there is a non-trivial linear combination $\mathtt{S}$
of elements of $B$ in $\cB(\lstr( -1, \ghost))$, say of minimal degree.
Then such linear combination does not have terms with $m_1$ or $m_2$ greater than 0 as in Proposition \ref{pr:lstr-11disc}.
Let $k$ be maximal such that $z_{k}^{n_{k}} \dots z_1^{n_1} z_0^{n_0}$ has non-zero coefficient in $\mathtt{S}$ for some $k\geq 1$, and for
such $k$ fix the maximal $n_k$. By \eqref{eq:zt n partial 3}, $y_k z_{k}^{n_{k}-1} \dots z_1^{n_1} z_0^{n_0}$
has non-zero coefficient in $\partial_3(\mathtt{S})$, and $\partial_3(\mathtt{S})$ is also a non-trivial linear
combination of elements of $B$, a contradiction.
Then $B$ is a basis of $\cB(\lstr( -1, \ghost))$ and $\cBt=\cB(\lstr( -1, \ghost))$.
The computation of $\GK$ follows from the Hilbert series at once.
\epf

\subsubsection{The Nichols algebra $\cB(\lstr_{-}( 1, \ghost))$}\label{subsubsection:lstr--11disc}

\begin{prop} \label{pr:lstr--11disc} Let $\ghost \in \N$. The algebra
	$\cB(\lstr_{-}( 1, \ghost))$ is presented by generators $x_1,x_2, x_3$ and relations \eqref{eq:rels-B(V(-1,2))-1},
	\eqref{eq:rels-B(V(-1,2))-2}, \eqref{eq:lstr-rels&11disc-1} and
	\begin{align}
		z_{1+2\ghost}&=0, \label{eq:lstr-rels&-11disc-1} \\
		x_{21}z_0& = q_{12}^2 \, z_0x_{21},  \label{eq:lstr-rels&-11disc-2} \\
		z_{2k+1}^2&=0, &  0\le & k < \ghost, \label{eq:lstr-rels&-11disc-3} \\
		z_{2k} z_{2k+1}&= q_{12}^{-1} \, z_{2k+1}z_{2k}, & 0\le & k < \ghost. \label{eq:lstr-rels&-11disc-4}
	\end{align}
	The set
	\begin{align*}
		B=\{ x_1^{m_1} x_{21}^{m_2} x_2^{m_3} z_{2\ghost}^{n_{2\ghost}} \dots z_1^{n_1} z_0^{n_0}: m_1, n_{2k+1} \in\{0,1\}, m_2, m_3, n_{2k} \in\N_0 \}
	\end{align*}
	is a basis of $\cB(\lstr_{-}( 1, \ghost))$ and $\GK \cB(\lstr_{-}( 1, \ghost)) = \ghost+3$.
\end{prop}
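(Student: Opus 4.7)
The plan is to mirror Propositions \ref{pr:lstr-11disc} and \ref{pr:lstr1-1disc}: first verify that the listed relations hold in $\cB(\lstr_{-}(1,\ghost))$, so the quotient $\cBt$ of $T(V)$ by them surjects onto $\cB(\lstr_{-}(1,\ghost))$; then prove that $B$ spans $\cBt$ by checking that its linear span is a right ideal; finally, prove that $B$ remains linearly independent in $\cB(\lstr_{-}(1,\ghost))$ using the skew derivations $\partial_1,\partial_2,\partial_3$.

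For the first step: \eqref{eq:rels-B(V(-1,2))-1} and \eqref{eq:rels-B(V(-1,2))-2} hold because $\langle x_1,x_2\rangle$ is the super Jordan plane $\cV(-1,2)$, cf. Proposition \ref{pr:-1block}; \eqref{eq:lstr-rels&11disc-1} holds because the interaction is weak; \eqref{eq:lstr-rels&-11disc-1} follows from Lemma \ref{lemma:derivations-zn} since $\ghost$ is discrete and $|2a|=2\ghost$; \eqref{eq:lstr-rels&-11disc-2} is the case $n=0$ of the third identity in \eqref{eq:-1block+point}; and \eqref{eq:lstr-rels&-11disc-3}, \eqref{eq:lstr-rels&-11disc-4} are consequences of Lemma \ref{lemma:relations L(pm1,pm1,G)} (noting $\epsilon^{2k+1}q_{22}=-1$ and $\epsilon^{2k}q_{22}=1$). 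Once the relations hold in $\cBt$, Lemma \ref{lemma:zt zk}\vii gives us the full commutation \eqref{eq:bracket ztzk} between the $z_t$'s (for all pairs, not just consecutive ones), which will be crucial for the reordering step.

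To show $I:=\mathrm{span}\, B$ is a right ideal of $\cBt$: $Ix_3\subseteq I$ is immediate since $x_3=z_0$; $Ix_1\subseteq I$ follows from \eqref{eq:rels-B(V(-1,2))-1}, \eqref{eq:rels-B(V(-1,2))-dos} and Lemma \ref{lemma:x1, x12 commute with zt}; and $Ix_2\subseteq I$ reduces to repeatedly applying the identity $z_tx_2=(-1)^tq_{21}(x_2z_t-z_{t+1})$ to move $x_2$ past each $z_t$ (producing $z_{t+1}$ terms that are absorbed via \eqref{eq:bracket ztzk}, killed by \eqref{eq:lstr-rels&-11disc-3} if one gets $z_{2k+1}^2$, or killed by \eqref{eq:lstr-rels&-11disc-1} if one gets $z_{2\ghost+1}$), combined with \eqref{eq:rels-B(V(-1,2))-2} to push $x_2$ past $x_{21}$. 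Since $1\in I$, this gives $\cBt=I$.

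For linear independence of $B$ in $\cB(\lstr_{-}(1,\ghost))$, take a non-trivial relation $\mathtt{S}=0$ of minimal total degree. Using $\partial_2(x_{21})=0$, $\partial_2(z_t)=\partial_1(z_t)=0$ (Lemmas \ref{le:-1bpz} and \ref{lemma:derivations-zn}), and \eqref{eq:-1del1}, one argues as in Proposition \ref{pr:lstr-11disc} that $\mathtt{S}$ cannot contain monomials with $m_1>0$, $m_2>0$ or $m_3>0$; hence $\mathtt{S}$ is a relation among pure $z$-monomials. Then, picking the largest $k\ge 1$ with a non-zero coefficient of $z_k^{n_k}\cdots z_0^{n_0}$ and fixing the maximal $n_k$ for that $k$, \eqref{eq:derivations-zn} together with \eqref{eq:zt n partial 3} show that $\partial_3(\mathtt{S})$ is a non-trivial lower-degree relation in $B$, contradicting minimality. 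The computation of $\GK$ is then immediate from the Hilbert series
\[
H(t)=\frac{(1+t)\prod_{k=0}^{\ghost-1}(1+t^{2k+2})}{(1-t)(1-t^2)\prod_{k=0}^{\ghost}(1-t^{2k+1})},
\]
whose denominator has $\ghost+3$ factors, yielding $\GK\cB(\lstr_{-}(1,\ghost))=\ghost+3$. The main obstacle I expect is the bookkeeping in the right-ideal step, where one must carefully track how the $z_{t+1}$ terms generated by commuting $x_2$ past the $z_t$'s interact with the truncation relations \eqref{eq:lstr-rels&-11disc-1}, \eqref{eq:lstr-rels&-11disc-3} and the full commutation \eqref{eq:bracket ztzk}; the key identities are already in place, so this is technical rather than conceptual.
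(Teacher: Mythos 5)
Your proposal is correct and takes essentially the same route as the paper's proof: verify the relations (with the super Jordan plane structure, weak interaction, Lemma \ref{lemma:derivations-zn}, the third identity of \eqref{eq:-1block+point} at $n=0$, and Lemma \ref{lemma:relations L(pm1,pm1,G)}), deduce the full $z$-commutation \eqref{eq:bracket ztzk} via Lemma \ref{lemma:zt zk}, show $\operatorname{span} B$ is a right ideal to get spanning, and use $\partial_1,\partial_2,\partial_3$ together with \eqref{eq:zt n partial 3} for independence. You fill in more detail on the right-ideal bookkeeping than the paper does, and your explicit Hilbert series is a nice check, but the argument is the same.
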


\pf
Relations \eqref{eq:rels-B(V(-1,2))-1}, \eqref{eq:rels-B(V(-1,2))-2}, \eqref{eq:lstr-rels&11disc-1} and
\eqref{eq:lstr-rels&-11disc-1} are 0 in $\cB(\lstr( -1, \ghost))$
being annihilated by $\partial_i$, $i=1,2,3$, and \eqref{eq:lstr-rels&-11disc-2}, \eqref{eq:lstr-rels&-11disc-3},
\eqref{eq:lstr-rels&-11disc-4} hold by Lemma \ref{lemma:relations L(pm1,pm1,G)}.
Hence the quotient $\cBt$ of $T(V)$ by these relations projects onto $\cB(\lstr_{-}(1,\ghost))$.
Then $z_{t} z_{t+1}= q_{21}q_{22} \, z_{t+1}z_{t}$ for all $t$ by Lemma \ref{lemma:zt zk} \vii, so
\eqref{eq:bracket ztzk} holds in $\cBt$ by Lemma \ref{lemma:zt zk}\vi.

We claim that the subspace $I$ spanned by $B$ is a right ideal of $\cBt$. Indeed,
$Ix_1\subseteq I$ follows by Lemma \ref{lemma:x1, x12 commute with zt}, $Ix_2\subseteq I$ by \eqref{eq:lstr-rels&-11disc-1},
\eqref{eq:bracket ztzk} and \eqref{eq:lstr-rels&-11disc-3}, and $I x_3\subseteq I$ by definition. Since $1\in I$, $\cBt$ is spanned by $B$.

To prove that $\cBt \simeq \cB(\lstr_{-}( 1, \ghost))$, it remains to show that
$B$ is linearly independent in $\cB(\lstr_{-}( 1, \ghost))$. For, suppose that there is a non-trivial linear combination $\mathtt{S}$
of elements of $B$ in $\cB(\lstr_{-}(1,\ghost))$, say of minimal degree.
As in the proof of Proposition \ref{pr:-1block},
each vector $x_1^{m_1} x_{21}^{m_2} x_2^{m_3} z_{2\ghost}^{n_{2\ghost}} \dots z_1^{n_1} z_0^{n_0}$ in $\mathtt{S}$
with non-trivial coefficient satisfies $m_i=0$, $i=1,2,3$.
Let $k$ be maximal such that $z_{k}^{n_{k}} \dots z_1^{n_1} z_0^{n_0}$ has non-zero coefficient in $\mathtt{S}$ for some $k\geq 1$, and for
such $k$ fix the maximal $n_k$. By \eqref{eq:zt n partial 3} and \eqref{eq:yn zt commute}, $y_k z_{k}^{n_{k}-1} \dots z_1^{n_1} z_0^{n_0}$
has non-zero coefficient in $\partial_3(\mathtt{S})$, and $\partial_3(\mathtt{S})$ is also a non-trivial linear
combination of elements of $B$, a contradiction.
Then $B$ is a basis of $\cB(\lstr_{-}(1, \ghost))$ and $\cBt=\cB(\lstr_{-}(1, \ghost))$.
The computation of $\GK$ follows from the Hilbert series at once.
\epf

\subsubsection{The Nichols algebra $\cB(\lstr_{-}( -1, \ghost))$}\label{subsubsection:lstr--1-1disc}

\begin{prop} \label{pr:lstr-1-1disc} Let $\ghost \in \N$. The algebra
	$\cB(\lstr_{-}( -1, \ghost))$ is presented by generators $x_1,x_2, x_3$ and relations \eqref{eq:rels-B(V(-1,2))-1},
	\eqref{eq:rels-B(V(-1,2))-2}, \eqref{eq:lstr-rels&11disc-1}, \eqref{eq:lstr-rels&-11disc-1}, \eqref{eq:lstr-rels&-11disc-2}
	and
	\begin{align}
		z_{2k}^2&=0, &  0\le & k \le \ghost, \label{eq:lstr-rels&-1-1disc-1} \\
		z_{2k-1} z_{2k}&= -q_{12}^{-1} z_{2k}z_{2k-1}, & 0< & k \le \ghost. \label{eq:lstr-rels&-1-1disc-2}
	\end{align}
	The set
	\begin{align*}
		B=\{ x_1^{m_1} x_{21}^{m_2} x_2^{m_3} z_{2\ghost}^{n_{2\ghost}} \dots z_1^{n_1} z_0^{n_0}: m_1, n_{2k} \in\{0,1\}, m_2, m_3, n_{2k-1} \in\N_0 \}
	\end{align*}
	is a basis of $\cB(\lstr_{-}( -1, \ghost))$ and $\GK \cB(\lstr_{-}( -1, \ghost)) = \ghost+2$.
\end{prop}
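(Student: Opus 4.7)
The plan is to follow the template of Propositions \ref{pr:lstr-11disc}, \ref{pr:lstr1-1disc} and especially \ref{pr:lstr--11disc}, specialized to $\epsilon = q_{22} = -1$. First I would check that all the listed defining relations vanish in $\cB(\lstr_{-}(-1,\ghost))$: \eqref{eq:rels-B(V(-1,2))-1} and \eqref{eq:rels-B(V(-1,2))-2} come from the super Jordan plane structure on $\langle x_1,x_2\rangle \simeq \cV(-1,2)$; \eqref{eq:lstr-rels&11disc-1} and \eqref{eq:lstr-rels&-11disc-2} are the weak-interaction identities of Lemma \ref{le:-1bpz}; \eqref{eq:lstr-rels&-11disc-1} is the truncation $z_{2\ghost+1}=0$ of Lemma \ref{lemma:derivations-zn} (with $\ghost=a$ since $\epsilon=-1$, so $|2a|=2\ghost$); \eqref{eq:lstr-rels&-1-1disc-1} is \eqref{eq:zt square is 0} applied to $\epsilon^{2k}q_{22}=-1$; and \eqref{eq:lstr-rels&-1-1disc-2} is \eqref{eq:zt zt+1 qcommute} specialized to $t=2k-1$, using $q_{21}q_{22}=-q_{12}^{-1}$. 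This produces the quotient projection $\cBt \twoheadrightarrow \cB(\lstr_{-}(-1,\ghost))$.

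Next I would upgrade the $q$-commutations inside $\cBt$. Lemma \ref{lemma:zt zk}\vii applied to \eqref{eq:lstr-rels&-1-1disc-1} for $t=2k$ (where $\epsilon^{t}q_{22}=-1$) supplies the missing cases $z_{2k}z_{2k+1} = -q_{12}^{-1}z_{2k+1}z_{2k}$, so that \eqref{eq:zt zt+1 qcommute} holds in full, and then Lemma \ref{lemma:zt zk}\vi yields the general skew-commutation \eqref{eq:bracket ztzk} for $0\le t<k\le 2\ghost$.

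With these relations in hand, I would show that the span $I$ of $B$ is a right ideal of $\cBt$. Right multiplication by $x_1$ uses Lemma \ref{lemma:x1, x12 commute with zt}, relation \eqref{eq:rels-B(V(-1,2))-1}, and the identity \eqref{eq:rels-B(V(-1,2))-dos}; right multiplication by $x_{21}$ is analogous and uses in addition \eqref{eq:lstr-rels&-11disc-2}; right multiplication by $x_2$ relies on $z_n x_2 = \epsilon^n q_{21}(x_2z_n - z_{n+1})$, extracted from \eqref{eq:-1block+point-z2}, combined with \eqref{eq:lstr-rels&-11disc-1}, \eqref{eq:bracket ztzk}, and \eqref{eq:lstr-rels&-1-1disc-1}; right multiplication by $x_3=z_0$ is immediate from \eqref{eq:bracket ztzk} and \eqref{eq:lstr-rels&-1-1disc-1}. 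Since $1\in B$, the algebra $\cBt$ is spanned by $B$.

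For linear independence of $B$ in $\cB(\lstr_{-}(-1,\ghost))$, I would argue by contradiction using a non-trivial relation $\mathtt{S}$ of minimal degree. Since $\partial_1$ and $\partial_2$ annihilate every $z_n$ by Lemma \ref{lemma:derivations-zn} and act on $x_1^{m_1}x_{21}^{m_2}x_2^{m_3}$ as in Proposition \ref{pr:-1block} (via \eqref{eq:-1del1}), minimality forces $m_1=m_2=m_3=0$ on every surviving monomial. Then, picking the largest index $k$ with $z_k^{n_k}\cdots z_0^{n_0}$ appearing and the maximal such $n_k$, formulas \eqref{eq:derivations-zn}, \eqref{eq:zt n partial 3}, and \eqref{eq:yn zt commute} show that the basis vector $y_k z_k^{n_k-1}z_{k-1}^{n_{k-1}}\cdots z_0^{n_0}\in B$ appears with non-zero coefficient in $\partial_3(\mathtt{S})$ — here $\mu_k\neq 0$ because $\ghost$ is discrete and $k\le 2\ghost$, while for even $k$ one has $n_k=1$ forced by \eqref{eq:lstr-rels&-1-1disc-1} and for odd $k$ the hypothesis $\epsilon^kq_{22}=1$ needed in \eqref{eq:zt n partial 3} is automatic. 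This contradicts minimality, so $\cBt\simeq\cB(\lstr_{-}(-1,\ghost))$, and the Hilbert series yields $\GK = \ghost+2$. The main obstacle is the right-ideal bookkeeping — in particular, controlling how $z_n$-factors reorder when $x_2$ crosses them; once the $q$-commutations of the second paragraph are in place the rest proceeds as in Proposition \ref{pr:lstr--11disc}.
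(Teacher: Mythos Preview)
Your proposal is correct and follows essentially the same approach as the paper, which simply records the proof as ``Analogous to Proposition \ref{pr:lstr-11disc}.'' Your choice to model the argument more closely on Proposition \ref{pr:lstr--11disc} (the $\epsilon=-1$, $q_{22}=1$ case) is in fact the more natural template, and your verification of the individual steps --- in particular the use of Lemma \ref{lemma:zt zk}\vii to recover the missing even-index commutations before invoking \vi, and the case split on the parity of $k$ in the $\partial_3$-argument --- is accurate.
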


\pf
Analogous to Proposition \ref{pr:lstr-11disc}.
\epf

\subsubsection{The Nichols algebra $\cB(\lstr( \omega, 1))$}\label{subsubsection:lstr-1omega1}

\begin{remark}
	As in the previous cases, \eqref{eq:lstr-rels&11disc-1} and
	\begin{align}\label{eq:lstr1omega1-qserre}
		z_2 & =0
	\end{align}
	hold in $\cB(\lstr( \omega, 1))$. As $q_{22}=\omega\in\G'_3$ we also have
	\begin{align}\label{eq:lstr1omega1-z0cube}
		z_0^3 & =0.
	\end{align}
\end{remark}

Let $z_{1,0} :=z_1z_0-q_{12}q_{22}z_0z_1$.

\begin{remark}
	The following equations hold in $\cB(\lstr( \omega, 1))$ by Lemma \ref{le:-1bpz}
	\begin{align}
		g_1 \cdot z_{1,0} &= q_{12}^2 z_{1,0}, & g_2 \cdot z_{1,0} &= q_{21}q_{22}^2 z_{1,0}, \label{eq:lstr1omega1-gi on z} \\
		\partial_1(z_{1,0}) &= \partial_2(z_{1,0}) =0, & \partial_3(z_{1,0}) &= (1-q_{22}^2) z_{1,0}. \label{eq:lstr1omega1-derivations z}
	\end{align}
\end{remark}

\begin{lemma}\label{lemma:lstr1omega1 - qcommutators=0}
	Let $\cB$ be a quotient algebra of $T(V)$. Assume that
	\eqref{eq:lstr-rels&11disc-1}, \eqref{eq:lstr1omega1-qserre} and \eqref{eq:lstr1omega1-z0cube}
	hold in $\cB$. Then the following relations also hold:
	\begin{align}
		z_1z_{1,0} &= q_{12}\omega ^2 z_{1,0} z_1, &z_{1,0}z_0 &= q_{12}\omega ^2 z_0 z_{1,0}, \label{eq:lstr1omega1 - qcommutators 1}
		\\ \label{eq:lstr1omega1 - qcommutators 2}
		x_2 z_{1,0} &= q_{12}^2 z_{1,0} x_2+q_{12}(1-\omega )z_1^2,  &
		x_1z_{1,0} &= q_{12}^2 z_{1,0} x_1.
	\end{align}
\end{lemma}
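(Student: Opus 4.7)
The plan is to verify each of the four identities by direct computation in $\cB$, exploiting the three hypothesised relations together with the definitions $z_1 = x_2 z_0 - q_{12} z_0 x_2$ and $z_{1,0} = z_1 z_0 - q_{12}\omega z_0 z_1$.

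I would start with the relation $x_2 z_{1,0} = q_{12}^2 z_{1,0} x_2 + q_{12}(1-\omega) z_1^2$, which is the most direct. Expanding $x_2 z_{1,0}$ and pushing $x_2$ past each $z_0$ via $x_2 z_0 = z_1 + q_{12} z_0 x_2$ (the definition of $z_1$), and past each $z_1$ via the hypothesis $z_2=0$, the $z_1^2$-terms combine to $q_{12}(1-\omega) z_1^2$ and the remaining terms assemble into $q_{12}^2 z_{1,0} x_2$. Only $z_2 = 0$ is needed here.

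For \eqref{eq:lstr1omega1 - qcommutators 1}, I would recognise these as the two quantum Serre relations $(\ad_c z_1)^2 z_0 = 0$ and $(\ad_c z_{1,0})(z_0) = 0$ for the braided diagonal subalgebra spanned by $z_0, z_1$, whose parameters are those of Cartan type $A_2$ at $\omega$, as recorded in the proof of Theorem \ref{thm:pm1bp}. A routine expansion using $1+\omega+\omega^2 = 0$ shows
\begin{align*}
z_1 z_{1,0} - q_{12}\omega^2 z_{1,0} z_1 = z_1^2 z_0 + q_{12}\, z_1 z_0 z_1 + q_{12}^2\, z_0 z_1^2.
\end{align*}
I would then substitute $z_1 = x_2 z_0 - q_{12} z_0 x_2$ into every monomial on the right hand side, expand in the letters $x_2, z_0$, and reduce the resulting degree-five word using the Serre identity $z_2 = x_2^2 z_0 - 2 q_{12}\, x_2 z_0 x_2 + q_{12}^2\, z_0 x_2^2 = 0$ together with $z_0^3 = 0$, so that every surviving monomial vanishes. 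The identity $z_{1,0} z_0 = q_{12}\omega^2 z_0 z_{1,0}$ is handled symmetrically as $(\ad_c z_{1,0})(z_0) = 0$.

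For the remaining relation $x_1 z_{1,0} = q_{12}^2 z_{1,0} x_1$, I would first apply $x_1 z_0 = q_{12} z_0 x_1$ (from \eqref{eq:lstr-rels&11disc-1}) to push $x_1$ past each $z_0$ appearing inside $z_{1,0}$, producing the factorisation
\begin{align*}
x_1 z_{1,0} - q_{12}^2 z_{1,0} x_1 = w\, z_0 - q_{12}^2 \omega\, z_0\, w, \qquad w := x_1 z_1 - q_{12} z_1 x_1.
\end{align*}
The task thus reduces to proving the single commutation $w\, z_0 = q_{12}^2 \omega\, z_0\, w$. I expect this to be the main obstacle, since $w$ is a non-trivial degree-three element; the plan is to expand $w$ as a word in $x_1,x_2,x_3$, move all $x_3$'s past the $x_1$'s using $x_1 x_3 = q_{12} x_3 x_1$, and then invoke $z_2=0$ (and, if necessary, $z_0^3=0$) to cancel the remaining monomials in $w z_0 - q_{12}^2 \omega z_0 w$.
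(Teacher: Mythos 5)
Your route to \eqref{eq:lstr1omega1 - qcommutators 1} differs from the paper's and is worth flagging. You propose expanding $z_1^2 z_0 + q_{12} z_1 z_0 z_1 + q_{12}^2 z_0 z_1^2$ in the letters $x_2,x_3$ and reducing via $z_2=0$, $x_3^3=0$; this does work (after using $x_3^3=0$ and two applications of the Serre relation the residual cancels), but it is a degree-five computation with a non-obvious cancellation. The paper instead proves the easier of the two Serre-type relations, $z_{1,0}z_0 = q_{12}\omega^2 z_0 z_{1,0}$, by direct expansion (only $x_3^3=0$ and $1+\omega+\omega^2=0$ are needed there), and then derives $z_1z_{1,0}=q_{12}\omega^2 z_{1,0}z_1$ from this together with the $x_2$-commutator \eqref{eq:lstr1omega1 - qcommutators 2}: the manipulation closes with $z_1z_{1,0}-q_{12}\omega^2 z_{1,0}z_1 = \omega(1-\omega)\left(z_1z_{1,0}-q_{12}\omega^2 z_{1,0}z_1\right)$, and since $\omega(1-\omega)\neq 1$ the bracket must vanish. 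This avoids the heavy expansion altogether. Your step for $x_2 z_{1,0}$ coincides with the paper's.

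The genuine gap is in your last step, $x_1 z_{1,0}=q_{12}^2 z_{1,0}x_1$. Your reduction to $wz_0 = q_{12}^2\omega\, z_0 w$ with $w=x_1z_1 - q_{12}z_1x_1$ is correct, but the proposed cancellation cannot succeed with the listed relations. Pushing $x_3$ past $x_1$ gives $w = [x_1,x_2]\,x_3 - q_{12}^2\,x_3\,[x_1,x_2]$ and hence
\[
wz_0 - q_{12}^2\omega\, z_0 w = [x_1,x_2]x_3^2 - q_{12}^2(1+\omega)x_3[x_1,x_2]x_3 + q_{12}^4\omega\, x_3^2[x_1,x_2].
\]
Neither $z_2=0$ (a relation in $x_2,x_3$ only) nor $x_3^3=0$ constrains the commutator $[x_1,x_2]$, so this expression cannot be forced to zero. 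What actually makes $w=0$ is the Jordan relation \eqref{eq:rels B(V(1,2))}, which gives $[x_1,x_2]=-\tfrac12 x_1^2$, and then $w = -\tfrac12\bigl(x_1^2 x_3 - q_{12}^2 x_3 x_1^2\bigr)=0$ by \eqref{eq:lstr-rels&11disc-1}. The paper handles this by invoking Lemma \ref{lemma:x1, x12 commute with zt}, whose hypotheses do include the Jordan relation; in the only place the present lemma is applied (Proposition \ref{pr:lstr1omega1}) that relation is available. Your plan as written does not use it, so this step cannot be completed without adding \eqref{eq:rels B(V(1,2))} to the working hypotheses and replacing the expansion of $w$ by the observation that $w=0$.
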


\begin{proof}
	The second equation in \eqref{eq:lstr1omega1 - qcommutators 1} follows because
	\begin{align*}
		z_{1,0}z_0 &-q_{12}\omega ^2 z_0 z_{1,0} = (z_1 x_3-q_{12}\omega  x_3 z_1)x_3 - q_{12}\omega  x_3 (z_1 x_3-q_{12}\omega  x_3 z_1) \\
		&= x_2x_3^3 -q_{12}(1+\omega +\omega ^2)\left( x_3x_2x_3^2 + q_{12} x_3^2x_2x_3 \right) - q_{12}^3x_3^3 x_2=0.
	\end{align*}
	
	As $x_2z_1=q_{12}z_1x_2$ by \eqref{eq:lstr1omega1-qserre}, we have that
	\begin{align*}
		x_2 z_{1,0} &- q_{12}^2 z_{1,0} x_2 = x_2 (z_1 x_3-q_{12}\omega  x_3 z_1)- q_{12}^2 (z_1 x_3-q_{12}\omega  x_3 z_1) x_2  \\
		&= q_{12} z_1 x_2x_3- q_{12}\omega  (z_1+q_{12}x_3x_2) z_1 - q_{12}^2 z_1 x_3x_2 +q_{12}^2\omega  x_3x_2 z_1  \\
		&=q_{12}(1-\omega )z_1^2.
	\end{align*}
	so the first equation in \eqref{eq:lstr1omega1 - qcommutators 2} holds.
	Using it and \eqref{eq:lstr1omega1 - qcommutators 1} we compute
	\begin{align*}\allowdisplaybreaks
		z_1z_{1,0} &- q_{12}\omega ^2 z_{1,0} z_1 = (x_2x_3-q_{12}x_3x_2) z_{1,0} - q_{12}\omega ^2 z_{1,0} z_1 \\
		&= q_{21}\omega x_2 z_{1,0} x_3- q_{12} x_3 \left( q_{12}^2 z_{1,0} x_2+q_{12}(1-\omega )z_1^2 \right) - q_{12}\omega ^2 z_{1,0} z_1 \\
		&= q_{21}\omega  \left( q_{12}^2 z_{1,0} x_2+q_{12}(1-\omega )z_1^2 \right) x_3 - q_{12}^2\omega  z_{1,0} x_3x_2 \\
		& \qquad - q_{12}^2(1-\omega )x_3z_1^2 - q_{12}\omega ^2 z_{1,0} z_1 \\
		&= q_{12} \omega  z_{1,0} z_1 +\omega(1-\omega ) \left( z_1 z_{1,0} + q_{12}q_{22} z_{1,0} z_1+q_{12}^2q_{22}^2 \right)   \\
		& \qquad - q_{12}^2(1-\omega )x_3z_1^2 - q_{12}\omega ^2 z_{1,0} z_1
		= \omega(1-\omega ) \left( z_1z_{1,0} - q_{12}\omega ^2 z_{1,0} z_1 \right)
	\end{align*}
	and the first equation in \eqref{eq:lstr1omega1 - qcommutators 1} holds.
	Finally notice that $x_1z_i=q_{12} z_ix_1$, $i=0,1$, as
	in Lemma \ref{lemma:x1, x12 commute with zt}, giving
	the second equation in \eqref{eq:lstr1omega1 - qcommutators 2}.
\end{proof}

\begin{lemma}\label{lemma:lstr1omega1 - z3=0}
	In $\cB(\lstr( \omega, 1))$,
	\begin{align}\label{eq:lstr-rels&1omega1}
		z_1^3&=z_{1,0}^3=0.
	\end{align}
\end{lemma}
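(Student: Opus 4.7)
My plan is to invoke the standard characterization: a homogeneous element of $\NA (V)$ of positive degree vanishes as soon as every skew-derivation $\partial _i$, $i\in \I_3$, kills it (iterating on the degree if needed). Hence it suffices to verify that $\partial _i(z_1^3)=\partial _i(z_{1,0}^3)=0$ for every $i\in \I_3$. The $\partial _1$ and $\partial _2$ parts are immediate: Lemma \ref{lemma:derivations-zn} gives $\partial _1(z_1)=\partial _2(z_1)=0$, and \eqref{eq:lstr1omega1-derivations z} gives $\partial _1(z_{1,0})=\partial _2(z_{1,0})=0$; the Leibniz rule then propagates both vanishings to the cubes.

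For $\partial _3$ the starting data I would record are $\partial _3(z_1)=\mu _1x_1=\tfrac 12 x_1$ (from Lemma \ref{lemma:derivations-zn}, since $\ghost =1$ forces $a=-\tfrac 12$) and, by a short direct computation using \eqref{eq:lstr-rels&11disc-1}, $\partial _3(z_{1,0})=(1-\omega ^2)z_1$. I then expand each cube by iterated Leibniz into three summands and align them on a common right factor, using $g_2\cdot z_1=q_{21}\omega z_1$ together with $x_1z_1=q_{12}z_1x_1$ (both from Lemma \ref{le:-1bpz}) for the first cube, and $g_2\cdot z_{1,0}=q_{21}\omega ^2z_{1,0}$ from \eqref{eq:lstr1omega1-gi on z} together with $z_1z_{1,0}=q_{12}\omega ^2z_{1,0}z_1$ from \eqref{eq:lstr1omega1 - qcommutators 1} for the second. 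The three pieces of $\partial _3(z_1^3)$ then collect on $z_1^2x_1$ with aggregate scalar $\tfrac 12\bigl(1+q_{12}q_{21}\omega +(q_{12}q_{21})^2\omega ^2\bigr)$, and those of $\partial _3(z_{1,0}^3)$ collect on $z_{1,0}^2z_1$ with aggregate scalar $(1-\omega ^2)\bigl(1+q_{12}q_{21}\omega +(q_{12}q_{21})^2\omega ^2\bigr)$. Since the interaction is weak ($q_{12}q_{21}=1$) and $\omega \in \G'_3$, each bracketed sum reduces to $1+\omega +\omega ^2=0$, so both cubes are annihilated by $\partial_3$ as desired.

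The only real obstacle is coefficient bookkeeping in the iterated Leibniz expansions and the careful use of the $q$-commutation rules to line the three terms up on a single monomial; the conceptual cancellation is forced by the weak-interaction identity $q_{12}q_{21}=1$ together with the $\G'_3$-identity $1+\omega +\omega ^2=0$.
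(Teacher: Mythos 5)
Your proof is correct and follows the same route as the paper: reduce to showing $\partial _3$ kills both cubes (since $\partial _1$, $\partial _2$ already vanish on the generators), expand by Leibniz, $q$-commute the three terms onto a single monomial, and cancel via $1+\omega +\omega ^2=0$. You have also, rightly, computed $\partial _3(z_{1,0})=(1-\omega ^2)z_1$ directly, which is what the argument actually needs (the stated formula in \eqref{eq:lstr1omega1-derivations z} has a small typo, $z_{1,0}$ in place of $z_1$ on the right-hand side).
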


\begin{proof}
	As $\partial_i(z_1^3)=\partial_i(z_{1,0}^3)=0$ for $i=1,2$, it remains to prove that $\partial_3$ annihilates them.
	For the first relation, we use Lemma \ref{le:-1bpz},
	\begin{align*}
		\partial_3(z_1^3) &= \mu_1 x_1 (g_2\cdot z_1^2) + \mu_1 z_1 x_1 (g_2\cdot z_1) + \mu_1  z_1^2 x_1 \\
		&= \mu_1 q_{21}^2 (1+\omega+\omega^2)x_1z_1^2 =0.
	\end{align*}
	By Lemma \ref{lemma:lstr1omega1 - qcommutators=0}, $z_1z_{1,0} = q_{12}\omega^2 z_{1,0} z_1$, and using \eqref{eq:lstr1omega1-derivations z},
	\begin{align*}
		\partial_3(z_{1,0}^3) &= (1-\omega^2) z_1 (g_2\cdot z_{1,0}^2) + (1-\omega^2) z_{1,0} z_1 (g_2\cdot z_{1,0}) + (1-\omega^2)  z_{1,0}^2 z_1 \\
		&= (1-\omega^2) q_{21}^2 (1+\omega+\omega^2)z_1z_{1,0}^2 =0;
	\end{align*}
	so  $z_{1,0}^3=0$.
\end{proof}

\begin{prop} \label{pr:lstr1omega1} Let $\omega \in \G'_3$. The algebra
	$\cB(\lstr( \omega, 1))$ is presented by generators $x_1,x_2, x_3$ and relations  \eqref{eq:rels B(V(1,2))},
	\eqref{eq:lstr-rels&11disc-1}, \eqref{eq:lstr1omega1-qserre}, \eqref{eq:lstr1omega1-z0cube} and \eqref{eq:lstr-rels&1omega1}.
	The set
	\begin{align*}
		B=\{ x_1^{m_1} x_2^{m_2} z_1^{n_1} z_{1,0}^{n_2} z_0^{n_3}: m_i\in\N_0, 0 \le n_j\le 2 \}
	\end{align*}
	is a basis of $\cB(\lstr( \omega, 1))$ and $\GK \cB(\lstr( \omega, 1)) = 2$.
\end{prop}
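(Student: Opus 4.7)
The plan is to follow closely the pattern of Proposition \ref{pr:lstr-11disc}: verify the relations, show that $B$ spans the quotient as a right ideal, and finally show $B$ is linearly independent via skew derivations.

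First I would check that all the listed relations hold in $\cB(\lstr(\omega,1))$, so that the algebra $\cBt$ defined by those relations admits a surjection onto $\cB(\lstr(\omega,1))$. Relations \eqref{eq:rels B(V(1,2))}, \eqref{eq:lstr-rels&11disc-1} and \eqref{eq:lstr1omega1-qserre} are killed by $\partial _1,\partial _2,\partial _3$ as in the weak-interaction cases already treated; \eqref{eq:lstr1omega1-z0cube} is $x_3^3=0$, which follows because $q_{22}=\omega \in \G_3'$ (cf.\ Lemma \ref{lemma:relations L(pm1,pm1,G)}); and the two cubic relations in \eqref{eq:lstr-rels&1omega1} are the content of Lemma \ref{lemma:lstr1omega1 - z3=0}.

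Next I would show that the linear span $I$ of $B$ is a right ideal of $\cBt$, hence equal to $\cBt$ since $1\in I$. The inclusion $Ix_3\subseteq I$ is immediate from $x_3=z_0$ and \eqref{eq:lstr1omega1-z0cube}. For $Ix_2\subseteq I$, move $x_2$ past a pure $z$-monomial using the $q$-commutation $x_2z_1=q_{12}z_1x_2$ (a consequence of \eqref{eq:lstr1omega1-qserre}) and \eqref{eq:lstr1omega1 - qcommutators 2} (which commutes $x_2$ past $z_{1,0}$ up to a $z_1^2$ correction), then use the Jordan relation to absorb the resulting $x_1$; the reorderings required among $z_0,z_1,z_{1,0}$ are provided by \eqref{eq:lstr1omega1 - qcommutators 1}, and the cubic truncations \eqref{eq:lstr1omega1-z0cube}, \eqref{eq:lstr-rels&1omega1} keep the exponents $n_j$ in $\{0,1,2\}$. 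For $Ix_1\subseteq I$ one combines Lemma \ref{lemma:x1, x12 commute with zt} with the second formula of \eqref{eq:lstr1omega1 - qcommutators 2}.

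Finally, $B$ must be linearly independent in $\cB(\lstr(\omega,1))$. Assume a non-trivial relation $\mathtt S$ of minimal total degree. Since $\partial _1,\partial _2$ annihilate each of $z_0,z_1,z_{1,0}$ (by Lemma \ref{le:-1bpz} and \eqref{eq:lstr1omega1-derivations z}) but act as honest lowering operators on the $x_1^{m_1}x_2^{m_2}$ factor, minimality forces $m_1=m_2=0$ in every monomial appearing in $\mathtt S$. Then $\mathtt S$ is a non-trivial combination of pure $z$-monomials $z_1^{n_1}z_{1,0}^{n_2}z_0^{n_3}$ with $0\le n_j\le 2$. Applying $\partial _3$, using $\partial _3(z_0)=1$, $\partial _3(z_1)=0$, $\partial _3(z_{1,0})=(1-\omega ^2)z_1$, the grading computed in \eqref{eq:lstr1omega1-gi on z}, and reordering via \eqref{eq:lstr1omega1 - qcommutators 1}, one obtains a non-trivial linear combination of elements of $B$ of strictly smaller degree, contradicting minimality.

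Once $\cBt=\cB(\lstr(\omega,1))$ with basis $B$, the Hilbert series
\begin{align*}
H(t)=\frac{(1+t+t^2)(1+t^2+t^4)(1+t^3+t^6)}{(1-t)^2}
\end{align*}
yields $\GK \cB(\lstr(\omega,1))=2$. The delicate point is the independence step: because $z_{1,0}$ is not $\partial _3$-closed and the three elements $z_0,z_1,z_{1,0}$ only $q$-commute up to the correction in \eqref{eq:lstr1omega1 - qcommutators 2}, $\partial _3$ applied to an element of $B$ produces a sum of monomials that must be put back into PBW order before reading off the coefficients; keeping track of this reordering and verifying that the resulting linear combination is still non-trivial is the main technical work.
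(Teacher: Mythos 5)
Your overall plan — verify relations, show $B$ spans as a (one-sided) ideal, conclude independence via skew derivations, read off $\GK$ from the Hilbert series — is exactly the route the paper takes, citing the same Lemmas \ref{lemma:lstr1omega1 - qcommutators=0} and \ref{lemma:lstr1omega1 - z3=0}.

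However, the independence step contains a genuine error: $\partial_3(z_1)\neq 0$. From \eqref{eq:derivations-zn} with $k=0$ one has $\partial_3(z_1)=\mu_1 x_1$, and for $\lstr(\omega,1)$ the parameters are $\epsilon=1$, $\ghost=1$, hence $a=-\tfrac12$ and $\mu_1=-a=\tfrac12\neq 0$. So $\partial_3(z_1)=\tfrac12 x_1$. This matters: when you apply $\partial_3$ to $z_1^{n_1}z_{1,0}^{n_2}z_0^{n_3}$ you get, besides the two summands you list (from lowering $z_0$ and turning a $z_{1,0}$ into a $z_1$), a third summand proportional to $(n_1)_\omega\, x_1 z_1^{n_1-1}z_{1,0}^{n_2}z_0^{n_3}$ — a PBW monomial with $m_1=1$. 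With your (wrong) assumption $\partial_3(z_1)=0$, a minimal relation supported on pure $z_1$-powers (so $n_2=n_3=0$) would pass through $\partial_3$ to $0$, and your argument produces no contradiction. With the correct value, $\partial_3$ kills $z_1^{n_1}z_{1,0}^{n_2}z_0^{n_3}$ only when $n_1=n_2=n_3=0$ (since $(n_j)_\omega\neq 0$ for $n_j\in\{1,2\}$), and the three contributions live in distinguishable $x_1,z_0,z_{1,0}$-slots, so $\partial_3(\mathtt{S})$ is again a nontrivial combination of PBW monomials of strictly lower degree. This is precisely what the paper's displayed formula for $\partial_3(z_1^{n_1}z_{1,0}^{n_2}z_0^{n_3})$ records (note the $\mu_1$-term). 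Once you replace $\partial_3(z_1)=0$ by $\partial_3(z_1)=\tfrac12 x_1$ and account for the resulting $x_1$-carrying summand, your proof goes through; as written, the last paragraph is incorrect.

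Two minor remarks. First, $z_0,z_1,z_{1,0}$ do genuinely $q$-commute among themselves — that is the content of \eqref{eq:lstr1omega1 - qcommutators 1}; the corrections in \eqref{eq:lstr1omega1 - qcommutators 2} concern $x_2$ (and $x_1$) moving past $z_{1,0}$, not the $z$'s among themselves, so your framing of the ``delicate point'' mislocates where the non-pure $q$-commutation occurs (it is in the definition $z_1z_0=q_{12}q_{22}z_0z_1+z_{1,0}$, used to reach the PBW form, not in \eqref{eq:lstr1omega1 - qcommutators 2}). Second, citing Lemma \ref{lemma:relations L(pm1,pm1,G)} for $x_3^3=0$ is a stretch since that lemma assumes $q_{22}^2=1$; the correct justification is simply that the point has label $\omega\in\G_3'$, so its Nichols algebra truncates at degree $3$.
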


\pf
Relations \eqref{eq:rels B(V(1,2))}, \eqref{eq:lstr-rels&11disc-1}, \eqref{eq:lstr1omega1-qserre}, \eqref{eq:lstr1omega1-z0cube}
are 0 in $\cB(\lstr( 1, \ghost))$ being annihilated by $\partial_i$, $i=1,2,3$, and \eqref{eq:lstr-rels&1omega1} holds by Lemma
\ref{lemma:lstr1omega1 - z3=0}. Hence the quotient $\cBt$ of $T(V)$
by \eqref{eq:rels B(V(1,2))}, \eqref{eq:lstr-rels&11disc-1}, \eqref{eq:lstr1omega1-qserre}, \eqref{eq:lstr1omega1-z0cube} and
\eqref{eq:lstr-rels&1omega1} projects onto $\cB(\lstr( \omega, 1))$. We claim that the subspace $I$ spanned by $B$ is a right
ideal of $\cBt$. Indeed, $I x_3\subseteq I$ by definition and $Ix_1\subseteq I$, $Ix_2\subseteq I$ follow by
Lemma \ref{lemma:lstr1omega1 - qcommutators=0}. Since $1\in I$, $\cBt$ is spanned by $B$.

To prove that $\cBt \simeq \cB(\lstr( \omega, 1))$, it remains to show that
$B$ is linearly independent in $\cB(\lstr( \omega, 1))$. For, suppose that there is a non-trivial linear combination $\mathtt{S}$
of elements of $B$ in $\cB(\lstr( \omega, 1))$, say of minimal degree. As in the proof of Proposition \ref{pr:lstr-11disc},
each vector $x_1^{m_1} x_2^{m_2} z_1^{n_1} z^{n_2} z_0^{n_3}$ in $\mathtt{S}$ with non-trivial coefficient satisfies $m_1=m_2=0$. As
\begin{align*}
	\partial_3(z_1^{n_1} z^{n_2} z_0^{n_3}) &= (n_3)_{\omega}\, z_1^{n_1} z^{n_2} z_0^{n_3-1}
	+ q_{21}^{n_2-1}(1-\omega^2)\omega^{n_3}(n_2)_{\omega} z_1^{n_1+1} z^{n_2-1} z_0^{n_3} \\
	& \qquad + \mu_1q_{21}^{n_1+n_2-1}\omega^{2n_2+n_3}(n_1)_{\omega} x_1 z_1^{n_1-1} z^{n_2} z_0^{n_3},
\end{align*}
$\partial_3(\mathtt{S})$ is also a non-trivial linear combination of elements of $B$, a contradiction.
Then $B$ is a basis of $\cB(\lstr( \omega, 1))$ and $\cBt=\cB(\lstr( \omega, 1))$.
The computation of $\GK$ follows from the Hilbert series at once.
\epf

\subsection{Mild interaction}\label{subsection:mild}
\emph{We assume in this Subsection that $q_{12}q_{21}=-1$}. Then we may  assume that $\epsilon = -1$.
For, if $\epsilon = 1$, then $\GK \NA (V) = \infty$ by Lemma \ref{lemma:points-trivial-braiding}, since $x_1$, $x_3$ span a braided vector subspace of diagonal type with
braiding matrix $(q_{ij})_{1\le i,j\le 2}$.

We keep the notation in \S \ref{subsubsection:YD3-notation};
so, $ \NA (V)\simeq  K\# \NA (V_1)$,
 $K \simeq \cB(K^1)$,  $K^1=\ad_c \NA (V_1) (V_2)$.
 We first describe $K^1$ in detail. Let $z_n$ as in \eqref{eq:zn} and
\begin{align*}
f_n&=\ad_c x_1 (z_n),&
n&\in \N _0.
\end{align*}

\begin{lemma} \label{le:-K^1} The following hold in $\NA(V)$ for all $n\in\N_0$:
\begin{align}\label{eq:-1block+point-mild-Kgen}
K^1 = &\,\langle  z_m,f_m: \, m\in \N_0 \rangle, &
&\ad_cx_{21}(K^1) = 0,
\\ \label{eq:-1block+point-mild}
g_1\cdot z_{2n} &= q_{12}z_{2n},& g_1\cdot z_{2n+1} &= -q_{12}z_{2n+1}+q_{12}f_{2n},
\\\label{eq:-1block+point-mild-bis}
g_2\cdot z_{2n} &= q_{21}^{2n}q_{22}z_{2n},& g_2\cdot z_{2n+1} &= q_{21}^{2n+1}q_{22}(z_{2n+1}+af_{2n}),
\\\label{eq:-1block+point-mild-ter}
g_1\cdot f_n &= (-1)^{n+1}q_{12}f_n,& g_2\cdot f_n &=q_{21}^{n+1}q_{22}f_n,
\\\label{eq:-1block+point-mild-quator}
\ad_cx_1 &(f_n) = 0,& \ad_cx_2 &(f_n) = -f_{n+1}.
\end{align}
\end{lemma}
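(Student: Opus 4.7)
The plan is to exploit the multiplicativity $\ad_c(ab)=\ad_c(a)\circ \ad_c(b)$ of the braided adjoint action of $\NA (V_1)$ on $\NA (V)$, which is inherited from the analogous multiplicativity of the adjoint action of $\NA (V_1)\#\ku \Gamma$ on $\NA (V)\#\ku \Gamma$ under the embedding $y\mapsto y\#1$. The base cases $n=0$ of all formulas in \eqref{eq:-1block+point-mild}--\eqref{eq:-1block+point-mild-quator} are direct calculations using \eqref{eq:braiding-block-point}; for example, $f_0=x_1x_3-q_{12}x_3x_1$ immediately yields $g_1\cdot f_0=-q_{12}f_0$, $g_2\cdot f_0=q_{21}q_{22}f_0$, $\ad_c x_1(f_0)=0$, and $\ad_c x_2(f_0)=-f_1$.

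The crucial step is the identity $\ad_c x_{21}(K^1)=0$. Since $x_1^2=0$ in $\NA (V_1)\subset \NA (V)$ by Proposition~\ref{pr:-1block}, multiplicativity gives $(\ad_c x_1)^2=\ad_c(x_1^2)=0$, so $\ad_c x_1(f_n)=(\ad_c x_1)^2(z_n)=0$ for all $n$, establishing the first equation in \eqref{eq:-1block+point-mild-quator}. Next, the skew-derivations $\partial _i$, $i\in \I_3$, all annihilate $x_{21}x_3-q_{12}^2x_3x_{21}$ (using $q_{12}q_{21}=-1$ and $\partial _i(x_{21})=0$ in the super Jordan plane), so $\ad_c x_{21}(z_0)=0$. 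The inductive step uses the super Jordan plane relation \eqref{eq:rels-B(V(-1,2))-2} rewritten as $x_{21}x_2=x_2x_{21}-x_1x_{21}$:
\begin{align*}
\ad_c x_{21}(z_{n+1}) &= \ad_c(x_{21}x_2)(z_n) \\
&= \ad_c x_2\,\ad_c x_{21}(z_n) - \ad_c x_1\,\ad_c x_{21}(z_n) = 0,
\end{align*}
and analogously $\ad_c x_{21}(f_n)=\ad_c(x_{21}x_1)(z_n)=\ad_c(x_1x_{21})(z_n)=0$ via \eqref{eq:rels-B(V(-1,2))-dos}. Writing $x_{21}=x_2x_1+x_1x_2$ and expanding,
\begin{align*}
0 = \ad_c x_{21}(z_n) = \ad_c x_2(f_n) + \ad_c x_1(z_{n+1}) = \ad_c x_2(f_n)+f_{n+1},
\end{align*}
which yields the second equation in \eqref{eq:-1block+point-mild-quator}. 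The generation claim in \eqref{eq:-1block+point-mild-Kgen} then follows: by the PBW basis $\{x_1^ax_{21}^bx_2^c:a\in \{0,1\},\,b,c\in \N_0\}$ of $\NA (V_1)$, the element $\ad_c(x_1^ax_{21}^bx_2^c)(x_3)$ vanishes when $b\ge 1$ by what was just proved, while $b=0$ produces $z_c$ ($a=0$) or $f_c$ ($a=1$).

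The $\Gamma$-action formulas \eqref{eq:-1block+point-mild}--\eqref{eq:-1block+point-mild-ter} are proved by simultaneous induction on $n$, using that each $g_i$ acts as an algebra automorphism and that $z_{n+1}=x_2z_n-(g_1\cdot z_n)x_2$ gives
\begin{align*}
g_i\cdot z_{n+1}=(g_i\cdot x_2)(g_i\cdot z_n)-(g_ig_1\cdot z_n)(g_i\cdot x_2).
\end{align*}
Plugging in $g_1\cdot x_2=-x_2+x_1$ and $g_2\cdot x_2=q_{21}(x_2+ax_1)$ together with the inductive formula for $g_i\cdot z_n$, the formulas for $z_{n+1}$ drop out; parity must be tracked separately because the shape of $g_1\cdot z_n$ differs for even and odd $n$. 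The identity \eqref{eq:-1block+point-mild-ter} for $f_n$ then follows from the simple observation that $g\cdot \ad_c x_1(y)=\lambda\,\ad_c x_1(g\cdot y)$ whenever $g\cdot x_1=\lambda x_1$ (true with $\lambda=-1$ for $g=g_1$ and $\lambda=q_{21}$ for $g=g_2$), combined with $\ad_c x_1(f_{n-1})=0$, which kills the mixing term in $g_i\cdot z_{2n+1}$. The only technical nuisance is parity bookkeeping; multiplicativity of $\ad_c$ on $\NA (V_1)$ carries all the substantive content.
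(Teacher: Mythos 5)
Your overall architecture is sound and close to the paper's: both proofs rest on multiplicativity of $\ad_c$, deduce $\ad_c x_1(f_n)=0$ from $x_1^2=0$, derive $\ad_c x_2(f_n)=-f_{n+1}$ from $\ad_c x_{21}$-vanishing, and run inductions on $n$ for the group actions. However, there is a genuine gap at the single most delicate step, namely the base case $\ad_c x_{21}(x_3)=0$.

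You assert that $\ad_c x_{21}(z_0)$ is the element $x_{21}x_3-q_{12}^2x_3x_{21}$ and that all three skew-derivations kill it (citing $\partial_i(x_{21})=0$). Both halves of this are wrong. First, $x_{21}$ is not braided-primitive in the super Jordan plane: $\Delta (x_{21})=x_{21}\otimes 1+1\otimes x_{21}-\epsilon \,x_1\otimes x_1$, so $\ad_c x_{21}(x_3)$ is not the naive $q$-commutator. Expanding via multiplicativity (and using $g_1\cdot z_1$, $g_1\cdot f_0$ and $x_1^2=0$) one finds
\begin{align*}
\ad_c x_{21}(x_3)=\ad_c x_2(f_0)+\ad_c x_1(z_1)=x_{21}x_3-q_{12}^2x_3x_{21}-q_{12}\,x_1x_3x_1,
\end{align*}
with a nonvanishing correction term $x_1x_3x_1=f_0x_1\neq 0$. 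Second, the $q$-commutator itself is not killed by $\partial_1$: using $\partial_1(x_{21})=x_1$ (not $0$; only $\partial_2$ and $\partial_3$ annihilate $x_{21}$) one gets $\partial_1\bigl(x_{21}x_3-q_{12}^2x_3x_{21}\bigr)=q_{12}(x_1x_3-q_{12}x_3x_1)=q_{12}\,f_0\neq 0$. Indeed the relation $x_{21}x_3=q_{12}^2x_3x_{21}$ is a \emph{weak}-interaction phenomenon (it is \eqref{eq:-1block+point}, proved there precisely because $x_1x_3=q_{12}x_3x_1$ in the weak case); in the present mild case $q_{12}q_{21}=-1$ it fails, and $\ad_c x_{21}(x_3)=0$ is equivalent to $x_{21}x_3-q_{12}^2x_3x_{21}=q_{12}\,x_1x_3x_1$, a nontrivial identity. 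The paper establishes the base case by a genuine computation: it shows $\partial_1,\partial_2$ kill both summands $\ad_c x_1(z_1)$, $\ad_c x_2(f_0)$ because they lie in $\ker\partial_1\cap\ker\partial_2$, and then computes $\partial_3(\ad_c x_1(z_1))=2(x_1x_2-x_2x_1)=-\partial_3(\ad_c x_2(f_0))$, so the sum vanishes. This cancellation is the real content and cannot be replaced by a $q$-commutation relation that in fact fails. Once this base case is corrected, your inductive steps, the generation of $K^1$, and the derivations of the $\Gamma$-action formulas all go through.
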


\begin{proof}
The claims on $g_1 \cdot z_0$, $g_2 \cdot z_0$ follow by definition. We record that
\begin{align*}
g_1 \cdot z_1 &= g_1 \cdot \ad_cx_2 (x_3) = \ad_c(-x_2+x_1) q_{12}x_3 = -q_{12}z_1+q_{12}f_0,
\\
g_1 \cdot f_0 &= g_1 \cdot \ad_cx_1 (x_3) = \ad_c(-x_1) q_{12}x_3 = -q_{12}f_0.
\end{align*}

We start by proving that $\ad_cx_{21} (x_3) = 0$. By definition,
\begin{align*}
\ad_cx_{21} (x_3) &= \ad_cx_1 (z_1) + \ad_cx_2 (f_0).
\end{align*}
If $i\in \I_2$, then $\partial_i( \ad_cx_{21}(x_3)) = 0$ because
$\partial_i(x_3)=\partial_i(z_1)=\partial_i(f_0)=0$.
Moreover,
\begin{align} \label{eq:partial3z1}
\partial_3(z_1)&=x_2-q_{12}g_2 \cdot x_2=x_2-q_{12}q_{21}(x_2+ax_1)=2x_2+ax_1,
\\ \label{eq:partial3f0}
\partial_3(f_0)&=x_1-q_{12}g_2 \cdot x_1=2x_1;
\end{align}
therefore
\begin{align*}
   \partial_3( \ad_cx_1 (z_1))&=
  x_1\partial_3(z_1)-q_{12}\partial_3(-z_1+f_0)g_2 \cdot x_1\\
  &=x_1(2x_2+ax_1)+(-2x_2-ax_1+2x_1)x_1 =2x_1x_2-2x_2x_1.
\end{align*}
On the other hand,
\begin{align*}
  \partial_3( \ad_cx_2 (f_0))&=
  x_2\partial_3(f_0)-\partial_3(-q_{12}f_0)g_2 \cdot x_2\\
  &=x_2\cdot 2x_1+q_{12}2x_1q_{21}(x_2+ax_1) =2x_2x_1-2x_1x_2.
\end{align*}
  This implies that $\ad_cx_{21} (x_3) = 0$.

\smallbreak
Since $K^1=\ad_c\NA (V_1)(x_3)$ and $x_1^ax_2^b x_{21}^c$, $a \in \{0, 1\}$,  $b,c\in \N_0$, is a basis of $\NA (V_1)$,
one has
$K^1 = \langle  z_n,f_n: \, n\in \N_0 \rangle$. Also,
$x_{21}\NA (V_1)\subseteq \NA (V_1)x_{21}$; hence
$$ \ad_c x_{21} (K^1) = \ad_c (x_{21}\NA (V_1))(x_3)
   \subseteq (\ad_c\NA (V_1)x_{21})(x_3)=0 $$
which proves \eqref{eq:-1block+point-mild-Kgen}.
We proceed with \eqref{eq:-1block+point-mild-quator}. Since $x_1^2=0$,  we have
\begin{align*}
\ad_cx_1 (f_n) &= (\ad_cx_1)^2 (z_n) = (\ad_cx_1^2) (z_n) = 0,& n&\in \N _0.
\end{align*}
Moreover, because of \eqref{eq:-1block+point-mild-Kgen} and the definitions, we have
\begin{align*}
  \ad_cx_2 (f_n) &=\ad_c (x_2x_1) (z_n)\\
  &=\ad_c (x_{21}-x_1x_2) (z_n)  =-\ad_cx_1 (z_{n+1}) = -f_{n+1}.
\end{align*}

It remains to prove \eqref{eq:-1block+point-mild}, \eqref{eq:-1block+point-mild-bis} and \eqref{eq:-1block+point-mild-ter}.
We proceed by induction on $n$. Assume that $g_1 \cdot z_{2n}$ and
$g_2 \cdot z_{2n}$ are known for a given $n\in \N_0$. Then
\begin{align*}
  g_1 \cdot f_{2n}&= g_1 \cdot (\ad_cx_1)(z_{2n})
	=(\ad_c (-x_1))(q_{12}z_{2n}) =-q_{12}f_{2n};
\\
  g_2 \cdot f_{2n} &= g_2 \cdot (\ad_cx_1)(z_{2n})
	=(\ad_c (q_{21}x_1))(q_{21}^{2n}q_{22}z_{2n})
  =q_{21}^{2n+1}q_{22}f_{2n}.
\end{align*}
Then we obtain that
\begin{align*}
  g_1 \cdot z_{2n+1} &= g_1 \cdot \ad_cx_2(z_{2n})
	=\ad_c (-x_2+x_1)(q_{12}z_{2n})
  =-q_{12}z_{2n+1}+q_{12}f_{2n};
\\
  g_2 \cdot z_{2n+1}&=g_2 \cdot \ad_cx_2 (z_{2n})
	= \ad_c q_{21}(x_2+ax_1)(q_{21}^{2n}q_{22}z_{2n})
\\ &= q_{21}^{2n+1}q_{22}(z_{2n+1}+af_{2n}).
\end{align*}
The inductive step for the action on $f_{2n+1}$, $z_{2n+1}$, is
similar; use \eqref{eq:-1block+point-mild-quator}.
\end{proof}

\begin{lemma} \label{le:fn}
If $n\in \N _0$, then
\begin{align*}
\partial_3(f_{2n})&=2\prod _{i=1}^n(i-a) x_{21}^nx_1,\\
\partial_3(f_{2n+1})&=2\prod _{i=1}^n(i-a) (2x_1x_{21}^nx_2-x_{21}^{n+1})
\end{align*}
In particular, if $N\in \N _0$ and $a\notin \I_N$ then $f_n\ne 0$ for all $n$, $0\le n \le 2N+1$.
\end{lemma}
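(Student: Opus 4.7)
The plan is to proceed by induction on $n$, exploiting the recursion $f_{n+1} = -\ad_c x_2(f_n)$ from \eqref{eq:-1block+point-mild-quator} to reduce computations of $\partial_3(f_{n+1})$ to computations on $\partial_3(f_n)$. Combining this recursion with the action \eqref{eq:-1block+point-mild-ter}, namely $g_1\cdot f_n = (-1)^{n+1}q_{12}f_n$, and noting $\ad_c x_2(y) = x_2 y - (g_1\cdot y)x_2$, one obtains $f_{n+1} = -x_2 f_n + (-1)^{n+1}q_{12} f_n x_2$. Applying the skew-derivation $\partial_3$ (with $\partial_3(x_2)=0$ and $g_2\cdot x_2 = q_{21}(x_2+ax_1)$), and using the mild-interaction identity $q_{12}q_{21}=-1$, gives the master recurrence
\begin{align*}
\partial_3(f_{n+1}) = -x_2\,\partial_3(f_n) + (-1)^{n}\,\partial_3(f_n)(x_2+ax_1).
\end{align*}

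For the base case $n=0$, \eqref{eq:partial3f0} already gives $\partial_3(f_0)=2x_1$ (consistent with the empty product). Applying the master recurrence once, using $x_1^2=0$ and $x_2x_1 = x_{21}-x_1x_2$ (from \eqref{eq:rels-B(V(-1,2))-2}), I verify $\partial_3(f_1) = -2x_2x_1 + 2x_1x_2 + 2ax_1^2 = 2(2x_1x_2 - x_{21})$, matching the claimed formula. For the inductive step I substitute the two inductive hypotheses into the master recurrence, splitting into the cases $n=2k$ and $n=2k+1$, and reduce with the super-Jordan relations: $x_1^2=0$, $x_{21}x_1=x_1x_{21}$ (cf.~\eqref{eq:rels-B(V(-1,2))-dos}), $x_2 x_1 = x_{21}-x_1x_2$, and $x_2 x_{21}^k = x_{21}^k x_2 + k x_1 x_{21}^k$ (from \eqref{eq:-1block+point-bis} with $\epsilon=-1$). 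In the even$\to$odd step the terms $ax_{21}^kx_1\cdot x_1$ and $x_1x_{21}^kx_1$ vanish via $x_1^2=0$, leaving exactly $2\prod_{i=1}^k(i-a)(2x_1x_{21}^kx_2 - x_{21}^{k+1})$. In the odd$\to$even step most cross-terms cancel and the coefficient of $x_1x_{21}^{k+1}$ condenses to $(k+1-a)\cdot 2\prod_{i=1}^k(i-a)= 2\prod_{i=1}^{k+1}(i-a)$.

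For the final assertion, if $a\notin\I_N$ then $\prod_{i=1}^n(i-a)\neq 0$ for every $n\le N$. The elements $x_{21}^nx_1$ and $2x_1x_{21}^nx_2-x_{21}^{n+1}$ appearing in the formulas are linear combinations of distinct members of the PBW basis $\{x_1^a x_{21}^b x_2^c: a\in\{0,1\},\,b,c\in\N_0\}$ of $\NA(\cV(-1,2))$ given by Proposition~\ref{pr:-1block}; hence they are nonzero in $\NA(V_1)$, and a fortiori in $\NA(V)\simeq K\# \NA(V_1)$. Consequently $\partial_3(f_n)\neq 0$ for $0\le n\le 2N+1$, so each such $f_n$ is nonzero.

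The main obstacle is purely combinatorial: the odd$\to$even step produces several terms of the shape $x_2 x_1 x_{21}^k x_2$, $x_1 x_{21}^k x_2 x_1$, $x_{21}^{k+1}x_2$, etc., which must be re-expressed in the PBW basis of the super-Jordan plane and then shown to telescope, with the coefficients of $x_{21}^{k+1}x_2$ and $x_1 x_{21}^k x_2^2$ each collapsing to zero while the coefficient of $x_1 x_{21}^{k+1}$ assembles into $(k+1-a)$. All remaining steps are mechanical applications of \eqref{eq:skewderivations} together with the structural relations already recorded for $\NA(V_1)$ and $\NA(V)$.
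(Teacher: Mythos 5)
Your proof is correct and follows essentially the same route as the paper's: the master recurrence $\partial_3(f_{n+1})=-x_2\partial_3(f_n)+(-1)^n\partial_3(f_n)(x_2+ax_1)$ (obtained from \eqref{eq:-1block+point-mild-quator}, \eqref{eq:-1block+point-mild-ter} and $q_{12}q_{21}=-1$), induction splitting by parity, and the PBW-basis argument for the final nonvanishing claim. One minor citation slip: the identity $x_2x_1=x_{21}-x_1x_2$ comes from the definition of $x_{21}$ in \eqref{eq:x12} with $\epsilon=-1$, not from \eqref{eq:rels-B(V(-1,2))-2}.
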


\begin{proof}
We proceed by induction on $n$. For $n=0$ the claim holds by \eqref{eq:partial3f0}.
Let $n\ge 0$. Then $g_1 \cdot f_n=(-1)^{n+1}q_{12}f_n$ by \eqref{eq:-1block+point-mild-ter}
and $f_{n+1}=-(\ad_cx_2)f_n$ by \eqref{eq:-1block+point-mild-quator}.
Hence
\begin{align*}
\partial_3(f_{n+1})&=\partial_3(-x_2f_n+(-1)^{n+1}q_{12}f_nx_2)\\
  &=-x_2\partial_3(f_n)+(-1)^{n+1}q_{12}q_{21}\partial_3(f_n)(x_2+ax_1).
\end{align*}
Let now $n\in \N _0$ such that the claim holds for $\partial_3(f_{2n})$. Then
\eqref{eq:rels-B(V(-1,2))-2}
and \eqref{eq:rels-B(V(-1,2))-dos} imply that
\begin{align*}
-x_2x_{21}^nx_1+x_{21}^nx_1(x_2+ax_1)
&=-(x_{21}^nx_2+nx_1x_{21}^n)x_1+x_1x_{21}^nx_2\\
&=-x_{21}^n(x_{21}-x_1x_2)+x_1x_{21}^nx_2\\
&=2x_1x_{21}^nx_2-x_{21}^{n+1}.
\end{align*}
Thus the claim for $\partial_3(f_{2n+1})$ holds. Moreover,
\begin{align*}
-x_2&(2x_1x_{21}^nx_2-x_{21}^{n+1})-(2x_1x_{21}^nx_2-x_{21}^{n+1})(x_2+ax_1)\\
&=-2(x_{21}-x_1x_2)x_{21}^nx_2+(x_{21}^{n+1}x_2+(n+1)x_1x_{21}^{n+1})\\
&\quad-2x_1x_{21}^nx_2^2+x_{21}^{n+1}x_2-2ax_1x_{21}^n(x_{21}-x_1x_2)+ax_1x_{21}^{n+1}\\
&=-2x_{21}^{n+1}x_2+2x_1x_{21}^nx_2^2+x_{21}^{n+1}x_2+(n+1)x_1x_{21}^{n+1}\\
&\quad -2x_1x_{21}^nx_2^2+x_{21}^{n+1}x_2-ax_1x_{21}^{n+1}\\
&=(n+1-a)x_1x_{21}^{n+1}.
\end{align*}
{}From this the claim for $\partial_3(f_{2n+2})$ follows. The rest is clear.
\end{proof}

Here is the main result of this Subsection; recall that $q_{12}q_{21} = - 1$.

\smallbreak
\begin{theorem} \label{thm:pm1bp-mild}  $\GK \NA (V)$ is finite if and only if   $a = 1$ and $q_{22} = - 1$; in this case $\GK \NA (V) =  2$.
\end{theorem}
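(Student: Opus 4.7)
\medbreak
\noindent\emph{Proof plan.} The strategy parallels that of Theorem \ref{thm:pm1bp}, built around the splitting $\NA(V) \simeq K \# \NA(V_1)$, where $V_1 = \cV(-1,2)$ is the super Jordan plane. Since $\GK \NA(V_1) = 2$ and the action of $\NA(V_1)\# \ku\Gamma$ on $K$ can be verified to be locally finite (the $z_n$'s and $f_n$'s span finite-dimensional invariant subspaces over any fixed $\Gamma$-character, thanks to \eqref{eq:-1block+point-mild}--\eqref{eq:-1block+point-mild-quator}), Lemma \ref{lemma:GKdim-smashproduct} and Remark \ref{remark:GK-block-point-K} reduce the problem to deciding when $\GK K = \GK \cB(K^1) < \infty$. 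The main conceptual obstacle is that, in contrast with the weak case, $K^1$ itself is \emph{not} of diagonal type: by \eqref{eq:-1block+point-mild} the element $g_1$ acts non-diagonally on $z_{2n+1}$, so Hypothesis \ref{hyp:nichols-diagonal-finite-gkd} cannot be invoked directly on $K^1$.

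\medbreak
\noindent\emph{Necessity.} I would isolate the braided subspace $W := \langle f_n : n\in\N_0,\, f_n \ne 0\rangle \subset K^1$, which \emph{is} of diagonal type by \eqref{eq:-1block+point-mild-ter}. A degree count shows that $f_n$ has Yetter-Drinfeld degree $g_1^{n+1}g_2$, so using \eqref{eq:-1block+point-mild-ter} the braiding matrix of $W$ is
\begin{align*}
p_{nm} &= (-1)^{(n+1)(m+1)} q_{12}^{n+1} q_{21}^{m+1} q_{22}.
\end{align*}
Invoking $q_{12}q_{21} = -1$ this simplifies to
\begin{align*}
p_{nn} &= q_{22}, & p_{nm}p_{mn} &= (-1)^{n+m} q_{22}^2 \quad (n\ne m).
\end{align*}
By Lemma \ref{le:fn}, if $a\notin \N$ then $f_n\ne 0$ for every $n$, so $W$ is infinite-dimensional. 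I would then rule out all values of $q_{22}$ except $-1$ by looking at small Dynkin subdiagrams of $W$: the case $q_{22}=1$ follows from Lemma \ref{lemma:points-trivial-braiding} applied to $\{f_0,f_1\}$ (since $p_{01}p_{10}=-1$); when $q_{22}^2\ne 1$ the subdiagram on $\{f_0,f_1\}$ has vertex labels $q_{22}$ and edge label $-q_{22}^2\ne 1$, giving either a non-admissible or an affine Cartan matrix, contradicting Theorem \ref{thm:nichols-diagonal-finite-gkd}; finally, when $q_{22}=-1$, the subdiagram on $\{f_0,f_1,f_2\}$ is $\xymatrix{\overset{-1}{\circ} \ar  @{-}[r]^{-1}  & \overset{-1}{\circ}\ar  @{-}[r]^{-1}  & \overset{-1}{\circ}}$, whose generalized root system is infinite (it has affine super-Cartan type), so $\GK \cB(W) = \infty$ by Hypothesis \ref{hyp:nichols-diagonal-finite-gkd}. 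Since $\GK \cB(W) \le \GK \cB(K^1) = \GK K$, this forces $a\in\N$. The same three-vertex argument shows that $a\ge 2$ is impossible: for $a\ge 2$, $f_0,f_1,f_2$ are all nonzero by Lemma \ref{le:fn} and the same forbidden diagram appears regardless of the value of $q_{22}$ (after again ruling out $q_{22}\ne -1$ by the two-vertex arguments). Hence $a=1$ and $q_{22}=-1$ are necessary.

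\medbreak
\noindent\emph{Sufficiency.} Assume $a=1$ and $q_{22}=-1$. Lemma \ref{le:fn} and direct derivation checks (using $\partial_1,\partial_2,\partial_3$ on $f_n$ and $z_n$) show that only $z_0,z_1,f_0,f_1$ are nonzero in $K^1$, making $K^1$ four-dimensional. I would then present $\cB(\cyc_1)$ via the generators and relations listed in Table \ref{tab:finiteGK-intro-1} (the relations $x_3^2,f_0^2,f_1^2,z_1^2$ together with the $q$-commutation $x_{21}x_3 = q_{12}^2 x_3 x_{21}$ and the mixed relation involving $f_0,f_1,z_1$), check that they all hold in $\NA(V)$ using the skew-derivations, and establish a PBW basis of the form $\{x_1^{a_1}x_{21}^{a_2}x_2^{a_3}f_0^{b_0}f_1^{b_1}z_0^{c_0}z_1^{c_1}\}$ with appropriate height restrictions. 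From the resulting Hilbert series, $\GK \NA(V) = 2$.

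\medbreak
\noindent\emph{Main obstacle.} The decisive technical issue is that the diagonal-type reduction is only partial: one sees $W\subset K^1$ diagonally, but the $z_n$'s remain non-diagonal. I would need to verify carefully that the forbidden diagonal subdiagrams identified in $W$ indeed survive in $\cB(K^1)$, i.e., that the $f_n$'s under consideration are linearly independent in $\cB(K^1)$ (and not merely in $K^1$). The cleanest way is to track homogeneous components via the $\Gamma$-grading and the skew-derivations $\partial_3$ of Lemma \ref{le:fn}, which separate the $f_n$'s by their action on $x_1 x_{21}^{n}$-type monomials.
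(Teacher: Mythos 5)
Your necessity argument has a genuine gap at its very first step: the subspace $W := \langle f_n : f_n \ne 0\rangle \subset K^1$ is \emph{not} of diagonal type, and for $f_2\ne 0$ the span $\langle f_0, f_1\rangle$ is not even closed under the braiding. Equation \eqref{eq:-1block+point-mild-ter} only shows that the $\Gamma$-\emph{action} on the $f_n$'s is diagonal; a diagonal braided vector space also needs the \emph{coaction} to have the form $\delta(f_n) = g^{(n)}\otimes f_n$, and by Lemma \ref{le:--K^1} this fails for odd $n$: modulo the ideal generated by $x_1$ one has $\delta'(f_{2n+1}) = g_1^{2n+2}g_2\otimes f_{2n+1} - 2x_2 g_1^{2n+1}g_2 \otimes f_{2n}$. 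The off-diagonal $x_2$-term cannot be discarded, because $\ad_c x_2(f_m) = -f_{m+1}\ne 0$ by \eqref{eq:-1block+point-mild-quator}. Concretely, $c(f_1\otimes f_1)$ acquires a term proportional to $f_2\otimes f_0$, and $c(f_1\otimes f_0)$ a term proportional to $f_1\otimes f_0$; so the ``braiding matrix'' $p_{nm}$ you wrote down is not the full picture, and the two-vertex and three-vertex Dynkin-subdiagram arguments built on it (for $q_{22}=1$, for $q_{22}^2\ne 1$, and especially for ruling out $a\ge 2$) do not go through.

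The paper avoids this by working only with \emph{even}-indexed $f_n$'s: the coaction $\delta'(f_{2n}) = g_1^{2n+1}g_2\otimes f_{2n}$ is honestly diagonal, and the $x_1$-parts of the true coaction $\delta$ act as zero on $K^1$ because $\ad_c x_1(K^1) = 0$. Thus $\ku f_0 + \ku f_2$ is a genuine diagonal braided subspace with labels $p_{11}=p_{22}=q_{22}$, $p_{12}p_{21}=q_{22}^2$, and this handles $a\ne 1$, $q_{22}\notin\G'_2\cup\G'_3$ (Lemma \ref{lemma:mild-odd-order}; $q_{22}=1$ is instead handled via $\ku x_1 + \ku x_3 \subset V$ and Lemma \ref{lemma:points-trivial-braiding}). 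But $\ku f_0 + \ku f_2$ has no edge ruling out $q_{22}\in\G'_3$, and is unavailable when $a=1$ (then $f_2 = 0$); so those cases require different tools. The paper treats $q_{22}\in\G'_N$, $N\ge 3$, via the filtration $\Bdiag$ of $\NA(V)$ and a reflection argument (Lemma \ref{lemma:mild-odd-order-2}), and treats $q_{22}=-1$, $a\ne 1$ by showing $z_2$ is a nonzero primitive element in $\Bdiag$ of $\Gamma$-degree $g_1^2 g_2$ and that the associated four-vertex diagram is an affine cycle (Lemma 4.24). Your proposal misses both of these mechanisms, and without them the case $a\ge 2$, $q_{22}=-1$ in particular remains open. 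The sufficiency sketch and the local finiteness reduction via Lemma \ref{lemma:GKdim-smashproduct} are fine in outline and match the paper.
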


The proof  is split in several Lemmas.
By Lemma \ref{lemma:points-trivial-braiding}, $q_{22} \neq 1$.
A combination of Lemma~\ref{le:fn} with Lemma \ref{lemma:rosso-lemma19-gral}
leads to the first step.

\begin{lemma}\label{lemma:mild-not-discrete}
If $a\notin \N$, then $\GK \NA (V)=\infty $.
\end{lemma}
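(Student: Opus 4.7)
My plan is to apply Lemma \ref{lemma:rosso-lemma19-gral} to $\NA(V)$ with the sub-family $y_k := f_{2k}$, $k \ge 0$. The degree bound \eqref{eq:bound-degree} is immediate, since $\deg f_{2k}=2k+2$ (so $m=p=2$ work). Moreover, Lemma \ref{le:fn} together with the hypothesis $a\notin\N$ guarantees that $\partial_3(f_{2k}) = 2\prod_{i=1}^k(i-a)\,x_{21}^k x_1 \neq 0$, and in particular $f_{2k}\ne 0$ for every $k\ge 0$. What remains is to verify the linear independence of the set $\{f_{2n_1}\cdots f_{2n_l}:n_1<\cdots<n_l\}$ in $\NA(V)$.

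For this I will use the commutation identities from Lemma \ref{le:-K^1}: one has $x_1 f_m = (-1)^{m+1}q_{12}f_m x_1$ and $x_{21}f_m = q_{12}^2 f_m x_{21}$ for every $m\ge 0$, together with $g_2\cdot f_m = q_{21}^{m+1}q_{22}f_m$. Iterating the skew-derivation rule $\partial_3(ab)=\partial_3(a)(g_2\cdot b)+a\partial_3(b)$ and then moving each factor $\partial_3(f_{2n_k})\in\NA(V_1)$ to the right past the remaining $f_{2n_j}$'s produces an identity of the form
\[
\partial_3\bigl(f_{2n_1}\cdots f_{2n_l}\bigr) \;=\; \sum_{k=1}^l \kappa_k\,\bigl(f_{2n_1}\cdots \widehat{f_{2n_k}}\cdots f_{2n_l}\bigr)\cdot x_{21}^{n_k}x_1,
\]
where each scalar $\kappa_k$ is a non-zero product of $2\prod_{i=1}^{n_k}(i-a)$ with signs and powers of $q_{12},q_{21},q_{22}$. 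Now suppose, for contradiction, that there is a non-trivial dependence $\mathtt S=\sum c_{(n_1,\dots,n_l)}f_{2n_1}\cdots f_{2n_l}=0$ of minimal length $l$; the case $l=1$ is covered by $f_{2n}\ne 0$. Apply $\partial_3$ to $\mathtt S$; using the splitting $\NA(V)\simeq K\otimes\NA(V_1)$ (from $\NA(V)=K\#\NA(V_1)$) together with the fact that $\{x_{21}^r x_1 : r\ge 0\}$ is part of the PBW basis of $\NA(V_1)$ from Proposition \ref{pr:-1block}, I can separate the resulting identity by $\NA(V_1)$-component. For each $r\ge 0$ this forces the relation
\[
\sum_{\substack{(n_1,\dots,n_l)\\ r\in\{n_1,\dots,n_l\}}} c_{(n_1,\dots,n_l)}\,\kappa_k\,\bigl(f_{2n_1}\cdots \widehat{f_{2r}}\cdots f_{2n_l}\bigr)=0
\]
in $K\subset\NA(V)$, which is a linear combination of strictly increasing products of length $l-1$. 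By minimality of $l$, each such shorter relation is trivial; letting $r$ range over every index appearing in the support of $\mathtt S$ then forces all $c_{(n_1,\dots,n_l)}=0$, a contradiction. Hence the required linear independence holds, and Lemma \ref{lemma:rosso-lemma19-gral} yields $\GK\NA(V)=\infty$.

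The most delicate point will be the bookkeeping for the splitting $\NA(V)\simeq K\otimes\NA(V_1)$ and the clean extraction of the $\NA(V_1)$-component of $\partial_3(\mathtt S)$; this is where the commutation relations of Lemma \ref{le:-K^1} are essential, because they are precisely what allows one to move $\partial_3(f_{2n_k})$ to the right of the product with only a non-zero scalar cost. Restricting to the even sub-family $\{f_{2k}\}$ rather than all of $\{f_n\}$ is a deliberate simplification: $\partial_3(f_{2n+1})$ has the mixed form $2\prod(i-a)(2x_1 x_{21}^n x_2 - x_{21}^{n+1})$ and its $x_2$-containing piece would not skew-commute past the remaining $f_m$'s, which would muddle the separation argument.
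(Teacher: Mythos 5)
Your proof is essentially correct and follows the same route as the paper: combine Lemma~\ref{le:fn} (non-vanishing of the $f_n$'s), the commutation relations of Lemma~\ref{le:-K^1}, and the growth criterion Lemma~\ref{lemma:rosso-lemma19-gral}. The one genuine difference is your deliberate restriction to the even subfamily $(f_{2k})_{k\ge0}$, whereas the paper claims linear independence for the increasing products over the full family $(f_n)_{0\le n\le 2N+1}$. Your restriction is a legitimate and arguably cleaner move: as you observe, $\partial_3(f_{2n+1})$ contains a summand $2x_1x_{21}^nx_2$, and since $\ad_c x_2(f_m)=-f_{m+1}\ne0$, the factor $x_2$ does not skew-commute past subsequent $f_m$'s, so the separation-by-$\NA(V_1)$-component step would need extra bookkeeping. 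By working only with even indices you ensure each $\partial_3(f_{2n_k})$ lies in the commutative-up-to-scalar part $\langle x_1, x_{21}\rangle$, the split $\NA(V)\simeq K\otimes\NA(V_1)$ is cleanly exploited, and the PBW basis of Proposition~\ref{pr:-1block} does the rest. Since the even subfamily already satisfies the degree bound $\deg f_{2k}=2k+2\le 2k+2$, Lemma~\ref{lemma:rosso-lemma19-gral} applies just as well, and you lose nothing. Two small points of hygiene: you should note that a dependence is automatically homogeneous in the $V_2$-grading (number of $x_3$-factors), so ``minimal length $l$'' is well-defined as you use it; and when you separate by the $\NA(V_1)$-component $x_{21}^r x_1$, it is worth saying explicitly that for distinct products $P_\alpha\ne P_\beta$ both containing $f_{2r}$ the quotients $P_\alpha/f_{2r}$ and $P_\beta/f_{2r}$ remain distinct, so no collapsing occurs.
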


\begin{proof}
Let $N\in \N $. We claim that if $a\notin \I_N$, then the elements
\begin{align*}
&f_{i_1}\cdots f_{i_k},&  k& \in \N _0,& 0 \le i_1< &\cdots <i_k\le 2N+1
\end{align*}
are linearly independent in $\NA (V)$. Indeed, this follows from
Lemma~\ref{le:fn}; use
that $\partial _1(f_n)=\partial_2(f_n)=0$ for all $n\in \N _0$ and
\begin{align*}
x_1f_n &= (-1)^{n+1}q_{12}f_nx_1,&
x_{21}f_n &= q_{12}^2f_nx_{21},
\end{align*}
cf.  Lemma \ref{le:-K^1}.
Since $f_i \in \NA^{i+2}(V)$ for all $i\in \N _0$,  Lemma \ref{lemma:rosso-lemma19-gral} applies.
\end{proof}

Now the braided vector space $K^1$ is not of diagonal type, as the next Lemma
shows.
But for our further analysis of $K^1$, again the quotient Hopf algebra
$B=\NA (V_1)/(x_1)\# \ku \Gamma $
turns out to be very useful.

\begin{lemma} \label{le:--K^1} Let $\delta ':K^1\to B\otimes K^1$ be the
	coaction
  $\delta '=(\pi \otimes \id )\Delta _{\NA (V)\#\ku \Gamma }$,
  where $\pi _{12}:\NA (V)\#\ku \Gamma \to B$ is the canonical algebra
	projection
  with $\pi(x_3)=\pi (x_1)=0$.
  Then for all $n\in \N _0$,
\begin{align*}
\delta '(f_{2n})&=g_1^{2n+1}g_2\otimes f_{2n},\\
\delta '(f_{2n+1})&=g_1^{2n+2}g_2\otimes f_{2n+1}
-2x_2g_1^{2n+1}g_2\otimes f_{2n}.
\end{align*}
\end{lemma}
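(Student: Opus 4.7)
The plan is to proceed by induction on $n$, using the recursion
\[ f_{n+1} = -\ad_c x_2(f_n) = -x_2 f_n + (-1)^{n+1} q_{12} f_n x_2, \]
which follows from \eqref{eq:-1block+point-mild-quator}, the action $g_1 \cdot f_n = (-1)^{n+1}q_{12}f_n$ in \eqref{eq:-1block+point-mild-ter}, and the fact that $x_2$ is $\Gamma$-homogeneous of degree $g_1$. For the base case $n=0$, expand $\Delta(f_0)=\Delta(x_1 x_3 - q_{12} x_3 x_1)$ using the $(g_i,1)$-primitivity of $x_1$ and $x_3$ in $\NA(V)\#\ku\Gamma$; after distributing, the three terms whose first tensor factor involves $x_1$ or $x_3$ are annihilated by $\pi\otimes\id$, and what survives collapses to $g_1 g_2 \otimes f_0$. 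A direct computation also confirms the formula for $\delta'(f_1)$.

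The inductive step exploits that $\delta'=(\pi\otimes\id)\Delta$ is an algebra morphism $\NA(V)\#\ku\Gamma\to B\otimes(\NA(V)\#\ku\Gamma)$, with $\delta'(x_2)=x_2\otimes 1 + g_1\otimes x_2$. One applies $\delta'$ to the recursion and computes the products $(x_2\otimes 1+g_1\otimes x_2)\delta'(f_n)$ and $\delta'(f_n)(x_2\otimes 1+g_1\otimes x_2)$ in $B\otimes(\NA(V)\#\ku\Gamma)$. The crucial commutation relations in $B$ are $g_1 x_2 = -x_2 g_1$ (from $g_1\cdot x_2 = -x_2+x_1$ together with $x_1\equiv 0$ in $B$) and $g_2 x_2 = q_{21} x_2 g_2$ (from $g_2\cdot x_2=q_{21}(x_2+ax_1)$, again with $x_1\equiv 0$), yielding $g_1^k x_2 = (-1)^k x_2 g_1^k$ and $g_1^k g_2 \, x_2 = (-1)^k q_{21} x_2 g_1^k g_2$ inside $B$.

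In the even-to-odd passage, carrying out the above expansion from $\delta'(f_{2n})=g_1^{2n+1}g_2\otimes f_{2n}$ produces
\[ \delta'(f_{2n+1}) = (q_{12}q_{21}-1)\,x_2 g_1^{2n+1}g_2\otimes f_{2n} - g_1^{2n+2}g_2\otimes(x_2 f_{2n}+q_{12}f_{2n}x_2); \]
invoking the mild interaction $q_{12}q_{21}=-1$ turns the first coefficient into $-2$, and the parenthesized factor is precisely $-f_{2n+1}$ by the recursion. In the odd-to-even passage the calculation has more terms: the contributions proportional to $x_2^2 g_1^{2n+1}g_2\otimes f_{2n}$ coming from $-\delta'(x_2 f_{2n+1})$ and $q_{12}\delta'(f_{2n+1}x_2)$ cancel (once again by $q_{12}q_{21}=-1$), as do the terms proportional to $x_2 g_1^{2n+2}g_2\otimes f_{2n+1}$, leaving only $g_1^{2n+3}g_2\otimes(-x_2 f_{2n+1}+q_{12}f_{2n+1}x_2)=g_1^{2n+3}g_2\otimes f_{2n+2}$.

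The main obstacle is purely organizational: tracking the parity-dependent signs from commuting $g_1^k$ past $x_2$ in $B$, combined with the various $(-1)^{n+1}q_{12}$ coming from the action on $f_n$, and making sure each cancellation relies on the mild hypothesis $q_{12}q_{21}=-1$ at the right step. No conceptually new input is required beyond Lemma~\ref{le:-K^1} and the reduction $x_1\equiv 0$ in $B$.
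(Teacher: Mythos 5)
Your proof is correct and follows exactly the route the paper intends: the paper's entire proof is the one-line remark ``Induction on $n$ using \eqref{eq:-1block+point-mild-ter} and \eqref{eq:-1block+point-mild-quator},'' and you have unpacked that induction faithfully, using the algebra-morphism property of $\delta'$, the recursion $f_{n+1}=-\ad_c x_2(f_n)$, and the commutation rules $g_1 x_2=-x_2 g_1$, $g_2 x_2=q_{21}x_2 g_2$ in $B$, with the mild-interaction relation $q_{12}q_{21}=-1$ invoked at precisely the two places it is needed. The only blemish is a cosmetic miscount in the base case (``three terms'' whose first factor involves $x_1$ or $x_3$ are killed by $\pi$ — after combining terms there are in fact two, namely $f_0\otimes 1$ and $2x_1g_2\otimes x_3$), but this has no bearing on the argument.
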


\begin{proof}
	Induction on $n$ using
	\eqref{eq:-1block+point-mild-ter}
	and
	\eqref{eq:-1block+point-mild-quator}.
\end{proof}

\begin{lemma}\label{lemma:mild-odd-order}
If $a\ne 1$, and $q_{22}\notin \G'_2 \cup \G'_3$, then $\GK \NA (V)=\infty $.
\end{lemma}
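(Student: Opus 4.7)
The plan is to exhibit a two-dimensional braided subspace of $\NA (V)$ of diagonal type whose generalized root system is infinite and then invoke Theorem \ref{thm:nichols-diagonal-finite-gkd}. By Lemma \ref{lemma:mild-not-discrete} we may assume $a\in \N$, so the hypothesis $a\ne 1$ gives $a\ge 2$.

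Suppose first that $q_{22}\ne 1$. Since $a\notin \I_1$, Lemma \ref{le:fn} ensures $f_0,f_2\ne 0$, and they are linearly independent because they are homogeneous of distinct degrees $2$ and $4$ in $\NA (V)$. I claim that $W:=\langle f_0,f_2\rangle \subset K^1$ is a braided subspace of diagonal type with
\[
 c(f_{2m}\otimes f_{2n})=-q_{12}^{2m+1}q_{21}^{2n+1}q_{22}\,f_{2n}\otimes f_{2m},\qquad m,n\in \{0,1\};
\]
since $q_{12}q_{21}=-1$, the diagonal entries are $q_{22}$ and the edge label is $q_{22}^2$. To verify this I would combine Lemma \ref{le:--K^1}, which gives $\delta '(f_{2m})=g_1^{2m+1}g_2\otimes f_{2m}$ in $B\otimes K^1$, with the following PBW observation. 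Writing $\delta (f_{2m})\in (\NA (V_1)\#\ku \Gamma)\otimes K^1$ in the PBW basis $\{x_1^{a_1}x_{21}^{a_2}x_2^{a_3}g\}$ of $\NA (V_1)\#\ku \Gamma $, the summands with $a_1=1$ are precisely those killed by $\pi $, while Lemma \ref{le:--K^1} forces the aggregate of the $a_1=0$ summands to coincide, in $B\otimes K^1$, with $g_1^{2m+1}g_2\otimes f_{2m}$. On the other hand, by \eqref{eq:-1block+point-mild-quator} and \eqref{eq:-1block+point-mild-Kgen}, every PBW basis element of $\NA (V_1)\#\ku \Gamma $ with $a_1\ge 1$ or $a_2\ge 1$ annihilates $f_{2n}$ through $\ad_c$; hence only the leading term $g_1^{2m+1}g_2\otimes f_{2m}$ contributes to $c(f_{2m}\otimes f_{2n})$, yielding the displayed diagonal formula.

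Granting the claim, the Cartan matrix of $W$ is $\bigl(\begin{smallmatrix} 2 & -(N-2) \\ -(N-2) & 2\end{smallmatrix}\bigr)$ whenever $q_{22}\in \G '_N$ with $N\ge 4$, and the reflection at a vertex is undefined when $q_{22}\notin \G _\infty $. In the former case the determinant is $4-(N-2)^2$, equal to $0$ (affine) for $N=4$ and strictly negative (hyperbolic) for $N\ge 5$; in the latter the root system is trivially infinite. Either way Theorem \ref{thm:nichols-diagonal-finite-gkd} yields $\GK \NA (W)=\infty $. Since the subalgebra of $K=\NA (K^1)$ generated by $W$ is a pre-Nichols algebra of $W$, hence surjects onto $\NA (W)$, we obtain $\GK K=\infty $, and consequently $\GK \NA (V)=\infty $ via $\NA (V)\simeq K\#\NA (V_1)$. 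The remaining case $q_{22}=1$ is disposed of immediately by applying Lemma \ref{lemma:points-trivial-braiding} to the Yetter-Drinfeld submodule $\langle x_3,x_1\rangle \subset V$, which is of diagonal type with $q'_{11}=q_{22}=1$ and $q'_{12}q'_{21}=q_{12}q_{21}=-1\ne 1$. The main obstacle is the diagonality claim for $W$: although $\delta (f_{2m})$ is not literally diagonal, its failure to be so is concentrated on summands that either lie in $x_1\cdot (\NA (V_1)\#\ku \Gamma )$ or carry an $x_{21}$-factor, and both types are harmless for the braiding on $W$ thanks to the annihilation identities of Lemma \ref{le:-K^1}.
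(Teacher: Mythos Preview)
Your argument is essentially the paper's: show that $W=\langle f_0,f_2\rangle\subset K^1$ is a diagonal braided subspace with both vertices labeled $q_{22}$ and edge labeled $q_{22}^2$, apply Theorem~\ref{thm:nichols-diagonal-finite-gkd} when $q_{22}\ne 1$, and dispose of $q_{22}=1$ via Lemma~\ref{lemma:points-trivial-braiding} on $\langle x_1,x_3\rangle$. One small slip in your justification of diagonality: it is not true that ``the summands with $a_1=1$ are precisely those killed by $\pi$'', since $x_{21}\in(x_1)$ also dies in $B=\NA(V_1)/(x_1)\#\ku\Gamma$; what actually makes the argument work is that $\NA(V_1)/(x_1)\simeq\ku[\bar x_2]$ is a polynomial ring, so $\pi$ is \emph{injective} on the span of $\{x_2^{a_3}g:a_3\ge0,\,g\in\Gamma\}$, whence Lemma~\ref{le:--K^1} forces the $a_1=a_2=0$ part of $\delta(f_{2m})$ to equal $g_1^{2m+1}g_2\otimes f_{2m}$ on the nose, after which your correct observation that PBW elements with $a_1\ge1$ or $a_2\ge1$ annihilate $f_{2n}$ finishes the braiding computation.
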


\begin{proof}
Lemma~\ref{le:fn} and the assumption $a\ne 1$ imply that $f_0,f_2\ne 0$.
  Since $\ad_cx_1(f_n)=0$ for all $n\ge 0$, Lemma~\ref{le:--K^1} and
equations
\begin{align*}
g_1g_2 \cdot f_0&=q_{22}f_0,& g_1g_2 \cdot f_2&=q_{21}^2q_{22} f_2,\\
g_1^3g_2 \cdot f_0&=q_{12}^2q_{22}f_0,& g_1^3g_2 \cdot f_2&=q_{22} f_2
\end{align*}
imply that
$W = \ku f_0+\ku f_2$ is a braided subspace of $K^1$ of diagonal type
with braiding matrix $(p_{ij})_{i,j\in \I_2}$ with respect to the basis
$f_0,f_2$, where $p_{11}=p_{22}=q_{22}$ and $p_{12}p_{21}=q_{22}^2$.
Thus $\GK \NA (W) = \infty$ by  the assumptions on
$q_{22}$. Indeed, if $q_{22}\notin \G_{\infty}$, then $W$
does not admit all reflections;
while if $\ord q_{22} = N > 3$, then $W$ is of Cartan type with Cartan matrix
$\begin{pmatrix} 2 & 2-N \\ 2-N & 2\end{pmatrix}$.
Thus Theorem \ref{thm:nichols-diagonal-finite-gkd} applies.
Finally, if $q_{22}=1$, then $U:=\ku x_1+\ku x_3$ is a braided vector space of diagonal
type with braiding matrix $(p_{ij})_{i,j\in \I_2}$, where $p_{11}=-1 = p_{12}p_{21}$,  $p_{22}=1$. Then $\GK \NA (U)=\infty $
by Lemma \ref{lemma:points-trivial-braiding}.
\end{proof}

Another approach to the calculation of $\GK \NA (V)$ excludes further possibilities.
Recall the braided Hopf algebra $\Bdiag$, cf. \S \ref{subsection:filtr-nichols}.

\begin{lemma}\label{lemma:mild-odd-order-2}
If  $q_{22}\in \G'_N$, where $N \ge 3$, then $\GK \NA (V)=\infty $.
\end{lemma}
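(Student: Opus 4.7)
The plan is to apply the filtration technique of \S \ref{subsection:filtr-nichols} to reduce to a braided vector space of diagonal type, and then invoke Theorem \ref{thm:nichols-diagonal-finite-gkd} or Hypothesis \ref{hyp:nichols-diagonal-finite-gkd}. Consider the complete flag of Yetter-Drinfeld submodules $0 \subsetneq \ku x_1 \subsetneq V_1 \subsetneq V$; by Lemma \ref{lemma:filtered}\eqref{obs:filtered-6}, $\GK \NA (V) \ge \GK \NA (\gr V)$, with $\gr V$ of diagonal type. A direct inspection of the $\Gamma$-action and $\Gamma$-grading in the basis $x_1,x_2,x_3$ (cf.\ \S \ref{subsubsection:YD3-notation}) shows that the generalized Dynkin diagram of $\gr V$ is the path
\[
\xymatrix{\overset{-1}{\circ}\ar@{-}[r]^{-1} & \overset{q_{22}}{\circ}\ar@{-}[r]^{-1} & \overset{-1}{\circ}}
\]
with no edge between the outer vertices, since $x_1$ and $x_2$ both live in $V_1$ and hence $\chi_1(g_1)^2 = 1$, while the outer-to-middle edges have label $q_{12}q_{21} = -1$.

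The task then reduces to showing $\GK \NA (\gr V) = \infty$ whenever $q_{22}\in \G'_N$, $N\ge 3$. When $q_{22}\in \G'_4$, the Cartan entries are $c_{31}=c_{32}=-2$ and $c_{13}=c_{23}=-1$, the braiding is of Cartan type, and the associated Cartan matrix has determinant zero (affine type), so that Theorem \ref{thm:nichols-diagonal-finite-gkd} applies. In the remaining cases I would argue that the generalized root system of $\gr V$ is infinite and invoke Hypothesis \ref{hyp:nichols-diagonal-finite-gkd}: for odd $N\ge 5$ and even $N\ge 8$ this already follows at rank $2$ via the sub-diagram $\langle x_1,x_3\rangle$, where a direct calculation of the two rank-$2$ reflection matrices shows that their product has infinite order (non-semisimple Jordan block, or real eigenvalues of modulus distinct from $1$); for $N\in\{3,6\}$ the rank-$2$ sub-diagrams are all of finite Cartan type ($B_2$ and $G_2$ respectively), but the full rank-$3$ diagram is either of indefinite Cartan type ($N=6$) or not of Cartan type at all ($N=3$) and does not appear in Heckenberger's classification of connected rank-$3$ arithmetic root systems, so Hypothesis \ref{hyp:nichols-diagonal-finite-gkd} still applies.

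The main obstacle is the verification at $q_{22}\in\G'_3$: here no rank-$2$ sub-diagram has infinite root system, and the rank-$3$ braiding fails the Cartan condition at the middle vertex (since $q_{22}^{-2}=\omega\ne -1$), so one must either perform explicit iterated Weyl-groupoid reflections---reflecting at the middle vertex yields a ``triangle'' diagram whose Cartan submatrices grow in size under further reflections---or appeal directly to Heckenberger's tables to rule out that $\bullet^{-1}\stackrel{-1}{-}\bullet^{\omega}\stackrel{-1}{-}\bullet^{-1}$ lies on the finite list.
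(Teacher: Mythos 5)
Your overall strategy matches the paper's: take the complete filtration of $V$, pass to the diagonal braiding on $\gr V$, and show that its generalized root system is infinite. You have identified the correct generalized Dynkin diagram, namely the path with outer vertices labelled $-1$, middle vertex labelled $q_{22}$, and both edges labelled $-1$. Where the proposal runs into trouble is the odd-$N$ case and the economy of the argument.

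Your claim that for odd $N \ge 5$ (and also $N=3$) the rank-$2$ subdiagram $\langle x_1, x_3\rangle$ settles the matter via ``the two reflection matrices'' is not well-posed. That subdiagram is \emph{not} of Cartan type whenever $N$ is odd: at the vertex labelled $q_{22}$ one finds $c_{31} = -(N-1)$, and $q_{22}^{c_{31}} = q_{22} \ne -1 = q_{13}q_{31}$. For a non-Cartan braiding, the Cartan matrix changes from object to object of the Weyl groupoid, so there is no single pair of reflection matrices whose product one can analyze. After one reflection the labels and Cartan entries are different, and finiteness of the groupoid is not decided by a single Cartan matrix. You would have to either track the full reflection orbit or inspect Heckenberger's rank-$2$ table directly; neither is carried out.

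The device you are missing is the one the paper uses for all odd $N$: reflect the \emph{rank-$3$} diagram at the middle vertex $3$. Because $c_{31} = c_{32} = -(N-1)$, this reflection connects the two outer vertices (which were disconnected in the original diagram), and the resulting rank-$2$ subdiagram on vertices $1$ and $2$ has $p'_{11} = p'_{22} = -q_{22} \in \G'_{2N}$ and $p'_{12}p'_{21} = q_{22}^2$. This \emph{is} of Cartan type, with $c'_{12} = c'_{21} = -(2N-2)$, so the Cartan matrix is of indefinite type for every $N \ge 3$, and Theorem \ref{thm:nichols-diagonal-finite-gkd} applies unconditionally. In particular the paper disposes of $N=3$ without invoking Hypothesis \ref{hyp:nichols-diagonal-finite-gkd}, whereas you appeal to the Hypothesis there. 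That difference matters: the paper makes a point (see the end of \S \ref{subsubsection:about-the-proofs}) of isolating the single place in the proof of Theorem \ref{thm:point-block} where the Hypothesis is genuinely needed, namely the rank-$3$ hyperbolic Cartan type occurring at $N=6$. Your version uses the Hypothesis at two values of $N$ and leaves the odd cases $N\ge 5$ unjustified, so it both proves slightly less and has a real gap.

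For even $N$ your reasoning is essentially the paper's: the rank-$3$ diagram is of Cartan type with Cartan matrix $\begin{pmatrix}2 & 0 & -1\\ 0 & 2 & -1\\ -M & -M & 2\end{pmatrix}$, $M = N/2$; the cases $M=2$ (affine), $M=3$ (hyperbolic, via the Hypothesis), and $M \ge 4$ (indefinite already at rank $2$) are exactly the paper's, and your rank-$2$ observation for $N\ge 8$ is correct there because the subdiagram is genuinely of Cartan type.
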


\begin{proof} \emph{Assume first that $N$ is odd}.
Here $\Bdiag $ is a braided graded Hopf algebra of diagonal type
generated by $U:=\sum _{i=1}^3\ku x_i$. The braiding matrix $(p_{ij})_{i,j\in
\I_3}$ of $U$ satisfies
$$ p_{11}=p_{13}p_{31}=p_{22}=p_{23}p_{32}=-1,\quad p_{12}p_{21}=1, \quad
p_{33}=q_{22}. $$
Therefore the reflection $R_3(U)$ is well-defined and has a braiding matrix
$(p'_{ij})_{i,j\in \I_3}$ such that
$p'_{ij}=-q_{22}$ for all $i,j\in \{1,2\}$. Since $-q_{22}\in \G'_{2N}$ and
$2N\ge 6$, we conclude that $\GK \NA (R_3(U))=\infty $, by Theorem \ref{thm:nichols-diagonal-finite-gkd}.
Hence
$$\GK \NA (V)\ge \GK \Bdiag \ge \GK \NA (U)=\GK \NA (R_3(U))=\infty
.$$

\emph{Assume next that $N = 2M$ is even}. Then $U$ as in the proof above
has generalized Dynkin diagram
\begin{align}\label{eq:Dynkin-mild} &&
&\xymatrix{\overset{-1}{\underset{1} {\circ}} \ar  @{-}[r]^{-1}  & \overset{q} {\underset{3} {\circ}}  \ar  @{-}[r]^{-1}  & \overset{-1}{\underset{2} {\circ}} ,}& q &\in \G'_{2M}.
\end{align}
This is of Cartan type, with Cartan matrix
$A =\begin{pmatrix} 2 & 0 & -1 \\ 0 & 2 & -1 \\ -M & -M & 2\end{pmatrix}$. If $M = 2$, then $A$ is of affine type and $\GK \NA (U)=\infty$ by Theorem \ref{thm:nichols-diagonal-finite-gkd}. Similarly,
if $M > 3$, then $A$ contains a rank 2 submatrix of affine or indefinite type, so
$\GK \NA (U)=\infty$ by Theorem \ref{thm:nichols-diagonal-finite-gkd}. If $M = 3$,
then $A$ is of hyperbolic type we conclude that $\GK \NA (U) = \infty$,
from the Hypothesis  \ref{hyp:nichols-diagonal-finite-gkd}.
\end{proof}

\begin{lemma}
If $q_{22}=-1$ and $a\ne 1$, then $\GK \NA (V)=\infty $.
\end{lemma}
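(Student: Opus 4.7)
The plan is to show $\GK K=\infty$ where $\NA(V)\simeq K\#\NA(V_1)$, which by Remark~\ref{remark:GK-block-point-K} implies $\GK \NA(V)=\infty$. Since Lemma~\ref{lemma:mild-not-discrete} disposes of $a\notin\N$, I may assume $a\in\N$ with $a\ge 2$. First I will catalogue the structure of $K^1=\ad_c\NA(V_1)(V_2)$ under these hypotheses. Lemma~\ref{le:fn} gives $f_0,\dots,f_{2a-1}$ nonzero while $\partial_3(f_{2a})=0$; combined with $\partial_1(f_n)=\partial_2(f_n)=0$ and the fact that primitives of $\NA(V)$ live in degree $1$, this forces $f_{2a}=0$, and then $f_n=0$ for all $n\ge 2a$ via $f_{n+1}=-\ad_cx_2(f_n)$. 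An analogous calculation with the recurrence $z_{n+1}=\ad_cx_2(z_n)$ shows $z_0,\dots,z_{2a}$ nonzero and $z_n=0$ beyond. The decisive difference from the finite-$\GK$ case $\cyc_1$ (where $a=1$) is the identity $\partial_3(f_1-2z_2)=2(a-1)x_{21}$: for $a\ge 2$ it is nonzero, so $f_1$ and $z_2$ are linearly independent in $K^1$, and similarly $f_2\ne 0$, giving $K^1$ strictly more room than in the $a=1$ case.

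The main step is to exhibit a sub-braided-vector-space $W\subset K^1$ whose Nichols algebra has infinite $\GK$. The diagonal subspace $U=\mathrm{span}\{f_0,f_2,\dots,f_{2a-2}\}$ is a braided subspace—the correction terms in $\rho(f_{2k})$ beyond the modular description of Lemma~\ref{le:--K^1} involve $x_1$ or $x_{21}$, and $\ad_cx_1(f_{2m})=0=\ad_cx_{21}(f_{2m})$ kill them when computing $c(f_{2k}\otimes f_{2m})$—but its generalized Dynkin diagram is the disconnected $A_1^{\oplus a}$ and so $\cB(U)=\Lambda^{*}U$ is finite-dimensional. To get infinite $\GK$, I enlarge $U$ by adjoining a $\Gamma$-eigenvector built from $z_{2k}$, specifically $z_2$. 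Computing $\rho(z_2)$ directly from $\Delta(z_2)$ (paralleling Lemmas~\ref{le:-1bpz}--\ref{le:zcoact}), its leading term is $g_1^2g_2\otimes z_2$, but among the correction terms one has first factors of the form $x_1g$: since $\ad_cx_1(z_c)=f_c\ne 0$, these terms contribute nontrivially when computing $c(z_2\otimes f_{2k})$ and $c(z_2\otimes z_2)$, producing off-diagonal components such as $f_{2k+1}\otimes f_0$ and $f_2\otimes z_0$. Filtering $W=U+\ku z_2$ by the number of $x_1$-factors in the coaction and passing to the associated graded $\gr W$ (as in Lemma~\ref{prop:gr br Hopf algebra}) yields a braided subspace of diagonal type whose generalized Dynkin diagram is connected, with labels depending explicitly on $a$ through the coefficients of the $x_1$-correction terms.

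For $a\ge 2$ one checks, by a case-by-case computation of these labels, that the resulting Cartan matrix—in particular the rank-$2$ subdiagram involving $z_2$ and $f_0$—falls outside the list of finite Cartan types classified in~\cite{H-classif}, so that Theorem~\ref{thm:nichols-diagonal-finite-gkd} (together with Hypothesis~\ref{hyp:nichols-diagonal-finite-gkd}) gives $\GK\cB(\gr W)=\infty$. By Lemma~\ref{prop:gr br Hopf algebra} and the bound $\GK\NA(\gr W)\le \GK\NA(W)\le \GK K$, we conclude $\GK K=\infty$, as required. The main obstacle is the explicit calculation of the full coaction $\rho(z_2)$ beyond the modular description of Lemma~\ref{le:--K^1}: one must keep track of which correction terms survive the subsequent action of $\ad_c$ on the target, and verify that the labels obtained on the associated Dynkin diagram indeed place the diagram outside the finite list precisely when $a\ne 1$—which is the structural reason the case $\cyc_1$ is exceptional.
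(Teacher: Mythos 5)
Your overall strategy---working inside $K^1$ to find a braided subspace of diagonal type with infinite $\GK$---is genuinely different from the paper's approach, and it runs into a real gap that you yourself flag but do not close. The crucial step is the assertion that, after enlarging $U=\langle f_0,f_2,\dots,f_{2a-2}\rangle$ by $z_2$ and passing to an associated graded object, ``one checks, by a case-by-case computation of these labels, that the resulting Cartan matrix $\dots$ falls outside the list of finite Cartan types.'' This computation is never performed, and it is not obvious it would go the way you want. Because $q_{22}=-1$, the already-established diagonal labels on $U$ are all $-1$ with trivial connections (a disconnected $A_1^{\oplus a}$), and a priori the rank-$2$ subdiagram on $\{z_2,f_0\}$ of the graded object could just as easily land in a finite type. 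You also leave the ``filtration by number of $x_1$-factors in the coaction'' undefined as a Yetter--Drinfeld filtration on $W$: Lemma~\ref{prop:gr br Hopf algebra} applies to filtrations of $V$ by Yetter--Drinfeld submodules, not to an ad hoc grading of the coaction, so the claim that $\gr W$ is a braided vector space of diagonal type whose Nichols algebra bounds $\GK K$ needs its own justification. Without the label computation and without making the filtration precise, the proof does not go through.

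The paper avoids these difficulties entirely by leaving $K$ aside and working with $\Bdiag$. After noting (as you do) that $z_2\ne 0$, it uses $a\neq 1$ for the single purpose of showing $z_2$ is \emph{not} proportional to $f_1$ in $K$, hence survives as a non-zero primitive element of $\Bdiag$. It then adjoins $z_2$ to the generators $x_1,x_2,x_3$ and observes that the resulting rank-$4$ diagonal braiding is a $4$-cycle with all labels $-1$ (using $q_{22}=-1$ and the mild interaction). That is affine Cartan type, so $\GK=\infty$ follows from Theorem~\ref{thm:nichols-diagonal-finite-gkd} alone, with no appeal to Hypothesis~\ref{hyp:nichols-diagonal-finite-gkd}. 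Your route, even if the gaps were closed, would invoke the Hypothesis; the paper's route is both shorter and stronger.
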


\begin{proof}
  First, $\partial_3(z_0)=1$;
	$\partial_3(z_1)=2x_2+ax_1$,  \eqref{eq:partial3z1}; and
\begin{align*}
	\partial_3(z_2)&=x_2\partial_3(z_1)-\partial_3(-q_{12}z_1+q_{12}f_0)
	q_{21}(x_2+ax_1)\\
  &=x_2(2x_2+ax_1)+(2x_1-2x_2-ax_1)(x_2+ax_1)\\
  &=(2-a)x_1x_2-ax_2x_1
\end{align*}
by \eqref{eq:-1block+point-mild} and \eqref{eq:partial3f0}
for any values of $q_{22}$ and $a$. Therefore
\begin{align} \label{eq:partial3z2}
	\partial_3(z_2)=2x_1x_2-ax_{21}
\end{align}
and hence $z_2\ne 0$ in $\NA (V)$.
Assume that $a\ne 1$ and that $\ad_cx_2^2(x_3)=0$
in $\Bdiag $. Since $\NA (V)\simeq K\#\NA (V_1)$ via the
multiplication map
and since $z_2\in K=\ker \partial _1\cap \ker \partial _2$,
\eqref{eq:-1block+point-mild-Kgen} implies that
there exists $\lambda \in \ku^{\times}$
such that $z_2=\lambda f_1$ in $K$.
Then $\partial_3(f_1)$ is a multiple of $2x_1x_2-ax_{21}$,
a contradiction to $a\ne 1$ and Lemma~\ref{le:fn}.

Now $z_2=\ad_cx_2^2(x_3)$ is a non-zero primitive element in  $\Bdiag $ of
$\Gamma$-degree  $g_1^2g_2$.
Consider the canonical Hopf algebra filtration of $\Bdiag $ with generators
$x_1$, $x_2$, $x_3$ and $z_2$ of degree $1$ and let $\tilde{B}_1$ be the
associated braided graded Hopf algebra. It is generated in degree $1$.
Let $\tilde{B}_2$ be the Nichols algebra quotient of $\tilde{B}_1$. The Dynkin
diagram of $\tilde{B}_2$ is a cycle where all vertices and all edges have label
$-1$ since $q_{22}=-1$.
Thus $\tilde{B}_2$ is of Cartan type with an affine Cartan matrix.
Thus $\GK \tilde{B}_2=\infty $ by Theorem \ref{thm:nichols-diagonal-finite-gkd}. This implies
the claim.
\end{proof}

\subsubsection{The Nichols algebra  $\cB(\cyc_1)$}\label{subsubsection:nichols-mild}

To prove Theorem \ref{thm:pm1bp-mild}, it remains the case $a=1$, $q_{22}= -1$.
Recall that the corresponding braided vector space is denoted $\cyc_1$, cf. page \pageref{page:cyclope}.
Recall the relations of the super Jordan plane:
\begin{align*}
	\tag{\ref{eq:rels-B(V(-1,2))-1}} &x_1^2,
	\\
	\tag{\ref{eq:rels-B(V(-1,2))-2}} & x_2x_{21}- x_{21}x_2 - x_1x_{21}.
\end{align*}

\begin{lemma} \label{le:a1q22even} The following hold in $\NA (V)$:
	\begin{align}
x_2f_0+q_{12}f_0x_2&=-f_1 \label{eq:nichols-mild-relation1}\\
x_2z_1+q_{12}z_1x_2&=\frac 1 2 f_1+q_{12}f_0x_2, \label{eq:nichols-mild-relation2}\\
x_2f_1 &=q_{12}f_1x_2, \label{eq:nichols-mild-relation3}\\
z_0^2 &=0, & f_0^2 &= 0, \label{eq:nichols-mild-relation4}\\
z_1^2 &=0, & f_1^2 &= 0  \label{eq:nichols-mild-relation5}.
	\end{align}
\end{lemma}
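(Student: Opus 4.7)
The plan is to verify each relation by exploiting the action formulas of Lemma \ref{le:-K^1} together with the skew-derivations $\partial_i$ on $\NA(V)$. Since $z_0, z_1, f_0, f_1$ all lie in $K=\ker\partial_1\cap \ker \partial_2$, any polynomial in them sits in $K$ too, so to show that an element is zero it suffices to check that $\partial_3$ kills it; and to prove an equality one can equivalently show that the difference of the two sides lies in $K$ and is killed by $\partial_3$.

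First, I would obtain \eqref{eq:nichols-mild-relation1} and \eqref{eq:nichols-mild-relation3} by directly unfolding $\ad_c x_2(f_n)=-f_{n+1}$ from \eqref{eq:-1block+point-mild-quator} using $g_1\cdot f_n=(-1)^{n+1}q_{12}f_n$ from \eqref{eq:-1block+point-mild-ter}. For \eqref{eq:nichols-mild-relation3} one also needs $f_2=0$ in $\NA(V)$, and this drops out of Lemma \ref{le:fn}: $\partial_3(f_2)=2(1-a)x_{21}x_1=0$ since $a=1$, while $\partial_1,\partial_2$ trivially annihilate $f_2\in K$. For \eqref{eq:nichols-mild-relation2} I would unfold $\ad_c x_2(z_1)=z_2$ via the second formula in \eqref{eq:-1block+point-mild} to get
\begin{align*}
x_2 z_1+q_{12}z_1 x_2 = z_2+q_{12}f_0 x_2,
\end{align*}
and then verify the key identity $z_2=\tfrac12 f_1$ in $\NA(V)$: compute $\partial_3(z_2)=2x_1x_2-x_{21}$ from \eqref{eq:partial3z2} with $a=1$, and $\partial_3(f_1)=2(2x_1x_2-x_{21})$ from Lemma \ref{le:fn} with $n=0$; since $z_2-\tfrac12 f_1\in K$ is killed by $\partial_1,\partial_2,\partial_3$, it vanishes.

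It remains to prove \eqref{eq:nichols-mild-relation4} and \eqref{eq:nichols-mild-relation5}. For $z_0^2=x_3^2$, apply the skew-Leibniz rule to get $\partial_3(x_3^2)=(1+q_{22})x_3=0$ since $q_{22}=-1$. For $f_0^2$, use $\partial_3(f_0)=2x_1$ from \eqref{eq:partial3f0} and $g_2\cdot f_0=-q_{21}f_0$ from \eqref{eq:-1block+point-mild-ter} to reduce $\partial_3(f_0^2)$ to $-2q_{21}x_1f_0+2f_0x_1$; combined with $x_1f_0=-q_{12}f_0x_1$ (from $\ad_c x_1(f_0)=0$) and $q_{12}q_{21}=-1$, the two terms cancel. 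For $z_1^2$ and $f_1^2$, the same method works: the skew-Leibniz computation of $\partial_3$ produces a sum of terms in which $x_1, x_2$ appear to the right of $z_1$ or $f_1$, and applying \eqref{eq:nichols-mild-relation1}--\eqref{eq:nichols-mild-relation3} already proved, together with $x_1^2=0$, \eqref{eq:rels-B(V(-1,2))-2} from the super Jordan plane $\NA(V_1)$, and $q_{12}q_{21}=q_{22}=-1$, reduces everything to zero.

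The main obstacle I anticipate is the bookkeeping for $\partial_3(z_1^2)$ and $\partial_3(f_1^2)$, each of which expands into several monomials in $x_1,x_2,f_0,f_1,z_1$ that must be rewritten in a normal form using the three previously proved relations. A cleaner but heavier alternative would be to compute the braided coproduct of these squares in $K$ via Lemma \ref{le:--K^1} (and its analogue for the $z_n$'s, which can be extracted along the lines of Lemma \ref{le:zcoact}) and detect them as primitives in a known degree, but the direct derivation argument is more self-contained and its length is bounded by the number of terms to be collected.
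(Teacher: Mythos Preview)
Your proposal is correct and follows essentially the same approach as the paper: extract \eqref{eq:nichols-mild-relation1} and \eqref{eq:nichols-mild-relation3} from $\ad_c x_2(f_n)=-f_{n+1}$ together with $f_2=0$ via Lemma~\ref{le:fn}, prove $z_2=\tfrac12 f_1$ by comparing $\partial_3$-values to get \eqref{eq:nichols-mild-relation2}, and verify the four squares vanish by computing $\partial_3$ on each one. The paper streamlines the $z_1^2$ step slightly by first establishing the auxiliary identity $(2x_2+x_1)(g_2\cdot z_1)=-z_1(2x_2+x_1)$ in one block, but this is exactly the rewriting you describe using \eqref{eq:nichols-mild-relation1}--\eqref{eq:nichols-mild-relation2} and the $\ad_c x_1$ formulas.
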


\begin{proof}
Note that \eqref{eq:nichols-mild-relation1} follows by \eqref{eq:-1block+point-mild-quator} for
$n=0$. Now $z_2,f_1\in K$ and $\partial_3(z_2)=2x_1x_2-x_{21}$ by \eqref{eq:partial3z2}.
Thus $2z_2=f_1$ by Lemma~\ref{le:fn}; i.e.
\begin{align}\label{eq:nichols-mild-2z2=f1}
\frac 1 2 f_1 = z_2 = \ad_cx_2(z_1)= x_2z_1 +q_{12} (z_1-f_0)x_2
\end{align}
by \eqref{eq:-1block+point-mild}, and \eqref{eq:nichols-mild-relation2} follows.

Next we claim that \eqref{eq:nichols-mild-relation3} holds.
Since $\partial_i(f_2)=0$ for all $i\in \I_2 $, Lemma~\ref{le:fn} implies that $f_2=0$.
By \eqref{eq:-1block+point-mild-quator} and \eqref{eq:-1block+point-mild-ter},
$0=\ad_cx_2(f_1) = x_2f_1-q_{12}f_1x_2$.

Now we prove that \eqref{eq:nichols-mild-relation4} holds.
As $q_{22}=-1$, $z_0^2=x_3^2=0$. Therefore
$$ f_0^2=(x_1x_3-q_{12}x_3x_1)^2=x_1x_3x_1x_3+q_{12}^2x_3x_1x_3x_1. $$
Moreover, $\partial_1(f_0^2)=\partial_2(f_0^2)=0$ since $f_0\in K$. Further,
$$ \partial_3(f_0^2)=x_1x_3x_1+x_1g_2\cdot (x_1x_3)+q_{12}^2g_2\cdot (x_1x_3x_1)+q_{12}^2x_3x_1g_2\cdot x_1=0 $$
since $g_2\cdot x_1=q_{21}x_1$, $x_1^2=0$, $q_{12}q_{21}=-1$, and $g_2\cdot x_3=-x_3$.

Finally we claim that \eqref{eq:nichols-mild-relation5} holds.
First we prove that $\partial_3(z_1^2)=0$. To do so we  need the following formula,
for which we use \eqref{eq:nichols-mild-relation1} and \eqref{eq:nichols-mild-relation2}:
\begin{align*}
	(2x_2+x_1)(g_2 \cdot z_1)&=(2x_2+x_1)(-q_{21})(z_1+f_0)\\
	&=-q_{21}\Big(2(\ad_cx_2)(z_1+f_0)+(\ad_cx_1)(z_1+f_0)\\
	&\qquad +(g_1 \cdot (z_1+f_0))(2x_2+x_1)\Big)\\
	&=-q_{21}\Big( f_1-2f_1+f_1+0 +(-q_{12}z_1)(2x_2+x_1)\Big)\\
	&=-z_1(2x_2+x_1).
\end{align*}
Now we conclude from \eqref{eq:partial3z1} that
\begin{align*}
	\partial_3(z_1^2)&=\partial_3(z_1)(g_2 \cdot z_1)+z_1\partial_3(z_1)\\
	&=(2x_2+x_1)(g_2 \cdot z_1)+z_1(2x_2+x_1)=0.
\end{align*}
This in turn implies that $z_1^2=0$. Similarly, since $g_2 \cdot
f_1=-q_{21}^2f_1$ and $\partial_3(f_1)=2(x_1x_2-x_2x_1)$ by (the proof of)
Lemma~\ref{le:-K^1}, we obtain from \eqref{eq:nichols-mild-relation3} and
the first equation in \eqref{eq:-1block+point-mild-quator} that
\begin{align*}
	\partial_3(f_1^2)&=\partial_3(f_1)(g_2 \cdot f_1)+f_1\partial_3(f_1)\\
	&=2(x_1x_2-x_2x_1)(-q_{21}^2f_1)+2f_1(x_1x_2-x_2x_1)\\
	&=2(-q_{21}^2)q_{12}^2f_1(x_1x_2-x_2x_1)+2f_1(x_1x_2-x_2x_1)=0,
\end{align*}
and then $f_1^2=0$.
\end{proof}

\begin{remark}\label{rem:nichols-mild-relations}
By definition of $f_0$, $f_1$, $z_1$, we have the identities
\begin{align*}
x_1z_0 & =f_0+q_{12}z_0x_1, & x_1z_1 & =f_1-q_{12}z_1x_1+q_{12}f_0x_1, & x_2z_0 & =z_1+q_{12}z_0x_2.
\end{align*}
\end{remark}

\begin{lemma}\label{lem:nichols-mild-relations}
Let $\cB$ be a quotient algebra of $T(V)$. Assume that
\eqref{eq:rels-B(V(-1,2))-1}, \eqref{eq:rels-B(V(-1,2))-2}, \eqref{eq:nichols-mild-relation1},
\eqref{eq:nichols-mild-relation2}, \eqref{eq:nichols-mild-relation3}, \eqref{eq:nichols-mild-relation4}
and \eqref{eq:nichols-mild-relation5} hold in $\cB$. Then the following relations also hold:
\begin{align}
x_1f_0 &= -q_{12}f_0 x_1, & x_1 f_1&=q_{12} f_1x_1, \label{eq:nichols-mild-1} \\
x_{21}z_1 &= q_{12}^2z_1x_{21}+2x_2f_1-x_1f_1-2x_{21}f_0, & x_{21}z_0&=q_{12}^2 z_0x_{21}, \label{eq:nichols-mild-2} \\
x_{21}f_0 &= q_{12}^2 f_0x_{21}, & x_{21}f_1 &= q_{12}^2 f_1x_{21},\label{eq:nichols-mild-3} \\
z_1z_0 &= -q_{12} z_1z_0, & f_0z_0 &=-q_{12} z_0f_0, \label{eq:nichols-mild-4} \\
f_1z_0 &= -q_{12}^2z_0f_1 -2q_{12}f_0z_1, & f_0z_1 &=- z_1f_0, \label{eq:nichols-mild-5} \\
f_1z_1 &= q_{12}z_1f_1-q_{12}f_0f_1, & f_1f_0 &=q_{12} f_0 f_1, \label{eq:nichols-mild-6}
\end{align}

In particular these relations hold in $\cB(V)$.
\end{lemma}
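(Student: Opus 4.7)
The strategy is to derive each of \eqref{eq:nichols-mild-1}--\eqref{eq:nichols-mild-6} purely formally from the assumed relations, exploiting $\epsilon = -1$ (so that $x_{21} = x_1 x_2 + x_2 x_1$), the super Jordan plane relations \eqref{eq:rels-B(V(-1,2))-1}, \eqref{eq:rels-B(V(-1,2))-2}, and the definitions of $f_i$ and $z_i$ encoded in Remark \ref{rem:nichols-mild-relations}. The point is that in $\cB$ we may freely rewrite using any of the hypothesized identities without appealing to derivations on $\toba(V)$.

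I would begin with \eqref{eq:nichols-mild-1}. Writing $f_0 = x_1 z_0 - q_{12} z_0 x_1$, one has $x_1 f_0 + q_{12} f_0 x_1 = x_1^2 z_0 - q_{12}^2 z_0 x_1^2 = 0$ by \eqref{eq:rels-B(V(-1,2))-1}; the second identity follows from the first by multiplying \eqref{eq:nichols-mild-relation1} by $x_1$ on the left and right, invoking $x_1 x_2 = x_{21} - x_2 x_1$ and the first identity, together with \eqref{eq:nichols-mild-relation3}. The relations \eqref{eq:nichols-mild-3} are then immediate from \eqref{eq:nichols-mild-1} upon expanding $x_{21} = x_1 x_2 + x_2 x_1$ and using \eqref{eq:nichols-mild-relation3} to commute $x_2$ past $f_i$. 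The pair \eqref{eq:nichols-mild-2} is similarly obtained: for the simpler case, $x_{21} z_0 = x_1(z_1 + q_{12} z_0 x_2) + x_2(f_0 + q_{12} z_0 x_1)$, which after substituting \eqref{eq:nichols-mild-relation1} and using $x_1^2 = 0$ and \eqref{eq:nichols-mild-relation3} collapses to $q_{12}^2 z_0 x_{21}$; the formula for $x_{21} z_1$ requires also \eqref{eq:nichols-mild-relation2} and produces precisely the three correction terms $2 x_2 f_1 - x_1 f_1 - 2 x_{21} f_0$.

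For the skew-commutation relations \eqref{eq:nichols-mild-4}--\eqref{eq:nichols-mild-6}, the pattern is that the squares-vanishing hypotheses \eqref{eq:nichols-mild-relation4}, \eqref{eq:nichols-mild-relation5} force anticommutators to collapse once one of the elements is unfolded through its definition. For instance, $f_0 z_0 + q_{12} z_0 f_0 = x_1 z_0^2 - q_{12}^2 z_0^2 x_1 = 0$; the relation $z_1 z_0 + q_{12} z_0 z_1 = 0$ (which is how \eqref{eq:nichols-mild-4} should be read) follows from $z_1 = x_2 z_0 - q_{12} z_0 x_2$ and a parallel expansion using $z_0^2 = 0$. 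Then $f_0 z_1 + z_1 f_0$ reduces to an $x_1$-sandwich of $z_0 z_1 + q_{12} z_1 z_0$, hence vanishes. The more intricate identity $f_1 z_0 = -q_{12}^2 z_0 f_1 - 2 q_{12} f_0 z_1$ is gotten by writing $f_1 = -x_2 f_0 - q_{12} f_0 x_2$ from \eqref{eq:nichols-mild-relation1}, commuting $z_0$ through each factor via the already-established $f_0 z_0 = -q_{12} z_0 f_0$ and $x_2 z_0 = z_1 + q_{12} z_0 x_2$, and reassembling. The remaining two relations in \eqref{eq:nichols-mild-6} are handled analogously, substituting $f_1 = -x_2 f_0 - q_{12} f_0 x_2$ and using \eqref{eq:nichols-mild-relation3} together with $f_0^2 = 0$.

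I expect the main obstacle to be bookkeeping for the mixed relations \eqref{eq:nichols-mild-5}--\eqref{eq:nichols-mild-6}, where several correction terms of the same total degree appear simultaneously and must cancel against each other modulo the hypotheses; in particular the relation $f_1 z_0 + q_{12}^2 z_0 f_1 + 2 q_{12} f_0 z_1 = 0$ involves carefully tracking the $x_2$-commutations across both $f_0$ and $z_0$. Since every hypothesis of the lemma holds in $\toba(V)$ by Lemma \ref{le:a1q22even} (and \eqref{eq:nichols-mild-relation4} because $q_{22} = -1$ implies $z_0^2 = x_3^2 = 0$, while $f_0^2 = 0$ was computed there), the last assertion --- that these relations hold in $\toba(V)$ --- follows at once.
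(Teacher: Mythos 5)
Your approach for \eqref{eq:nichols-mild-1}, \eqref{eq:nichols-mild-2}, \eqref{eq:nichols-mild-3}, \eqref{eq:nichols-mild-4}, and \eqref{eq:nichols-mild-6} is sound and essentially coincides with the paper's: one expands $x_{21}=x_1x_2+x_2x_1$ or unfolds the defining formulas for $f_n$, $z_n$, and applies the hypothesized relations. (A small caveat on the second identity in \eqref{eq:nichols-mild-1}: your route through multiplying \eqref{eq:nichols-mild-relation1} by $x_1$ and using \eqref{eq:nichols-mild-relation3} seems to need \eqref{eq:nichols-mild-3}, which you establish later, so there is a latent circularity there; but a direct computation from $f_1 = x_1z_1 + q_{12}z_1x_1 - q_{12}f_0x_1$ and $x_1^2=0$ avoids it.)

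The genuine gap is in \eqref{eq:nichols-mild-5}. You claim that $f_0z_1 + z_1f_0 = 0$ ``reduces to an $x_1$-sandwich of $z_0z_1+q_{12}z_1z_0$, hence vanishes.'' That manipulation — whether you mean applying $\ad_c x_1$ to the established anticommutator $z_1z_0+q_{12}z_0z_1=0$, or substituting $f_0=x_1z_0-q_{12}z_0x_1$ and reducing — produces only the four-term identity
\begin{equation*}
f_1z_0 - q_{12}z_1f_0 + q_{12}f_0z_1 + q_{12}^2z_0f_1 = 0,
\end{equation*}
which is \emph{not} $f_0z_1 + z_1f_0 = 0$; the four unknowns $f_1z_0$, $z_1f_0$, $f_0z_1$, $z_0f_1$ are not pinned down by a single linear relation. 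Your subsequent derivation of $f_1z_0 = -q_{12}^2z_0f_1 - 2q_{12}f_0z_1$ (via $f_1=-x_2f_0-q_{12}f_0x_2$) in fact yields $f_1z_0 = q_{12}z_1f_0 - q_{12}^2z_0f_1 - q_{12}f_0z_1$, which only gives the stated formula \emph{if} $z_1f_0=-f_0z_1$ is already known — so the argument is circular. What is missing is the second, independent relation
\begin{equation*}
f_1z_0 + 2q_{12}f_0z_1 + q_{12}^2z_0f_1 = 0,
\end{equation*}
obtained by applying $\ad_c x_2$ to $z_1z_0+q_{12}z_0z_1=0$; this step uses both $z_1^2=0$ from \eqref{eq:nichols-mild-relation5} \emph{and} the identity $z_2=\tfrac{1}{2}f_1$, which is itself a consequence of \eqref{eq:nichols-mild-relation2}. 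It is only the difference of the two relations that gives $f_0z_1 + z_1f_0=0$, whence the other identity in \eqref{eq:nichols-mild-5}. Your sketch acknowledges that \eqref{eq:nichols-mild-relation5} must enter but never actually uses $z_1^2=0$ (nor $z_2=\tfrac{1}{2}f_1$) in the concrete manipulations, and this is precisely where the argument fails. Once \eqref{eq:nichols-mild-5} is in place your treatment of \eqref{eq:nichols-mild-6} goes through.
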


\begin{proof}
Note that \eqref{eq:nichols-mild-1} follows from \eqref{eq:rels-B(V(-1,2))-1}. For \eqref{eq:nichols-mild-2} and \eqref{eq:nichols-mild-3},
\begin{align*}
x_{21}z_0 &= (x_1x_2+x_2x_1)z_0 = x_1 (z_1+q_{12}z_0x_2) + x_2 (f_0+q_{12}z_0x_1) \\
&= f_1-q_{12}z_1x_1+q_{12}f_0x_1 +q_{12} (f_0+q_{12}z_0x_1)x_2 -(q_{12}f_0x_2+f_1)\\
& \qquad +q_{12}(z_1+q_{12}z_0x_2)x_1 = q_{12}^2 z_0x_{21}, \\
x_{21}z_1 &=  x_1 \Big(-q_{12}z_1x_2+q_{12}f_0x_2+\frac{1}{2}f_1 \Big)+ x_2 (f_1+q_{12} f_0x_1 -q_{12}z_1 x_1) \\
&= -q_{12} (f_1+q_{12} f_0x_1 -q_{12}z_1 x_1) x_2 - q_{12}^2 f_0x_1x_2 + \frac{1}{2}q_{12}f_1x_1 - q_{12}f_1x_2 \\
& \qquad + q_{12}(-q_{12}f_0x_2-f_1)x_1 -q_{12} \Big(-q_{12}z_1x_2+q_{12}f_0x_2+\frac{1}{2}f_1 \Big) x_1 \\
&= q_{12}^2 z_1 x_{21} -2x_{21}f_0+2x_2f_1-x_1f_1, \\
x_{21}f_0 &= (x_1x_2+x_2x_1)f_0 = x_1(-q_{12}f_0x_2-f_1)-q_{12}x_2f_0x_1   \\
&= q_{12}^2 f_0x_1x_2-q_{12}f_1x_1-q_{12}(-q_{12}f_0x_2-f_1)x_1 = q_{12}^2 f_0x_1, \\
x_{21}f_1 &= (x_1x_2+x_2x_1)f_1 = -q_{12}x_1f_1x_2+q_{12}x_2f_1x_1=q_{12}^2f_1x_{21},
\end{align*}
where we use \eqref{eq:nichols-mild-relation1}, \eqref{eq:nichols-mild-relation2}, \eqref{eq:nichols-mild-relation3},
\eqref{eq:nichols-mild-1} and Remark \ref{rem:nichols-mild-relations}.

We prove \eqref{eq:nichols-mild-4} from the first equality of \eqref{eq:nichols-mild-relation4} since
\begin{align}\label{eq:qcommut-1}
	0&= \ad_c x_1 (x_3^2) = f_0z_0+q_{12}z_0f_0,
	\\ \label{eq:qcommut-2}
	0&=\ad_c x_2 (x_3^2) = z_1z_0+q_{12}z_0z_1.
\end{align}
From \eqref{eq:qcommut-1} and \eqref{eq:nichols-mild-relation1},
\begin{align}\label{eq:f1e0}
	0&=\ad_cx_2(f_0z_0+q_{12}z_0f_0)
	=-f_1z_0-q_{12}f_0z_1+q_{12}z_1f_0-q_{12}^2z_0f_1.
\end{align}
From \eqref{eq:qcommut-2} and \eqref{eq:nichols-mild-relation5} we conclude that
\begin{align*}
	0&=(\ad_cx_2)(z_1z_0+q_{12}z_0z_1)\\
	&=\frac{1}{2}f_1z_0+q_{12}(-z_1+f_0)z_1 +q_{12}z_1^2+\frac{1}{2}q_{12}^2z_0f_1\\
	&=\frac{1}{2}(f_1z_0+2q_{12}f_0z_1+q_{12}^2z_0f_1).
\end{align*}
Together with \eqref{eq:f1e0}, the last equation implies that \eqref{eq:nichols-mild-5} holds.
Hence
\begin{align*}
	0&=(\ad_cx_2)(f_0z_1+z_1f_0)\\
	&=-f_1z_1-q_{12}\frac 12f_0f_1+\frac 12 f_1f_0+q_{12}(-z_1+f_0)(-f_1)\\
	&=-f_1z_1+q_{12}z_1f_1-q_{12}f_0f_1.
\end{align*}
From the second equality of \eqref{eq:nichols-mild-relation4} we conclude that
\begin{align*}
	0 &= (\ad_c x_2)(f_0^2)=-f_1f_0+q_{12}f_0f_1,
\end{align*}
so \eqref{eq:nichols-mild-6} holds. The last statement follows by Lemma \ref{le:a1q22even}.
\end{proof}

\begin{lemma} \label{le:basisK}
The set $B=\{ f_1^{n_1} f_0^{n_2} z_1^{n_3} z_0^{n_4}: 0\le n_i\le 1, \,\, i\in \I_4\}$
is a basis of $K$. In particular, $\dim K = 16$.
\end{lemma}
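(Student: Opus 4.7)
The plan is in three steps: (1) reduce the generating set of $K = \cB(K^1)$ from $\{z_n, f_n : n \ge 0\}$ to $\{z_0, z_1, f_0, f_1\}$; (2) show $B$ spans $K$; and (3) establish linear independence of $B$ inside $\cB(V)$.

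For step (1), I would specialize Lemma \ref{le:fn} at $a=1$: the product $\prod_{i=1}^{n}(i-1)$ vanishes for all $n\ge 2$, so $\partial_3(f_n)=0$ there, and combined with $\partial_1(f_n)=\partial_2(f_n)=0$ this forces $f_n=0$ for $n\ge 2$. The recursions $\ad_c x_2(f_n)=-f_{n+1}$ and $z_{n+1}=\ad_c x_2(z_n)$, together with the identity $f_1 = 2z_2$ from \eqref{eq:nichols-mild-2z2=f1}, then reduce every $z_n$ with $n\ge 2$ inductively to an element of $\ku z_0+\ku z_1+\ku f_0+\ku f_1$. Hence by \eqref{eq:-1block+point-mild-Kgen}, $K$ is generated as an algebra by the four elements $z_0, z_1, f_0, f_1$.

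For step (2), I would apply the q-commutation relations of Lemma \ref{lem:nichols-mild-relations} and the nilpotency relations $u^2 = 0$ for $u \in \{z_0, z_1, f_0, f_1\}$ of Lemma \ref{le:a1q22even} as rewriting rules with target order $f_1, f_0, z_1, z_0$ (meaning $f_1$ is written first in the normal form). The rules are all homogeneous in the $\N_0$-grading with $\deg z_0=1$, $\deg f_0=\deg z_1=2$, $\deg f_1=3$, and every correction term (namely $f_0 z_1$ in the $f_1 z_0$ relation and $f_0 f_1$ in the $f_1 z_1$ relation) is strictly closer to the normal form than the corresponding left-hand side with respect to a suitable deg-lex ordering. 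Thus the rewriting terminates, $B$ spans $K$, and in particular $\dim K \le 16$.

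For step (3), I would work inside $\cB(V) \simeq K \# \NA(V_1)$ (a vector-space isomorphism) and use iterated skew derivations to extract coefficients. Given a hypothetical vanishing combination $\sum_{\mathbf n \in \{0,1\}^4} c_{\mathbf n} f_1^{n_1} f_0^{n_2} z_1^{n_3} z_0^{n_4} = 0$, the explicit formulas $\partial_3(z_0)=1$, $\partial_3(f_0)=2x_1$, $\partial_3(z_1)=2x_2+x_1$, $\partial_3(f_1)=2(x_1x_2-x_2x_1)$ (the last from Lemma \ref{le:fn}), the commutation rules for $x_1, x_{21}$ against $z_n, f_n$ from Lemma \ref{le:-K^1}, and the braided Leibniz rule together let me compute $\partial_3$ on every monomial of $B$ and expand it in the PBW basis $\{x_1^a x_{21}^b x_2^c\}$ of $\NA(V_1)$ from Proposition \ref{pr:-1block}; an iterated-$\partial_3$ argument, proceeding by decreasing $n_4$ then $n_3, n_2, n_1$, isolates the coefficient of each $\mathbf n$ in turn.

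The main obstacle is step (3): the sixteen monomials of $B$ share $\Gamma$-degrees and total degrees in several pairs (for instance $z_1$ and $f_0$, and hence $f_0 z_0$ and $z_1 z_0$), so a naive bigrading argument does not separate them, and one must carefully track the non-trivial correction terms produced when computing $\partial_3$ of products that involve $f_1$. A cleaner alternative completion is to verify confluence of the critical overlaps of the rewriting system from step (2) via the diamond lemma, which independently yields $\dim K = 16$ and thereby forces the spanning set $B$ to be a basis.
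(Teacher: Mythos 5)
Your steps (1) and (2) agree with the paper's: reduce to the four generators $z_0,z_1,f_0,f_1$ via $f_n=0$ for $n\ge 2$ and $f_1=2z_2$, then use the $q$-commutation and nilpotency relations to see that the sixteen ordered monomials span $K$. The gap is in step (3), and you have flagged it rather than filled it. The paper's independence argument is a hybrid you do not reproduce: iterated skew derivations are applied only once, to establish the single fact $f_1f_0z_1z_0\ne 0$ (via $\partial_1\partial_2\partial_3$ and then $\partial_3\partial_1\partial_3$), after which the $\N$-grading of $\cB(V)$ is used to split the hypothetical dependence into homogeneous pieces, and the pieces containing more than one monomial (degree $6$, say, where $f_1f_0z_0$ and $f_1z_1z_0$ coexist) are handled by \emph{left-multiplying} by the missing factors ($f_0$, respectively $z_1$) so as to land on the known non-zero top element $f_1f_0z_1z_0$; the resulting linear system forces the coefficients to vanish. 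Your proposed ``iterated $\partial_3$ argument, proceeding by decreasing $n_4$ then $n_3,n_2,n_1$'' runs head-on into the difficulty you flag yourself: $g_2\cdot z_1=-q_{21}(z_1+f_0)$ is not diagonal, so a single application of $\partial_3$ to a monomial containing $z_1$ (let alone $f_1$) already scatters it across several basis candidates, and you do not carry out the bookkeeping needed to recover the individual coefficients. As written, step (3) is a plan, not a proof, and the multiplication trick that makes the paper's argument go through is absent from it.

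Your suggested alternative via the diamond lemma does not close the gap either. Confluence of the rewriting system gives a basis of the \emph{abstract} algebra presented by $z_0,z_1,f_0,f_1$ and the listed relations. Since $K=\cB(K^1)$ is a quotient of that abstract algebra (and could a priori satisfy further relations), all the diamond lemma yields is the upper bound $\dim K\le 16$. To obtain equality, you must still exhibit sixteen linearly independent elements inside $\cB(V)$, which is precisely the non-vanishing-plus-multiplication argument the paper carries out and is exactly the part you have left open.
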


\begin{proof}
First we claim that $K$ is spanned by the monomials $f_1^{n_1} f_0^{n_2} z_1^{n_3} z_0^{n_4}$
with $n_i\in\N_0$. By \eqref{eq:-1block+point-mild-Kgen},
$K$ is generated as an algebra by the $z_i$'s and the $f_i$'s; as $f_2=0$ and by
\eqref{eq:nichols-mild-2z2=f1} $2z_2=f_1$, it is enough to consider $z_0$, $z_1$,
$f_0$, $f_1$. Thus the claim follows since \eqref{eq:nichols-mild-4}, \eqref{eq:nichols-mild-5}, \eqref{eq:nichols-mild-6}
hold in $K \subset \cB(V)$. Thus $K$ is spanned by $B$ since the monomials with some $n_i\geq 2$ are 0 by
\eqref{eq:nichols-mild-relation4} and \eqref{eq:nichols-mild-relation5}.
We claim now that $B$ is linearly independent. Recall that $\partial_i(f_j)=\partial_i(z_j)=0$ for $i=1,2$, $j=0,1$, and
\begin{align*}
\partial_3(z_0)&=1, & \partial_3(z_1)&=2x_2+x_1, & \partial_3(f_0)&=2x_1, & \partial_3(f_1)&=2x_1x_2-2x_2x_1.
\end{align*}
Assume that $\sum\limits_{0\le n_i\le 1} a_{n_1n_2n_3n_4}\, f_1^{n_1} f_0^{n_2} z_1^{n_3} z_0^{n_4}=0$ for some $a_{n_1n_2n_3n_4}$. As
\begin{align*}
\partial_1\partial_2\partial_3(f_1f_0z_1z_0) &= 4g_1^2g_2\cdot (f_0z_1z_0)= -4q_{12}^2 \, f_0z_1z_0, \\
\partial_3\partial_1\partial_3(f_0z_1z_0) &= \partial_3(2q_{12}  \, z_1z_0-q_{12}\, f_0z_0)= -4x_2^2\neq 0,
\end{align*}
we have $f_1f_0z_1z_0\neq 0$. Thus $a_{1111}=0$ since $f_1f_0z_1z_0$ has degree 8 in $\cB(V)$ and
the other elements have degree $\le 7$. As $f_1f_0z_1\neq 0$ is the unique element in degree 7, $a_{1110}=0$.
In degree 6, $ a_{1101} \, f_1f_0z_0+a_{1011} \, f_1z_1z_0=0$. But
\begin{align*}
0 & = f_0 (a_{1101} \, f_1f_0z_0+a_{1011} \, f_1z_1z_0) = -q_{21} a_{1011} \, f_1f_0z_1z_0 , \\
0 & = z_1 (a_{1101} \, f_1f_0z_0+a_{1011} \, f_1z_1z_0) = -q_{21} (a_{1011}+a_{1101}) \, f_1f_0z_1z_0 ,
\end{align*}
by \eqref{eq:nichols-mild-6}, \eqref{eq:nichols-mild-relation4} and \eqref{eq:nichols-mild-relation5}, so $a_{1011}=a_{1101}=0$.
By a similar argument all the other $a_{n_1n_2n_3n_4}$ are also
 zero, so $B$ is a basis of $K$.
\end{proof}

Theorem \ref{thm:pm1bp-mild} is now proved.

\begin{remark}
The Lemma allows us to understand the structure of $K$--and hence of
$\NA (V)$--from a
different perspective. We observed that $\Bdiag $ is a braided Hopf algebra of
diagonal type. More precisely, it is of Cartan type with Dynkin diagram
of type $A_3$, where all labels are $-1$. Thus the corresponding Nichols
algebra $B$ has dimension $2^6$. The subalgebra $B'$ of $B$ generated
by $\ku x_1+\ku x_2$ has dimension $4$, and hence the right coinvariants of $B$ with
respect to $B'$ form a subalgebra of dimension $2^4=16$.
The proposition implies that $K$ has the same dimension as
$B^{\mathrm{co}\,B'}$, and hence the graded object associated with the
filtration on $K$ induced by the one on $\NA (V)$ is isomorphic as a braided
Hopf algebra to $B^{\mathrm{co}\,B'}$.
\end{remark}

We close this Subsection giving the presentation of $\cB(\cyc_1)$.

\begin{prop} \label{pr:nichols-mild} The algebra
$\cB(\cyc_1)$ is presented by generators $x_1,x_2, x_3$ and relations \eqref{eq:rels-B(V(-1,2))-1},
\eqref{eq:rels-B(V(-1,2))-2}, \eqref{eq:nichols-mild-relation1}, \eqref{eq:nichols-mild-relation2},
\eqref{eq:nichols-mild-relation3}, \eqref{eq:nichols-mild-relation4} and \eqref{eq:nichols-mild-relation5}. The set
\begin{align*}
B=\{ x_1^{m_1} x_{21}^{m_2} x_2^{m_3} f_1^{n_1} f_0^{n_2} z_1^{n_3} z_0^{n_4}: m_1,n_i \in\{0,1\}, m_2,m_3 \in\N_0\}
\end{align*}
is a basis of $\cB(\cyc_1)$ and $\GK \cB(\cyc_1) = 2$.
\end{prop}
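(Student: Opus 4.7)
The plan is to follow the pattern of Propositions \ref{pr:lstr-11disc}, \ref{pr:lstr--11disc} and \ref{pr:lstr1omega1}. First, by Lemma \ref{le:a1q22even}, all the listed relations hold in $\cB(\cyc_1)$, so the quotient $\cBt$ of $T(V)$ by \eqref{eq:rels-B(V(-1,2))-1}, \eqref{eq:rels-B(V(-1,2))-2}, \eqref{eq:nichols-mild-relation1}--\eqref{eq:nichols-mild-relation5} projects onto $\cB(\cyc_1)$, and by Lemma \ref{lem:nichols-mild-relations} the auxiliary identities \eqref{eq:nichols-mild-1}--\eqref{eq:nichols-mild-6} also hold in $\cBt$. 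To prove that $\cBt \simeq \cB(\cyc_1)$ with PBW basis $B$, it suffices to show (i) that the span $I$ of $B$ is a right ideal of $\cBt$, hence equals $\cBt$ since $1\in I$, and (ii) that $B$ is linearly independent in $\cB(\cyc_1)$.

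For (i), I would verify $Ix_i\subseteq I$ for $i=1,2,3$. The argument $Ix_1\subseteq I$ uses \eqref{eq:rels-B(V(-1,2))-1} together with \eqref{eq:nichols-mild-1}, and the identities $x_1z_0=f_0+q_{12}z_0x_1$ and $x_1z_1=f_1-q_{12}z_1x_1+q_{12}f_0x_1$ from Remark \ref{rem:nichols-mild-relations}, so that every time an $x_1$ is pushed past $z_0,z_1,f_0,f_1$ one lands back inside the span of $B$ (using $x_1^2=0$ and $f_i^2=z_i^2=0$ to kill the new terms). The inclusion $Ix_2\subseteq I$ uses \eqref{eq:rels-B(V(-1,2))-2} on the left side of $B$ (producing $x_{21}$) and, on the right, rewrites $x_2z_0=z_1+q_{12}z_0x_2$, $x_2z_1=\frac12 f_1+q_{12}f_0x_2-q_{12}z_1x_2$, $x_2f_0=-f_1-q_{12}f_0x_2$, $x_2f_1=q_{12}f_1x_2$ from \eqref{eq:nichols-mild-relation1}--\eqref{eq:nichols-mild-relation3}; again the relations \eqref{eq:nichols-mild-relation4}, \eqref{eq:nichols-mild-relation5} and \eqref{eq:nichols-mild-6} absorb the new monomials. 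Finally $Ix_3=Iz_0\subseteq I$ follows from \eqref{eq:nichols-mild-4}, \eqref{eq:nichols-mild-5}, together with \eqref{eq:nichols-mild-2}, \eqref{eq:nichols-mild-3} to pass $z_0$ through the $x_{21}$ factors.

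For (ii), I would invoke the decomposition $\cB(V)\simeq K\#\cB(V_1)$ from \S\ref{subsubsection:YD3-notation}. By Proposition \ref{pr:-1block}, $\{x_1^{m_1}x_{21}^{m_2}x_2^{m_3}:m_1\in\{0,1\},\,m_2,m_3\in\N_0\}$ is a basis of $\cB(V_1)$, and by Lemma \ref{le:basisK}, $\{f_1^{n_1}f_0^{n_2}z_1^{n_3}z_0^{n_4}:0\le n_i\le1\}$ is a basis of $K$. The smash product structure then gives that the product set $B$ is linearly independent in $\cB(V)$. Combined with (i), this yields $\cBt=\cB(\cyc_1)$ with PBW basis $B$. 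The Hilbert series $H_{\cB(\cyc_1)}(t)=(1+t)(1-t^2)^{-1}(1+t)^2\cdot(1+t)^2(1+t^2)^2$ (or rather the factorized form coming from $K\#\cB(V_1)$) shows at once that $\GK\cB(\cyc_1)=\GK\cB(V_1)+\GK K=2+0=2$, in agreement with Theorem \ref{thm:pm1bp-mild}.

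The main obstacle is bookkeeping: establishing $Ix_i\subseteq I$ requires carefully chaining several of the defining relations (especially $Ix_1$ and $Ix_3$, where a single move generates lower-order correction terms that must be reduced to $B$ using $x_1^2=0$ and the squares in \eqref{eq:nichols-mild-relation4}, \eqref{eq:nichols-mild-relation5}). All the necessary commutation identities have already been collected in Lemma \ref{lem:nichols-mild-relations}, so the check is routine once one fixes the correct normal form, but it must be performed by cases for each of the seven "letters" $x_1,x_{21},x_2,f_1,f_0,z_1,z_0$.
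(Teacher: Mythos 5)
Your proof is correct and follows essentially the same route as the paper: show the relations hold via Lemma~\ref{le:a1q22even}, show the span $I$ of $B$ is a one-sided ideal using Lemma~\ref{lem:nichols-mild-relations} and Remark~\ref{rem:nichols-mild-relations}, and prove linear independence by reducing to Lemma~\ref{le:basisK} and Proposition~\ref{pr:-1block}. Two small remarks. First, you have the trivial and nontrivial parts of step (i) reversed: since $z_0 = x_3$ is the \emph{rightmost} letter of the normal form, $I x_3 \subseteq I$ is immediate (either $n_4$ goes from $0$ to $1$, or one hits $z_0^2 = 0$), with no commutation through the $x_{21}$ factors required; it is $I x_1 \subseteq I$ and $I x_2 \subseteq I$ that call on Lemma~\ref{lem:nichols-mild-relations} and Remark~\ref{rem:nichols-mild-relations}. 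Second, in step (ii) you invoke the smash-product decomposition directly; this is fine, but note that the canonical vector-space isomorphism coming from $\cB(\cyc_1)\simeq K\#\cB(V_1)$ is $K\otimes\cB(V_1)\to\cB(\cyc_1)$, with the $K$-factor on the \emph{left}, whereas $B$ has the $\cB(V_1)$-factor on the left. The map $\cB(V_1)\otimes K\to\cB(\cyc_1)$ is still bijective (bijectivity of the antipode of the braided bosonization gives the other factorization), but the paper sidesteps this by the derivation argument used in Proposition~\ref{pr:-1block}: in a minimal-degree nontrivial relation, applying $\partial_1$ and $\partial_2$ forces $m_1=m_2=m_3=0$, after which Lemma~\ref{le:basisK} gives the contradiction. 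Either version works.
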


\pf
Relations \eqref{eq:rels-B(V(-1,2))-1},  \eqref{eq:rels-B(V(-1,2))-2}, \eqref{eq:nichols-mild-relation1},
\eqref{eq:nichols-mild-relation2}, \eqref{eq:nichols-mild-relation3}, \eqref{eq:nichols-mild-relation4}
and \eqref{eq:nichols-mild-relation5} are 0 in $\cB(\cyc_1)$, see Lemma \ref{le:a1q22even}.
Hence the quotient $\cBt$ of $T(V)$ by these relations projects onto $\cB(\cyc_1)$.
We claim that the subspace $I$ spanned by $B$ is a right ideal of $\cBt$. Indeed, $I x_3\subseteq I$
by definition while $Ix_1\subseteq I$, $Ix_2\subseteq I$ follow by Lemma \ref{lem:nichols-mild-relations}
and Remark \ref{rem:nichols-mild-relations}. Since $1\in I$, $\cBt$ is spanned by $B$.

To prove that $\cBt \simeq \cB(\cyc_1)$, it remains to show that
$B$ is linearly independent in $\cB(\cyc_1)$. For, suppose that there is a non-trivial linear combination $\mathtt{S}$
of elements of $B$ in $\cB(\cyc_1)$, say of minimal degree. As in the proof of Proposition \ref{pr:-1block},
each vector $x_1^{m_1} x_{21}^{m_2} x_2^{m_3} f_1^{n_1} f_0^{n_2} z_1^{n_3} z_0^{n_4}$ in $\mathtt{S}$
with non-trivial coefficient satisfies $m_i=0$, $i=1,2,3$. But then we obtain a contradiction with Lemma \ref{le:basisK}.
Thus $B$ is a basis of $\cB(\cyc_1)$ and $\cBt=\cB(\cyc_1)$.
The computation of $\GK$ follows from the Hilbert series at once.
\epf

\section{One block and several points}\label{sec:yd-dim>3}
\subsection{The setting}\label{subsection:YD>3-setting}
Let $\Gamma $ be an abelian group. In this Section we consider $V \in \ydG$, $\dim V  >3$ as in \eqref{eq:bradinig-generalform} and \eqref{eq:bradinig-generalform1}, with one block  and several points.
We seek to determine when $\GK \NA(V) < \infty$.
 Recall that by Theorem \ref{theorem:blocks}, the block is assumed of the form
 $\cV(\epsilon, 2)$, with $\epsilon^2 = 1$.

For a more suggestive presentation, we introduce the notation
\begin{align*}
\I_{2,\theta} &= \I_{\theta} - \{1\},&
\Iw_\theta &= \I_{\theta} \cup \{\fudos\},&  &\theta \in\N.
\end{align*}

Let
$g_1, \dots, g_\theta\in \Gamma$,  $\chi_1, \dots, \chi_\theta\in \widehat\Gamma$ and $\eta: \Gamma \to \ku$ a $(\chi_1, \chi_1)$-derivation.
Let $\cV_{g_1}(\chi_1, \eta) \in \ydG$ be the indecomposable with basis $(x_i)_{i\in\Iw_1}$ and action given by \eqref{equation:basis-triangular-gral}-- but with $\fudos$ instead of 2;
while $\ku_{g_j}^{\chi_j}\in \ydG$ is irreducible with basis $(x_j)$, $j \in \I_{2,\theta}$.
Let
\begin{align*}
V = \cV_{g_1}(\chi_1, \eta) \oplus \ku_{g_2}^{\chi_2} \oplus \cdots \oplus \ku_{g_\theta}^{\chi_\theta}.
\end{align*}
Thus $(x_i)_{i\in\Iw_\theta}$ is a basis of $V$. We suppose that $V$ is not of diagonal type, hence
$\eta(g_1) \neq 0$; we may assume that  $\eta(g_1) = 1$ by normalizing $x_1$.
Let
\begin{align*}
q_{ij}&= \chi_j(g_i),& &i, j\in \I_{\theta};& a_j&= q_{j1}^{-1}\eta(g_j),&  j &\in \I_\theta.
\end{align*}
Let $\lfloor i\rfloor$ be the largest integer $\leq i$.
Then the braiding in the basis $(x_i)_{i\in\Iw_\theta}$ is
\begin{align}\label{eq:braiding-block-several-point}
c(x_i \otimes x_j) &= \begin{cases}
q_{\lfloor i\rfloor j} x_j  \otimes x_i, &i\in \Iw_{\theta},\, j\in \I_{\theta};\\
q_{\lfloor i\rfloor 1} (x_{\fudos} + a_{\lfloor i\rfloor} x_1) \otimes x_{i}, & i\in \Iw_{\theta},\, j =\fudos.
\end{cases}
\end{align}
Let $\epsilon :=q_{11}$. Notice that $\NA( \cV_{g_1}(\chi_1, \eta) \oplus \ku_{g_j}^{\chi_j}) \hookrightarrow \NA(V)$
for all $j \in \I_{2,\theta}$, thus we may apply the results from \S \ref{sec:yd-dim3}.
By Theorem \ref{theorem:blocks}, we may assume that $\epsilon^2 = 1$, thus $a_1 = \epsilon$.
The \emph{interaction} and the \emph{ghost} between the block and the points are the  vectors $ \inc = (\inc_{j})_{j \in \I_{2,\theta}}$, $\ghost = (\ghost_{j})_{j \in \I_{2,\theta}}$ given by
\begin{align}
 \inc_{j}&= q_{1j}q_{j1},& &\ghost_{j} =
\begin{cases} -2(a_{j})_{j \in \I_{2,\theta}}, &\epsilon = 1, \\
(a_{j})_{j \in \I_{2,\theta}}, &\epsilon = -1, \end{cases} & j &\in \I_{2,\theta}.
\end{align}
The interaction is \emph{strong} if there exists $h \in \I_{2,\theta}$ such that $\inc_{h} \notin \{\pm 1\}$;
when it is not strong, it is
\begin{align*}
\text{\emph{weak} if } \inc_{h}&= 1,& &\forall h \in \I_{2,\theta}; &\text{\emph{mild}, otherwise.}
\end{align*}

We say that the ghost is \emph{discrete} if $\ghost \in \N_0^{\I_{2,\theta}} - \{0\}$.

\bigbreak
We can present our main object of interest in the language of braided vector spaces.
Given $(q_{ij})_{i,j \in \I_{\theta}}$, with $q_{11}^2 = 1$, and $\ghost \in \ku^{\I_{2,\theta}}$, we set $a_1 = \epsilon = q_{11}$
and consider the braided vector space  $(V, c)$ of dimension $\theta + 1$,
with a basis $(x_i)_{i\in\Iw_{\theta}}$ and braiding given  by \eqref{eq:braiding-block-several-point}.
This braided vector space $(V, c)$ can be realized as a Yetter-Drinfeld module
$\cV_{g_1}(\chi_1, \eta) \oplus \bigoplus_{j \in \I_{\theta}} \ku_{g_j}^{\chi_j}$
over some abelian group $\Gamma$ as described above; for instance $\Gamma = \Z^{\theta}$ would do.
Such a realization will be called \emph{principal}.

The braided subspace $V_1$ spanned by $x_1, x_{\fudos}$ is $\simeq \cV(\epsilon, 2)$, while
$V_{\diag}$ spanned by $(x_i)_{i\in\I_{2,\theta}}$ is of diagonal type. Obviously,
\begin{align}\label{eq:v=v1+v2}
V &= V_1 \oplus V_{\diag}.
\end{align}
Let $\X$ be the set of connected components of the generalized Dynkin diagram of the matrix $\bq = (q_{ij})_{i, j \in \I_{2, \theta}}$.
If $J\in \X$, then we set  $J' = \I_{2, \theta} - J$,
\begin{align*}
V_J &= \sum_{j \in J} \ku_{g_j}^{\chi_j},&
\ghost_J &= (\ghost_{j})_{j \in J},& \inc_J
&= (q_{1h}q_{h1})_{h \in J}.
\end{align*}

As before,  $J$ could have weak, mild or strong interaction $\inc_J$.

\begin{table}[ht]
	\caption{A block and several points, finite $\GK$, weak interaction, $\epsilon = 1$;
		here  $\omega \in \G'_3$ and $\gkv_J = \GK \toba(K_J)$. The meaning of $K_J$  is explained
		in \S \ref{subsubsec:algK-block-points} below.} \label{tab:finiteGK-block-points}
	\begin{center}
		\begin{tabular}{|c|c|c|c|c|}
			\hline   $V_J$ & {\scriptsize type} & $\ghost_J$  &  $K_J$ & {\scriptsize $\gkv_J$}   \\
			\hline
			$\overset{1}{\circ}$	& \scriptsize{$A_{1}$} & \scriptsize{discrete} & \scriptsize{$(A_1)^{\ghost_J + 1}$} & \scriptsize{$\ghost_J + 1$}
			\\ \hline	
			$\overset{-1}{\circ}$	& \scriptsize{$A_{1}$} & \scriptsize{discrete} & \scriptsize{$(A_1)^{\ghost_J + 1}$} & $0$
			\\ \hline
			$\overset{\omega}{\circ}$	& \scriptsize{$A_{1}$} & 1 & \scriptsize{$A_2$} & $0$
			\\ \hline
			$\xymatrix{\overset{-1}{\circ} \ar  @{-}[r]^{-1}  & \overset{-1}{\circ}} \dots \xymatrix{
				\overset{-1}{\circ} \ar  @{-}[r]^{-1}  & \overset{-1}{\circ} }$
			& \scriptsize{$A_{\theta -1}$}	& $(1, 0,\dots, 0)$     &\scriptsize{$A_3$, $\theta = 3$ } & 0		
			\\ \cline{4-4}
			& & & \scriptsize{$D_\theta$, $\theta > 3$} &
			\\ \hline
			$\xymatrix{\overset{-1}{\circ} \ar  @{-}[r]^{-1}  & \overset{-1}{\circ}}$ 	& \scriptsize{$A_{2}$} & $(2, 0)$     & \scriptsize{$D_4$}& 0
			\\ \hline
			$\xymatrix{\overset{-1}{\circ} \ar  @{-}[r]^{\omega}  & \overset{-1}{\circ}}$ 	& \scriptsize{$\mathfrak{sl}(2\vert 1)$}
			& $(1, 0)$     &\scriptsize{$\mathfrak g(2,3)$} & 0
			\\ \hline
			$\xymatrix{\overset{-1}{\circ} \ar  @{-}[r]^{\omega^2 }  & \overset{\omega}{\circ}}$ & \scriptsize{$\mathfrak{sl}(2\vert 1)$}
			& $(1, 0)$  & \scriptsize{$\mathfrak{sl}(2\vert 2)$} & 0	
			\\ \cline{3-5}
			&  &  $(0, 1)$   & \scriptsize{$\mathfrak g(2,3)$} & 0	
			
			\\ \hline
			$\xymatrix{\overset{-1}{\circ} \ar  @{-}[r]^{\omega}  & \overset{\omega^2}{\circ} \ar  @{-}[r]^{\omega}  & \overset{\omega^2}{\circ} }$ & \scriptsize{$\mathfrak{sl}(1\vert 3)$}
			& $(1,0, 0)$ & \scriptsize{$\mathfrak g(3,3)$} & 0
			\\ \hline
			$\xymatrix{\overset{-1}{\circ} \ar  @{-}[r]^{\omega}  & \overset{\omega^2}{\circ} \ar  @{-}[r]^{\omega^2}  & \overset{\omega}{\circ} }$ & \scriptsize{$\mathfrak{osp} (2,4)$}
			& $(1,0, 0)$ & \scriptsize{$\mathfrak g(3,3)$}	& 0	
			\\ \hline
			$\xymatrix{\overset{-1}{\circ} \ar  @{-}[r]^{r^{-1}}  & \overset{r}{\circ}}$, $r \notin \G_{\infty}$ & \scriptsize{$\mathfrak{sl}(2\vert 1)$}
			&  $(1, 0)$   & \scriptsize{$\mathfrak{sl}(2\vert 2)$} & 2
			\\ \hline
			$\xymatrix{\overset{-1}{\circ} \ar  @{-}[r]^{r^{-1}}  & \overset{r}{\circ}}$, $r \in \G'_N, N > 3$ & \scriptsize{$\mathfrak{sl}(2\vert 1)$}
			&    $(1, 0)$    & \scriptsize{$\mathfrak{sl}(2\vert 2)$} & 0
			\\ \hline
		\end{tabular}
	\end{center}

\end{table}

Here are the main results of this Section.

\begin{theorem}\label{thm:points-block-eps1} Let $V$ be a braided vector space with braiding \eqref{eq:braiding-block-several-point}.  Assume that $\epsilon = 1$. Then the interaction  is weak and
 the following are equivalent:

\begin{enumerate}[leftmargin=*,label=\rm{(\roman*)}]

\item $\GK \NA (V) < \infty$.

\smallbreak
\item\label{item:points-block-ii} For every $J \in \X$, either  $\ghost_J = 0$, or else
 $V_J$ is  as in Table \ref{tab:finiteGK-block-points}.

\end{enumerate}
Furthermore, if \ref{item:points-block-ii} holds, then
\begin{align}\label{eq:points-block-gkd}
\GK \toba(V) &= 2 + \sum_{J\in \X} \GK \toba(K_J).
\end{align}
\end{theorem}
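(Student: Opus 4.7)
\medbreak
\noindent\emph{Proof plan.} The argument proceeds in four stages.

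\textbf{Stage 1: Weak interaction.} For each $j\in \I_{2,\theta}$, the subspace $V_1\oplus \ku x_j$ is a braided subspace of $V$ with braiding of the shape \eqref{eq:braiding-block-point}, hence $\NA(V_1\oplus \ku x_j)\hookrightarrow \NA(V)$. If $\GK\NA(V)<\infty$, then Theorem \ref{thm:point-block} applies to each such pair; since $\epsilon=1$, mild interaction is excluded (the subspace $\ku x_1\oplus \ku x_j$ is then of diagonal type with $q_{11}=1$, $q_{1j}q_{j1}=-1$, so Lemma \ref{lemma:points-trivial-braiding} forces $\GK=\infty$), and strong interaction is excluded by Lemma \ref{le:strong}. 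Thus every $\inc_j=1$, i.e.\ the interaction of $V$ is weak.

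\textbf{Stage 2: Splitting.} Adapting \S \ref{subsubsection:YD3-notation} to the present setting, set $K=\NA(V)^{\mathrm{co}\,\NA(V_1)}$, so that $\NA(V)\simeq K\#\NA(V_1)$ with $K=\NA(K^1)$ and $K^1=\ad_c\NA(V_1)(V_{\diag})$. Each homogeneous component of $K^1$ is finite dimensional and $\Gamma$-locally finite, so Lemma \ref{lemma:GKdim-smashproduct}\ref{item:GKdim-smashproduct-braided} together with Proposition \ref{pr:1block} gives $\GK\NA(V)=\GK K+2$. This reduces the problem to computing $\GK\NA(K^1)$.

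\textbf{Stage 3: Component decomposition of $K^1$.} I claim that as a Yetter--Drinfeld module
\begin{align*}
K^1=\bigoplus_{J\in\X} K_J^1, \qquad K_J^1:=\ad_c\NA(V_1)(V_J),
\end{align*}
and the double braiding $c_{K_J^1,K_{J'}^1}c_{K_{J'}^1,K_J^1}$ is the identity for $J\neq J'$. The splitting as $\Gamma$-graded module is immediate from the additivity of $\ad_c\NA(V_1)$. For the triviality of the double braiding, use Lemmas \ref{le:-1bpz} and \ref{le:zcoact} to express elements of $K_J^1$ in the basis $z_n^{(j)}=(\ad_c x_{\fudos})^n x_j$, $j\in J$; then the relevant scalar factors are products of $q_{jh}q_{hj}$ (which equals $1$ because $j\in J$ and $h\in J'$ lie in distinct connected components of $V_{\diag}$) and $\inc_j^{?}=1$ (weak interaction). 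The result of \cite{Grana} recalled in \S \ref{subsubsec:intro-class} then produces a vector-space decomposition $\NA(K^1)\simeq \bigotimes_{J\in\X}\NA(K_J^1)$ and
\begin{align*}
\GK K=\sum_{J\in\X}\GK\NA(K_J).
\end{align*}

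\textbf{Stage 4: Per-component classification and conclusion.} For $J\in\X$ with $\ghost_J=0$, all descents $z_n^{(j)}$ with $n\geq 1$ vanish, $K_J^1=V_J$ as braided vector spaces, and the component contributes $\GK\NA(V_J)$ with no further constraint. For $\ghost_J\neq 0$, an explicit basis of $K_J^1$ is given by the nonzero descents $z_n^{(j)}$, $j\in J$, $0\leq n\leq |2a_j|$, which form a braided vector space of diagonal type whose braiding matrix is computed from \eqref{eq:braiding-block-several-point} and Lemma \ref{le:zcoact}. One then checks, case by case according to the generalized Dynkin diagram of $V_J$ and the support of $\ghost_J$, that finiteness of $\GK\NA(K_J^1)$ (imposed via Theorem \ref{thm:nichols-diagonal-finite-gkd} and Hypothesis \ref{hyp:nichols-diagonal-finite-gkd}) forces exactly the entries of Table \ref{tab:finiteGK-block-points}, and produces the listed $K_J$ and its $\GK$. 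Summing these contributions with the base term $2$ from Stage 2 yields \eqref{eq:points-block-gkd}.

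\textbf{Where the difficulty lies.} The substantive work is Stage 4, specifically showing: (i) $\ghost_J$ may have at most one nonzero coordinate, corresponding to the unique point of $J$ connected to the block in the flourished-graph sense; and (ii) once this is established, the diagonal braiding of $K_J^1$ is precisely one of the superexceptional types $A_k$, $D_k$, $\mathfrak{sl}(m|n)$, $\mathfrak g(2,3)$, $\mathfrak g(3,3)$ appearing in the table. Both points require iterated use of Lemma \ref{le:zcoact} and of reflections (\S \ref{subsubsection:diagonal-type-reflections}) to reduce apparently new configurations to diagonal diagrams whose infinite root system is then ruled out by Hypothesis \ref{hyp:nichols-diagonal-finite-gkd}. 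All other steps (verifying tables, summing GK-dimensions) are routine given the framework already established.
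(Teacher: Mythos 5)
Your framework is exactly the paper's: establish weak interaction (here by Lemma \ref{lemma:points-trivial-braiding} applied to $\ku x_1 \oplus \ku x_j$, as in \S\ref{subsection:mild}), split $\NA(V)\simeq K\#\NA(V_1)$ and invoke Lemma \ref{lemma:GKdim-smashproduct} (Remark \ref{remark:GK-block-points-K}), decompose $K^1$ over connected components (Lemma \ref{lemma:braiding-K-weak-block-points} and Corollary \ref{cor:conncomp}), and then classify each $K_J$ as a diagonal braided vector space. Stages 1--3 are accurate and correctly cite the paper's tools; the only omission there is that you should also record, from Theorem \ref{thm:point-block} applied to $V_1\oplus\ku x_j$, that each $\ghost_j$ is either $0$ or discrete and that $q_{jj}\in\G_2\cup\G_3$ whenever $\ghost_j\neq0$, since otherwise $K^1$ need not even be finite dimensional and your Stage~2 local-finiteness hypothesis would fail.

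The genuine gap is Stage 4, which you yourself flag but do not carry out. The two observations you list --- (i) at most one coordinate of $\ghost_J$ is nonzero, and (ii) the diagonal braiding of $K_J^1$ matches a finite-type diagram --- are precisely the conclusions of Lemmas \ref{lemma:KJ-card2} and \ref{lemma:KJ-card-geq3}, which occupy the bulk of \S\ref{subsec:pf-main-block-points-weak}. Observation (i) is not available a priori; it is extracted from the $|J|=2$ classification (applying Lemma \ref{lemma:KJ-card2} to each connected pair inside $J$) and then propagated inductively. The case analysis in those two lemmas is a delicate enumeration: for each shape of $(q_{ii},q_{jj},\widetilde q_{ij},\ghost_J)$ one forms the generalized Dynkin diagram of $K_J^1$, applies a specific sequence of reflections $\cR^i$, and either matches the result against \cite{H-classif} or lands in an infinite/affine/hyperbolic configuration ruled out by Theorem \ref{thm:nichols-diagonal-finite-gkd} or Hypothesis \ref{hyp:nichols-diagonal-finite-gkd}. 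Writing ``one then checks, case by case'' restates the theorem for each $J$ rather than proving it; since Table \ref{tab:finiteGK-block-points} is the entire content of clause \ref{item:points-block-ii}, the proposal as written establishes the reduction but not the classification. To close the gap you would need to exhibit, for each admissible diagonal diagram of $V_J$ and each admissible support of $\ghost_J$, both the negative cases (infinite $\GK$ via reflections and the Hypothesis) and the positive identifications of $K_J$ with the types listed in the table.
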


After some preliminaries in this Subsection, we prove Theorem \ref{thm:points-block-eps1}
in \S \ref{subsec:pf-main-block-points-weak}.

\begin{theorem}\label{thm:points-block-eps-1} Let $V$ be a braided vector space with braiding \eqref{eq:braiding-block-several-point}. Assume that $\epsilon = -1$.
	Then the following are equivalent:
	
	\begin{enumerate}[leftmargin=*,label=\rm{(\roman*)}]
		
		\item $\GK \NA (V) < \infty$.
		
		\smallbreak
		\item\label{item:points-block-ii-1} For $J \in \X$, either of the following holds:
	\end{enumerate}
	
	\begin{enumerate}[leftmargin=*,label=\rm{(\alph*)}]
		\smallbreak\item\label{item:points-block-weak-0}
		The interaction of $J$ is weak and $\ghost_J = 0$.

		\smallbreak
		\item\label{item:points-block-weak-eps-1}
		The interaction  of $J$ is weak, $J = \{i\}$,  $\ghost_i$ discrete and $q_{ii}  = \pm 1$.

		\smallbreak
		\item\label{item:points-block-mild} The interaction  of $J$ is mild, $J = \{i\}$,
		$\ghost_i = 1$ and $q_{ii}  = - 1$.
		
		\smallbreak
		\item\label{item:points-block-mild-cyc2} The interaction  of $J$ is mild, $J = \{i, j\}$ has Dynkin diagram
		$\xymatrix{\overset{-1}{\circ} \ar  @{-}[r]^{-1}  & \overset{-1}{\circ}} $, i.e. is of type $A_2$,
		$(\ghost_i, \ghost_j) = (1,0)$ and $(\inc_i, \inc_j) = (-1, 1)$.
	\end{enumerate}
	Furthermore, if there is one component $J$ with mild interaction, i.e. of type \ref{item:points-block-mild} or \ref{item:points-block-mild-cyc2},
	and no components disconnected from the block, i.e. of type \ref{item:points-block-weak-0}, then the diagram is connected, i.e. $J = \I_{2,\theta}$.
	
If \ref{item:points-block-ii-1} holds, then
	\begin{align}\label{eq:points-block-gkd-1}
	\GK \toba(V) &= 2 + \sum_{J\in \X} \GK \toba(K_J).
	\end{align}
\end{theorem}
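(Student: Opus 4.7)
The plan is to reduce to the Majid--Radford splitting $\NA(V) \simeq K \# \NA(V_1)$, where $V_1 \simeq \cV(-1,2)$ is the super Jordan plane and $K \simeq \NA(K^1)$ with $K^1 = \ad_c \NA(V_1)(V_{\diag})$, and to analyze each connected component $J \in \X$ separately. A first reduction eliminates strong interactions: if $\inc_j = q_{1j}q_{j1} \notin \{\pm 1\}$ for some $j \in \I_{2,\theta}$, then $V_1 \oplus \ku x_j \hookrightarrow V$ as a braided subspace and Lemma~\ref{le:strong} yields $\GK \NA(V) \geq \GK \NA(V_1 \oplus \ku x_j) = \infty$. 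Hence we may assume that every component has weak or mild interaction.

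For the necessity direction, singleton components $J = \{j\}$ with $\ghost_j \neq 0$ are governed by Theorem~\ref{thm:point-block} applied to $V_1 \oplus V_J \hookrightarrow V$: restricting Table~\ref{tab:finiteGK-block-point} to $\epsilon = -1$ yields exactly cases \ref{item:points-block-weak-eps-1} and~\ref{item:points-block-mild}. For components with $|J| \geq 2$, I would pick two adjacent vertices $i,j \in J$, compute the Yetter--Drinfeld structure on the subspace $\ad_c \NA(V_1)(\ku x_i + \ku x_j) \subset K^1$ using the coaction~\eqref{eq:coaction-K^1} and the skew derivations $\partial_1, \partial_{\fudos}$ as in the proof of Theorem~\ref{thm:pm1bp-mild}, and show that the induced braided subspace is of diagonal type in all cases except the mild $A_2$ configuration of~\ref{item:points-block-mild-cyc2}. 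Then Theorem~\ref{thm:nichols-diagonal-finite-gkd} together with Hypothesis~\ref{hyp:nichols-diagonal-finite-gkd} discards all infinite-root-system diagonal configurations, ruling out all $|J| \geq 2$ weak cases with nontrivial ghost and all $|J| \geq 2$ mild cases save~\ref{item:points-block-mild-cyc2}. The connectivity claim---if some $J$ has mild interaction then $J = \I_{2,\theta}$---follows analogously: any extra point $k$ outside $J$ would produce, via $\ad_c \NA(V_1)(\ku x_h + \ku x_k) \subset K^1$ for $h$ in the mild component, an auxiliary diagonal braiding of infinite $\GK$.

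For the sufficiency direction, cases~\ref{item:points-block-weak-0} through~\ref{item:points-block-mild} follow from Theorem~\ref{thm:point-block} together with Gra\~na's tensor decomposition for decoupled components (those with $c^2 = \id$ relative to $V_1$), and the formula~\eqref{eq:points-block-gkd-1} is an immediate consequence of $\GK(K \# \NA(V_1)) = \GK K + 2$ from Lemma~\ref{lemma:GKdim-smashproduct} combined with $\GK K = \sum_J \GK K_J$ for mutually braided-commuting components. The essentially new case is~\ref{item:points-block-mild-cyc2}, i.e.\ the Nichols algebra $\NA(\cyc_2)$: here I would proceed ad hoc as in the treatment of $\cyc_1$, namely guess the defining relations (one quadratic per pair and several higher-degree relations of the type listed in Table~\ref{tab:finiteGK-intro-2}) by computing inside the auxiliary graded algebra $\Bdiag$, verify they lie in the defining ideal using the skew derivations, and then exhibit an explicit PBW basis whose Hilbert series yields $\GK \NA(\cyc_2) = 3$.

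The main obstacle will be precisely this last case. The braided Hopf algebra $K_J$ attached to $\cyc_2$ is not of diagonal type, so neither Theorem~\ref{thm:nichols-diagonal-finite-gkd} nor a routine root-system calculation suffices to determine its Nichols algebra. One must instead mirror the strategy used for $\cyc_1$ in Lemma~\ref{le:basisK}: produce the full list of generators $z_n$, $f_n$ and their nested analogues for the two-point part, verify that the algebra spanned by candidate PBW monomials is closed under right multiplication by each $x_i$, and then establish linear independence via a careful derivation argument. The bookkeeping is significantly heavier than for $\cyc_1$ because of the additional $A_2$-interactions; getting the complete list of relations right and a clean PBW basis is the technical heart of the proof.
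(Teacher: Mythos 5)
Your overall plan — split via $\NA(V)\simeq K\#\NA(V_1)$, invoke Theorem~\ref{thm:point-block} for singleton components, appeal to Hypothesis~\ref{hyp:nichols-diagonal-finite-gkd} to discard auxiliary diagonal braidings, and handle $\NA(\cyc_2)$ by an ad hoc PBW argument — is the right scaffolding and mostly agrees with the paper's route. However, there is a genuine gap in the step where you claim that for $|J|\ge 2$ the braided subspace $\ad_c\NA(V_1)(\ku x_i+\ku x_j)\subset K^1$ "is of diagonal type in all cases except the mild $A_2$ configuration," after which you would apply Theorems~\ref{thm:nichols-diagonal-finite-gkd} and Hypothesis~\ref{hyp:nichols-diagonal-finite-gkd}. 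This is false precisely when one of the vertices has \emph{mild} interaction: as the paper points out explicitly just before Lemma~\ref{le:--K^1}, the space $K^1$ is not of diagonal type in the mild case (the elements $f_n$ mix with the $z_n$ under the coaction), so there is no diagonal braiding to read off, and no Weyl-groupoid or Cartan-type argument applies directly. Hence your proposed mechanism cannot discard the unwanted $|J|\ge2$ mild configurations.

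The paper instead handles these cases (Lemma~\ref{lemma:block-points-eps-1-aux1}) by passing to the associated graded pre-Nichols algebra $\Bdiag$ along an absolutely complete flag of $V_1\oplus V_J$, so that the degree-one part becomes diagonal, and then adjoining a suitable \emph{primitive element in $\Bdiag$} (for instance $z_2=(\ad_c x_{\fudos})^2x_3$ when $(q,r)=(-1,-1)$ and $\ghost_3>0$, or $u=[x_{\fudos 23},x_2]_c$ when $|J|\ge3$). A delicate but essential part of that argument is verifying, via explicit derivation computations, that the chosen element does \emph{not} lie in a lower filtration level $\cB^n_{m-1}$ and hence survives to a nonzero primitive of $\Bdiag$; without this check one cannot conclude that the enlarged diagonal braided vector space has infinite $\GK$. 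This filtration-plus-primitives technique is substantially different from, and cannot be replaced by, a diagonality analysis inside $K^1$. By contrast, your treatment of the weak $|J|\ge 2$ case and of the connectivity claim can indeed be carried out inside $K^1$ (as the paper does in Lemmas~\ref{lemma:conncomp-eps-1-weak} and~\ref{lemma:superJordan-2-mild+weak}, exhibiting an explicit diagonal-type subspace of $K^1$); there your outline is sound, and for the purely weak case one does not even need Hypothesis~\ref{hyp:nichols-diagonal-finite-gkd} since Lemma~\ref{lemma:points-trivial-braiding} suffices after noting that $p_{i1,i1}=\epsilon q_{ii}=1$.
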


We prove Theorem \ref{thm:points-block-eps-1} in \S \ref{subsec:pf-main-block-points-mild}.

\bigbreak
To start with the proofs of Theorems \ref{thm:points-block-eps1} and \ref{thm:points-block-eps-1},  we infer from Theorem \ref{thm:point-block}, since $\cV_{g_1}(\chi_1, \eta) \oplus \ku_{g_j}^{\chi_j} \hookrightarrow V$,
$j\in \I_{2, \theta}$, that the interaction is not strong, $q_{ii} \in \G_2 \cup \G_3$
 whenever $\ghost_i \neq 0$
 (according to Table \ref{tab:finiteGK-block-point})
 and the ghost $\ghost$ is either 0 or discrete. On the other hand, $\GK \toba(V_{\diag}) < \infty$.
 If $q_{ii} = 1$, then $q_{ij}q_{ji} = 1$ for all $j\in \I_{2,\theta}$, $j\neq i$
 by Lemma \ref{lemma:points-trivial-braiding}, so that the connected component containing $i$ is the singleton $\{i\}$.
 \label{page:qii1}

Now, if $J\in \X$ is a point, then the result follows from Theorem
\ref{thm:point-block}. Thus we need to analyze those $J$ with  $\vert J \vert  \geq 2$.

Next, if $J\in \X$ has weak interaction and $\ghost_J = 0$, then
	\begin{align*}
	\NA(V) \simeq \NA(V_1 \oplus V_{J'})  \, \underline{\otimes} \, \NA(V_J),
	\end{align*}
	hence $\GK \NA(V) = \GK\NA(V_1 \oplus V_{J'}) + \GK \NA(V_J)$ by Lemma \ref{lemma:GKdim-smashproduct}.
Therefore, we  consider  those $J \in \X$ with weak interaction only when $\ghost_J \neq 0$.

\subsection{Proof of Theorem \ref{thm:points-block-eps1} ($\epsilon = 1$)} \label{subsec:pf-main-block-points-weak}

After some preliminaries on the algebra $K$, we reduce to connected components in Corollary \ref{cor:conncomp}, and then deal with the case $\vert J \vert = 2$
in \S \ref{subsub:J=2},  and with $\vert J \vert > 2$ in \S \ref{subsub:J>2}.

\subsubsection{Weak interaction and the algebra $K$}\label{subsubsec:algK-block-points}
Here we assume  that the interaction is weak, but $\epsilon$ could be $\pm 1$.
We shall use the results and notations from the preceding Sections, but with a caveat: $\fudos$ replaces 2 when appropriate,
e. g. $x_{\fudos 1} = x_{\fudos}x_1 - \epsilon x_1 x_{\fudos}$.
As in \S \ref{sec:yd-dim3}, let
$K=\NA (V)^{\mathrm{co}\,\NA (V_1)}$; again
\begin{align*}
\NA(V) &\simeq K \# \NA (V_1);& K &\simeq \NA(K^1)& &\text{and}& K^1 &= \ad_c\NA (V_1) (V_{\diag})
\in {}^{\NA (V_1)\# \ku \Gamma}_{\NA (V_1)\# \ku \Gamma}\mathcal{YD},
\end{align*}
with the coaction \eqref{eq:coaction-K^1} and the adjoint action.
 We introduce
\begin{align}\label{eq:zjn}
z_{j,n} &:= (ad_c x_{\fudos})^n x_j,& j&\in\I_{2, \theta},& n&\in\N_0.
\end{align}

We have that
\begin{align}
\label{eq:1block+points-action}
  g_1\cdot z_{j,n} &= \epsilon^nq_{1j}z_{j,n},& &\text{(by Lemma \ref{le:-1bpz})}
  \\\label{eq:1block+points-bis}
  g_i\cdot z_{j,n} &= q_{i1}^nq_{ij}z_{j,n}, &&
\end{align}
for all $i,j\in \I_{2,\theta}$, $n\in \N_0$. Indeed, $g_i\cdot z_{j,0} = g_i\cdot x_{j} = q_{ij} x_{j}$; and
\begin{align*}
g_i\cdot z_{j,n+1} &= g_i\cdot (x_{\fudos} z_{j,n}  - \epsilon^{n} q_{1j}z_{j,n} x_{\fudos}) \\
&=q_{i1}(x_{\fudos}+a_ix_1)q_{i1}^nq_{ij}z_{j,n}  - \epsilon^{n}  q_{1j}q_{i1}^{n+1}q_{ij}z_{j,n}(x_{\fudos}+a_ix_1)\\
&\overset{\eqref{eq:-1block+point}} =q_{i1}^{n+1}q_{ij}(x_{\fudos}z_{j,n} - \epsilon^{n} q_{1j}z_{j,n}x_{\fudos}) = q_{i1}^{n+1}q_{ij}z_{j,n+1}.
\end{align*}

\begin{remark}\label{remark:GK-block-points-K}
As in Remark \ref{remark:GK-block-point-K}, if	$\GK \toba(V_1) < \infty$
and the action of $\toba(V_1)$ on $K$ is locally finite, then by Lemma \ref{lemma:GKdim-smashproduct}, $\GK\NA(V) =  \GK K + 2$.
\end{remark}

\begin{lemma}\label{lemma:braiding-K-weak-block-points} Assume  that the interaction is weak and that $\ghost$ is discrete.
Then the  braided vector space $K^1$ is of diagonal type with respect to the basis
\begin{align}\label{eq:block-points-baseK1}
(z_{j,n})_{j\in \I_{2,\theta}, 0\le n\le \vert 2a_j\vert}
\end{align}
with braiding matrix
\begin{align*}
(p_{im, jn})_{\substack{i,j \in \I_{2,\theta}, \\ 0\le m\le \vert 2a_i\vert,
0\le n \le 2\vert a_j \vert}} &= (\epsilon^{nm} q_{i1}^nq_{1j}^m  q_{ij})_{\substack{ i,j \in \I_{2,\theta}, \\ 0\le m\le \vert 2a_i\vert,
0\le n \le 2\vert a_j \vert}}.
\end{align*}
Hence, the corresponding generalized  Dynkin diagram  has labels
\begin{align*}
p_{im,im}&= \epsilon^m q_{ii},& p_{im, jn}p_{jn, im}&= q_{ij}q_{ji}, & &(i,m)\neq (j,n).
\end{align*}
\end{lemma}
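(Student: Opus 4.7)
The plan is to reduce the statement, as closely as possible, to the one-block-one-point computations of \S \ref{subsection:weak}. Since $\NA (V)\simeq K\# \NA (V_1)$, and the braided Hopf subalgebra $\NA(\cV_{g_1}(\chi_1,\eta)\oplus \ku_{g_j}^{\chi_j})$ of $\NA(V)$ splits as $K_j\#\NA (V_1)$ for a certain subalgebra $K_j\subseteq K$ generated by $(z_{j,n})_{n\ge 0}$, essentially everything we need has already been proved point-by-point. The formulas \eqref{eq:1block+points-action}, \eqref{eq:1block+points-bis} are already recorded; so what remains to be established are (i) the spanning statement, (ii) the linear independence of the proposed basis, and (iii) the diagonal form of the braiding.

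For (i) and (ii) I would proceed as in Remark \ref{lemma:K-basis}. Since $\NA (V_1)$ admits a PBW basis in $x_1, x_{\fudos}$, $x_{\fudos 1}$ (with the appropriate exponent restrictions according to $\epsilon=\pm 1$), and since the braided adjoint action satisfies the obvious analogues of \eqref{eq:adx1-zn} and \eqref{eq:adx12-zn}, namely $\ad_c x_1(z_{j,n})=0$ and $\ad_c x_{\fudos 1}(z_{j,n})=0$ for every $j\in\I_{2,\theta}$ and every $n\in\N_0$, all of $\ad_c \NA (V_1)$ acting on $V_\diag$ collapses to powers of $\ad_c x_{\fudos}$. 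Hence $K^1=\sum_{j\in\I_{2,\theta}}\sum_{n\ge 0}\ku\, z_{j,n}$. The vanishing $z_{j,n}=0$ for $n>\vert 2a_j\vert$ and the non-vanishing in the converse range are then direct transcriptions of Lemma \ref{lemma:derivations-zn}, computing $\partial_\ell(z_{j,n})$ for $\ell\in\Iw_1\cup\{j\}$: all derivations $\partial_\ell$ with $\ell\notin\{1,\fudos,j\}$ annihilate $z_{j,n}$, and $\partial_j(z_{j,n})$ reproduces, up to obvious relabeling, the formula \eqref{eq:derivations-zn} with constants $\mu_n$ built from $a_j$. Since the $z_{j,n}$ are $\Z^\theta$-homogeneous with pairwise distinct degrees across different $(j,n)$, linear independence of the family \eqref{eq:block-points-baseK1} is automatic from non-vanishing.

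For (iii) the key observation is that the coaction $\delta$ on $z_{j,n}$ in $\NA (V_1)\#\ku \Gamma\otimes K^1$, computed by the obvious multi-point analogue of Lemma \ref{le:zcoact}, takes the form
\begin{align*}
\delta(z_{j,n})\;=\;g_1^n g_j\otimes z_{j,n}\;+\;\sum_{k<n} w_{k}\, g_1^{k} g_j\otimes z_{j,k},
\end{align*}
where each $w_k\in \NA (V_1)$ is a linear combination of monomials in $x_1,x_{\fudos 1}$ of positive total degree in $\{x_1,x_{\fudos 1}\}$. Now the braiding of $K^1$, viewed as a YD module over $\NA (V_1)\#\ku \Gamma$, is $c(u\otimes v)=u\_{-1}\cdot v\otimes u\_0$ with the braided adjoint action on the left. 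The lower-order tail of $\delta(z_{i,m})$ therefore acts on $z_{j,n}$ by some monomial in $\ad_c x_1$ and $\ad_c x_{\fudos 1}$ of positive degree, and hence annihilates $z_{j,n}$. Only the leading term survives, yielding
\begin{align*}
c(z_{i,m}\otimes z_{j,n}) \;=\; (g_1^{m}g_i)\cdot z_{j,n}\otimes z_{i,m}\;\overset{\eqref{eq:1block+points-bis}}{=}\;\epsilon^{mn}q_{1j}^{m}q_{i1}^{n}q_{ij}\,z_{j,n}\otimes z_{i,m},
\end{align*}
which is exactly the claimed diagonal braiding, and from it the labels $p_{im,im}$ and $p_{im,jn}p_{jn,im}$ are read off directly. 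The only mildly delicate point in the whole argument is verifying the shape of $\delta(z_{j,n})$---i.e.\ that every subleading term really does carry a positive power of $x_1$ or $x_{\fudos 1}$---which one handles by induction on $n$ via the recursion $z_{j,n+1}=\ad_c x_{\fudos}(z_{j,n})$ and the explicit coproducts of $x_{\fudos},x_1,x_{\fudos 1}$, mimicking the inductive calculation already carried out in the proof of Lemma \ref{le:zcoact}.
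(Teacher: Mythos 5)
Your proposal is correct and follows essentially the same route as the paper: both arguments establish spanning and linear independence from $\ad_c x_1(z_{j,n})=\ad_c x_{\fudos 1}(z_{j,n})=0$, $\Z^\theta$-homogeneity, and the nonvanishing coming from $\partial_j$, and both deduce diagonality from the fact that the subleading terms of $\delta(z_{j,n})$ (given by Lemma~\ref{le:zcoact}) carry positive powers of $x_1$ or $x_{\fudos 1}$ and therefore act trivially on the $z_{j',n'}$.
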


\pf We have  $\ad_cx_{\fudos} z_{j,n} = z_{j,n + 1}$ by definition and $\ad_cx_1 z_{j,n} = 0$ by \eqref{eq:adx1-zn} for all $j,n$;
hence $K^1$ is generated by the family \eqref{eq:block-points-baseK1}; and this family is linearly independent,
because the $z_{j,n}$'s are homogeneous of distinct degrees, and  are $\neq 0$ by \eqref{eq:derivations-zn}.
By Lemma \ref{le:zcoact}
the coaction \eqref{eq:coaction-K^1} on $z_{j,n}$, $n\in \N _0$, is given by \eqref{eq:coact-zjn}, when $\epsilon = 1$,
and by \eqref{eq:coact-zjn-even}, \eqref{eq:coact-zjn-odd}, when $\epsilon = -1$:

\begin{align} \label{eq:coact-zjn}
\delta (z_{j,n}) &= \sum _{k=0}^n \nu_{k,n}\, x_1^{n-k}g_1^{k} g_j \otimes z_{j,k}.
\\ \label{eq:coact-zjn-even}
\delta (z_{j, 2n}) &=
\sum _{k=1}^n k\binom n k \mu_{k,n} \,
x_1x_{\fudos 1}^{n-k}g_1^{2k-1}g_j\otimes z_{j, 2k-1}
\\ \notag&\qquad  + \sum _{k=0}^n \binom n k\mu_{k, n} x_{\fudos 1}^{n-k}g_1^{2k}g_j\otimes z_{j, 2k},
\\ \label{eq:coact-zjn-odd}
\delta (z_{j, 2n+1}) &=
\sum _{k=0}^n \binom nk \mu_{k, n+1} \,   x_1x_{\fudos 1}^{n-k}g_1^{2k}g_j\otimes z_{j, 2k}
\\ \notag& \qquad + \sum _{k=0}^n \binom nk \mu_{k+1, n+1} x_{\fudos 1}^{n-k}g_1^{2k+1}g_j\otimes z_{j, 2k+1}.
\end{align}
We  compute
\begin{align*}
c(z_{i,m} \otimes z_{j,n}) &= g_1^mg_i \cdot z_{j,n} \otimes z_{i,m} = \epsilon^{nm} q_{i1}^n q_{1j}^m  q_{ij} \, z_{j,n} \otimes z_{i,m},
\end{align*}
by Lemmas \ref{le:zcoact} and \ref{le:-1bpz}, \eqref{eq:adx1-zn} and \eqref{eq:adx12-zn}.
\epf

Let $K_J$ be the braided vector subspace of $K^1$ spanned by $(z_{j,n})_{\substack{j\in J,\\ 0\le n\le \vert 2a_j\vert}}$.

\begin{coro}\label{cor:conncomp}
The braided subspaces corresponding to the connected components of the Dynkin diagram of $K^1$ are  $K_J$, $J \in \X$. Hence
\begin{align}\label{eq:points-block-gkd-K}
\GK K = \GK \toba(K^1) &=  \sum_{J\in \X} \GK \toba(K_J). \qed
\end{align}
\end{coro}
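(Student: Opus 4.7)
The plan is to realize $K^1 = \bigoplus_{J\in\X} K_J$ as a decomposition of braided vector spaces with pairwise commuting braiding between distinct summands, and then invoke the Gra\~na factorization of Nichols algebras from \cite{Grana} to deduce the claimed additivity of Gelfand--Kirillov dimension.

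From the basis of $K^1$ provided in Lemma \ref{lemma:braiding-K-weak-block-points}, the $K_J$ are braided vector subspaces satisfying $K^1=\bigoplus_{J\in\X} K_J$. The key calculation uses the explicit braiding matrix $p_{im,jn}=\epsilon^{nm}q_{i1}^n q_{1j}^m q_{ij}$ of that Lemma: for any $(i,m)$ and $(j,n)$,
\[
p_{im,jn}\,p_{jn,im}\;=\;\epsilon^{2nm}\,(q_{i1}q_{1i})^n\,(q_{j1}q_{1j})^m\,q_{ij}q_{ji}\;=\;q_{ij}q_{ji},
\]
using weak interaction ($q_{i1}q_{1i}=q_{j1}q_{1j}=1$) and $\epsilon^2=1$. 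If $i\in J$ and $j\in J'$ with $J\ne J'$, then by the definition of the connected components of the Dynkin diagram of $V_{\diag}$ we have $q_{ij}q_{ji}=1$; hence the Dynkin diagram of $K^1$ carries no edge between any vertex of $K_J$ and any vertex of $K_{J'}$, and $c_{K_J,K_{J'}}\,c_{K_{J'},K_J}=\id$. This identifies the $K_J$'s as the braided subspaces that collect the connected components of the Dynkin diagram of $K^1$ according to the partition $\X$, proving the first assertion. (Within a single $K_J$ the induced subgraph may or may not itself be connected --- for instance, a singleton $J=\{j\}$ with $q_{jj}^2=1$ and $|2a_j|>0$ contributes $|2a_j|+1$ isolated $A_1$-vertices, as recorded in Table \ref{tab:finiteGK-block-points}; these are nonetheless grouped into the single braided subspace $K_J$.)

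For the second assertion, the Gra\~na splitting \cite{Grana} recalled in the Remark at the end of \S\ref{subsubsec:intro-class} yields, from the commuting cross-braiding, an isomorphism of $\N_0$-graded vector spaces
\[
\toba(K^1)\;\simeq\;\underline{\bigotimes}_{J\in\X}\toba(K_J),
\]
whose Hilbert series factorizes accordingly. If some $\GK\toba(K_J)=\infty$ both sides of \eqref{eq:points-block-gkd-K} are $\infty$; otherwise, every $\toba(K_J)$ has $\GK\le 2$ by Table \ref{tab:finiteGK-block-points} and therefore admits a GK-deterministic subspace by Remark \ref{rem:examples-GKdterministic}, so iterating Lemma \ref{lemma:GKdim-smashproduct} upgrades the obvious inequality to the equality \eqref{eq:points-block-gkd-K}.

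The only genuinely substantive step is the one-line computation reducing $p_{im,jn}\,p_{jn,im}$ to $q_{ij}q_{ji}$; the remainder is bookkeeping and standard factorization, and I do not anticipate a real obstacle.
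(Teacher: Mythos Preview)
Your approach is the same as the paper's (which gives no explicit proof, marking the corollary with a \qed as immediate from Lemma~\ref{lemma:braiding-K-weak-block-points}): compute $p_{im,jn}p_{jn,im}=q_{ij}q_{ji}$, observe this equals $1$ when $i,j$ lie in distinct components of $\X$, and apply the Gra\~na splitting. Your parenthetical observation that a single $K_J$ may itself be disconnected is a useful clarification of what the statement actually asserts.

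There is one slip in your justification of the $\GK$ additivity: the claim that ``every $\toba(K_J)$ has $\GK\le 2$ by Table~\ref{tab:finiteGK-block-points}'' is both circular (that table is precisely what this section is establishing, via this corollary) and false (the first row has $\gkv_J=\ghost_J+1$, which can be arbitrarily large). The fix is immediate: the Remark at the end of \S\ref{subsubsec:intro-class} already asserts $\GK\cB(U\oplus W)=\GK\cB(U)+\GK\cB(W)$ whenever $c_{U,W}c_{W,U}=\id$, with no bound on the summands required; alternatively, since each $\toba(K_J)$ is a Nichols algebra of diagonal type with a PBW basis, the additivity follows directly from the factorization of Hilbert series.
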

Observe that if $\ghost_J = 0$, then $K_J = V_J$.

\subsubsection{$\vert J \vert  = 2$}\label{subsub:J=2}
Recall that $\ghost_J \neq 0$.
Below we denote $\imath\, = \sqrt{-1}$.

\begin{lemma}\label{lemma:KJ-card2}
Assume that $\vert J \vert  = 2$.
Then $\GK \NA (K_J)$ is finite if and only if it appears in Table \ref{tab:finiteGK-block-points}.
\end{lemma}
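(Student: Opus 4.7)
By Lemma \ref{lemma:braiding-K-weak-block-points}, $K_J$ is of diagonal type with basis $(z_{i,m})_{0\le m\le \ghost_i}\cup (z_{j,n})_{0\le n\le \ghost_j}$ (recall $\epsilon = 1$ so $|2a_h| = \ghost_h$); every vertex in the family $(i,*)$ has self-label $q_{ii}$, every vertex in family $(j,*)$ has self-label $q_{jj}$, edges within a family $(h,*)$--$(h,*)$ carry label $q_{hh}^2$, and edges between families carry label $q_{ij}q_{ji}$. Under Hypothesis \ref{hyp:nichols-diagonal-finite-gkd}, $\GK \NA(K_J) < \infty$ is equivalent to the generalized root system of $K_J$ being finite, so it suffices to enumerate which decorated graphs of the above shape correspond to finite root systems and to match these against Table \ref{tab:finiteGK-block-points}.

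The first step is to narrow down the possibilities for the underlying $V_J$. Since $V_J \hookrightarrow V$ as a braided subspace, necessarily $\GK \NA(V_J) < \infty$; combined with the rank-$2$ classification of \cite{H-classif} and the restriction from Theorem \ref{thm:point-block} that $q_{hh}\in \G_2\cup \G_3$ whenever $\ghost_h\neq 0$ (cf.\ the remarks after page \pageref{page:qii1}), this leaves only finitely many diagonal diagrams for $V_J$: essentially the rank-$2$ Dynkin diagrams of type $A_2$, $A_2^{(2)}\cong \mathfrak{sl}(2|1)$, those associated with $\mathfrak{g}(2,3)$, $\mathfrak{sl}(2|2)$, and the one-parameter family with edge label $r^{-1}$ and vertex labels $-1,r$.

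The second step is, for each such $V_J$, to run through the allowed ghost vectors $\ghost_J\in \N_0^{2}\setminus\{0\}$. For small ghost, one computes the enlarged diagram directly: e.g.\ when $V_J$ is $\xymatrix{\overset{-1}{\circ}\ar@{-}[r]^{-1} & \overset{-1}{\circ}}$ and $\ghost_J=(1,0)$, the new vertex $z_{i,1}$ is also labeled $-1$, the edge $(i,0)$--$(i,1)$ has label $q_{ii}^2=1$ (hence no edge), while both $(i,0),(i,1)$ are joined to $(j,0)$ by edges of label $q_{ij}q_{ji}=-1$; the result is the diagram of $A_3$, and analogously $\ghost_J=(2,0)$ yields $D_4$. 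Each entry of Table \ref{tab:finiteGK-block-points} is obtained by such a direct inspection, confirming that $K_J$ has finite root system and reading off $\GK \NA(K_J)$ from \cite{H-classif}.

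The remaining step, the main technical obstacle, is discarding all other $(V_J,\ghost_J)$. The strategy is to locate in the would-be diagram of $K_J$ either a rank-$2$ subdiagram of affine or hyperbolic type (so Theorem \ref{thm:nichols-diagonal-finite-gkd} or Hypothesis \ref{hyp:nichols-diagonal-finite-gkd} gives $\GK\NA(K_J)=\infty$), or a vertex at which not all reflections are defined. For instance, if $\ghost_i\ge 2$ while $q_{ii}\in \G_3'$, the chain $(i,0),(i,1),(i,2)$ carries labels $\omega$ with all three pairwise edges labelled $\omega^2$, giving an affine $A_2^{(1)}$; if $q_{ij}q_{ji}\notin \G_\infty$ and both ghosts are positive, reflections at $(i,m)$ break down; and so on. After this case-by-case elimination, the only survivors are precisely the rows of Table \ref{tab:finiteGK-block-points} with $|J|=2$, which completes the equivalence and gives \eqref{eq:points-block-gkd} for $|J|=2$ via \eqref{eq:points-block-gkd-K}.
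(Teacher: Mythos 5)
Your strategy matches the paper's: Lemma~\ref{lemma:braiding-K-weak-block-points} exhibits $K_J$ as diagonal, Hypothesis~\ref{hyp:nichols-diagonal-finite-gkd} (together with its known converse) replaces ``$\GK\NA(K_J)<\infty$'' by ``the root system of $K_J$ is finite,'' and the list in \cite{H-classif} is the target. The ``if'' direction is indeed an inspection, as you say. But the ``only if'' direction is where essentially all the content of the lemma lies, and your argument stops at ``and so on.'' The paper's proof there is a lengthy case split on $q_{ii}\in\{-1\}\cup\G_3'$, on $q_{jj}$, and on the edge label $q=q_{ij}q_{ji}$, which repeatedly extracts a small braided subspace $U$ of $K_J$ and pushes it through explicit chains of reflections $\cR^{i_1}\cdots\cR^{i_k}$ until an affine or non-finite subdiagram shows up (see the reflected matrices in \eqref{eq:diagram-block-points-casob}--\eqref{eq:casof-general roots}). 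The reflected labels are not read off from the starting data, so this enumeration is not a formality that can be summarized away; it is the proof.

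Two of your auxiliary claims are also off. The assertion that ``if $q_{ij}q_{ji}\notin\G_\infty$ and both ghosts are positive, reflections at $(i,m)$ break down'' is wrong as stated: the integers $c_{(i,m),(j',n)}$ in \eqref{eq:defcij} exist whenever the \emph{self-label} $p_{(i,m),(i,m)}=q_{ii}$ is a root of unity, and with both ghosts positive one already has $q_{ii},q_{jj}\in\G_2\cup\G_3$, so every reflection of the initial diagram of $K_J$ is defined. The paper eliminates the ``both ghosts positive'' subcase by applying Hypothesis~\ref{hyp:nichols-diagonal-finite-gkd} to the rank-$4$ diagram $(\beta)$ of \eqref{eq:tridente-omega}, not by any failure of the Weyl groupoid; such a failure can only appear after some reflections are computed, which is precisely the work you omit. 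Second, your list of admissible $V_J$'s conflates $V_J$ with $K_J$: $\mathfrak g(2,3)$ and $\mathfrak{sl}(2|2)$ are rank-$3$ and rank-$4$ types appearing in the $K_J$ column of Table~\ref{tab:finiteGK-block-points}, while the surviving rank-$2$ types of $V_J$ are only $A_2$ (at $-1$) and the $\mathfrak{sl}(2|1)$ realizations.
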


\pf Say $J = \{i, j\}$; we set $q:= q_{ij}q_{ji} \neq 1$ for brevity.
Recall that  $\epsilon = 1$.
 There are several possibilities:

 \bigbreak
 \begin{enumerate}\renewcommand{\theenumi}{\alph{enumi}}\renewcommand{\labelenumi}{(\theenumi)}
  \item\label{case:diagram-block-points-casob}
 $q_{ii} = \omega \in \G'_3$,  $q_{jj} = -1$, $\ghost_j > 0$.
\end{enumerate}

Let $U$ be the braided vector subspace of $K_J$  spanned by  $1 = z_{i,0}$, $2 = z_{j,1}$, $3 = z_{j,0}$, i. e. the central diagram in
\eqref{eq:diagram-block-points-casob}. We apply the reflections
at various vertices as described below:

\begin{align}\label{eq:diagram-block-points-casob}
\xymatrix{ & \overset{-\omega q^2}{\underset{2}{\circ}} \ar  @{-}[ld]_{q^{-1}\omega^2}  \ar  @{-}[d]^{q^{4}\omega^2}
& \ar@/^0.5pc/@{<->}[r]^{1}_{\ast} &  & \overset{-1}{\underset{2}{\circ}} \ar  @{-}[ld]_{q}
& \ar@/^0.5pc/@{<->}[r]^{3} &  & \overset{-1}{\underset{2}{\circ}} \ar  @{-}[ld]_{q}
\\
\overset{\omega}{\underset{1}{\circ}} \ar  @{-}[r]_{q^{-1}\omega^2 }  & \overset{-\omega q^2}{\underset{3}{\circ}}
&&   \overset{\omega}{\underset{1}{\circ}} \ar  @{-}[r]^{q}  & \overset{-1}{\underset{3}{\circ}}
&&
\overset{-q\omega}{\underset{1}{\circ}} \ar  @{-}[r]^{q^{-1}}  & \overset{-1}{\underset{3}{\circ}} \ar@/^0.5pc/@{<->}[d]^{2}
\\
& & & & \overset{\mp \imath\, \omega}{\underset{2}{\circ}} \ar  @{-}[d]^{-\omega^2}  \ar  @{-}[ld]_{q^{5} \omega^2}
& \ar@/^0.5pc/@{<->}[r]^{1}_{\#} &
& \overset{-1}{\underset{2}{\circ}} \ar  @{-}[ld]_{q^{-1}}
\\
& &
& \overset{q^2\omega}{\underset{1}{\circ}} \ar  @{-}[r]^{q^{5} \omega^2}  & \overset{\mp \imath\, \omega}{\underset{3}{\circ}}
& & \overset{q^2\omega}{\underset{1}{\circ}} \ar  @{-}[r]^{q^{-1}}  & \overset{-1}{\underset{3}{\circ}}
}
\end{align}
$\ast$: only for $q\neq \omega^2$; $\#$: only when $q = \pm \imath\,$ or $q = \pm \imath\,\omega^2$.

 Then the following possibilities are excluded since $\GK \toba (U) = \infty$:

 \medbreak
\begin{itemize} [leftmargin=*] 
 \item $q \notin \G_{\infty}$. By Theorem \ref{thm:nichols-diagonal-finite-gkd} since $\xymatrix{\overset{\omega}{\circ} \ar  @{-}[r]^{q}  & \overset{-1}{\circ}}$ has an infinite root system.

  \medbreak
  \item $-\omega q^2 = 1$, that is $q = \pm \imath\, \omega$. By Lemma \ref{lemma:points-trivial-braiding}
  applied to $\cR^1 (U)$.

 \medbreak
  \item $-\omega q^2 = -1$, that is $q = \pm\omega$. By Lemma \ref{lemma:points-trivial-braiding}
  applied to $\cR^2\cR^3 (U)$.

   \medbreak
 \item $-\omega q^2 \in \G'_N$, $N > 3$, but $q\neq \omega^2$. By Theorem \ref{thm:nichols-diagonal-finite-gkd} applied to $\cR^1 (U)$, since the subdiagram spanned by 2 and 3 is of Cartan type but not finite.

   \medbreak
 \item $-\omega q^2 \in \G'_3$, that is $q = \pm \imath\,$ or $q = \pm \imath\,\omega^2$.
By Theorem \ref{thm:nichols-diagonal-finite-gkd} applied to $\cR^1\cR^2\cR^3 (U)$,
 since the subdiagram spanned by 2 and 3 is of Cartan type but not finite.

  \end{itemize}

It remains the following case:

 \begin{itemize} [leftmargin=*] \renewcommand{\labelitemi}{$\circ$}
 \item $q = \omega^2$. Then $\dim \toba (U) < \infty$ by  \cite[Table 2, row 8]{H-classif}, type super A.
\end{itemize}

We argue according to $\ghost_j$.
If $\ghost_j = 1$ and $\ghost_i = 0$, then $U = K_J$.
There are two more cases: ($\alpha$)  $\ghost_j > 1$: then $K_J$ contains a braided vector subspace $U'$
spanned by  $z_{i,0}$, $z_{j,0}$, $z_{j,1}$, $z_{j,2}$:  ($\beta$) $\ghost_j = 1 = \ghost_i$,
then $K_J$ is spanned by $z_{i,0}$, $z_{i,1}$, $z_{j,0}$, $z_{j,1}$. The respective diagrams are

\begin{align}\label{eq:tridente-omega}
& (\alpha) \quad\xymatrix{ &\overset{-1}{\circ}\ar  @{-}[d]_{\omega^{2}} &
\\ \overset{-1}{\circ} \ar  @{-}[r]^{\omega^{2}}  &\overset{\omega}{\circ} \ar  @{-}[r]^{\omega^{2}}  & \overset{-1}{\circ},}
& & (\beta) \quad \xymatrix{\overset{\omega}{\circ}\ar  @{-}[d]_{\omega^{2}} \ar  @{-}[rd]^{\omega^{2}} \ar  @{-}[r]^{\omega^{2}}
	& \overset{-1}{\circ}\ar  @{-}[d]^{\omega^{2}}
	\\ \overset{-1}{\circ} \ar  @{-}[r]^{\omega^{2}}  & \overset{\omega}{\circ}.}
\end{align}
Hence  $\GK \toba(V) = \infty$ in these cases by Hypothesis \ref{hyp:nichols-diagonal-finite-gkd}.

\bigbreak
\begin{enumerate}\renewcommand{\theenumi}{\alph{enumi}}\renewcommand{\labelenumi}{(\theenumi)}
\setcounter{enumi}{1}
\item
 $q_{ii} = \omega \in \G'_3$, $q_{jj} = -1$, $\ghost_j = 0$ and thus $\ghost_i = 1$.
\end{enumerate}

Then $K_J$ is spanned by the vertices $1 = z_{i,0}$, $2 = z_{i,1}$, $3 = z_{j,0}$,
and its diagram is the left-hand side in \eqref{eq:diagram-block-points-casoa}.
We apply the reflection at 3, that is

\begin{align}\label{eq:diagram-block-points-casoa}
\xymatrix{\overset{\omega}{\underset{2}{\circ}}\ar  @{-}[d]_{\omega^{2}} \ar  @{-}[rd]^{q} &
\\ \overset{\omega}{\underset{1}{\circ}} \ar  @{-}[r]^{q}  &
\overset{-1}{\underset{3}{\circ}}} &
&\xymatrix{\ar@/^1pc/@{<->}[rr]^{3} & & }  &
&\xymatrix{\overset{-\omega q}{\underset{2}{\circ}}\ar  @{-}[d]_{\omega^{2} q^2} \ar  @{-}[rd]^{q^{-1}} &
\\ \overset{-\omega q}{\underset{1}{\circ}} \ar  @{-}[r]^{q^{-1}}  &
\overset{-1}{\underset{3}{\circ}}.}
\end{align}

Then the following possibilities are excluded looking at $\cR^3(K_J)$:

 \medbreak
\begin{itemize} [leftmargin=*] 
 \item $-\omega q = 1$, i. e. $q = -\omega^2$. Then $\GK \toba (K_J) = \infty$ by Lemma \ref{lemma:points-trivial-braiding}.

 \medbreak
 \item $-\omega q \in \G'_N$, $N > 3$. Then $\GK \toba (K_J) = \infty$ by Theorem \ref{thm:nichols-diagonal-finite-gkd}.

 \medbreak
  \item $-\omega q \notin \G_{\infty}$. Then $\GK \toba (K_J) = \infty$ by Theorem \ref{thm:nichols-diagonal-finite-gkd}.

\item Either $-\omega q = \omega$, i. e.  $q = -1$; or else $-\omega q = \omega^2$, i. e. $q = -\omega$.
Here  $\GK \toba(V) = \infty$ by Hypothesis \ref{hyp:nichols-diagonal-finite-gkd}
because the braiding matrices are
\begin{align}\label{eq:triangle-omega}
\xymatrix{\overset{\omega}{\circ}\ar  @{-}[d]_{\omega^{2}} \ar  @{-}[rd]^{-1} &
\\ \overset{\omega}{\circ} \ar  @{-}[r]^{-1}  &
\overset{-1}{\circ},} & &\hspace{30pt} &
\xymatrix{\overset{\omega}{\circ}\ar  @{-}[d]_{\omega^{2}} \ar  @{-}[rd]^{-\omega} &
\\ \overset{\omega}{\circ} \ar  @{-}[r]^{-\omega}  &
\overset{-1}{\circ}.}
\end{align}
\end{itemize}

The last possibility gives a positive answer:

\begin{itemize} [leftmargin=*] \renewcommand{\labelitemi}{$\circ$}
	\item $-\omega q = -1$, i. e. $q = \omega^2$. Then $\dim \toba (K_J) < \infty$ by  \cite[Table 2, row 15]{H-classif}.
\end{itemize}

 \bigbreak
\begin{enumerate}\renewcommand{\theenumi}{\alph{enumi}}\renewcommand{\labelenumi}{(\theenumi)}
 \setcounter{enumi}{2}
  \item
 $q_{ii} = q_{jj} =\omega \in \G'_3$,  $\ghost_i = 1$.
\end{enumerate}

We consider the braided vector subspace $U$ of $K_J$ corresponding to the
subdiagram spanned by the vertices $1 = z_{i,0}$, $2 = z_{i,1}$, $3 = z_{j,0}$, that is the central diagram in
\eqref{eq:diagram-block-points-casoc}. We apply the reflections as described below:
\begin{align}\label{eq:diagram-block-points-casoc}
\xymatrix{\overset{\omega}{\underset{2}{\circ}}\ar  @{-}[d]_{\omega^{2}} \ar  @{-}[rd]^{q^2\omega^2} &
\\ \overset{\omega}{\underset{1}{\circ}} \ar  @{-}[r]^{q^{-1}\omega^2 \quad}  &
\overset{q^2\omega^2}{\underset{3}{\circ}}} &
&\xymatrix{\ar@/^1pc/@{<->}[r]^{1}_{\#}  & } &
&\xymatrix{\overset{\omega}{\underset{2}{\circ}}\ar  @{-}[d]_{\omega^{2}} \ar  @{-}[rd]^{q} &
\\ \overset{\omega}{\underset{1}{\circ}} \ar  @{-}[r]^{q}  &
\overset{\omega}{\underset{3}{\circ}}} &
&\xymatrix{\ar@/^1pc/@{<->}[r]^{3}_{\#}  & } &
&\xymatrix{\overset{\omega^2 q^2}{\underset{2}{\circ}}\ar  @{-}[d]_{\omega q^4} \ar  @{-}[rd]^{q^{-1}\omega^2} &
\\ \overset{\omega^2 q^2}{\underset{1}{\circ}} \ar  @{-}[r]^{q^{-1} \omega^2}  &
\overset{\omega}{\underset{3}{\circ}}.}
\end{align}
$\#$: only for $q\notin \G'_3$.
Many possibilities are excluded as we discuss now:

 \medbreak
\begin{itemize} [leftmargin=*] 
 \item $\omega^2 q^2 = 1$, i. e. $q = -\omega^2$. Then $\GK \toba (\cR^3(U)) = \infty$ by Lemma \ref{lemma:points-trivial-braiding}.

 \medbreak
 \item $\omega^2 q^2 = -1$, i. e. $q = \pm \imath\omega^2$. Then $\cR^3(U)$ has diagram
\begin{align*}
\xymatrix{\overset{-1}{\underset{2}{\circ}} \ar  @{-}[rd]^{\mp \imath} &
\\ \overset{-1}{\underset{1}{\circ}} \ar  @{-}[r]^{\mp \imath}  &
\overset{\omega}{\underset{3}{\circ}}}
\end{align*}
that appeared in \eqref{eq:diagram-block-points-casob}; hence $\GK \toba (U) = \infty$ by case \eqref{case:diagram-block-points-casob}.

\medbreak
 \item $\omega^2 q^2 = \omega$, i. e. $q = -\omega$ (the possibility $q=\omega$ is discussed separately).
 Then $\cR^1(U)$  contains a subdiagram of type $A_1^{(1)}$, since its  diagram is
\begin{align*}
\xymatrix{\overset{\omega}{\circ} \ar  @{-}[rd]^{\omega}\ar  @{-}[d]^{\omega^2} &
\\ \overset{\omega}{\circ} \ar  @{-}[r]^{-\omega}  &
\overset{\omega}{\circ}.}
\end{align*}
Hence  $\GK \toba (U) = \infty$ by Theorem \ref{thm:nichols-diagonal-finite-gkd}.

 \medbreak
 \item $\omega^2 q^2 \in \G'_N$, $N > 3$. Then $\GK \toba (\cR^3(U)) = \infty$ by Theorem \ref{thm:nichols-diagonal-finite-gkd}.

 \medbreak
  \item $\omega^2 q^2 \notin \G_{\infty}$. Then $\GK \toba (\cR^3(U)) = \infty$ by Theorem \ref{thm:nichols-diagonal-finite-gkd}.

 \medbreak
 \item If $q = \omega$, respectively $q = \omega^2$, then $U$ is of type $A_2^{(1)}$, respectively $U$ has a subdiagram of  type $A_1^{(1)}$, hence  $\GK \toba (U) = \infty$ by  Theorem \ref{thm:nichols-diagonal-finite-gkd}.

\medbreak
 \item $\omega^2 q^2 = \omega^2$, i. e. $q = -1$ (we assumed $q\neq 1$).
The braiding matrix is
\begin{align}\label{eq:triangle-omega-bis}
\xymatrix{\overset{\omega}{\circ}\ar  @{-}[d]_{\omega^{2}} \ar  @{-}[rd]^{-1} &
\\ \overset{\omega}{\circ} \ar  @{-}[r]^{-1}  &
\overset{\omega}{\circ}.}
\end{align}

\end{itemize}
The subdiagram $\xymatrix{\overset{\omega}{\circ} \ar  @{-}[r]^{-1}  &
\overset{\omega}{\circ}}$ has infinite root system, hence  $\GK \toba(V) = \infty$ by  Theorem \ref{thm:nichols-diagonal-finite-gkd}.

 \bigbreak
\begin{enumerate}\renewcommand{\theenumi}{\alph{enumi}}\renewcommand{\labelenumi}{(\theenumi)}
 \setcounter{enumi}{3}
  \item
 $q_{ii} = \omega \in \G'_3$, $q_{jj} = \omega^2$, $\ghost_i = 1$.
\end{enumerate}

We consider the braided vector subspace $U$ corresponding to the
subdiagram spanned by the vertices $1 = z_{i,0}$, $2 = z_{i,1}$, $3 = z_{j,0}$, that is the left-hand diagram in
\eqref{eq:diagram-block-points-casod}. We apply the reflections as described below:
\begin{align}\label{eq:diagram-block-points-casod}
\xymatrix{\overset{\omega}{\underset{2}{\circ}}\ar  @{-}[d]_{\omega^{2}} \ar  @{-}[rd]^{q} &
\\ \overset{\omega}{\underset{1}{\circ}} \ar  @{-}[r]^{q}  &
\overset{\omega^2}{\underset{3}{\circ}}} &
&\xymatrix{\ar@/^1pc/@{<->}[rr]^{3}_{\#} & & }  &
&\xymatrix{\overset{q^2}{\underset{2}{\circ}}\ar  @{-}[d]_{q^4} \ar  @{-}[rd]^{q^{-1}\omega} &
\\ \overset{q^2}{\underset{1}{\circ}} \ar  @{-}[r]^{q^{-1} \omega}  &
\overset{\omega^2}{\underset{3}{\circ}}.}
\end{align}
$\#$: only for $q\notin \G'_3$.
Many possibilities are excluded as we discuss now:

 \medbreak
\begin{itemize} [leftmargin=*] 
 \item $q = -1$. Then $\GK \toba (\cR^3(U)) = \infty$ by Lemma \ref{lemma:points-trivial-braiding}.

  \medbreak
 \item $q^2 \in \G'_N$, $N > 3$. Then $\GK \toba (\cR^3(U)) = \infty$ by Theorem \ref{thm:nichols-diagonal-finite-gkd}.

 \medbreak
  \item $q^2 \notin \G_{\infty}$. Then $\GK \toba (\cR^3(U)) = \infty$ by Theorem \ref{thm:nichols-diagonal-finite-gkd}.

\medbreak
 \item $q^2 = -1$. Then $\cR^3(U)$ has diagram
\begin{align*}
\xymatrix{\overset{-1}{\underset{2}{\circ}} \ar  @{-}[rd]^{\mp \imath \omega} &
\\ \overset{-1}{\underset{1}{\circ}} \ar  @{-}[r]^{\mp \imath \omega}  &
\overset{\omega^2}{\underset{3}{\circ}}}
\end{align*}
that appeared in \eqref{eq:diagram-block-points-casob}; hence $\GK \toba (U) = \infty$ by case \eqref{case:diagram-block-points-casob}.

\medbreak \item $q = \pm\omega,  \pm\omega^2$.
The braiding matrices are respectively
\begin{align}\label{eq:triangle-omega-tris}
\xymatrix{\overset{\omega}{\circ}\ar  @{-}[d]_{\omega^{2}} \ar  @{-}[rd]^{\pm\omega} &
\\ \overset{\omega}{\circ} \ar  @{-}[r]^{\pm\omega}  &
\overset{\omega^{2}}{\circ},} & &\hspace{25pt} &
\xymatrix{\overset{\omega}{\circ}\ar  @{-}[d]_{\omega^{2}} \ar  @{-}[rd]^{\pm\omega^{2}} &
\\ \overset{\omega}{\circ} \ar  @{-}[r]^{\pm\omega^{2}}  &
\overset{\omega^{2}}{\circ}.}
\end{align}

If $q \in \G'_3$, then these matrices are of indefinite Cartan type; by Hypothesis \ref{hyp:nichols-diagonal-finite-gkd}, $\GK \toba (U) = \infty$.
If $-q \in \G'_3$, then the subdiagram $\xymatrix{\overset{\omega}{\circ} \ar  @{-}[r]^{q}  &
	\overset{\omega^2}{\circ}}$ has infinite root system, hence  $\GK \toba(V) = \infty$ by  Theorem \ref{thm:nichols-diagonal-finite-gkd}.

\end{itemize}

\bigbreak
\begin{enumerate}\renewcommand{\theenumi}{\alph{enumi}}\renewcommand{\labelenumi}{(\theenumi)}
 \setcounter{enumi}{4}
  \item
 $q_{ii} = \omega \in \G'_3$, $q_{jj} \notin \G_2 \cup \G_3$, thus $\ghost_j = 0$ and  $\ghost_i = 1$.
\end{enumerate}

We set $r = q_{jj}$. We consider the braided vector subspace $U$ corresponding to the subdiagram spanned by the vertices $1 = z_{i,0}$, $2 = z_{i,1}$, $3 = z_{j,0}$. The braiding matrix is
\begin{align}\label{eq:triangle-omega-general roots}
\xymatrix{\overset{\omega}{\circ}\ar  @{-}[d]_{\omega^{2}} \ar  @{-}[rd]^{q} &
	\\ \overset{\omega}{\circ} \ar  @{-}[r]^{q}  &
	\overset{r}{\circ}.}
\end{align}

 \medbreak
\begin{itemize} [leftmargin=*] 
 \item Either $q \notin\G_{\infty}$ or $r \notin\G_{\infty}$.
 Here $\GK \toba (U) = \infty$ by Theorem \ref{thm:nichols-diagonal-finite-gkd}.
 Indeed, if $r\neq q^{-1}$, then we consider the subdiagram $\xymatrix{\overset{\omega}{\circ} \ar  @{-}[r]^{q}  &
 	\overset{r}{\circ}}$; while for $r= q^{-1}$, $\GK \toba (\cR^1(U)) = \infty$, cf. the subdiagram $\xymatrix{\overset{\omega}{\circ} \ar  @{-}[r]^{q^2\omega}  &
 	\overset{\omega q}{\circ}}$:
\begin{align}\label{eq:diagram-block-points-casoe-semigeneric}
\xymatrix{\overset{\omega}{\underset{2}{\circ}}\ar  @{-}[d]_{\omega^{2}} \ar  @{-}[rd]^{q} &
\\ \overset{\omega}{\underset{1}{\circ}} \ar  @{-}[r]^{q}  &
\overset{q^{-1}}{\underset{3}{\circ}}} &
&\xymatrix{\ar@/^1pc/@{<->}[rr]^{1} & & }  &
&\xymatrix{\overset{\omega}{\underset{2}{\circ}}\ar  @{-}[d]_{\omega^2} \ar  @{-}[rd]^{q^{2}\omega} &
\\ \overset{\omega}{\underset{1}{\circ}} \ar  @{-}[r]_{\omega^2 q^{-1}}  &
\overset{\omega q}{\underset{3}{\circ}}.}
\end{align}

\medbreak
 \item $q, r \in\G_{\infty}$, $r\notin \G_2\cup\G_3$.
Here $\GK \toba (U) = \infty$ by Hypothesis \ref{hyp:nichols-diagonal-finite-gkd}.
\end{itemize}

\bigbreak
\begin{enumerate}\renewcommand{\theenumi}{\alph{enumi}}\renewcommand{\labelenumi}{(\theenumi)}
 \setcounter{enumi}{5}
  \item\label{case:diagram-block-points-casof}
 $q_{ii} = -1$, $ q_{jj} \neq 1$. We omit $ q_{jj} \in \G_3$ because this was considered before.
 If $q_{jj} \neq - 1$, $\ghost_j = 0$ hence $\ghost_i \geq 1$. So, we can assume always $\ghost_i \geq 1$ without loss of generality.
\end{enumerate}

We set $r = q_{jj}$. We consider the braided vector subspace $U$ corresponding to the
subdiagram spanned by the vertices $1 = z_{i,0}$, $2 = z_{i,1}$, $3 = z_{j,0}$, that is the central diagram in
\eqref{eq:diagram-block-points-casof}. We apply the reflections as described below:
\begin{align}\label{eq:diagram-block-points-casof}
\xymatrix{\overset{-1}{\underset{2}{\circ}} \ar  @{-}[rd]^{q} &
\\ \overset{-1}{\underset{1}{\circ}} \ar  @{-}[r]^{q^{-1}}  &
\overset{-qr}{\underset{3}{\circ}}}
&
&\xymatrix{\ar@/^1pc/@{<->}[r]^{1}  & }  &
&
\xymatrix{\overset{-1}{\underset{2}{\circ}} \ar  @{-}[rd]^{q} &
\\ \overset{-1}{\underset{1}{\circ}} \ar  @{-}[r]^{q}  &
\overset{r}{\underset{3}{\circ}}} &
&\xymatrix{\ar@/^1pc/@{<->}[r]^{3}_{\#}  & }  &
&\xymatrix{\overset{-q^{N-1}r}{\underset{2}{\circ}}\ar  @{-}[d]_{q^{2(N-1)}r^2} \ar  @{-}[rd]^{q^{-1} r^2} &
\\ \overset{-q^{N-1}r}{\underset{1}{\circ}} \ar  @{-}[r]_{\quad q^{-1} r^2}  &
\overset{r}{\underset{3}{\circ}}.}
\end{align}
$\#$: only for $r \in \G'_N$, $q\notin r^{\Z}$.
There are some positive answers:

 \begin{itemize} [leftmargin=*] \renewcommand{\labelitemi}{$\circ$}
 \item $q = r = -1$, $(\ghost_i, \ghost_j) = (1, 0)$ or $(2,0)$. Then $K_J$ is of Cartan type $A_3$ or $D_4$ respectively,
 hence $\dim \toba(K_J) < \infty$.

\item $q \in \G'_3$, $r = -1$, $(\ghost_i, \ghost_j) = (1, 0)$;  $\dim \toba(K_J) < \infty$ by   \cite[Table 2, row 15]{H-classif}.
It can be shown that the root system of this Nichols algebra is isomorphic to that of the modular Lie superalgebra $\mathfrak g(2,3)$.

\item $r \notin \G_2 \cup \G_3$, $q  = r^{-1}$. Then $\ghost_j = 0$.
If $\ghost_i = 1$, then $K_J$ is of super type $A$, hence $\dim \toba(K_J) < \infty$ when $r\in \G_{\infty}$,
while $\GK \toba(K_J) = 2$ when $r\notin \G_{\infty}$,
see   \cite[Table 2, row 9]{H-classif}.

\end{itemize}

 \medbreak
\begin{itemize} [leftmargin=*] 

 \item $q = r = -1$, $(\ghost_i, \ghost_j) \notin\{ (1, 0),(2,0)\}$. Then $K_J$
 has a subdiagram of affine Cartan type $A_3^{(1)}$ or $D_4^{(1)}$,
 thus $\GK \toba(K_J) = \infty$ by Theorem \ref{thm:nichols-diagonal-finite-gkd}.

\item $q \in \G'_3$, $r = -1$, $(\ghost_i, \ghost_j) \neq (1, 0)$. Then  $K_J$
 contains a braided subspace $U'$ with diagram of either of the following types:
\begin{align*}
\xymatrix{&\overset{-1}{\underset{1}{\circ}} \ar  @{-}[rd]^{\omega}\ar  @{-}[ld]_{\omega}\ar  @{-}[d]^{\omega} &
\\ \overset{-1} {\circ}  & \overset{-1} {\circ}  &	 \overset{-1}{\circ}}
&
&\xymatrix{\ar@/^1pc/@{<->}[r]^{1}  & }  &
&
\xymatrix{&\overset{-1}{\underset{1}{\circ}} \ar  @{-}[rd]^{\omega^2}\ar  @{-}[ld]_{\omega^2}\ar  @{-}[d]^{\omega^2} &
\\ \overset{\omega} {\circ} \ar  @{-}[r]^{\omega^2} \ar@/_1pc/  @{-}[rr]_{\omega^2} & \overset{\omega} {\circ} \ar  @{-}[r]^{\omega^2}
&\overset{\omega}{\circ};}
\end{align*}
\begin{align*}
\xymatrix{\overset{-1}{\underset{2}{\circ}} \ar  @{-}[r]^{\omega}\ar  @{-}[d]_{\omega} & \overset{-1}{\underset{3}{\circ}} \ar  @{-}[d]^{\omega}
\\ \overset{-1} {\underset{1}{\circ}} \ar  @{-}[r]^{\omega} & \overset{-1} {\underset{4}{\circ}} }
&
&\xymatrix{\ar@/^1pc/@{<->}[r]^{3}  & }  &
&
\xymatrix{\overset{\omega}{\underset{2}{\circ}} \ar  @{-}[r]^{\omega^2}\ar  @{-}[rd]^{\omega^2}\ar  @{-}[d]_{\omega}
& \overset{-1}{\underset{3}{\circ}} \ar  @{-}[d]^{\omega^2}
\\ \overset{-1} {\underset{1}{\circ}} \ar  @{-}[r]^{\omega} & \overset{\omega} {\underset{4}{\circ}} }
&
&\xymatrix{\ar@/^1pc/@{<->}[r]^{1}  & }  &
&
\xymatrix{\overset{-\omega^2}{\underset{2}{\circ}} \ar  @{-}[r]^{\omega^2}\ar  @{-}[rd]^{\omega}\ar  @{-}[d]_{\omega^2}
& \overset{-1}{\underset{3}{\circ}} \ar  @{-}[d]^{\omega^2}
\\ \overset{-1} {\underset{1}{\circ}} \ar  @{-}[r]^{\omega^2} & \overset{-\omega^2} {\underset{4}{\circ}}. }
\end{align*}
In the first case, $\cR^1(U')$ contains a subdiagram of affine Cartan type $A_2^{(1)}$, hence
$\GK \toba(V_J) = \infty$ by Theorem \ref{thm:nichols-diagonal-finite-gkd}. In the second, the diagonal of
$\cR^1\cR^3(U')$ has infinite $\GK$ by  Theorem \ref{thm:nichols-diagonal-finite-gkd}.

\medbreak
\item $q \notin \G_2 \cup \G_3$, $r = -1$; then
$\GK \toba (\cR^3(U)) = \infty$ by Theorem \ref{thm:nichols-diagonal-finite-gkd}.

\medbreak
\item $r \notin \G_2 \cup \G_3$, $q  = r^{-1}$, $\ghost_j = 0$, $\ghost_i \geq 1$.
Then  $K_J$  contains a braided subspace $U'$ with diagram of the following type:
\begin{align*}
\xymatrix{&\overset{r} {\circ} \ar  @{-}[rd]^{r^{-1}}\ar  @{-}[ld]_{r^{-1}}\ar  @{-}[d]^{r^{-1}} &
\\ \overset{-1} {\underset{1}{\circ}}  & \overset{-1} {\circ}  &	 \overset{-1}{\circ}}
&
&\xymatrix{\ar@/^1pc/@{<->}[r]^{1}  & }  &
&
\xymatrix{&\overset{-1} {\circ} \ar  @{-}[rd]^{r^{-1}}\ar  @{-}[ld]_{r}\ar  @{-}[d]^{r^{-1}} &
\\ \overset{-1} {\underset{1}{\circ}}  & \overset{-1} {\circ}  &	 \overset{-1}{\circ}.}
\end{align*}
Erasing the vertex 1 of $\cR^1(U')$ we are in the previous case, therefore  $\GK \toba(K_J) = \infty$.

\medbreak
\item $r \notin \G_{\infty}$, $q  \neq r^{-1}$. Then  $\GK \toba(K_J) = \infty$ by  Theorem \ref{thm:nichols-diagonal-finite-gkd} (applied to $\cR^1(U)$
when $q\in r^{-\N}$) or Theorem \ref{thm:nichols-diagonal-finite-gkd}.

\medbreak
\item $r \in \G_{\infty}$ and $q  \notin \G_{\infty}$. Then  $\GK \toba(K_J) = \infty$ by  Theorem \ref{thm:nichols-diagonal-finite-gkd}
applied to $\cR^1(U)$.

\medbreak
\item $q, r \in\G_{\infty}$, $r\notin \G_2\cup\G_3$, $r\neq q^{-1}$.
The braiding matrix is
\begin{align}\label{eq:casof-general roots}
\xymatrix{\overset{-1}{\circ}  \ar  @{-}[rd]^{q} &
\\ \overset{-1}{\circ} \ar  @{-}[r]^{q}  &
\overset{r}{\circ}.}
\end{align}\end{itemize}
Here $\GK \toba (U) = \infty$ by Hypothesis \ref{hyp:nichols-diagonal-finite-gkd}.
\epf

\subsubsection{$\vert J \vert  > 2$}\label{subsub:J>2}
We now conclude the consideration of $J\in \X$ with $\ghost_J \neq 0$. Recall that $\epsilon=1$ and $\omega \in \G'_3$.

\begin{lemma}\label{lemma:KJ-card-geq3}
Assume that $\vert J \vert  > 2$.
Then $\GK \NA (K_J)$ is finite if and only if it appears in Table \ref{tab:finiteGK-block-points}.
\end{lemma}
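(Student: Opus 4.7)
The plan is to reduce the analysis for $|J| > 2$ to a combination of Lemma \ref{lemma:KJ-card2} applied to each adjacent pair inside $J$ and a global argument constraining the Dynkin type of $J$. First, I would observe that since $V_{\diag}$ must have finite $\GK$ (otherwise $\GK\NA(V)=\infty$), Hypothesis \ref{hyp:nichols-diagonal-finite-gkd} together with Theorem \ref{thm:nichols-diagonal-finite-gkd} forces $J$ to have finite generalized root system, so $J$ is one of the types in \cite{H-classif}. For each edge $\{i,j\}$ of $J$, the pair $(i,j)$ gives rise inside $K_J$ to a braided subspace of the same shape treated by Lemma \ref{lemma:KJ-card2}; hence any such edge must match one of the admissible local configurations listed there.

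Next I would argue that at most one vertex $i$ of $J$ has $\ghost_i\neq 0$. Indeed, inspection of Table \ref{tab:finiteGK-block-points} shows that no connected $2$-vertex configuration with finite $\GK$ has both endpoints with nonzero ghost: every admissible edge pairs a block-connected vertex $i$ (labelled $-1$ or $\omega$) with a vertex $j$ necessarily satisfying $\ghost_j=0$. So if $\ghost$ had nonzero coordinates at two adjacent vertices of $J$, applying Lemma \ref{lemma:KJ-card2} to the corresponding pair would yield a contradiction, and connectedness of $J$ then propagates this to any two vertices. Let $i$ be the unique such vertex. The local constraints around $i$, together with the discreteness of $\ghost_i$ and the forced value $q_{ii}\in\{-1,\omega,\omega^2\}$, drastically limit the admissible neighbours of $i$ inside $J$.

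Finally, I would run a case analysis across the finite root system types that $J$ could have (Cartan $A_n$, $D_n$, $E_n$; the super types $A(m|n)$, $D(m|n)$, $\mathfrak{osp}$; the exceptional $\mathfrak{g}(r,s)$, $\mathfrak{el}$, etc.), and across the position of $i$ in $J$. For each candidate, I would either (a) exhibit a small braided subspace of $K_J$ whose Dynkin diagram is infinite root system, hence of infinite $\GK$ by Hypothesis \ref{hyp:nichols-diagonal-finite-gkd} and Theorem \ref{thm:nichols-diagonal-finite-gkd}, possibly after a sequence of Weyl-groupoid reflections on $K_J$; or (b) identify $K_J$ via reflections with a finite-dimensional Nichols algebra of diagonal type already in the tables of \cite{H-classif}, confirming entry by entry the survivors of Table \ref{tab:finiteGK-block-points} (chains of $-1$'s with $\ghost_J=(1,0,\ldots,0)$ yielding Cartan $D_\theta$, and the two three-vertex $\omega$-diagrams yielding $\mathfrak{sl}(1|3)$ and $\mathfrak{g}(3,3)$). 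The main obstacle is the last step: the combinatorial explosion of subcases is substantial because the local configuration at $i$ in Lemma \ref{lemma:KJ-card2} already branches depending on whether $q_{ii}=-1$ or $q_{ii}=\omega$, and the global $J$ may be attached to $i$ at any vertex along a non-trivial Dynkin diagram; taming this by choosing, for each case, the right small subspace of $K_J$ and the right sequence of reflections is where the real work lies.
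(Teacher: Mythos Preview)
Your overall strategy matches the paper's: constrain every adjacent pair inside $J$ via Lemma \ref{lemma:KJ-card2}, isolate the vertex carrying the ghost, and then finish by analyzing well-chosen braided subspaces of $K_J$ using Hypothesis \ref{hyp:nichols-diagonal-finite-gkd}. One step, however, has a real gap. From Table \ref{tab:finiteGK-block-points} you correctly read off that no two \emph{adjacent} vertices of $J$ can both have nonzero ghost, i.e.\ the set $\{j\in J:\ghost_j\neq 0\}$ is an independent set in the graph $J$. But the sentence ``connectedness of $J$ then propagates this to any two vertices'' does not follow: on a path $i\text{--}j\text{--}k$ in $J$ with $\ghost_j=0$ and $\ghost_i,\ghost_k>0$, no adjacent pair fails Lemma \ref{lemma:KJ-card2}. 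The paper closes this gap for $|J|=3$ by looking at the subspace of $K_J$ spanned by $z_{i,0},z_{i,1},z_{j,0},z_{k,0},z_{k,1}$: the central vertex $z_{j,0}$ then has valence $4$ in the resulting Dynkin diagram, and no such diagram occurs in \cite{H-classif}, so Hypothesis \ref{hyp:nichols-diagonal-finite-gkd} excludes it. In short, uniqueness of the ghost vertex is not a formal consequence of the edge constraints; it needs its own diagram argument inside $K_J$.

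A smaller organizational point: rather than first classifying $J$ among all finite-root-system types and then locating $i$, the paper runs an induction on $|J|$. Once $|J|=3$ is settled, any connected $(|J|-1)$-vertex subdiagram of $J$ containing the ghost vertex must already lie in Table \ref{tab:finiteGK-block-points}; for $|J|\ge 5$ this forces that subdiagram to be an $A$-chain of $-1$'s, and the only remaining question is how the last vertex attaches. This keeps the case analysis much shorter than a direct sweep over Heckenberger's tables.
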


\pf
We proceed  by induction on $\vert J \vert$.

\begin{caso2}\label{caso2:a}
$\vert J \vert  =3$. Say $J = \{i, j, k \}$; set $q:= q_{ij}q_{ji} \neq 1$,
$r:= q_{jk}q_{kj} \neq 1$, $u = q_{ik}q_{ki}$.
\end{caso2}

By Lemma \ref{lemma:KJ-card2} for each pair of connected vertices, at most one of them has positive ghost, while the other has ghost $0$.  Also every connected Dynkin subdiagram of rank 2 with non-zero ghost, appears in Table \ref{tab:finiteGK-block-points}.

  \medbreak
 \begin{enumerate}[leftmargin=*]\renewcommand{\theenumi}{\alph{enumi}}\renewcommand{\labelenumi}{\theenumi.}
  \item\label{case:diagram-block-2points-casob}
 $\ghost_j > 0$. Thus $\ghost_j \in\{ 1,2 \}$, $\ghost_i =\ghost_k =0$.
\end{enumerate}

Let $U$ be the braided vector subspace spanned by $1 = z_{i,0}$, $2 = z_{j,0}$, $3 = z_{j,1}$, $4 = z_{k,0}$. Let $t = q_{jj}^2$.  Then $\GK \toba (U) = \infty$ by Hypothesis \ref{hyp:nichols-diagonal-finite-gkd},  since
there is no square in the list in \cite{H-classif} and the Dynkin
 diagram of $U$ is
\begin{align*}
\xymatrix{ \overset{q_{ii}} {\circ} \ar  @{-}[r]^{q} \ar  @{-}[d]_{q}
\ar @{-}[rd]_(.3){u}	& \overset{q_{jj}} {\circ} \ar  @{-}[d]^{r}
\ar  @{-}[ld]^(.3){t}|\hole
\\ \overset{q_{jj}} {\circ} \ar  @{-}[r]_{r}  & \overset{q_{kk}} {\circ}.}
\end{align*}

This case excluded, we may assume that $\ghost_i > 0$ (otherwise $\ghost_k > 0$ but then we exchange $i$ and $k$) and $u = 1$ (otherwise we exchange $i$ and $j$ and reduce to case \ref{case:diagram-block-2points-casob}).

  \medbreak
 \begin{enumerate}[leftmargin=*]\renewcommand{\theenumi}{\alph{enumi}}\renewcommand{\labelenumi}{\theenumi.}
 \setcounter{enumi}{1}
  \item\label{case:diagram-block-3points-casoc}
 $q_{ii} = \omega$. Thus $\ghost_i=1$, $\ghost_j=0$, $q=\omega^2$, $q_{jj}=-1$.
\end{enumerate}
Set $s=q_{kk}$. Let $U$ be the braided vector subspace spanned by $1 = z_{i,0}$, $2 = z_{i,1}$, $3 = z_{j,0}$, $4 = z_{k,0}$.
Its generalized Dynkin diagram is
\begin{align}\label{eq:3pt-case1c}
\xymatrix{  & \overset{\omega}{\circ} \ar  @{-}[d]^{\omega^2} \ar  @{-}[dl]_{\omega^2} &  \\
\overset{\omega}{\circ}\ar  @{-}[r]^{\omega^2}  & \overset{-1}{\circ} \ar  @{-}[r]^{r}  & \overset{s}{\circ}.}
\end{align}

 By Hypothesis \ref{hyp:nichols-diagonal-finite-gkd},  $\GK \toba(V) = \infty$, since no diagram with 4 points like this appears in the list in \cite{H-classif}.

 \medbreak
 So, we may assume that $\ghost_i > 0$, $u = 1$ and $q_{ii} = -1$.

 \medbreak
\begin{enumerate}[leftmargin=*]\renewcommand{\theenumi}{\alph{enumi}}\renewcommand{\labelenumi}{\theenumi.}
 \setcounter{enumi}{2}
  \item\label{case:diagram-block-3points-casod}
 $\ghost_k > 0$. By case \ref{case:diagram-block-3points-casoc} we may assume $q_{kk}=-1$.
\end{enumerate}
Let $U$ be the braided vector subspace spanned by $z_{i,0}$, $z_{i,1}$, $z_{j,0}$, $z_{k,0}$, $z_{k,1}$.
Its generalized Dynkin diagram is
\begin{align*}
\xymatrix{ \overset{-1}{\circ} \ar  @{-}[dr]^{q} &  & \overset{-1}{\circ} \ar  @{-}[dl]_{r} \\
\overset{-1}{\circ}\ar  @{-}[r]_{q}  & \overset{q_{jj}}{\circ} \ar  @{-}[r]_{r}  & \overset{-1}{\circ}.}
\end{align*}
 By Hypothesis \ref{hyp:nichols-diagonal-finite-gkd},  $\GK \toba(V) = \infty$, since no diagram with 5 points like this appears in the list in \cite{H-classif}.

 \medbreak
\begin{enumerate}[leftmargin=*]\renewcommand{\theenumi}{\alph{enumi}}\renewcommand{\labelenumi}{\theenumi.}
 \setcounter{enumi}{3}
  \item\label{case:diagram-block-3points-casoe}
  So, we may assume that $\ghost_i > 0$, $\ghost_j=\ghost_k=0$, $u = 1$, $q_{ii} = -1$.
\end{enumerate}
Let $U$ be the braided vector subspace spanned by $1 = z_{i,0}$, $2 = z_{i,1}$, $3 = z_{j,0}$, $4 = z_{k,0}$.
There are some cases with $\dim \toba(K_J) < \infty$:

\begin{itemize} [leftmargin=*] \renewcommand{\labelitemi}{$\circ$}
\item $q = q_{jj} = r = s = -1$, $\ghost_i = 1$. Then $K_J$ is of Cartan type $D_4$.

\item $q = q_{jj}^{-1} \in \G'_3$, $r = s^{-1}=q^{\pm 1}$, $\ghost_i= 1$; this is \cite[Table 3, row 18]{H-classif}.
\end{itemize}

The remaining possibilities are excluded as we discuss now:

 \medbreak
\begin{itemize} [leftmargin=*] 
 \item $q = q_{jj} = r = s = -1$, $\ghost_i = 2$. Then $K_J$ is of Cartan type $D_4^{(1)}$, hence $\GK \toba(K_J) = \infty$
 by Theorem \ref{thm:nichols-diagonal-finite-gkd}.

 \medbreak
 \item $q \notin \G_{\infty}$. We may assume $q_{jj}=q^{-1}$ by Lemma \ref{lemma:KJ-card2}  and $r=q^{-m}$
 for some $m\in\N$ by Theorem \ref{thm:nichols-diagonal-finite-gkd}. Then $\cR^1 U$ and $\cR^3\cR^1 U$ have diagrams
 \begin{align*}
& \xymatrix{ & \overset{s}{\circ} \ar  @{-}[d]^{q^{-m}} &   \\
\overset{-1}{\underset{1}{\circ}}\ar  @{-}[r]_{q^{-1}}  & \overset{-1}{\underset{3}{\circ}} \ar  @{-}[r]_{q}  & \overset{-1}{\circ},}
&
&\xymatrix{\ar@/^1pc/@{<->}[r]^{3} & }
&
& \xymatrix{ & \overset{-sq^{-m}}{\circ} \ar  @{-}[d]^{q^{m}}  \ar  @{-}[dl]_{q^{-1-m}}  \ar  @{-}[dr]^{q^{1-m}}  &   \\
\overset{q^{-1}}{\circ}\ar  @{-}[r]_{q}  & \overset{-1}{\circ} \ar  @{-}[r]_{q^{-1}}  & \overset{q}{\circ}.}
\end{align*}
 Thus $\GK \toba (\cR^3\cR^1(U)) = \infty$ by Theorem \ref{thm:nichols-diagonal-finite-gkd}.

 \medbreak
 \item $q_{jj}\neq-1$, either $r\notin\G_\infty$ or $s\notin\G_\infty$. We may assume $q\in\G_\infty$ by the previous step; moreover,
 $q_{jj}=q^{-1}$ by Lemma \ref{lemma:KJ-card2}, and $q=\omega$, $rs=1$ by Theorem \ref{thm:nichols-diagonal-finite-gkd}. Then $\GK \toba (\cR^3\cR^1(U)) =
 \infty$ by Theorem \ref{thm:nichols-diagonal-finite-gkd}.

 \medbreak
 \item $q_{jj}=-1$, $q=\omega$, either $r\notin\G_\infty$ or $s\notin\G_\infty$. As $\cR^1 U$ has the vertex 3 labeled with $\omega$,
 we may assume $rs=1$ by Theorem \ref{thm:nichols-diagonal-finite-gkd}. Then $\GK \toba (\cR^3(U)) = \infty$ by Theorem \ref{thm:nichols-diagonal-finite-gkd}.

 \medbreak
 \item $q_{jj}=q=-1$, either $r\notin\G_\infty$ or $s\notin\G_\infty$. We may assume $r=s^{-n}$, $n\in\N$,
 by Theorem \ref{thm:nichols-diagonal-finite-gkd}. Then $\GK \toba (\cR^1\cR^3(U)) = \infty$ by Theorem \ref{thm:nichols-diagonal-finite-gkd}.

\medbreak
 \item Either $q=q_{jj}^{-1},r,s\in\G_\infty$, or else $q = \omega$, $q_{jj}=-1$, $r,s\in\G_\infty$.
\begin{align}\label{eq:3pt-case1e}
& \xymatrix{ & \overset{-1}{\circ} \ar  @{-}[d]^{q} &   \\
\overset{-1}{\circ}\ar  @{-}[r]_{q}  & \overset{q^{-1}}{\circ} \ar  @{-}[r]_{r}  & \overset{s}{\circ},}
&
& \xymatrix{ & \overset{-1}{\circ} \ar  @{-}[d]^{\omega} &   \\
\overset{-1}{\circ}\ar  @{-}[r]_{\omega}  & \overset{-1}{\circ} \ar  @{-}[r]_{r}  & \overset{s}{\circ}.}
\end{align}
By Hypothesis \ref{hyp:nichols-diagonal-finite-gkd},  $\GK \toba(U) = \infty$.

\end{itemize}

\bigbreak

\begin{caso2} $\vert J \vert  =4$. Say $J = \{i, j, k,\ell \}$.
\end{caso2}

Assume $\ghost_i>0$ and $q_{ij}q_{ji}\neq 1$. By the previous Step, $\ghost_i=1$, $\ghost_j=0$, $q_{ik}q_{ki}=q_{i\ell}q_{\ell i}=1$,
$q_{ii}=-1$ and $q_{ij}q_{ji}=q_{jj}^{-1}\in\{-1,\omega\}$. We may assume $q_{jk}q_{kj}\neq 1$ since $J$ is connected,
so the subdiagram corresponding to $i,j,k$ appears in Table \ref{tab:finiteGK-block-points}; in particular $\ghost_k =0$.
Let $U$ be the subspace of
$K_J$ spanned by the vertices $1 = z_{i,0}$, $2 = z_{i,1}$, $3 = z_{j,0}$, $4 = z_{k,0}$, $5 = z_{\ell,0}$.

\bigbreak
 \begin{enumerate}\renewcommand{\theenumi}{\alph{enumi}}\renewcommand{\labelenumi}{(\theenumi)}
  \item\label{case:diagram-block-4points-casoa}
 $q_{j\ell}q_{\ell j}\neq 1$. Thus $U$ has a diagram of the following shape:
\end{enumerate}

\begin{align*}
\xymatrix{ \overset{-1}{\circ} \ar  @{-}[dr]^{q_{jj}^{-1}} &  & \overset{q_{\ell\ell}}{\circ} \ar  @{-}[dl]_{r} \ar@{-}[d]^{t} \\
	\overset{-1}{\circ}\ar  @{-}[r]_{q_{jj}^{-1}}  & \overset{q_{jj}}{\circ} \ar  @{-}[r]_{s}  & \overset{q_{kk}}{\circ},}
\end{align*}
where $r\neq 1$, $s \neq 1$. Then $\GK \toba(K_J) = \infty$ by Hypothesis \ref{hyp:nichols-diagonal-finite-gkd}, since there is no diagram in \cite[Table 4]{H-classif} like this.

 \bigbreak
 \begin{enumerate}\renewcommand{\theenumi}{\alph{enumi}}\renewcommand{\labelenumi}{(\theenumi)}
 \setcounter{enumi}{1}
  \item\label{case:diagram-block-4points-casob}
 $q_{j\ell}q_{\ell j} =1$. Thus  $q_{k\ell}q_{\ell k}\neq 1$.
\end{enumerate}

There is one case with $\dim \toba(K_J) < \infty$:

\begin{itemize} [leftmargin=*] \renewcommand{\labelitemi}{$\circ$}
\item $q_{jj} = q_{kk} = q_{\ell\ell} = q_{ij}q_{ji} =  q_{jk}q_{kj} = q_{k\ell}q_{\ell k} = -1$, $\ghost_\ell=0$.
Then $U = K_J$ is of Cartan type $D_5$.
\end{itemize}

\medskip
In the remaining cases $\GK \toba(K_J) = \infty$.

\medbreak
\begin{itemize} [leftmargin=*] 
 \item $q_{jj} = q_{kk} = q_{\ell\ell} = q_{ij}q_{ji} =  q_{jk}q_{kj} = q_{k\ell}q_{\ell k} = -1$, $\ghost_\ell=1$.
 Then $K_J$ is of Cartan type $D_5^{(1)}$, so Theorem \ref{thm:nichols-diagonal-finite-gkd} applies.

 \medbreak\item $q_{jj} = q_{kk} = q_{ij}q_{ji} =  q_{jk}q_{kj} = -1$, either $q_{k\ell}q_{\ell k}\notin \G_\infty$ or $q_{\ell\ell}\notin \G_\infty$.
 If $q_{k\ell}q_{\ell k}\neq q_{\ell\ell}^{-1}, q_{\ell\ell}^{-2}$, then apply Theorem \ref{thm:nichols-diagonal-finite-gkd}; otherwise \emph{ditto}  to $\cR^4(U)$.

 \medbreak\item $q_{jj} = \omega^2$, $q_{ij}q_{ji}=\omega$, $q_{jk}q_{kj}=q_{kk}^{-1}=\omega^{\pm 1}$, either $q_{k\ell}q_{\ell k}\notin \G_\infty$
 or $q_{\ell\ell}\notin \G_\infty$. The argument is similar to the previous case.
\end{itemize}

\medbreak
Set $q=q_{k\ell}q_{\ell k}$, $r=q_{\ell\ell}$.

\begin{itemize} [leftmargin=*]
 \item $q_{jj} = q_{kk} = q_{ij}q_{ji} =  q_{jk}q_{kj} = -1$, $q, r \in \G_\infty$; or

 \item $q_{jj} = \omega^2$, $q_{ij}q_{ji}=\omega$, $q_{jk}q_{kj}=q_{kk}^{-1}=\omega^{\pm 1}$, $q, r \in \G_\infty$. The Dynkin diagrams are
\begin{align}\label{eq:4pt-case2b}
& \xymatrix{  & \overset{-1}{\circ} \ar  @{-}[d]^{-1} &  &  \\
\overset{-1}{\circ}\ar  @{-}[r]_{-1}  & \overset{-1}{\circ} \ar  @{-}[r]_{-1}  & \overset{-1}{\circ}
\ar  @{-}[r]_{q}  & \overset{r}{\circ} }
&
& \xymatrix{  & \overset{-1}{\circ} \ar  @{-}[d]^{\omega} &  &  \\
\overset{-1}{\circ}\ar  @{-}[r]_{\omega}  & \overset{\omega^2}{\circ} \ar  @{-}[r]_{\omega^{\pm 1}}  & \overset{\omega^{\mp 1}}{\circ}
\ar  @{-}[r]_{q}  & \overset{r}{\circ} .}
\end{align}
By Hypothesis \ref{hyp:nichols-diagonal-finite-gkd}, $\GK \toba(K_J) = \infty$.
\end{itemize}

\bigbreak

\begin{caso2}
$\vert J \vert = m >4$. Say $J=\{2,\dots, m + 1\}$, $\ghost_{2}>0$.
\end{caso2}

We argue recursively. Since the Dynkin diagram spanned by $J$ is connected, there is a connected subdiagram $\widetilde J$ with $\vert J \vert = m-1$ and $2 \in \widetilde J$; we may assume that $\widetilde J = \{2,\dots, m\}$; thus this subdiagram appears in
Table \ref{tab:finiteGK-block-points}, and up to renumbering $q_{jj}=-1$, $\ghost_j=0$, $q_{jk}q_{kj}=1$
if $|k-j|\geq 2$, $q_{jk}q_{kj} = -1$ if $|k-j|= 1$, $j,k\in \I_{2,m}$. By the recursive hypothesis, $m + 1$ is  connected with no $j< m-1$.
Hence either $m +1$ is connected only
with $m$, or else $q_{m - 1 \, m + 1}q_{m + 1 \, m - 1} = q_{m+1\, m+1} = -1$, $\ghost_{m + 1}=0$.
In one case we have that $\dim \toba(K_J) < \infty$:

\begin{itemize} [leftmargin=*] \renewcommand{\labelitemi}{$\circ$}
\item $q_{m \, m+1}q_{m+1 \, m}= q_{m+1\, m+1}=-1$, $q_{m-1 \, m+1}q_{m+1 \, m-1}=1$,
$\ghost_{m+1}=0$. Then $K_J$ is of Cartan type $D_m$.
\end{itemize}

\medskip
In all other cases $\GK \toba(K_J) = \infty$.
Set $q=q_{m \, m+1}q_{m+1 \, m}$, $r=q_{m+1\, m+1}$, $s=q_{m-1 \, m+1}q_{m+1 \, m-1}$.
By Theorem \ref{thm:nichols-diagonal-finite-gkd}, we discard:

\medbreak
\begin{itemize} [leftmargin=*] 
 \item $q= r=-1$, $s=1$, $\ghost_{m+1}=1$. Then $K_J$ is of  type $D_{m+1}^{(1)}$.

 \item $s=r=-1$, $q=1$.  Then $K_J$ is of  type $D_m^{(1)}$.

 \item $s=1$, either $q\notin\G_\infty$ or  else $r\notin\G_\infty$ (if $q = r^{-1}$, or $r^{-2}$, consider $\cR^{m+1}(K_J)$).

\end{itemize}

Hypothesis \ref{hyp:nichols-diagonal-finite-gkd} covers the remaining cases, see \cite[Table 4]{H-classif}. \epf

\begin{table}[ht]
	\caption{$V = \cV(1,2) \oplus V_J$,  $\vert J \vert > 1$} \label{tab:finiteGK-block-points-names}
	\begin{center}
		\begin{tabular}{|c|c|}
			\hline   $V_J$ & $V$    \\
			\hline
			$\xymatrix{\overset{-1}{\circ} \ar  @{-}[r]^{r^{-1}}  & \overset{r}{\circ}}$; $r \in \ku^{\times}$ & $\lstr(A(1\vert 0)_1; r)$
			\\ \hline
			$\xymatrix{\overset{-1}{\circ} \ar  @{-}[r]^{\omega}  & \overset{-1}{\circ}}$; $\omega \in \G'_3$ 	& $\lstr(A(1\vert 0)_2; \omega)$
			\\ \hline
			$\xymatrix{\overset{\omega}{\circ} \ar  @{-}[r]^{\omega^2 }  &\overset{-1}{\circ}} $; $\omega \in \G'_3$ &   $\lstr(A(1\vert 0)_3; \omega)$
			\\ \hline
			$\xymatrix{\overset{-1}{\circ} \ar  @{-}[r]^{\omega}  & \overset{\omega^2}{\circ} \ar  @{-}[r]^{\omega}  & \overset{\omega^2}{\circ} }$; $\omega \in \G'_3$ & $\lstr(A(2 \vert 0)_1; \omega)$
			\\ \hline
			$\xymatrix{\overset{-1}{\circ} \ar  @{-}[r]^{\omega}  & \overset{\omega^2}{\circ} \ar  @{-}[r]^{\omega^2}  & \overset{\omega}{\circ} }$; $\omega \in \G'_3$ & $\lstr(D(2 \vert 1); \omega)$	
			\\ \hline
			$\xymatrix{\overset{-1}{\circ} \ar  @{-}[r]^{-1}  & \overset{-1}{\circ}}$,  $\ghost = (2,0)$ 	& $\lstr(A_{2}, 2)$
			\\ \hline			
			$\xymatrix{\overset{-1}{\circ} \ar  @{-}[r]^{-1}  & \overset{-1}{\circ}} \dots \xymatrix{
				\overset{-1}{\circ} \ar  @{-}[r]^{-1}  & \overset{-1}{\circ} }$
			& $\lstr(A_{\theta -1})$, $\theta > 2$	 		
			\\ \hline
		\end{tabular}
	\end{center}
\end{table}

\subsection{The Nichols algebras with finite $\GK$, $V_{\diag}$ connected}\label{subsection:points-block-presentation}
Let $V = V_1 \oplus V_{\diag}$ be as discussed before \eqref{eq:v=v1+v2}. We assume that the Dynkin diagram of $V_{\diag}$ is connected, i.e. $\X = \{J\}$ and $V_{\diag} = V_J$, where $J = \I_{2, \theta}$, and  that $\vert J\vert > 1$, as the case $\vert J\vert =  1$ was treated in \S \ref{sec:yd-dim3}.
We provide a presentation by generators and relations and exhibit an explicit PBW basis of $\toba(V)$, for all $V$ as in
Theorem \ref{thm:points-block-eps1}, see Table \ref{tab:finiteGK-block-points}.
We give names to these braided vector spaces in Table \ref{tab:finiteGK-block-points-names}.
The subspace $V_1\oplus \ku x_2$ is a braided vector subspace of type
\begin{itemize}
  \item $\lstr(-1, 2)$ when $V$ is of type $\lstr(A_{2}, 2)$,
  \item $\lstr(\omega, 1)$ when $V$ is of type $\lstr(A(1\vert 0)_3; \omega)$, or
  \item $\lstr(-1, 1)$ for all the other cases.
\end{itemize}
Thus the subalgebra generated by $V_1\oplus \ku x_2$ is a Nichols algebra of the corresponding type. We recall next the relations of each of them. For this, remember the change of index with respect to \S  \ref{sec:yd-dim3}; the $2$ and $3$ there are now $\fudos$ and $2$.
As in \eqref{eq:xijk}, we set   $x_{i_1i_2 \dots i_M} = \ad_c x_{i_1}\, x_{i_2 \dots i_M}$.

\smallbreak
First, the defining relations of $\cB(\lstr(-1, 2))$ are
\begin{align}
\tag{\ref{eq:rels B(V(1,2))}} &x_{\fudos}x_1-x_1x_{\fudos}+\frac{1}{2}x_1^2,
\\
\tag{\ref{eq:lstr-rels&11disc-1}} & x_1x_2-q_{12} \, x_2x_1,
\\
\label{eq:lstr-rels&11disc-qserre-g=2} & (\ad_c x_{\fudos})^3 x_2,
\\
\label{eq:lstr-rels&1-1disc-g=2} & x_2^2, \, x_{\fudos 2}^2, \, x_{\fudos\fudos 2}^2.
\end{align}
Here \eqref{eq:lstr-rels&11disc-qserre-g=2} and \eqref{eq:lstr-rels&1-1disc-g=2} are the specializations of \eqref{eq:lstr-rels&11disc-qserre} and \eqref{eq:lstr-rels&1-1disc} at $\ghost=2$, respectively.

Second, the defining relations of $\cB(\lstr(\omega, 1))$ are \eqref{eq:rels B(V(1,2))}, \eqref{eq:lstr-rels&11disc-1},
\begin{align}
\label{eq:lstr1omega1-qserre-g=1} & (\ad_c x_{\fudos})^2 x_2,
\\
\tag{\ref{eq:lstr1omega1-z0cube},\ref{eq:lstr-rels&1omega1}} & x_2^3, \, x_{\fudos 2}^3, \, [x_{\fudos 2}, x_2]_c^3.
\end{align}
Third, the defining relations of $\cB(\lstr(-1, 1))$ are \eqref{eq:rels B(V(1,2))}, \eqref{eq:lstr-rels&11disc-1}, \eqref{eq:lstr1omega1-qserre-g=1} and
\begin{align}
\label{eq:lstr-rels&1-1disc-g=1} & x_2^2, \, x_{\fudos 2}^2.
\end{align}
Here \eqref{eq:lstr1omega1-qserre-g=1} and \eqref{eq:lstr-rels&1-1disc-g=1} are the specializations of \eqref{eq:lstr-rels&11disc-qserre} and \eqref{eq:lstr-rels&1-1disc} at $\ghost=1$, respectively.

\begin{remark}
We have
\begin{align}\label{eq:lstr-rels-ghost-int-trivial}
x_1x_j &= q_{1j} x_jx_1, & x_{\fudos} x_j &= q_{1j} x_j x_{\fudos},& j &\in \I_{3, \theta},
\end{align}
since $q_{1j}q_{j1}=1$ and $\ghost_j=0$.
\end{remark}

\subsubsection{The Nichols algebra $\cB(\lstr(A(1\vert 0)_1; r))$, $r\in\G_N'$, $N\geq 3$} \label{subsubsection:lstr-a(10)1}
The subalgebra generated by $x_2$, $x_3$ is a Nichols algebra of type $A(1\vert 0)_1$. Thus,
\begin{align}\label{eq:lstr-rels-a10-diagpart}
(\ad_c x_3)^2 x_2 &=0, & x_3^N &= 0.
\end{align}

\medskip

Let $W$ be a braided vector space of diagonal type with Dynkin diagram
$\xymatrix{ \overset{-1}{\circ} \ar  @{-}[r]^{r^{-1}}  &\overset{r}{\circ} \ar  @{-}[r]^{r^{-1}}  & \overset{-1}{\circ}}$.
By \cite[Theorem 3.1]{Ang-crelle}, $\cB(W)$ is presented by generators $y_1$, $y_2$, $y_3$ and relations
\begin{align}\label{eq:lstr-rels-a10-K}
& (\ad_c y_2)^2 y_1, & & (\ad_c y_2)^2 y_3, & & (\ad_c y_1) y_3, & &y_1^2, & &y_3^2, &  &y_2^N, & &y_{123}^N.
\end{align}
Here is a basis of $\cB(W)$:
\begin{align*}
B_W=\{ y_{1}^{n_{1}}y_{12}^{n_{12}}y_{123}^{n_{123}}y_{2}^{n_{2}}y_{23}^{n_{23}}y_{3}^{n_{3}} :
n_{1}, n_{12}, n_{23}, n_{3}\in\{0,1\}, 0\le n_{2}, n_{123} < N \}.
\end{align*}

\begin{remark}\label{rem:relations lstr-a(10)1}
By Lemma \ref{lemma:braiding-K-weak-block-points}, $K^1$ is isomorphic to $W$ as braided vector spaces. Hence there exists an  isomorphism of braided Hopf algebras
$\psi:\cB(W)\to K$ such that $\psi(y_1)=x_{\fudos 2}$, $\psi(y_2)=x_{3}$, $\psi(y_3)=x_{2}$. Let
\begin{align}\label{eq:lstr-def-ztt-1}
\ztt_{12}&=x_{\fudos 23}=\psi(y_{12}), & \ztt_{123}&=[x_{\fudos 2}, x_{23}]_c=\psi(y_{123}).
\end{align}
Thus, the following identity holds in $\cB(\lstr(A(1\vert 0)_1; r))$:
\begin{align}\label{eq:lstr-rels-a10-1}
\ztt_{123}^N&=0,
\end{align}
and the following set is a basis of $K$:
\begin{align*}
B_K=\{ x_{\fudos 2}^{n_{1}}\ztt_{12}^{n_{2}}\ztt_{123}^{n_{3}}x_{3}^{n_{4}}x_{23}^{n_{5}}x_{2}^{n_{6}} :
0 \le n_{1}, n_{2}, n_{5}, n_{6} < 2, \,  0\le n_{3}, n_{4} < N \}.
\end{align*}
\end{remark}

\begin{lemma}\label{lemma:other relations lstr-a(10)1}
Let $\cB$ be a quotient algebra of $T(V)$. Assume that \eqref{eq:lstr1omega1-qserre-g=1}, \eqref{eq:lstr-rels&1-1disc-g=1},
\eqref{eq:lstr-rels-ghost-int-trivial}, \eqref{eq:lstr-rels-a10-diagpart}, \eqref{eq:lstr-rels-a10-1} hold in $\cB$.
Then there exists an algebra map $\phi:\cB(W)\to \cB$ such that $\phi(y_1)=x_{\fudos,2}$, $\phi(y_2)=x_{3}$, $\phi(y_3)=x_{2}$.
\end{lemma}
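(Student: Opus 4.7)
The plan is to build $\phi$ via the universal property of the presentation of $\cB(W)$ by the relations \eqref{eq:lstr-rels-a10-K}. I would first define an algebra map $\widetilde\phi:T(W)\to \cB$ by $y_1\mapsto x_{\fudos 2}$, $y_2\mapsto x_3$, $y_3\mapsto x_2$, and then verify that each of the seven defining relations of $\cB(W)$ maps to zero in $\cB$. Four of them match the hypotheses directly: $y_1^2,\,y_3^2$ go to $x_{\fudos 2}^2,\,x_2^2\in$ \eqref{eq:lstr-rels&1-1disc-g=1}, while $y_2^N$ and $(\ad_c y_2)^2y_3$ go to $x_3^N$ and $(\ad_c x_3)^2 x_2$, which are part of \eqref{eq:lstr-rels-a10-diagpart}. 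The remaining three relations $(\ad_c y_1)y_3$, $(\ad_c y_2)^2 y_1$ and $y_{123}^N$ need to be derived from the hypotheses.

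For $(\ad_c y_1)y_3$, I would expand $[x_{\fudos 2},x_2]_c$ using $x_{\fudos 2}=x_\fudos x_2 - q_{12}x_2x_\fudos$, compute the twist scalar from the $\Gamma$-grading (the degree of $x_{\fudos 2}$ is $g_1g_2$, giving the coefficient $g_1g_2\cdot x_2=q_{12}q_{22}x_2=-q_{12}x_2$ because $q_{22}=-1$), and then apply $x_2^2=0$ to cancel the two resulting monomials $-q_{12}x_2x_\fudos x_2$ and $+q_{12}x_2x_\fudos x_2$. For $(\ad_c y_2)^2y_1$, I would first note that weak interaction \eqref{eq:lstr-rels-ghost-int-trivial} yields $\ad_c x_3(x_\fudos)=0$, then use the twisted Leibniz rule $(\ad_c x_3)(uv)=(\ad_c x_3)(u)v+(g_3\cdot u)(\ad_c x_3)(v)$ twice on $x_{\fudos 2}=[x_\fudos,x_2]_c$; all surviving terms end up as products of $x_\fudos$'s with $(\ad_c x_3)^2 x_2$, which vanishes by \eqref{eq:lstr-rels-a10-diagpart}.

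For $y_{123}^N$, I would identify $\widetilde\phi(y_{123})$ with $[x_{\fudos 2},x_{23}]_c$ inside $\cB$, so that \eqref{eq:lstr-rels-a10-1} supplies the relation. By definition $\widetilde\phi(y_{123})=\ad_c x_{\fudos 2}\ad_c x_3(x_2)$; applying the braided Jacobi identity to the inner $\ad_c x_{\fudos 2}=\ad_c[x_\fudos,x_2]_c$ (using $\ad_c x_\fudos(x_3)=0$ from \eqref{eq:lstr-rels-ghost-int-trivial}) rewrites the expression, after keeping track of the diagonal braiding scalars, as a nonzero scalar multiple of $\ad_c x_{\fudos 2}(x_{23})=[x_{\fudos 2},x_{23}]_c$; taking $N$-th powers and invoking \eqref{eq:lstr-rels-a10-1} finishes this check. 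Once all seven relations are verified, $\widetilde\phi$ descends to the desired $\phi$.

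The main obstacle is the last identification: establishing the equality $\widetilde\phi(y_{123})=\mathrm{(scalar)}\cdot[x_{\fudos 2},x_{23}]_c$ in $\cB$ requires careful bookkeeping of the braiding scalars and of the various braided commutators, since a priori only $\ad_c x_3(x_2)$ (not $\ad_c x_2(x_3)=x_{23}$) appears. However, because all braidings in sight are diagonal and $\ad_c x_\fudos(x_3)=0$, a direct Jacobi manipulation inside $T(V)$ suffices; after that, every other verification is a short algebraic computation relying only on the stated hypotheses.
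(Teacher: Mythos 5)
Your overall strategy---lift to $T(W)$ and check that each of the seven relations in \eqref{eq:lstr-rels-a10-K} is killed---is exactly the paper's. But your handling of the three nontrivial relations diverges from the paper's in ways that make the proposal more complicated than it needs to be, and one step, as stated, would not go through.

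For $(\ad_c y_2)^2 y_1$, the paper uses the simple observation that $x_{\fudos 2}$ is built from $x_{\fudos}$ acting on $x_2$, and since $\ad_c x_3$ and $\ad_c x_{\fudos}$ commute up to the scalar $q_{31}$ (because $x_3$ and $x_{\fudos}$ $q$-commute by \eqref{eq:lstr-rels-ghost-int-trivial}), one gets $(\ad_c x_3)^2 x_{\fudos 2}=q_{31}^2(\ad_c x_{\fudos})(\ad_c x_3)^2 x_2=0$. Your twisted-Leibniz computation would reach the same conclusion, but it is longer and you don't need to re-expand $x_{\fudos 2}$ as a commutator.

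The real problem is your treatment of $(\ad_c y_1)y_3$ and $y_{123}^N$. For $(\ad_c y_1)y_3$, the paper does not expand $x_{\fudos 2}$ into $x_{\fudos}x_2-q_{12}x_2 x_{\fudos}$ and cancel monomials; instead it simply invokes Lemma~\ref{lemma:zt zk}\,\textbf{(ii)}, which says precisely that $x_2^2=0$ forces $[x_{\fudos 2},x_2]_c=0$ (i.e.\ $z_0 z_1=q_{21}q_{22}z_1 z_0$ with $z_0=x_2$, $z_1=x_{\fudos 2}$). Your claim that expanding and ``applying $x_2^2=0$'' cancels the two resulting monomials is not correct as stated: the braided commutator $[x_{\fudos 2},x_2]_c=x_{\fudos 2}x_2 - (g_1 g_2\cdot x_2)x_{\fudos 2}$ expands into four monomials $x_{\fudos}x_2 x_2$, $x_2 x_{\fudos}x_2$, $x_2 x_{\fudos}x_2$, $x_2 x_2 x_{\fudos}$ (with coefficients), and while the outer ones die by $x_2^2=0$, the two middle $x_2 x_{\fudos}x_2$ terms cancel only after you correctly compute the scalar $g_1 g_2\cdot x_2$; you have written $q_{12}q_{22}x_2$ but the relevant scalar is $q_{12}^2 q_{22}$ since both $x_{\fudos}$ and $x_2$ sit in degree $g_1$. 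With the right scalar the cancellation does hold, but since the paper already has the general Lemma~\ref{lemma:zt zk}\,\textbf{(ii)} for precisely this purpose, you should use it rather than redo the computation.

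For $y_{123}^N$, the paper does nothing: by Remark~\ref{rem:relations lstr-a(10)1}, $\ztt_{123}$ is \emph{defined} as $\psi(y_{123})=[x_{\fudos 2},x_{23}]_c$, and \eqref{eq:lstr-rels-a10-1} is the statement $\ztt_{123}^N=0$. So $\widetilde\phi(y_{123})$ is literally $\ztt_{123}$ by construction of the notation, and \eqref{eq:lstr-rels-a10-1} applies directly. Your plan to ``identify $\widetilde\phi(y_{123})$ with $[x_{\fudos 2},x_{23}]_c$'' by a braided Jacobi manipulation misses that in $\cB(W)$ the element $y_{123}$ is $\ad_c y_1\ad_c y_2(y_3)$ (see \eqref{eq:xijk}), which maps to $\ad_c x_{\fudos 2}\ad_c x_3(x_2)=[x_{\fudos 2},x_{32}]_c$, and the hypothesis \eqref{eq:lstr-rels-a10-1} concerns $[x_{\fudos 2},x_{23}]_c$. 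These differ by a braided symmetry, so some argument is indeed needed to relate them --- but the paper absorbs this into the definition of $\ztt_{123}$ via $\psi$, and your proposal should at least note that the conversion between $x_{23}$ and $x_{32}$ uses $x_2^2=0$ (so $x_{23}=-q_{32}^{-1}x_{32}$, up to a scalar) rather than gesture at an unspecified Jacobi identity. As written, this step is a genuine gap: you have not actually established the identity you call the ``main obstacle,'' only asserted that a manipulation ``suffices.''
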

\pf
Let $\Phi:T(W)\to \cB$ be the algebra map defined as $\phi$ on the $y_i$'s. We claim that $\Phi$ annihilates
all the relations in \eqref{eq:lstr-rels-a10-K}, and the Lemma follows.
The second and the sixth relations are annihilated by \eqref{eq:lstr-rels-a10-diagpart} while the last is \eqref{eq:lstr-rels-a10-1}.
The fourth and the fifth relations are annihilated because of \eqref{eq:lstr-rels&1-1disc}, and for the third relation we apply
Lemma \ref{lemma:zt zk} (ii). Finally,
\begin{align*}
\Phi \left( (\ad_c y_2)^2 y_1 \right) &= (\ad_c x_3)^2 x_{\fudos,2} =q_{31}^2 (\ad_c x_{\fudos})(\ad_c x_3)^2 x_{2}=0,
\end{align*}
where we use \eqref{eq:lstr-rels-ghost-int-trivial} and \eqref{eq:lstr-rels-a10-diagpart}.
\epf

\begin{prop} \label{pr:lstr-a(10)1}
The algebra $\cB(\lstr(A(1\vert 0)_1; r))$ is presented by generators $x_i$, $i\in \Iw_3$, and relations
\eqref{eq:rels B(V(1,2))}, \eqref{eq:lstr-rels&11disc-1},
\eqref{eq:lstr1omega1-qserre-g=1}, \eqref{eq:lstr-rels&1-1disc-g=1},
\eqref{eq:lstr-rels-ghost-int-trivial}, \eqref{eq:lstr-rels-a10-diagpart}, \eqref{eq:lstr-rels-a10-1}.
The set
\begin{multline*}
B =\big\{ x_1^{m_1} x_{\fudos}^{m_2} x_{\fudos 2}^{n_{1}}  \ztt_{12}^{n_{2}}\ztt_{123}^{n_{3}}x_{3}^{n_{4}}x_{23}^{n_{5}}x_{2}^{n_{6}} : \\
0 \le n_{1}, n_{2}, n_{5}, n_{6} < 2, \,  0\le n_{3}, n_{4} < N, \, 0 \le m_1, m_2 \big\}
\end{multline*}
is a basis of $\cB(\lstr(A(1\vert 0)_1; r))$ and $\GK \cB(\lstr(A(1\vert 0)_1; r)) = 2$.
\end{prop}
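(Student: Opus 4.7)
The plan is to adapt the pattern used for Propositions \ref{pr:lstr-11disc}--\ref{pr:nichols-mild}. First I will verify that all stated relations hold in $\cB(\lstr(A(1\vert 0)_1; r))$. Relations \eqref{eq:rels B(V(1,2))}, \eqref{eq:lstr-rels&11disc-1}, \eqref{eq:lstr1omega1-qserre-g=1} and \eqref{eq:lstr-rels&1-1disc-g=1} are the defining relations of the Nichols subalgebra $\cB(V_1 \oplus \ku x_2) \simeq \cB(\lstr(-1,1))$, which embeds in $\cB(V)$. The commutations \eqref{eq:lstr-rels-ghost-int-trivial} with $j = 3$ follow from $q_{13}q_{31}=1$ and $\ghost_3=0$, via Lemma \ref{le:-1bpz} (with $\fudos$ replacing $2$ throughout). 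The diagonal relations \eqref{eq:lstr-rels-a10-diagpart} hold in the subalgebra $\cB(V_{\diag})$ of type $A(1\vert 0)_1$. Finally, \eqref{eq:lstr-rels-a10-1} holds since, by Remark \ref{rem:relations lstr-a(10)1}, $\psi: \cB(W)\to K$ is an isomorphism of braided Hopf algebras with $\psi(y_{123}) = \ztt_{123}$, and $y_{123}^N = 0$ in $\cB(W)$ by \eqref{eq:lstr-rels-a10-K}.

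Let $\cBt$ denote the quotient of $T(V)$ by the relations of the statement; by the above there is a canonical surjection $\cBt \twoheadrightarrow \cB(\lstr(A(1\vert 0)_1; r))$. Next I will show that the linear span $I$ of $B$ in $\cBt$ is a right ideal containing $1$. By Lemma \ref{lemma:other relations lstr-a(10)1} there is an algebra map $\phi: \cB(W) \to \cBt$ whose image $\tilde K$ is, thanks to the basis $B_W$ of $\cB(W)$, spanned by $\phi(B_W)$, i.e., by the monomials in the factor $B_K$ of $B$. Thus $I = \tilde K \cdot \{x_1^{m_1}x_{\fudos}^{m_2} : m_1,m_2 \ge 0\}$. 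Since $x_2, x_3 \in \tilde K$ and $\tilde K$ is a subalgebra, $I \cdot x_2 \subseteq I$ and $I \cdot x_3 \subseteq I$ follow at once. For $I \cdot x_1$ and $I \cdot x_{\fudos}$ I will argue that each of $x_{\fudos 2}, \ztt_{12}, \ztt_{123}, x_{23}$ is $g_1$-homogeneous and compute its commutation with $x_1$ and $x_{\fudos}$ modulo $I$, using the Jordan plane relation \eqref{eq:rels B(V(1,2))}, the $q$-commutations \eqref{eq:lstr-rels&11disc-1} and \eqref{eq:lstr-rels-ghost-int-trivial}, and analogues of Lemma \ref{lemma:x1, x12 commute with zt}. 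Since $1\in I$, this yields $I = \cBt$, so $B$ spans $\cBt$.

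For the linear independence of $B$ in $\cB(\lstr(A(1\vert 0)_1; r))$, I will invoke the bosonization decomposition $\cB(V) \simeq K \# \cB(V_1)$ from \S \ref{subsubsec:algK-block-points}. The set $B_K$ is a basis of $K \simeq \cB(W)$ by Remark \ref{rem:relations lstr-a(10)1}, while $\{x_1^{m_1}x_{\fudos}^{m_2}\}_{m_1,m_2 \ge 0}$ is a basis of $\cB(V_1) \simeq \cB(\cV(1,2))$ by Proposition \ref{pr:1block}. Hence the product set $B$ is a basis of $K \# \cB(V_1) \simeq \cB(V)$, hence linearly independent. Combined with the spanning property, $\cBt \simeq \cB(\lstr(A(1\vert 0)_1; r))$ with $B$ as a PBW basis. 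Finally, since $r \in \G'_N$ forces $\dim \cB(W)<\infty$, we get $\dim K < \infty$ and $\GK K = 0$; then Lemma \ref{lemma:GKdim-smashproduct} together with local finiteness of the $\cB(V_1)$-action on the finite-dimensional $K$ gives $\GK \cB(V) = \GK K + \GK \cB(V_1) = 0 + 2 = 2$.

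The main obstacle will be the right-ideal verification in the second paragraph, specifically for $I\cdot x_{\fudos}$: moving $x_{\fudos}$ past each of $\ztt_{12}$ and $\ztt_{123}$ produces correction terms involving $x_1$ (because of the block structure of $V_1$) as well as higher powers of $x_{\fudos 2}$. One must carefully track these using the $g_1$-homogeneity ($g_1\cdot x_{\fudos 2} = q_{12}x_{\fudos 2}$ and the analogous formulas for $\ztt_{12}, \ztt_{123}$) to confirm that every such correction lies in $I$, i.e. can be re-expressed as a $\tilde K$-combination of the $x_1^{m_1}x_{\fudos}^{m_2}$.
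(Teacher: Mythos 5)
Your proposal is correct in outline and takes a genuinely different route from the paper. The paper proves that the span $I$ of $B$ is a \emph{left} ideal, whereas you work with a right ideal; the two are dual and both are viable, but they distribute the burden differently. In the paper's left-ideal computation, $x_1 I\subseteq I$ is immediate and $x_{\fudos}I\subseteq I$ follows from the single Jordan relation \eqref{eq:rels B(V(1,2))}, so the nontrivial step is moving $x_2$ and $x_3$ leftward past $x_1^{m_1}x_{\fudos}^{m_2}$. For $x_3$ this is just the $q$-commutation \eqref{eq:lstr-rels-ghost-int-trivial}; for $x_2$ one uses \eqref{eq:lstr-rels&11disc-1} to cross $x_1^{m_1}$ and then the Serre relation \eqref{eq:lstr1omega1-qserre-g=1} to show that crossing $x_{\fudos}^{m_2}$ produces exactly one correction of the form $x_{\fudos}^{m_2-1}x_{\fudos 2}\phi(B_W)$, after which everything lands back in $\phi(B_W)$ because $\phi$ is an algebra map (Lemma \ref{lemma:other relations lstr-a(10)1}). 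In your right-ideal version, $I\cdot x_2$ and $I\cdot x_3$ are trivially in $I$ because $x_2,x_3\in\tilde K$ — you observe this correctly, although note a typo: with the basis $B$ as stated, $I=\{x_1^{m_1}x_{\fudos}^{m_2}\}\cdot\tilde K$, not $\tilde K\cdot\{x_1^{m_1}x_{\fudos}^{m_2}\}$, and your subsequent argument only parses with the former. The hard part then shifts to $I\cdot x_1$ (which is in fact easy, since $\ad_c x_1$ annihilates $K^1$ so $x_1$ $q$-commutes with the PBW generators) and especially $I\cdot x_{\fudos}$, which requires commuting $x_{\fudos}$ rightward past each of $x_{\fudos 2},\ztt_{12},\ztt_{123},x_3,x_{23},x_2$. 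You flag this as the "main obstacle" and propose the right mechanism ($g_1$-homogeneity plus the fact that $\ad_c x_{\fudos}$ maps $K$ into itself), but you leave the verification as a sketch. This is the one genuine gap: the paper's choice of a left ideal confines the nontrivial rewriting to a short finite computation, whereas your route needs an explicit commutation identity for $x_{\fudos}$ against every PBW generator of $K$. The remainder of your argument — checking the relations hold, the surjection $\cBt\twoheadrightarrow\cB$, linear independence via $\cB(V)\simeq K\#\cB(V_1)$, and the $\GK$ count — is in substance the same as the paper's (which simply says "by computing the Hilbert series" where you invoke Lemma \ref{lemma:GKdim-smashproduct}).
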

\pf
We first prove that $B$ is a basis of $\cB := \cB(\lstr(A(1\vert 0)_1; r))$: since $\cB\simeq K\#\cB(V_1)$, see \S\ref{subsubsec:algK-block-points},
the claim follows from
Remark \ref{rem:relations lstr-a(10)1} and Proposition \ref{pr:-1block}. Then  $\GK \cB = 2$ by computing the Hilbert series.

Relations \eqref{eq:rels B(V(1,2))}, \eqref{eq:lstr-rels&11disc-1}, \eqref{eq:lstr-rels&11disc-qserre}, \eqref{eq:lstr-rels&1-1disc},
\eqref{eq:lstr-rels-ghost-int-trivial}, \eqref{eq:lstr-rels-a10-diagpart}, \eqref{eq:lstr-rels-a10-1} hold in $\cB$ as we have discussed at the
beginning of the subsection. Hence the quotient $\cBt$ of $T(V)$ by these relations projects onto $\cB$.

We claim that the subspace $I$ spanned by $B$ is a left ideal of $\cBt$. Indeed, $x_1I\subseteq I$ by definition, and
$x_{\fudos}I\subseteq I$ by \eqref{eq:rels B(V(1,2))}. By Lemma \ref{lemma:other relations lstr-a(10)1},
\begin{align*}
x_3\phi(B_W)&=\phi(y_2 B_W)\subset \phi(B_W), & x_{\fudos 2}\phi(B_W)&=\phi(y_1 B_W)\subset \phi(B_W) \\
x_2\phi(B_W) &=\phi(y_3 B_W)\subset \phi(B_W).
\end{align*}
As $I = \underset{m_1,m_2}\sum \ku \, x_1^{m_1} x_{\fudos}^{m_2} \phi(B_W)$, we have that
\begin{align*}
x_3 I & = \sum_{m_1,m_2} \ku \, x_3 x_1^{m_1} x_{\fudos}^{m_2} \phi(B_W) = \sum_{m_1,m_2} \ku \,  x_1^{m_1} x_{\fudos}^{m_2} x_3 \phi(B_W)\subset I,\\
x_2 I & = \sum_{m_1,m_2} \ku \, x_2 x_1^{m_1} x_{\fudos}^{m_2} \phi(B_W) \\
& = \sum_{m_1,m_2} \ku \,  x_1^{m_1} x_{\fudos}^{m_2} x_2 \phi(B_W) + \ku \,  x_1^{m_1} x_{\fudos}^{m_2-1} x_{\fudos2} \phi(B_W)\subset I,
\end{align*}
by \eqref{eq:lstr-rels-ghost-int-trivial}, \eqref{eq:lstr-rels&1-1disc}. Since $1\in I$, $\cBt$ is spanned by $B$. Thus
$\cBt \simeq \cB$ since $B$ is a basis of $\cB$.
\epf

\subsubsection{The Nichols algebra $\cB(\lstr(A(1\vert 0)_1; r))$, $r\notin\G_\infty$} \label{subsubsection:lstr-a(10)1-bis}
The subalgebra generated by $x_2$, $x_3$ is a Nichols algebra of type $A(1\vert 0)_1$. Thus,
\begin{align}\label{eq:lstr-rels-a10-diagpart-bis}
x_{332} &=0.
\end{align}

\medskip

Let $W$ be a braided vector space of diagonal type with Dynkin diagram
$\xymatrix{ \overset{-1}{\circ} \ar  @{-}[r]^{r^{-1}}  &\overset{r}{\circ} \ar  @{-}[r]^{r^{-1}}  & \overset{-1}{\circ}}$.
By \cite[Theorem 3.1]{Ang-crelle}, $\cB(W)$ is presented by generators $y_1$, $y_2$, $y_3$ and relations
\begin{align}\label{eq:lstr-rels-a10-K-bis}
& (\ad_c y_2)^2 y_1, & & (\ad_c y_2)^2 y_3, & & (\ad_c y_1) y_3, & &y_1^2, & &y_3^2.
\end{align}
Here is a basis of $\cB(W)$:
\begin{align*}
B_W=\{ y_{1}^{n_{1}}y_{12}^{n_{12}}y_{123}^{n_{123}}y_{2}^{n_{2}}y_{23}^{n_{23}}y_{3}^{n_{3}} :
0 \le n_{1}, n_{2}, n_{5}, n_{6} < 2, \,  0\le n_{3}, n_{4} \}.
\end{align*}

\begin{remark}\label{rem:relations lstr-a(10)1-bis}
\begin{sloppypar}
By Lemma \ref{lemma:braiding-K-weak-block-points}, $K^1$ is isomorphic to $W$ as braided vector spaces.
Hence there exists an isomorphism of braided Hopf algebras
${\psi: \cB(W)\to K}$ such that $\psi(y_1)=x_{\fudos 2}$, $\psi(y_2)=x_{3}$, $\psi(y_3)=x_{2}$. Let
$\ztt_{12}$, $\ztt_{123}$ be as in \eqref{eq:lstr-def-ztt-1}. Thus, the following set is a basis of $K$:
\end{sloppypar}
\begin{align*}
B_K=\{ x_{\fudos 2}^{n_{1}}\ztt_{12}^{n_{2}}\ztt_{123}^{n_{3}}x_{3}^{n_{4}}x_{23}^{n_{5}}x_{2}^{n_{6}} :
0 \le n_{1}, n_{2}, n_{5}, n_{6} < 2, \,  0\le n_{3}, n_{4} \}.
\end{align*}
\end{remark}

\begin{prop} \label{pr:lstr-a(10)1-bis}
The algebra $\cB(\lstr(A(1\vert 0)_1; r))$ is presented by generators $x_i$, $i\in \Iw_3$, and relations
\eqref{eq:rels B(V(1,2))}, \eqref{eq:lstr-rels&11disc-1}, \eqref{eq:lstr1omega1-qserre-g=1}, \eqref{eq:lstr-rels&1-1disc-g=1},
\eqref{eq:lstr-rels-ghost-int-trivial}, \eqref{eq:lstr-rels-a10-diagpart-bis}. The set
\begin{align*}
B =\{ x_1^{m_1} x_{\fudos}^{m_2} x_{\fudos 2}^{n_{1}} & \ztt_{12}^{n_{2}}\ztt_{123}^{n_{3}}x_{3}^{n_{4}}x_{23}^{n_{5}}x_{2}^{n_{6}} :
0 \le n_{1}, n_{2}, n_{5}, n_{6} < 2, \,  0\le m_1,m_2, n_{3}, n_{4} \}
\end{align*}
is a basis of $\cB(\lstr(A(1\vert 0)_1; r))$ and $\GK \cB(\lstr(A(1\vert 0)_1; r)) = 4$.
\end{prop}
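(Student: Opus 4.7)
The proof follows the same template as Proposition \ref{pr:lstr-a(10)1}, with the simplification that no $N$-th power relations are needed. First I would establish that the set $B$ is a basis of $\cB := \cB(\lstr(A(1\vert 0)_1; r))$ by invoking the braided bosonization decomposition $\cB \simeq K \# \cB(V_1)$ from \S \ref{subsubsec:algK-block-points}. Remark \ref{rem:relations lstr-a(10)1-bis} supplies a basis $B_K$ of $K \simeq \cB(W)$ (where $W$ has generalized Dynkin diagram $\xymatrix{\overset{-1}{\circ} \ar  @{-}[r]^{r^{-1}}  & \overset{r}{\circ} \ar  @{-}[r]^{r^{-1}}  & \overset{-1}{\circ}}$), and Proposition \ref{pr:1block} gives the PBW basis $\{x_1^{m_1} x_{\fudos}^{m_2}\}$ of $\cB(V_1) = \cB(\cV(1,2))$. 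From the Hilbert series of $\cB(W)$ (of super type $A(1\vert 0)_1$ with $r \notin \G_\infty$, hence $\GK \cB(W) = 2$) and $\GK \cB(V_1) = 2$, Lemma \ref{lemma:GKdim-smashproduct} and Remark \ref{remark:GK-block-points-K} yield $\GK \cB = 4$.

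Next I would verify that every listed relation holds in $\cB$. Relations \eqref{eq:rels B(V(1,2))}, \eqref{eq:lstr-rels&11disc-1}, \eqref{eq:lstr1omega1-qserre-g=1}, \eqref{eq:lstr-rels&1-1disc-g=1} come from the embedded copy $\cB(\lstr(-1,1)) \hookrightarrow \cB$ (block plus the point $x_2$, with interaction of mild/discrete-ghost type as in Table \ref{tab:finiteGK-block-point}). Relation \eqref{eq:lstr-rels-ghost-int-trivial} holds since $q_{1\jmath} q_{\jmath 1} = 1$ and $\ghost_{\jmath} = 0$ for $\jmath = 3$. Relation \eqref{eq:lstr-rels-a10-diagpart-bis}, namely $x_{332} = 0$, is the quantum Serre relation inside $\cB(V_{\diag})$, valid because $V_{\diag}$ is a Nichols algebra of super type $A(1\vert 0)_1$.

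Then, letting $\cBt$ be the quotient of $T(V)$ by these relations, the plan is to show that the subspace $I$ spanned by $B$ is a left ideal of $\cBt$, whence $1 \in I$ forces $\cBt = I$ and the surjection $\cBt \twoheadrightarrow \cB$ becomes an isomorphism since $B$ is already linearly independent in $\cB$. The inclusions $x_1 I \subseteq I$ and $x_{\fudos} I \subseteq I$ follow respectively by definition and by \eqref{eq:rels B(V(1,2))}. For $x_2$ and $x_3$, I would establish the analog of Lemma \ref{lemma:other relations lstr-a(10)1}: in $\cBt$ there exists an algebra map $\phi: \cB(W) \to \cBt$ with $\phi(y_1) = x_{\fudos 2}$, $\phi(y_2) = x_3$, $\phi(y_3) = x_2$. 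This requires checking that $\Phi: T(W) \to \cBt$ (defined on generators as $\phi$) annihilates the five defining relations \eqref{eq:lstr-rels-a10-K-bis} of $\cB(W)$: the Serre-type $(\ad_c y_2)^2 y_3 = (\ad_c x_3)^2 x_2$ and $(\ad_c y_2)^2 y_1$ reduce to \eqref{eq:lstr-rels-a10-diagpart-bis} via \eqref{eq:lstr-rels-ghost-int-trivial}; $(\ad_c y_1) y_3 = \ad_c x_{\fudos 2}(x_2) = 0$ is covered by Lemma \ref{lemma:zt zk}\vii; while $y_1^2 = x_{\fudos 2}^2$ and $y_3^2 = x_2^2$ vanish by \eqref{eq:lstr-rels&1-1disc-g=1}. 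Once $\phi$ exists, writing $I = \sum_{m_1, m_2} \ku\, x_1^{m_1} x_{\fudos}^{m_2} \phi(B_W)$ and pushing $x_2, x_3$ past the powers of $x_1, x_{\fudos}$ using \eqref{eq:lstr-rels-ghost-int-trivial} and \eqref{eq:rels B(V(1,2))}, together with the stability of $\phi(B_W)$ under left multiplication by $y_2, y_3$ (inherited from $\cB(W)$), gives $x_3 I, x_2 I \subseteq I$.

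The main obstacle is the construction of $\phi$, but this is purely a check on finitely many explicit identities already proved in the earlier sections; there is no genuinely new computation beyond the analog in Lemma \ref{lemma:other relations lstr-a(10)1}, only the absence of the $y_2^N$ and $y_{123}^N$ relations (which we need not verify). Thus the proof is routine once the template of Proposition \ref{pr:lstr-a(10)1} is in place.
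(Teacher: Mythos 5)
Your proof is correct and spells out precisely what the paper intends by ``Analogous to Proposition \ref{pr:lstr-a(10)1}'': you bosonize via $\cB\simeq K\#\cB(V_1)$ using Remark \ref{rem:relations lstr-a(10)1-bis} and Proposition \ref{pr:1block}, construct the algebra map $\phi:\cB(W)\to\cBt$ by checking the five relations \eqref{eq:lstr-rels-a10-K-bis}, and conclude that the span of $B$ is a left ideal. One cosmetic slip worth fixing: the braided subspace $V_1\oplus\ku x_2$ has \emph{weak} interaction and discrete ghost $\ghost=1$ (not mild interaction) --- this does not affect your argument, which correctly identifies it as type $\lstr(-1,1)$ and uses its relations \eqref{eq:lstr1omega1-qserre-g=1}, \eqref{eq:lstr-rels&1-1disc-g=1}.
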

\pf
Analogous to Proposition \ref{pr:lstr-a(10)1}.
\epf

\subsubsection{The Nichols algebra $\cB(\lstr(A(1\vert 0)_2; \omega))$} \label{subsubsection:lstr-a(10)2}

The subalgebra generated by $x_2$, $x_3$ is a Nichols algebra of type $A(1\vert 0)_2$. Thus,
\begin{align}\label{eq:lstr-rels-a10-2-diagpart}
x_2^2 &=0, & x_3^2 &= 0, & x_{23}^3 &= 0.
\end{align}

\medskip

Let $W$ be a braided vector space of diagonal type with Dynkin diagram
$\xymatrix{ \overset{-1}{\circ} \ar  @{-}[r]^{\omega}  &\overset{-1}{\circ} \ar  @{-}[r]^{\omega}  & \overset{-1}{\circ}}$.
By \cite[Theorem 3.1]{Ang-crelle}, $\cB(W)$ is presented by generators $y_1$, $y_2$, $y_3$ and relations
\begin{align}\label{eq:lstr-rels-a10-2-K-1}
& (\ad_c y_1) y_3, & &y_1^2, & &y_2^2, & &y_3^2,  & &[[y_{12},y_{123}]_c,y_2]_c, \\ \label{eq:lstr-rels-a10-2-K-2}
&[y_{123},y_2]_c^3, & &y_{12}^3, & y_{123}^6, & &&y_{23}^3, & & [[y_{123},y_{23}]_c,y_2]_c.
\end{align}
Here is a basis of $\cB(W)$:
\begin{multline*}
\big\{ y_1^{n_1} y_{12}^{n_2} [y_{12},[y_{12},y_{123}]_c]_c^{n_3} [y_{12},y_{123}]_c^{n_4} y_{123}^{n_5} [y_{123},y_2]_c^{n_6}
[y_{123},y_{23}]_c^{n_7} y_2^{n_8} y_{23}^{n_9} y_3^{n_{10}}: \\
0\le n_1,n_3,n_4,n_7,n_8,n_{10}<2, \, 0\le n_2,n_6,n_9<3, \, 0\le n_5<6 \big\}.
\end{multline*}

\begin{remark}\label{rem:relations lstr-a(10)2}
\begin{sloppypar}
By Lemma \ref{lemma:braiding-K-weak-block-points}, $K^1$ is isomorphic to $W$ as braided vector spaces.
Hence there exists an isomorphism of braided Hopf algebras
${\psi: \cB(W)\to K}$ such that $\psi(y_1)=x_{\fudos 2}$, $\psi(y_2)=x_{3}$, $\psi(y_3)=x_{2}$. Let
$\ztt_{12}$, $\ztt_{123}$ be as in \eqref{eq:lstr-def-ztt-1}. Thus, the following identities hold in
$\cB(\lstr(A(1\vert 0)_2; \omega))$:
\begin{align}\label{eq:lstr-rels-a10-2}
\ztt_{12}^3&=0, & \ztt_{123}^6&=0, & [\ztt_{123},x_3]_c^3&=0,
\end{align}
and the following set is a basis of $K$: $B_K=$
\begin{align*}
\big\{ x_{\fudos 2}^{n_1} \ztt_{12}^{n_2} [\ztt_{12},[\ztt_{12},\ztt_{123}]_c]_c^{n_3} [\ztt_{12},\ztt_{123}]_c^{n_4} \ztt_{123}^{n_5} [\ztt_{123},x_3]_c^{n_6}
[\ztt_{123},x_{32}]_c^{n_7} x_3^{n_8} x_{32}^{n_9} x_2^{n_{10}}:  \\
0\le n_1,n_3,n_4,n_7,n_8,n_{10}<2, \, 0\le n_2,n_6,n_9<3, \, 0\le n_5<6 \big\}.
\end{align*}
\end{sloppypar}
\end{remark}

\begin{lemma}\label{lemma:other relations lstr-a(10)2}
Let $\cB$ be a quotient algebra of $T(V)$. Assume that \eqref{eq:lstr1omega1-qserre-g=1}, \eqref{eq:lstr-rels&1-1disc-g=1},
\eqref{eq:lstr-rels-ghost-int-trivial}, \eqref{eq:lstr-rels-a10-2-diagpart}, \eqref{eq:lstr-rels-a10-2} hold in $\cB$.
Then there exists an algebra map $\phi:\cB(W)\to \cB$ such that $\phi(y_1)=x_{\fudos,2}$, $\phi(y_2)=x_{3}$, $\phi(y_3)=x_{2}$.
\end{lemma}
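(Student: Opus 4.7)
My plan follows Lemma \ref{lemma:other relations lstr-a(10)1}. Define the algebra homomorphism $\Phi: T(W) \to \cB$ by $\Phi(y_1) = x_{\fudos 2}$, $\Phi(y_2) = x_3$, $\Phi(y_3) = x_2$; choose the braiding matrix of $W$ so that under this identification it coincides with the braiding of $K^1$ given by Lemma \ref{lemma:braiding-K-weak-block-points}. The lemma then reduces to showing that $\Phi$ annihilates each of the ten defining relations of $\cB(W)$ in \eqref{eq:lstr-rels-a10-2-K-1}--\eqref{eq:lstr-rels-a10-2-K-2}.

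Using the $q$-commutations \eqref{eq:lstr-rels-ghost-int-trivial} between $x_{\fudos}$ and $x_2,x_3$, a direct expansion yields $\Phi(y_{12}) = \ztt_{12}$ and $\Phi(y_{123}) = \ztt_{123}$. With this identification, six of the relations are immediate: $\Phi(y_i^2) = 0$ for $i = 1,2,3$ by \eqref{eq:lstr-rels&1-1disc-g=1} and \eqref{eq:lstr-rels-a10-2-diagpart}, while $\Phi(y_{12}^3)$, $\Phi(y_{123}^6)$ and $\Phi([y_{123}, y_2]_c^3)$ vanish by \eqref{eq:lstr-rels-a10-2}. For $\Phi((\ad_c y_1)(y_3)) = (\ad_c x_{\fudos 2})(x_2)$, I would expand $x_{\fudos 2} = x_{\fudos}x_2 - q_{12}x_2 x_{\fudos}$ and use $x_2^2 = 0$ to obtain $x_{\fudos 2}x_2 = -q_{12}\,x_2 x_{\fudos 2}$, which, combined with $c(x_{\fudos 2}\otimes x_2) = -q_{12}\,x_2\otimes x_{\fudos 2}$, gives $(\ad_c x_{\fudos 2})(x_2) = 0$. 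For $\Phi(y_{23}^3)$, a direct expansion in the six-letter monomials in $x_2, x_3$ using only $x_2^2 = x_3^2 = 0$ and $q_{23}q_{32} = \omega$ shows that $\Phi(y_{23}^3)$ equals, up to a non-zero scalar, $x_{23}^3$, which vanishes by \eqref{eq:lstr-rels-a10-2-diagpart}.

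The hard part will be the two remaining Serre-type relations $[[y_{12}, y_{123}]_c, y_2]_c$ and $[[y_{123}, y_{23}]_c, y_2]_c$. My plan is to reduce each image in $\cB$ to a normal form in $x_{\fudos 2}, x_2, x_3$ by systematically applying \eqref{eq:lstr-rels-ghost-int-trivial}, the identity $(\ad_c x_{\fudos 2})(x_2) = 0$ just established, and the vanishings $x_2^2 = x_3^2 = x_{\fudos 2}^2 = 0$ together with the cubic relations $\ztt_{12}^3 = x_{23}^3 = [\ztt_{123}, x_3]_c^3 = 0$. The main obstacle is bookkeeping: each iterated braided commutator expands into many terms with cumbersome braiding scalars. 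I expect no conceptual difficulty, since by Remark \ref{rem:relations lstr-a(10)2} the corresponding identities already hold in $\cB(\lstr(A(1\vert 0)_2;\omega))$, so they must be formal consequences of the listed relations in any $\cB$ satisfying them.
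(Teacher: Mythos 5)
Your overall strategy coincides with the paper's: define the algebra map $\Phi$, arrange the braiding of $W$ to match that of $K^1$, and check that $\Phi$ annihilates the defining relations of $\cB(W)$. Your handling of the squared relations, of $(\ad_c y_1)y_3$, and of the cubic relations in \eqref{eq:lstr-rels-a10-2-K-2} is sound, and your treatment of $y_{23}^3$ is actually a bit more explicit than the paper's: writing $a=x_2$, $b=x_3$ and using only $a^2=b^2=0$, one gets $x_{23}^3=ababab-q_{23}^3\,bababa$ and $x_{32}^3=bababa-q_{32}^3\,ababab$; since $q_{23}^3q_{32}^3=\omega^3=1$ this gives $x_{32}^3=-q_{32}^3\,x_{23}^3$, which vanishes by \eqref{eq:lstr-rels-a10-2-diagpart}. (Minor slip: \eqref{eq:lstr-rels-ghost-int-trivial} gives the $q$-commutation of $x_{\fudos}$ with $x_3$, not with $x_2$, which has non-trivial ghost; fortunately only the former is needed to identify $\Phi(y_{12})=\ztt_{12}$ and $\Phi(y_{123})=\ztt_{123}$.)

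The genuine gap lies in the two Serre-type relations --- the last entry of each of \eqref{eq:lstr-rels-a10-2-K-1} and \eqref{eq:lstr-rels-a10-2-K-2} --- which you never actually verify. Your plan of reducing to a normal form is reasonable in principle, but you do not carry it out, and your closing justification is circular: the fact that the corresponding identities hold in $\cB(\lstr(A(1\vert 0)_2;\omega))$ (Remark \ref{rem:relations lstr-a(10)2}) says nothing about whether they hold in an arbitrary $\cB$ satisfying only the listed relations, because the Nichols algebra is a \emph{further} quotient of any such $\cB$. Proving that these two relations are formal consequences of the listed ones is precisely the non-trivial content of the lemma, so ``they must be formal consequences'' cannot be taken as given. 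The paper deals with them via targeted commutator rearrangements, not a brute-force reduction: noting $[x_{\fudos 23}, x_3]_c = 0$ (a consequence of $x_3^2=0$), the $q$-Jacobi identity lets one rewrite $\Phi\bigl([[y_{12},y_{123}]_c,y_2]_c\bigr)$ as $[x_{\fudos 23},[x_{\fudos 23}, x_{23}]_c]_c$, which the paper identifies with $(\ad_c x_{\fudos})^2 x_{23}^3=0$ via \eqref{eq:lstr-rels-ghost-int-trivial}; a similar rearrangement handles the second relation. You need to supply these manipulations (or an explicit, terminating normal-form reduction) for the proof to be complete.
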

\pf
Let $\Phi:T(W)\to \cB$ be the algebra map defined as $\phi$ on the $y_i$'s. We claim that $\Phi$ annihilates
all the relations in \eqref{eq:lstr-rels-a10-2-K-1} and \eqref{eq:lstr-rels-a10-2-K-2}, and the Lemma follows.
For \eqref{eq:lstr-rels-a10-2-K-1}, $\Phi$ annihilates the first relation by Lemma \ref{lemma:zt zk} (ii), the second by
\eqref{eq:lstr-rels&1-1disc-g=1}, the third and the fourth relations by \eqref{eq:lstr-rels-a10-2-diagpart}; for the last,
notice that $[x_{\fudos 23}, x_3]_c =0$ since $x_3^2=0$, so
\begin{align*}
\Phi \left( [[y_{12},y_{123}]_c,y_2]_c \right) &= [[x_{\fudos 23},[x_{\fudos 23}, x_2]_c]_c,x_3]_c = [x_{\fudos 23},[x_{\fudos 23}, x_{23}]_c]_c \\
& = (\ad_c x_{\fudos})^2 x_{23}^3=0,
\end{align*}
c.f. \eqref{eq:lstr-rels-a10-2-diagpart}. For \eqref{eq:lstr-rels-a10-2-K-2}, the first three relations are annihilated by \eqref{eq:lstr-rels-a10-2}, and the fourth by \eqref{eq:lstr-rels-a10-2-diagpart}; for the last relation,
\begin{align*}
\Phi \left( [[y_{32},y_{321}]_c,y_2]_c \right) &= [[[x_{\fudos 2},x_{23}]_c, x_{23}]_c,x_3]_c
= - q_{23} [[[x_{\fudos 23},x_{2}]_c, x_{23}]_c,x_3]_c \\
& = q_{23}^2 [[[x_{\fudos 23},x_{2}]_c, x_3]_c,x_{23}]_c = q_{23}^2 [[x_{\fudos 23},x_{23}]_c, x_{23}]_c=0,
\end{align*}
where we use \eqref{eq:lstr-rels-ghost-int-trivial} and \eqref{eq:lstr-rels-a10-2-diagpart}.
\epf

\begin{prop} \label{pr:lstr-a(10)2}
The algebra $\cB(\lstr(A(1\vert 0)_2; \omega))$ is presented by generators $x_i$, $i\in \Iw_3$, and relations
\eqref{eq:rels B(V(1,2))}, \eqref{eq:lstr-rels&11disc-1},
\eqref{eq:lstr1omega1-qserre-g=1}, \eqref{eq:lstr-rels&1-1disc-g=1},
\eqref{eq:lstr-rels-ghost-int-trivial}, \eqref{eq:lstr-rels-a10-2-diagpart}, \eqref{eq:lstr-rels-a10-2}. The set
\begin{multline*}
B =\big\{ x_1^{m_1} x_{\fudos}^{m_2} x_{\fudos 2}^{n_1} \ztt_{12}^{n_2} [\ztt_{12},[\ztt_{12},\ztt_{123}]_c]_c^{n_3}
[\ztt_{12},\ztt_{123}]_c^{n_4} \ztt_{123}^{n_5}  \\
[\ztt_{123},x_3]_c^{n_6} [\ztt_{123},x_{32}]_c^{n_7} x_3^{n_8} x_{32}^{n_9} x_2^{n_{10}}:  0 \le m_1, m_2, \\
0\le n_2,n_6,n_9<3, \, 0\le n_5<6, \,0\le n_1,n_3,n_4,n_7,n_8,n_{10}<2 \big\}
\end{multline*}
is a basis of $\cB(\lstr(A(1\vert 0)_2; \omega))$ and $\GK \cB(\lstr(A(1\vert 0)_2; \omega)) = 2$.
\end{prop}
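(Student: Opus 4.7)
\pf
The plan is to follow essentially the same strategy as in Proposition \ref{pr:lstr-a(10)1}, adapted to the new PBW structure of $K$. First I would establish that $B$ is a basis of $\cB := \cB(\lstr(A(1\vert 0)_2; \omega))$ by using the splitting $\cB \simeq K \# \cB(V_1)$ from \S\ref{subsubsec:algK-block-points}. The basis of $K$ exhibited in Remark \ref{rem:relations lstr-a(10)2}, together with the Jordan-plane basis $\{x_1^{m_1} x_{\fudos}^{m_2}: m_1, m_2 \in \N_0\}$ from Proposition \ref{pr:1block}, produces exactly the set $B$. The calculation $\GK \cB = 2$ then falls out of the resulting Hilbert series, since $K$ is finite-dimensional and contributes $0$, while $\cB(V_1)$ contributes $2$.

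Second, I would verify that all listed relations already vanish in $\cB$. The Jordan plane relation \eqref{eq:rels B(V(1,2))}, together with \eqref{eq:lstr-rels&11disc-1}, \eqref{eq:lstr1omega1-qserre-g=1} and \eqref{eq:lstr-rels&1-1disc-g=1}, hold because the braided subspace $V_1 \oplus \ku x_2$ is of type $\lstr(-1,1)$. The identities \eqref{eq:lstr-rels-ghost-int-trivial} encode the weak interaction and zero ghost between the block and $x_3$. Relations \eqref{eq:lstr-rels-a10-2-diagpart} hold because the subalgebra generated by $x_2, x_3$ is the Nichols algebra of type $A(1\vert 0)_2$, and \eqref{eq:lstr-rels-a10-2} hold via the isomorphism $\cB(W) \simeq K$ from Remark \ref{rem:relations lstr-a(10)2}. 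Consequently, the quotient $\cBt$ of $T(V)$ by this list of relations surjects onto $\cB$.

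Third, I would show that the subspace $I$ spanned by $B$ in $\cBt$ is a left ideal. Clearly $x_1 I \subseteq I$ by definition, and $x_{\fudos} I \subseteq I$ thanks to \eqref{eq:rels B(V(1,2))}. Writing $I = \sum_{m_1,m_2} \ku\, x_1^{m_1} x_{\fudos}^{m_2}\, \phi(B_W)$, where $\phi:\cB(W)\to\cBt$ is the algebra map supplied by Lemma \ref{lemma:other relations lstr-a(10)2}, the inclusions $x_3 \phi(B_W) \subseteq \phi(B_W)$, $x_{\fudos 2} \phi(B_W) \subseteq \phi(B_W)$, and $x_2 \phi(B_W) \subseteq \phi(B_W)$ are immediate from the fact that $\phi$ carries the $\cB(W)$-module structure over. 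Using \eqref{eq:lstr-rels-ghost-int-trivial} to push $x_3$ past $x_1^{m_1} x_{\fudos}^{m_2}$, and \eqref{eq:rels B(V(1,2))} to push $x_2$ past them (introducing an extra $x_{\fudos 2}$ term absorbed into $\phi(B_W)$), yields $x_3 I \subseteq I$ and $x_2 I \subseteq I$. Since $1 \in I$, we conclude $\cBt = I$ is spanned by $B$; combined with linear independence of $B$ in $\cB$, this gives $\cBt \simeq \cB$.

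The one delicate ingredient is Lemma \ref{lemma:other relations lstr-a(10)2} itself: one must verify that the tentative algebra map $\Phi: T(W) \to \cBt$ annihilates each defining relation of $\cB(W)$ in \eqref{eq:lstr-rels-a10-2-K-1}--\eqref{eq:lstr-rels-a10-2-K-2}. Most are immediate translations, but the two relations $[[y_{12},y_{123}]_c,y_2]_c$ and $[[y_{123},y_{23}]_c,y_2]_c$ require a short $q$-Jacobi manipulation based on the observation that $[x_{\fudos 23}, x_3]_c = 0$ (a consequence of $x_3^2 = 0$) together with \eqref{eq:lstr-rels-ghost-int-trivial} and \eqref{eq:lstr-rels-a10-2-diagpart}, after which both reduce to multiples of $(\ad_c x_{\fudos})^2 x_{23}^3 = 0$. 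Once that bracket bookkeeping is carried out, the argument is a routine transcription of the proof of Proposition \ref{pr:lstr-a(10)1}.
\epf
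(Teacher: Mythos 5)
Your proposal follows essentially the same path as the paper's proof: use the splitting $\cB\simeq K\#\cB(V_1)$ plus Remark \ref{rem:relations lstr-a(10)2} and Proposition \ref{pr:1block} to produce the basis $B$ and read off the $\GK$; verify the listed relations hold so that $\cBt$ surjects onto $\cB$; and then show the span $I$ of $B$ is a left ideal by invoking the algebra map $\phi:\cB(W)\to\cBt$ of Lemma \ref{lemma:other relations lstr-a(10)2} and commuting $x_3$ and $x_2$ past $x_1^{m_1}x_{\fudos}^{m_2}$. One small bookkeeping slip: to move $x_2$ past the block generators you need \eqref{eq:lstr-rels&11disc-1} (in the re-indexed form $x_1x_2=q_{12}x_2x_1$) together with the definition of $x_{\fudos 2}$, not \eqref{eq:rels B(V(1,2))}, which only involves $x_1$ and $x_{\fudos}$; this does not affect the structure of the argument.
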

\pf
We first prove that $B$ is a basis of $\cB := \cB(\lstr(A(1\vert 0)_2; \omega))$: since $\cB\simeq K\#\cB(V_1)$, see \S\ref{subsubsec:algK-block-points},
the claim follows from
Remark \ref{rem:relations lstr-a(10)2} and Proposition \ref{pr:-1block}. Then  $\GK \cB = 2$ by computing the Hilbert series.

Relations \eqref{eq:rels B(V(1,2))}, \eqref{eq:lstr-rels&11disc-1}, \eqref{eq:lstr1omega1-qserre-g=1}, \eqref{eq:lstr-rels&1-1disc-g=1},
\eqref{eq:lstr-rels-ghost-int-trivial}, \eqref{eq:lstr-rels-a10-2-diagpart}, \eqref{eq:lstr-rels-a10-2} hold in $\cB$ as we have discussed at the
beginning of the subsection. Hence the quotient $\cBt$ of $T(V)$ by these relations projects onto $\cB$.

We claim that the subspace $I$ spanned by $B$ is a left ideal of $\cBt$. Indeed, $x_1I\subseteq I$ by definition, and
$x_{\fudos}I\subseteq I$ by \eqref{eq:rels B(V(1,2))}. By Lemma \ref{lemma:other relations lstr-a(10)2},
\begin{align*}
x_3\phi(B_W)&=\phi(y_2 B_W)\subset \phi(B_W), & x_{\fudos 2}\phi(B_W)&=\phi(y_1 B_W)\subset \phi(B_W) \\
x_2\phi(B_W) &=\phi(y_3 B_W)\subset \phi(B_W).
\end{align*}
As $I = \underset{m_1,m_2}\sum \ku \, x_1^{m_1} x_{\fudos}^{m_2} \phi(B_W)$ and \eqref{eq:lstr-rels-ghost-int-trivial}, \eqref{eq:lstr-rels&1-1disc}
hold in $\cBt$, the same computation of Proposition \ref{pr:lstr-a(10)1} applies to prove that $x_3 I \subset I$, $x_2 I \subset I$.
Since $1\in I$, $\cBt$ is spanned by $B$. Thus $\cBt \simeq \cB$ since $B$ is a basis of $\cB$.
\epf

\subsubsection{The Nichols algebra $\cB(\lstr(A(1\vert 0)_3; \omega))$} \label{subsubsection:lstr-a(10)3}

The subalgebra generated by $x_2$, $x_3$ is a Nichols algebra of diagonal type $A(1\vert 0)_3$. Thus,
\begin{align}\label{eq:lstr-rels-a10-3-diagpart}
x_2^3 &=0, & x_3^2 &= 0, & x_{223} &= 0.
\end{align}

\medskip

Let $W$ be a braided vector space of diagonal type with Dynkin diagram
$$\xymatrix{ \overset{\omega}{\circ} \ar@{-}[r]^{\omega^2} \ar@{-}[rd]_{\omega^2} &\overset{-1}{\circ} \ar@{-}[d]^{\omega^2}  \\
& \overset{\omega}{\circ}.}$$ 
By \cite[Theorem 3.1]{Ang-crelle}, $\cB(W)$ is presented by generators $y_1$, $y_2$, $y_3$ and relations
\begin{align}\label{eq:lstr-rels-a10-3-K-1}
& y_{112}, \qquad y_{113}, & &y_2^2, \qquad y_3^3,
\\ \label{eq:lstr-rels-a10-3-K-2}
&y_{331}, \qquad y_{332}, & &y_1^3, \qquad y_{13}^3,
\\ \label{eq:lstr-rels-a10-3-K-3}
&y_{123}-q_{23}\omega [y_{13},y_2]_c  -q_{12}(1-\omega^2) y_2y_{13}, & & y_{123}^6.
\end{align}
Here is a basis of $\cB(W)$:
\begin{multline*}
B_W=\{ y_1^{a_1} y_{12}^{a_2} [y_{12},y_{13}]_c^{a_3} y_{123}^{a_4} [y_{123},y_{13}]_c^{a_5}
[y_{13},y_{23}]_c^{a_6} y_{13}^{a_7} y_2^{a_8} y_{23}^{a_9} y_3^{a_{10}}:\\
0\le n_2,n_3,n_5,n_6,n_8,n_9<2, \, 0\le n_1,n_7,n_{10}<3, \, 0\le n_4<6 \big\}.
\end{multline*}

\begin{remark}\label{rem:relations lstr-a(10)3}
\begin{sloppypar}
By Lemma \ref{lemma:braiding-K-weak-block-points}, $K^1$ is isomorphic to $W$ as braided vector spaces.
Hence there exists an isomorphism of braided Hopf algebras
${\psi: \cB(W)\to K}$ such that $\psi(y_1)=x_{\fudos 2}$, $\psi(y_2)=x_{3}$, $\psi(y_3)=x_{2}$. Let
$\ztt_{12}$, $\ztt_{123}$ be as in \eqref{eq:lstr-def-ztt-1}, $\ztt_{13}=\psi(y_{13})$.
Thus, the following identities hold in $\cB(\lstr(A(1\vert 0)_3; \omega))$:
\begin{align}\label{eq:lstr-rels-a10-3}
x_{\fudos 2}x_{\fudos 23}&+q_{13}q_{23}x_{\fudos 23}x_{\fudos 2}=0, &  \ztt_{123}^6&=0,
\end{align}
and the following set is a basis of $K$:
\begin{multline*}
B_K=\big\{ x_{\fudos 2}^{n_1} \ztt_{12}^{n_2} [\ztt_{12},\ztt_{13}]_c^{n_3} \ztt_{123}^{n_4} [\ztt_{123},\ztt_{13}]_c^{n_5} [\ztt_{13},x_{32}]_c^{n_6}
\ztt_{13}^{n_7} x_3^{n_8} x_{32}^{n_9} x_2^{n_{10}}:\\
0\le n_2,n_3,n_5,n_6,n_8,n_9<2, \, 0\le n_1,n_7,n_{10}<3, \, 0\le n_4<6 \big\}.
\end{multline*}
\end{sloppypar}
\end{remark}

\begin{lemma}\label{lemma:other relations lstr-a(10)3}
Let $\cB$ be a quotient algebra of $T(V)$. Assume that \eqref{eq:lstr1omega1-qserre-g=1}, \eqref{eq:lstr1omega1-z0cube},
\eqref{eq:lstr-rels&1omega1}, \eqref{eq:lstr-rels-ghost-int-trivial}, \eqref{eq:lstr-rels-a10-3-diagpart},
\eqref{eq:lstr-rels-a10-3} hold in $\cB$.
Then there exists an algebra map $\phi:\cB(W)\to \cB$ such that $\phi(y_1)=x_{\fudos,2}$, $\phi(y_2)=x_{3}$, $\phi(y_3)=x_{2}$.
\end{lemma}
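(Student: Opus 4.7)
\pf[Proof proposal]
The plan is to mimic the strategy used for Lemmas \ref{lemma:other relations lstr-a(10)1} and \ref{lemma:other relations lstr-a(10)2}. Let $\Phi:T(W)\to \cB$ be the algebra homomorphism defined on generators by $\Phi(y_1)=x_{\fudos 2}$, $\Phi(y_2)=x_3$, $\Phi(y_3)=x_2$. Since $\cB(W)$ is presented by the relations \eqref{eq:lstr-rels-a10-3-K-1}, \eqref{eq:lstr-rels-a10-3-K-2}, \eqref{eq:lstr-rels-a10-3-K-3}, it suffices to show that $\Phi$ annihilates each of them; the claimed $\phi$ is then induced on the quotient.

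First I would dispose of the ``power'' relations: $\Phi(y_2^2)=x_3^2=0$ and $\Phi(y_3^3)=x_2^3=0$ by \eqref{eq:lstr-rels-a10-3-diagpart}; $\Phi(y_1^3)=x_{\fudos 2}^3=0$ and $\Phi(y_{13}^3)=[x_{\fudos 2},x_2]_c^3=0$ by \eqref{eq:lstr-rels&1omega1} (noting that $[x_{\fudos 2},x_2]_c$ is the image of the element denoted $z_{1,0}$ in \S\ref{subsubsection:lstr-1omega1}); and $\Phi(y_{123}^6)=\ztt_{123}^6=0$ is the second identity in \eqref{eq:lstr-rels-a10-3}. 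Next, $\Phi(y_{112})=[x_{\fudos 2},\ztt_{12}]_c=[x_{\fudos 2},x_{\fudos 23}]_c$, which vanishes by the first identity in \eqref{eq:lstr-rels-a10-3} (the $q$-commutator reads $x_{\fudos 2}x_{\fudos 23}+q_{13}q_{23}\,x_{\fudos 23}x_{\fudos 2}=0$, and this is precisely the scalar appearing in the braided commutator because the block-point braiding between $x_{\fudos 2}$ and $x_{\fudos 23}$ is governed by the products $q_{1j}$ that appear in \eqref{eq:lstr-rels-ghost-int-trivial}).

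For the remaining ``Serre-type'' relations $y_{113}$, $y_{331}$ and $y_{332}$, I would use a $q$-Jacobi identity plus the diagonal relations to rewrite each bracket as a bracket which contains either $x_3^2$, $x_2^3$ or $x_{223}$ and hence vanishes by \eqref{eq:lstr-rels-a10-3-diagpart}: e.g. $\Phi(y_{113})=[x_{\fudos 2},[x_{\fudos 2},x_2]_c]_c$ can be turned into a multiple of $(\ad_c x_{\fudos})^2 x_2^3$ modulo \eqref{eq:rels B(V(1,2))} and \eqref{eq:lstr-rels-ghost-int-trivial} (exactly as in the proof of Lemma \ref{lemma:other relations lstr-a(10)2} for the fifth relation), while $\Phi(y_{331})=[x_2,[x_2,x_{\fudos 2}]_c]_c$ and $\Phi(y_{332})=[x_2,[x_2,x_3]_c]_c=x_{223}$ are handled directly by \eqref{eq:lstr-rels-a10-3-diagpart}. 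These computations are routine $q$-bracket manipulations using the defining commutation rules \eqref{eq:lstr-rels-ghost-int-trivial} between $x_1, x_{\fudos}$ and $x_2, x_3$.

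The hardest step will be the inhomogeneous relation \eqref{eq:lstr-rels-a10-3-K-3}, namely
\[
\Phi(y_{123})-q_{23}\omega\,\Phi([y_{13},y_2]_c)-q_{12}(1-\omega^2)\,\Phi(y_2y_{13})=0.
\]
Here $\Phi(y_{123})=[x_{\fudos 2},[x_3,x_2]_c]_c$, $\Phi([y_{13},y_2]_c)=[[x_{\fudos 2},x_2]_c,x_3]_c$ and $\Phi(y_2y_{13})=x_3[x_{\fudos 2},x_2]_c$. My approach will be to expand both $\Phi(y_{123})$ and $\Phi([y_{13},y_2]_c)$ by a braided Jacobi identity, move all brackets into the single ``ordering'' $[\,x_{\fudos 2},\,[x_3,x_2]_c\,]_c$, and then collect the remaining terms, which will be multiples of $x_3[x_{\fudos 2},x_2]_c$ and $[x_{\fudos 2},x_2]_c x_3$. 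Using $x_3^2=0$ plus the scalar identities $q_{12}q_{21}=q_{13}q_{31}=1$ (from $\ghost_2=\ghost_3=0$) and the fact that $q_{22}=\omega\in\G'_3$, the coefficients will match exactly those prescribed by \eqref{eq:lstr-rels-a10-3-K-3}. Once this identity is checked, $\Phi$ descends to the required $\phi:\cB(W)\to\cB$ and the proof is complete.
\epf
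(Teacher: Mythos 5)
Your overall scheme matches the paper's: define $\Phi:T(W)\to\cB$ on the generators and check that it annihilates the defining relations \eqref{eq:lstr-rels-a10-3-K-1}--\eqref{eq:lstr-rels-a10-3-K-3}. Your treatment of the power relations, of $y_{112}$, and of $y_{331}$, $y_{332}$ is essentially what the paper does. But two of the remaining relations are handled in a way that does not hold up.

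First, for $y_{113}$ you invoke the commutation rules \eqref{eq:lstr-rels-ghost-int-trivial} and the Jordan relation \eqref{eq:rels B(V(1,2))} to rewrite $\Phi(y_{113})=[x_{\fudos 2},[x_{\fudos 2},x_2]_c]_c$ as a multiple of $(\ad_c x_{\fudos})^2 x_2^3$. But \eqref{eq:lstr-rels-ghost-int-trivial} only governs the braided commutation of $x_1,x_{\fudos}$ with $x_j$ for $j\in\I_{3,\theta}$, not with $x_2$; the analogy with the fifth relation of Lemma~\ref{lemma:other relations lstr-a(10)2}, where the letters being moved past $\ad_c x_{\fudos}$ really were $x_3,x_4$, does not carry over. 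Moreover \eqref{eq:rels B(V(1,2))} is not among the stated hypotheses of this lemma. The paper's route is different: $\Phi(y_{113})$ vanishes by the rank-two theory of $\cB(\lstr(\omega,1))$, namely from \eqref{eq:lstr1omega1-qserre-g=1} and \eqref{eq:lstr1omega1-z0cube} (see Lemma~\ref{lemma:lstr1omega1 - qcommutators=0}, which derives exactly $(\ad_c z_1)^2 z_0=0$).

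Second, for the inhomogeneous relation in \eqref{eq:lstr-rels-a10-3-K-3} your parenthetical ``from $\ghost_2=\ghost_3=0$'' is wrong: in $\lstr(A(1\vert 0)_3;\omega)$ the ghost on the edge $\boxplus$---$\overset{\omega}{\bullet}$ is $\ghost_2=1$ (this is the whole point of the example), and only $\ghost_3=0$. The equalities $q_{12}q_{21}=q_{13}q_{31}=1$ come from the interaction being weak, not from the ghost vanishing. More importantly, the brute-force plan of expanding $\Phi(y_{123})$ and $\Phi([y_{13},y_2]_c)$ by Jacobi identities and collecting coefficients misses the structural shortcut the paper uses: $\Phi$ of the entire linear combination equals $(\ad_c x_{\fudos})\,x_{223}$, which vanishes by \eqref{eq:lstr-rels-a10-3-diagpart}. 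Without this observation your proposed bookkeeping has no guarantee of closing cleanly, and as written it rests on an incorrect premise about $\ghost_2$.
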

\pf
Let $\Phi:T(W)\to \cB$ be the algebra map defined as $\phi$ on the $y_i$'s. We claim that $\Phi$ annihilates
all the relations in \eqref{eq:lstr-rels-a10-3-K-1}, \eqref{eq:lstr-rels-a10-3-K-2} and \eqref{eq:lstr-rels-a10-3-K-3},
and the Lemma follows.
For \eqref{eq:lstr-rels-a10-3-K-1}, $\Phi$ annihilates the third and the fourth relations by \eqref{eq:lstr-rels-a10-3-diagpart}, the first by
\eqref{eq:lstr-rels-a10-3} and the second by \eqref{eq:lstr1omega1-qserre-g=1}, \eqref{eq:lstr1omega1-z0cube}.

For \eqref{eq:lstr-rels-a10-3-K-2}, $\Phi$ annihilates the first relation by \eqref{eq:lstr1omega1-z0cube}, the second by \eqref{eq:lstr-rels-a10-3-diagpart},
while the third and the fourth relation follow by \eqref{eq:lstr-rels&1omega1}. Finally, for \eqref{eq:lstr-rels-a10-3-K-3},
$\Phi$ applies the first relation to $(\ad_c x_{\fudos})x_{223}=0$, c.f. \eqref{eq:lstr-rels-a10-3-diagpart}, and the second relation to
$\ztt_{123}^6=0$,  c.f. \eqref{eq:lstr-rels-a10-3}.
\epf

\begin{prop} \label{pr:lstr-a(10)3}
The algebra $\cB(\lstr(A(1\vert 0)_3; \omega))$ is presented by generators $x_i$, $i\in \Iw_3$, and relations
\eqref{eq:rels B(V(1,2))}, \eqref{eq:lstr-rels&11disc-1}, \eqref{eq:lstr1omega1-qserre-g=1}, \eqref{eq:lstr1omega1-z0cube},
\eqref{eq:lstr-rels&1omega1}, \eqref{eq:lstr-rels-ghost-int-trivial}, \eqref{eq:lstr-rels-a10-3-diagpart}, \eqref{eq:lstr-rels-a10-3}.
The set
\begin{multline*}
B =\big\{ x_1^{m_1} x_{\fudos}^{m_2}x_{\fudos 2}^{n_1} \ztt_{12}^{n_2} [\ztt_{12},\ztt_{13}]_c^{n_3} \ztt_{123}^{n_4} [\ztt_{123},\ztt_{13}]_c^{n_5} [\ztt_{13},x_{32}]_c^{n_6} \ztt_{13}^{n_7} x_3^{n_8} x_{32}^{n_9} x_2^{n_{10}}:  \\
0 \le m_1, m_2,  \, 0\le n_2,n_3,n_5,n_6,n_8,n_9<2, \, 0\le n_1,n_7,n_{10}<3, \, 0\le n_4<6 \big\}
\end{multline*}
is a basis of $\cB(\lstr(A(1\vert 0)_3; \omega))$ and $\GK \cB(\lstr(A(1\vert 0)_3; \omega)) = 2$.
\end{prop}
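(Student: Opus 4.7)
The plan is to mimic the strategy already deployed for Propositions~\ref{pr:lstr-a(10)1} and \ref{pr:lstr-a(10)2}, namely to prove independently that $B$ is a basis of $\cB:=\cB(\lstr(A(1\vert 0)_3;\omega))$ and that the algebra $\cBt$ presented by the listed relations is spanned by (the image of) $B$; the two statements then force $\cBt\simeq\cB$ and the $\GK$ computation drops out of the Hilbert series.

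First I would establish that $B$ is a basis of $\cB$. By \S\ref{subsubsec:algK-block-points} we have $\cB\simeq K\#\cB(V_1)$, and Proposition~\ref{pr:-1block} (with $\fudos$ in place of $2$) gives the PBW basis $\{x_1^{m_1}x_{\fudos}^{m_2}\}$ of $\cB(V_1)$ with $m_1\in\{0,1\}$; however, since here $\epsilon=1$ so that $V_1\simeq\cV(1,2)$, I use Proposition~\ref{pr:1block} instead to obtain the PBW basis $\{x_1^{m_1}x_{\fudos}^{m_2}:m_1,m_2\in\N_0\}$. Combined with Remark~\ref{rem:relations lstr-a(10)3}, which identifies a basis $B_K$ of $K$ transported from $\cB(W)$ via the braided Hopf algebra isomorphism $\psi$, the set $B$ is a basis of $\cB$; the Hilbert series then yields $\GK\cB=2$.

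Next I would verify that every listed relation holds in $\cB$. Relations \eqref{eq:rels B(V(1,2))} and \eqref{eq:lstr-rels&11disc-1} come from the Jordan block $V_1$ and the weak interaction $q_{1j}q_{j1}=1$, cf.\ \eqref{eq:lstr-rels-ghost-int-trivial}; \eqref{eq:lstr1omega1-qserre-g=1}, \eqref{eq:lstr1omega1-z0cube} and \eqref{eq:lstr-rels&1omega1} hold because the sub-braided space $V_1\oplus\ku x_2$ is of type $\lstr(\omega,1)$, cf.\ Proposition~\ref{pr:lstr1omega1}; \eqref{eq:lstr-rels-a10-3-diagpart} are the defining relations of the diagonal Nichols algebra of type $A(1\vert 0)_3$ generated by $x_2,x_3$; and \eqref{eq:lstr-rels-a10-3} follows by transporting through $\psi$ the relations \eqref{eq:lstr-rels-a10-3-K-1}--\eqref{eq:lstr-rels-a10-3-K-3} of $\cB(W)$ to $K\subset\cB$. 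Hence there is a canonical surjection $\cBt\twoheadrightarrow\cB$.

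Finally, I would show that the subspace $I$ of $\cBt$ spanned by $B$ is a left ideal, which by $1\in I$ forces $I=\cBt$ and completes the proof. Lemma~\ref{lemma:other relations lstr-a(10)3} supplies an algebra map $\phi:\cB(W)\to\cBt$ with image containing $B_K$, so multiplication on the left by $x_3$, $x_{\fudos 2}$, $x_2$ preserves $\phi(B_W)$ by the corresponding identities $y_2 B_W\subset\phi^{-1}(\phi(B_W))$, etc. Writing $I=\sum_{m_1,m_2}\ku\,x_1^{m_1}x_{\fudos}^{m_2}\phi(B_W)$, the stability $x_1 I\subseteq I$ is immediate, $x_{\fudos}I\subseteq I$ uses \eqref{eq:rels B(V(1,2))}, while $x_3 I\subseteq I$ and $x_2 I\subseteq I$ are obtained by commuting $x_3,x_2$ past $x_1^{m_1}x_{\fudos}^{m_2}$ using \eqref{eq:lstr-rels-ghost-int-trivial} and \eqref{eq:rels B(V(1,2))} (this last step produces an extra term in $x_{\fudos 2}$, absorbed again by left multiplication by $\phi(y_1)$), exactly as in Proposition~\ref{pr:lstr-a(10)1}. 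The main technical nuisance I anticipate is Lemma~\ref{lemma:other relations lstr-a(10)3}, i.e.\ checking by hand that the seven families of defining relations of $\cB(W)$ listed in \eqref{eq:lstr-rels-a10-3-K-1}--\eqref{eq:lstr-rels-a10-3-K-3} are indeed consequences of the relations of $\cBt$; the bracket identity in \eqref{eq:lstr-rels-a10-3-K-3}, being neither purely diagonal nor coming from the $\lstr$-plane, will require the most delicate manipulation of $q$-commutators.
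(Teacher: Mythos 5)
Your proof follows exactly the same route as the paper's: identify $B$ as a PBW basis of $\cB$ via the splitting $\cB\simeq K\#\cB(V_1)$ and Remark~\ref{rem:relations lstr-a(10)3}, then show the listed relations hold so that $\cBt$ surjects onto $\cB$, and finally prove that the $\ku$-span of $B$ is a left ideal of $\cBt$ using Lemma~\ref{lemma:other relations lstr-a(10)3}. You even caught that for the Jordan (as opposed to super Jordan) block $V_1\simeq\cV(1,2)$ the PBW basis of $\cB(V_1)$ with $m_1,m_2\in\N_0$ should be justified by Proposition~\ref{pr:1block} rather than Proposition~\ref{pr:-1block} as cited in the published argument — a correct observation that is needed to match the range of $m_1$ in the stated basis $B$.
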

\pf
We first prove that $B$ is a basis of $\cB := \cB(\lstr(A(1\vert 0)_3; \omega))$: since $\cB\simeq K\#\cB(V_1)$, the claim follows from
Remark \ref{rem:relations lstr-a(10)3} and Proposition \ref{pr:-1block}. Then  $\GK \cB = 2$ by computing the Hilbert series.

Relations \eqref{eq:rels B(V(1,2))}, \eqref{eq:lstr-rels&11disc-1}, \eqref{eq:lstr1omega1-qserre-g=1}, \eqref{eq:lstr1omega1-z0cube},
\eqref{eq:lstr-rels&1omega1}, \eqref{eq:lstr-rels-ghost-int-trivial}, \eqref{eq:lstr-rels-a10-3-diagpart}, \eqref{eq:lstr-rels-a10-3}
hold in $\cB$ as we have discussed at the beginning of the subsection. Hence the quotient $\cBt$ of $T(V)$ by these relations projects onto $\cB$.

We claim that the subspace $I$ spanned by $B$ is a left ideal of $\cBt$. Indeed, $x_1I\subseteq I$ by definition, and
$x_{\fudos}I\subseteq I$ by \eqref{eq:rels B(V(1,2))}. By Lemma \ref{lemma:other relations lstr-a(10)3},
\begin{align*}
x_3\phi(B_W)&=\phi(y_2 B_W)\subset \phi(B_W), & x_{\fudos 2}\phi(B_W)&=\phi(y_1 B_W)\subset \phi(B_W) \\
x_2\phi(B_W) &=\phi(y_3 B_W)\subset \phi(B_W).
\end{align*}
As $I = \underset{m_1,m_2}\sum \ku \, x_1^{m_1} x_{\fudos}^{m_2} \phi(B_W)$ and \eqref{eq:lstr-rels-ghost-int-trivial}, \eqref{eq:lstr-rels&1-1disc}
hold in $\cBt$, the same computation of Proposition \ref{pr:lstr-a(10)1} applies to prove that $x_3 I \subset I$, $x_2 I \subset I$.
Since $1\in I$, $\cBt$ is spanned by $B$. Thus $\cBt \simeq \cB$ since $B$ is a basis of $\cB$.
\epf

\subsubsection{The Nichols algebra $\cB(\lstr(A(2\vert 0)_1; \omega))$} \label{subsubsection:lstr-a(20)1}

The subalgebra generated by $x_2$, $x_3$, $x_4$ is a Nichols algebra of type $A(2\vert 0)_1$. Thus,
\begin{align}\label{eq:lstr-rels-a(20)1-diagpart}
x_{24}&=0, & x_{332}&=0, & x_{334}&=0, & x_{443}&=0, \\ \label{eq:lstr-rels-a(20)1-diagpart-2}
x_2^2 &=0, & x_3^3&=0, & x_{34}^3&=0, & x_4^3&=0.
\end{align}

\medskip

Let $W$ be a braided vector space of diagonal type with Dynkin diagram
$$
\xymatrix{ \overset{-1}{\circ} \ar@{-}[r]^{\omega}  &\overset{\omega^2}{\circ} \ar@{-}[r]^{\omega}  \ar@{-}[d]^{\omega^2}
& \overset{\omega}{\circ}  \\ & \overset{-1}{\circ} & }.
$$
By \cite[Thm. 3.1]{Ang-crelle}, $\cB(W)$ is presented by generators $y_i$, $i\in\I_4$, and relations
\begin{align}
&y_{13}, &&y_{221}, && y_{224}, &&  y_{14}, && y_{442}, && y_{223},
\label{eq:lstr-rels-a(20)1-K-1} \\
&y_{34}, &&y_1^2, && y_2^3, && y_{123}^3, && [y_{123},y_{1234}]_c^3, && [y_{123},[y_{1234},y_2]_c]_c^3,
\label{eq:lstr-rels-a(20)1-K-2}\\
&y_{24}^3, && y_3^2, && y_4^3, && y_{1234}^3, && [y_{1234},y_2]_c^3, && [y_{1234},[y_{1234},y_2]_c]_c^3.
\label{eq:lstr-rels-a(20)1-K-3}
\end{align}
Here is a basis of $\cB(W)$:
\begin{multline*}
B_W=\{ y_1^{n_1} y_{12}^{n_2} [y_{12},y_{1234}]_c^{n_3} y_{123}^{n_4} [y_{123},y_{1234}]_c^{n_5}
[y_{123},[y_{1234}, y_2]_c]_c^{n_6} y_{1234}^{n_7}
\\
[y_{1234}, [y_{1234}, y_2]_c]_c^{n_8} [y_{1234}, y_2]_c^{n_{9}} [y_{1234}, y_{23}]_c^{n_{10}} y_{124}^{n_{11}} y_2^{n_{12}} y_{23}^{n_{13}} y_{234}^{n_{14}} y_{24}^{n_{15}} y_3^{n_{16}} y_4^{n_{17}}:
\\
0\le n_{1},n_{2},n_{3},n_{10},n_{11},n_{13},n_{14},n_{15} <2,
\\
0\le n_{4},n_{5},n_{6},n_{7},n_{8},n_{9},n_{12},n_{16},n_{17} <3 \}.
\end{multline*}

\begin{remark}\label{rem:relations lstr-a(20)1}
\begin{sloppypar}
By Lemma \ref{lemma:braiding-K-weak-block-points}, $K^1$ is isomorphic to $W$ as braided vector spaces.
Hence there exists an isomorphism of braided Hopf algebras
${\psi: \cB(W)\to K}$ such that $\psi(y_1)=x_{\fudos 2}$, $\psi(y_2)=x_{3}$, $\psi(y_3)=x_{2}$, $\psi(y_4)=x_{4}$.
Let $\ztt_{12}$, $\ztt_{123}$ be as in \eqref{eq:lstr-def-ztt-1},
\begin{align}\label{eq:lstr-def-ztt-2}
\ztt_{1234}&=\psi(y_{1234})=[x_{\fudos 23}, x_{24}]_c.
\end{align}
Thus, the following identities hold in $\cB(\lstr(A(2\vert 0)_1; \omega))$:
\begin{align}\label{eq:lstr-rels-a(20)1}
[x_{\fudos 23},x_2]_c^3&=0, & [[x_{\fudos 23},x_2]_c, [x_{\fudos 23},x_{24}]_c]_c^3&=0,  \notag\\
[x_{\fudos 23},x_{24}]_c^3&=0, & [ [x_{\fudos 23},x_2]_c, [[x_{\fudos 23},x_{24}]_c,x_2]_c]_c^3&=0,\\
[[x_{\fudos 23},x_{24}]_c,x_2]_c^3&=0, & [ [x_{\fudos 23},x_{24}]_c, [[x_{\fudos 23},x_{24}]_c,x_2]_c]_c^3&=0, \notag
\end{align}
and the following set is a basis of $K$:
\begin{multline*}
B_K=\{ x_{\fudos 2}^{n_1} \ztt_{12}^{n_2} [\ztt_{12},\ztt_{1234}]_c^{n_3} \ztt_{123}^{n_4} [\ztt_{123},\ztt_{1234}]_c^{n_5}
[\ztt_{123},[\ztt_{1234}, x_3]_c]_c^{n_6} \ztt_{1234}^{n_7}
\\
[\ztt_{1234}, [\ztt_{1234}, x_3]_c]_c^{n_8} [\ztt_{1234}, x_3]_c^{n_{9}} [\ztt_{1234}, x_{32}]_c^{n_{10}} x_{\fudos 234}^{n_{11}} x_3^{n_{12}} x_{32}^{n_{13}} x_{324}^{n_{14}} x_{34}^{n_{15}} x_2^{n_{16}} x_4^{n_{17}}:
\\
0\le n_{1},n_{2},n_{3},n_{10},n_{11},n_{13},n_{14},n_{15} <2,
\\
0\le n_{4},n_{5},n_{6},n_{7},n_{8},n_{9},n_{12},n_{16},n_{17} <3 \}.
\end{multline*}
\end{sloppypar}
\end{remark}

\begin{lemma}\label{lemma:other relations lstr-a(20)1}
Let $\cB$ be a quotient algebra of $T(V)$. Assume that \eqref{eq:lstr1omega1-qserre-g=1}, \eqref{eq:lstr-rels&1-1disc-g=1},
\eqref{eq:lstr-rels-ghost-int-trivial}, \eqref{eq:lstr-rels-a(20)1-diagpart}, \eqref{eq:lstr-rels-a(20)1-diagpart-2}, \eqref{eq:lstr-rels-a(20)1} hold in $\cB$.
Then there exists an algebra map $\phi:\cB(W)\to \cB$ such that $\phi(y_1)=x_{\fudos,2}$, $\phi(y_2)=x_{3}$, $\phi(y_3)=x_{2}$, $\phi(y_4)=x_4$.
\end{lemma}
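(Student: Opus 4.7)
The plan is to mimic Lemmas \ref{lemma:other relations lstr-a(10)1}, \ref{lemma:other relations lstr-a(10)2}, \ref{lemma:other relations lstr-a(10)3}: define $\Phi:T(W)\to\cB$ as the algebra map sending $y_1\mapsto x_{\fudos 2}$, $y_2\mapsto x_3$, $y_3\mapsto x_2$, $y_4\mapsto x_4$, and check that $\Phi$ kills each of the defining relations of $\cB(W)$ listed in \eqref{eq:lstr-rels-a(20)1-K-1}, \eqref{eq:lstr-rels-a(20)1-K-2}, \eqref{eq:lstr-rels-a(20)1-K-3}; once this is done $\Phi$ will descend to the required algebra map $\phi:\cB(W)\to\cB$. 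I would organize the verification into three groups.

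First, the relations involving only $y_2,y_3,y_4$ translate under $\Phi$ into relations among $x_3,x_2,x_4$ that are immediate from \eqref{eq:lstr-rels-a(20)1-diagpart}--\eqref{eq:lstr-rels-a(20)1-diagpart-2}: for example $\Phi(y_{223})=x_{332}$, $\Phi(y_{224})=x_{334}$, $\Phi(y_{442})=x_{443}$, $\Phi(y_{34})=x_{24}$, and similarly for the monomial relations $y_3^2$, $y_2^3$, $y_4^3$ and $y_{24}^3$.

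Second, the relations $y_{13}$, $y_{14}$, $y_1^2$, $y_{221}$ involve $y_1$ in a mild way. Here $\Phi(y_1^2)=x_{\fudos 2}^2$ vanishes by \eqref{eq:lstr-rels&1-1disc-g=1}; $\Phi(y_{13})=[x_{\fudos 2},x_2]_c$ vanishes by invoking Lemma \ref{lemma:zt zk} (ii) with $z_0=x_2$ and $z_1=x_{\fudos 2}$ (since $x_2^2=0$ and $q_{22}=-1$); for $\Phi(y_{14})=[x_{\fudos 2},x_4]_c$ one uses that $\ad_c x_{\fudos}(x_4)=0$ by \eqref{eq:lstr-rels-ghost-int-trivial} together with the skew-derivation property of $\ad_c x_{\fudos}$ to rewrite $[x_{\fudos 2},x_4]_c$ as a scalar multiple of $\ad_c x_{\fudos}(x_{24})$, which vanishes by \eqref{eq:lstr-rels-a(20)1-diagpart}; and for $\Phi(y_{221})=(\ad_c x_3)^2\ad_c x_{\fudos}(x_2)$ one commutes $\ad_c x_3$ past $\ad_c x_{\fudos}$ (legal up to a scalar since $x_{\fudos}$ and $x_3$ $q$-commute by \eqref{eq:lstr-rels-ghost-int-trivial}) to reduce to $\ad_c x_{\fudos}(x_{332})$, which is again $0$ by \eqref{eq:lstr-rels-a(20)1-diagpart}.

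Third, and this is the main obstacle, the six cubic relations in \eqref{eq:lstr-rels-a(20)1-K-2}--\eqref{eq:lstr-rels-a(20)1-K-3}, namely the cubes of $y_{123}$, $y_{1234}$, $[y_{123},y_{1234}]_c$, $[y_{1234},y_2]_c$, $[y_{123},[y_{1234},y_2]_c]_c$ and $[y_{1234},[y_{1234},y_2]_c]_c$. These should map under $\Phi$ onto the six relations listed in \eqref{eq:lstr-rels-a(20)1}, provided one first identifies $\Phi(y_{12})=[x_{\fudos 2},x_3]_c$ with $x_{\fudos 23}$, $\Phi(y_{123})$ with $[x_{\fudos 23},x_2]_c$, and $\Phi(y_{1234})$ with $\ztt_{1234}=[x_{\fudos 23},x_{24}]_c$ from \eqref{eq:lstr-def-ztt-2}. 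This identification is a sequence of braided Jacobi rearrangements valid modulo the relations of the first two groups, using in particular $x_{24}=x_{34}=0$, $x_2^2=0$, and the $q$-commutativity of $x_{\fudos}$ with $x_3,x_4$. Once the identification is in place, each of the six cubic relations vanishes verbatim by \eqref{eq:lstr-rels-a(20)1}, and $\phi$ exists by the universal property of $\cB(W)$.
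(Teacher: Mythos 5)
Your proof follows the same strategy as the paper: define the algebra map $\Phi\colon T(W)\to\cB$ on generators and verify that $\Phi$ annihilates each of the defining relations of $\cB(W)$. The three-group organization matches the paper's brief description, and the first two groups are carefully and correctly verified --- in particular the $q$-commutation of $\ad_c x_{\fudos}$ past $\ad_c x_3$ for $y_{221}$ and past $\ad_c x_2$ for $y_{14}$ (using \eqref{eq:lstr-rels-ghost-int-trivial}, then \eqref{eq:lstr-rels-a(20)1-diagpart}) are exactly the implicit computations the paper points to.

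Two issues in the third group. The concrete one is the claim $x_{34}=0$: this is false. Hypothesis \eqref{eq:lstr-rels-a(20)1-diagpart-2} gives only $x_{34}^3=0$, and $x_{34}=(\ad_c x_3)(x_4)$ is itself nonzero since $q_{34}q_{43}=\omega\neq 1$. (You do not in fact need it; the relevant vanishing for $\Phi(y_{1234})$ comes from $x_{24}=0$, which you correctly list, since $\Phi(y_{34})=x_{24}$ by an elementary computation.) The second issue is that the ``verbatim'' match of the six cubes with \eqref{eq:lstr-rels-a(20)1} is asserted rather than checked, and it does not hold literally: with $\Phi(y_2)=x_3$, the image $\Phi([y_{1234},y_2]_c)$ carries $x_3$ in the outer bracket, whereas the corresponding entry of \eqref{eq:lstr-rels-a(20)1} has $x_2$ there. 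You need either a braided Jacobi rewriting to reconcile these, or the observation that $x_{24}=0$ already annihilates $\Phi(y_{1234})$ (and hence all five cubes that involve $y_{1234}$), leaving only $\Phi(y_{123})^3=[x_{\fudos 23},x_2]_c^3$, the first item of \eqref{eq:lstr-rels-a(20)1}, as the nontrivial case. The paper's proof is equally terse here --- it only cites \eqref{eq:lstr-rels-a(20)1-diagpart-2} and \eqref{eq:lstr-rels-a(20)1} --- so the gap is shared, but a self-contained argument should make the dictionary between the $\Phi$-images of the power relations and the hypotheses explicit, one cube at a time.
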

\pf
Let $\Phi:T(W)\to \cB$ be the algebra map defined as $\phi$ on the $y_i$'s. We claim that $\Phi$ annihilates
all the relations in \eqref{eq:lstr-rels-a(20)1-K-1}, \eqref{eq:lstr-rels-a(20)1-K-2} and \eqref{eq:lstr-rels-a(20)1-K-2},
and the Lemma follows. All the relations in \eqref{eq:lstr-rels-a(20)1-K-1} and the first relation in \eqref{eq:lstr-rels-a(20)1-K-2}
are annihilated by $\Phi$ because of \eqref{eq:lstr1omega1-qserre-g=1}, \eqref{eq:lstr-rels&1-1disc-g=1}, \eqref{eq:lstr-rels-ghost-int-trivial}, \eqref{eq:lstr-rels-a(20)1-diagpart}. For the remaining relations, we use \eqref{eq:lstr-rels-a(20)1-diagpart-2}, \eqref{eq:lstr-rels-a(20)1}.
\epf

\begin{prop} \label{pr:lstr-a(20)1}
The algebra $\cB(\lstr(A(2\vert 0)_1; \omega))$ is presented by generators $x_i$, $i\in \Iw_4$, and relations
\eqref{eq:rels B(V(1,2))}, \eqref{eq:lstr-rels&11disc-1},
\eqref{eq:lstr1omega1-qserre-g=1}, \eqref{eq:lstr-rels&1-1disc-g=1},
\eqref{eq:lstr-rels-ghost-int-trivial}, \eqref{eq:lstr-rels-a(20)1-diagpart}, \eqref{eq:lstr-rels-a(20)1-diagpart-2}, \eqref{eq:lstr-rels-a(20)1}.
The set
\begin{multline*}
B =\big\{ x_1^{m_1} x_{\fudos}^{m_2}
x_{\fudos 2}^{n_1} \ztt_{12}^{n_2} [\ztt_{12},\ztt_{1234}]_c^{n_3} \ztt_{123}^{n_4} [\ztt_{123},\ztt_{1234}]_c^{n_5}
[\ztt_{123},[\ztt_{1234}, x_3]_c]_c^{n_6} \ztt_{1234}^{n_7}
\\
[\ztt_{1234}, [\ztt_{1234}, x_3]_c]_c^{n_8} [\ztt_{1234}, x_3]_c^{n_{9}} [\ztt_{1234}, x_{32}]_c^{n_{10}} x_{\fudos 234}^{n_{11}} x_3^{n_{12}} x_{32}^{n_{13}} x_{324}^{n_{14}} x_{34}^{n_{15}} x_2^{n_{16}} x_4^{n_{17}}:
\\
0 \le m_1, m_2, \, 0\le n_{1},n_{2},n_{3},n_{10},n_{11},n_{13},n_{14},n_{15} <2,
\\
0\le n_{4},n_{5},n_{6},n_{7},n_{8},n_{9},n_{12},n_{16},n_{17} <3 \big\}
\end{multline*}
is a basis of $\cB(\lstr(A(2\vert 0)_1; \omega))$ and $\GK \cB(\lstr(A(2\vert 0)_1; \omega)) = 2$.
\end{prop}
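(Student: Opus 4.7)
The plan is to follow the same three-step pattern used in Propositions \ref{pr:lstr-a(10)1}, \ref{pr:lstr-a(10)2} and \ref{pr:lstr-a(10)3}: first show $B$ is a basis of $\cB := \cB(\lstr(A(2\vert 0)_1; \omega))$ (giving the $\GK$ at once), then verify that all listed relations hold so that we have a surjection $\cBt \twoheadrightarrow \cB$ from the abstractly presented algebra, and finally show the span of $B$ is a left ideal of $\cBt$ containing $1$, forcing $\cBt \simeq \cB$.

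For the first step I would invoke the decomposition $\cB \simeq K \# \cB(V_1)$ from \S\ref{subsubsec:algK-block-points}, combine the basis $B_K$ of $K$ exhibited in Remark \ref{rem:relations lstr-a(20)1} with the Jordan-plane basis $\{x_1^{m_1}x_{\fudos}^{m_2}\}$ of $\cB(V_1)$ from Proposition \ref{pr:1block}, and read off that $B$ is a basis. Since $K \simeq \cB(W)$ is finite-dimensional (as the diagonal Nichols algebra associated to the given flourished diagram of type $A(2\vert 0)_1$-extended has finite root system), the Hilbert series immediately yields $\GK \cB = \GK K + 2 = 2$, in accordance with \eqref{eq:points-block-gkd}.

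For the second step, the relations split naturally: \eqref{eq:rels B(V(1,2))} is the Jordan-plane relation in $V_1$; \eqref{eq:lstr-rels&11disc-1}, \eqref{eq:lstr1omega1-qserre-g=1}, \eqref{eq:lstr-rels&1-1disc-g=1} and \eqref{eq:lstr-rels-ghost-int-trivial} come from the weak-interaction rank-2 subcases already treated in \S\ref{subsection:weak}; \eqref{eq:lstr-rels-a(20)1-diagpart}, \eqref{eq:lstr-rels-a(20)1-diagpart-2} are the defining relations of the diagonal subalgebra $\toba(V_\diag)$ of type $A(2\vert 0)_1$; and the power relations \eqref{eq:lstr-rels-a(20)1} are precisely the images under $\psi$ of the defining power relations of $\cB(W)$ in \eqref{eq:lstr-rels-a(20)1-K-2}, \eqref{eq:lstr-rels-a(20)1-K-3}, hence hold by Remark \ref{rem:relations lstr-a(20)1}.

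For the third step, I would argue in the quotient $\cBt$ as follows. Closure of $I = \operatorname{span} B$ under left multiplication by $x_1$ is immediate, and closure under $x_{\fudos}$ follows from \eqref{eq:rels B(V(1,2))}. To handle $x_2, x_3, x_4$ and $x_{\fudos 2}$, I would prove the analogue of Lemma \ref{lemma:other relations lstr-a(20)1}, supplying an algebra map $\phi: \cB(W) \to \cBt$ with $\phi(y_1)=x_{\fudos 2}$, $\phi(y_2)=x_3$, $\phi(y_3)=x_2$, $\phi(y_4)=x_4$. Then $x_k \phi(B_W) = \phi(y_i B_W) \subset \phi(B_W)$ for the appropriate $i$, and moving $x_1^{m_1}x_{\fudos}^{m_2}$ past $\phi(B_W)$ using \eqref{eq:lstr-rels-ghost-int-trivial} and \eqref{eq:lstr-rels&1-1disc-g=1} shows $x_k I \subset I$, just as in Proposition \ref{pr:lstr-a(10)1}. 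Since $1 \in I$, the span of $B$ exhausts $\cBt$, and since $B$ is linearly independent already in the quotient $\cB$, the surjection $\cBt \twoheadrightarrow \cB$ is an isomorphism.

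The main obstacle is precisely establishing the analogue of Lemma \ref{lemma:other relations lstr-a(20)1}, i.e.\ checking that all twelve defining relations \eqref{eq:lstr-rels-a(20)1-K-1}--\eqref{eq:lstr-rels-a(20)1-K-3} of the rank-$4$ diagonal Nichols algebra $\cB(W)$ are annihilated by the tentative evaluation map $\Phi: T(W) \to \cBt$. The quadratic and cubic relations in \eqref{eq:lstr-rels-a(20)1-K-1} and the first entry of \eqref{eq:lstr-rels-a(20)1-K-2} reduce to \eqref{eq:lstr-rels-a(20)1-diagpart} together with the weak-interaction identities from \S\ref{subsection:weak} (e.g.\ $\Phi(y_{13}) = x_{\fudos 2} x_2 + q_{12} q_{13} x_2 x_{\fudos 2}$, which vanishes by Lemma \ref{lemma:zt zk}(ii)); the remaining power relations translate directly to \eqref{eq:lstr-rels-a(20)1-diagpart-2} and \eqref{eq:lstr-rels-a(20)1}. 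These verifications are lengthy but mechanical, following the same bracket-manipulation template as in Lemma \ref{lemma:other relations lstr-a(10)2}, and I would organize them into two blocks — those that only involve the ``old'' subdiagram with vertices $1,2,3$, and those genuinely using the new vertex $4$ — to make the argument parallel to the rank-$3$ cases.
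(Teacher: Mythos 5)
Your proposal is correct and follows essentially the same three-step template the paper uses: identify $B$ as a basis via the splitting $\cB\simeq K\#\cB(V_1)$ together with Remark \ref{rem:relations lstr-a(20)1} and Proposition \ref{pr:1block}, check the listed relations hold, and close the span of $B$ under left multiplication using Lemma \ref{lemma:other relations lstr-a(20)1} to conclude $\cBt\simeq\cB$. The only presentational difference is that the paper states and proves Lemma \ref{lemma:other relations lstr-a(20)1} as a separate preliminary, whereas you fold that verification into the body of the argument.
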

\pf
We first prove that $B$ is a basis of $\cB := \cB(\lstr(A(2\vert 0)_1; \omega))$: since $\cB\simeq K\#\cB(V_1)$, the claim follows from
Remark \ref{rem:relations lstr-a(20)1} and Proposition \ref{pr:-1block}. Then  $\GK \cB = 2$ by computing the Hilbert series.

Relations \eqref{eq:rels B(V(1,2))}, \eqref{eq:lstr-rels&11disc-1},\eqref{eq:lstr1omega1-qserre-g=1}, \eqref{eq:lstr-rels&1-1disc-g=1},
\eqref{eq:lstr-rels-ghost-int-trivial}, \eqref{eq:lstr-rels-a(20)1-diagpart}, \eqref{eq:lstr-rels-a(20)1-diagpart-2}, \eqref{eq:lstr-rels-a(20)1}
hold in $\cB$ as we have discussed at the beginning of the subsection. Hence the quotient $\cBt$ of $T(V)$ by these relations projects onto $\cB$.

We claim that the subspace $I$ spanned by $B$ is a left ideal of $\cBt$. Indeed, $x_1I\subseteq I$ by definition, and
$x_{\fudos}I\subseteq I$ by \eqref{eq:rels B(V(1,2))}. By Lemma \ref{lemma:other relations lstr-a(20)1},
\begin{align*}
x_3\phi(B_W)&=\phi(y_2 B_W)\subset \phi(B_W), & x_{\fudos 2}\phi(B_W)&=\phi(y_1 B_W)\subset \phi(B_W), \\
x_4\phi(B_W)&=\phi(y_4 B_W)\subset \phi(B_W), & x_2\phi(B_W) &=\phi(y_3 B_W)\subset \phi(B_W).
\end{align*}
As $I = \underset{m_1,m_2}\sum \ku \, x_1^{m_1} x_{\fudos}^{m_2} \phi(B_W)$ and \eqref{eq:lstr-rels-ghost-int-trivial}, \eqref{eq:lstr-rels&1-1disc}
hold in $\cBt$, the same computation of Proposition \ref{pr:lstr-a(10)1} applies to prove that $x_4 I \subset I$, $x_3 I \subset I$, $x_2 I \subset I$.
Since $1\in I$, $\cBt$ is spanned by $B$. Thus $\cBt \simeq \cB$ since $B$ is a basis of $\cB$.
\epf

\subsubsection{The Nichols algebra $\cB(\lstr(D(2\vert 1); \omega))$} \label{subsubsection:lstr-D(21)}

The subalgebra generated by $x_2$, $x_3$, $x_4$ is a Nichols algebra of type $D(2\vert 1)$. Thus,
\begin{align}\label{eq:lstr-rels-d(21)-diagpart-1}
x_{24}&=0, & x_{332}&=0, & x_{443}&=0, & [[x_{234}&,x_3]_c,x_3]_c=0,
\\ \label{eq:lstr-rels-d(21)-diagpart-2}
x_2^2&=0, & x_3^3&=0, & x_{34}^3&=0, & x_{334}^3&=0, \quad x_4^3=0.
\end{align}

\medskip
Let $W$ be a braided vector space of diagonal type with Dynkin diagram
$$
\xymatrix{ \overset{-1}{\circ} \ar@{-}[r]^{\omega}  &\overset{\omega^2}{\circ} \ar@{-}[r]^{\omega^2}  \ar@{-}[d]^{\omega}
& \overset{\omega}{\circ}  \\ & \overset{-1}{\circ} & }.
$$
By \cite[Thm. 3.1]{Ang-crelle}, $\cB(W)$ is presented by generators $y_i$, $i\in\I_4$, and relations
\begin{align}
&y_1^2, & & y_{13}, & &y_{34}, & &y_{221}, & &[[y_{124},y_2]_c,y_2]_c, \label{eq:lstr-rels-d(21)-K-1} \\
&y_3^2, & & y_{14}, & &y_{442}, & &y_{223}, &  & [[y_{324},y_2]_c,y_2]_c, \label{eq:lstr-rels-d(21)-K-2} \\
&y_2^3, &&  y_{224}^3, && y_{123}^3, && [y_{1234}, y_2]_c^3, && [[y_{1234}, y_2]_c, y_2]_c^3, \label{eq:lstr-rels-d(21)-K-3} \\
&y_4^3, &&  y_{24}^3, && y_{1234}^3, && [y_{1234}, y_{224}]_c^3. \label{eq:lstr-rels-d(21)-K-4}
\end{align}
Here is a basis of $\cB(W)$:
\begin{multline*}
B_W=\{ y_1^{n_1} y_{12}^{n_2} y_{123}^{n_3} y_{1234}^{n_4} [y_{1234},y_2]_c^{n_5}
[[y_{1234}, y_2]_c, y_2]_c^{n_6} [y_{1234},y_{224}]_c^{n_7}
\\
y_{124}^{n_8} [y_{124}, y_2]_c^{n_{9}} y_{2}^{n_{10}} y_{23}^{n_{11}} y_{234}^{n_{12}} [y_{234},y_2]_c^{n_{13}} y_{224}^{n_{14}} y_{24}^{n_{15}} y_3^{n_{16}} y_4^{n_{17}}:
\\
0\le n_{1},n_{2},n_{8},n_{9},n_{11},n_{12},n_{13},n_{16} <2,
\\
0\le n_{3},n_{4},n_{5},n_{6},n_{7},n_{10},n_{14},n_{15},n_{17} <3 \}.
\end{multline*}

\begin{remark}\label{rem:relations lstr-d(21)}
\begin{sloppypar}
By Lemma \ref{lemma:braiding-K-weak-block-points}, $K^1$ is isomorphic to $W$ as braided vector spaces.
Hence there exists an isomorphism of braided Hopf algebras
${\psi: \cB(W)\to K}$ such that $\psi(y_1)=x_{\fudos 2}$, $\psi(y_2)=x_{3}$, $\psi(y_3)=x_{2}$, $\psi(y_4)=x_{4}$.
Let $\ztt_{12}$, $\ztt_{123}$, $\ztt_{1234}$ be as in \eqref{eq:lstr-def-ztt-1}, \eqref{eq:lstr-def-ztt-2}.
Thus, the following identities hold in $\cB(\lstr(D(2\vert 1); \omega))$:
\begin{align}\label{eq:lstr-rels-d(21)}
& && [x_{\fudos 23},x_2]_c^3=0, && [[x_{\fudos 23},x_{24}]_c,x_3]_c^3=0, \\
&[[[x_{\fudos 23},x_{24}]_c, x_3]_c, x_3]_c^3=0, && [x_{\fudos 23},x_{24}]_c^3=0, && [[x_{\fudos 23},x_{24}]_c,x_{334}]_c^3=0. \notag
\end{align}
and the following set is a basis of $K$:
\begin{multline*}
B_K=\{ x_{\fudos 2}^{n_1} \ztt_{12}^{n_2} \ztt_{123}^{n_3} \ztt_{1234}^{n_4} [\ztt_{1234},x_3]_c^{n_5}
[[\ztt_{1234}, x_3]_c, x_3]_c^{n_6} [\ztt_{1234},x_{334}]_c^{n_7}
\\
x_{\fudos 234}^{n_8} [x_{\fudos 234}, x_3]_c^{n_{9}} x_{3}^{n_{10}} x_{32}^{n_{11}} x_{324}^{n_{12}} [x_{324},x_3]_c^{n_{13}} x_{334}^{n_{14}} x_{34}^{n_{15}} x_2^{n_{16}} x_4^{n_{17}}:
\\
0\le n_{1},n_{2},n_{8},n_{9},n_{11},n_{12},n_{13},n_{16} <2,
\\
0\le n_{3},n_{4},n_{5},n_{6},n_{7},n_{10},n_{14},n_{15},n_{17} <3 \}.
\end{multline*}
\end{sloppypar}
\end{remark}

\begin{lemma}\label{lemma:other relations lstr-d(21)}
Let $\cB$ be a quotient algebra of $T(V)$. Assume that \eqref{eq:lstr1omega1-qserre-g=1}, \eqref{eq:lstr-rels&1-1disc-g=1},
\eqref{eq:lstr-rels-ghost-int-trivial}, \eqref{eq:lstr-rels-d(21)-diagpart-1}, \eqref{eq:lstr-rels-d(21)-diagpart-2},
\eqref{eq:lstr-rels-d(21)} hold in $\cB$.
Then there exists an algebra map $\phi:\cB(W)\to \cB$ such that $\phi(y_1)=x_{\fudos,2}$, $\phi(y_2)=x_{3}$, $\phi(y_3)=x_{2}$, $\phi(y_4)=x_4$.
\end{lemma}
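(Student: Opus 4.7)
The plan is to imitate the proofs of Lemmas \ref{lemma:other relations lstr-a(10)1}, \ref{lemma:other relations lstr-a(10)2}, \ref{lemma:other relations lstr-a(10)3} and especially \ref{lemma:other relations lstr-a(20)1}. First I define the algebra map $\Phi: T(W)\to \cB$ by sending $y_1\mapsto x_{\fudos 2}$, $y_2\mapsto x_3$, $y_3\mapsto x_2$, $y_4\mapsto x_4$, and then check that $\Phi$ annihilates each of the defining relations in \eqref{eq:lstr-rels-d(21)-K-1}, \eqref{eq:lstr-rels-d(21)-K-2}, \eqref{eq:lstr-rels-d(21)-K-3}, \eqref{eq:lstr-rels-d(21)-K-4}; once this is done, $\Phi$ factors through $\cB(W)$ and yields the desired $\phi$.

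Most relations are immediate matches with the hypotheses. The quadratic and cubic power relations $y_1^2, y_3^2, y_2^3, y_4^3$ are sent to $x_{\fudos 2}^2, x_2^2, x_3^3, x_4^3$ which vanish by \eqref{eq:lstr-rels&1-1disc-g=1} and \eqref{eq:lstr-rels-d(21)-diagpart-2}. The quantum Serre relations purely in the diagonal generators, namely $y_{34}, y_{223}, y_{442}$, map to $x_{24}, x_{332}, x_{443}$, which appear in \eqref{eq:lstr-rels-d(21)-diagpart-1}. After identifying $\Phi(y_{12})=\ztt_{12}=x_{\fudos 23}$, $\Phi(y_{123})=\ztt_{123}$, $\Phi(y_{1234})=\ztt_{1234}$, the six cube relations in \eqref{eq:lstr-rels-d(21)-K-3}–\eqref{eq:lstr-rels-d(21)-K-4} involving these elements are exactly \eqref{eq:lstr-rels-d(21)} together with $x_{34}^3$, $x_{334}^3$ in \eqref{eq:lstr-rels-d(21)-diagpart-2}.

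The mixed relations connecting the block generator $x_{\fudos 2}$ to the points need a short computation exploiting the trivial interactions. Using \eqref{eq:lstr-rels-ghost-int-trivial} one verifies that $\ad_c x_3$ and $\ad_c x_4$ commute with $\ad_c x_{\fudos}$ on elements of $V_{\diag}$, so that
\begin{align*}
\Phi(y_{13}) &= (\ad_c x_{\fudos 2})(x_2)= q\,x_{\fudos 2}^2,\\
\Phi(y_{14}) &= (\ad_c x_{\fudos 2})(x_4)= q'(\ad_c x_{\fudos})(\ad_c x_2)(x_4)= q'\,(\ad_c x_{\fudos})(x_{24}),\\
\Phi(y_{221}) &= (\ad_c x_3)^2 x_{\fudos 2}= q''\,(\ad_c x_{\fudos})(\ad_c x_3)^2 x_2 = q''\,(\ad_c x_{\fudos})(x_{332}),
\end{align*}
up to non-zero scalars, and all vanish by \eqref{eq:lstr-rels&1-1disc-g=1} and \eqref{eq:lstr-rels-d(21)-diagpart-1}.

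The delicate check is for the last relations of \eqref{eq:lstr-rels-d(21)-K-1} and \eqref{eq:lstr-rels-d(21)-K-2}, namely $[[y_{124},y_2]_c,y_2]_c$ and $[[y_{324},y_2]_c,y_2]_c$. Again commuting the block factor $x_{\fudos}$ past the diagonal generators via \eqref{eq:lstr-rels-ghost-int-trivial}, one rewrites
\begin{align*}
\Phi([[y_{124},y_2]_c,y_2]_c) &= (\ad_c x_{\fudos})\bigl[[x_{234},x_3]_c,x_3\bigr]_c,\\
\Phi([[y_{324},y_2]_c,y_2]_c) &= \bigl[[x_{234},x_3]_c,x_3\bigr]_c,
\end{align*}
again up to non-zero scalars, which vanish by \eqref{eq:lstr-rels-d(21)-diagpart-1}. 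The main obstacle is bookkeeping the scalars and the $q$-commutator identities: once one has the lemma that $\ad_c x_{\fudos}$ and the $\ad_c x_j$ ($j\in\I_{3,\theta}$) satisfy twisted commutation on $K$, every remaining relation collapses onto one of the assumed identities, and the proof concludes.
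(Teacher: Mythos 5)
Your overall strategy is exactly the paper's: define $\Phi:T(W)\to\cB$ on the generators, identify $\Phi(y_{12})=\ztt_{12}$, $\Phi(y_{123})=\ztt_{123}$, $\Phi(y_{1234})=\ztt_{1234}$, and check relation by relation. Your treatment of the power and cube relations, of $y_{34}$, $y_{223}$, $y_{442}$, of $y_{14}$ and $y_{221}$ (commuting $\ad_c x_{\fudos}$ past $\ad_c x_3$, $\ad_c x_4$ via \eqref{eq:lstr-rels-ghost-int-trivial}), and of the two length-five Serre relations $[[y_{124},y_2]_c,y_2]_c$ and $[[y_{324},y_2]_c,y_2]_c$ matches the paper's argument.

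There is however a genuine error in the step for $y_{13}$. You claim
\begin{align*}
\Phi(y_{13}) = (\ad_c x_{\fudos 2})(x_2) = q\,x_{\fudos 2}^2,
\end{align*}
but this identity is impossible already on degree grounds: $(\ad_c x_{\fudos 2})(x_2)$ is $\N_0$-homogeneous of degree $3$, while $x_{\fudos 2}^2$ has degree $4$, so they cannot be proportional. The conclusion $\Phi(y_{13})=0$ is of course correct, but the reason is different and does need an argument. Writing $z_0=x_2$ and $z_1=x_{\fudos 2}=\ad_c x_{\fudos}(z_0)$, the braiding scalar on $z_1\ot z_0$ is $q_{12}q_{22}=-q_{12}$ (here $q_{22}=-1$ since the point $x_2$ carries label $-1$ and the interaction is weak). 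Expanding $z_1$ and collecting terms gives
\begin{align*}
(\ad_c z_1)(z_0) = x_{\fudos}z_0^2 - q_{12}^2\,z_0^2 x_{\fudos} = (\ad_c x_{\fudos})(x_2^2),
\end{align*}
which vanishes because $x_2^2=0$ is part of \eqref{eq:lstr-rels&1-1disc-g=1}. So you should be invoking the relation $x_2^2=0$, not $x_{\fudos 2}^2=0$. This is precisely the content of Lemma \ref{lemma:zt zk}\vii, which the paper cites for the same step in the analogous Lemma \ref{lemma:other relations lstr-a(10)1}; replace your ad hoc identity by a reference to that lemma, or by the short computation above, and the proof closes.
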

\pf
Let $\Phi:T(W)\to \cB$ be the algebra map defined as $\phi$ on the $y_i$'s. We claim that $\Phi$ annihilates
all the relations in \eqref{eq:lstr-rels-d(21)-K-1}, \eqref{eq:lstr-rels-d(21)-K-2}, \eqref{eq:lstr-rels-d(21)-K-3}, \eqref{eq:lstr-rels-d(21)-K-4},
and the Lemma follows. For the last relation in \eqref{eq:lstr-rels-d(21)-K-1},
\begin{align*}
\Phi\left( [[y_{124},y_2]_c,y_2]_c \right) &= \left[ \left[ x_{\fudos 234}, x_3 \right]_c , x_3 \right]_c = (\ad_c x_{\fudos}) [[x_{234},x_3]_c,x_3]_c =0,
\end{align*}
by \eqref{eq:lstr-rels-ghost-int-trivial} and the last relation in \eqref{eq:lstr-rels-d(21)-diagpart-1}.
All the other relations in \eqref{eq:lstr-rels-d(21)-K-1} and all the relations in \eqref{eq:lstr-rels-d(21)-K-2}
follow directly by \eqref{eq:lstr1omega1-qserre-g=1}, \eqref{eq:lstr-rels&1-1disc-g=1}, \eqref{eq:lstr-rels-ghost-int-trivial}, \eqref{eq:lstr-rels-d(21)-diagpart-1}.
For \eqref{eq:lstr-rels-d(21)-K-3} and \eqref{eq:lstr-rels-d(21)-K-4}, we use \eqref{eq:lstr-rels-d(21)-diagpart-2}, \eqref{eq:lstr-rels-d(21)}.
\epf

\begin{prop} \label{pr:lstr-d(21)}
The algebra $\cB(\lstr(D(2\vert 1); \omega))$ is presented by generators $x_i$, $i\in \Iw_4$, and relations
\eqref{eq:rels B(V(1,2))}, \eqref{eq:lstr-rels&11disc-1}, \eqref{eq:lstr1omega1-qserre-g=1}, \eqref{eq:lstr-rels&1-1disc-g=1},
\eqref{eq:lstr-rels-ghost-int-trivial}, \eqref{eq:lstr-rels-d(21)-diagpart-1}, \eqref{eq:lstr-rels-d(21)-diagpart-2},
\eqref{eq:lstr-rels-d(21)}. The set
\begin{multline*}
B =\{ x_1^{m_1} x_{\fudos}^{m_2} x_{\fudos 2}^{n_1} \ztt_{12}^{n_2} \ztt_{123}^{n_3} \ztt_{1234}^{n_4} [\ztt_{1234},x_3]_c^{n_5}
[[\ztt_{1234}, x_3]_c, x_3]_c^{n_6} [\ztt_{1234},x_{334}]_c^{n_7}
\\
x_{\fudos 234}^{n_8} [x_{\fudos 234}, x_3]_c^{n_{9}} x_{3}^{n_{10}} x_{32}^{n_{11}} x_{324}^{n_{12}} [x_{324},x_3]_c^{n_{13}} x_{334}^{n_{14}} x_{34}^{n_{15}} x_2^{n_{16}} x_4^{n_{17}}:
\\
0\le m_1,m_2, \, 0\le n_{1},n_{2},n_{8},n_{9},n_{11},n_{12},n_{13},n_{16} <2,
\\
0\le n_{3},n_{4},n_{5},n_{6},n_{7},n_{10},n_{14},n_{15},n_{17} <3 \}.
\end{multline*}
is a basis of $\cB(\lstr(D(2\vert 1); \omega))$ and $\GK \cB(\lstr(D(2\vert 1); \omega)) = 2$.
\end{prop}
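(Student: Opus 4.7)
The plan is to adapt the standard template used in the preceding Propositions \ref{pr:lstr-a(10)1}, \ref{pr:lstr-a(10)2}, \ref{pr:lstr-a(10)3} and \ref{pr:lstr-a(20)1} to the present setting. The proof will have two halves: first establishing that the proposed set $B$ is a basis of $\cB := \cB(\lstr(D(2\vert 1); \omega))$, then showing that the listed relations suffice to present it.

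For the first half, I will invoke the smash product decomposition $\cB \simeq K \# \cB(V_1)$ from \S\ref{subsubsec:algK-block-points}. Combining Remark \ref{rem:relations lstr-d(21)}, which gives the basis $B_K$ of $K$ inherited via the isomorphism $\psi : \cB(W) \to K$ from the classification of the $D(2\vert 1)$ type Nichols algebra in \cite{Ang-crelle}, with the basis $\{x_1^{m_1} x_{\fudos}^{m_2}\}$ of $\cB(V_1) = \cB(\cV(1,2))$ from Proposition \ref{pr:-1block}, yields immediately that $B$ is a basis of $\cB$. The Hilbert series computation then gives $\GK \cB = 2$.

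For the second half, all the listed relations \eqref{eq:rels B(V(1,2))}, \eqref{eq:lstr-rels&11disc-1}, \eqref{eq:lstr1omega1-qserre-g=1}, \eqref{eq:lstr-rels&1-1disc-g=1}, \eqref{eq:lstr-rels-ghost-int-trivial}, \eqref{eq:lstr-rels-d(21)-diagpart-1}, \eqref{eq:lstr-rels-d(21)-diagpart-2}, \eqref{eq:lstr-rels-d(21)} hold in $\cB$ as discussed at the beginning of the subsection. Let $\cBt$ denote the quotient of $T(V)$ by these relations; the evident surjection $\cBt \twoheadrightarrow \cB$ will be our target isomorphism. I will show that the subspace $I := \operatorname{span} B \subseteq \cBt$ is a left ideal. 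The containments $x_1 I \subseteq I$ (by definition) and $x_{\fudos} I \subseteq I$ (by \eqref{eq:rels B(V(1,2))}) are immediate. For $x_j I \subseteq I$ with $j \in \{2,3,4\}$, I will use Lemma \ref{lemma:other relations lstr-d(21)} to obtain an algebra map $\phi : \cB(W) \to \cBt$ with $\phi(y_1)=x_{\fudos 2}$, $\phi(y_2)=x_3$, $\phi(y_3)=x_2$, $\phi(y_4)=x_4$, so that $x_j \phi(B_W) \subseteq \phi(B_W)$ for each such $j$ via the left multiplication rules inside $\cB(W)$. Writing $I = \sum_{m_1, m_2} \ku \, x_1^{m_1} x_{\fudos}^{m_2} \phi(B_W)$ and using \eqref{eq:lstr-rels-ghost-int-trivial} to commute the powers of $x_1, x_{\fudos}$ past $x_j$ (for $j\ge 2$), together with \eqref{eq:rels B(V(1,2))} to reorder the block variables when the commutator produces a lower-order correction, one concludes $x_j I \subseteq I$, exactly as in the proof of Proposition \ref{pr:lstr-a(10)1}.

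Since $1 \in I$, the left ideal $I$ equals $\cBt$, so $B$ spans $\cBt$. Combined with the linear independence of $B$ in the quotient $\cB$, this forces $\cBt \simeq \cB$, completing the proof. The main bookkeeping obstacle is Lemma \ref{lemma:other relations lstr-d(21)}, i.e.\ verifying that the map $\phi$ is well-defined: this requires checking that $\phi$ annihilates each of the relations \eqref{eq:lstr-rels-d(21)-K-1}--\eqref{eq:lstr-rels-d(21)-K-4} of $\cB(W)$. The cubic and sextic power relations transfer verbatim from \eqref{eq:lstr-rels-d(21)-diagpart-2} and \eqref{eq:lstr-rels-d(21)}; the quantum Serre relations reduce either to entries of \eqref{eq:lstr-rels-d(21)-diagpart-1} or, in the case of $[[y_{124}, y_2]_c, y_2]_c$, to a computation of the form $\phi([[y_{124}, y_2]_c, y_2]_c) = (\ad_c x_{\fudos})[[x_{234}, x_3]_c, x_3]_c = 0$ using \eqref{eq:lstr-rels-ghost-int-trivial} to pull $x_{\fudos}$ through $x_3$ and then the last relation in \eqref{eq:lstr-rels-d(21)-diagpart-1}. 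Apart from this verification, which is routine but lengthy, everything else follows the established template.
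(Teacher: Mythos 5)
Your proposal is correct and follows the same template as the paper, whose proof of this Proposition is literally ``Analogous to Proposition \ref{pr:lstr-a(20)1}''; you have simply written out the steps of that analogy (basis via $\cB\simeq K\#\cB(V_1)$ using Remark \ref{rem:relations lstr-d(21)}, then left-ideal argument via Lemma \ref{lemma:other relations lstr-d(21)} and the algebra map $\phi$). One small inherited slip: the block $V_1$ here is the Jordan plane $\cV(1,2)$, so the basis $\{x_1^{m_1}x_{\fudos}^{m_2}\}$ comes from Proposition \ref{pr:1block} rather than Proposition \ref{pr:-1block}, a citation that the paper itself misstates throughout \S\ref{subsection:points-block-presentation}.
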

\pf
Analogous to Proposition \ref{pr:lstr-a(20)1}.
\epf

\subsubsection{The Nichols algebra $\cB(\lstr(A_2, 2))$} \label{subsubsection:lstr-a-22}

The subalgebra generated by $x_2$, $x_3$ is a Nichols algebra of type $A_2$. Thus,
\begin{align}\label{eq:lstr-rels-a-22-diagpart}
x_2^2 &=0, & x_3^2 &=0, & x_{32}^2&=0.
\end{align}

\medskip

Let $W$ be a braided vector space of diagonal type with Dynkin diagram
$$
\xymatrix{ \overset{-1}{\circ} \ar@{-}[r]^{-1}  &\overset{-1}{\circ} \ar@{-}[r]^{-1}  \ar@{-}[d]^{-1}
& \overset{-1}{\circ}  \\ & \overset{-1}{\circ} & }.
$$
By \cite[Thm. 3.1]{Ang-crelle}, $\cB(W)$ is presented by generators $y_i$, $i\in\I_4$, and relations
\begin{align}
& [y_{123},y_2]_c, && y_{13}, && y_1^2, && y_{12}^2,  && y_{123}^2, && [y_{124}, y_3]_c^2, \label{eq:lstr-rels-a-22-K-1} \\
& [y_{124},y_2]_c, && y_{34}, && [[y_{124}, y_3]_c,y_2]_c^2, && y_{124}^2, && y_2^2, && y_{23}^2, \label{eq:lstr-rels-a-22-K-2} \\
& [y_{324},y_2]_c, && y_{14}, && [y_{24}, y_3]_c^2, && y_{24}^2, && y_3^2, && y_4^2.\label{eq:lstr-rels-a-22-K-3}
\end{align}
Here is a basis of $\cB(W)$: $B_W=$
\begin{align*}
\{  y_1^{n_1} y_{12}^{n_2}  y_{123}^{n_3} y_{1234}^{n_4} [y_{1234},y_2]_c^{n_5} y_{124}^{n_6}
y_2^{n_7} y_{23}^{n_8} y_{234}^{n_9} y_{24}^{n_{10}} y_3^{n_{11}} y_4^{n_{12}} : \, 0 \le n_i <2 \}.
\end{align*}

\begin{remark}\label{rem:relations lstr-a-22}
\begin{sloppypar}
By Lemma \ref{lemma:braiding-K-weak-block-points}, $K^1$ is isomorphic to $W$ as braided vector spaces.
Hence there exists an isomorphism of braided Hopf algebras
${\psi: \cB(W)\to K}$ such that $\psi(y_1)=x_{\fudos\fudos 2}$, $\psi(y_2)=x_{3}$, $\psi(y_3)=x_{\fudos 2}$, $\psi(y_4)=x_2$.
Thus, the following identities hold in $\cB(\lstr(A_2,2))$:
\begin{align}\label{eq:lstr-rels-a-22}
& & & x_{\fudos\fudos 23}^2=0, &
& [x_{\fudos\fudos 23}, x_{\fudos 2}]_c^2=0, &
& [x_{\fudos\fudos 23}, x_2]_c, x_{\fudos 2}]_c^2=0, \\ \notag
& [x_{\fudos\fudos 23},x_2]_c^2=0, &
& x_{3\fudos 2}^2=0, &
& [x_{32},x_{\fudos 2}]_c^2=0, &
& [[x_{\fudos\fudos 23}, x_2]_c, x_{\fudos 2}]_c, x_3]_c^2=0,
\end{align}
and the following set is a basis of $K$:
\begin{multline*}
B_K=\{ x_{\fudos\fudos 2}^{n_1} x_{\fudos\fudos 23}^{n_2}  [x_{\fudos\fudos 23}, x_{\fudos 2}]_c^{n_3}
[[x_{\fudos\fudos 23},x_2]_c,x_{\fudos 2}]_c^{n_4} [[[x_{\fudos\fudos 23},x_2]_c,x_{\fudos 2}]_c,x_3]_c^{n_5} \\
[x_{\fudos\fudos 23}, x_2]_c^{n_6} x_3^{n_7} x_{3\fudos 2}^{n_8} [x_{32},x_{\fudos 2}]^{n_9} x_{32}^{n_{10}}
x_{\fudos 2}^{n_{11}} x_2^{n_{12}} : \, 0 \le n_i <2 \}.
\end{multline*}
\end{sloppypar}
\end{remark}

\begin{lemma}\label{lemma:other relations lstr-a-22}
Let $\cB$ be a quotient algebra of $T(V)$. Assume that \eqref{eq:lstr-rels&11disc-qserre-g=2}, \eqref{eq:lstr-rels&1-1disc-g=2},
\eqref{eq:lstr-rels-ghost-int-trivial}, \eqref{eq:lstr-rels-a-22-diagpart}, \eqref{eq:lstr-rels-a-22} hold in $\cB$.
Then there exists an algebra map $\phi:\cB(W)\to \cB$ such that $\phi(y_1)=x_{\fudos\fudos 2}$, $\phi(y_2)=x_{3}$,
$\phi(y_3)=x_{\fudos 2}$, $\phi(y_4)=x_2$.
\end{lemma}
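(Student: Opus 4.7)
The plan is to follow the template used in Lemmas \ref{lemma:other relations lstr-a(10)1}, \ref{lemma:other relations lstr-a(10)2}, \ref{lemma:other relations lstr-a(10)3}, \ref{lemma:other relations lstr-a(20)1} and \ref{lemma:other relations lstr-d(21)}. First I would define the algebra homomorphism $\Phi:T(W)\to\cB$ by specifying the images of the generators exactly as prescribed in the statement, and then show that $\Phi$ annihilates each of the 18 defining relators listed in \eqref{eq:lstr-rels-a-22-K-1}, \eqref{eq:lstr-rels-a-22-K-2} and \eqref{eq:lstr-rels-a-22-K-3}; once this is verified, $\Phi$ factors through a map $\phi:\cB(W)\to\cB$ with the required properties.

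The verification splits naturally into three groups of relators. The first group consists of the $q$-commutator vanishings of the form $[y_{123},y_2]_c$, $[y_{124},y_2]_c$, $[y_{324},y_2]_c$, $y_{13}$, $y_{14}$ and $y_{34}$. These should be handled using the block identity \eqref{eq:lstr-rels&11disc-qserre-g=2} (which supplies $x_{\fudos\fudos\fudos 2}=0$, i.e.\ $\ad_c x_{\fudos}$ annihilates $x_{\fudos\fudos 2}$) together with the weak-interaction commutation rules \eqref{eq:lstr-rels-ghost-int-trivial} and the relation $x_{\fudos 2}^2=0$ from \eqref{eq:lstr-rels&1-1disc-g=2}; for instance, $\Phi([y_{123},y_2]_c)=[x_{\fudos\fudos 23},x_3]_c=(\ad_c x_{\fudos})[x_{\fudos 23},x_3]_c$ up to a scalar and a correction term built from the known vanishings. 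The second group consists of the quadratic relations $y_i^2$ ($i\in\I_4$), $y_{12}^2$, $y_{23}^2$, $y_{24}^2$, $y_{123}^2$, $y_{124}^2$, which translate directly into the relations \eqref{eq:lstr-rels-a-22-diagpart} (for $x_2^2,x_3^2,x_{32}^2$), the block relations \eqref{eq:lstr-rels&1-1disc-g=2} (for $x_{\fudos 2}^2$ and $x_{\fudos\fudos 2}^2$), and the first four items of the hypothesized list \eqref{eq:lstr-rels-a-22} (for $x_{\fudos\fudos 23}^2$, $x_{3\fudos 2}^2$, $[x_{32},x_{\fudos 2}]_c^2$ and $[x_{\fudos\fudos 23},x_2]_c^2$).

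The third group consists of the ``deeper'' relators $[y_{124},y_3]_c^2$, $[y_{24},y_3]_c^2$ and $[[y_{124},y_3]_c,y_2]_c^2$. Here I would first re-express each image as an iterated $q$-bracket of the relevant $x$'s and use the $q$-Jacobi identity together with the already-established vanishings (in particular $[x_{\fudos\fudos 23},x_2]_c$-relations and the commutation of $x_{\fudos 2}$ with $x_{\fudos\fudos 2}$ up to lower-degree terms) to rewrite them in a normal form involving the iterated brackets $[x_{\fudos\fudos 23},x_{\fudos 2}]_c$ and $[[x_{\fudos\fudos 23},x_2]_c,x_{\fudos 2}]_c$, whose squares are the two remaining items in \eqref{eq:lstr-rels-a-22}. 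This third group is the main obstacle: unlike the previous Lemmas (where every relator maps almost verbatim to either a block relation or one of the assumed items), the iterated brackets in $\cB(W)$ differ from those singled out in \eqref{eq:lstr-rels-a-22} only up to non-trivial rearrangements inside the block, and controlling these rearrangements requires careful use of \eqref{eq:lstr-rels&11disc-qserre-g=2}. Once the identification of these iterated brackets is made, the squares vanish by \eqref{eq:lstr-rels-a-22}, completing the verification and hence the Lemma.
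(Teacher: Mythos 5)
Your overall plan is the same as the paper's: define $\Phi:T(W)\to\cB$ on generators and check that it kills the eighteen defining relators of $\cB(W)$ listed in \eqref{eq:lstr-rels-a-22-K-1}--\eqref{eq:lstr-rels-a-22-K-3}. However, the one computation you actually display is wrong, and it reveals a real confusion about what $\Phi$ does to the iterated generators. Since $\phi(y_1)=x_{\fudos\fudos 2}$, $\phi(y_2)=x_3$, $\phi(y_3)=x_{\fudos 2}$, one has $\Phi(y_{12})=x_{\fudos\fudos 23}$ but $\Phi(y_{123})=[x_{\fudos\fudos 2},[x_3,x_{\fudos 2}]_c]_c$, which, modulo $\Phi(y_{13})=0$, equals $[x_{\fudos\fudos 23},x_{\fudos 2}]_c$ and not $x_{\fudos\fudos 23}$. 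Hence $\Phi([y_{123},y_2]_c)=[[x_{\fudos\fudos 23},x_{\fudos 2}]_c,x_3]_c$, whereas what you wrote, $[x_{\fudos\fudos 23},x_3]_c$, is $\Phi([y_{12},y_2]_c)$ -- which is not a defining relator of $\cB(W)$ at all. The correct relator is handled in the paper by rewriting $[[x_{\fudos\fudos 23},x_{\fudos 2}]_c,x_3]_c=[x_{\fudos\fudos 23},x_{\fudos 23}]_c$ (using the $q$-commutation \eqref{eq:lstr-rels-ghost-int-trivial} and $x_3^2=0$) and then expanding this bracket to exhibit $x_{\fudos 23}^2$, which vanishes because $x_{3\fudos 2}^2=0$ from \eqref{eq:lstr-rels-a-22}. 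So the ingredients you invoke (the $q$-Serre relation \eqref{eq:lstr-rels&11disc-qserre-g=2} and $x_{\fudos 2}^2=0$) are not the ones that actually make this relator vanish.

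There is a second, structural gap. You classify $[y_{124},y_3]_c^2$, $[y_{24},y_3]_c^2$ and $[[y_{124},y_3]_c,y_2]_c^2$ as "the main obstacle" and leave them unresolved, saying only that the needed identifications "require careful use" of \eqref{eq:lstr-rels&11disc-qserre-g=2}. In fact $\Phi([y_{24},y_3]_c^2)=[x_{32},x_{\fudos 2}]_c^2$ is a verbatim item of \eqref{eq:lstr-rels-a-22} (you also mislist this element as belonging to the second group instead of $[x_{\fudos\fudos 23},x_{\fudos 2}]_c^2$, the image of $y_{123}^2$), and the other two map, after using $\Phi(y_{13})=\Phi(y_{14})=0$ to move $\ad_c x_{\fudos\fudos 2}$ inside, to items 3 and 7 of \eqref{eq:lstr-rels-a-22}. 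The point you are missing is that the relators $y_{13}$, $y_{14}$, $y_{34}$ must be disposed of first -- $y_{14}$ and $y_{34}$ live in the subalgebra generated by $V_1\oplus\ku x_2\simeq\cB(\lstr(-1,2))$ and vanish by Lemma~\ref{lemma:zt zk}, while $y_{13}$ vanishes by Lemma~\ref{lemma:braiding-K-weak-block-points} -- and only then do the iterated brackets $\Phi(y_{123})$, $\Phi(y_{124})$ match the bracket forms singled out in \eqref{eq:lstr-rels-a-22}. Absent this bookkeeping, and with the displayed computation applying to the wrong relator, the proposal does not yet prove the Lemma.
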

\pf
Let $\Phi:T(W)\to \cB$ be the algebra map defined as $\phi$ on the $y_i$'s. We claim that $\Phi$ annihilates
all the relations in \eqref{eq:lstr-rels-a-22-K-1}, \eqref{eq:lstr-rels-a-22-K-2}, \eqref{eq:lstr-rels-a-22-K-3},
and the Lemma follows. By \eqref{eq:lstr-rels-ghost-int-trivial} and \eqref{eq:lstr-rels-a-22},
\begin{align*}
\Phi & ([y_{123},y_2]_c) =[[x_{\fudos\fudos 23}, x_{\fudos 2}]_c,x_3]_c = [x_{\fudos\fudos 23}, x_{\fudos 2 3}]_c \\
&= (x_{\alpha}x_{\fudos 2 3}+q_{12}q_{13} x_{\fudos 2 3}x_{\alpha})x_{\fudos 2 3}
-q_{12}q_{13} x_{\fudos 2 3}(x_{\alpha}x_{\fudos 2 3}+q_{12}q_{13} x_{\fudos 2 3}x_{\alpha})=0,
\end{align*}
since $x_{3\fudos 2}^2=0$ implies that $x_{\fudos 23}^2=0$, and $x_3^2=0$. Now $[x_{\fudos 23},x_{23}]_c=0$ because
$x_{23}^2=0$, and using again that $x_{\fudos 23}^2=0$,
\begin{align*}
\Phi([y_{124},y_2]_c) &= [[x_{\fudos\fudos 23}, x_2]_c,x_3]_c = [x_{\fudos\fudos 23}, x_{23}]_c=-2q_{12}q_{13}\, x_{\fudos 23}^2 =0.
\end{align*}
Similarly,
\begin{align*}
\Phi([y_{324},y_2]_c) &= [[x_{\fudos 23},x_2]_c,x_3]_c= [x_{\fudos 23},x_{23}]_c=0.
\end{align*}
Finally, $\Phi$ applies the remaining relations to \eqref{eq:lstr-rels&11disc-qserre-g=2}, \eqref{eq:lstr-rels&1-1disc-g=2},
\eqref{eq:lstr-rels-ghost-int-trivial}, \eqref{eq:lstr-rels-a-22-diagpart}, \eqref{eq:lstr-rels-a-22},
and the claim follows.
\epf

\begin{prop} \label{pr:lstr-a-22}
The algebra $\cB(\lstr(A_2,2))$ is presented by generators $x_i$, $i\in \Iw_3$, and relations
\eqref{eq:rels B(V(1,2))}, \eqref{eq:lstr-rels&11disc-1}, \eqref{eq:lstr-rels&11disc-qserre-g=2}, \eqref{eq:lstr-rels&1-1disc-g=2},
\eqref{eq:lstr-rels-ghost-int-trivial}, \eqref{eq:lstr-rels-a-22-diagpart}, \eqref{eq:lstr-rels-a-22}.
The set $B=$
\begin{multline*}
\{ x_1^{m_1} x_{\fudos}^{m_2} x_{\fudos\fudos 2}^{n_1} x_{\fudos\fudos 23}^{n_2}  [x_{\fudos\fudos 23}, x_{\fudos 2}]_c^{n_3}
[[x_{\fudos\fudos 23},x_2]_c,x_{\fudos 2}]_c^{n_4} [[[x_{\fudos\fudos 23},x_2]_c,x_{\fudos 2}]_c,x_3]_c^{n_5} \\
[x_{\fudos\fudos 23}, x_2]_c^{n_6} x_3^{n_7} x_{3\fudos 2}^{n_8} [x_{32},x_{\fudos 2}]^{n_9} x_{32}^{n_{10}}
x_{\fudos 2}^{n_{11}} x_2^{n_{12}} : \, 0\le m_1,m_2, \, 0\le n_i <2 \}.
\end{multline*}
is a basis of $\cB(\lstr(A_2,2))$ and $\GK \cB(\lstr(A_2,2)) = 2$.
\end{prop}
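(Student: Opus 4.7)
The plan follows the template established in the proofs of Propositions \ref{pr:lstr-a(10)1} through \ref{pr:lstr-d(21)}. First I would exploit the Majid--Radford decomposition $\cB(\lstr(A_2,2)) \simeq K \# \cB(V_1)$ from \S\ref{subsubsec:algK-block-points}. Combining the basis of $K$ given in Remark \ref{rem:relations lstr-a-22} with the basis $\{x_1^{m_1} x_{\fudos}^{m_2}\}$ of $\cB(V_1) = \cB(\cV(1,2))$ from Proposition \ref{pr:1block}, one obtains immediately that $B$ is linearly independent in $\cB := \cB(\lstr(A_2,2))$. The resulting Hilbert series — a polynomial in $t$ of bounded degree times $(1-t)^{-2}$ — yields $\GK \cB = 2$.

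Next I would verify that each of the listed relations holds in $\cB$. Relations \eqref{eq:rels B(V(1,2))}, \eqref{eq:lstr-rels&11disc-1}, \eqref{eq:lstr-rels&11disc-qserre-g=2} and \eqref{eq:lstr-rels&1-1disc-g=2} all hold because the subspace spanned by $x_1, x_{\fudos}, x_2$ is a braided subspace of type $\lstr(-1,2)$, whose Nichols algebra was presented in \S\ref{subsection:points-block-presentation}. The relations \eqref{eq:lstr-rels-ghost-int-trivial} hold by the weak interaction hypothesis $q_{1j}q_{j1}=1$ with $\ghost_j = 0$ for $j \ge 3$. The diagonal relations \eqref{eq:lstr-rels-a-22-diagpart} come from the Nichols algebra of $V_J$, of Cartan type $A_2$ with all labels $-1$, and Remark \ref{rem:relations lstr-a-22} supplies \eqref{eq:lstr-rels-a-22}. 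This yields a surjection $\cBt \twoheadrightarrow \cB$, where $\cBt$ denotes the quotient of $T(V)$ by the listed relations.

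The main step is then to show that $B$ also spans $\cBt$. Letting $\phi: \cB(W) \to \cBt$ be the algebra map from Lemma \ref{lemma:other relations lstr-a-22}, I would set $I = \sum_{m_1,m_2 \ge 0} \ku\, x_1^{m_1} x_{\fudos}^{m_2} \phi(B_W)$ and prove that $I$ is a left ideal. The inclusions $x_1 I \subset I$ and $x_{\fudos} I \subset I$ are immediate from the Jordan relation \eqref{eq:rels B(V(1,2))}. For the generators $x_2, x_3, x_{\fudos 2}$, left multiplication preserves $\phi(B_W)$ by Lemma \ref{lemma:other relations lstr-a-22}; one then transports these past the prefactor $x_1^{m_1} x_{\fudos}^{m_2}$ using \eqref{eq:lstr-rels-ghost-int-trivial} (to move $x_1, x_{\fudos}$ past $x_3$) and \eqref{eq:lstr-rels&1-1disc-g=2} together with \eqref{eq:rels B(V(1,2))} (to move them past $x_2$ and $x_{\fudos 2}$, producing only lower-order correction terms). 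Since $1 \in I$, it follows that $\cBt = I$; coupled with the linear independence of $B$ in $\cB$, the projection $\cBt \twoheadrightarrow \cB$ must be an isomorphism. I expect the principal obstacle to lie entirely in Lemma \ref{lemma:other relations lstr-a-22}, namely in verifying that $\phi$ annihilates the mixed relations such as $[y_{123},y_2]_c$, $[y_{124},y_2]_c$, $[y_{324},y_2]_c$: these translate into subtle identities linking the block generators $x_1, x_{\fudos}$ with the diagonal generators $x_2, x_3$, and rely essentially on $x_3^2 = 0$ and $x_{\fudos 23}^2=0$ (the latter a consequence of $x_{3\fudos 2}^2=0$) to force the quartic super-Jordan commutators to collapse.
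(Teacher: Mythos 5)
Your proposal is correct and unfolds exactly the argument the paper invokes by writing ``Analogous to Proposition~\ref{pr:lstr-a(20)1}'': split as $K\#\cB(V_1)$, deduce linear independence and the Hilbert series, check the relations, and show the span $I$ of $B$ is a left ideal of $\cBt$ via Lemma~\ref{lemma:other relations lstr-a-22} and commutation of $x_2,x_3,x_{\fudos 2}$ past $x_1^{m_1}x_{\fudos}^{m_2}$. You also correctly use Proposition~\ref{pr:1block} (Jordan plane, since $\epsilon=1$) for the basis of $\cB(V_1)$, where the paper's template proofs cite Proposition~\ref{pr:-1block} by an apparent slip.
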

\pf
Analogous to Proposition \ref{pr:lstr-a(20)1}.
\epf

\subsubsection{The Nichols algebra $\cB(\lstr(A_{\theta - 1}))$} \label{subsubsection:lstr-a-n}

The subalgebra generated by $x_i$, $i\in\I_{2,\theta}$, is a Nichols algebra of type $A_{\theta - 1}$. Thus,
\begin{align}\label{eq:lstr-rels-a-n-diagpart}
x_{ij}^2 &=0, &  &2\le i\le j\le\theta,\\
\notag
[x_{k-1 \, k \, k+1}, x_k]&=0, & & 3\le k <\theta.
\end{align}

\medskip

Let $W$ be a braided vector space of diagonal type with Dynkin diagram
$$
\xymatrix{ \overset{-1}{\circ} \ar@{-}[r]^{-1}  &\overset{-1}{\circ} \ar@{-}[r]^{-1}  \ar@{-}[d]^{-1}
& \overset{-1}{\circ}  \\ & \overset{-1}{\circ} & } \dots
\xymatrix{ \overset{-1}{\circ} \ar@{-}[r]^{-1}  & \overset{-1}{\circ}\\ & }.
$$
Set as above $y_{ij}$, $i\leq j$, and for $k<\ell\in\I_\theta$,
\begin{align*}
\ch_{1\ell} & = [y_1, y_{3\ell}]_c, & \ch_{k\ell} & = [\ch_{1\ell}, y_{2k}], \quad k>1.
\end{align*}
We order the letters $y_{ij}$, $\ch_{k\ell}$ as follows:
\begin{itemize}
  \item $y_{ij}<y_{k\ell}$, $\ch_{ij}<\ch_{k\ell}$ if either $i<k$ or else $i=k$, $j<\ell$,
  \item $y_{1j}<\ch_{k\ell}<y_{mn}$ for all $j,k,\ell,m,n\in\I_\theta$, $m\geq 2$.
\end{itemize}
Given $\overline{a}=(a_n)\in\{0,1\}^{\theta(\theta-1)}$, let $\mathbf{y}^{\overline{a}}$ be the product of $y_{ij}^{a_n}$,
$\ch_{k\ell}^{\, a_n}$ with the previous order, where $n$ is the position of the letter $y_{ij}$,
$\ch_{k\ell}$.
By \cite[Thm. 3.1]{Ang-crelle}, $\cB(W)$ is presented by generators $y_i$, $i\in\I_\theta$, and relations
\begin{align} \label{eq:lstr-rels-a-n-K-1}
& y_{ij}^2, &  &[y_{k-1 \, k \, k+1}, y_k], \quad 3\le k <\theta,\\ \label{eq:lstr-rels-a-n-K-2}
& \ch_{k\ell}^2, &  &[y_{1 34}, y_3].
\end{align}
Here is a basis of $\cB(W)$: $B_W=\{ \mathbf{y}^{\overline{a}}: \overline{a}\in\{0,1\}^{\theta(\theta-1)} \}.$

\begin{remark}\label{rem:relations lstr-a-n}
\begin{sloppypar}
By Lemma \ref{lemma:braiding-K-weak-block-points}, $K^1$ is isomorphic to $W$ as braided vector spaces.
Hence there exists an isomorphism of braided Hopf algebras ${\psi: \cB(W)\to K}$ such that  $\psi(y_1)=x_{\fudos 2}$,
$\psi(y_j)=x_j$, $j\geq 2$. Set as above $x_{ij}$, $2\le i\le j$, and
\begin{align*}
x_{1j}&=\psi(y_{1j})=[x_{\fudos 2}, y_{2 j}]_c, &
\ya_{k\ell} & = \psi(\ch_{k\ell}) = [\ya_{1\ell}, x_{2k}], \quad k>1, \\
\ya_{1\ell} &=\psi(\ch_{1\ell}) = [x_{\fudos 2}, x_{3\ell}]_c, &
\mathbf{x}^{\overline{a}} &=\psi( \mathbf{y}^{\overline{a}}), \quad \overline{a}\in\{0,1\}^{\theta(\theta-1)}.
\end{align*}
Thus, the following identities hold in $\cB(\lstr(A_{\theta - 1}))$:
\begin{align}\label{eq:lstr-rels-a-n}
& x_{1j}^2, &  &\ya_{k\ell}^2, & j,k,\ell\in\I_\theta, & k<\ell,
\end{align}
and $B_K=\{ \mathbf{x}^{\overline{a}}: \overline{a}\in\{0,1\}^{\theta(\theta-1)} \}$ is a basis of $K$.
\end{sloppypar}
\end{remark}

\begin{lemma}\label{lemma:other relations lstr-a-n}
Let $\cB$ be a quotient algebra of $T(V)$. Assume that \eqref{eq:lstr1omega1-qserre-g=1}, \eqref{eq:lstr-rels&1-1disc-g=1},
\eqref{eq:lstr-rels-ghost-int-trivial}, \eqref{eq:lstr-rels-a-n-diagpart}, \eqref{eq:lstr-rels-a-n} hold in $\cB$.
Then there exists an algebra map $\phi:\cB(W)\to \cB$ such that $\phi(y_1)=x_{\fudos 2}$, $\phi(y_j)=x_j$, $j\geq 2$.
\end{lemma}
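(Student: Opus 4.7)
The plan is to define $\Phi \colon T(W)\to \cB$ by $\Phi(y_1)=x_{\fudos 2}$ and $\Phi(y_j)=x_j$ for $j\in \I_{2,\theta}$, and then verify that $\Phi$ kills every defining relation \eqref{eq:lstr-rels-a-n-K-1}--\eqref{eq:lstr-rels-a-n-K-2} of $\cB(W)$; the map $\phi$ then exists by the universal property. Before starting the case analysis, I would record two stabilization facts that get used repeatedly: under $\Phi$,
\begin{align*}
\Phi(y_{ij}) &= x_{ij} \quad (2\le i\le j\le \theta), &
\Phi(y_{1j}) &= [x_{\fudos 2},x_{2j}]_c \quad (j\ge 2),\\
\Phi(\ch_{1\ell}) &= [x_{\fudos 2},x_{3\ell}]_c = \ya_{1\ell}, &
\Phi(\ch_{k\ell}) &= [\ya_{1\ell},x_{2k}]_c = \ya_{k\ell}\ (k>1),
\end{align*}
all of which are straightforward from the definitions of the iterated braided commutators in $T(V)$.

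Next I would dispatch the easy relations. Those in \eqref{eq:lstr-rels-a-n-K-1} of the form $y_{ij}^2$ with $i,j\ge 2$ and the Serre relations $[y_{k-1\,k\,k+1},y_k]$ with $k\ge 3$ involve only indices in $\I_{2,\theta}$, so they are sent by $\Phi$ into the subalgebra generated by $x_2,\dots,x_\theta$ and are annihilated directly by \eqref{eq:lstr-rels-a-n-diagpart}. The relations $y_{1j}^2$ are sent to $x_{1j}^2$, which vanish by \eqref{eq:lstr-rels-a-n}. For the first family in \eqref{eq:lstr-rels-a-n-K-2} one distinguishes the cases $k=1$ and $k>1$: in both one lands on an $\ya_{k\ell}^2$, again $0$ by \eqref{eq:lstr-rels-a-n}.

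The main obstacle is the single isolated relation $[y_{134},y_3]$ in \eqref{eq:lstr-rels-a-n-K-2}, which mixes the block generator $y_1$ with the diagonal tail. Under $\Phi$ it becomes $[[x_{\fudos 2},x_{34}]_c,x_3]_c$. Here I would exploit the $q$-commutation of $x_{\fudos}$ (and $x_1$) with $x_3,x_4$ provided by \eqref{eq:lstr-rels-ghost-int-trivial} to push the block letters to the outside: using the braided Jacobi identity
\begin{equation*}
[[u,v]_c,w]_c = [u,[v,w]_c]_c + q_{vw}[[u,w]_c,v]_c - q_{uv}[v,[u,w]_c]_c
\end{equation*}
with $u=x_{\fudos 2}$, $v=x_{34}$, $w=x_3$ and repeatedly moving $x_{\fudos}$ past the $x_j$'s ($j\ge 3$), the expression reduces to a scalar multiple of $[x_{\fudos 2},[x_{34},x_3]_c]_c = [x_{\fudos 2},x_{343}]_c$, together with a term involving $[x_{\fudos 2},x_3]_c$. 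Both of these vanish: the bracket $[x_{34},x_3]_c$ is proportional to $x_{343}=0$ because it is a member of the family of quantum Serre relations $[x_{k-1\,k\,k+1},x_k]=0$ from \eqref{eq:lstr-rels-a-n-diagpart} (inside the subalgebra of diagonal type), while $[x_{\fudos 2},x_3]_c$ is zero by \eqref{eq:lstr-rels-ghost-int-trivial} combined with $\epsilon=-1$ and the fact that $x_{\fudos 2}$ is a homogeneous element with $x_{\fudos}$ and $x_2$ both $q$-commuting with $x_3$ (so the interaction between the block and point $3$ is weak and trivial in the $x_3$-direction).

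Once all defining relations are checked, $\Phi$ descends uniquely to the required algebra map $\phi\colon \cB(W)\to \cB$. I expect the braided Jacobi bookkeeping for $[y_{134},y_3]$ to be the only nontrivial computation; everything else is a direct substitution into the hypotheses.
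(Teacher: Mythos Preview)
Your overall framework is correct and matches the paper: define $\Phi$ on $T(W)$ and check that it kills each defining relation of $\cB(W)$. The easy relations are handled fine. The problem is the key relation $[y_{134},y_3]_c$, where both vanishing claims you make are false.

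First, $[x_{\fudos 2},x_3]_c$ is \emph{not} zero. Relation \eqref{eq:lstr-rels-ghost-int-trivial} says only that $x_1$ and $x_{\fudos}$ $q$-commute with $x_j$ for $j\ge 3$; it says nothing about $x_2$. In fact $q_{23}q_{32}=-1$ here, so $x_{23}\ne 0$, and $[x_{\fudos 2},x_3]_c=x_{\fudos 23}=\Phi(y_{12})$ is precisely one of the (nonzero) PBW generators. Second, $[x_{34},x_3]_c$ is not zero either: from $x_{334}=0$ one gets $x_3x_{34}=q_{33}q_{34}\,x_{34}x_3$, and then $[x_{34},x_3]_c=(1-q_{33}^2q_{34}q_{43})x_{34}x_3=2x_{34}x_3\ne 0$. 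The relation $[x_{k-1\,k\,k+1},x_k]_c=0$ from \eqref{eq:lstr-rels-a-n-diagpart} for $k=3$ is $[x_{234},x_3]_c=0$, not anything about $x_{343}$ or $[x_{34},x_3]_c$. So your Jacobi decomposition with $u=x_{\fudos 2}$ produces two nonzero terms, and you would need a further cancellation argument you have not supplied.

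The paper's computation avoids this by pulling $\ad_c x_{\fudos}$ out \emph{first}, which is legitimate exactly because $x_{\fudos}$ (alone) $q$-commutes with $x_3,x_4$ by \eqref{eq:lstr-rels-ghost-int-trivial}. One gets
\[
[x_{\fudos 2},x_{34}]_c=(\ad_c x_{\fudos})\,x_{234},
\qquad
[[x_{\fudos 2},x_{34}]_c,x_3]_c=(\ad_c x_{\fudos})\,[x_{234},x_3]_c=0,
\]
the last equality by the genuine Serre relation $[x_{234},x_3]_c=0$ from \eqref{eq:lstr-rels-a-n-diagpart}. This is a one-line replacement for your Jacobi argument.
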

\pf
Let $\Phi:T(W)\to \cB$ be the algebra map defined as $\phi$ on the $y_i$'s. We claim that $\Phi$ annihilates
all the relations in \eqref{eq:lstr-rels-a-n-K-1}, \eqref{eq:lstr-rels-a-n-K-2},
and the Lemma follows. By \eqref{eq:lstr-rels-ghost-int-trivial} and \eqref{eq:lstr-rels-a-n-diagpart},
\begin{align*}
\Phi([y_{134},y_3]_c) &= [[x_{\fudos 2}, x_{34}]_c,x_3]_c = (\ad_c x_{\fudos})[x_{234},x_3]_c=0.
\end{align*}
Finally, $\Phi$ applies the remaining relations to \eqref{eq:lstr-rels-ghost-int-trivial}, \eqref{eq:lstr-rels-a-n-diagpart},
\eqref{eq:lstr-rels-a-n}, and the claim follows.
\epf

\begin{prop} \label{pr:lstr-a-n}
The algebra $\cB(\lstr(A_{\theta - 1}))$ is presented by generators $x_i$, $i\in \Iw_{\theta}$, and relations
\eqref{eq:rels B(V(1,2))}, \eqref{eq:lstr-rels&11disc-1},
\eqref{eq:lstr1omega1-qserre-g=1}, \eqref{eq:lstr-rels&1-1disc-g=1},
\eqref{eq:lstr-rels-ghost-int-trivial}, \eqref{eq:lstr-rels-a-n-diagpart}, \eqref{eq:lstr-rels-a-n}.
The set $B =\{ x_1^{m_1} x_{\fudos}^{m_2} \mathbf{x}^{\overline{a}}: \, m_i\in\N_0, \, \overline{a}\in\{0,1\}^{\theta(\theta-1)} \}$
is a basis of $\cB(\lstr(A_{\theta - 1}))$ and $\GK \cB(\lstr(A_{\theta - 1})) = 2$.
\end{prop}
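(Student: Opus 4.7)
My plan is to follow the same strategy used in Propositions \ref{pr:lstr-a(10)1}, \ref{pr:lstr-a(10)2}, \ref{pr:lstr-a(10)3}, \ref{pr:lstr-a(20)1}, \ref{pr:lstr-d(21)} and \ref{pr:lstr-a-22}, transposed to this particular case. First I would verify that the listed relations all vanish in $\cB(\lstr(A_{\theta - 1}))$: relations \eqref{eq:rels B(V(1,2))}, \eqref{eq:lstr-rels&11disc-1} hold because $V_1$ is a Jordan plane and the interaction is weak; \eqref{eq:lstr1omega1-qserre-g=1}, \eqref{eq:lstr-rels&1-1disc-g=1} follow from Lemmas \ref{lemma:relations L(pm1,pm1,G)} and \ref{lemma:zt zk} applied to each pair (block, point $j$); \eqref{eq:lstr-rels-ghost-int-trivial} holds since $\ghost_j = 0$, $q_{1j}q_{j1} = 1$ for $j \geq 3$; \eqref{eq:lstr-rels-a-n-diagpart} expresses the defining relations of the subalgebra of type $A_{\theta - 1}$ generated by $x_2, \dots, x_\theta$; and \eqref{eq:lstr-rels-a-n} is recorded in Remark \ref{rem:relations lstr-a-n}. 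So the quotient $\cBt$ of $T(V)$ by these relations projects onto $\cB := \cB(\lstr(A_{\theta -1}))$.

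Next I would prove that $B$ is a basis of $\cB$. This follows from the braided bosonization $\cB \simeq K \# \cB(V_1)$ of \S \ref{subsubsec:algK-block-points}, using Remark \ref{rem:relations lstr-a-n} (which identifies $K \simeq \cB(W)$ via $\psi$ and transports the PBW basis $B_W$ to $B_K$) together with Proposition \ref{pr:1block} (which provides the PBW basis $\{x_1^{m_1} x_{\fudos}^{m_2}\}$ of the Jordan plane). Since $B = \{ x_1^{m_1} x_{\fudos}^{m_2} \mathbf{x}^{\overline{a}} \}$ arises precisely as the product basis, it is linearly independent in $\cB$. From the Hilbert series we read off $\GK \cB = 2$, the $\GK$ of the Jordan plane, since $\dim K^n$ grows polynomially of degree $0$ and $K$ is finite-dimensional as $B_K$ is finite.

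Now to conclude $\cBt \simeq \cB$ it suffices to show that $\cBt$ is spanned by $B$. For this I would verify that the subspace $I$ spanned by $B$ is a left ideal of $\cBt$. Since $1 \in I$ this gives $\cBt = I$. The inclusion $x_1 I \subseteq I$ is trivial from the definition of $B$, and $x_{\fudos} I \subseteq I$ follows from \eqref{eq:rels B(V(1,2))}. For $x_j I \subseteq I$ with $j \in \I_{2,\theta}$, I invoke Lemma \ref{lemma:other relations lstr-a-n}, which provides an algebra map $\phi: \cB(W) \to \cBt$ with $\phi(y_1) = x_{\fudos 2}$ and $\phi(y_j) = x_j$ for $j \geq 2$; under $\phi$ the PBW basis $B_W$ of $\cB(W)$ maps to $\phi(B_W) = \{\mathbf{x}^{\overline{a}}\}$, which is closed on the left by multiplication by any $x_j$, $j \in \I_{2,\theta}$, because $y_j B_W \subset \ku\text{-span}(B_W)$ in $\cB(W)$. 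Writing $I = \sum_{m_1, m_2} \ku\, x_1^{m_1} x_{\fudos}^{m_2}\, \phi(B_W)$ and using \eqref{eq:lstr-rels-ghost-int-trivial} to commute $x_j$ past $x_1^{m_1} x_{\fudos}^{m_2}$ (up to lower order terms for $j=2$, where \eqref{eq:rels B(V(1,2))} produces an additional $x_{\fudos 2}$ factor already in $\phi(B_W)$), exactly as in the analogous step of Proposition \ref{pr:lstr-a(10)1}, yields $x_j I \subseteq I$.

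The only delicate point, and the one that requires the careful preparatory work of Lemma \ref{lemma:other relations lstr-a-n}, is the appearance of the quantum Serre-type relation $[y_{1 34}, y_3] = 0$ in the presentation \eqref{eq:lstr-rels-a-n-K-2} of $\cB(W)$: this relation is not among the ``obvious'' ones from the two-vertex sub-diagrams and must be derived in $\cBt$ from \eqref{eq:lstr-rels-ghost-int-trivial} together with the $A_{\theta -1}$-relation $[x_{234}, x_3] = 0$ from \eqref{eq:lstr-rels-a-n-diagpart}, via $[[x_{\fudos 2}, x_{34}]_c, x_3]_c = (\ad_c x_{\fudos})[x_{234}, x_3]_c = 0$. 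Once this is in place everything is formal and the computation of $\GK$ is immediate from the Hilbert series.
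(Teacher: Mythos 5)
Your proof is correct and follows exactly the approach the paper intends: the paper's own proof of this Proposition is literally the single line ``Analogous to Proposition \ref{pr:lstr-a(20)1},'' and you have correctly transposed that template (bosonization $\cB \simeq K \# \cB(V_1)$, linear independence of $B$ from Remark \ref{rem:relations lstr-a-n} and the Jordan-plane basis, the left-ideal argument via $\phi: \cB(W) \to \cBt$ from Lemma \ref{lemma:other relations lstr-a-n}), and you have also correctly singled out the one non-obvious relation $[y_{134}, y_3]$ and its derivation via $(\ad_c x_{\fudos})[x_{234},x_3]_c = 0$, which is precisely the content of Lemma \ref{lemma:other relations lstr-a-n}. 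One small remark: you cite Proposition \ref{pr:1block} for the PBW basis of $\cB(V_1)$, while the proofs of the analogous Propositions in the paper cite \ref{pr:-1block}; since $V_1$ is here a Jordan plane ($\epsilon = 1$), your citation is the apt one and the paper's appears to be a persistent typo in that subsection.
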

\pf
Analogous to Proposition \ref{pr:lstr-a(20)1}.
\epf

\subsection{Proof of Theorem \ref{thm:points-block-eps-1} ($\epsilon = -1$)} \label{subsec:pf-main-block-points-mild}

Here $\cB(V_1)$ is a super Jordan plane. Let $j \in \I_{2, \theta}$. If $\GK \NA(V) < \infty$, then we see from Table \ref{tab:finiteGK-block-point} the following possibilities:

\begin{table}[ht]
	\caption{}\label{tab:possible-block-points}
	\begin{center}
		\begin{tabular}{|c|c|c|}
			\hline \scriptsize{interaction} $\inc_j = q_{1j}q_{j1}$ &  $q_{jj}$  & $\ghost_j$   \\
			\hline
			$1$ &  $\in \ku^{\times}$ & $0$
			\\ \hline
			$1$ & $\pm 1$ &   \small{discrete}
			\\ \hline
			$-1$  & $-1$  &   1
			
			\\\hline
		\end{tabular}
	\end{center}
\end{table}

We start by a result that will be applied several times.

\begin{lemma}\label{lemma:superJordan-2-mild}
	Let $W = V_1 \oplus U$ be a direct sum of braided vector spaces, where $V_1$ and $U$ have dimension 2, $V_1$ is a $-1$-block
	with basis $x_1, x_{\fudos}$
	and $U$ is of diagonal type with respect to a basis $x_2, x_3$. Assume that both $x_2$ and $x_3$ have mild interaction with $V_1$.
	Then $\GK \cB(W) = \infty$.
\end{lemma}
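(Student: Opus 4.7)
The plan is to first reduce to $q_{22}=q_{33}=-1$ using the single-point theorem, and then to split into three cases according to the value of $r:=q_{23}q_{32}$, exhibiting in each case a diagonal-type braided subspace or associated graded whose Nichols algebra already has infinite $\GK$. For the reduction, I apply Theorem~\ref{thm:pm1bp-mild} to the braided subspaces $V_1\oplus \ku x_j\subset W$, $j\in\{2,3\}$: since both points have mild interaction with $V_1$, either $\GK\cB(V_1\oplus \ku x_j)=\infty$ (and then $\GK\cB(W)=\infty$ at once) or $q_{jj}=-1$ and $a_j=1$. I may thus assume $q_{22}=q_{33}=-1$.

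If $r\notin\{\pm 1\}$, the rank-two braided subspace $\ku x_2\oplus \ku x_3$ is of diagonal type with vertex labels $(-1,-1)$ and edge label $r$. Inspection of Heckenberger's rank-two classification \cite{H-classif} shows that every row carrying both vertex labels $-1$ has edge label in $\{1,-1\}$, so our diagram lies outside the finite-root-system list; Theorem~\ref{thm:nichols-diagonal-finite-gkd} then yields $\GK\cB(\ku x_2\oplus \ku x_3)=\infty$, and hence $\GK\cB(W)=\infty$. If $r=-1$, the rank-three subspace $\ku x_1\oplus \ku x_2\oplus \ku x_3$ is of diagonal type with $q_{ii}=-1$ for all $i$ and $q_{ij}q_{ji}=-1$ for all $i\ne j$: a triangle of affine Cartan type $\tilde A_2$, again infinite by Theorem~\ref{thm:nichols-diagonal-finite-gkd}.

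The remaining case $r=1$ requires the filtration machinery of \S\ref{subsection:filtr-nichols}. The flag of Yetter-Drinfeld submodules $0\subsetneq \ku x_1\subsetneq V_1\subsetneq V_1\oplus \ku x_2\subsetneq W$ is absolutely complete, so by Lemma~\ref{lemma:filtered} the associated graded $\gr W$ is of diagonal type and $\GK\cB(\gr W)\le \GK\cB(W)$. A direct computation from \eqref{equation:basis-block} and \eqref{eq:braiding-block-several-point} gives that all four vertices of $\gr W$ carry label $-1$, that $q_{1\fudos}q_{\fudos 1}=\epsilon^2=1$ (no edge between $1$ and $\fudos$), that the pairs $(1,j)$ and $(\fudos,j)$ for $j\in\{2,3\}$ each yield edge label $-1$, and that $(2,3)$ gives no edge since $r=1$. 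The Dynkin diagram is therefore the $4$-cycle $1-2-\fudos-3-1$ with all edges labelled $-1$, of affine Cartan type $\tilde A_3$; Theorem~\ref{thm:nichols-diagonal-finite-gkd} gives $\GK\cB(\gr W)=\infty$, completing the proof. The principal obstacle is the first case, where one must verify that no row of Heckenberger's rank-two list features both vertex labels $-1$ together with an edge label outside $\{\pm 1\}$; this reduces to a direct row-by-row inspection of the classification.
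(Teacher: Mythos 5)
Your reduction to $q_{22}=q_{33}=-1$ via Theorem~\ref{thm:pm1bp-mild} is correct, and your treatment of the cases $r=-1$ (the triangle of type $\tilde A_2$) and $r=1$ (the absolutely complete flag producing a $4$-cycle of type $\tilde A_3$) is sound, matching the paper's diagram when specialized to $q=\pm 1$. However, the case $r\notin\{\pm 1\}$ contains a fatal error. You claim that the rank-two subspace $\ku x_2\oplus \ku x_3$, with both vertices labelled $-1$ and edge labelled $r$, lies outside Heckenberger's finite list; this is false. That diagram is Weyl-equivalent, via the reflection $\cR^1$, to $\xymatrix{\overset{-1}{\circ}\ar@{-}[r]^{r^{-1}}&\overset{r}{\circ}}$, which is precisely super type $A(1|0)$ and appears in \cite{H-classif} (it is, for instance, the diagonal part of $\lstr(A(1\vert 0)_1;r)$ in Table \ref{tab:finiteGK-intro-2}). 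More to the point, with both vertices equal to $-1$ the relations $x_2^2=x_3^2=0$ force the root system $\{\alpha_2,\alpha_3,\alpha_2+\alpha_3\}$ to be finite for every $r$, so Theorem~\ref{thm:nichols-diagonal-finite-gkd} cannot yield $\GK\cB(\ku x_2\oplus\ku x_3)=\infty$; in fact this rank-two Nichols algebra has $\GK\le 1$ for all $r$. No rank-two subspace of $W$ can carry the infiniteness you are after.

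The paper closes this gap by taking a coarser flag, namely $0\subsetneq \langle x_1,x_2,x_3\rangle \subsetneq W$ (which is a complete flag of Yetter-Drinfeld submodules even though it is not absolutely complete), so that $\cV^{\diag}$ is a $4$-vertex diagonal-type space: the $4$-cycle $1$--$2$--$\fudos$--$3$--$1$ with all vertices and cycle-edges labelled $-1$ together with an extra diagonal edge $2$--$3$ labelled $q=q_{23}q_{32}$. For general $q$ this is discarded by Hypothesis~\ref{hyp:nichols-diagonal-finite-gkd}; Theorem~\ref{thm:nichols-diagonal-finite-gkd} alone suffices only when $q=\pm 1$. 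Your instinct to avoid the Hypothesis is understandable, but in this lemma its use for $q\notin\{\pm 1\}$ appears unavoidable: to repair your proof you would need to pass to a subspace of rank at least three (e.g. adjoin $x_1$ or the graded class of $x_{\fudos}$) and then, for $q$ not $\pm 1$, you would again be left invoking Hypothesis~\ref{hyp:nichols-diagonal-finite-gkd} or else proving ad hoc that the relevant rank-three or rank-four diagram has infinite $\GK$.
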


\pf
By Theorem \ref{thm:pm1bp-mild}, $q_{22} = q_{33} = -1$. Set $q = q_{23}q_{32}$.
We consider the flag of braided subspaces:
$0 =\cV_0 \subsetneq \cV_1  \subsetneq \cV_2 =W$, where $\cV_1$ is spanned by $(x_i)_{i\in \I_3}$.
Let  $\Bdiag := \gr \cB (W)$, a pre-Nichols algebra of  $\cV^{\textrm{diag}}$, see \S \ref{subsection:filtr-nichols}. Thus $\GK \toba(W) = \infty$ since the Dynkin diagram of $\cV^{\textrm{diag}}$ is:
\begin{align*}
\xymatrix{ \overset{-1} {\underset{2}{\circ}} \ar  @{-}[r]^{-1} \ar  @{-}[d]_{-1}
	\ar @{-}[rd]_{q}	& \overset{-1} {\underset{\fudos}{\circ}} \ar  @{-}[d]^{-1}
	\\ \overset{-1} {\underset{1}{\circ}} \ar  @{-}[r]_{-1}  & \overset{-1} {\underset{3}{\circ}},}
\end{align*}
by Hypothesis \ref{hyp:nichols-diagonal-finite-gkd} (or by Theorem \ref{thm:nichols-diagonal-finite-gkd} if $q = \pm 1$).
\epf

\begin{coro} If $\GK \NA(V) < \infty$, then  $V_{\diag}$ has at most one connected component with mild interaction.
\end{coro}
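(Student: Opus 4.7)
The plan is to argue by contradiction, reducing to the already established Lemma \ref{lemma:superJordan-2-mild}. Suppose that there exist two distinct connected components $J_1, J_2 \in \X$ each having mild interaction with the block $V_1$. From the discussion prior to the corollary (cf. Table \ref{tab:possible-block-points} together with Theorem \ref{thm:point-block}), within each mild component there is at least one vertex with mild interaction; pick $j_1 \in J_1$ and $j_2 \in J_2$ such that $q_{1 j_s}q_{j_s 1} = -1$ and $q_{j_s j_s} = -1$ for $s = 1,2$.

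Next, I would consider the Yetter-Drinfeld submodule
\[
W = V_1 \oplus \ku x_{j_1} \oplus \ku x_{j_2} \subset V,
\]
which is automatically a braided subspace since $V_1$, $\ku x_{j_1}$ and $\ku x_{j_2}$ are each Yetter-Drinfeld submodules of $V$. The subspace $U := \ku x_{j_1} \oplus \ku x_{j_2}$ is two-dimensional and of diagonal type (it is a subspace of $V_{\diag}$), and by the very choice of $j_1, j_2$, each of $x_{j_1}, x_{j_2}$ has mild interaction with the $-1$-block $V_1$.

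Therefore Lemma \ref{lemma:superJordan-2-mild} applies to $W = V_1 \oplus U$ and yields $\GK \cB(W) = \infty$. Since $\cB(W)$ embeds into $\cB(V)$ as a braided subalgebra (the inclusion $W \hookrightarrow V$ of Yetter-Drinfeld modules induces an injection of Nichols algebras), we conclude that $\GK \cB(V) \geq \GK \cB(W) = \infty$, contradicting our hypothesis. Hence at most one component of $V_{\diag}$ may carry mild interaction. The only step requiring any care is the extraction of a mildly interacting vertex from each component, which is immediate from the classification in Table \ref{tab:possible-block-points}; the rest is a direct invocation of Lemma \ref{lemma:superJordan-2-mild}.
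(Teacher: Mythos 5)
Your argument is correct and is exactly what the paper has in mind: the corollary is stated without proof as an immediate consequence of Lemma \ref{lemma:superJordan-2-mild}, and you have simply spelled out that immediacy by restricting to the Yetter--Drinfeld subspace $V_1 \oplus \ku x_{j_1} \oplus \ku x_{j_2}$ with one mildly interacting vertex from each of the two putative mild components and invoking the lemma. (The extra observation that $q_{j_s j_s}=-1$ is true, by Theorem \ref{thm:pm1bp-mild}, but it is not actually needed to invoke the lemma, whose hypothesis is only that both basis vectors of $U$ have mild interaction.)
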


\subsubsection{Connected components of $V_{\diag}$} Let $J \in \X$. We assume that $\ghost_J \neq 0$.
One of the implications in Theorem \ref{thm:points-block-eps-1} follows from \S \ref{sec:yd-dim3}, so we assume that
$\GK \NA(V_1 \oplus V_J) < \infty$.

\begin{lemma}\label{lemma:conncomp-eps-1-weak}
If the interaction of $J$ is weak, then $\vert J \vert = 1$.
\end{lemma}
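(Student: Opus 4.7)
\medskip
\noindent\emph{Proof plan.} Suppose for contradiction that $|J|\ge 2$. Since $\ghost_J\neq 0$, pick $i\in J$ with $\ghost_i\neq 0$. The braided subspace $V_1\oplus \ku x_i$ falls under Theorem \ref{thm:point-block}, with $\epsilon=-1$, weak interaction and discrete ghost; reading off Table \ref{tab:finiteGK-block-point} we obtain $q_{ii}\in\{\pm 1\}$.

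I first dispose of the case $q_{ii}=1$. For any $k\in\I_{2,\theta}$, $k\neq i$, the subspace $\langle x_i,x_k\rangle\subset V_{\diag}$ is of diagonal type with diagonal entry $q_{ii}=1$; since $\GK\NA(V_{\diag})<\infty$, Lemma \ref{lemma:points-trivial-braiding} forces $q_{ik}q_{ki}=1$. Thus $i$ is isolated in the generalized Dynkin diagram of $V_{\diag}$, i.e. $\{i\}\in\X$, contradicting $|J|\ge 2$. Hence $q_{ii}=-1$.

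Now I exploit the splitting $\NA(V)\simeq K\#\NA(V_1)$ developed in \S\ref{subsubsec:algK-block-points}, specifically through the description of $K^1$ in Lemma \ref{lemma:braiding-K-weak-block-points}. Because $J$ is connected and $|J|\ge 2$, the vertex $i$ admits a neighbor $j\in J\setminus\{i\}$ in the Dynkin diagram of $V_{\diag}$, meaning $q_{ij}q_{ji}\neq 1$. Consider the two elements
\begin{align*}
z_{i,1}&=(\ad_c x_{\fudos})(x_i), & z_{j,0}&=x_j,
\end{align*}
of $K^1$. Since $\ghost_i\ge 1$ and $\epsilon=-1$, Lemma \ref{lemma:derivations-zn} (applied to the single block/single point subsystem $V_1\oplus\ku x_i$) shows $z_{i,1}\neq 0$. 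Hence $\langle z_{i,1},z_{j,0}\rangle$ is a rank-$2$ braided subspace of $K^1$, of diagonal type by Lemma \ref{lemma:braiding-K-weak-block-points}, whose braiding matrix has entries
\begin{align*}
p_{i1,i1}&=\epsilon\, q_{ii}=(-1)(-1)=1, & p_{i1,j0}\,p_{j0,i1}&=q_{ij}q_{ji}\neq 1.
\end{align*}

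Then Lemma \ref{lemma:points-trivial-braiding} gives $\GK\NA(\langle z_{i,1},z_{j,0}\rangle)=\infty$, hence $\GK\NA(K^1)=\infty$, hence $\GK K=\infty$. Since the action of $\NA(V_1)$ on $K$ is locally finite (as $V_1$ is spanned by $\Gamma$-homogeneous elements and $\NA(V_1)$ has $\GK =2$), Lemma \ref{lemma:GKdim-smashproduct} yields $\GK\NA(V)=\infty$, contradicting our hypothesis. Therefore $|J|=1$. The argument is essentially painless once the $K^1$-reduction is in place: the only subtle point is extracting the ``free'' vertex of label $1$ inside $K^1$ coming from the block-induced derivation $z_{i,1}$, which supplies exactly the hypothesis needed to invoke Lemma \ref{lemma:points-trivial-braiding}.
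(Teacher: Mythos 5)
Your argument is correct and essentially the paper's own proof: dispose of $q_{ii}=1$ via Lemma \ref{lemma:points-trivial-braiding} in $V_{\diag}$, then for $q_{ii}=-1$ exhibit the rank-$2$ diagonal braided subspace $\langle z_{i,1},z_{j,0}\rangle$ of $K^1$ with $p_{i1,i1}=\epsilon q_{ii}=1$ and $p_{i1,j0}p_{j0,i1}=q_{ij}q_{ji}\neq 1$, and apply Lemma \ref{lemma:points-trivial-braiding} again. One small caveat: in the final step, Lemma \ref{lemma:GKdim-smashproduct} only gives the upper bound $\GK(K\#\NA(V_1))\le\GK K+\GK\NA(V_1)$, so it cannot itself propagate $\GK K=\infty$ to $\GK\NA(V)=\infty$; the clean justification is simply that $K\simeq\NA(K^1)$ embeds as a subalgebra of $\NA(V)$, whence $\GK K\le\GK\NA(V)$.
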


\pf Let $i \in J$ such that $\ghost_i > 0$.
Then $q_{ii} = \pm 1$ by Theorem \ref{thm:pm1bp}.
If $q_{ii} = 1$, then $J = \{i\}$ as already discussed, cf. page \pageref{page:qii1}.
If $q_{ii} = -1$ and $\vert J \vert >1$, then there is $j\in J - \{i\}$ with $q := q_{ij}q_{ji} \neq 1$. By Lemma \ref{lemma:braiding-K-weak-block-points},
$p_{i1,i1} = 1$ and $p_{i1,j0} = q$; hence $\GK \toba(K_J) =\infty$ by Lemma \ref{lemma:points-trivial-braiding}.
\epf

\emph{Assume from the rest of this Subsection that  the interaction of $J$ is mild.}

\smallbreak
In the next proof, we denote  $x_{i_1i_2 \dots i_M} = \ad_c x_{i_1}\, x_{i_2 \dots i_M}$, cf. \eqref{eq:xijk}.

\begin{lemma}\label{lemma:block-points-eps-1-aux1}
If $\vert J \vert > 1$, then $\vert J \vert = 2$, say  $J=\{2,3\}$; and
$q_{22}=q_{23}q_{32}=q_{33}=-1$, $\inc_2 = -1$, $\inc_3 =1$, $\ghost_2 = 1$, $\ghost_3 = 0$.
\end{lemma}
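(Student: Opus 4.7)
The argument proceeds in three stages, running parallel to the analysis in Lemma \ref{lemma:KJ-card2} and its higher rank analogues, but adapted to the super-Jordan block.

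\emph{Stage 1 (uniqueness of the mild point).} Suppose two distinct points $i,i'\in J$ both satisfied $\inc_i=\inc_{i'}=-1$. Then the braided subspace $V_1\oplus\ku x_i\oplus\ku x_{i'}$ fits the hypotheses of Lemma \ref{lemma:superJordan-2-mild}, forcing $\GK\cB(V_1\oplus\ku x_i\oplus\ku x_{i'})=\infty$, a contradiction. Hence exactly one vertex of $J$, call it $2$, has mild interaction, and every other vertex of $J$ has weak interaction. Applying Theorem \ref{thm:pm1bp-mild} to the subspace $V_1\oplus\ku x_2$ gives $q_{22}=-1$ and $\ghost_2=1$. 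Since $J$ is connected and $|J|>1$, choose $3\in J$ with $q_{23}q_{32}\neq 1$; this $3$ has weak interaction, so either $\ghost_3=0$, or $\ghost_3$ is discrete with $q_{33}\in\{\pm 1\}$ by Theorem \ref{thm:pm1bp}.

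\emph{Stage 2 (determining $q_{33}$, $q_{23}q_{32}$, and $\ghost_3=0$).} Take the complete flag of $V_1$ and extend to $W=V_1\oplus\ku x_2\oplus\ku x_3$. By Lemma \ref{prop:gr br Hopf algebra} and the block-point formulas of \eqref{eq:braiding-block-several-point}, the associated $V^{\diag}$ is of diagonal type on vertices $\{1,\fudos,2,3\}$, with $1,\fudos$ both labelled $-1$ and mutually disconnected, vertex $2$ labelled $-1$ and joined to both $1$ and $\fudos$ by edges labelled $-1$, and vertex $3$ labelled $q_{33}$, joined to $2$ by an edge labelled $q_{23}q_{32}$ and disconnected from $1,\fudos$ (weak interaction of $3$). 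A case analysis over $q_{33}$ and $q=q_{23}q_{32}$, exactly parallel to cases (a)–(f) of Lemma \ref{lemma:KJ-card2} and using Theorem \ref{thm:nichols-diagonal-finite-gkd} (to exclude infinite root systems or affine Cartan sub-diagrams like $A_2^{(1)}$, $D_4^{(1)}$) together with Hypothesis \ref{hyp:nichols-diagonal-finite-gkd}, shows that $\GK\NA(V^{\diag})<\infty$ forces $q_{33}=-1$ and $q=-1$. It remains to show $\ghost_3=0$: if $\ghost_3\ge 1$, then by Lemma \ref{lemma:derivations-zn}, the element $z_{3,1}=(\ad_c x_{\fudos})x_3$ is nonzero in $\NA(V)$, and refining the flag by inserting $\ku z_{3,1}$ produces an additional diagonal vertex whose labels, computed from \eqref{eq:1block+points-bis} and Lemma \ref{lemma:braiding-K-weak-block-points}, give the subdiagram
\[
\xymatrix{&\overset{-1}{\circ}\ar@{-}[d]^{-1}&\\ \overset{-1}{\circ}\ar@{-}[r]_{-1}&\overset{-1}{\circ}\ar@{-}[r]_{-1}&\overset{-1}{\circ}}
\]
on $\{1,\fudos,3,z_{3,1}\}$, which contains the affine subdiagram $D_4^{(1)}$ when one adds back vertex $2$, excluded by Theorem \ref{thm:nichols-diagonal-finite-gkd}.

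\emph{Stage 3 (ruling out $|J|\ge 3$).} Suppose a fourth vertex $4\in J$ exists. By Stage 1 it has weak interaction, and by connectedness it is joined (directly or through a chain already constrained by Stage 2) to the string $2$–$3$. Running Stage 2 again with the pair $(3,4)$ or $(2,4)$ in place of $(2,3)$, or, if $4$ hangs off $3$, inserting $4$ into the flag of $V_1\oplus\ku x_2\oplus\ku x_3\oplus\ku x_4$ and reading off the diagonal Dynkin diagram, one extends the $D_4$-shape of Stage 2 to a $5$-vertex diagram which either contains $D_5^{(1)}$ (when $q_{44}=-1$, $q_{34}q_{43}=-1$) or fails the Cartan/root-system tests of Theorem \ref{thm:nichols-diagonal-finite-gkd} and Hypothesis \ref{hyp:nichols-diagonal-finite-gkd}. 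In every case $\GK\NA(V)=\infty$, contradicting our standing assumption. Hence $J=\{2,3\}$ with the asserted parameters.

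\emph{Main obstacle.} The delicate point is Stage 2's elimination of $\ghost_3>0$: the new element $z_{3,1}$ is \emph{not} in the original flag, and one must verify that after the refined flag the induced braiding between $x_2$ (whose $\cV_1$-neighbour $f_0=\ad_c x_1(x_3)$ is also nontrivial, cf.\ Lemma \ref{le:-K^1}) and $z_{3,1}$ produces genuinely a $5$-vertex diagonal diagram of infinite $\GK$, rather than accidentally collapsing. This is handled by the explicit formulas for $g_i\cdot z_{j,n}$ in \eqref{eq:1block+points-action}–\eqref{eq:1block+points-bis} and the adjoint computations of Lemma \ref{le:-K^1}, all of which remain valid when only one point of $J$ is mild.
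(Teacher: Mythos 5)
Your Stage~1 (one mild point via Lemma~\ref{lemma:superJordan-2-mild}; $q_{22}=-1$, $\ghost_2=1$ via Theorem~\ref{thm:pm1bp-mild}) is correct, and so is reducing $q_{33}=q_{23}q_{32}=-1$ to a case analysis on the four-vertex diagonal diagram. The gap lies in eliminating $\ghost_3>0$. You claim that adjoining $z_{3,1}=(\ad_c x_{\fudos})x_3$ produces a $D_4$-shaped subdiagram on $\{1,\fudos,3,z_{3,1}\}$ with all labels $-1$; this is not what the computation gives. The $\Gamma$-degree of $z_{3,1}$ is $g_1g_3$, so its diagonal self-braiding is $(g_1g_3)\cdot z_{3,1}=(-q_{13})(q_{31}q_{33})z_{3,1}=-q_{33}z_{3,1}=+z_{3,1}$ once $q_{33}=-1$; the label is $+1$, not $-1$. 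Moreover the product scalars linking $z_{3,1}$ to each of $x_1,x_{\fudos},x_2,x_3$ all equal $1$ (using $q_{13}q_{31}=1$, $q_{23}q_{32}=-1$, $q_{12}q_{21}=-1$), and the pairs $1$--$\fudos$, $1$--$3$, $\fudos$--$3$ are likewise unlinked. Thus $z_{3,1}$ is an isolated vertex with label $+1$, whose Nichols algebra is a polynomial ring, and no contradiction results. The element that works is $z_{3,2}=(\ad_c x_{\fudos})^2x_3$, of $\Gamma$-degree $g_1^2g_3$: its self-braiding is $q_{33}=-1$ and its only edge is to vertex $2$ with label $q_{23}q_{32}=-1$, so $\{1,\fudos,2,3,z_{3,2}\}$ supports an affine $D_4^{(1)}$. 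One must also verify that $z_{3,2}$ survives as a nonzero primitive in the right layer of $\Bdiag$ (comparing filtration degrees against the basis of $\cB(\lstr_-(-1,\ghost_3))$ from Proposition~\ref{pr:lstr-1-1disc}), which your outline omits.

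Stage~3 has a parallel problem. After Hypothesis~\ref{hyp:nichols-diagonal-finite-gkd} forces $q_{24}q_{42}=1$, $q_{34}q_{43}=q_{44}=-1$, $\ghost_4=0$, the diagonal diagram on $\{1,\fudos,2,3,4\}$ is finite Cartan $D_5$ with all labels $-1$ --- it neither contains $D_5^{(1)}$ nor fails any root-system test, so ``extending the $D_4$ shape'' gives no contradiction at the diagonal level. Again a new primitive is needed: take $u=[x_{\fudos 23},x_2]_c$, check via the skew derivations that $u$ is not a combination of filtration-degree-$4$ monomials, so that $\overline u\in\Bdiag$ is a nonzero primitive; then $\langle x_4,\overline u\rangle$ is a diagonal braided subspace with diagram $\xymatrix{\overset{-1}{\circ}\ar@{-}[r]^{-1}&\overset{1}{\circ}}$, and Lemma~\ref{lemma:points-trivial-braiding} gives $\GK\NA(V)=\infty$.
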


\pf First we assume that $\vert J\vert = 2$, say $J=\{ 2,3 \}$.
By Lemma \ref{lemma:superJordan-2-mild}, we  may suppose that $\inc_2 = -1$ (thus $q_{22} = - 1$), $\inc_3 = 1$.
Set $q = q_{23}q_{32}$,  $r = q_{33}$.
We consider the flag of braided subspaces:
$0 =\cV_0 \subsetneq \cV_1  \subsetneq \cV_2 = V_1 \oplus V_J$, where $\cV_1$ is spanned by $(x_i)_{i\in \I_3}$.
Let  $\Bdiag := \gr \cB (V_1 \oplus V_J)$, a pre-Nichols algebra of  $\cV^{\textrm{diag}}$, see \S \ref{subsection:filtr-nichols}. Now the Dynkin diagram of $U = \cV^{\textrm{diag}}$ has vertices $\{1, \fudos, 2,3 \}$
and edges as follows:
\begin{align*}
\xymatrix{ & \overset{-1}{\underset{1}{\circ}} \ar  @{-}[d]^{-1} &   \\
	\overset{-1}{\underset{\fudos}{\circ}}\ar  @{-}[r]_{-1}  & \overset{-1}{\underset{2}{\circ}} \ar  @{-}[r]_{q}  & \overset{r}{\underset{3}{\circ}}.}
\end{align*}

\begin{casso}
	$(q, r) \neq (-1, -1)$. We distinguish two cases:
\end{casso}

\medbreak
\begin{itemize} [leftmargin=*] 
	\item Either $q \notin\G_{\infty}$ or $r \notin\G_{\infty}$. If $q\neq r^{-1}, r^{-2}$, then
	$\GK \toba (U) = \infty$ by Theorem \ref{thm:nichols-diagonal-finite-gkd}. For $q= r^{-1}$, respectively $q=r^{-2}$, we apply the
	reflections as described below:
	
	\begin{align*}
	\xymatrix{  \overset{-1}{\circ} \ar  @{-}[d]^{-1}  &  \\
		\overset{-1}{\circ} \ar  @{-}[d]^{-1} \ar  @{-}[r]^{q}  & \overset{r}{\circ} \\
		\overset{-1}{\circ}  & }
	&
	\xymatrix{\ar@/^1pc/@{<->}[r]^{2} & }
	&
	\xymatrix{  \overset{-1}{\circ} \ar  @{-}[d]^{-1} \ar  @{-}[dr]^{-q}  &  \\
		\overset{-1}{\circ} \ar  @{-}[d]^{-1} \ar  @{-}[r]^{q^{-1}}  & \overset{-qr}{\circ} \\
		\overset{-1}{\circ} \ar  @{-}[ur]_{-q} & }
	&
	\xymatrix{\ar@/^1pc/@{<->}[r]^{1} & }
	&
	\xymatrix{  \overset{-1}{\circ} \ar  @{-}[d]^{-1} \ar  @{-}[dr]^{-q^{-1}}  &  \\
		\overset{-1}{\circ} \ar  @{-}[d]^{-1}   & \overset{-q^2r}{\circ} \\
		\overset{-1}{\circ} \ar  @{-}[ur]_{-q} & }
	\end{align*}
	Then $\GK \toba (\cR^1\cR^2(U)) = \infty$, respectively $\GK \toba (\cR^2(U)) = \infty$, by Theorem \ref{thm:nichols-diagonal-finite-gkd}.
	
	\item $q, r \in\G_{\infty}$. Thus $\GK \toba(V_1 \oplus V_J) = \infty$
	by Hypothesis \ref{hyp:nichols-diagonal-finite-gkd}.
\end{itemize}

\begin{casso}
	$(q, r) = (-1, -1)$, $\ghost_ 3 > 0$.
\end{casso}

For simplicity of the proof of this Step, we consider a different flag of braided subspaces:
$0 =\cV_0 \subsetneq \cV_1  \subsetneq \cV_2 = \cV_3  \subsetneq \cV_4 = V_1 \oplus V_J$, where $\cV_1$
is spanned by $(x_i)_{i\in \I_2}$ and $\cV_2$ is spanned by $(x_i)_{i\in \Iw_2}$.
Let  $\Bdiag := \gr \cB (V_1 \oplus V_J)$, a pre-Nichols algebra of  $\cV^{\textrm{diag}}$, see \S \ref{subsection:filtr-nichols},
with the same Dynkin diagram:
\begin{align*}
\xymatrix{ & \overset{-1}{\underset{1}{\circ}} \ar  @{-}[d]^{-1} &   \\
	\overset{-1}{\underset{\fudos}{\circ}}\ar  @{-}[r]_{-1}  & \overset{-1}{\underset{2}{\circ}} \ar  @{-}[r]_{-1}  & \overset{-1}{\underset{3}{\circ}}.}
\end{align*}

As in \S \ref{sec:yd-dim3}, see \eqref{eq:zn}, we set $z_n := (\ad_c x_{\fudos})^n x_3$, $n\in\N_0$.
Then $z_1,z_2 \neq 0$ by Lemma \ref{lemma:derivations-zn}, and $z_2\in\cB^3_5$. By direct computation,
\begin{align*}
\Delta(z_2) &= z_2\otimes 1 -a x_{\fudos 1}\otimes x_3+a  x_1\otimes z_1 + 1\otimes z_2.
\end{align*}
Suppose that $z_2\in\cB^3_4$. Notice that $\cB^3_4$ is spanned by the monomials in letters $x_i$, $i\in \Iw_2$
with at most one $x_{\fudos}$, since $T(V)^3_4$ is spanned by these monomials, cf. Remark \ref{obs:filtered} and Lemma \ref{lemma:filtered}.
We check that all these monomials are written as a linear combination of $x_1 x_{\fudos 1}$, $x_1 x_{\fudos} z_0$, $x_1 z_1$,
$x_{\fudos 1} z_0$, $z_1 z_0$, where we use the defining relations of $\cB(\lstr_{-}( -1, \ghost_3))$, cf. Proposition \ref{pr:lstr-1-1disc}.
Then $z_2$ is a linear combination of these elements and we get a contradiction with Proposition \ref{pr:lstr-1-1disc}.
Thus $z_2\in\cB^3_5-\cB^3_4$.

Let $z$ be class of $z_2$ in $\Bdiag$. Then $z$ is a non-zero primitive element in $\Bdiag$.
Let $\widetilde{\cB}_1$ be the subalgebra of $\Bdiag$ generated by $Z = \langle x_1,x_{\fudos},x_2,x_3,z \rangle$,
and consider the algebra filtration of $\widetilde{\cB }_1$, such that the generators have degree one.
This is a Hopf algebra filtration, hence the associated graded algebra  $\widetilde{\cB}_2$ is a braided Hopf algebra.
The Nichols algebra $\NA(Z)$ is a subquotient of $\widetilde{\cB}_2$.
Then $\GK \NA(Z)\le
  \GK \widetilde{\cB}_2\le \GK \widetilde{\cB}_1\le \GK\Bdiag
=\GK \cB (V_1 \oplus V_J)$.
The braided vector space $Z$ is of Cartan type $D_4^{(1)}$, since it has diagram
$$ \xymatrix{ \overset{-1}{\underset{1}{\circ}} \ar  @{-}[rd]^{-1} & &  \overset{-1}{\underset{z}{\circ}} \\
\overset{-1}{\underset{\fudos}{\circ}}\ar@{-}[r]_{-1}  & \overset{-1}{\underset{2}{\circ}} \ar@{-}[r]_{-1}  \ar@{-}[ru]_{-1}
& \overset{-1}{\underset{3}{\circ}},} $$
so $\GK\NA(Z)=\infty $ by Theorem \ref{thm:nichols-diagonal-finite-gkd}, and then $\GK \cB (V_1 \oplus V_J)=\infty $.

\begin{casso}
	$\vert J\vert \geq 3$.
\end{casso}

We assume that $\vert J\vert = 3$, say $J=\{ 2,3,4 \}$, and obtain a contradiction. Then the general case follows.
By Lemma \ref{lemma:superJordan-2-mild}, we  may suppose that $\inc_2 = -1$, $\inc_3 = \inc_4 = 1$,
and by the previous steps $q_{22} = q_{23}q_{32} = q_{33} = - 1$, $\ghost_3=0$. Set $q = q_{24}q_{42}$,
$r = q_{44}$, $s=q_{34}q_{43}$. We consider the flag of braided subspaces:
$0 =\cV_0 \subsetneq \cV_1  \subsetneq \cV_2 = V_1 \oplus V_J$, where $\cV_1$ is spanned by $(x_i)_{i\in \I_4}$.
Let  $\Bdiag := \gr \cB (V_1 \oplus V_J)$, a pre-Nichols algebra of  $\cV^{\textrm{diag}}$, see \S \ref{subsection:filtr-nichols}.
Now the Dynkin diagram of $U = \cV^{\textrm{diag}}$ has vertices $\{1, \fudos, 2,3,4 \}$
and edges as follows:
\begin{align*}
\xymatrix{ & \overset{-1}{\underset{1}{\circ}} \ar  @{-}[d]^{-1} &  & \\
	\overset{-1}{\underset{\fudos}{\circ}}\ar  @{-}[r]_{-1}  & \overset{-1}{\underset{2}{\circ}} \ar@/^2pc/@{-}[rr]^{q} \ar  @{-}[r]_{-1}
& \overset{-1}{\underset{3}{\circ}}  \ar  @{-}[r]_{s} & \overset{r}{\underset{4}{\circ}}.}
\end{align*}
By Hypothesis \ref{hyp:nichols-diagonal-finite-gkd}, $q=1$ (otherwise the vertex 2 would have valence 4, contradicting \cite{H-classif}) and $r=s=-1$ (the only posible extension of $D_4$ in the list in \cite{H-classif}). Thus $\cV_1$ is of Cartan type $A_4$ and
\begin{align*}
B=\{ x_1^{n_1}x_{12}^{n_2}x_{123}^{n_3}x_{1234}^{n_4}x_{2}^{n_5}x_{23}^{n_6}x_{234}^{n_7}x_{3}^{n_8}x_{34}^{n_9}x_{4}^{n_{10}}: 0\le n_i\le 1 \}
\end{align*}
is a basis of $\cB(\cV_1)$. Let $B_4$ be the subset of $B$ of those elements of degree 4.

Notice that $\cB^4_j=0$ for $j\le 3$ and $\cB^4_4$ is spanned by the monomials in letters $x_i$, $i\in \I_4$,
since $T(V)^4_j=0$ and $T(V)^4_4$ is spanned by these monomials, cf. Remark \ref{obs:filtered}
and Lemma \ref{lemma:filtered}. The subalgebra generated by $\cV_1$ is the Nichols algebra of
$\cV_1$; thus $B_4$ is a basis of $\cB^5_5$.

We observe that $(\ad_c x_{\fudos}) x_3=0$ because $\inc_3 =1$ and $\ghost_3 =0$.
We next claim that $\partial_2(x_{\fudos23})=0$. Indeed, $\partial_2(x_{\fudos23}) =$
\begin{align*}
&= \partial_2(x_{\fudos2}x_3 - q_{13}q_{23} x_3 x_{\fudos2}) \overset{\eqref{eq:partial3z1}}{=}
(2x_{\fudos} + x_1) q_{23} x_3 -  q_{13}q_{23} x_3 (2x_{\fudos} + x_1) \\
&=  q_{23}\left(2(x_{\fudos} x_3 -  q_{13}x_3 x_{\fudos}) + (x_1x_3 -  q_{13}x_3 x_1) \right)  = 0.
\end{align*}
We next introduce $u=[x_{\fudos23},x_2]_c$. We observe that $\partial_1(u)=0$, and
\begin{align*}
\partial_2(u) &= \partial_2(x_{\fudos23} x_2+q_{12}q_{32} x_2 x_{\fudos23})
= x_{\fudos23}+q_{12}q_{32}\, g_2\cdot x_{\fudos23} \\
& = x_{\fudos23}+q_{12}q_{32}q_{21}q_{22}q_{23} ( x_{\fudos23}+x_{123})= -x_{123}, \\
\partial_3(u) &= \partial_3(x_{\fudos23} x_2+q_{12}q_{32} x_2 x_{\fudos23})
= 2 q_{32} (x_{\fudos 2} x_2+q_{12}x_2 x_{\fudos 2})=0.
\end{align*}
where we use \eqref{eq:nichols-mild-4} for the last equality.
Suppose that $u\in\cB^4_4$; i.e. $u$ is a linear combination of the elements of $B_4$. As $\partial_4(x_{1234})=2x_{123}$,
$\partial_4(x_{234})=2x_{23}$, $\partial_4(x_{34})=2x_{3}$, $\partial_4(x_4)=1$, and $\partial_4$ annihilates $u$
and the remaining PBW generators, the elements of $B_4$ containing $x_{1234}$, $x_{234}$, $x_{34}$, $x_4$ have coefficient 0.
For the elements of the shape $x_1 y \in B_4$, $\partial_1(x_1y)$ gives $y$ up to a non-zero scalar. As $\partial_1$ annihilates
$u$ and the remaining elements of $B_4$, the elements $x_1 y \in B_4$ also have coefficient 0. Thus
\begin{align}\label{eq:mild-2points-combination-u}
u &= b_1 \, x_{12}x_{23}+b_2 \, x_{12}x_2x_3 +  b_3 \, x_{123}x_2  +  b_4 \, x_{123}x_3+ b_5 \, x_2x_{23}x_3,
\end{align}
for some $b_i\in\ku$. Applying $\partial_2$ to this equality gives
\begin{align*}
-x_{123} &= q_{23}(b_2-2 b_1) \, x_{1}x_{23}-2b_2q_{23} \, x_{1}x_2x_3 + b_3 \, x_{123} - b_5 q_{23}^2 \, x_{23}x_3,
\end{align*}
so $b_1=b_2=b_5=0$, $b_3=-1$. Applying $\partial_3$ to \eqref{eq:mild-2points-combination-u},
\begin{align*}
0 &= - 2q_{32}\, x_{12}x_2  -2 b_4 \, x_{12}x_3 +  b_4 \, x_{123},
\end{align*}
so we have a contradiction. Thus $u \in \cB^4_5-\cB^4_4$. Next we compute
\begin{align*}
\Delta(x_{\fudos23}) &= x_{\fudos23}\otimes 1+ 2 x_{\fudos2} \otimes x_3 + (2x_{\fudos}+x_1)\otimes x_{23}+1\otimes x_{\fudos23}, \\
\Delta(u) &= u\otimes 1 - 2 x_{21}\otimes x_3x_2 + 1\otimes u.
\end{align*}
Let $\overline{u}$ be the class of $u$ in $\Bdiag$. Thus $\overline{u}$ is a non-zero primitive element in $\Bdiag$.
Let $\widetilde{\cB}_1$ be the subalgebra of $\Bdiag$ generated by $Z = \langle x_4,  \overline{u}\rangle$,
and consider the algebra filtration of
$\widetilde{\cB }_1$, such that the generators $\overline{u}$ and $x_4$ have degree one. This is a Hopf algebra filtration, hence
the associated graded algebra  $\widetilde{\cB}_2$ is a braided Hopf algebra,  and $\NA(Z)$ is a subquotient of $\widetilde{\cB}_2$. Then
$ \GK \NA(Z)\le \GK \widetilde{\cB}_2\le \GK \widetilde{\cB}_1\le \GK\Bdiag
=\GK \cB (V_1 \oplus V_J)$.
The braided vector space $Z$ has diagram $\xymatrix{ \overset{-1}{\circ} \ar@{-}[r]_{-1} & \overset{1}{\circ}}$,
so $\GK\NA(Z)=\infty $ by Lemma \ref{lemma:points-trivial-braiding}, and then $\GK \cB (V_1 \oplus V_J)=\infty $.
\epf

\subsubsection{The Nichols algebra  $\cB(\cyc_2)$}\label{subsubsection:nichols-mild-cyc2}

Assume that $\vert J \vert = 2$, say  $J=\{2,3\}$; and
$q_{22}=q_{23}q_{32}=q_{33}=-1$, $\inc_2 = -1$, $\inc_3 =1$, $\ghost_2 = 1$, $\ghost_3 = 0$.
Recall that the corresponding braided vector space is denoted $\cyc_2$.
Recall the relations of $\cB(\cyc_1)$, with the change of index with respect to \S  \ref{sec:yd-dim3}
(the $2$ and $3$ there are now $\fudos$ and $2$):
\begin{align*}
\tag{\ref{eq:rels-B(V(-1,2))-1}} &x_1^2,
\\
\tag{\ref{eq:rels-B(V(-1,2))-2}} & x_{\fudos}x_{\fudos 1}- x_{\fudos 1}x_{\fudos} - x_1x_{\fudos 1},
\\
\tag{\ref{eq:nichols-mild-relation1}} & x_{\fudos}x_{12}+q_{12}x_{12}x_{\fudos}+x_{1 \fudos 2} \\
\tag{\ref{eq:nichols-mild-relation2}} & x_{\fudos}x_{\fudos 2}+q_{12}x_{\fudos 2}x_{\fudos}-\frac 1 2 x_{1 \fudos 2} - q_{12}x_{12}x_{\fudos}, \\
\tag{\ref{eq:nichols-mild-relation3}} & x_{\fudos}x_{1 \fudos 2} -q_{12}x_{1 \fudos 2}x_{\fudos}, \\
\tag{\ref{eq:nichols-mild-relation4}} & x_2^2, \qquad x_{12}^2, \\
\tag{\ref{eq:nichols-mild-relation5}} & x_{\fudos 2}^2, \qquad x_{1 \fudos 2}^2.
\end{align*}

\begin{remark}\label{rem:nichols-mild-cyc2-defrelations}
We have
\begin{align}\label{eq:nichols-mild-cyc2-ghost-int-trivial}
x_1x_3 &= q_{13} x_3x_1, & x_{\fudos} x_3 &= q_{13} x_3 x_{\fudos},
\end{align}
since $q_{13}q_{31}=1$ and $\ghost_3=0$.
The subalgebra generated by $x_1$, $x_2$, $x_3$ is a Nichols algebra of diagonal type $A_3$. Thus,
\begin{align}
\label{eq:nichols-mild-cyc2-A3-1}  x_3^2  &=0, &  x_{123}^2 & =0, \\
\label{eq:nichols-mild-cyc2-A3-2}  x_{23}^2 & =0, & [x_{123}, x_2]_c &= 0.
\end{align}
\end{remark}

\begin{lemma} \label{lem:nichols-mild-cyc2-defrelations} The following hold in $\NA (V)$:
	\begin{align}
x_{\fudos 23}^2 &=0, \label{eq:nichols-mild-cyc2-relation1}\\
x_{1\fudos 23}^2 &=0, \label{eq:nichols-mild-cyc2-relation2}\\
[x_{\fudos 2}, x_{123}]_c^2 &=0, \label{eq:nichols-mild-cyc2-relation3}\\
x_{123}x_{\fudos 23} &=-x_{\fudos 23}x_{123}, \label{eq:nichols-mild-cyc2-relation4}\\
[x_1, [x_{\fudos 23}, x_2]_c]_c &= q_{12}q_{13} x_{123} x_{12}, \label{eq:nichols-mild-cyc2-relation5}\\
[x_1, [x_{\fudos 2}, x_{123}]_c]_c &= 2q_{12}x_{12}x_{1\fudos 23}, \label{eq:nichols-mild-cyc2-relation6}\\
[x_{\fudos}, [x_{\fudos 2}, x_{123}]_c]_c &= 2q_{12}x_{12}x_{1\fudos 23}-2q_{12}x_{\fudos 2}x_{1\fudos 23}. \label{eq:nichols-mild-cyc2-relation7}
\end{align}
\end{lemma}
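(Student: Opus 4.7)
\smallbreak
\noindent\textbf{Proof plan.} The guiding principle is the standard criterion for Nichols algebras: a homogeneous element $y\in\NA(V)$ of degree $\ge 2$ vanishes iff $\partial_i(y)=0$ for every $i\in\{1,\fudos,2,3\}$, since in that case $y$ is primitive and primitives of $\NA(V)$ live only in degree $1$. So for each of the seven identities \eqref{eq:nichols-mild-cyc2-relation1}--\eqref{eq:nichols-mild-cyc2-relation7} I will move every term to one side, call the resulting expression $y$, and verify that each skew-derivation $\partial_i$ annihilates it. All relations already known in the subalgebras $\NA(\cV(-1,2))=\langle x_1,x_\fudos\rangle$, $\NA(\cyc_1)=\langle x_1,x_\fudos,x_2\rangle$, and the diagonal Nichols algebra of type $A_3$ generated by $x_1,x_2,x_3$ (in particular \eqref{eq:rels-B(V(-1,2))-1}--\eqref{eq:rels-B(V(-1,2))-2}, \eqref{eq:nichols-mild-relation1}--\eqref{eq:nichols-mild-relation5}, \eqref{eq:nichols-mild-cyc2-A3-1}--\eqref{eq:nichols-mild-cyc2-A3-2}), together with the commutation rules \eqref{eq:nichols-mild-cyc2-ghost-int-trivial}, are available from Remark~\ref{rem:nichols-mild-cyc2-defrelations} and Lemma~\ref{le:a1q22even}.

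\smallbreak
\noindent The order I intend to follow is dictated by complexity. First I would treat \eqref{eq:nichols-mild-cyc2-relation5}: the element $[x_{\fudos 23},x_2]_c$ lies in the kernel of $\partial_1,\partial_\fudos$ trivially and its images under $\partial_2,\partial_3$ can be reduced to combinations of $x_{\fudos 2},x_{12},x_{23},x_{123}$ using $\partial_3(x_{\fudos 2})=2x_\fudos+x_1$ (cf.\ \eqref{eq:partial3z1} after relabeling) and the $A_3$-type identities; then $\ad_c x_1$ followed by the commutation rules produce the right-hand side. Once \eqref{eq:nichols-mild-cyc2-relation5} is established, I would tackle the quadratic relation \eqref{eq:nichols-mild-cyc2-relation1}: here $\partial_1,\partial_\fudos$ annihilate $x_{\fudos 23}$ (direct check via $\partial_i(x_{23})=0$ for $i\le \fudos$) and the key computation is $\partial_3(x_{\fudos 23}^2)$, which expands using $g_3\cdot x_{\fudos 23}$ and reduces to a multiple of $x_{\fudos 2}\cdot x_\fudos$-like terms that cancel because $x_3^2=0$ and $x_{23}^2=0$. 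The anticommutation \eqref{eq:nichols-mild-cyc2-relation4} follows by applying $\ad_c x_\fudos$ to $[x_{123},x_{23}]_c=0$ (a consequence of $A_3$-type relations) and using \eqref{eq:nichols-mild-cyc2-ghost-int-trivial}.

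\smallbreak
\noindent The remaining four identities \eqref{eq:nichols-mild-cyc2-relation2}, \eqref{eq:nichols-mild-cyc2-relation3}, \eqref{eq:nichols-mild-cyc2-relation6}, \eqref{eq:nichols-mild-cyc2-relation7} form a coupled cluster that should be treated together. My intended strategy is: (i) first prove \eqref{eq:nichols-mild-cyc2-relation6} and \eqref{eq:nichols-mild-cyc2-relation7} by a direct application of $\ad_c x_1$ and $\ad_c x_\fudos$ to $[x_{\fudos 2},x_{123}]_c$, using the $\cyc_1$ relation \eqref{eq:nichols-mild-relation2} (which expresses $\ad_c x_\fudos(x_{\fudos 2})$ in terms of $x_{1\fudos 2}$ and $x_{12}x_\fudos$) together with \eqref{eq:nichols-mild-cyc2-ghost-int-trivial} to push $x_3$ past $x_1,x_\fudos$; (ii) then derive the quadratic relation \eqref{eq:nichols-mild-cyc2-relation2} by applying $\ad_c x_1$ to \eqref{eq:nichols-mild-cyc2-relation1} and invoking \eqref{eq:nichols-mild-cyc2-relation6}; (iii) finally obtain \eqref{eq:nichols-mild-cyc2-relation3} by applying $\ad_c x_{\fudos 2}$ (or, more cleanly, computing $\partial_3$ directly) using \eqref{eq:nichols-mild-cyc2-relation1}, \eqref{eq:nichols-mild-cyc2-relation2} and \eqref{eq:nichols-mild-cyc2-relation4}.

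\smallbreak
\noindent The main obstacle will be \eqref{eq:nichols-mild-cyc2-relation7}: unlike the simpler commutators, $[x_\fudos,[x_{\fudos 2},x_{123}]_c]_c$ produces two non-trivial correction terms, reflecting the Jordanian part of the block. The computation requires combining the super Jordan identity \eqref{eq:rels-B(V(-1,2))-2} with the $\cyc_1$-relation \eqref{eq:nichols-mild-relation2} and keeping careful track of the $q_{12}$-factors arising each time $x_3$ crosses a $x_1$ or $x_\fudos$. A secondary difficulty is bookkeeping for \eqref{eq:nichols-mild-cyc2-relation3}, since $[x_{\fudos 2},x_{123}]_c^2$ lies in degree $10$, but once \eqref{eq:nichols-mild-cyc2-relation1}, \eqref{eq:nichols-mild-cyc2-relation2}, \eqref{eq:nichols-mild-cyc2-relation4} are in hand, the derivations $\partial_i$ reduce $\partial_i([x_{\fudos 2},x_{123}]_c^2)$ to expressions that vanish by the already proved relations. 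Throughout, the coactions of the $z_n, f_n$ from Lemma~\ref{le:-K^1} (adapted to the current indexing) provide the $\Gamma$-degrees needed to keep the $q$-coefficients straight.
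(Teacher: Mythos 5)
Your guiding idea---verify that the skew-derivations $\partial_i$, $i\in\Iw_3$, annihilate each candidate relation, so that it is primitive of degree $\ge 2$ and hence zero---is exactly the paper's route, and your observation that $\partial_3$ is the only derivation that does real work is correct.  However, two of the sub-routes you propose do not go through, and they are precisely the places where the paper switches to a different device.

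For the anticommutation \eqref{eq:nichols-mild-cyc2-relation4} you suggest applying $\ad_c x_{\fudos}$ to the $A_3$-relation $[x_{123},x_{23}]_c=0$.  This breaks down: since $(x_1,x_{\fudos})$ is a Jordanian block, $\ad_c x_{\fudos}$ does \emph{not} act diagonally on the generators inside $x_{123}$ and $x_{23}$; expanding by the skew Leibniz rule produces extra terms involving $\ad_c x_{\fudos}(x_{123})$ and $g_1\cdot x_{123}$, both of which contain corrections beyond the nominal leading piece, so you end up with an identity in degree $\alpha_1+\frac12\alpha_{\fudos}+2\alpha_2+2\alpha_3$ rather than \eqref{eq:nichols-mild-cyc2-relation4}.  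The paper sidesteps this entirely by acting with the group element rather than with $\ad_c$: since $g_1\cdot x_{\fudos 23}=q_{12}q_{13}(-x_{\fudos 23}+x_{123})$, the identity $0=g_1\cdot x_{\fudos 23}^2$ expands as $x_{\fudos 23}^2-x_{123}x_{\fudos 23}-x_{\fudos 23}x_{123}+x_{123}^2$, and the two squares vanish by the already-established \eqref{eq:nichols-mild-cyc2-relation1} and by $x_{123}^2=0$ from \eqref{eq:nichols-mild-cyc2-A3-1}.  This is shorter, does not require any auxiliary Serre-type identity, and is genuinely different from your plan.  Similarly, your proposal to obtain \eqref{eq:nichols-mild-cyc2-relation2} by applying $\ad_c x_1$ to \eqref{eq:nichols-mild-cyc2-relation1} only yields $x_{1\fudos 23}x_{\fudos 23}+(g_1\cdot x_{\fudos 23})x_{1\fudos 23}=0$, which relates $x_{1\fudos 23}x_{\fudos 23}$ to $x_{\fudos 23}x_{1\fudos 23}$ and $x_{123}x_{1\fudos 23}$ but is not the square relation; you would still need a separate argument.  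The paper instead mirrors its proof of \eqref{eq:nichols-mild-cyc2-relation1}: it notes $\partial_3(x_{1\fudos 23})=2x_{1\fudos 2}$, uses $x_{1\fudos 2}^2=0$ to get the $q$-commutation $x_{1\fudos 2}x_{1\fudos 23}=-q_{13}^2q_{23}^2x_{1\fudos 23}x_{1\fudos 2}$, and then checks $\partial_3(x_{1\fudos 23}^2)=0$ directly.  One more bookkeeping point: your step (iii) for \eqref{eq:nichols-mild-cyc2-relation3} lists only \eqref{eq:nichols-mild-cyc2-relation1}, \eqref{eq:nichols-mild-cyc2-relation2} and \eqref{eq:nichols-mild-cyc2-relation4} as inputs, but to compute $\partial_2(v^2)$ and $\partial_3(v^2)$ with $v=[x_{\fudos 2},x_{123}]_c$ one also needs the $q$-commutation rules $x_{12}v=q_{13}q_{23}\,vx_{12}$ and $x_{\fudos 2}v=q_{13}q_{23}\,vx_{\fudos 2}$, which the paper extracts precisely from \eqref{eq:nichols-mild-cyc2-relation6} and \eqref{eq:nichols-mild-cyc2-relation7}; so those two must be stated as prerequisites.
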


\begin{proof}
We claim that $\partial_i(x_{\fudos 23}^2)=0$ for all $i\in\Iw_3$. For $i\neq 3$, we notice that
$\partial_1(x_{\fudos 23})=\partial_{\fudos}(x_{\fudos 23})=\partial_2(x_{\fudos 23})=0$. For $i=3$,
$\partial_3(x_{\fudos 23})=2 x_{\fudos 2}$, and as $x_{\fudos 2}^2=0$, we deduce that
\begin{align}\label{eq:nichols-mild-cyc2-other-rel1}
x_{\fudos 2}x_{\fudos 23}=-q_{13}q_{23}x_{\fudos 23}x_{\fudos 2}
\end{align}
Thus
\begin{align*}
\partial_3(x_{\fudos 23}^2)& = 2x_{\fudos 23}x_{\fudos 2}-2q_{31}q_{32}x_{\fudos 2}x_{\fudos 23}=0.
\end{align*}
Analogously we prove  that $x_{1\fudos 23}^2=0$, since $\partial_1$, $\partial_2$ and $\partial_{\fudos}$ annihilate $x_{1\fudos 23}$, $\partial_3(x_{1\fudos 23})=2x_{1\fudos 2}$ and as $x_{1\fudos 2}^2=0$ we have
\begin{align}\label{eq:nichols-mild-cyc2-other-rel2}
x_{1\fudos 2}x_{1\fudos 23}&= x_{1\fudos 2}(x_{1\fudos 2}x_3- q_{13}^2q_{23}^2x_3x_{1\fudos 2}) = -q_{13}^2q_{23}^2x_{1\fudos 23}x_{1\fudos 23}.
\end{align}
Thus \eqref{eq:nichols-mild-cyc2-relation1} and \eqref{eq:nichols-mild-cyc2-relation2} hold in $\cB(V)$. For \eqref{eq:nichols-mild-cyc2-relation4},
we use \eqref{eq:nichols-mild-cyc2-relation1} and \eqref{eq:nichols-mild-cyc2-A3-1}:
\begin{align*}
0 & = g_1\cdot x_{\fudos 23}^2 = x_{\fudos 23}^2 -x_{123}x_{\fudos 23}-x_{\fudos 23}x_{123}+x_{123}^2
= - x_{123}x_{\fudos 23}-x_{\fudos 23}x_{123}.
\end{align*}

Let $u = [x_{\fudos 23}, x_2]_c$, $v=[x_{\fudos 2},x_{123}]_c$. By the second relation in \eqref{eq:nichols-mild-cyc2-A3-2}
\begin{align*} g_1\cdot u &= q_{12}^2q_{13}([x_{123},x_2]_c-u)= -q_{12}^2q_{13} \, u \implies
[x_1, u]_c = x_1 u +q_{12}^2q_{13} u x_1. \end{align*}
We claim that \eqref{eq:nichols-mild-cyc2-relation5} holds in $\cB(V)$.
By direct computation,
\begin{align}\label{eq:nichols-mild-derivations-u}
\partial_1(u)&=\partial_{\fudos}(u)=\partial_3(u)=0, & \partial_2(u)&=-x_{123}.
\end{align}
Thus, $\partial_1([x_1, u]_c)=\partial_{\fudos}([x_1, u]_c)=\partial_3([x_1, u]_c) = 0$, and $\partial_2([x_1, u]_c) =$ \begin{align*} & = x_1 \partial_2(u)+q_{12}^2 q_{13} \partial_2(u) \, g_2\cdot x_1 = -x_1 x_{123}+ q_{12}q_{13} \, x_{123} x_1 = 2q_{12}q_{13} \, x_{123} x_1 \end{align*} since $x_1x_{123}=-q_{12}q_{13} \, x_{123}x_1$, as $\ad_c^2 x_1 =0$, by (3.7). Also,
\begin{align*}
\partial_2(x_{123} x_{12}) & = 2\, x_{123} x_{12}, & \partial_3(x_{123} x_{12}) & = 2q_{31}q_{32} \, x_{12}^2=0.
\end{align*}

Now we claim that $\partial_i(x_1v-q_{12}^2q_{13} vx_1)= 2q_{12} \partial_i(x_{12}x_{1\fudos 23})$
for all $i\in\Iw_3$, i.e. \eqref{eq:nichols-mild-cyc2-relation6} holds in $\cB(V)$.
By direct computation, $\partial_1(v)=\partial_{\fudos}(v)=0$,
\begin{align*}
\partial_2(v) &= \partial_2(x_{\fudos 2}x_{123}+q_{13}q_{23}x_{123}x_{\fudos 2}) \\
&= -q_{21}q_{23} (2x_{\fudos}+x_1)x_{123}+q_{13}q_{23} \, x_{123} (2x_{\fudos}+x_1) \\
& = -2q_{31}q_{12} x_{1\fudos 23}, \\
\partial_3(v) &= 2 x_{\fudos 2}x_12+ 2q_{13}q_{31}q_{23}q_{32} \, x_{12}x_{\fudos 2} = -4x_{\fudos 2}x_{12},
\end{align*}
where we use \eqref{eq:nichols-mild-relation1}-\eqref{eq:nichols-mild-relation5}. Thus,
$\partial_1$, $\partial_{\fudos}$ annihilate both sides of \eqref{eq:nichols-mild-cyc2-relation6}, and
\begin{align*}
\partial_2(x_1v-q_{12}^2q_{13} vx_1) & = 2q_{21}q_{23} (x_1x_{1\fudos 23}+q_{12}q_{13}x_{1\fudos 23}x_1)= 4q_{21}q_{23} \, x_1x_{1\fudos 23}, \\
\partial_3(x_1v-q_{12}^2q_{13} vx_1) & = -4x_1 x_{\fudos 2}x_{12}+4q_{12}^2 \, x_{\fudos 2}x_{12}x_1=4q_{12}x_{12}x_{1\fudos 2}, \\
\partial_2(x_{12}x_{1\fudos 23}) &= -2q_{21}^2q_{23} \, x_1 x_{1\fudos 23},\\
\partial_3(x_{12}x_{1\fudos 23}) &= 2x_{12}x_{1\fudos 2}.
\end{align*}
Similarly \eqref{eq:nichols-mild-cyc2-relation7} holds in $\cB(V)$ since $\partial_1$, $\partial_{\fudos}$ annihilate both sides, and
\begin{align*}
\partial_2(x_{\fudos}v-q_{12}^2q_{13} vx_{\fudos}) & = 4q_{21}q_{23} \, x_{\fudos}x_{1\fudos 23}+2q_{21}q_{23} \, x_1x_{1\fudos 23} \\
&= 2q_{12}\partial_2(x_{12}x_{1\fudos 23}-x_{\fudos 2}x_{1\fudos 23}),\\
\partial_3(x_{\fudos}v-q_{12}^2q_{13} vx_{\fudos}) & = -4x_1 x_{\fudos 2}x_{12}+4q_{12}^2 \, x_{\fudos 2}x_{12}x_1=4q_{12}x_{12}x_{1\fudos 2}, \\
&= 2q_{12}\partial_3(x_{12}x_{1\fudos 23}-x_{\fudos 2}x_{1\fudos 23}).
\end{align*}

Finally we claim that $\partial_i(v^2)=0$ for all $i\in\Iw_3$. It suffices to prove that $\partial_2(v^2)=\partial_3(v^2)=0$. By direct computation,
\begin{align}\label{eq:nichols-mild-cyc2-other-rel3}
[x_{1\fudos 23}, x_2]_c  &= -q_{12}q_{13} v.
\end{align}
From this equation and \eqref{eq:nichols-mild-cyc2-relation2},
\begin{align}\label{eq:nichols-mild-cyc2-other-rel4}
x_{1\fudos 23} v = q_{12}^2q_{32} \, v x_{1\fudos 23}.
\end{align}
Thus,
\begin{align*}
\partial_2(v^2) &= -2q_{31}q_{12} \, v x_{1\fudos 23} -2q_{31}q_{12}q_{21}^2q_{23} \, x_{1\fudos 23}v=0.
\end{align*}
As $x_2^2=0$, $v x_2= q_{12}^2 x_2 v$. Using this equation and \eqref{eq:nichols-mild-cyc2-relation6}, \eqref{eq:nichols-mild-cyc2-relation7},
\begin{align}\label{eq:nichols-mild-cyc2-other-rel5}
x_{12} v &= q_{13}q_{23} \, v x_{12}, & x_{\fudos 2} v &= q_{13}q_{23} \, v x_{\fudos 2}.
\end{align}
Then we compute:
\begin{align*}
\partial_3(v^2) &= 4q_{31}^2q_{32}^2 \, x_{12}x_{\fudos 2} x_{1\fudos 23} -4 \, v x_{12}x_{\fudos 2}=0.
\end{align*}
Thus \eqref{eq:nichols-mild-cyc2-relation3} holds in $\cB(V)$.
\end{proof}

We fix the notation $u = [x_{\fudos 23}, x_2]_c$, $v=[x_{\fudos 2},x_{123}]_c$ for the rest of this Subsection,
as in the proof of last Lemma.

\begin{remark}\label{rem:nichols-mild-cyc2-relations}
By definition of $x_{1\fudos 23}$, $x_{123}$, $x_{\fudos 23}$, $v$, $u$, $x_{23}$, we have
\begin{align*}
x_1 x_{\fudos 23} &= x_{1\fudos 23} + q_{12}q_{13} (x_{123} - x_{\fudos 23}) x_{1}, &
x_1 x_{23} &= x_{123}+q_{12}q_{13} x_{23} x_{1}, \\
x_{\fudos} x_{23} &= x_{\fudos 23}+q_{12}q_{13} x_{23} x_{\fudos}, &
x_{\fudos 2} x_{123} &= v - q_{13}q_{23}x_{123} x_{\fudos 2}, \\
x_{\fudos 23} x_2 &= u-q_{12}q_{32} x_2 x_{\fudos 23}, &
x_2 x_{3} &= x_{23}+q_{23} x_{3} x_2.
\end{align*}
\end{remark}

\begin{lemma}\label{lem:nichols-mild-cyc2-relations}
Let $\cB$ be a quotient algebra of $T(V)$. Assume that \eqref{eq:rels-B(V(-1,2))-1}, \eqref{eq:rels-B(V(-1,2))-2},
\eqref{eq:nichols-mild-relation1}, \eqref{eq:nichols-mild-relation2}, \eqref{eq:nichols-mild-relation3}, \eqref{eq:nichols-mild-relation4}, \eqref{eq:nichols-mild-relation5}, \eqref{eq:nichols-mild-cyc2-relation1}, \eqref{eq:nichols-mild-cyc2-relation2}, \eqref{eq:nichols-mild-cyc2-relation3}, \eqref{eq:nichols-mild-cyc2-relation4}, \eqref{eq:nichols-mild-cyc2-relation5}, \eqref{eq:nichols-mild-cyc2-relation6}
and \eqref{eq:nichols-mild-cyc2-relation7} hold in $\cB$. Then the following relations also hold:
\begin{align*}
x_1 x_{1\fudos 23} &= q_{12}q_{13} x_{1\fudos 23} x_{1}, &
x_1 x_{123} &= -q_{12}q_{13} x_{123} x_{1}, \\
x_1 v &= q_{12}^2q_{13} vx_1-2q_{12}x_{12}x_{1\fudos 23}, &
x_1 x_{3} &= q_{13} x_{3} x_{1}, \\
x_1 u &= - q_{12}^2q_{13} u x_{1} + q_{12}q_{13} x_{123}x_{12}, &
x_{\fudos 1} x_{3} &= q_{13}^2 x_{3} x_{\fudos 1} \\
x_{\fudos 1} x_{1\fudos 23} &= q_{12}^2q_{13}^2 x_{1\fudos 23} x_{\fudos 1}, &
x_{\fudos 1} x_{123} &= q_{12}^2q_{13}^2 x_{123} x_{\fudos 1}, \\
x_{\fudos 1} x_{\fudos 23} &= q_{12}^2q_{13}^2 x_{\fudos 23} x_{\fudos 1}-q_{12}^2q_{13}^2 x_{123} x_{\fudos 1}, &
x_{\fudos 1} v &= q_{12}^4q_{13}^2 v x_{\fudos 1}, \\
x_{\fudos 1} u &= q_{12}^4q_{13}^2 u x_{\fudos 1}, &
x_{\fudos 1} x_{23} &= q_{12}^2 q_{13}^2 x_{23} x_{\fudos 1}, \\
x_{\fudos} x_{123} &= -q_{12}q_{13} x_{123} x_{\fudos} - x_{1\fudos 23},  &
x_{\fudos} x_{3} &= q_{13} x_{3} x_{\fudos}, \\
x_{\fudos} x_{1\fudos 23} &= q_{12}q_{13} x_{1\fudos 23} x_{\fudos},
\end{align*}
\begin{align*}
x_{\fudos} x_{\fudos 23} &= -q_{12}q_{13} x_{\fudos 23} x_{\fudos} + q_{12}q_{13}x_{123}x_{\fudos} +\frac{1}{2} x_{1\fudos 23}, \\
x_{\fudos} v &= q_{12}^2q_{13} v x_{\fudos} +2q_{12}x_{12}x_{1\fudos 23}-2q_{12}x_{\fudos 2}x_{1\fudos 23},\\
x_{\fudos} u &= -q_{12}^2q_{13} u x_{\fudos} +q_{12}^2q_{13} (x_{123}-x_{\fudos 23})x_{\fudos 2}-\frac{q_{12}q_{13}}{2} v, \\
x_{12} x_{\fudos 23} &= -q_{13}q_{23} x_{\fudos 23} x_{12} + x_{12}x_{123}-q_{13}q_{23} v, \\
x_{1 \fudos 2} x_{\fudos 23} &= q_{12}q_{13}^2q_{23} x_{\fudos 23} x_{1 \fudos 2} -2q_{13}^2q_{23}^2 x_{12}x_{1\fudos 23}, \\
x_{1 \fudos 2} u &= q_{12}^3q_{13}^2q_{23} u x_{1 \fudos 2} -2q_{12}q_{13}^3q_{23} x_{12}v, \\
x_{\fudos 2} x_{1\fudos 23} &= q_{21}q_{23}q_{13} x_{1\fudos 23} x_{\fudos 2} + 2q_{13}q_{23}x_{12}x_{1\fudos 23}, \\
x_{12} x_{23} &= -q_{12}q_{13}q_{23} x_{23} x_{12} -2q_{12} x_2x_{123}, \\
x_{1 \fudos 2} x_{23} &= -q_{21}q_{13}^2q_{23} x_{23} x_{1 \fudos 2} + q_{12}q_{13}q_{23}v, \\
x_{1\fudos 23} x_2  &= -q_{12}^2q_{32} x_2 x_{1\fudos 23} -q_{12}q_{13} v,
\end{align*}
\begin{align*}
x_{12} x_{1\fudos 23} &= q_{13}q_{23}q_{21} x_{1\fudos 23} x_{12}, &
x_2 x_{123} &= -q_{21}q_{23} x_{123} x_2, \\
x_{12} x_{123} &= -q_{13}q_{23} x_{123} x_{12}, &
x_{12} v &= q_{13}q_{23} v x_{12}, \\
x_{1 \fudos 2} x_{3} &= q_{13}^2q_{23} x_{3} x_{1 \fudos 2} + x_{1\fudos 23} &
x_{12} u &= q_{12}q_{13}q_{23} u x_{12}, \\
x_{12} x_{3} &= q_{13}q_{23} x_{3} x_{12} + x_{123},  &
x_{\fudos 2} v &= q_{13}q_{13} v x_{\fudos 2}, \\
x_{\fudos 2} x_{\fudos 23} &= -q_{12}q_{23} x_{\fudos 23} x_{\fudos 2}, &
x_{1 \fudos 2} v &= q_{12}^2q_{13}^2q_{23} v x_{1 \fudos 2}, \\
x_{\fudos 2} x_{3} &= q_{13}q_{23} x_{3} x_{\fudos 2} + x_{\fudos 23}, &
x_{\fudos 2} u &= q_{12}q_{13} u x_{\fudos 2},\\
x_{1 \fudos 2} x_{1\fudos 23} &= -q_{13}^3q_{23} x_{1\fudos 23} x_{1 \fudos 2}, &
v x_2 &=  q_{12}^2 q_{32} x_2 v,\\
x_{\fudos 2} x_{23} &= -q_{12}q_{13}q_{23} x_{23} x_{\fudos 2} -q_{23}u, &
x_2 u &= q_{21}q_{23} u x_2,\\
x_{1 \fudos 2} x_{123} &= q_{12}q_{13}^2q_{23} x_{123} x_{1 \fudos 2}, &
x_2 x_{23} &= -q_{23} x_{23} x_2,
\end{align*}
\begin{align*}
x_{1\fudos 23} x_{123} &= q_{12}q_{13} x_{123} x_{1\fudos 23} ,&
x_{1\fudos 23} v &= q_{12}^2q_{32} v x_{1\fudos 23} , \\
x_{1\fudos 23} x_{\fudos 23} &= q_{12}q_{13} x_{\fudos 23} x_{1\fudos 23} ,&
x_{1\fudos 23} u &= -q_{12}^3q_{13}q_{32} u x_{1\fudos 23} , \\
x_{1\fudos 23} x_{23} &= -q_{12}^2q_{13}^2 x_{23} x_{1\fudos 23},&
x_{1\fudos 23} x_{3} &= -q_{13}^2q_{23} x_{3} x_{1\fudos 23}, \\
x_{123} v &= q_{31}q_{32} v x_{123}, &
x_{123} x_{\fudos 23} &= -x_{\fudos 23} x_{123} , \\
x_{123} u &= q_{12}q_{32} u x_{123} , &
x_{123} x_{23} &= -q_{12}q_{13} x_{23} x_{123}, \\
x_{123} x_{3} &= -q_{13}q_{23} x_{3} x_{123}, &
v x_{\fudos 23} &= -q_{12}q_{13} x_{\fudos 23} v , \\
v u &= q_{12}^2q_{13} u v , &
v x_{23} &= q_{12}^2q_{13}^2q_{23} x_{23} v , \\
v x_{3} &= -q_{13}^2q_{23}^2 x_{3} v , &
x_{\fudos 23} u &= q_{12}q_{32} u x_{\fudos 23}, \\
x_{\fudos 23} x_{23} &= -q_{12}q_{13} x_{23} x_{\fudos 23}, &
x_{\fudos 23} x_{3} &= -q_{13}q_{23} x_{3} x_{\fudos 23}, \\
u x_{23} &= q_{12}q_{13} x_{23} u, &
u x_{3} &= -q_{13}q_{23}^2 x_{3} u, \\
x_{23} x_{3} &= -q_{23} x_{3} x_{23}.
\end{align*}

In particular these relations hold in $\cB(V)$.
\end{lemma}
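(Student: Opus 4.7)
The plan is to derive each of the listed identities as a consequence of the defining relations of $\cBt$ by systematic $q$-commutator manipulation, arranging them in an order that avoids circular dependency. I would split the list into two groups: (A) relations of the form ``$x\cdot y = c\,y\cdot x + \text{correction}$'' where $x \in \{x_1,x_{\fudos},x_{\fudos 1}\}$ and $y$ ranges over the generators $x_2,x_3,x_{12},x_{\fudos 2},x_{1\fudos 2},x_{23},x_{123},x_{\fudos 23},x_{1\fudos 23},u,v$ of (the candidate for) $K^1$; and (B) relations among the $K^1$-generators themselves. Group (A) should be proved first, so that its identities can be used as ``transport moves'' when simplifying products in group (B).

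Within group (A), I would start with $x_1$: from $x_1x_3 = q_{13}x_3 x_1$ in \eqref{eq:nichols-mild-cyc2-ghost-int-trivial} together with $x_1^2=0$ from \eqref{eq:rels-B(V(-1,2))-1}, an induction on the bracket length gives all relations $x_1 \cdot y = c\, y \cdot x_1 + \text{correction}$ by expanding $y$ via its definition and collecting terms. Next, $x_{\fudos}$: \eqref{eq:nichols-mild-relation1}--\eqref{eq:nichols-mild-relation3} are the starting data, and applying $\ad_c x_3$ together with $x_{\fudos} x_3 = q_{13}x_3x_{\fudos}$ produces the relations for $x_{\fudos}x_{123}$, $x_{\fudos}x_{\fudos 23}$ and $x_{\fudos}x_{1\fudos 23}$; the identities \eqref{eq:nichols-mild-cyc2-relation5}--\eqref{eq:nichols-mild-cyc2-relation7} directly unpack, using $[x_{1},-]_c$ and $[x_{\fudos},-]_c$ notation and the already-established relations for $x_1$, into $x_1u$, $x_1v$ and $x_{\fudos}v$. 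The missing $x_{\fudos}u$ relation is obtained by expanding $x_{\fudos}(x_{\fudos 23}x_2 + q_{12}q_{32}x_2 x_{\fudos 23})$ via \eqref{eq:nichols-mild-relation1}, \eqref{eq:nichols-mild-relation2} and the previous $x_{\fudos}x_{\fudos 23}$ relation, then solving. Finally for $x_{\fudos 1}$, I would write $x_{\fudos 1} = x_{\fudos}x_1 + x_1x_{\fudos}$ and substitute the already-proved $x_1$- and $x_{\fudos}$-relations, using \eqref{eq:rels-B(V(-1,2))-2} to collapse terms.

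For group (B), the simplest relations like $x_2x_3 = x_{23}+q_{23}x_3x_2$ are definitional (Remark \ref{rem:nichols-mild-cyc2-relations}); the more elaborate ones, for instance $x_{12}x_{\fudos 23} = -q_{13}q_{23}x_{\fudos 23}x_{12}+x_{12}x_{123}-q_{13}q_{23}v$, follow by expanding both factors through their defining brackets, pushing the $x_1$'s and $x_{\fudos}$'s across using group (A), and recollecting. Relations involving $u$, $v$, $x_{1\fudos 23}$ additionally use the auxiliary identities \eqref{eq:nichols-mild-cyc2-other-rel1}--\eqref{eq:nichols-mild-cyc2-other-rel5} already extracted in the proof of Lemma \ref{lem:nichols-mild-cyc2-defrelations}, together with the defining relations \eqref{eq:nichols-mild-cyc2-relation1}--\eqref{eq:nichols-mild-cyc2-relation4}. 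The main obstacle is organizational rather than conceptual: there are close to fifty identities, and each $u$- or $v$-commutation is sensitive to sign and to the precise power of $q_{ij}$, so a single slip propagates. The cleanest safeguard is to verify each nontrivial relation by applying all skew-derivations $\partial_i$, $i\in\Iw_3$, to both sides, as in the proof of Lemma \ref{lem:nichols-mild-cyc2-defrelations}; in $\cB(V)$ an element vanishes iff all $\partial_i$ vanish on it, giving a uniform check. The final sentence of the Lemma is then immediate, since $\cB(V)$ is a quotient of $\cBt$.
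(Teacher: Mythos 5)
Your proposal is essentially the same approach as the paper's, which states the entire proof in one line: ``The equations are proved recursively on the degree from the defining relations.'' Your two-group organization (block generators $x_1$, $x_{\fudos}$, $x_{\fudos 1}$ against $K^1$-generators first, then $K^1$-generators among themselves) is a reasonable expansion of that one-liner, and correctly identifies that the $x_1$- and $x_{\fudos}$-relations must be secured before they can be used as transport moves. One caution worth keeping in mind: the skew-derivation ``safeguard'' is only a consistency check in $\cB(V)$ -- it cannot be used to establish the relations in an arbitrary quotient $\cB$, which is what the Lemma asserts -- but you already say this, so the point stands as a sanity check rather than part of the argument.
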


\begin{proof}
The equations are proved recursively on the degree from the defining relations.
\end{proof}

\begin{lemma} \label{le:basisK-cyc2}
The set
\begin{align*}
B_K=&\big\{x_{12}^{n_1} x_{1\fudos 2}^{n_2} x_{1\fudos 23}^{n_3} [x_{123}, x_{\fudos 2}]_c^{n_4} x_{123}^{n_5}  x_{\fudos 2}^{n_6} x_{\fudos 23}^{n_7}
 [x_{\fudos 23}, x_2]_c^{n_8} x_2^{n_9}  x_{23}^{n_{10}} x_3^{n_{11}}:
\\ & 0 \le n_1, n_2, n_3,n_4, n_5, n_6, n_7, n_9, n_{10}, n_{11}  \le 1 \big\}.
\end{align*}
is a basis of $K$ and $\GK K = 1$.
\end{lemma}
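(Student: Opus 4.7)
Our plan is to follow the template of Proposition~\ref{pr:nichols-mild}, which handled the analogous but smaller Nichols algebra $\cB(\cyc_1)$. First, we observe that the eleven PBW-candidates $x_{12}, x_{1\fudos 2}, x_{1\fudos 23}, v, x_{123}, x_{\fudos 2}, x_{\fudos 23}, u, x_2, x_{23}, x_3$ all lie in $K$ (each is annihilated by $\partial_1$ and $\partial_{\fudos}$) and, because $x_{ij\ldots}$ are iterated braided commutators starting from the $V_{\diag}$-letters, they span $K^1 = \ad_c\NA (V_1)(V_{\diag})$; since $K \simeq \NA(K^1)$, they generate $K$ as an algebra.

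Next, we would show that the linear span $I$ of $B_K$ is a left ideal of $K$ containing $1$, whence $I = K$. This reduces to checking $yI \subseteq I$ for each of the eleven generators $y$. Each such inclusion is obtained from the commutation rules collected in Lemma~\ref{lem:nichols-mild-cyc2-relations}, together with the height bounds $y^2 = 0$ for all generators except $u$ --- these come from \eqref{eq:nichols-mild-relation4}, \eqref{eq:nichols-mild-relation5}, \eqref{eq:nichols-mild-cyc2-relation1}, \eqref{eq:nichols-mild-cyc2-relation2}, \eqref{eq:nichols-mild-cyc2-relation3}, \eqref{eq:nichols-mild-cyc2-A3-1}, \eqref{eq:nichols-mild-cyc2-A3-2}. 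Each such relation expresses a product $yz$ (with $y$ above $z$ in our chosen order) as $\alpha\, zy$ modulo strictly lower-order monomials that already lie in $I$, so a standard diamond-style rewriting terminates in the normal form prescribed by $B_K$.

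For linear independence, which is the main technical obstacle, we would iterate the skew-derivations $\partial_i$, $i \in \Iw_3$, using the computations \eqref{eq:nichols-mild-derivations-u}, \eqref{eq:partial3z1}, \eqref{eq:partial3f0} and those in the proof of Lemma~\ref{lem:nichols-mild-cyc2-defrelations} to extract, one by one, the coefficients of any hypothetical non-trivial relation among $B_K$; here the delicate point is that $u$ is the unique generator of unbounded height, so $\partial_3^{n_8}$ applied to a $B_K$-element sends $u^{n_8}$ to a non-zero multiple of $x_{\fudos 2}^{n_8}$ times the remaining factors, which can then be separated by $\partial_2$. An alternative route is to compare Hilbert series via the braided bosonization $\NA(V)\simeq K\#\NA(V_1)$: knowing $B_K$ spans $K$ and that $\NA(V_1)$ is the super Jordan plane, the product set $B_K \cdot B_{\cB(V_1)}$ surjects onto $\NA(V)$, and matching Hilbert series forces linear independence.

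Finally, once $B_K$ is a basis, the Hilbert series of $K$ is a polynomial in $t$ times $(1-t^{\deg u})^{-1}$, so $\dim \bigoplus_{j\le n} K^j$ grows linearly in $n$ and $\GK K = 1$ by \eqref{eq:def-GKdim}.
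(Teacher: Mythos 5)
Your spanning/left-ideal paragraph follows the paper's approach and is fine in outline. The real problem is the linear-independence step, and both of the routes you sketch have gaps.

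Your first route is to peel off powers of $u=[x_{\fudos 23},x_2]_c$ by applying $\partial_3^{n_8}$. This cannot work: by \eqref{eq:nichols-mild-derivations-u} one has $\partial_3(u)=0$ (only $\partial_2(u)=-x_{123}$ is non-zero), hence $\partial_3(u^n)=0$ for all $n$ and $\partial_3$ sees nothing of the $u$-power. The actual extraction of $u$-exponents requires a composite of skew-derivations; the paper uses the identity
\begin{align*}
\partial_1\partial_2\partial_3\partial_2(u^n) &= -4n\,u^{n-1},
\end{align*}
obtained from the computation of $\partial_1\partial_2\partial_3\partial_2(u)$ and the behaviour of the $g$-action on $u$. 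On top of this, the paper needs a further idea which your proposal omits: given a hypothetical non-trivial relation $\mathtt S$ among the $B_K$-monomials, one multiplies $\mathtt S$ on the right by the ``complementary'' monomial $x_3^{1-m_{11}}x_{23}^{1-m_{10}}\cdots x_{12}^{1-m_1}$ chosen so that every factor except the $u$-power is completed. Because each such generator squares to zero, most summands vanish and the surviving ones become scalar multiples of the elements
\begin{align*}
\ur_n = x_{12}\,x_{1\fudos 2}\,x_{1\fudos 23}\,v\,x_{123}\,x_{\fudos 2}\,x_{\fudos 23}\,u^n\,x_2\,x_{23}\,x_3,
\end{align*}
which are homogeneous of pairwise distinct degrees $4n+26$ and are shown non-zero by a long chain of derivation computations beginning from the non-vanishing of $x_{12}x_{1\fudos 2}x_{\fudos 2}u^nx_2$. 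Without this multiplication-by-complement device, the separation of coefficients by derivations alone is not straightforward, because most derivations annihilate $u$. Your second route via Hilbert series of $\NA(V)\simeq K\#\NA(V_1)$ is circular: at this point one does not yet know the Hilbert series of $\NA(V)$ independently; indeed the Hilbert series of $\NA(V)$ is only determined after $B_K$ is proved to be a basis, as is done in Proposition~\ref{pr:nichols-mild-cyc2}.
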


\begin{proof}
First we claim that $K$ is spanned by the monomials
\begin{align}\label{eq:cyclop2-generating-subspace}
x_{12}^{n_1} x_{1\fudos 2}^{n_2} x_{1\fudos 23}^{n_3} v^{n_4} x_{123}^{n_5}  x_{\fudos 2}^{n_6} x_{\fudos 23}^{n_7}
 u^{n_8} x_2^{n_9}  x_{23}^{n_{10}} x_3^{n_{11}}, \, n_i\in\N_0.
\end{align}
Using the relations in Lemma \ref{lem:nichols-mild-cyc2-relations} and Remark \ref{rem:nichols-mild-cyc2-relations}, $K^1= \ad_c\NA (V_1) (V_{\diag})$
is spanned by $x_{12}$, $x_{1\fudos 2}$, $x_{\fudos 2}$, $x_2$, $x_3$ since the subspace generated by them is stable by $\ad_c x_1$, $\ad_c x_{\fudos}$,
$\ad_c x_{\fudos 1}$.
Then $x_{1\fudos 23}$, $v$, $x_{123}$, $x_{\fudos 23}$, $u$, $x_{23}$ also belong to $K$. Thus the subspace generated by \eqref{eq:cyclop2-generating-subspace}
is contained in $K$. On the other hand, the subspace generated by \eqref{eq:cyclop2-generating-subspace} is a left ideal by Lemma \ref{lem:nichols-mild-cyc2-relations} and Remark \ref{rem:nichols-mild-cyc2-relations}; as $1$ belongs to this subspace, it is $K$.
Now we can restrict all the $n_i$, $i\neq 8$, in \eqref{eq:cyclop2-generating-subspace} to the set $\{0,1\}$ because of Lemma \ref{lem:nichols-mild-cyc2-relations}, \eqref{eq:nichols-mild-relation4}, \eqref{eq:nichols-mild-relation5},
\eqref{eq:nichols-mild-cyc2-A3-1} and \eqref{eq:nichols-mild-cyc2-A3-2}. Thus $K$ is spanned by $B_K$.

We claim now that $B_K$ is linearly independent. From \eqref{eq:nichols-mild-derivations-u},
\begin{align*}
\partial_1\partial_2\partial_3\partial_2(u^n) &= \sum_{k=0}^n u^k \partial_1\partial_2\partial_3\partial_2(u) g_1g_2^2g_3 \cdot u^{n-1-k} =-4n\, u^{n-1},
\end{align*}
for all $n\in\N$. We claim that $x_{12} x_{1\fudos 2} x_{\fudos 2} u^{n} x_2\neq 0$ for all $n\in\N_0$. Indeed, it holds for $n=0$ by Lemma \ref{le:basisK},
and recursively
\begin{align*}
\partial_1\partial_2\partial_3\partial_2(x_{12}x_{1\fudos 2}x_{\fudos 2}u^{n}x_2) &= -4nq_{21}q_{23} \, x_{12} x_{1\fudos 2} x_{\fudos 2} u^{n-1} x_2 \neq 0.
\end{align*}
Set $\ur_n=x_{12} x_{1\fudos 2} x_{1\fudos 23} v x_{123} x_{\fudos 2} x_{\fudos 23} u^{n} x_2  x_{23} x_3$.
We claim that
\begin{align*}
\partial_3(\ur_n) &= x_{12} x_{1\fudos 2} x_{1\fudos 23} v x_{123} x_{\fudos 2} x_{\fudos 23} u^{n} x_2  x_{23}.
\end{align*}
Indeed $\partial_3$ annihilates $x_{12}$, $x_{1\fudos 2}$, $x_{\fudos 2}$, $u$, $x_2$, and
\begin{align*}
\partial_3(v)&=-4x_{\fudos}x_{12}, & \partial_3(x_{1\fudos 23}) &=2x_{1\fudos 2}, & \partial_3(x_{123}) &=2 x_{12}, \\
\partial_3(x_{\fudos 23}) &=2 x_{\fudos}-x_{12}, & \partial_3(x_{23}) &=2x_2, & \partial_3(x_3) &=1.
\end{align*}
As $\partial_3$ is a skew derivation, $\partial_3(\ur_n)$ is the sum of 11 terms, each of them obtained by applying $\partial_3$
either to a PBW generator different to $u$ or else to $u^n$, and applying $g_3$ to the generators on the right. Here, 5 of these summands
are 0 because $\partial_3$ annihilates the corresponding PBW generator, and another 5 summands are 0 because we can reorder the PBW
generators to obtain the square of a PBW generator which is zero, see Lemma \ref{lem:nichols-mild-cyc2-relations},
\eqref{eq:nichols-mild-relation4}, \eqref{eq:nichols-mild-relation5}, \eqref{eq:nichols-mild-cyc2-A3-1} and \eqref{eq:nichols-mild-cyc2-A3-2}.
The exception is the summand containing $\partial_3(x_3) =1$.
By a similar argument,
\begin{align*}
& \partial_1^2\partial_2\partial_3(x_{12} x_{1\fudos 2} x_{1\fudos 23} v x_{123} x_{\fudos 2} x_{\fudos 23} u^{n} x_2  x_{23}) =
4 \chi_1^{n+5}\chi_2^{2n+7}\chi_3^{n+4}(g_1^2g_2g_3) \\
&\qquad x_{12} x_{1\fudos 2} v x_{123} x_{\fudos 2} x_{\fudos 23} u^{n} x_2  x_{23}, \\
& \partial_1^2\partial_2\partial_3\partial_2(x_{12} x_{1\fudos 2} v x_{123} x_{\fudos 2} x_{\fudos 23} u^{n} x_2  x_{23}) =
8q_{31}q_{32} \chi_1^{n+3}\chi_2^{2n+5}\chi_3^{n+3}(g_1^2g_2^2g_3) \\
&\qquad x_{12} x_{1\fudos 2} x_{123} x_{\fudos 2} x_{\fudos 23} u^{n} x_2  x_{23}, \\
& \partial_{\fudos}\partial_2\partial_3(x_{12} x_{1\fudos 2} x_{123} x_{\fudos 2} x_{\fudos 23} u^{n} x_2  x_{23}) =
4 \chi_1^{n}\chi_2^{2n+2}\chi_3^{n+1}(g_1g_2g_3) \\
&\qquad x_{12} x_{1\fudos 2} x_{123} x_{\fudos 2} u^{n} x_2  x_{23}, \\
& \partial_1\partial_2\partial_3(x_{12} x_{1\fudos 2} x_{123} x_{\fudos 2} u^{n} x_2  x_{23}) =
4 \chi_1^{n+1}\chi_2^{2n+3}\chi_3^{n+1}(g_1g_2g_3) \\
&\qquad x_{12} x_{1\fudos 2} x_{\fudos 2} u^{n} x_2  x_{23}, \\
& \partial_2\partial_3(x_{12} x_{1\fudos 2} x_{\fudos 2} u^{n} x_2  x_{23}) = 2 x_{12} x_{1\fudos 2} x_{\fudos 2} u^{n} x_2,
\end{align*}
so $\ur_n\neq 0$ for all $n\in\N$. By Lemma \ref{lem:nichols-mild-cyc2-relations}, and \eqref{eq:nichols-mild-relation4}, \eqref{eq:nichols-mild-relation5}, \eqref{eq:nichols-mild-cyc2-A3-1}, \eqref{eq:nichols-mild-cyc2-A3-2},
\begin{multline*}
x_{12}^{n_1} x_{1\fudos 2}^{n_2} x_{1\fudos 23}^{n_3} v^{n_4} x_{123}^{n_5}  x_{\fudos 2}^{n_6} x_{\fudos 23}^{n_7}
u^{n_8} x_2^{n_9}  x_{23}^{n_{10}} x_3^{n_{11}} \\
\cdot x_3^{1-n_{11}} x_{23}^{1-n_{10}} x_2^{1-n_9} x_{\fudos 23}^{1-n_7} x_{\fudos 2}^{1-n_6} x_{123}^{1-n_5} v^{1-n_4}x_{1\fudos 23}^{1-n_3} x_{1\fudos 2}^{1-n_2} x_{12}^{1-n_1}
\end{multline*}
gives $\ur_{n_8}$ multiplied by a non-zero scalar. Let $\mathtt{S}$ be a non-trivial linear combination of elements of $B_K$. Let
$N$ be the minimum of $\sum_{i\neq 8} n_i$ between the elements of $B_K$ with non-zero coefficient and pick
\begin{align*}
&x_{12}^{m_1} x_{1\fudos 2}^{m_2} x_{1\fudos 23}^{m_3} v^{m_4} x_{123}^{m_5}  x_{\fudos 2}^{m_6} x_{\fudos 23}^{m_7}
u^{m_8} x_2^{m_9}  x_{23}^{m_{10}} x_3^{m_{11}}, && \sum_{i\neq 8} m_i=N,
\end{align*}
with non-zero coefficient. Then
\begin{align}\label{eq:S-mult-by-product}
& \mathtt{S} \cdot x_3^{1-m_{11}} x_{23}^{1-m_{10}} x_2^{1-m_9} x_{\fudos 23}^{1-m_7} x_{\fudos 2}^{1-m_6} x_{123}^{1-m_5} v^{1-m_4}x_{1\fudos 23}^{1-m_3} x_{1\fudos 2}^{1-m_2} x_{12}^{1-m_1}
\end{align}
is a sum of $\ur_n$'s with non-zero coefficient, one for each term of $B_K$ such that $n_i=m_i$, $i\neq 8$, $n_8=n$, and non-zero
coefficient in $\mathtt{S}$. As the $\ur_n$'s are non-zero and $\gr \ur_n=4n+26$, \eqref{eq:S-mult-by-product} is non-zero, and $\mathtt{S}$
is so. Thus the claim follows, so $B_K$ is a basis of $K$.
\end{proof}

We close this Subsection giving the presentation of $\cB(\cyc_2)$.

\begin{prop} \label{pr:nichols-mild-cyc2} The algebra
$\cB(\cyc_2)$ is presented by generators $x_i$, $i\in\Iw_3$, and relations \eqref{eq:rels-B(V(-1,2))-1}, \eqref{eq:rels-B(V(-1,2))-2},
\eqref{eq:nichols-mild-relation1}, \eqref{eq:nichols-mild-relation2}, \eqref{eq:nichols-mild-relation3}, \eqref{eq:nichols-mild-relation4}, \eqref{eq:nichols-mild-relation5}, \eqref{eq:nichols-mild-cyc2-relation1}, \eqref{eq:nichols-mild-cyc2-relation2}, \eqref{eq:nichols-mild-cyc2-relation3}, \eqref{eq:nichols-mild-cyc2-relation4}, \eqref{eq:nichols-mild-cyc2-relation5}, \eqref{eq:nichols-mild-cyc2-relation6}
and \eqref{eq:nichols-mild-cyc2-relation7}. The set
\begin{align*}
&\big\{x_1^{a_1} x_{1\fudos}^{a_2} x_{\fudos}^{a_3} x_{12}^{a_4} x_{1\fudos 2}^{a_5}
x_{1\fudos 23}^{a_6} [x_{123}, x_{\fudos 2}]_c^{a_7} x_{123}^{a_8}  x_{\fudos 2}^{a_9} x_{\fudos 23}^{a_{10}}
 [x_{\fudos 23}, x_2]_c^{a_{11}} x_2^{a_{12}}  x_{23}^{a_{13}}
 x_3^{a_{14}}:
\\ & 0 \le a_1, a_4, a_5, a_6,a_7, a_8, a_9, a_{10}, a_{12}, a_{13}, a_{14}  \le 1 \big\}.
\end{align*}
is a basis of $\cB(\cyc_2)$ and $\GK \cB(\cyc_2) = 3$.
\end{prop}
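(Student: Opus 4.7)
The plan follows the same pattern as Proposition \ref{pr:nichols-mild}. First, all the listed relations hold in $\cB(\cyc_2)$: the super Jordan relations \eqref{eq:rels-B(V(-1,2))-1}, \eqref{eq:rels-B(V(-1,2))-2} come from the block $V_1$; the relations \eqref{eq:nichols-mild-relation1}--\eqref{eq:nichols-mild-relation5} are Lemma \ref{le:a1q22even} applied to the pair $V_1\oplus\ku x_2$; and \eqref{eq:nichols-mild-cyc2-relation1}--\eqref{eq:nichols-mild-cyc2-relation7} are the content of Lemma \ref{lem:nichols-mild-cyc2-defrelations}. Hence the quotient $\cBt$ of $T(V)$ by all these relations projects onto $\cB(\cyc_2)$.

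Next I would show that the subspace $I$ spanned by $B$ is a left ideal of $\cBt$; since $1\in I$, this forces $I = \cBt$, so that $B$ spans $\cBt$. The containment $x_3 I \subseteq I$ is immediate by definition of $B$, and $x_1 I,\,x_{\fudos} I \subseteq I$ follow by combining the super Jordan relations, which govern the left factor $x_1^{a_1} x_{1\fudos}^{a_2} x_{\fudos}^{a_3}$, with the long list of $q$-commutation formulae recorded in Lemma \ref{lem:nichols-mild-cyc2-relations}. The most delicate case is $x_2 I \subseteq I$, for which each identity $x_2\,y$, with $y$ ranging over the PBW letters $x_{12}, x_{1\fudos 2}, x_{1\fudos 23}, v, x_{123}, x_{\fudos 2}, x_{\fudos 23}, u, x_2, x_{23}, x_3$, rewrites via Remark \ref{rem:nichols-mild-cyc2-relations} and Lemma \ref{lem:nichols-mild-cyc2-relations} as a linear combination of elements of $B$, the correction terms either squaring out by the $2$-truncations in $B$ or landing in strictly lower-degree pieces treatable by induction.

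Linear independence of $B$ in $\cB(\cyc_2)$ then follows from the braided bosonization $\cB(V)\simeq K\#\cB(V_1)$ recalled in \S\ref{subsubsec:algK-block-points}: Lemma \ref{le:basisK-cyc2} provides the basis $B_K$ of $K$, and Proposition \ref{pr:-1block} gives the basis $\{x_1^{a_1}x_{1\fudos}^{a_2}x_{\fudos}^{a_3}:a_1\in\{0,1\},\,a_2,a_3\in\N_0\}$ of $\cB(V_1)$. The product of these two bases under the identification $\cB(V) = K\cdot\cB(V_1)$ is precisely the set $B$, so $B$ is a basis. The resulting equality $\cBt = \cB(\cyc_2)$ and the corresponding Hilbert series calculation yield $\GK\cB(\cyc_2) = \GK K + \GK \cB(V_1) = 1 + 2 = 3$, consistent with Lemma \ref{lemma:GKdim-smashproduct} and Remark \ref{remark:GK-block-points-K}. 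The main obstacle is the explicit bookkeeping required to verify the left-ideal condition, particularly $x_2 I\subseteq I$: every PBW letter of $K$ must be moved past $x_2$ with the help of the identities in Lemma \ref{lem:nichols-mild-cyc2-relations}, and each correction term must be shown to close back into $I$; this is mechanical but combinatorially heavy, since the number of $q$-commutations to be chased is large.
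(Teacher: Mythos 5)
Your overall plan is the same as the paper's, but the ideal argument is written with the wrong sidedness, which breaks the claim that one of the containments is immediate. The PBW basis $B$ lists the block letters $x_1^{a_1}x_{1\fudos}^{a_2}x_{\fudos}^{a_3}$ on the \emph{left} and $x_3^{a_{14}}$ as the \emph{rightmost} letter; thus the containment that really is immediate is $Ix_3\subseteq I$ (right multiplication), because $(\dots x_3^{a_{14}})x_3$ either raises $a_{14}$ within the allowed range or kills the word by $x_3^2=0$. The paper accordingly shows that $I$ is a \emph{right} ideal, with $Ix_3\subseteq I$ for free and $Ix_1,Ix_{\fudos},Ix_2\subseteq I$ verified via Lemma \ref{lem:nichols-mild-cyc2-relations} and Remark \ref{rem:nichols-mild-cyc2-relations}. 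You instead claim a left ideal with $x_3I\subseteq I$ free; that is false, since $x_3$ would then have to be commuted all the way past both the block part and the full $K$ part. (Conversely, for a left ideal it is $x_1I\subseteq I$, not $x_3I\subseteq I$, that would be immediate.) The repair is mechanical: flip every $xI$ to $Ix$ and you recover the paper's argument.

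Your linear-independence step also has a small gap as stated. You write that the product of $B_K$ (Lemma \ref{le:basisK-cyc2}) with the basis of $\cB(V_1)$ is ``precisely the set $B$'' under the identification $\cB(V)\simeq K\#\cB(V_1)$. But the smash-product decomposition gives the vector space as $K\otimes\cB(V_1)$, so the canonical basis is $B_K\cdot B_1$, whereas $B$ has the factors in the opposite order $B_1\cdot B_K$. Passing from one to the other is possible (e.g.\ by a filtration argument or by using the inverse bosonization map $b\# k\mapsto (b_{(1)}\cdot k)\otimes b_{(2)}$), but you do not say how. The paper sidesteps this entirely: it argues directly, as in Proposition \ref{pr:-1block}, that any minimal-degree dependency among elements of $B$ can be stripped of its $\cB(V_1)$ factors by applying the skew derivations $\partial_1,\partial_{\fudos}$, reducing to a dependency among products of the $K$-letters alone, which contradicts Lemma \ref{le:basisK-cyc2}. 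Either route works, but yours is missing the flip of factors. The identification of the source of relations \eqref{eq:nichols-mild-relation1}--\eqref{eq:nichols-mild-relation5} with Lemma \ref{le:a1q22even} for $V_1\oplus\ku x_2\simeq\cyc_1$, and the final computation $\GK=\GK K+\GK\cB(V_1)=1+2=3$, are correct.
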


\pf
Relations \eqref{eq:rels-B(V(-1,2))-1}, \eqref{eq:rels-B(V(-1,2))-2}, \eqref{eq:nichols-mild-relation1}, \eqref{eq:nichols-mild-relation2}, \eqref{eq:nichols-mild-relation3}, \eqref{eq:nichols-mild-relation4}, \eqref{eq:nichols-mild-relation5}, \eqref{eq:nichols-mild-cyc2-relation1}, \eqref{eq:nichols-mild-cyc2-relation2}, \eqref{eq:nichols-mild-cyc2-relation3}, \eqref{eq:nichols-mild-cyc2-relation4},
\eqref{eq:nichols-mild-cyc2-relation5}, \eqref{eq:nichols-mild-cyc2-relation6} and \eqref{eq:nichols-mild-cyc2-relation7} are 0 in $\cB(\cyc_1)$,
see Lemma \ref{lem:nichols-mild-cyc2-defrelations} and Remark \ref{rem:nichols-mild-cyc2-defrelations}.
Hence the quotient $\cBt$ of $T(V)$ by these relations projects onto $\cB(\cyc_2)$.
We claim that the subspace $I$ spanned by $B$ is a right ideal of $\cBt$. Indeed, $I x_3\subseteq I$
by definition while $Ix_1\subseteq I$, $Ix_{\fudos}\subseteq I$, $Ix_2\subseteq I$ follow by Lemma \ref{lem:nichols-mild-cyc2-relations}
and Remark \ref{rem:nichols-mild-cyc2-relations}. Since $1\in I$, $\cBt$ is spanned by $B$.

To prove that $\cBt \simeq \cB(\cyc_2)$, it remains to show that
$B$ is linearly independent in $\cB(\cyc_2)$. For, suppose that there is a non-trivial linear combination $\mathtt{S}$
of elements of $B$ in $\cB(\cyc_2)$, say of minimal degree. As in the proof of Proposition \ref{pr:-1block},
each vector in $\mathtt{S}$ with non-trivial coefficient satisfies $a_i=0$, $i=1,2,3$. But then we obtain a contradiction with Lemma \ref{le:basisK-cyc2}.
Thus $B$ is a basis of $\cB(\cyc_2)$ and $\cBt=\cB(\cyc_2)$. The computation of $\GK$ follows from the Hilbert series at once.
\epf

\subsubsection{Several components}\label{subsubsec:components-block-points-eps-1}

\begin{lemma}\label{lemma:superJordan-2-mild+weak}
	Let $W = V_1 \oplus U$ be a direct sum of braided vector spaces, where $V_1$ and $U$ have dimension 2, $V_1$ is a $-1$-block
	with basis $x_1, x_{\fudos}$ and $U$ is of diagonal type with respect to a basis $x_2$, $x_3$, $q_{23}q_{32}=1$. Assume that $x_2$ has mild interaction with $V_1$, and $x_3$ has weak interaction with $\ghost_3>0$. Then $\GK \cB(W) = \infty$.
\end{lemma}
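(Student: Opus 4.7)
The plan is to pass to the associated graded $\Bdiag := \gr \cB(W)$ of a suitable Yetter-Drinfeld flag and, depending on $q_{33}$, extract a diagonal braided subspace of primitives of $\Bdiag$ whose Nichols algebra has infinite $\GK$. Assume for contradiction $\GK \cB(W) < \infty$; the braided subalgebras $\cB(V_1 \oplus \ku x_j) \hookrightarrow \cB(W)$ for $j = 2, 3$ then force $q_{22} = -1$, $a_2 = 1$ (Theorem \ref{thm:pm1bp-mild}) and $q_{33} \in \{\pm 1\}$, $\ghost_3 \in \N$ (Theorem \ref{thm:pm1bp}).

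Following Step 2 of Lemma \ref{lemma:block-points-eps-1-aux1}, I would choose the flag $0 \subsetneq \cV_1 \subsetneq \cV_2 = \cV_3 \subsetneq \cV_4 = W$ with $\cV_1 = \ku x_1 + \ku x_2$ and $\cV_2 = \cV_1 + \ku x_{\fudos}$ (the repetition $\cV_2 = \cV_3$ pushes $x_3$ to the top filtration level), and argue via the defining relations of $\cB(\cyc_1)$ (Proposition \ref{pr:nichols-mild}) and $\cB(\lstr_{-}(q_{33}, \ghost_3))$ (Propositions \ref{pr:lstr--11disc}, \ref{pr:lstr-1-1disc}) that the classes $\bar z_n \in \Bdiag$ of $z_n := (\ad_c x_{\fudos})^n x_3$ are primitive and sit outside the subalgebra generated by $\cV^{\mathrm{diag}}$, for $1 \le n \le 2 \ghost_3$ (all $z_n$ in this range being non-zero by Lemma \ref{lemma:derivations-zn}). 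The $\Gamma$-action yields the diagonal braiding data $q_{\bar z_n, \bar z_n} = (-1)^n q_{33}$ and $q_{x_2, \bar z_n} q_{\bar z_n, x_2} = (q_{12}q_{21})^n(q_{23}q_{32}) = (-1)^n$, with trivial braiding between distinct $\bar z_m, \bar z_n$ (as $q_{33}^2 = 1$).

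For $q_{33} = -1$: $\bar z_1$ carries label $1$ and is joined to $x_2$ (label $-1$) by an edge of value $-1$, so the sub-Nichols algebra $\NA(\langle x_2, \bar z_1\rangle) \hookrightarrow \Bdiag$ has $\GK = \infty$ by Lemma \ref{lemma:points-trivial-braiding}, contradicting the hypothesis. For $q_{33} = 1$ and $\ghost_3 \ge 2$: the braided subspace $Z := \langle x_1, x_{\fudos}, x_2, \bar z_1, \bar z_3, \ldots, \bar z_{2\ghost_3 - 1}\rangle$ of $\Bdiag$ is diagonal with Dynkin diagram a star centred at $x_2$ with $\ghost_3 + 2 \ge 4$ leaves (every vertex labelled $-1$, every edge $-1$), hence contains an affine subdiagram of type $D_n^{(1)}$; Theorem \ref{thm:nichols-diagonal-finite-gkd} gives $\GK \NA(Z) = \infty$, and the chain $\GK \NA(Z) \le \GK \Bdiag \le \GK \cB(W)$ closes the case.

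The main obstacle is the residual case $q_{33} = 1$ and $\ghost_3 = 1$, where the analogous star attains only Cartan $D_4$ (finite $\GK$). I would handle it either by producing an additional primitive element in $\Bdiag$ not of the form $\bar z_n$—plausibly via an iterated bracket such as $(\ad_c x_2)(\bar z_1)$ mixing the mild and weak sides, so as to expand the Dynkin diagram beyond $D_4$ to one absent from the list of \cite{H-classif}, which is ruled out by Hypothesis \ref{hyp:nichols-diagonal-finite-gkd}—or by applying Lemma \ref{lemma:rosso-lemma19-gral} to an explicit polynomial-growth family $\bar z_2^n w$ exploiting that $q_{\bar z_2, \bar z_2} = q_{33} = 1$ embeds $\ku[\bar z_2]$ in $\Bdiag$, with $w$ chosen so that the products $(\bar z_2^i w)(\bar z_2^j w)$ remain linearly independent. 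The required linear-independence verification, together with the filtration-weight computations for the $\bar z_n$, is the technical heart of the proof.
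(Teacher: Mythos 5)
Your route (a complete flag, $\Bdiag$, and primitives $\bar z_n$) is genuinely different from the paper's and runs into a real obstruction that the paper avoids. The paper instead uses the splitting $\NA(W) \simeq K \# \NA(V_1)$ with $K = \NA(W)^{\mathrm{co}\,\NA(V_1)}$, identifies the three nonzero elements $u_1 = x_{12}$, $u_2 = x_{1\fudos 2}$ (coming from the mild point $x_2$) and $u_3 = x_{\fudos 3}$ (coming from the weak point $x_3$, nonzero because $\ghost_3 > 0$), and checks via the coaction \eqref{eq:coaction-K^1} and the vanishing $\ad_c x_1(u_i) = 0$ that $U = \ku u_1 \oplus \ku u_2 \oplus \ku u_3$ is a diagonal braided subspace of $K^1$ whose Dynkin diagram is the triangle
\[
\xymatrix{ \overset{-q_{33}} {\circ} \ar  @{-}[d]_{-1} \ar @{-}[rd]^{-1}	&
\\ \overset{-1}{\circ} \ar  @{-}[r]_{-1}  & \overset{-1} {\circ}.}
\]
This kills both cases at once: for $q_{33} = -1$ the top vertex has label $1$ and Lemma \ref{lemma:points-trivial-braiding} applies, and for $q_{33} = 1$ it is affine Cartan type $A_2^{(1)}$ and Theorem \ref{thm:nichols-diagonal-finite-gkd} applies. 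No case split on $\ghost_3$ is needed, because $u_3$ mixes the block with the weak point at ghost height one regardless of the size of $\ghost_3$.

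The genuine gap in your argument is exactly where you flagged it: the case $q_{33} = 1$, $\ghost_3 = 1$. Your star around $x_2$ built from $x_1, x_{\fudos}, \bar z_1, \bar z_3, \ldots$ gives only $D_4$ (finite) when $\ghost_3 = 1$, and neither of the two escape routes you sketch is carried out. The first one (producing an extra primitive by a mixed bracket like $(\ad_c x_2)(\bar z_1)$) requires both a nonvanishing computation in the filtered algebra and an identification of the resulting diagonal diagram, neither of which is given; the second one (applying Lemma \ref{lemma:rosso-lemma19-gral} to $\bar z_2^n w$) leaves the choice of $w$ and the linear-independence check entirely open. Since $\ghost_3 = 1$ is precisely the smallest admissible ghost and occurs generically, the proof is not complete without this case. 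I would recommend abandoning the $\Bdiag$ filtration for this lemma and switching to the coinvariant decomposition: the three elements $u_1, u_2, u_3$ above are the natural objects here because $u_1, u_2$ already encode the mild interaction with $x_2$ (they are the ``$f_0, f_1$'' from the $\cyc_1$ analysis, cf.\ Lemma \ref{le:-K^1}) while $u_3$ is the ``$w_1$'' of the weak side, and the diagonal braiding among them is computed directly from \eqref{eq:coaction-K^1} without any filtration bookkeeping.

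One smaller remark: in the $q_{33} = -1$ branch you argue via $\langle x_2, \bar z_1 \rangle$ in $\Bdiag$, which is fine, but note that the labels you wrote ($q_{\bar z_n, \bar z_n} = (-1)^n q_{33}$) give $\bar z_1$ label $1$ only when $q_{33} = -1$; for $q_{33} = 1$ it is $\bar z_2$ that has label $1$, but $x_2$ braids trivially with $\bar z_2$ (your own computation $(-1)^n$ at $n=2$), so Lemma \ref{lemma:points-trivial-braiding} cannot be applied to $\langle x_2, \bar z_2 \rangle$ directly — another sign that this route needs more than the $\bar z_n$'s alone in the $q_{33} = 1$ case.
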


\pf
By Lemmas \ref{le:basisK} and \ref{lemma:derivations-zn}, $u_1:=x_{12}$, $u_2:=x_{1\fudos 2}$,
$u_3:=x_{\fudos 3}$ are non-zero elements of $K^1$. Moreover they are linearly independent since
$u_2$ has degree 3, $u_1$, $u_3$ have degree 2 and
\begin{align*}
\partial_2(u_1) &=2x_1, & \partial_2(u_3)&=0, & \partial_3(u_3)&= x_1.
\end{align*}
By direct computation,
\begin{align*}
\delta(u_1) &= g_1g_2 \otimes u_1 + 2x_1 g_2\otimes x_2, \\
\delta(u_2) &= g_1^2g_2 \otimes u_2 + x_1 g_1g_2\otimes (2x_{\fudos 2}-u_1)+2(x_1x_{\fudos}-x_{\fudos}x_1) g_2 \otimes x_2, \\
\delta(u_3) &= g_1g_3 \otimes u_3 + x_1 g_3\otimes x_3.
\end{align*}
Notice that $\ad_c x_1 u_2=0$ by \eqref{eq:-1block+point}, and $\ad_c x_1 u_1=\ad_c x_1 u_3=0$ since $x_1^2=0$.
Thus $U=\ku u_1\oplus \ku u_2\oplus \ku u_3$ is a braided vector subspace of $K^1$ of diagonal type with Dynkin diagram
\begin{align*}
\xymatrix{ \overset{-q_{33}} {\underset{3}{\circ}} \ar  @{-}[d]_{-1} \ar @{-}[rd]^{-1}	&
\\ \overset{-1}{\underset{1}{\circ}} \ar  @{-}[r]_{-1}  & \overset{-1} {\underset{2}{\circ}}.}
\end{align*}
If $q_{33}=-1$, then $\GK\cB(U)=\infty$ by Lemma \ref{lemma:points-trivial-braiding}. If $q_{33}=1$, then $\GK\cB(U)=\infty$ by Theorem \ref{thm:nichols-diagonal-finite-gkd} since the braiding is of affine type $A_2^{(1)}$. In any case, $\GK K=\infty$.
\epf

\subsubsection{The Nichols algebras with finite $\GK$, several connected components in $V_{\diag}$}\label{subsubsection:points-block-presentation-manycomponents}

Let $V = V_1 \oplus V_{\diag}$ be a braided vector space as described in \S \ref{subsection:YD>3-setting}, where the braided subspace $V_1\simeq \cV(\epsilon, 2)$ is a block with $\epsilon^2=1$, while
$V_{\diag}$ is of diagonal type. We assume that the interaction between $V_1$ and $V_{\diag}$ is weak.
Let $\X$ be the set of connected components of the generalized Dynkin diagram of $V_{\diag}$.
If $J\in \X$, then we denote by $\D_J$ its generalized Dynkin diagram and by $\ghost_J$ its ghost, see the paragraph after \eqref{eq:v=v1+v2}.
Then we denote
\begin{align*}
V = \lstr((\D_J)_{J\in \X}, (\ghost_J)_{j\in \X}).
\end{align*}
Notice that given a finite set $\X$ and collections $(\D_J)_{J\in \X}$ of generalized Dynkin diagrams and $(\ghost_J)_{j\in \X}$ of appropriate vectors, then there is a unique braided vector space $V = \lstr((\D_J)_{J\in \X}, (\ghost_J)_{j\in \X})$. We assume in this \S \/
that for every $J \in \X$:
\begin{itemize} [leftmargin=*]\renewcommand{\labelitemi}{$\circ$}
	\item If $\ghost_J \neq 0$, then $(\D_J, \ghost_J)$ (or the corresponding abbreviation) belongs either to Table \ref{tab:finiteGK-block-point} or to Table \ref{tab:finiteGK-block-points}. In this case, we denote by $\cR_J$ the defining relations of $\NA(\lstr(\D_J, \ghost_J))$ as described in those Tables, and by $B_J$ a basis of $\NA(K_J)$ as described in the previous Subsections.
	
	\item If $\ghost_J = 0$, then $\GK \NA(V_J) < \infty$.
In this case, we denote by $\cR_J$ a set of defining relations of $\NA(V_J)$ and by $B_J$ a basis of $\NA(V_J)$. (If Conjecture \ref{conjecture:nichols-diagonal-finite-gkd} is valid, then these relations are known from \cite{Ang-crelle}).
\end{itemize}

Let $B_0$ be a basis of $\NA(V_1)$ as described in Propositions \ref{pr:1block}, \ref{pr:-1block}. We fix an order of the elements of $\X$: $J_1,\dots,J_{|\X|}$.

Our goal is to describe a presentation of $\NA(V) = \NA(\lstr ((\D_J)_{J\in \X}, (\ghost_J)_{j\in \X}))$. We follow the same path as in previous related subsections.

\begin{lemma}\label{lem:several-comp-def-relations}
Assume that the interaction is weak and that $\ghost$ is discrete.
Let $i,j\geq 2$ be such that $q_{ij}q_{ji}=1$. Then
\begin{align}\label{eq:several-comp-def-relations}
z_{i,m}x_j &= q_{1j}^mq_{ij} \, x_j z_{i,m} & & 0 \le m\le 2|a_i|.
\end{align}
\end{lemma}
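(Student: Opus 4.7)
The plan is to deduce the identity from the diagonal type structure of $K^1 = \ad_c\NA(V_1)(V_{\diag})$ established in Lemma \ref{lemma:braiding-K-weak-block-points}, together with the fact that $K = \NA(K^1)$ sits inside $\NA(V)$ as a braided sub-Hopf-algebra, cf.\ \S\ref{subsubsec:algK-block-points}. This reduces the claim to a purely diagonal q-commutation statement.

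First I would verify the product condition at the two vertices $(i,m)$ and $(j,0)$ of the generalized Dynkin diagram of $K^1$. Using the formula $p_{im,jn} = \epsilon^{nm} q_{i1}^{n} q_{1j}^{m} q_{ij}$ from Lemma \ref{lemma:braiding-K-weak-block-points},
\begin{align*}
p_{im,j0}\, p_{j0,im} &= \bigl(q_{1j}^{m} q_{ij}\bigr)\bigl(q_{j1}^{m} q_{ji}\bigr) = (q_{1j}q_{j1})^{m}\,(q_{ij}q_{ji}) = 1,
\end{align*}
where $q_{1j}q_{j1} = 1$ comes from the weakness of the interaction between the block and the point $j$, and $q_{ij}q_{ji} = 1$ is the hypothesis. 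In particular, $p_{im,j0} = q_{1j}^{m} q_{ij}$.

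Then, since $\NA(K^1)$ is a Nichols algebra of diagonal type and the distinct vertices $(i,m)$ and $(j,0)$ satisfy $p_{im,j0}\,p_{j0,im}=1$, the braided q-commutator $[z_{i,m}, z_{j,0}]_c = z_{i,m}\,z_{j,0} - p_{im,j0}\,z_{j,0}\,z_{i,m}$ is primitive in $K$; since it sits in $\N_0$-degree $m+2\ge 2$ and primitives of a Nichols algebra are concentrated in degree one, it must vanish. As $z_{j,0} = x_j$, this gives
\begin{align*}
z_{i,m}\,x_j = q_{1j}^{m}\, q_{ij}\, x_j\, z_{i,m},
\end{align*}
and the identity transfers to $\NA(V)$ via the inclusion $K \hookrightarrow \NA(V)$. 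The only point requiring comment is the standard fact that, in a Nichols algebra of diagonal type, two generators $z_\alpha,z_\beta$ with $p_{\alpha\beta}p_{\beta\alpha}=1$ satisfy $[z_\alpha,z_\beta]_c=0$, which is a direct computation of $\Delta$ together with the coradical grading of Nichols algebras; I do not expect any real obstacle beyond bookkeeping.
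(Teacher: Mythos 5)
Your proof is correct and takes essentially the same approach as the paper's: both invoke Lemma~\ref{lemma:braiding-K-weak-block-points} to identify $K^1$ as diagonal type, compute $p_{im,j0}\,p_{j0,im}=(q_{1j}q_{j1})^m\,q_{ij}q_{ji}=1$ using the weak interaction and the hypothesis, and conclude that $\ad_c z_{i,m}(x_j)=0$ inside $K=\NA(K^1)\subseteq\NA(V)$. The paper cites this last step as the standard vanishing criterion for $\ad_c$ in a diagonal Nichols algebra (it is $k=1$ in Lemma~\ref{lemmata:rosso}\ref{item:diagonal-relations}), whereas you re-derive it via the primitivity argument; either justification is fine. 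One minor imprecision in your wording: the degree count that forces the primitive braided commutator to vanish should be read in the intrinsic $\N_0$-grading of $K=\NA(K^1)$, where $z_{i,m}$ and $z_{j,0}$ both sit in degree one and the commutator in degree two, rather than in the grading of $\NA(V)$ where $z_{i,m}$ has degree $m+1$; the conclusion ``degree $\ge 2$, hence zero'' is of course unaffected.
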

\pf
By Lemma \ref{lemma:braiding-K-weak-block-points}, $K^1$ is a braided vector space of diagonal type with matrix $(p_{ks,\ell t})$.
As $p_{im,j0}p_{j0,im}=q_{ij}q_{ji}=1$, the corresponding elements $z_{i,m}$ and $x_j$ satisfy $\ad_c z_{i,m} x_j=0$.
\epf

\begin{lemma}\label{lem:several-comp-other-relations}
Assume that the interaction is weak and that $\ghost$ is discrete.
Let $i,j\geq 2$ be such that $q_{ij}q_{ji}=1$, $\ghost_i\leq\ghost_j$ and
$\cB$ a quotient algebra of $T(V)$ such that \eqref{eq:several-comp-def-relations} and
$z_{i,2|a_i|+1}=z_{j,2|a_j|+1}=0$ hold in $\cB$. Then
\begin{align}\label{eq:several-comp-other-relations}
z_{i,m}z_{j,n} &= p_{im,jn} \, z_{j,n} z_{i,m} & &\mbox{for all }m, n \in\N_0.
\end{align}
\end{lemma}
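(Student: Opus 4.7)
The plan is to prove \eqref{eq:several-comp-other-relations} by induction on $n$, treating all $m \in \N_0$ simultaneously at each stage. The base case $n=0$ reads $z_{i,m}x_j = p_{im,j0}\,x_j z_{i,m}$, which is exactly \eqref{eq:several-comp-def-relations} once one checks from the formula in Lemma \ref{lemma:braiding-K-weak-block-points} that $p_{im,j0}=q_{1j}^m q_{ij}$. Note that the weak interaction gives $q_{1i}q_{i1}=1$, and $\epsilon^2=1$ will be used freely in the scalar manipulations.

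For the inductive step, I would use the recursion $z_{j,n+1}=x_{\fudos}z_{j,n}-\epsilon^n q_{1j}z_{j,n}x_{\fudos}$ to write
\begin{align*}
z_{i,m}z_{j,n+1} &= z_{i,m}x_{\fudos}z_{j,n} - \epsilon^n q_{1j}\,z_{i,m}z_{j,n}x_{\fudos}.
\end{align*}
The corresponding recursion for $z_{i,m+1}$ yields $z_{i,m}x_{\fudos}=\epsilon^m q_{i1}(x_{\fudos}z_{i,m}-z_{i,m+1})$. Substituting this, applying the induction hypothesis at $(m,n)$ and $(m+1,n)$, and then pushing the remaining $x_{\fudos}$ through $z_{j,n}$ and $z_{i,m}$ by the same two recursions, expresses $z_{i,m}z_{j,n+1}$ as a $\ku$-linear combination of the three monomials $z_{j,n+1}z_{i,m}$, $z_{j,n}z_{i,m+1}$, and $z_{j,n}z_{i,m}x_{\fudos}$. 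When $m+1>2|a_i|$ the middle monomial simply drops out thanks to the hypothesis $z_{i,2|a_i|+1}=0$, so the argument goes through uniformly for all $m$; the hypothesis $z_{j,2|a_j|+1}=0$ plays the dual role when $n+1>2|a_j|$.

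The remaining task is scalar bookkeeping. The coefficient of $z_{j,n}z_{i,m}x_{\fudos}$ becomes $\bigl(p_{im,jn}-p_{im,jn}\bigr)\epsilon^n q_{1j}=0$. The coefficient of $z_{j,n}z_{i,m+1}$ equals $p_{im,jn}\,\epsilon^{m+n}q_{i1}q_{1j}-\epsilon^m q_{i1}\,p_{i(m+1),jn}$, which vanishes by the identity $p_{i(m+1),jn}=\epsilon^n q_{1j}\,p_{im,jn}$ read off from the formula for $p_{\cdot,\cdot}$. What survives is $p_{im,jn}\,\epsilon^m q_{i1}\,z_{j,n+1}z_{i,m}$, and this is the desired right-hand side because $p_{im,j(n+1)}=\epsilon^m q_{i1}\,p_{im,jn}$.

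I do not anticipate a genuine obstacle: the argument is a careful but elementary induction, and the hypothesis $\ghost_i\le\ghost_j$ serves only to fix a WLOG ordering (the relation is symmetric in $(i,m)$ and $(j,n)$ since $p_{im,jn}p_{jn,im}=q_{ij}q_{ji}=1$). The only mild delicacy is to keep track of the two recursions for $\ad_c x_{\fudos}$ and to invoke the vanishing conditions $z_{i,2|a_i|+1}=z_{j,2|a_j|+1}=0$ exactly when the inductive hypothesis would otherwise overshoot the allowed range of indices.
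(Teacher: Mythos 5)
Your proof is correct and follows essentially the same route as the paper: induction on $n$ with the inductive hypothesis taken for all $m$ at once, driven by the recursion $z_{\cdot,\ell+1}=\ad_c x_{\fudos}(z_{\cdot,\ell})$ and the vanishing conditions $z_{i,2|a_i|+1}=z_{j,2|a_j|+1}=0$. The only difference is presentational: the paper applies $\ad_c x_{\fudos}$ directly to the $q$-commutator $[z_{i,m},z_{j,n}]_c$ and invokes the braided Leibniz rule to split it as $[z_{i,m+1},z_{j,n}]_c + \epsilon^m q_{1i}[z_{i,m},z_{j,n+1}]_c$, whereas you unpack the same identity by hand; the scalar bookkeeping and the use of the inductive hypothesis at $(m,n)$ and $(m+1,n)$ are identical in substance.
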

\pf
By induction on $n$. For $n=0$, if $m\le 2|a_i|$, then \eqref{eq:several-comp-other-relations} holds
by \eqref{eq:several-comp-def-relations}, while if $m>2|a_i|$ then $z_{i,m}=0$ since $z_{i,2|a_i|+1}=0$.

Assume that \eqref{eq:several-comp-other-relations} holds for a fixed $n$ and all $m\in\N_0$.
By direct computation and inductive hypothesis,
\begin{align*}
0 &= \ad_c x_{\fudos} \left[ z_{i,m}, z_{j,n}  \right]_c =  \left[ \ad_c x_{\fudos} z_{i,m}, z_{j,n}  \right]_c
+ \epsilon^m q_{1i} z_{i,m} \ad_c x_{\fudos}(z_{j,n}) \\
& \qquad -\epsilon^{mn} q_{1j}^m q_{i1}^n q_{ij} \ad_c x_{\fudos}(z_{j,n}) z_{i,m} \\
&= \left[ z_{i,m+1}, z_{j,n}  \right]_c + \epsilon^m q_{1i} \left( z_{i,m} z_{j,n+1}
- \epsilon^{m(n+1)} q_{1j}^m q_{i1}^{n+1} q_{ij} z_{j,n+1} z_{i,m} \right) \\
&= \epsilon^m q_{1i} \left[ z_{i,m}, z_{j,n+1} \right]_c,
\end{align*}
so \eqref{eq:several-comp-other-relations} holds for $n+1$ and all $m\in\N_0$.
\epf

\begin{prop} \label{pr:several-comp-presentation}
The algebra $\cB(V)$ is presented by generators $x_i$, $i\in \Iw_{\theta}$, and relations $\cR_J$, $J\in\X$,
\begin{align}\label{eq:several-comp-def-relations-2}
z_{i,m}x_j &= q_{1j}^mq_{ij} \, x_j z_{i,m} & & i,j \mbox{ not connected, } \ghost_i\le \ghost_j, \, 0 \le m\le 2|a_i|.
\end{align}
The set $B =\big\{ b_0b_1\cdots b_{|\X|}: \, b_0\in B_0, \, b_i \in B_{J_i} \big\}$
is a basis of $\cB(V)$ and $\GK \cB(V) = 2 + \sum_{J\in \X} \GK \toba(K_J)$.
\end{prop}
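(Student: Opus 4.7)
The plan is to proceed along the same lines as Propositions \ref{pr:lstr-a(10)1}--\ref{pr:lstr-a-n}: first exhibit the relations in $\cB(V)$, then use them to show that the proposed $B$ spans a quotient $\cBt$ of $T(V)$, and finally establish linear independence via the splitting $\cB(V) \simeq K \# \NA(V_1)$ from \S \ref{subsubsec:algK-block-points}.

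First I would check that the stated relations hold in $\cB(V)$. For each $J\in \X$, the subalgebra of $\cB(V)$ generated by $V_1 \oplus V_J$ is $\cB(V_1 \oplus V_J)$ (as a subspace of a Nichols algebra is a Nichols algebra of its span, by coradical gradedness), so the relations $\cR_J$ described in the previous subsections hold. The relations \eqref{eq:several-comp-def-relations-2} hold by Lemma \ref{lem:several-comp-def-relations} applied inside $\cB(V)$. Hence there is a surjection $\cBt \twoheadrightarrow \cB(V)$, where $\cBt$ denotes the quotient of $T(V)$ by the stated relations.

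Next I would show that the subspace $I\subseteq \cBt$ spanned by $B$ is a left ideal. By construction $B = B_0 \cdot B_{J_1}\cdots B_{J_{|\X|}}$. Since $x_1, x_{\fudos}$ satisfy the Jordan or super Jordan relations and commute (in the braided sense) with every $z_{i,m}$ by \eqref{eq:-1block+point} and its analogues, left multiplication by $x_1$ or $x_{\fudos}$ preserves $I$; this part is identical to the earlier propositions. For left multiplication by $x_j$ with $j\in J_k$, one uses \eqref{eq:several-comp-def-relations-2} combined with Lemma \ref{lem:several-comp-other-relations} to move $x_j$ past every factor $b_i$ with $i\neq k$ up to scalars, reducing to the fact that $B_{J_k}$ is stable under left multiplication by $x_j$ inside $\NA(K_{J_k})$ (which is part of the inductive knowledge recorded in each of \ref{subsubsection:lstr-a(10)1}--\ref{subsubsection:lstr-a-n}). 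Since $1\in I$, we conclude $\cBt = I$ is spanned by $B$.

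Finally, to prove linear independence of $B$ in $\cB(V)$ I would exploit the splitting $\cB(V) \simeq K\# \NA(V_1)$ together with Corollary \ref{cor:conncomp}. Because the Dynkin diagram of $K^1$ is the disjoint union of those of the $K_J$, and the $K_J$ mutually satisfy $c_{K_J,K_{J'}}c_{K_{J'},K_J} = \id$ (which follows from $p_{im,jn}p_{jn,im}=q_{ij}q_{ji}=1$ for $i,j$ in different components), we obtain an algebra isomorphism
\begin{align*}
K \simeq \NA(K^1) \simeq \NA(K_{J_1}) \,\underline{\otimes}\cdots \underline{\otimes}\, \NA(K_{J_{|\X|}})
\end{align*}
by \cite{Grana}. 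The set $\{b_1\cdots b_{|\X|}: b_i\in B_{J_i}\}$ is therefore a basis of $K$, and together with the PBW basis $B_0$ of $\NA(V_1)$ it yields a basis of $K\# \NA(V_1) \simeq \cB(V)$. The map $\cBt \twoheadrightarrow \cB(V)$ thus carries a spanning set to a basis, so it is an isomorphism. The formula $\GK \cB(V) = 2 + \sum_{J\in \X}\GK \NA(K_J)$ then follows from Lemma \ref{lemma:GKdim-smashproduct}\ref{item:GKdim-smashproduct-braided} using \eqref{eq:points-block-gkd-K} and Propositions \ref{pr:1block}, \ref{pr:-1block} (the action of $\NA(V_1)$ on $K$ is locally finite since $\NA(V_1)$ acts on each homogeneous component, which is finite-dimensional). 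The only delicate point, which I view as the main obstacle, is the verification that the ordered product $B$ is exactly the basis produced by the smash-product/tensor-product decomposition; this requires the commutation relations \eqref{eq:several-comp-def-relations-2} and \eqref{eq:several-comp-other-relations} to hold not just modulo lower filtration but on the nose in $\cBt$, which is the content of Lemma \ref{lem:several-comp-other-relations}.
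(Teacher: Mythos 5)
Your proposal is correct and follows essentially the same route as the paper's proof: establish the relations via Lemma~\ref{lem:several-comp-def-relations}, show the ordered products span $\cBt$ using Lemma~\ref{lem:several-comp-other-relations} to move letters across the component factors, and get linear independence from the decomposition $\cB(V)\simeq K\#\NA(V_1)$ together with the braided tensor factorization of $K$ from Corollary~\ref{cor:conncomp}. The only cosmetic difference is the order of presentation (the paper establishes that $B$ is a basis of $\cB(V)$ first and then shows $\cBt$ is spanned by $B$, whereas you argue spanning of $\cBt$ first), and your final $\GK$ step invokes Lemma~\ref{lemma:GKdim-smashproduct} rather than comparing Hilbert series, but these are equivalent.
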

\pf
We first prove that $B$ is a basis of $\cB := \cB(V)$: it follows since $\cB\simeq K\#\cB(V_1)$, see \S\ref{subsubsec:algK-block-points},
and $K\simeq \cB(K^1)\simeq \underline{\otimes}_{J\in\X}\cB(K_J)$ by Corollary \ref{cor:conncomp}.
Then $\GK \cB = 2 + \sum_{J\in \X} \GK \toba(K_J)$ by computing the Hilbert series.

Relations $\cR_J$ and \eqref{eq:several-comp-def-relations-2} hold in $\cB$ by previous Subsections and Lemma
\ref{lem:several-comp-def-relations}. Hence the quotient $\cBt$ of $T(V)$ by these relations projects onto $\cB$.

We claim that the subspace $I$ spanned by $B$ is a left ideal of $\cBt$. Indeed, $x_1I\subseteq I$,
$x_{\fudos}I\subseteq I$ since $x_1 b_0, x_{\fudos}b_0$ belongs to the subspace spanned by $B_0$.
Let $i\geq 2$, $b_0\in B_0$, $b_j \in B_{J_j}$. Then $i\in J_k$ for some $k$. If $\ghost_k\neq 0$, then
\begin{align*}
x_i b_0 & \in \sum_{b_0'\in B_0}\sum_{n=0}^{2|a_i|} \ku \, b_0'z_{i,n}, & \mbox{using the relations in }&\cR_k.
\end{align*}
If $\ghost_k=0$, then $\ku x_i b_0= \ku b_0 x_i$. In any case,
\begin{align*}
x_i b_0 b_1\dots b_{|\X|} & \in \sum_{b_0'\in B_0}\sum_{n=0}^{2|a_i|} \ku \, b_0'z_{i,n} b_1\dots b_{|\X|} \\
& = \sum_{b_0'\in B_0}\sum_{n=0}^{2|a_i|} \ku \, b_0' b_1\dots b_{k-1} z_{i,n} b_k b_{k+1} \dots b_{|\X|} \\
& \subseteq \sum_{b_0'\in B_0}\sum_{b_k'\in B_k} \ku \, b_0' b_1\dots b_{k-1} b_k' b_{k+1} \dots b_{|\X|} \subseteq I
\end{align*}
by Lemma \ref{lem:several-comp-other-relations} and the relations in $\cR_k$. Thus $x_i I\subseteq I$ for all $i\in\Iw_{\theta}$.
Since $1\in I$, $\cBt$ is spanned by $B$. Thus $\cBt \simeq \cB$ since $B$ is a basis of $\cB$.
\epf

\section{Two blocks}\label{section:YD>3-2blocks}
\subsection{The setting}
Let $\Idd_j = \{j, j + \tfrac{1}{2}\}$,
$\Idd = \Idd_1 \cup \Idd_2 = \{1, \fudos, 2, \futres\}$.
Let $g_i\in \Gamma$,  $\chi_i\in \widehat\Gamma$ and $\eta_i: \Gamma \to \ku$ a $(\chi_i, \chi_i)$-derivation,
$i\in \I_2$.
Let $\cV_{g_i}(\chi_i, \eta_i) \in \ydG$ be the indecomposable with basis $(x_h)_{h\in\Idd_i}$
and action given by
\eqref{equation:basis-triangular-gral}.
Let
\begin{align*}
V &= V_1 \oplus V_2,& &\text{where } V_i = \cV_{g_i}(\chi_i, \eta_i), i\in \I_2.
\end{align*}
Thus $(x_h)_{h\in\Idd}$ is a basis of $V$.
Let
\begin{align*}
q_{ij}&= \chi_j(g_i),&  a_{ij} &= q_{ij}^{-1}\eta_j(g_i),&  i,j &\in \I_\theta.
\end{align*}

We suppose that $V$ is neither of diagonal type nor of the form  \emph{1 block $\&$ 2 points}, hence
$\as_{ii} \neq 0$, $i\in \I_2$; we may assume that  $\eta_i(g_i) = 1$ by normalizing $x_i$.
If $i \in \I_2$, then set $\ghost_i =
\begin{cases} -2a_{ji}, &q_{ii} = 1, \\
a_{ji}, &q_{ii} = -1 \end{cases}$ where $j\neq i$. The
\emph{ghost} is $\ghost = (\ghost_1, \ghost_2)$. Thus $\ghost_1$ is the ghost of the braiding of the block 1 with the point 2, and vice versa for $\ghost_2$.

We seek to know when $\GK \toba(V) < \infty$.
Since $\NA(V_i \oplus \ku_{g_j}^{\chi_j}) \hookrightarrow \NA(V)$
for $i\neq j \in \I_2$, we may apply the results from \S 5, 6
and assume that
\begin{align}
q_{ii}^2 &= 1 \text{ hence } \as_{ii} = q_{ii} =: \epsilon_i, & & q_{12}q_{21}\in\{\pm 1\},& &\ghost\in \N_0^2.
\end{align}

Here is the main result of this Section.

\begin{theorem}\label{theorem:2blocks}
The following are equivalent:
\begin{enumerate}
	\item $\GK \toba(V) < \infty$.
	\item $c^2_{\vert V_1 \otimes V_2} = \id$.
\end{enumerate}
\end{theorem}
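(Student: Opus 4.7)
The implication $(2)\Rightarrow(1)$ is immediate: when $c^2_{\vert V_1\otimes V_2}=\id$, equivalently $c_{V_2,V_1}c_{V_1,V_2}=\id$, Gra\~na's tensor-product theorem (cf.\ the Remark after Question \ref{question:nichols-finite-gkd}) yields $\toba(V)\simeq \toba(V_1)\,\underline{\otimes}\,\toba(V_2)$. Each factor is a Jordan or super Jordan plane with $\GK=2$ by Propositions \ref{pr:1block}, \ref{pr:-1block}, so Lemma \ref{lemma:GKdim-smashproduct} gives $\GK\toba(V)=4$.

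For the converse I argue by contrapositive: assuming $c^2_{\vert V_1\otimes V_2}\neq \id$, under the reductions $q_{12}q_{21}\in\{\pm1\}$ and $\ghost\in\N_0^2$ there are two sub-cases.

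\emph{Mild case} ($q_{12}q_{21}=-1$). First I would apply Theorem \ref{thm:point-block} to the YD-submodules $V_1\oplus \ku x_2$ and $V_2\oplus \ku x_1$, each of type ``block plus point'' with mild interaction. Finiteness of their Gelfand--Kirillov dimensions forces $\epsilon_1=\epsilon_2=-1$ and $a_{12}=a_{21}=1$; otherwise $\GK\toba(V)=\infty$ already. Then consider the complete YD-flag $0\subsetneq \ku x_1\subsetneq V_1\subsetneq V_1\oplus\ku x_2\subsetneq V$. A direct computation of the induced braiding on the diagonal associated graded $V^{\mathrm{diag}}$ shows that its four vertices all carry label $-1$, with the two vertices of each block disconnected (as $\epsilon_i^2=1$) and each of the four cross-edges labeled $q_{12}q_{21}=-1$. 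The generalized Dynkin diagram is thus the $4$-cycle $K_{2,2}$, of affine Cartan type $A_3^{(1)}$, and Theorem \ref{thm:nichols-diagonal-finite-gkd} yields $\GK\toba(V^{\mathrm{diag}})=\infty$. The filtration inequality (Lemma \ref{lemma:filtered}) then gives $\GK\toba(V)=\infty$.

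\emph{Weak case with non-zero ghost} ($q_{12}q_{21}=1$ and $\ghost\neq(0,0)$). Say $a_{21}\neq 0$. Here the complete-flag technique fails, because $V^{\mathrm{diag}}$ becomes totally disconnected with finite $\GK$; a finer argument is needed. The plan is to invoke Rosso's Lemma \ref{lemma:rosso-lemma19-gral} with the family $y_n:=(\ad_c x_{\fudos})^n x_{\futres}\in\toba^{n+1}(V)$, $n\ge 0$, and to verify (a) that each $y_n$ is non-zero and (b) that the ordered products $\{y_{i_1}y_{i_2}\cdots y_{i_k}:i_1<\cdots<i_k\}$ are linearly independent. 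Step (a) is the principal obstacle: unlike $z_n:=(\ad_c x_{\fudos})^n x_2$, which is killed at $n=\ghost_1+1$ by the relations of the ``block plus point'' subalgebra $\toba(V_1\oplus\ku x_2)$, the element $y_n$ is not constrained by those relations because $x_{\futres}\notin V_1\oplus\ku x_2$. The essential new input is the non-diagonal action $g_2\cdot y_1=q_{22}y_1+z_1$, which propagates through the recursion $y_{n+1}=(\ad_c x_{\fudos})y_n$ and is to be tracked by iterated skew-derivations $\partial_{\futres}^a\partial_2^b$ in the spirit of Lemmas \ref{lemma:derivations-zn} and \ref{le:fn}. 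Step (b) will follow from (a) by a standard triangularity argument. Since $\deg y_n=n+1$ grows linearly, Lemma \ref{lemma:rosso-lemma19-gral} then yields $\GK\toba(V)=\infty$, completing the contrapositive.
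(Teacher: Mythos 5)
Your implication $(2)\Rightarrow(1)$ and your treatment of the mild case $q_{12}q_{21}=-1$ are both correct and coincide with the paper's argument: the paper also reduces to $q_{11}=q_{22}=-1$ and passes to the diagonal associated graded $\gr V$ of the natural filtration, observing it is of affine Cartan type so that Theorem~\ref{thm:nichols-diagonal-finite-gkd} applies. The genuine gap is in the weak case. Your family $y_n=(\ad_c x_{\fudos})^n x_{\futres}$ is precisely what the paper denotes $w_n$, and Lemma~\ref{lemma:2blocks-auxiliar}\,(b)--(c) shows that $\partial_1(w_n)=\partial_{\fudos}(w_n)=\partial_2(w_n)=0$ while $\partial_{\futres}(w_n)=\varpi_n\,x_1^n$ with $\varpi_m=(-1)^m\prod_{0\le k\le m-1}\bigl(a_{21}+\tfrac{k}{2}\bigr)$; since $a_{21}=-\ghost_1/2$ with $\ghost_1\in\N$, one gets $w_n\neq 0$ if and only if $0\le n\le\ghost_1$, and the analogous vanishing holds in the other weak sub-cases (Lemmas~\ref{lemma:2blocks-auxiliar-caso2}, \ref{lemma:2blocks-auxiliar-caso3}). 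So what you describe as ``the principal obstacle'' in step~(a) is not an obstacle but a refutation: the sequence $(y_n)_n$ is finite, and Lemma~\ref{lemma:rosso-lemma19-gral} cannot be invoked with it. The heuristic that $y_n$ is ``not constrained'' because $x_{\futres}\notin V_1\oplus\ku x_2$ is misleading; the braided subspace $\langle x_1,x_{\fudos},x_{\futres}\rangle$ is not well defined when $a_{12}\neq 0$, and in any case the vanishing above is an intrinsic computation in $\NA(V)$.

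The paper's mechanism is of a different nature. It uses the splitting $\NA(V)\simeq K\#\NA(V_1)$, $K=\NA(V)^{\mathrm{co}\,\NA(V_1)}=\NA(K^1)$ with $K^1=\ad_c\NA(V_1)(V_2)$, and explicitly computes the coaction $\delta$ on $K^1$ (Lemmas~\ref{lemma:2blocks-auxiliar}, \ref{lemma:coact-shn-wn}, \ref{lemma:2blocks-auxiliar-caso3}, \ref{lemma:coact-zhn-wn-case3}). The crucial observation is that $K^1$ is \emph{not} of diagonal type, and that one can always find inside it a $3$-dimensional braided subspace of type ``one block plus one point'' — for instance $W=\langle z_0,w_0\rangle\oplus\langle z_1\rangle$ when $\epsilon_1=1$, $\ghost_2=0$, with braiding \eqref{eq:braiding-2blocks-caso1} — whose induced ghost is \emph{negative} (equal to $-2$ or $-1$). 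Lemma~\ref{lemma:weak-not-discrete} then yields $\GK\toba(W)=\infty$ and hence $\GK\toba(V)=\infty$. That lemma is itself a Rosso-type argument, but applied to the $z$-family of the new block-plus-point braiding inside $K$, not to the $w_n$ of the original one. If you wish to salvage your plan, you would have to transpose it to a subspace of $K^1$ with negative ghost, which is essentially what Lemmas~\ref{lemma:2blocks-caso1}, \ref{lemma:2blocks-caso2}, \ref{lemma:2blocks-caso3} do.
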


From what we have already explained, $c^2_{\vert V_1 \otimes V_2} = \id$ implies that
$$\GK \toba(V) = \GK \toba(V_1) + \GK \toba(V_2) = 4.$$
So, let us  assume that $c^2_{\vert V_1 \otimes V_2} \neq \id$.
Again, we can codify the situation in the language of braided vector spaces.
With the previous conventions,  the braiding is given in the  basis $(x_i)_{i\in \Idd}$ by $(c(x_i \otimes x_j))_{i,j\in \Idd} =$
\begin{align}\label{eq:braiding-2blocks}
\begin{pmatrix}
\epsilon_1 x_1 \otimes x_1&  (\epsilon_1 x_{\fudos} + x_1) \otimes x_1& q_{12} x_2  \otimes x_1 &  q_{12}(x_{\futres} + \as_{12} x_2) \otimes x_1
\\
\epsilon_1 x_1 \otimes x_{\fudos} & (\epsilon_1 x_{\fudos} + x_1) \otimes x_{\fudos}& q_{12} x_2  \otimes x_{\fudos} & q_{12}(x_{\futres} + \as_{12} x_2) \otimes x_{\fudos}
\\
q_{21} x_1 \otimes x_2 &  q_{21}(x_{\fudos} + \as_{21} x_1) \otimes x_2& \epsilon_2 x_2  \otimes x_2 &  (\epsilon_2 x_{\futres} + x_2) \otimes x_2
\\
q_{21} x_1 \otimes x_{\futres} &  q_{21}(x_{\fudos} + \as_{21} x_1) \otimes x_{\futres}& \epsilon_2 x_2  \otimes x_{\futres} & (\epsilon_2 x_{\futres} + x_2) \otimes x_{\futres}
\end{pmatrix}.
\end{align}

Conversely, given a braided vector space with a braiding \eqref{eq:braiding-2blocks} we may define principal realizations over abelian groups.

Here is our first reduction:

\begin{lemma}
	If $q_{12}q_{21} = -1$, then $\GK \toba(V) = \infty$.
\end{lemma}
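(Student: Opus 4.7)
The plan is to reduce, in each possible case, to a braided subspace of diagonal type whose Nichols algebra is already known to have infinite $\GK$, using the filtration machinery of \S\ref{subsection:filtr-nichols}.

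First I would dispose of the case where at least one of $\epsilon_1, \epsilon_2$ equals $1$. Say $\epsilon_1 = 1$. Then the two-dimensional braided subspace $W := \ku x_1 \oplus \ku x_2 \subset V$ is of diagonal type with $q_{11} = 1$ and $q_{12}q_{21} = -1 \neq 1$. Lemma~\ref{lemma:points-trivial-braiding} applied to $W$ gives $\GK \NA(W) = \infty$, and because $\NA(W)$ embeds as a subalgebra of $\NA(V)$, one concludes $\GK \NA(V) = \infty$. The case $\epsilon_2 = 1$ is symmetric.

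Next, assume $\epsilon_1 = \epsilon_2 = -1$. Here I would take any refinement of the flag
\[
0 \subsetneq \ku x_1 \subsetneq V_1 \subsetneq V_1 \oplus \ku x_2 \subsetneq V
\]
to an absolutely complete flag of Yetter-Drinfeld submodules and form the associated graded $\Vdiag := \gr V$. By Lemma~\ref{lemma:filtered}, $\Vdiag$ is a braided vector space of diagonal type and $\GK \NA(\Vdiag) \leq \GK \NA(V)$. Reading off the leading (filtration-preserving) summands of the braiding \eqref{eq:braiding-2blocks}, the braiding matrix $(\tilde q_{ij})$ of $\Vdiag$ satisfies $\tilde q_{ii} = -1$ at every vertex; the two pairs inside a block satisfy $\tilde q_{ij}\tilde q_{ji} = \epsilon_k^2 = 1$, giving no edge; and the four cross-block pairs satisfy $\tilde q_{ij}\tilde q_{ji} = q_{12}q_{21} = -1$. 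Thus the generalized Dynkin diagram of $\Vdiag$ is the 4-cycle $K_{2,2}$ with every vertex labelled $-1$ and every edge labelled $-1$. This is of Cartan type with Cartan matrix
\[
\begin{pmatrix} 2 & -1 & 0 & -1 \\ -1 & 2 & -1 & 0 \\ 0 & -1 & 2 & -1 \\ -1 & 0 & -1 & 2 \end{pmatrix},
\]
the affine Cartan matrix of type $A_3^{(1)}$. By Theorem~\ref{thm:nichols-diagonal-finite-gkd} one concludes $\GK \NA(\Vdiag) = \infty$, and therefore $\GK \NA(V) = \infty$.

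No step is genuinely hard; the only piece of bookkeeping is the identification of the leading terms of \eqref{eq:braiding-2blocks}, which amounts to discarding in each matrix entry the summands lying in a strictly lower piece of the flag (for example the $x_1 \otimes x_1$ summand inside $c(x_1 \otimes x_{\fudos}) = (\epsilon_1 x_{\fudos} + x_1) \otimes x_1$, and the $\as_{12} x_2 \otimes \ast$ summand inside $c(\ast \otimes x_{\futres})$). In particular, no appeal to Hypothesis~\ref{hyp:nichols-diagonal-finite-gkd} is needed, since affine Cartan type is already covered unconditionally by Theorem~\ref{thm:nichols-diagonal-finite-gkd}.
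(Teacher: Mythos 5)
Your proof is correct and takes the same route as the paper: pass to the associated graded of the natural flag $\ku x_1\subsetneq V_1\subsetneq V_1\oplus\ku x_2\subsetneq V$, observe that $\gr V$ is of affine Cartan type $A_3^{(1)}$, and invoke Theorem~\ref{thm:nichols-diagonal-finite-gkd} together with Lemma~\ref{lemma:filtered}. The paper treats the reduction to $\epsilon_1=\epsilon_2=-1$ as immediate from the previously established constraints, whereas you make that step explicit via Lemma~\ref{lemma:points-trivial-braiding}; both are valid, and no further comment is needed.
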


\pf From the hypothesis, we conclude that $q_{11} = q_{22} = -1$. Consider the filtration given by the natural order of $\Idd$.
Then $\gr V$ is of diagonal type, actually of affine Cartan type, hence Theorem \ref{thm:nichols-diagonal-finite-gkd} applies.
\epf

Therefore we may assume that $q_{12}q_{21} = 1$ and consequently $\ghost \neq 0$. There are three alternatives:
\begin{align*}
\epsilon_1 &= \epsilon_2 = 1, &  \epsilon_1 &= -\epsilon_2 = 1,&  \epsilon_1 &= \epsilon_2 = -1.
\end{align*}
We dispose of the first two alternatives in \S \ref{subsec:2blocks-eps1} and the third in \S \ref{subsec:2blocks-eps-1}, see Lemmas \ref{lemma:2blocks-caso1}, \ref{lemma:2blocks-caso2} and \ref{lemma:2blocks-caso3}. Altogether these give a proof of Theorem \ref{theorem:2blocks}.

\smallbreak
Before starting,  a bit  of notation.
As in \S \ref{sec:yd-dim3}, let
$K=\NA (V)^{\mathrm{co}\,\NA (V_1)}$; again
\begin{align*}
\NA(V) &\simeq K \# \NA (V_1);& K &\simeq \NA(K^1)& &\text{and}& K^1 &= \ad_c\NA (V_1) (V_2)
\in {}^{\NA (V_1)\# \ku \Gamma}_{\NA (V_1)\# \ku \Gamma}\mathcal{YD},
\end{align*}
with the coaction \eqref{eq:coaction-K^1} and the adjoint action.
We shall use as customary the derivations $\partial_i$, $i \in \Idd$.
The next observation will be used often:

\begin{remark}\label{rem:der-often}
	If $i \in \Idd$ and $\partial_i(x) =0$, then $\partial_i(\ad_c x_i x) =0$.
\end{remark}

\subsection{$\epsilon_1 = 1$}\label{subsec:2blocks-eps1}
Here we deal with the alternatives $\epsilon_1 = 1$ and $\epsilon_2 = \pm 1$.
We distinguish the cases when either $\ghost_{2} = 0$, $\ghost_{1} \ge 1$, see
Lemma \ref{lemma:2blocks-caso1}; or else $\ghost_{2} \ge 1$, $\ghost_{1} \ge 0$, see Lemma
 \ref{lemma:2blocks-caso2}. In both situations, we introduce for all $n, m \in \N_0$, the following elements of $K^{1}$:
\begin{align}
z_n &= (\ad_{c} x_{\fudos})^n x_2,&  \sh_{m, n} &= (\ad_{c} x_{1})^m(\ad_{c} x_{\fudos})^n x_{\futres},&  w_n &=  \sh_{0,n}.
\end{align}
The definition of $z_n$ is consistent with \eqref{eq:zn}.
By definition, \eqref{eq:-1block+point}, and \eqref{eq:derivations-zn},
\begin{align}\label{eq:2blocks-property-minus1}
\ad_{c} x_{1} z_n &=0,  & \partial_{\futres}(z_n) &= 0, & n &\in \N_0.
\end{align}
Also $\partial_2(z_n) \neq 0 \iff 0\le n\le \ghost_1$  and
the family $(z_n)_{0\le n\le \ghost_1}$ is linearly independent, by Remark \ref{lemma:K-basis}.

By \eqref{eq:relations B(W) - case 1}
$x_{\fudos}x_1^n =x_1^n x_{\fudos}-\frac{n}{2}x_1^{n+1}$, for all $n \in \N$. Thus
\begin{align} \label{eq:2blocks-property0}
\ad_c x_1 \sh_{m, n} &= \sh_{m + 1, n},&  \ad_c x_{\fudos} \sh_{m, n} &= \sh_{m, n + 1} - \frac{m}{2}\sh_{m + 1, n},
\end{align}
Thus the $z_n$'s and the $\sh_{m, n}$'s generate $K^1$.

\begin{kase} $\ghost_{2} = 0$, $\ghost_{1} \ge 1$.
\end{kase}

\begin{lemma}\label{lemma:2blocks-caso1}
If $\epsilon_1 = 1$,  $\ghost_{2} = 0$ and $\ghost_{1} \ge 1$, then $\GK \cB(V) = \infty$.
\end{lemma}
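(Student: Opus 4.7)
The plan is to use the splitting $\cB(V) \simeq K \,\#\, \cB(V_1)$ induced by the inclusion $V_1 \hookrightarrow V$, where $K = \cB(V)^{\mathrm{co}\,\cB(V_1)} \simeq \cB(K^1)$ and $K^1 = \operatorname{ad}_c \cB(V_1)(V_2)$, and then to exhibit inside $K^1$ a $3$-dimensional braided subspace of the ``block and a point'' form of \S\ref{sec:yd-dim3} whose ghost fails to be discrete. Since $K$ is a subalgebra of $\cB(V)$, Lemma~\ref{lemma:weak-not-discrete} will then force $\GK\cB(V) = \infty$.

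Three preliminaries will be needed. First, because $\ghost_2 = 0$ one has $g_1 \cdot x_{\futres} = q_{12} x_{\futres}$, and the derivation argument used in \eqref{eq:-1block+point} gives $x_1 x_{\futres} = q_{12} x_{\futres} x_1$ and $x_1 x_2 = q_{12} x_2 x_1$ in $\cB(V)$; the induction of Lemma~\ref{le:-1bpz} then yields $\operatorname{ad}_c x_1(z_n) = \operatorname{ad}_c x_1(w_n) = 0$ for all $n$, so the $\cB(V_1)$-action on $K^1$ kills $x_1$. Second, applying Lemma~\ref{lemma:derivations-zn} separately to $x_2$ and $x_{\futres}$ (which have identical braiding with $V_1$, as $\ghost_2 = 0$), one sees that $z_n$ and $w_n$ are nonzero for $0 \le n \le \ghost_1$; in particular $z_1 \ne 0$ because $\ghost_1 \ge 1$. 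Third, a direct computation from $g_2 \cdot x_{\futres} = \epsilon_2 x_{\futres} + x_2$, using the above commutation relations for $x_1$ together with induction on $n$, yields $g_2 \cdot w_n = q_{21}^n(\epsilon_2 w_n + z_n)$; in particular $(z_0, w_0)$ forms a $\cV(\epsilon_2, 2)$-block inside $K^1$.

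I then set $U_K := \ku z_0 \oplus \ku w_0 \oplus \ku z_1 \subset K^1$ and compute $c_K$ on $U_K$. By the coaction formula of Lemma~\ref{le:zcoact} (applied in the present setting with only cosmetic changes) one has $\delta(z_1) = -a_{21}\, x_1 g_2 \otimes z_0 + g_1 g_2 \otimes z_1$, and since $x_1 \cdot K^1 = 0$ by the first preliminary, the first term is annihilated, so $c_K(z_1 \otimes u) = (g_1 g_2) \cdot u \otimes z_1$ for every $u \in K^1$. Combined with $g_2 \cdot z_0 = \epsilon_2 z_0$, $g_2 \cdot w_0 = \epsilon_2 w_0 + z_0$ and $g_1 \cdot v = q_{12} v$ for $v \in K^1$, this shows that $U_K$ is $c_K$-stable and that its braiding matches \eqref{eq:braiding-block-point} with $(z_0, w_0)$ playing the role of the block and $z_1$ the point, under parameters $\epsilon = q_{22} = \epsilon_2$, $q_{12}^K q_{21}^K = \epsilon_2^2 = 1$ (weak interaction) and ghost coefficient $a^K = \epsilon_2$. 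Hence the ghost of $U_K$ equals $-2$ if $\epsilon_2 = 1$ and $-1$ if $\epsilon_2 = -1$, neither of which lies in $\N$.

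Therefore Lemma~\ref{lemma:weak-not-discrete} gives $\GK\cB(U_K) = \infty$; since $\cB(U_K)$ embeds in $\cB(K^1) = K \subset \cB(V)$ as a sub-Nichols algebra, we conclude $\GK\cB(V) = \infty$. The main obstacle will be the careful bookkeeping in the third preliminary and the resulting identification $a^K = \epsilon_2$, both of which hinge on the cancellations forced by $q_{12}q_{21} = 1$ and the weak-interaction commutation relations that bring $x_1$ past $x_2$ and $x_{\futres}$.
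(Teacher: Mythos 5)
Your proposal is correct and follows essentially the same route as the paper's own argument: the splitting $\cB(V)\simeq K\,\#\,\cB(V_1)$, the identification of $z_0,w_0,z_1$ as spanning a braided subspace of $K^1$ of ``block and point'' type with weak interaction and negative ghost ($-2$ if $\epsilon_2=1$, $-1$ if $\epsilon_2=-1$), and the final appeal to Lemma~\ref{lemma:weak-not-discrete}. The paper packages the preliminary facts you state as Lemma~\ref{lemma:2blocks-auxiliar} (vanishing of $\sh_{m,n}$ for $m>0$, the derivation formula $\partial_{\futres}(w_n)=\varpi_n x_1^n$, and $g_2\cdot w_n=q_{21}^n(\epsilon_2 w_n+z_n)$), and derives the same $3\times 3$ braiding matrix \eqref{eq:braiding-2blocks-caso1}; your route through the coaction $\delta(z_1)=-a_{21}x_1g_2\otimes z_0+g_1g_2\otimes z_1$ and the observation that $\ad_c x_1$ kills $K^1$ is the same computation in only slightly different clothes.
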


We need some preliminaries before the proof of Lemma \ref{lemma:2blocks-caso1}.

\begin{lemma}\label{lemma:2blocks-auxiliar}
	\begin{enumerate}\renewcommand{\theenumi}{\alph{enumi}}\renewcommand{\labelenumi}{(\theenumi)}
		\item\label{item:2blocks-auxiliar-1} If $m > 0$, then $\sh_{m, n} =0$. Hence
\begin{align}\label{eq:2blocks-auxiliar-0}
\ad_cx_1(w_n) &= 0,& \forall&n \in \N_0.
\end{align}
		
		\medbreak
		\item\label{item:2blocks-auxiliar-2} Let $\varpi_{m} = (-1)^m \prod_{0\le k \le m-1} (\as_{21} + \frac k2)$.  We have for all $n\in \N_0$
		\begin{align}\label{eq:2blocks-auxiliar-1}
		\partial_1(w_n) &= \partial_{2} (w_n) = \partial_{\fudos} (w_n) = 0; & \partial_{\futres} (w_n) &= \varpi_n x_1^n;
		\\ g_1 \cdot w_n &= q_{12} w_n,& g_2 \cdot w_n &= q_{21}^n (\epsilon_2w_n + z_n).
		\end{align}
		
		\medbreak
		\item\label{item:2blocks-auxiliar-3} The elements $(z_n)_{0 \le n \le \ghost_1}$ and $(w_n)_{0 \le n \le \ghost_1}$ form a basis of $K^1$.
	\end{enumerate}
\end{lemma}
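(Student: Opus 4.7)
\emph{Plan.} I would establish (a) and (b) simultaneously by induction on $n$, and derive (c) as a consequence.

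For (a), the crux is to verify $\sh_{1, 0} = \ad_c x_1(x_{\futres}) = 0$ in $\NA(V)$. The hypothesis $\ghost_2 = 0$ gives $\as_{12} = 0$, so $\sh_{1, 0} = x_1 x_{\futres} - q_{12} x_{\futres} x_1$. A direct Leibniz-rule computation, using $\as_{12} = 0$ and $q_{12}q_{21} = 1$, shows $\partial_i(\sh_{1, 0}) = 0$ for all $i \in \Idd$; since nonzero positive-degree elements of a Nichols algebra are detected by skew-derivations, $\sh_{1, 0} = 0$. The same argument yields $\ad_c x_1(x_2) = 0$, which will be needed in (b). Then $\sh_{m, n} = 0$ for all $m \ge 1$ follows inductively from \eqref{eq:2blocks-property0}: $\sh_{m+1, n} = \ad_c x_1(\sh_{m, n})$ propagates vanishing in $m$, while
\[
\sh_{m, n+1} = \ad_c x_{\fudos}(\sh_{m, n}) + \tfrac{m}{2}\,\sh_{m+1, n}
\]
propagates it from level $n$ to level $n+1$.

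For (b), the base $n = 0$ is immediate from the braiding and $\varpi_0 = 1$. For the inductive step, the identity $g_1 \cdot w_n = q_{12} w_n$ rewrites $w_{n+1} = x_{\fudos} w_n - q_{12} w_n x_{\fudos}$, and the Leibniz rule immediately gives $\partial_1(w_{n+1}) = \partial_{\fudos}(w_{n+1}) = \partial_2(w_{n+1}) = 0$. The formula for $\partial_{\futres}(w_{n+1})$ reduces, via $g_2 \cdot x_{\fudos} = q_{21}(x_{\fudos} + \as_{21} x_1)$ and $q_{12}q_{21} = 1$, to
\[
\partial_{\futres}(w_{n+1}) = \varpi_n\bigl(x_{\fudos} x_1^n - x_1^n x_{\fudos} - \as_{21} x_1^{n+1}\bigr),
\]
and \eqref{eq:relations B(W) - case 1} converts the commutator to $-\tfrac{n}{2} x_1^{n+1}$, matching $\varpi_{n+1} x_1^{n+1}$ by the recursion defining $\varpi_m$. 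The identity $g_1 \cdot w_{n+1} = q_{12} w_{n+1}$ picks up a spurious term $q_{12}\sh_{1, n}$ killed by (a). The most delicate check---and the main obstacle---is $g_2 \cdot w_{n+1}$. Setting $A := \epsilon_2 w_n + z_n$ one computes
\[
g_2 \cdot w_{n+1} = q_{21}^n\bigl[(q_{21} x_{\fudos} A - A x_{\fudos}) + \as_{21}(q_{21} x_1 A - A x_1)\bigr].
\]
The $\as_{21}$-term vanishes because (a) and \eqref{eq:-1block+point} (with $\epsilon_1 = 1$) give $x_1 w_n = q_{12} w_n x_1$ and $x_1 z_n = q_{12} z_n x_1$, so $q_{21} x_1 A = A x_1$. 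The first bracketed term equals $q_{21}(\epsilon_2 w_{n+1} + z_{n+1})$: indeed, $q_{21} x_{\fudos} w_n - w_n x_{\fudos} = q_{21} w_{n+1}$ (a rewrite of the definition) and the analogous identity for $z_n$ follows from \eqref{eq:-1block+point}.

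For (c), since $\ad_c$ is a braided module-algebra action and $\NA(V_1)$ has the PBW basis $\{x_1^a x_{\fudos}^b\}$ (Proposition~\ref{pr:1block}), $K^1$ is spanned by $(\ad_c x_1)^a(\ad_c x_{\fudos})^b x_j$ with $j \in \{2, \futres\}$. Part (a) together with \eqref{eq:2blocks-property-minus1} kills all $a \ge 1$ terms, so $K^1 = \langle z_b, w_b : b \in \N_0\rangle$. Truncation at $b = \ghost_1$ is routine: $z_b = 0$ for $b > \ghost_1$ by Remark~\ref{lemma:K-basis}, and $w_b = 0$ for $b > \ghost_1$ since all of its skew-derivations vanish by (b)---here $\as_{21} = -\ghost_1/2$ (because $\epsilon_1 = 1$) makes $\varpi_b = 0$ precisely when $b > \ghost_1$. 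Finally, applying $\partial_{\futres}$ to any hypothetical relation $\sum \alpha_b z_b + \sum \beta_b w_b = 0$ kills the $z$-part and leaves $\sum \beta_b \varpi_b x_1^b = 0$; since $\varpi_b \neq 0$ and the $x_1^b$ are of distinct degrees, all $\beta_b = 0$, and Remark~\ref{lemma:K-basis} then forces all $\alpha_b = 0$.
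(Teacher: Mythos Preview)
Your proposal is correct and follows essentially the same inductive scheme as the paper's proof. The one notable difference is in how you establish the base case $\sh_{1,0}=0$: you compute skew-derivations directly from $\as_{12}=0$ and $q_{12}q_{21}=1$, whereas the paper argues more conceptually that $\toba(V_2\oplus\ku x_1)\simeq\toba(V_2)\,\underline{\otimes}\,\toba(\ku x_1)$ (which holds for exactly the same reason, namely $c^2_{\vert V_2\otimes\ku x_1}=\id$). Both arguments are equally valid; yours is more self-contained, the paper's more structural.
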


\pf \eqref{item:2blocks-auxiliar-1}: By induction on $n$. Clearly, if $\sh_{m, n} =0$, then $\sh_{m', n} =0$ for all $m' \geq m$.
Now $\sh_{1, 0} = \ad_c(x_1) x_{\futres} = 0$ because
$\toba (V_2 \oplus \ku x_1) \simeq \toba (V_2) \underline{\otimes} \toba (\ku x_1)$.
The recursive step follows because $\sh_{m, n + 1} = \ad_c x_{\fudos} \sh_{m, n} + \frac{m}{2}\sh_{m + 1, n}$  by \eqref{eq:2blocks-property0}. Now \eqref{eq:2blocks-auxiliar-0} follows from this and \eqref{eq:2blocks-property0}.
\eqref{item:2blocks-auxiliar-2}: By definition, $\partial_1(w_n) = \partial_{2} (w_n) =  0$; while $\partial_{\fudos} (w_n) = 0$ by definition and Remark \ref{rem:der-often}.
Now $g_1 \cdot w_0 = q_{12} w_0$ hence
\begin{align*}
g_1 \cdot w_{n + 1} &= g_1 \cdot(x_{\fudos}w_n - q_{12} w_n x_{\fudos}) = (x_{\fudos} + x_1)q_{12}w_n  - q_{12}^2 w_n  (x_{\fudos} + x_1)
\\
&= q_{12} w_{n + 1} + q_{12}(x_1w_n  - q_{12} w_nx_1) =  q_{12} (w_{n + 1} + \sh_{1,n}) = q_{12} w_{n + 1};
\\
\partial_{\futres} ( w_{n + 1}) &=  \partial_{\futres}(x_{\fudos}w_n - q_{12} w_n x_{\fudos}) = x_{\fudos} \partial_{\futres}(w_n) - q_{12} \partial_{\futres}(w_n) g_2 \cdot x_{\fudos}
\\&= x_{\fudos}\varpi_n x_1^n - q_{12} q_{21}\varpi_n x_1^n (x_{\fudos} + \as_{21} x_1)
\\ & = \varpi_n \left(x_{\fudos} x_1^n -  x_1^n x_{\fudos} - \as_{21} x_1^{n+1}\right)
\overset{\eqref{eq:relations B(W) - case 1}}{=} -\varpi_n(\frac{n}{2} + \as_{21}) x_1^{n+1}.
\end{align*}
Next we compute $g_2 \cdot w_0 = g_2 \cdot x_{\futres} = \epsilon_2x_{\futres} +  x_2$, while
\begin{align*}
g_2 \cdot w_{n + 1} &= g_2 \cdot(x_{\fudos}w_n - q_{12} w_n x_{\fudos})
\\&=
q_{21}^{n+1} \left((x_{\fudos} + \as_{21} x_1)  (\epsilon_2w_n + z_n) - q_{12}(\epsilon_2w_n + z_n)(x_{\fudos} + \as_{21} x_1)
\right)
\\&=
q_{21}^{n+1} \left( \epsilon_2 (x_{\fudos}w_n - q_{12} w_n x_{\fudos}) + (x_{\fudos} z_n  - q_{12}  z_nx_{\fudos}) \right.
\\ &\qquad \left. + \as_{21} \left(\epsilon_2( x_1 w_n - q_{12} w_nx_1) + (x_1 z_n - q_{12} z_n x_1)\right)
\right)
\\&\overset{\eqref{eq:2blocks-property-minus1}, \eqref{eq:2blocks-auxiliar-0}}{=} q_{21}^{n+1} (\epsilon_2w_{n + 1} + z_{n + 1})
\end{align*}
In the last equality, we use that $\ad_cx_1 w_n =  x_1 w_n - q_{12} w_nx_1$, by the last identity in \eqref{eq:2blocks-auxiliar-1}.
\eqref{item:2blocks-auxiliar-3}: first, $(z_n)_{0 \le n \le \ghost_1}$ and $(w_n)_{0 \le n}$ generate $K^1$ by  \eqref{item:2blocks-auxiliar-1}.
Second,  $w_n \neq 0 \iff 0 \le n \le \ghost_1$ by  \eqref{item:2blocks-auxiliar-2}. Finally,  since $\gr z_n = \gr w_n = n+1$, it suffices to prove that
$\{z_n, w_n\}$ for a fixed $0 \le n \le \ghost_1$ are linearly independent, what follows applying $\partial_2$ and $\partial_{\futres}$.
\epf

\emph{Proof of Lemma \ref{lemma:2blocks-caso1}.\/}
By Lemma \ref{le:zcoact}, the coaction \eqref{eq:coaction-K^1} on $z_n$ and $w_n$, $0 \le n \le \ghost_1$, is given by \eqref{eq:coact-zn}, that read in the present context
	as follows:
	\begin{align} \label{eq:coact-zn-2blocks}
	\delta (z_{n}) &= \sum _{k=0}^n \nu_{k,n}\, x_1^{n-k}g_1^{k} g_2 \otimes z_k,&
\delta (w_{n}) &= \sum _{k=0}^n \nu_{k,n}\, x_1^{n-k}g_1^{k} g_2 \otimes w_k.
	\end{align}

Let $p_{ij} = q_{12}^iq_{21}^j\epsilon_{2}$, $0\le i,j\le \ghost_1$.
The braided vector space  $K^1$ has braiding
\begin{align*}
c(z_i \ot z_j ) &= p_{ij} z_j \ot z_i, &
c(z_i \ot w_j ) &= p_{ij} (w_j + \epsilon_2 z_j)\ot z_i, \\
c(w_i \ot z_j ) &= p_{ij} z_j \ot w_i, & c(w_i \ot w_j ) &= p_{ij} (w_j + \epsilon_2 z_j) \ot w_i.
\end{align*}
Then the braided vector subspace $W = W_1 \oplus W_2$, where $W_1 = \langle z_0, w_0\rangle$ and $W_2 = \langle z_1\rangle$, has braiding in the
ordered basis $z_0, w_0, z_1$ given by
\begin{align}\label{eq:braiding-2blocks-caso1}
\begin{pmatrix}
\epsilon_2 z_0 \otimes z_0 &  (\epsilon_2 w_0 + z_0) \otimes z_0 & p_{01} z_1  \otimes z_0
\\
\epsilon_2 z_0 \otimes w_0 & (\epsilon_2 w_0 + z_0) \otimes w_0 & p_{01}  z_1  \otimes w_0
\\
p_{10} z_0 \otimes z_1 &  p_{10}(w_0 + \epsilon_2 z_0) \otimes z_1&  \epsilon_2 z_1  \otimes z_1
\end{pmatrix}.
\end{align}
That is, $W_1$ is an $\epsilon_2$-block, $W_2$ is a point with label $\epsilon_2$, the interaction is weak and $a = \epsilon_2$, cf. \eqref{eq:braiding-block-point}; thus
$\ghost = \begin{cases} -2, &\epsilon_2 = 1, \\ -1, &\epsilon_2 = -1 \end{cases}$ is negative. Hence $\GK \toba(W) = \infty$ by Lemma \ref{lemma:weak-not-discrete}.
\qed

\begin{kase}  $\ghost_{2} \ge 1$, $\ghost_{1} \ge 0$.
\end{kase}

\begin{lemma}\label{lemma:2blocks-caso2}
	If $\epsilon_1 = 1$, $\ghost_{2} \ge 1$ and $\ghost_{1} \ge 0$, then $\GK \cB(V) = \infty$.
\end{lemma}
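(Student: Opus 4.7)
The plan is to adapt the strategy of Lemma \ref{lemma:2blocks-caso1}, splitting $\NA(V) \simeq K \# \NA(V_1)$ with $K \simeq \NA(K^1)$ and $K^1 = \ad_c \NA(V_1)(V_2)$, and exhibiting a braided subspace of $K^1$ whose Nichols algebra has infinite $\GK$. The key new feature is that the hypothesis $\as_{12} \neq 0$ (equivalent to $\ghost_2 \geq 1$) makes the elements $\sh_{m,n} = (\ad_c x_1)^m (\ad_c x_{\fudos})^n x_{\futres}$ with $m \geq 1$ no longer identically zero, so $K^1$ is strictly richer than in the case treated by Lemma \ref{lemma:2blocks-caso1}. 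In particular, computing $\partial_1(\sh_{1,0}) = -\as_{12} x_2 \neq 0$ shows immediately that $\sh_{1,0} \neq 0$, in contrast with Lemma \ref{lemma:2blocks-auxiliar}\,\ref{item:2blocks-auxiliar-1}.

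First I would dispose of the sub-case $\epsilon_2 = 1$ and $\ghost_1 = 0$ by the symmetry $V_1 \leftrightarrow V_2$: under this swap, $(\epsilon_1,\ghost_1)$ and $(\epsilon_2,\ghost_2)$ are exchanged, and Lemma \ref{lemma:2blocks-caso1} applies to the swapped configuration (with $\epsilon_1^{\mathrm{sw}} = 1$, $\ghost_2^{\mathrm{sw}} = \ghost_1 = 0$ and $\ghost_1^{\mathrm{sw}} = \ghost_2 \geq 1$). For all other sub-cases, where the symmetry is unavailable (either because $\epsilon_2 = -1$, or because $\ghost_1 \geq 1$ and so $\ghost_2^{\mathrm{sw}} \neq 0$), a direct argument is required.

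The direct argument would parallel Lemma \ref{lemma:2blocks-auxiliar} but must now track the non-vanishing $\sh_{m,n}$. Concretely, I would look for a finite-dimensional braided subspace $W$ of $K^1$ with $\GK \NA(W) = \infty$; a natural candidate is $W = \langle z_0, w_0, \sh_{1,0}\rangle$, which contains the sub-block $V_2 = \langle z_0, w_0\rangle$ together with the additional generator $\sh_{1,0}$ that records the interaction carried by $\as_{12}$. After computing the $\Gamma$-action and the braidings of $\sh_{1,0}$ with $z_0$ and $w_0$, one expects $W$ to take the form either of a ``block plus point'' with parameters excluded from Table \ref{tab:finiteGK-block-point} (whence $\GK \NA(W) = \infty$ by Theorem \ref{thm:point-block}), or of a ``two blocks'' configuration to which Lemma \ref{lemma:2blocks-caso1} applies inductively. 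A variant of the approach, which might reduce the case analysis, would instead filter $\NA(V)$ and identify, as in \S\ref{subsubsection:about-the-proofs}, a new primitive element in $\gr \NA(V)$ (for instance, the class of $\sh_{1,0}$) that together with $x_1, x_{\fudos}, x_2, x_{\futres}$ generates a braided subspace of diagonal type whose Dynkin diagram has infinite root system, invoking Hypothesis \ref{hyp:nichols-diagonal-finite-gkd}.

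The main obstacle will be the explicit determination of the Yetter-Drinfeld structure on $\sh_{1,0}$, in particular the formulas for $g_1 \cdot \sh_{1,0}$, $g_2 \cdot \sh_{1,0}$ and the coaction $\delta(\sh_{1,0})$, which will involve correction terms absent in the computations of Lemmas \ref{le:zcoact} and \ref{lemma:2blocks-auxiliar}. A supplementary difficulty is that the optimal choice of $W$ may depend on the precise values of $\epsilon_2$, $\ghost_1$, $\ghost_2$, so a few further sub-cases may need to be handled separately, but the common underlying mechanism should be the same: the non-vanishing of $\sh_{1,0}$ (forced by $\as_{12} \neq 0$) introduces into $K^1$ an extra vector that combined with the block $V_2$ destroys the finiteness of $\GK$.
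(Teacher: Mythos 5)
Your high-level plan is the right one and coincides with the paper's: split $\NA(V) \simeq K \# \NA(V_1)$ with $K \simeq \NA(K^1)$, and exploit that the hypothesis $a_{12} \neq 0$ prevents the $\sh_{m,n}$ with $m=1$ from vanishing. Two corrections on the details, one minor and one essential. The minor one: since $\sh_{1,0} \in K = \ker\partial_1 \cap \ker\partial_{\fudos}$, one has $\partial_1(\sh_{1,0}) = 0$; the non-vanishing derivation is $\partial_2(\sh_{1,0}) = -a_{12}\,x_1$, which is what shows $\sh_{1,0} \neq 0$. Your conclusion is right, but your formula is not.

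The essential gap is the choice of braided subspace. Your candidate $W = \langle z_0, w_0, \sh_{1,0}\rangle$ \emph{is} a braided subspace of $K^1$, but it does \emph{not} decompose as a block plus a point. Indeed, the coaction $\delta(\sh_{1,0}) = g_1g_2 \otimes \sh_{1,0} - a_{12}\,x_1 g_2 \otimes z_0$ has a nontrivial $\NA(V_1)$-component, and since $\ad_c x_1(g_2 \cdot w_0) = \epsilon_2\,\sh_{1,0} \neq 0$, the braiding $c(\sh_{1,0} \otimes w_0)$ acquires a term $-a_{12}\epsilon_2\,\sh_{1,0}\otimes z_0$. This ruins any decomposition of the form $\langle z_0,w_0\rangle\oplus\langle\sh_{1,0}\rangle$ (the image of $\langle\sh_{1,0}\rangle\otimes\langle w_0\rangle$ is not contained in $\langle z_0,w_0\rangle\otimes\langle\sh_{1,0}\rangle$), so Theorem \ref{thm:point-block} cannot be invoked. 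The paper sidesteps this by going to the \emph{top} of the range: with $N = \ghost_1$ one has $\kappa_{N+1}=0$, hence $w_{N+1}=z_{N+1}=\sh_{1,N}=0$, so $\ad_c x_1$ and $\ad_c x_{\fudos}$ annihilate all of $w_N, z_N, \sh_{1,N-1}$ and only the group-like part $g_1^N g_2$ of the coaction contributes to the braiding. A further linear change of basis $y_N = (1-2\epsilon_2 a_{21}a_{12})z_N - 2a_{21}^2(\epsilon_2 - 2)\sh_{1,N-1}$ then makes $\langle y_N, w_N\rangle \oplus \langle z_N\rangle$ an honest $\epsilon_2$-block plus $\epsilon_2$-point with weak interaction and ghost $-1$ or $-2$, and Lemma \ref{lemma:weak-not-discrete} applies. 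This top-degree trick, and not a bottom-degree $W$, is the crux you are missing. Your swap reduction for $\epsilon_2 = 1, \ghost_1 = 0$ is fine, but note that it does not cover $\epsilon_2 = -1, \ghost_1 = 0$, and neither does the paper's argument with $N=0$; that sub-case has to be handled by the remark that $\sh_{1,n} \neq 0$ for all $n$ and a Lemma \ref{lemma:rosso-lemma19-gral}-type argument.
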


We need some preliminaries before the proof of Lemma \ref{lemma:2blocks-caso2}. Let $(\mu_n)$ be the scalars defined in \eqref{eq:def-mu-n}, with $a_{21}$ in the place of $a$, and let $\kappa_n = (-1)^{n}\prod_{i= 0}^{n-1} \left(\dfrac{i}{2}+ a_{21}\right)$, $n \in \N_0$. It is easy to see that
\begin{align}
\kappa_n = \frac{(-1)^k}{2^k} \mu_{n},& &\text{where } k = \lfloor \frac n2\rfloor.
\end{align}
Then \eqref{eq:derivations-zn-eps1} says in the present context that
$\partial_2(z_{n}) = \kappa_n x_{1}^{n}$.

\begin{lemma}\label{lemma:2blocks-auxiliar-caso2}
	\begin{enumerate}\renewcommand{\theenumi}{\alph{enumi}}\renewcommand{\labelenumi}{(\theenumi)}
		\item\label{item:2blocks-auxiliar-11} If $m > 1$, then $\sh_{m, n} =0$. Hence
		for all $n \in \N_0$
		\begin{align}\label{eq:2blocks-auxiliar-10}
		\ad_cx_1(\sh_{1,n}) &= 0, \\
	\label{eq:2blocks-caso2-3}
	\sh_{1, n + 1} &= [x_{\fudos}, \sh_{1,n}]_c.
		\end{align}
		
		\medbreak
		\item\label{item:2blocks-auxiliar-12}   We have for all $n\in \N_0$
		\begin{align}\label{eq:2blocks-auxiliar-11}
		g_1 \cdot \sh_{1,n} &= q_{12} \sh_{1,n},& g_2 \cdot \sh_{1,n} &= q_{21}^{n + 1} \epsilon_2\sh_{1,n};
		\\ \label{eq:shn0}		\partial_1(\sh_{1,n}) &= \partial_{\fudos} (\sh_{1,n}) = \partial_{\futres} (\sh_{1,}n) = 0; &&\\
		\label{eq:shn}		\partial_{2} (\sh_{1,n}) &= \frac{a_{12} \kappa_{n+1}}{a_{21}} x_1^{n+1}. &&
		\end{align}
In particular, if $\ghost_1 = 0$, then $\sh_{1,n} \neq 0$ for every $n \in \N_0$.

		\medbreak
		\item\label{item:2blocks-auxiliar-13} For every $n \in \N_0$,
		$	\partial_1(w_{n}) = \partial_{\fudos} (w_{n}) = 0$,
\begin{align}\label{eq:wn-g1}
g_1 \cdot w_n &= q_{12} (w_n + a_{12}z_n + n\sh_{1, n-1}),\\
\label{eq:wn-g2}
g_2 \cdot w_n &= q_{21}^n (\epsilon_2w_n + z_n + n a_{21} \epsilon_2\sh_{1, n-1}),
\\ \label{eq:wn-partial2}
\partial_{2} (w_n) &= \frac{ a_{12} n \kappa_{n}}{a_{21}}  x_1^{n-1}(x_{\fudos} + a_{21}x_1),
\\ \label{eq:wn-partial52}
\partial_{\futres} (w_n) &= \kappa_{n} x_1^n.
\end{align}		
		\medbreak
		\item\label{item:2blocks-auxiliar-14}
Assume that $\ghost_1 > 0$.	
The elements $(z_n)_{0 \le n \le \ghost_1}$, $(w_n)_{0 \le n \le \ghost_1}$ and
$(\sh_{1,n})_{0 \le n \le \ghost_1 - 1}$ form a basis of $K^1$.
	\end{enumerate}
\end{lemma}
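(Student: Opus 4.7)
Parts (a)-(d) are proved together by interlocking inductions driven by the two commutation identities in \eqref{eq:2blocks-property0} and the Leibniz rule of the skew-derivations $\partial_i$, $i\in\Idd$. The global strategy is: first reduce the whole $\sh$-family to $\sh_{0,n}=w_n$ and $\sh_{1,n}$ via part (a); then use the identity $\sh_{1,n+1}=[x_{\fudos},\sh_{1,n}]_c$ (which makes sense after (a) and a preliminary verification that $g_1\cdot\sh_{1,n}=q_{12}\sh_{1,n}$) to propagate parts (b) and (c); finally, assemble the basis statement in (d) from explicit non-vanishing of the derivations and a $\Z^2$-graded counting.

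\textbf{Part (a).} The recursion $\sh_{m+1,n}=\ad_cx_1\sh_{m,n}$ together with the first formula of \eqref{eq:2blocks-property0} shows that if $\sh_{2,n_0}=0$ for one value $n_0$, then $\sh_{3,n_0}=\ad_cx_1\sh_{2,n_0}=0$, whence $\sh_{2,n_0+1}=\ad_cx_{\fudos}\sh_{2,n_0}+\sh_{3,n_0}=0$; induction on $n$ then on $m$ gives the result. The base case $\sh_{2,0}=(\ad_cx_1)^2 x_{\futres}=0$ is the only genuine input: the subspace $\ku x_1\oplus \ku x_2\oplus \ku x_{\futres}\subset V$ is a ``pale block plus a point'' in the sense of \S\ref{subsubsection:YD3-3points-notdiag} with weak interaction $q_{12}q_{21}=1$, and a direct check that all four skew-derivations $\partial_i$ annihilate $(\ad_cx_1)^2 x_{\futres}$ (using the braiding \eqref{eq:braiding-2blocks} and $x_1x_2=q_{12}x_2x_1$, the latter being \eqref{eq:adx1-zn} for $z_0$) yields vanishing in $\toba(V)$. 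Equations \eqref{eq:2blocks-auxiliar-10} and \eqref{eq:2blocks-caso2-3} are now immediate from \eqref{eq:2blocks-property0} and the inductive verification of $g_1\cdot\sh_{1,n}=q_{12}\sh_{1,n}$ in (b).

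\textbf{Parts (b) and (c).} Both are handled by induction on $n$. For (b), $n=0$ is obtained from $\sh_{1,0}=x_1x_{\futres}-q_{12}(x_{\futres}+a_{12}x_2)x_1$ by bare-hands computation using the braiding rules and $q_{12}q_{21}=1$; the inductive step applies $g_i$ and each $\partial_j$ to the bracket presentation $\sh_{1,n+1}=x_{\fudos}\sh_{1,n}-q_{12}\sh_{1,n}x_{\fudos}$ and collapses the would-be corrections via $\ad_cx_1(\sh_{1,n})=0$ from (a); the derivation coefficient $a_{12}\kappa_{n+1}/a_{21}$ in \eqref{eq:shn} is forced by the recursion of $(\kappa_n)$. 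Part (c) is entirely parallel, using $w_{n+1}=\ad_cx_{\fudos}w_n$: in the computation of $g_1\cdot w_{n+1}$, the nondiagonal piece $x_1$ of $g_1\cdot x_{\fudos}=x_{\fudos}+x_1$ interacts with the nondiagonal piece $a_{12}z_n+n\sh_{1,n-1}$ of $g_1\cdot w_n$, but the mixed terms collapse (using $\ad_cx_1$ annihilates $w_n$, $z_n$ and $\sh_{1,n-1}$) to produce exactly the extra $(n+1)\sh_{1,n}$ in the formula \eqref{eq:wn-g1}; the derivation formulas \eqref{eq:wn-partial2}--\eqref{eq:wn-partial52} follow by the same induction, using the explicit $\partial_2(z_n)=\kappa_n x_1^n$ from \eqref{eq:derivations-zn-eps1} and \eqref{eq:relations B(W) - case 1} to absorb the $x_{\fudos}$ coming from $g_2\cdot x_{\fudos}=q_{21}(x_{\fudos}+a_{21}x_1)$.

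\textbf{Part (d).} Generation of $K^1$ by the listed family is immediate: $K^1=\ad_c\toba(V_1)(V_2)$, $\toba(V_1)$ is spanned by $x_1^ix_{\fudos}^j$, and after (a) this reduces to monomials with $i\le 1$, yielding exactly the $z_n$, $w_n$ and $\sh_{1,n}$. Non-vanishing in the indicated range follows from the derivation formulas in (b) and (c): under $\epsilon_1=1$ we have $a_{21}=-\ghost_1/2$, so $\kappa_n=\prod_{i=0}^{n-1}(i-\ghost_1)/(-2)$ vanishes precisely when $n>\ghost_1$; and $a_{12}\ne 0$ because $\ghost_2\ge 1$. For linear independence within a fixed $\Z^2$-degree, $\partial_{\futres}$ separates the $w_n$-component (since $\partial_{\futres}(w_n)$ is a nonzero multiple of $x_1^n$ while $\partial_{\futres}$ vanishes on $z_n$ and $\sh_{1,n-1}$), and then the two remaining elements $z_n$ and $\sh_{1,n-1}$ are distinguished by the shapes of their $\partial_2$-values together with the $\Gamma$-grading / filtration by $V_1$-letters.

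\textbf{Main obstacle.} The most delicate step is the base case $\sh_{2,0}=0$ in part (a): since $x_1^2\ne 0$ in the Jordan plane $\toba(V_1)$, the vanishing cannot come from an obvious $x_1$-quadratic relation and must be extracted from the ``pale block plus a point'' structure on $\ku x_1\oplus\ku x_2\oplus\ku x_{\futres}$, i.e., from the simultaneous vanishing in $\bigcap_{i\in\Idd}\ker\partial_i$ enforced by $q_{12}q_{21}=1$. After this input is available, the remaining inductions in (b) and (c) are long but essentially mechanical computations, and the linear independence argument in (d) is a clean bookkeeping exercise once one has the explicit derivation formulas.
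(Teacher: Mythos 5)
Parts (a)--(c) of your proposal take the same route as the paper: establish the base cases $\sh_{m,0}$ by direct evaluation of the four skew-derivations $\partial_i$, $i\in\Idd$, and then propagate by induction using $\sh_{1,n+1}=[x_{\fudos},\sh_{1,n}]_c$ and $w_{n+1}=[x_{\fudos},w_n]_c$. Your digression through ``pale block plus a point'' is an unnecessary detour --- the paper's base case $\sh_{2,0}=0$ is a one-line derivation check, $\partial_2(\sh_{2,0})=-a_{12}x_1^2+q_{12}q_{21}a_{12}x_1^2=0$ --- but the underlying mechanism you describe (all four $\partial_i$ vanish, weak interaction $q_{12}q_{21}=1$ does the cancellation) is the right one, and the computations for (b) and (c) are routine once $\sh_{2,n}=0$ is in hand.

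Part (d) has a genuine gap, and you should know that the gap is already present in the paper's own two-sentence argument, so you have faithfully reproduced a problem rather than invented one. Your plan for linear independence is: $\partial_{\futres}$ separates $w_n$, and then $z_n$ and $\sh_{1,n-1}$ ``are distinguished by the shapes of their $\partial_2$-values together with the $\Gamma$-grading / filtration by $V_1$-letters.'' None of these discriminates them. By \eqref{eq:derivations-zn-eps1} and \eqref{eq:shn} one has $\partial_2(z_n)=\kappa_n x_1^n$ and $\partial_2(\sh_{1,n-1})=\frac{a_{12}\kappa_n}{a_{21}}x_1^n$, proportional outputs; both elements carry the same $\Gamma$-degree $g_1^n g_2$; and a ``grading by $V_1$-letters'' is not available because the braiding mixes $x_1$ with $x_{\fudos}$ and $x_2$ with $x_{\futres}$. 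Worse, the obstruction is not one you can route around with a smarter invariant: in degree two one checks directly that all four first-order skew-derivations agree (up to the common scalar $a_{12}/a_{21}$) on $z_1$ and $\sh_{1,0}$, namely $\partial_1$, $\partial_{\fudos}$, $\partial_{\futres}$ all give zero on both and $\partial_2(z_1)=-a_{21}x_1$, $\partial_2(\sh_{1,0})=-a_{12}x_1$. Since an element of $\NA^2(V)$ is zero precisely when all its first-order derivations vanish, this forces
\begin{align*}
a_{12}\,z_1 \;=\; a_{21}\,\sh_{1,0}\quad\text{in }\NA(V).
\end{align*}
Equivalently, $(\id+c)\bigl(a_{12}z_1-a_{21}\sh_{1,0}\bigr)=0$ in $T^2(V)$ by a short direct computation using $q_{12}q_{21}=1$. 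Applying $\ad_c x_{\fudos}$ repeatedly (allowed by your \eqref{eq:2blocks-caso2-3}) then gives $a_{12}z_{n+1}=a_{21}\sh_{1,n}$ for every $n$. With $\ghost_1>0$ and $\ghost_2\ge 1$ both $a_{12}$ and $a_{21}$ are nonzero, so $z_n$ and $\sh_{1,n-1}$ are proportional, and the listed family is \emph{not} linearly independent; indeed, a $\Gamma$-graded count against the $g_1g_2$-piece of $\NA^2(V)$ (which has dimension $6$, of which $4$ comes from $V_2\cdot V_1$ in the bosonization) shows $K^1$ in degree $2$ is two-dimensional, not three. The conclusion of (d) cannot be salvaged by a different choice of derivations; the spanning set $\{z_n,w_n,\sh_{1,n}\}$ is correct but its cardinality overcounts the dimension of $K^1$ by $\ghost_1$, and whoever uses Lemma \ref{lemma:2blocks-auxiliar-caso2}(d) downstream (e.g.\ in the construction of the three-dimensional braided subspace $W$ in the proof of Lemma \ref{lemma:2blocks-caso2}) needs to adjust for this.
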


\pf
To start with \eqref{item:2blocks-auxiliar-11}, $\partial_1(\sh_{1, 0}) = \partial_{\fudos} (\sh_{1, 0}) = \partial_{\futres} (\sh_{1, 0}) = 0$, while
\begin{align}\label{eq:2blocks-caso2-2}
\partial_{2} (\sh_{1, 0}) &= -a_{12}x_1, &
g_1 \cdot \sh_{1, 0} &= q_{12} \sh_{1, 0},
&  g_2 \cdot \sh_{1, 0} &= q_{21} \epsilon_2 \sh_{1, 0}.
\end{align}
Indeed,
\begin{align*}
\partial_{2} (\sh_{1, 0}) &= \partial_{2} ( x_{1} x_{\futres} - q_{12}(x_{\futres} + a_{12} x_2)x_{1}) = - q_{12}a_{12}\partial_{2} (x_2x_{1}) = -  a_{12}x_{1}; \\
g_1 \cdot \sh_{1, 0} &= q_{12} \left( x_1 (x_{\futres} + a_{12} x_2)
- q_{12}(x_{\futres} + 2a_{12} x_2)x_{1} \right) \\
&= q_{12} \left(\sh_{1, 0} + a_{12} (x_1x_2 - q_{12} x_2x_1) \right)
= q_{12} \sh_{1, 0};
\\
g_2 \cdot \sh_{1, 0} &= q_{21} \left( x_1  (\epsilon_2x_{\futres} +  x_2)
- q_{12}(\epsilon_2x_{\futres} + (1 + a_{12}\epsilon_2) x_2) x_1\right)
\\
&=  q_{21} \left(\epsilon_2\sh_{1, 0} + x_1x_2 - q_{12} x_2x_1 \right) = q_{21} \epsilon_2\sh_{1, 0}.
\end{align*}
However, $\sh_{2, 0} = 0$:
clearly $\partial_1(\sh_{2, 0}) = \partial_{\fudos} (\sh_{2, 0}) = \partial_{\futres} (\sh_{2, 0}) = 0$, while
$\partial_{2} (\sh_{2, 0}) = \partial_{2} ( x_{1}  \sh_{1, 0} - q_{12} \sh_{1, 0} x_{1}) = -a_{12}x_1^2 + q_{12} a_{12}x_1 q_{21}x_1 =0$.

Hence $\sh_{m, 0} = 0$ for every $m \ge 2$ by definition, and \eqref{item:2blocks-auxiliar-11} holds
by \eqref{eq:2blocks-property0}. From this, again by \eqref{eq:2blocks-property0}, we get \eqref{eq:2blocks-caso2-3}.
Next we prove \eqref{eq:2blocks-auxiliar-11}.
Arguing recursively, since
\begin{align}\label{eq:shn-explicit}
\sh_{1, n + 1} = x_{\fudos} \sh_{1, n}- q_{12}\sh_{1, n}x_{\fudos}
\end{align}
and $\sh_{2, n} = x_{1} \sh_{1, n}- q_{12}\sh_{1, n}x_{1}$ by the inductive hypothesis, we have
\begin{align*}
g_1 \cdot \sh_{1, n + 1} &= q_{12} \left((x_{\fudos} + x_1) \sh_{1, n}
- q_{12}\sh_{1, n}(x_{\fudos} + x_1) \right) \\
&= q_{12} \left( \sh_{1, n + 1} + \sh_{2, n} \right) = q_{12} \sh_{1, n + 1}; \\
g_2 \cdot \sh_{1, n + 1} &= q_{21}^{n+1} \epsilon_2 \left((x_{\fudos} + a_{21}x_1) \sh_{1, n}
- q_{12}\sh_{1, n}(x_{\fudos} + a_{21} x_1) \right) \\
&= q_{21}^{n+1} \epsilon_2 \left( \sh_{1, n + 1} + a_{21} \sh_{2, n} \right) = q_{21}^{n+1}  \epsilon_2 \sh_{1, n + 1}.
\end{align*}

Now \eqref{eq:shn0} follows at once; while \eqref{eq:shn} for $n=0$ is just \eqref{eq:2blocks-caso2-2}.
If \eqref{eq:shn} holds for $n$, then
\begin{align*}
\partial_{2} (\sh_{1, n + 1}) &= \partial_{2} ( x_{\fudos}  \sh_{1, n} - q_{12} \sh_{1, n} x_{\fudos})
\\ &= \frac{a_{12} \kappa_{n+1}}{a_{21}}
\left(x_{\fudos} x^{n+1}  -  x^{n+1} (x_{\fudos} + a_{21} x_1)\right)
\\ &\overset{\eqref{eq:relations B(W) - case 1}}{=} \frac{a_{12} \kappa_{n+1}}{a_{21}}
\left(-\frac{n+1}{2}x_1^{n+2} - \as_{21} x_1^{n+2}\right) =  \frac{a_{12} \kappa_{n+2}}{a_{21}} x_1^{n+2}.
\end{align*}
Then the last statement in \eqref{item:2blocks-auxiliar-12} follows. To start with \eqref{item:2blocks-auxiliar-13}, observe that
$\partial_1(w_{1}) = \partial_{\fudos} (w_{1}) =  0$, and \emph{a fortiori}
$\partial_1(w_{n}) = \partial_{\fudos} (w_{n}) =  0$ recursively,
while
\begin{align*}
g_1 \cdot w_1 &= q_{12} (w_1 + a_{12}z_1 + \sh_{1, 0}),&
g_2 \cdot w_1 &= q_{21} (\epsilon_2w_1 + z_1 + a_{21} \epsilon_2\sh_{1, 0});\\
\partial_{2} (w_1) &= -a_{12}(x_{\fudos} + a_{21}x_1),&
\partial_{\futres} (w_1) &= -a_{21}x_1.
\end{align*}
Indeed,
\begin{align*}
g_1 \cdot w_1 &= q_{12} \left((x_{\fudos} + x_1) (x_{\futres} + a_{12} x_2)
- q_{12}(x_{\futres} + 2a_{12} x_2) (x_{\fudos} + x_1) \right) \\
&= q_{12} \left(w_1 + a_{12} (x_1x_2 - q_{12} x_2x_1) + a_{12} z_1 + \sh_{1, 0}\right);
\\
g_2 \cdot w_1 &= q_{21} \left( (x_{\fudos} + a_{21}x_1)  (\epsilon_2x_{\futres} +  x_2) \right. \\
&\qquad - \left. q_{12}(\epsilon_2x_{\futres} + (1 + a_{12}\epsilon_2) x_2)  (x_{\fudos} + a_{21}x_1)\right)
\\
&=  q_{21} \left(\epsilon_2w_1 + a_{21}(x_1x_2 - q_{12} x_2x_1) + z_1 + a_{21} \epsilon_2 \sh_{1, 0} \right);
\end{align*}
also,
\begin{align*}
\partial_{2} (w_1) &= \partial_{2} ( x_{\fudos} x_{\futres} - q_{12}(x_{\futres} + a_{12} x_2)x_{\fudos}) = - q_{12}a_{12}\partial_{2} (x_2x_{\fudos})\\ &= -  a_{12}(x_{\fudos} + a_{21}x_1);
\\
\partial_{\futres} (w_1) &= \partial_{\futres}
(x_{\fudos} x_{\futres} - q_{12}x_{\futres} x_{\fudos}) = x_{\fudos} - q_{12}q_{21}  (x_{\fudos} + a_{21} x_1).
\end{align*}
Next, assume that \eqref{eq:wn-g1} holds for $n$. Then
\begin{align*}
w_{n+1} = x_{\fudos} w_n - q_{12} (w_n + a_{12}z_n +n \sh_{1, n-1}) x_{\fudos}.
\end{align*}
We compute
\begin{align*}
g_1 \cdot w_{n+1} &= g_1 \cdot \left( x_{\fudos} w_n - q_{12} (w_n + a_{12}z_n +n \sh_{1, n-1}) x_{\fudos}\right) \\
&= q_{12} \left((x_{\fudos} + x_1) (w_n + a_{12}z_n +n \sh_{1, n-1}) \right. \\ & \left. \qquad
- q_{12}(w_n + 2a_{12}z_n + 2n \sh_{1, n-1}) (x_{\fudos} + x_1) \right) \\
&= q_{12} \big(w_{n+1} + a_{12} z_{n+1} + n(x_{\fudos} -q_{12}\sh_{1, n-1}x_{\fudos}) + x_1 w_n - q_{12} w_n x_1 \\
&  \qquad +
a_{12} (x_1z_n - q_{12}z_nx_1)  + n(x_{1}\sh_{1, n-1} -q_{12}\sh_{1, n-1}x_{1}) \big),
\end{align*}
and \eqref{eq:wn-g1} follows because $[x_1, z_n]_c =0= \sh_{2, n}$.
Similarly,
\begin{multline*}
	g_2 \cdot w_{n+1} = g_2 \cdot \left( x_{\fudos} w_n - q_{12} (w_n + a_{12}z_n +n \sh_{1, n-1}) x_{\fudos}\right) \\
	= q_{21}^{n+1} \left((x_{\fudos} + a_{21} x_1) (\epsilon_2w_n + z_n +n a_{21} \epsilon_2 \sh_{1, n-1}) \right. \\  \left.
	- q_{12}(\epsilon_2w_n + z_n +n a_{21} \epsilon_2 \sh_{1, n-1} + a_{12} \epsilon_2 z_n + n \epsilon_2 \sh_{1, n-1}) (x_{\fudos} + a_{21} x_1) \right) \\
	= q_{21}^{n+1} \left(\epsilon_2 w_{n+1} + z_{n+1} +
	(n+1)a_{21} \epsilon_2 \sh_{1, n}    +
	a_{21}[x_1, z_n]_c  + na_{21}^2 \epsilon_2 \sh_{2, n-1} \right),
\end{multline*}
and \eqref{eq:wn-g2} follows because $[x_1, z_n]_c =0= \sh_{2, n-1}$.
We check \eqref{eq:wn-partial2} for $n=2$:
\begin{align*}
\partial_{2} (w_{2}) & = \partial_{2} (x_{\fudos} w_1 - q_{12} (w_1 + a_{12}z_1 +  \sh_{1, 0}) x_{\fudos})
\\ &= x_{\fudos} \partial_{2} (w_1) - \partial_2(w_1 + a_{12}z_1 +  \sh_{1, 0})(x_{\fudos} + a_{21} x_1)
= -  a_{12} x_{\fudos}(x_{\fudos} + a_{21}x_1) \\ &- \left(-  a_{12} (x_{\fudos} + a_{21}x_1)
-  a_{12} a_{21} x_1 -a_{12} x_1 \right)(x_{\fudos} + a_{21} x_1)
\\ &= a_{12} \left(-x_{\fudos} + x_{\fudos} + (2a_{21} + 1 )x_1 \right)(x_{\fudos} + a_{21} x_1)
\end{align*}
as claimed. Set $P_n = \frac{n \kappa_{n}}{a_{21}}$. If \eqref{eq:wn-partial2} holds for $n$, then
\begin{align*}
\partial_{2} (w_{n+1}) & =    a_{12}  P_n x_{\fudos}  x_1^{n-1}(x_{\fudos} + a_{21}x_1) \\& - \left(a_{12} P_n   x_1^{n-1}(x_{\fudos} + a_{21}x_1) + a_{12}\kappa_n x_1^n + n \frac{a_{12} \kappa_{n}}{a_{21}} x_1^n \right)(x_{\fudos} + a_{21} x_1)
\\&= -a_{12}  \left(P_n(\tfrac{n - 1}{2} + a_{21}) +   \kappa_n + \frac{n\kappa_{n}}{a_{21}} \right) x_1^n (x_{\fudos} + a_{21} x_1)
\\&= -\frac{a_{12} \kappa_{n}}{a_{21}}  \left(n(\tfrac{n - 1}{2} + a_{21}) + a_{21} + n \right) x_1^n (x_{\fudos} + a_{21} x_1)
\\&= a_{12}P_{n+1}x_1^n (x_{\fudos} + a_{21} x_1).
\end{align*}

Next we deal with \eqref{eq:wn-partial52}; the case $ n=1$ was already settled. If \eqref{eq:wn-partial52} holds for $n$, then
\begin{align*}
\partial_{\futres} (w_{n+1}) & \overset{\heartsuit}{=} \partial_{\futres} ( x_{\fudos}  w_n - q_{12} w_n x_{\fudos})
	\\ &=\kappa_{n}
	\left(x_{\fudos} x^{n}  -  x^{n} (x_{\fudos} + a_{21} x_1)\right)
	\\ &\overset{\eqref{eq:relations B(W) - case 1}}{=} \kappa_{n}
	\left(-\frac{n}{2}x_1^{n+1} - \as_{21} x_1^{n+1}\right) = \kappa_{n+1}x_1^{n+1};
\end{align*}
here $\heartsuit$ holds because $\partial_{\futres} (z_n) = \partial_{\futres} (\sh_{1,n-1}) =0$. Hence \eqref{item:2blocks-auxiliar-13} is proved.

For \eqref{item:2blocks-auxiliar-14}, notice that $z_{\ghost_1+1}=w_{\ghost_1+1}=\sh_{\ghost_1}=0$ since $\kappa_{\ghost_1+1}=0$. Thus $(z_n)_{0 \le n \le \ghost_1}$, $(w_n)_{0 \le n \le \ghost_1}$ and
$(\sh_{1,n})_{0 \le n \le \ghost_1 - 1}$ span the vector space $K^1$. To prove that these elements are linearly independent, it suffices to consider elements of the same degree, $z_n$, $w_n$ and $\sh_{1,n-1}$. We use now $\partial_2$, $\partial_{\futres}$ and that $\kappa_n\neq 0$ for $n\le\ghost_1$.
\epf

We now compute the coaction \eqref{eq:coaction-K^1} given by $\delta =(\pi _{\NA (V_1)\#  \ku \Gamma}\otimes \id)\Delta _{\NA (V)\#  \ku \Gamma}$. By Lemma \ref{le:zcoact}, this is given by  \eqref{eq:coact-zn-2blocks} on $z_n$, $0 \le n \le \ghost_1$.

\begin{lemma}\label{lemma:coact-shn-wn}
 The coaction \eqref{eq:coaction-K^1} of $K^1 \in {}^{\NA (V_1)\# \ku \Gamma}_{\NA (V_1)\# \ku \Gamma}\mathcal{YD}$ is determined by \eqref{eq:coact-zn-2blocks},
\begin{align} \label{eq:coact-shn}
\delta (\sh_{1,n}) &= \sum _{h=0}^n A_{h,n}\, x_1^{n-h}g_1^{h+1} g_2 \otimes \sh_{1,h} - a_{12} B_{h,n}\, x_1^{n + 1 -h}g_1^{h} g_2 \otimes z_{h},
\\ \label{eq:coact-wn}
\delta (w_{n}) &= g_1^ng_2\otimes w_n + \sum_{j=0}^{n-1} \mathrm{a}_j \otimes w_j
+ \sum_{j=0}^{n-1} \mathrm{b}_j \otimes z_j + \sum_{j=0}^{n-2} \mathrm{c}_j \otimes \sh_{1,j},
\end{align}
where $\mathrm{a}_j,\mathrm{b}_j\in\cB(V_1)\#  \ku \Gamma$ have degree $n-j$, $\mathrm{c}_j\in\cB(V_1)\#  \ku \Gamma$ has degree $n-j-1$, and
\begin{align} \label{eq:Ahn}
A_{h,n} &=  \binom{n}{h} \frac{\kappa_{n+1}}{\kappa_{h+1}}, &
B_{h,n} &=  \binom{n}{h} \frac{\kappa_{n}}{\kappa_{h}}.
\end{align}

\end{lemma}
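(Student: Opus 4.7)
The plan is to prove \eqref{eq:coact-shn} and \eqref{eq:coact-wn} simultaneously by induction on $n$, following the strategy of Lemma~\ref{le:zcoact}. For the base case, $\delta(w_0) = g_2 \otimes x_{\futres}$ follows at once from $\Delta(x_{\futres}) = x_{\futres}\otimes 1 + g_2\otimes x_{\futres}$; for $\sh_{1,0} = x_1 x_{\futres} - q_{12}(x_{\futres} + a_{12} x_2)x_1$, I would expand $\Delta(\sh_{1,0})$ as a product in $\NA(V)\#\ku\Gamma$, using the bosonization commutation $g_1 x_{\futres} = q_{12}(x_{\futres} + a_{12} x_2) g_1$, $g_1 x_2 = q_{12} x_2 g_1$, together with $q_{12}q_{21} = 1$. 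The $\otimes x_{\futres}$ and $\otimes x_1$ contributions cancel, leaving $\Delta(\sh_{1,0}) = \sh_{1,0}\otimes 1 + g_1 g_2\otimes \sh_{1,0} - a_{12}\, x_1 g_2\otimes z_0$; projecting gives \eqref{eq:coact-shn} at $n=0$ with $A_{0,0}=B_{0,0}=1$, as $\kappa_0=\kappa_1=1$.

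For the inductive step, the most efficient route is the Yetter--Drinfeld compatibility. The action of $H:=\NA(V_1)\#\ku\Gamma$ on $K^1$ is the restriction of $\ad_c$ in $\NA(V)$, so $x_{\fudos}\cdot \sh_{1,n}=\sh_{1,n+1}$ and $x_{\fudos}\cdot w_n=w_{n+1}$. Using $\Delta^{(2)}(x_{\fudos}) = x_{\fudos}\otimes 1\otimes 1 + g_1\otimes x_{\fudos}\otimes 1 + g_1\otimes g_1\otimes x_{\fudos}$ and $S(x_{\fudos}) = -g_1^{-1} x_{\fudos}$, together with the fact that $g_1$ commutes with $x_1$ and with $g_1^k g_2$, the YD formula reads
\[
\delta(x_{\fudos}\cdot y) = x_{\fudos}\, y_{(-1)}\otimes y_{(0)} + g_1\, y_{(-1)}\otimes x_{\fudos}\cdot y_{(0)} - y_{(-1)}\, x_{\fudos}\otimes g_1\cdot y_{(0)}.
\]
Feeding the inductive hypothesis into this and invoking $g_1\cdot\sh_{1,h}=q_{12}\sh_{1,h}$, $g_1\cdot z_h = q_{12}z_h$, $g_1\cdot w_h = q_{12}(w_h + a_{12}z_h + h\,\sh_{1,h-1})$ from \eqref{eq:2blocks-auxiliar-11}, \eqref{eq:1block+points-action} and \eqref{eq:wn-g1}, reduces the computation to commuting $x_{\fudos}$ past monomials $x_1^m g_1^k g_2$. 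This is handled by $x_{\fudos} x_1^m = x_1^m x_{\fudos} - \tfrac{m}{2}x_1^{m+1}$ (from \eqref{eq:relations B(W) - case 1}) and $g_1^k g_2\, x_{\fudos} = q_{21}(x_{\fudos} + (k+a_{21})x_1)\, g_1^k g_2$.

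The main obstacle is the coefficient bookkeeping in \eqref{eq:coact-shn}. Collecting the three YD contributions, the coefficient of $x_1^{n+1-h} g_1^{h+1}g_2\otimes \sh_{1,h}$ in $\delta(\sh_{1,n+1})$ receives two pieces: one from the index shift $\sh_{1,h-1}\mapsto\sh_{1,h}$ in the middle term, and one from the commutation corrections $-\tfrac{n-h}{2}x_1^{n-h+1}$ and $-(h+a_{21})x_1^{n-h+1}$ arising in the first and third terms after using $q_{12}q_{21}=1$. Matching the sum against \eqref{eq:Ahn} for $n+1$ reduces, via $\binom{n+1}{h}=\binom{n}{h}+\binom{n}{h-1}$ and the recursion $\kappa_{m+1}=-(\tfrac{m}{2}+a_{21})\kappa_m$, to a purely combinatorial identity; the same mechanism produces $B_{h,n+1}$. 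Formula \eqref{eq:coact-wn} is qualitatively softer, since only the support and the degree of the $\otimes w_j$, $\otimes z_j$, $\otimes\sh_{1,j}$ contributions need to be controlled; the truncation of the $\sh_{1,j}$-column at $j\le n-2$ and the occurrence of $\mathrm{c}_j$ of degree $n-j-1$ are precisely accounted for by the correction $h\,\sh_{1,h-1}$ appearing in $g_1\cdot w_h$, propagated through the recursion.
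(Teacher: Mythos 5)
Your proposal is correct and matches the paper in substance; the difference is organizational. The paper derives \eqref{eq:coact-shn} by applying $(\pi\otimes\id)$ to $\Delta(x_{\fudos})\delta(\sh_{1,n})-q_{12}\delta(\sh_{1,n})\Delta(x_{\fudos})$ and splitting into eight summands $I,\dots,VIII$, then regroups them in pairs; you instead invoke the Yetter--Drinfeld compatibility $\delta(h\cdot y)=h_{(1)}y_{(-1)}S(h_{(3)})\otimes h_{(2)}\cdot y_{(0)}$ with $h=x_{\fudos}$. Your three-term formula is precisely the regrouping $(I+III)$, $(II+IV+VI+VIII)$, $(V+VII)$: the middle YD term already packages $x_{\fudos}\cdot\sh_{1,h}=\sh_{1,h+1}$ (resp.\ $z_{h+1}$), so you never have to combine $x_{\fudos}\sh_{1,h}$ from term $II$ with $-q_{12}\sh_{1,h}x_{\fudos}$ from term $VI$ by hand. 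Both routes lead to the same commutation corrections $x_{\fudos}x_1^m=x_1^m x_{\fudos}-\tfrac m2 x_1^{m+1}$ and $g_1^k g_2\,x_{\fudos}=q_{21}(x_{\fudos}+(k+a_{21})x_1)g_1^k g_2$, and hence to the same recursions for $A_{h,n}$, $B_{h,n}$; your treatment of \eqref{eq:coact-wn} as a softer grading/basis statement also mirrors the paper.

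Two small cautions. First, your three-term YD identity silently replaces $g_1\,y_{(-1)}g_1^{-1}x_{\fudos}$ by $y_{(-1)}x_{\fudos}$; this is licit for $y=\sh_{1,n}$ because there $y_{(-1)}$ is a monomial in $x_1,g_1,g_2$ (all commuting with $g_1$ when $\epsilon_1=1$), but for $y=w_n$ the coefficients $\mathrm a_j,\mathrm b_j,\mathrm c_j$ may involve $x_{\fudos}$ or $x_{\fudos 1}$, which do \emph{not} commute with $g_1$, so the conjugation must be kept if one wants explicit formulas there; since only the leading term of $\delta(w_n)$ is needed, this does not affect your argument, but the shortcut should be flagged. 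Second, both your proof and the paper's defer the verification that the recursion $A_{h,n+1}=A_{h-1,n}-A_{h,n}(a_{21}+\tfrac{n+h+2}{2})$ (and its $B$-analogue) implies the closed form \eqref{eq:Ahn} to a one-line ``by induction''; if you actually carry it out at $h=0$, $n=0\to1$ you will find that the recursion forces $A_{0,1}=-(1+a_{21})$ and $B_{0,1}=-(\tfrac12+a_{21})$, which the stated $\kappa$-quotients do not reproduce with $\kappa_n=(-1)^n\prod_{i=0}^{n-1}(\tfrac i2+a_{21})$; the exponents in the $\kappa$'s should be shifted by one. This discrepancy is inherited from the statement rather than introduced by your argument, and only the leading coefficients $A_{n,n}=B_{n,n}=1$ are used downstream, but it is worth noticing before asserting that the ``purely combinatorial identity'' closes.
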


\pf
Since $\sh_{1,0} = x_1 x_{\futres} - q_{12}(x_{\futres} + a_{12}x_2) x_1$, we have
\begin{multline*}
\delta (\sh_{1,0})  = (x_1\otimes 1+g_1\otimes x_1)(g_2\otimes x_{\futres})
-q_{12}(g_2\otimes x_{\futres}) (x_1\otimes 1+g_1\otimes x_1)
\\-q_{12}a_{12} (g_2\otimes x_{2}) (x_1\otimes 1+g_1\otimes x_1)=
x_1g_2\otimes x_{\futres} + g_1g_2\otimes x_1x_{\futres}
\\
-q_{12} \left(g_2 x_1\otimes x_{\futres} + g_2g_1\otimes x_{\futres}x_1
+a_{12} g_2x_1\otimes x_{2} + a_{12} g_1g_2\otimes x_{2}x_1\right)
\\
= g_1g_2 \ot \sh_{1,0}- a_{12} x_1g_2 \ot z_0.
\end{multline*}
Hence $A_{0,0} = 1 = B_{0,0}$.
Now we assume that \eqref{eq:coact-shn} holds for $n$; hence
\begin{multline*}
\delta (\sh_{1,n+1}) \overset{\eqref{eq:shn-explicit}}{=} (x_{\fudos}\otimes 1+g_1\otimes x_{\fudos}) \\
\times \sum _{h=0}^n
\left(A_{h,n}\, x_1^{n-h}g_1^{h+1} g_2 \otimes \sh_{1,h} - a_{12} B_{h,n}\, x_1^{n + 1 -h}g_1^{h} g_2 \otimes z_{h} \right)
\\- q_{12} \sum _{h=0}^n
\left(A_{h,n}\, x_1^{n-h}g_1^{h+1} g_2 \otimes \sh_{1,h}  - a_{12} B_{h,n}\, x_1^{n + 1 -h}g_1^{h} g_2 \otimes z_{h}\right)
\\ \times (x_{\fudos}\otimes 1+g_1\otimes x_{\fudos}) =
I + II + III + IV + V + VI + VII + VIII,
\end{multline*}
where
\begin{align*}
I &= \sum _{h=0}^n
A_{h,n}\, x_{\fudos}x_1^{n-h}g_1^{h+1} g_2 \otimes \sh_{1,h}, \\
II &= \sum _{h=0}^n
A_{h,n}\, g_1x_1^{n-h}g_1^{h+1} g_2 \otimes x_{\fudos}\sh_{1,h},
\\
III &= - a_{12} \sum _{h=0}^n   B_{h,n}\, x_{\fudos} x_1^{n + 1 -h}g_1^{h} g_2 \otimes z_{h},\\
IV &= - a_{12} \sum _{h=0}^n  B_{h,n}\, g_1 x_1^{n + 1 -h}g_1^{h} g_2 \otimes x_{\fudos}z_{h},
\\
V &= -q_{12} \sum _{h=0}^n
A_{h,n}\, x_1^{n-h}g_1^{h+1} g_2 x_{\fudos} \otimes \sh_{1,h},\\
VI &=  -q_{12} \sum _{h=0}^n
A_{h,n}\, x_1^{n-h}g_1^{h+2} g_2 \otimes \sh_{1,h} x_{\fudos},
\\
VII &=  a_{12}q_{12} \sum _{h=0}^n   B_{h,n}\, x_1^{n + 1 -h}g_1^{h} g_2 x_{\fudos} \otimes z_{h}, \\
VIII &= a_{12}q_{12} \sum _{h=0}^n  B_{h,n}\, x_1^{n + 1 -h} g_1^{h + 1} g_2 \otimes z_{h} x_{\fudos}.
\end{align*}
Then by a direct computation using \eqref{eq:relations B(W) - case 1}, we have
\begin{align*}
I + V &=  -\sum _{h=0}^n  A_{h,n} (a_{21} + \tfrac{n+h+2}{2})\, x_1^{n + 1-h}g_1^{h+1} g_2 \otimes \sh_{1,h};
\\
II + VI &=  \sum _{h=0}^n A_{h,n} \, x_1^{n -h}g_1^{h+2} g_2 \otimes \sh_{1,h + 1};
\end{align*}
\begin{align*}
III + VII &= a_{12} \sum _{h=0}^n   B_{h,n} (a_{21} + \tfrac{n+h+1}{2})
\, x_1^{n + 2 -h}g_1^{h} g_2 \otimes z_{h};
\\
IV + VIII &=  -a_{12} \sum _{h=0}^n  B_{h,n}\, x_1^{n + 1 -h} g_1^{h + 1} g_2 \otimes z_{h +1}.
\end{align*}
Hence
\begin{align*}
I + V + II + VI &=  \sum _{h=0}^{n + 1}  A_{h,n + 1} \, x_1^{n+1 -h}g_1^{h+1} g_2 \otimes \sh_{1,h}, \\
III + VII + IV + VIII &= \sum _{h=0}^{n + 1} B_{h,n + 1} \, x_1^{n+2-h}g_1^{h} g_2 \otimes z_{h},
\end{align*}
where $A_{n+1,n + 1} = A_{n,n}$, $B_{n+1,n + 1} = B_{n,n}$
\begin{align}\label{eq:recursiveAn}
  A_{h,n + 1} &= A_{h-1,n}-A_{h,n} (a_{21} + \tfrac{n+h+2}{2}), \, A_{0, n+1} = -A_{0, n}(a_{21} + \tfrac{n+2}{2});
\\ \label{eq:recursiveBn}
B_{h,n + 1} &= B_{h-1,n}-B_{h,n} (a_{21} + \tfrac{n+h+1}{2}), \, B_{0, n+1} = -B_{0, n}(a_{21} + \tfrac{n+1}{2}).
\end{align}
From $A_{0,0}=1 = B_{0,0}$, \eqref{eq:recursiveAn} and \eqref{eq:recursiveBn}, \eqref{eq:Ahn} follows by induction.

For \eqref{eq:coact-wn}, note that $\delta(w_n)\in \cB(V_1)\#  \ku \Gamma\otimes K^1$. As
$(z_n)_{0 \le n \le \ghost_1}$, $(w_n)_{0 \le n \le \ghost_1}$ and $(\sh_{1,n})_{0 \le n \le \ghost_1 - 1}$ form a basis of $K^1$ by Lemma \ref{lemma:2blocks-auxiliar-caso2} and $\delta$ is a graded map,
\begin{align*}
\delta (w_{n}) &= \sum_{j=0}^{n} \mathrm{a}_j \otimes w_j
+ \sum_{j=0}^{n} \mathrm{b}_j \otimes z_j + \sum_{j=0}^{n-1} \mathrm{c}_j \otimes \sh_{1,j},
\end{align*}
for some $\mathrm{a}_j,\mathrm{b}_j\in\cB(V_1)\#  \ku \Gamma$ of degree $n-j$ and $\mathrm{c}_j\in\cB(V_1)\#  \ku \Gamma$ of degree $n-j-1$.
It remains to prove that $\mathrm{a}_n=g_1^ng_2$, $\mathrm{b}_n=\mathrm{c}_{n-1}=0$, which follows by induction.
\epf

\emph{Proof of Lemma \ref{lemma:2blocks-caso2}.\/}
Set $N=\ghost_1=-2a_{21}$. If $x\in\{w_N,z_N,\sh_{1,N-1}\}$, then
\begin{align*}
c(x\ot & w_{N}) = g_1^Ng_2\cdot w_{N}\ot x = q_{21}^N g_1^N \cdot (\epsilon_2 w_N+ z_N + Na_{21}\epsilon_2\sh_{1,N-1}) \ot x \\
&= q_{12}^Nq_{21}^N(\epsilon_2w_N+ (1+\epsilon_2 Na_{12}) z_N + N(N+a_{21}\epsilon_2)\sh_{1,N-1})  \ot x \\
&= (\epsilon_2w_N+ (1-2 \epsilon_2 a_{21}a_{12}) z_N -2a_{21}^2(\epsilon_2-2)\sh_{1,N-1}) \ot x , \\
c(x\ot & z_{N}) = g_1^Ng_2\cdot z_{N}\ot x = q_{12}^Nq_{21}^N\epsilon_2 z_{N}\ot x = \epsilon_2 z_{N}\ot x, \\
c(x\ot & \sh_{1,N-1}) = g_1^Ng_2\cdot \sh_{1,N-1}\ot x = \epsilon_2 \sh_{1,N-1}\ot x.
\end{align*}
by Lemmas \ref{lemma:2blocks-auxiliar-caso2} and \ref{lemma:coact-shn-wn}.
Set $y_N= (1-2 \epsilon_2 a_{21}a_{12}) z_N -2a_{21}^2(\epsilon_2-2)\sh_{1,N-1}$. Then $W = \langle y_N, w_N, z_N \rangle$ is a 3-dimensional braided vector subspace of $K^1$, with braiding in the ordered basis $y_N, w_N, z_N$ given by
\begin{align}\label{eq:braiding-2blocks-caso2}
\begin{pmatrix}
\epsilon_2 y_N \otimes y_N &  (\epsilon_2 w_N + y_N) \otimes y_N & \epsilon_2 z_N  \otimes y_N
\\
\epsilon_2 y_N \otimes w_N & (\epsilon_2 w_N + y_N) \otimes w_N & \epsilon_2  z_N  \otimes w_N
\\
\epsilon_2 y_N \otimes z_N &  \epsilon_2(w_N + \epsilon_2 y_N) \otimes z_N&  \epsilon_2 z_N  \otimes z_N
\end{pmatrix}.
\end{align}
Hence $W = W_1 \oplus W_2$, where $W_1 = \langle y_N, w_N\rangle$ and $W_2 = \langle z_N\rangle$, $W_1$ is an $\epsilon_2$-block, $W_2$ is a point with label $\epsilon_2$, the interaction is weak and $a = \epsilon_2$, cf. \eqref{eq:braiding-block-point}; thus
$\ghost = \begin{cases} -2, &\epsilon_2 = 1, \\ -1, &\epsilon_2 = -1 \end{cases}$ is negative, and $\GK \toba(W) = \infty$ by Lemma \ref{lemma:weak-not-discrete}.
\qed

\subsection{$\epsilon_1 = \epsilon_2 = -1$}\label{subsec:2blocks-eps-1}

\begin{lemma}\label{lemma:2blocks-caso3}
	If $\epsilon_1 = \epsilon_2 = -1$,   then $\GK \cB(V) = \infty$.
\end{lemma}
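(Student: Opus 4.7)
\emph{Proof plan.} We follow the same strategy as in Lemmas \ref{lemma:2blocks-caso1} and \ref{lemma:2blocks-caso2}: locate inside $K^1$ a small braided subspace whose Nichols algebra has infinite $\GK$ by invoking an earlier result, and conclude via Lemma \ref{lemma:GKdim-smashproduct}.

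By the obvious symmetry $V_1\leftrightarrow V_2$ we may assume $\ghost_1 = a_{21}\ge 1$. Set $K = \NA(V)^{\mathrm{co}\,\NA(V_1)}$, so that, as in \S\ref{subsubsec:algK-block-points}, $\NA(V)\simeq K\#\NA(V_1)$, $K \simeq \NA(K^1)$ with $K^1 = \ad_c\NA(V_1)(V_2)$, and therefore $\GK\NA(V) = \GK K + 2$, since $\NA(V_1)$ is the super Jordan plane and hence possesses a GK-deterministic subspace. Thus it suffices to prove $\GK\NA(K^1) = \infty$.

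The plan is to play off the two block structures against each other. Set $z_n = (\ad_c x_{\fudos})^n x_2 \in K^1$ for $n \ge 0$ and $w_0 = x_{\futres} \in K^1$. The braided subspace $V_1\oplus\ku x_2\subseteq V$ is isomorphic to $\lstr_{-}(-1,\ghost_1)$, so by Proposition \ref{pr:lstr-1-1disc} we already know that $z_n\neq 0$ for $0\le n\le 2\ghost_1$; in particular $z_0, z_1, z_2$ are linearly independent, since $\ghost_1\ge 1$. Parallel to this, the pair $\{z_0,w_0\} = \{x_2, x_{\futres}\}$ spans a braided subspace of $K^1$ isomorphic to $V_2\simeq\cV(-1,2)$, i.e.\ a $-1$-block. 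I would next compute, using the coaction formulas of Lemma \ref{le:zcoact} adapted to $\epsilon_1=-1$ together with $\delta(w_0) = g_2\otimes w_0$ and the $\Gamma$-action inherited from \eqref{eq:braiding-2blocks}, the braiding structure of the 3-dimensional subspace $W = \langle w_0, z_0, z_k\rangle$ for a suitable $k\in\{1,2\}$, possibly after replacing $z_k$ by a linear combination of $z_1$ and $z_2$ to kill off-diagonal $\Gamma$-action terms, as in the definition of $y_N$ in the proof of Lemma \ref{lemma:2blocks-caso2}. The goal is to realize $W$ as a block $\cV(-1,2)$ plus a point of label $-1$ with weak interaction and ghost that is non-discrete (in fact negative, forced by the coupling of the two super-Jordan structures, exactly as the negative ghost appeared in Lemmas \ref{lemma:2blocks-caso1} and \ref{lemma:2blocks-caso2}); Lemma \ref{lemma:weak-not-discrete} will then yield $\GK\NA(W)=\infty$, whence $\GK\NA(K^1)=\infty$, as required.

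The main technical obstacle is the bookkeeping. With both $V_1$ and $V_2$ super-Jordan, every $\Gamma$-action and coaction computation picks up two simultaneous derivation terms, namely $a_{21} x_1$ coming from the block structure of $V_1$ and $a_{12} x_2$ coming from the block structure of $V_2$; isolating the block $\langle w_0, z_0\rangle$ inside $K^1$ and then pairing it with the correct linear combination of $z_1, z_2$ so that the induced braiding on $W$ is in canonical block-plus-point form will occupy the bulk of the argument. Should this direct choice of $W$ within $\mathrm{span}(w_0,z_0,z_1,z_2)$ not immediately land in the hypothesis of Lemma \ref{lemma:weak-not-discrete}, a backup is a filtration argument in the spirit of Step 2 of the proof of Lemma \ref{lemma:block-points-eps-1-aux1}: the complete flag $0\subset\ku x_1\subset V_1\subset V_1\oplus\ku x_2\subset V$ gives a graded diagonal braiding of rank $4$ with all diagonal entries equal to $-1$ and all edge labels equal to $1$; exhibiting an extra non-zero primitive element in $\Bdiag$ coming from the class of $z_2$ enlarges $V^{\mathrm{diag}}$ to a braided vector space of diagonal type with Dynkin diagram of affine or hyperbolic type, whose Nichols algebra has infinite $\GK$ by Theorem \ref{thm:nichols-diagonal-finite-gkd} (or by Hypothesis \ref{hyp:nichols-diagonal-finite-gkd}).
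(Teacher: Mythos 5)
Your overall strategy matches the paper's: decompose $\NA(V) \simeq K\#\NA(V_1)$, locate inside $K^1 = \ad_c\NA(V_1)(V_2)$ a $3$-dimensional braided subspace $W$ that is a $-1$-block plus a point with negative ghost, and then invoke Lemma~\ref{lemma:weak-not-discrete}. The paper does precisely this. However, your specific choice of $W$ does not work, and this is not just a matter of bookkeeping.

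You propose $W = \langle w_0, z_0, z_k\rangle$ for small $k$, with the block being $W_1 = \langle z_0, w_0\rangle = V_2$ and the point being $z_k$. The problem is that $W$ is \emph{not} a braided subspace of $K^1$. The coaction $\delta(z_k) = g_1^kg_2\otimes z_k + (\text{lower terms})$ has lower-degree contributions involving positive-degree elements of $\NA(V_1)$, so computing $c(z_k\otimes z_0)$ produces pieces such as $\ad_c x_{\fudos}(z_0)\otimes z_j = z_1\otimes z_j$, which leave the span of $\{w_0, z_0, z_k\}$ (and $z_1\neq 0$ since $\ghost_1\ge 1$); similarly $c(z_k\otimes w_0)$ spills into $w_1, \zh_1$. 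The paper sidesteps this precisely by working at the \emph{maximal} degree $N = 2\ghost_1$: there one has $z_{N+1} = w_{N+1} = \zh_{N+1} = 0$ (because $\mu_{N+1} = 0$), so the positive-degree part of $\NA(V_1)$ annihilates $z_N$, $w_N$, $\zh_N$, and only the grouplike leading terms of the coaction survive. The block inside $K^1$ is then $\langle y_N, w_N\rangle$ with $y_N = (1-2a_{21}a_{12})z_N + 2a_{21}^2\zh_N$, and the point is $z_N$ --- not $V_2 = \langle z_0, w_0\rangle$ at all. Your parenthetical "as in the definition of $y_N$" gestures at the right phenomenon but does not fix the underlying issue that all three spanning elements need to sit in the top degree $N$, which can be large, not in degree $0$ or in $\{1,2\}$.

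Your filtration backup is not developed enough to assess. Note that the associated diagonal braiding coming from the absolutely complete flag is totally disconnected with all vertex labels $-1$ (since the blocks braided-commute and all interactions with $V_2$ are weak), so $\NA(V^{\diag}) = \Lambda V^{\diag}$ has $\GK = 0$; you would need to exhibit a specific non-zero primitive in $\Bdiag$ creating an affine or indefinite subdiagram, and you give no candidate. In short, the idea of the proof is right, but the stated $W$ fails to be a braided subspace, and the backup is a sketch rather than an argument.
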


We consider again the scalars $(\mu_n)$ defined in \eqref{eq:def-mu-n}, with $a_{21}$ in the place of $a$.
As in Remark \ref{rem:xk qcommutes with z_k}, set $y_{2k} =x_{\fudos 1}^k$, $y_{2k+1} = x_1x_{\fudos 1}^k$, $k \in \N_0$. Then
\begin{align}\label{eq:yn-formula}
\mu_{n+1}y_{n+1} &=\mu_n \left( x_{\fudos}y_n-(-1)^n y_n(x_{\fudos}+a_{21}x_1) \right) & \mbox{for all }&n\in\N_0.
\end{align}
It follows by the proof of Lemma \ref{lemma:derivations-zn}.

\medbreak

We introduce for all $n, m \in \N_0$ and $a\in \Z/2$, the following elements of $K^{1}$:
\begin{align}
	z_n &= (\ad_{c} x_{\fudos})^n x_2,&  \sch_{a, m, n} &= (\ad_{c} x_{1})^a (\ad_{c} x_{\fudos 1})^m (\ad_{c} x_{\fudos})^n x_{\futres}.
\end{align}
The definition of $z_n$ is consistent with \eqref{eq:zn}, hence \eqref{eq:2blocks-property-minus1} holds
and the family $(z_n)_{0\le n\le \ghost_1}$ is linearly independent.
Moreover \eqref{eq:derivations-zn-eps1} says in the present context that $\partial_2(z_{n}) = \mu_n y_{n}$. We pick some of the $\sch_{a, m, n}$'s:
\begin{align}\label{eq:two-blocks-w-zh-ur}
w_n&:=\sch_{0,0,n}, & \zh_n&:=\sch_{1,0,n-1}, & \ur_n&:=\sch_{0,1,n-1}.
\end{align}

\medbreak

By \eqref{eq:rels-B(V(-1,2))-2} and \eqref{eq:rels-B(V(-1,2))-dos},
$x_{\fudos}x_{\fudos 1} = x_{\fudos 1}x_{\fudos} + x_1x_{\fudos 1}$ and $x_{\fudos 1}x_1 = x_1x_{\fudos}x_1 = x_1x_{\fudos 1}$. By \eqref{eq:-1block+point-bis}, $x_{\fudos} x^n_{\fudos 1} = x_{\fudos 1}^n x_{\fudos} + n  x_1 x^n_{\fudos 1}$. Thus
\begin{align} \label{eq:2blocks-property0-caso3}
	\ad_c x_1 \sch_{a, m, n} &= \sch_{a + 1, m, n}, & \ad_c x_{\fudos} \sch_{0, m, n} &= \sch_{0, m, n+1} +n \sch_{1, m, n},
\\ \label{eq:2blocks-property1-caso3}
	\ad_c x_{\fudos 1} \sch_{a, m, n} &= \sch_{a, m+1, n}, & \ad_c x_{\fudos} \sch_{1, m, n} &= \sch_{0,m+ 1, n} - \sch_{1, m, n+1}.
\end{align}
Thus the $z_n$'s and the $\sch_{a,m, n}$'s generate $K^1$.

\begin{lemma}\label{lemma:2blocks-auxiliar-caso3}
\begin{enumerate}\renewcommand{\theenumi}{\alph{enumi}}\renewcommand{\labelenumi}{(\theenumi)}
\item\label{item:2blocks-auxiliar-21} If $a+m>1$, then $\sch_{a,m,n}=0$. Hence for all $n \in \N$,
\begin{align}\label{eq:2blocks-auxiliar-action}
\ad_cx_{\fudos}(\zh_n) &=\ur_n-\zh_{n+1}, & \ad_cx_{\fudos}(\ur_n) &=\ur_{n+1}.
\end{align}
		
\medbreak

\item\label{item:2blocks-auxiliar-22}   We have for all $n\in \N$, $\ur_n=2\zh_{n+1}$. Hence,
\begin{align}\label{eq:2blocks-auxiliar-action-x1}
\ad_cx_1(w_n) &=\zh_{n+1}, & \ad_cx_1(\zh_n) &=0; \\
\label{eq:2blocks-auxiliar-action-xfudos}
\ad_cx_{\fudos}(w_n) &=w_{n+1}, & \ad_cx_{\fudos}(\zh_n) &=\zh_{n+1}.
\end{align}

\medbreak
\item\label{item:2blocks-auxiliar-23}   We have for all $n\in \N_0$
\begin{align}\label{eq:2blocks-auxiliar-zh0}
g_1 \cdot \zh_n &= (-1)^{n}q_{12} \zh_n,& g_2 \cdot \zh_n &= -q_{21}^{n} \zh_n;
\\ \label{eq:zh1}		\partial_1(\zh_n) &= \partial_{\fudos} (\zh_n) = \partial_{\futres} (\zh_n) = 0; &&\\
\label{eq:zh2}		\partial_{2} (\zh_n) &= \frac{a_{12} \mu_{n}}{a_{21}} y_{n}. &&
\end{align}
In particular, if $\ghost_1 = 0$, then $\zh_{n} \neq 0$ for every $n \in \N_0$.

\medbreak
\item\label{item:2blocks-auxiliar-24} For every $n \in \N_0$,
$	\partial_1(w_{n}) = \partial_{\fudos} (w_{n}) = 0$,
\begin{align}\label{eq:w-g1-case3}
g_1 \cdot w_n &= (-1)^n q_{12} (w_n + a_{12}z_n - n\zh_{n}),\\
\label{eq:w-g2-case3}
g_2 \cdot w_n &= -q_{21}^n (w_n - z_n + n a_{21} \zh_{n}),
\\ \label{eq:w-partial2-case3}
\partial_{2} (w_n) &=\frac{ a_{12} n \mu_{n}}{a_{21}}  y_{n-1}(x_{\fudos} + a_{21}x_1),
\\ \label{eq:w-partial52-case3}
\partial_{\futres} (w_n) &= \mu_n \, y_n.
\end{align}		

\medbreak
\item\label{item:2blocks-auxiliar-25}
Assume that $\ghost_1 > 0$.	
The elements $(z_n)_{0 \le n \le 2\ghost_1}$, $(w_n)_{0 \le n \le 2\ghost_1}$ and $(\zh_{n})_{1 \le n \le 2\ghost_1}$
form a basis of $K^1$.
\end{enumerate}
\end{lemma}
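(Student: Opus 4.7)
The plan is to mirror the strategy used in Lemma \ref{lemma:2blocks-auxiliar-caso2}, adapting the arguments to the super Jordan setting where $\epsilon_1 = \epsilon_2 = -1$ forces $x_1^2 = 0$ and \eqref{eq:rels-B(V(-1,2))-2}, \eqref{eq:rels-B(V(-1,2))-dos} to hold in $\cB(V_1)$. All computations will be carried out using the skew-derivations $\partial_i$, $i \in \Idd$, together with Remark \ref{rem:der-often} and the recursions \eqref{eq:2blocks-property0-caso3}, \eqref{eq:2blocks-property1-caso3}, which reduce verification of each claim to a base case at $n=0$ followed by induction on $n$.

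For part \ref{item:2blocks-auxiliar-21}, the key base case is $\sch_{2,0,0} = (\ad_c x_1)^2 x_{\futres} = \ad_c(x_1^2)\,x_{\futres} = 0$, which uses that $x_1$ is primitive in $\cB(V_1)$ and $x_1^2 = 0$. To obtain $\sch_{0,2,0} = 0$ and $\sch_{1,1,0} = 0$, I would compute $g_1 \cdot x_{\fudos 1} = x_{\fudos 1}$ (using $x_1^2=0$), expand $\sch_{0,1,0} = x_{\fudos 1} x_{\futres} - q_{12}^2(x_{\futres} + 2a_{12}x_2)\,x_{\fudos 1}$, and then verify that all four derivations $\partial_1,\partial_{\fudos},\partial_2,\partial_{\futres}$ annihilate $\ad_c x_{\fudos 1}$ and $\ad_c x_1$ applied to this element; since $K^1$ sits inside $\cB(V)$, vanishing of all derivations forces the element to be zero. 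Once all base cases $\sch_{a,m,0}=0$ with $a+m\ge 2$ are known, induction on $n$ using \eqref{eq:2blocks-property0-caso3}, \eqref{eq:2blocks-property1-caso3} propagates vanishing to all $n$, and \eqref{eq:2blocks-auxiliar-action} follows directly from those recursions specialized to $m=0$.

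For part \ref{item:2blocks-auxiliar-22}, the identity $\ur_n = 2\zh_{n+1}$ amounts to $\sch_{0,1,n-1} = 2\sch_{1,0,n}$, which I would verify by induction on $n$: the base case $\sch_{0,1,0} = 2\sch_{1,0,1}$ is a direct expansion using the definitions of $x_{\fudos 1}$, $\sch_{1,0,0}$ and $x_1^2=0$, while the inductive step applies $\ad_c x_{\fudos}$ to both sides, using \eqref{eq:2blocks-property0-caso3} and the relations proved in \ref{item:2blocks-auxiliar-21} to cancel the spurious terms $\sch_{1,1,n}$ that appear. Granted this identity, \eqref{eq:2blocks-auxiliar-action-x1} and \eqref{eq:2blocks-auxiliar-action-xfudos} fall out as specializations of the general recursions. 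The action and derivation formulas in \ref{item:2blocks-auxiliar-23} and \ref{item:2blocks-auxiliar-24} are then proved by induction on $n$: the base case $n=0$ is a direct computation using \eqref{eq:braiding-2blocks} and the definitions of $\zh_0$, $w_0$, and the inductive step uses $\sch_{2,0,n} = \sch_{1,1,n} = \sch_{0,2,n} = 0$ from \ref{item:2blocks-auxiliar-21} to show that the unwanted cross terms arising when commuting $x_{\fudos}$ past $\zh_n$ or $w_n$ vanish, exactly as in the proof of Lemma \ref{lemma:2blocks-auxiliar-caso2}\ref{item:2blocks-auxiliar-12}--\ref{item:2blocks-auxiliar-13}.

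For part \ref{item:2blocks-auxiliar-25}, the generation claim follows from \ref{item:2blocks-auxiliar-21} together with \ref{item:2blocks-auxiliar-22}, since these reduce every $\sch_{a,m,n}$ to a $z_n$, $w_n$, or $\zh_n$. The vanishing ranges $z_n = w_n = 0$ for $n > 2\ghost_1$ and $\zh_n=0$ for $n > 2\ghost_1$ come from the factor $\mu_n = 0$ for $n > 2\ghost_1 = |2a_{21}|$ appearing in \eqref{eq:w-partial52-case3} and \eqref{eq:zh2}, together with the observation that a homogeneous element of $K^1$ all of whose derivations vanish must be zero. Linear independence is then proved exactly as in Lemma \ref{lemma:2blocks-auxiliar-caso2}\ref{item:2blocks-auxiliar-14}: since $\gr z_n = \gr w_n = n+1$ while $\gr \zh_n = n$, it suffices to separate within each degree, which is achieved by applying $\partial_2$, $\partial_{\futres}$ in combination with $\partial_1$ and using that $\mu_n\neq 0$ in the allowed range. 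The main obstacle I expect is the careful bookkeeping in part \ref{item:2blocks-auxiliar-22}: the proof of $\ur_n = 2\zh_{n+1}$ and the derivation formulas depend on a delicate interplay between the two recursions in \eqref{eq:2blocks-property1-caso3}, and getting the coefficients right (especially the factor $2$ and the $\mu_n/a_{21}$ normalizations) requires patience.
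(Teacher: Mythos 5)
Your overall plan matches the paper's: base cases via derivation checks, inductions on $n$ using the formal recursions for $\ad_c x_1$, $\ad_c x_{\fudos}$, $\ad_c x_{\fudos 1}$, and linear independence in part (e) by combining the derivation formulas with the vanishing of $\mu_n$ for $n>2\ghost_1$. There is, however, one step that does not close as you have stated it.

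The proposed ``direct expansion'' of the base case $\ur_1 = 2\zh_2$ returns only a tautology. Since $\ad_c$ is an algebra homomorphism and $x_{\fudos 1} = x_{\fudos}x_1+x_1x_{\fudos}$ for $\epsilon_1=-1$, one finds
\begin{align*}
\ur_1 = \ad_c x_{\fudos 1}(x_{\futres}) = \ad_c x_{\fudos}\bigl(\zh_1\bigr) + \ad_c x_1\bigl(w_1\bigr) = (\ur_1-\zh_2)+\zh_2 = \ur_1,
\end{align*}
using \eqref{eq:2blocks-auxiliar-action} for the first summand and the recursion $\ad_c x_1\,\sch_{0,m,n}=\sch_{1,m,n}$ for the second. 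These recursions are consequences of the super Jordan relations of $\cB(V_1)$ alone; they carry no information about how $x_{\futres}$ sits in $\cB(V)$ and hence cannot produce the factor $2$. The paper instead establishes the base case by the same derivation-matching device you use everywhere else: $\partial_1,\partial_{\fudos},\partial_{\futres}$ annihilate both $\ur_1$ and $\zh_2$, while a direct computation gives $\partial_2(\ur_1)=-2a_{12}x_{\fudos 1}=2\partial_2(\zh_2)$, whence $\ur_1=2\zh_2$ because a homogeneous element of a Nichols algebra with all skew-derivations zero vanishes. Once you substitute that argument, the rest of your plan goes through. A smaller point: the ``key base case'' $\sch_{2,0,0}=(\ad_c x_1)^2 x_{\futres}=0$ you lead with in part (a) is not one of the cases the lemma asserts, since the index $a$ runs in $\Z/2$; the cases that actually need to be proven from scratch are $\sch_{1,1,0}$ and $\sch_{0,2,0}$, which you do go on to treat.
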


\pf
To start with \eqref{item:2blocks-auxiliar-21}, we notice that
\begin{align}\label{eq:ad-xfudos1}
(\ad_{c} x_{\fudos 1}) x &= x_{\fudos 1} x-x_1(g_1\cdot x)x_1-(g_1^2\cdot x)x_{\fudos 1}, & &x\in\cB(V).
\end{align}
Hence,
\begin{align*}
\sch_{0,1,0} &=x_{\fudos 1}x_{\futres}-q_{12} x_1x_{\futres}x_1-q_{12}^2(x_{\futres}+2a_{12}x_2)x_{\fudos 1}.
\end{align*}
As $\partial_1(x_{\fudos 1})=x_1$, $\partial_{\fudos}(x_{\fudos 1})=\partial_2(x_{\fudos 1})=\partial_{\futres}(x_{\fudos 1})=0$,
we have $\partial_{\fudos}(\sch_{0,1,0})=0$,
\begin{align*}
\partial_1(\sch_{0,1,0})&= x_1(g_1\cdot x_{\futres})-q_{12} g_1\cdot(x_{\futres}x_1) -q_{12} x_1x_{\futres}-q_{12}^2(x_{\futres}+2a_{12}x_2)x_1=0; \\
\partial_2(\sch_{0,1,0})&= -2a_{12}q_{12}^2q_{21}^2 x_{\fudos 1} = -2a_{12}x_{\fudos 1};\\
\partial_{\fudos}(\sch_{0,1,0})&= x_{\fudos 1}-q_{12}q_{21}x_1^2 -q_{12}^2q_{21}^2 x_{\fudos 1}=0.
\end{align*}
By direct computation, $g_1\cdot \sch_{0,1,0}=q_{12}\sch_{0,1,0}$. Hence,
$$ \sch_{1,1,0}=x_1\sch_{0,1,0}-q_{12}\sch_{0,1,0}x_1. $$
Thus $\partial_i(\sch_{1,1,0})=0$ for all $i\in\Idd$, so $\sch_{1,1,0}=0$. Now,
\begin{align*}
\sch_{0,2,0} &=x_{\fudos 1}\sch_{0,1,0}-q_{12} x_1\sch_{0,1,0}x_1-q_{12}^2\sch_{0,1,0} x_{\fudos 1}.
\end{align*}
Hence $\partial_i(\sch_{0,2,0})=0$ for $i=\fudos,\futres$. Also,
\begin{align*}
\partial_1(\sch_{0,1,0})&= q_{12} x_1 \sch_{0,1,0}+q_{12}^2 \sch_{0,1,0}x_1-q_{12} x_1 \sch_{0,1,0}-q_{12}^2\sch_{0,1,0} x_1=0; \\
\partial_2(\sch_{0,1,0})&= -2a_{12}x_{\fudos 1}^2 -x_1x_{\fudos 1}x_1+2a_{12}x_{\fudos 1}^2 =0.
\end{align*}
Thus $\sch_{0,2,0}=0$. We claim that $\sch_{1,1,n}=\sch_{0,2,n}=0$ for all $n\in\N_0$;
the case $n=0$ was already settled. If this holds for $n$, then
\begin{align*}
0 &= \ad_c x_{\fudos} \sch_{1,1,n}= \sch_{0,2,n}- \sch_{1,1,n+1}, \\
0 &= \ad_c x_{\fudos} \sch_{0,2,n}= \sch_{0,2,n+1}+n \, \sch_{1,2,n}= \sch_{0,2,n+1}+n \, \ad_c x_{\fudos 1}\sch_{1,1,n},
\end{align*}
by \eqref{eq:2blocks-property0-caso3} and \eqref{eq:2blocks-property1-caso3}, so $\sch_{1,1,n+1}=\sch_{0,2,n+1}=0$.

Finally we prove that $\sch_{a,m,n}=0$ for all $m,n\in\N_0$, $a=0,1$ such that $a+m>1$ by induction on $a+m$. The case $a+m=2$
was already settled, and the inductive step follows since by \eqref{eq:2blocks-property0-caso3} and
\eqref{eq:2blocks-property1-caso3},
\begin{align*}
\sch_{a+1,m,n}&=\ad_c x_1 \sch_{a,m,n}, & \sch_{a,m+1,n}&=\ad_c x_{\fudos 1} \sch_{a,m,n}.
\end{align*}
Finally \eqref{eq:2blocks-auxiliar-action} is a direct consequence of \eqref{eq:2blocks-property0-caso3} and \eqref{eq:2blocks-property1-caso3}.

As $\ur_1=\sch_{0,1,0}$, we have already seen that
\begin{align*}
\partial_1(\ur_{1}) &= \partial_{\fudos} (\ur_{1}) = \partial_{\futres} (\ur_{1}) = 0, \partial_2(\ur_{1})&=-2a_{12}x_{\fudos 1}.
\end{align*}
As $\partial_1(\zh_1) = \partial_{\fudos} (\zh_{1}) = \partial_{\futres} (\zh_{1}) = 0$, $\partial_2(\zh_{1})=-a_{12}x_{1}$, we have
\begin{align*}
\partial_1(\zh_{2}) &=\partial_{\fudos} (\zh_{2}) = \partial_{\futres} (\zh_{2}) = 0, & \partial_2(\zh_{2}) &=-a_{12}x_{\fudos 1}.
\end{align*}
Hence $\ur_1=2\zh_2$. If $\ur_n=2\zh_{n+1}$, then
\begin{align*}
\ur_{n+1} &= \ad_c x_{\fudos} \ur_n = 2 \ad_c x_{\fudos} \zh_{n+1} = 2 \left(\ur_{n+1}-\zh_{n+2} \right)
\end{align*}
by \eqref{eq:2blocks-auxiliar-action}, and we prove the inductive step. Now \eqref{eq:2blocks-auxiliar-action-x1} and \eqref{eq:2blocks-auxiliar-action-xfudos} follow by \eqref{eq:2blocks-property0-caso3} and \eqref{eq:2blocks-property1-caso3} and the previous equality.
This ends the proof of \eqref{item:2blocks-auxiliar-22}.

To start with \eqref{item:2blocks-auxiliar-23},
\begin{align*}
g_1\cdot \zh_1 &= g_1\cdot \ad_c x_{1}x_{\futres} = -\ad_c x_{1} \big( q_{12}(x_{\futres}+a_{12} x_2)\big) =-q_{12}\zh_1, \\
g_2\cdot \zh_1 &= g_2\cdot \ad_c x_{1}x_{\futres} = q_{21}\ad_c x_{\fudos 1} \big( -x_{\futres}+x_2)\big) =-q_{21}\zh_1.
\end{align*}
so \eqref{eq:2blocks-auxiliar-zh0} holds for $n=1$. If \eqref{eq:2blocks-auxiliar-zh0} holds for $n$, then
\begin{align*}
g_1\cdot \zh_{n+1} &= g_1\cdot \ad_c x_{\fudos}\zh_n =(-1)^{n}q_{12}\ad_c (-x_{\fudos}+x_1)\zh_n =(-1)^{n+1}q_{12}\zh_{n+1}, \\
g_2\cdot \zh_{n+1} &= g_2\cdot \ad_c x_{\fudos}\zh_n =-q_{21}^{n+1}\ad_c (x_{\fudos}+a_{21}x_1) \zh_n=-q_{21}^{n+1}\zh_{n+1}.
\end{align*}
The cases $ n=1,2$ of \eqref{eq:zh1} were already settled, while the recursive step follows from \eqref{eq:2blocks-auxiliar-action-xfudos}.
The cases $ n=1,2$ of \eqref{eq:zh2} were also settled. If \eqref{eq:zh2} holds for $n$, then
\begin{align*}
\partial_{2} (\zh_{n+1}) &= \partial_2 \left( x_{\fudos} \zh_n - (-1)^n q_{12}\zh_n x_{\fudos} \right) \\
&=\frac{a_{12} \mu_{n}}{a_{21}} \left( x_{\fudos} y_{n} - (-1)^n y_{n}(x_{\fudos}+a_{21}x_1) \right) \overset{\eqref{eq:yn-formula}}{=} \frac{a_{12} \mu_{n+1}}{a_{21}} y_{n+1}.
\end{align*}

Next we prove \eqref{item:2blocks-auxiliar-24}. Note that
$\partial_1(w_{1}) = \partial_{\fudos} (w_{1}) =  0$, and \emph{a fortiori}
$\partial_1(w_{n}) = \partial_{\fudos} (w_{n}) =  0$ recursively.
Now we compute
\begin{align*}
g_1 \cdot w_1 &= -q_{12} (w_1 + a_{12}z_1 -\zh_{1}),&
\partial_{2} (w_1) &= -a_{12}(x_{\fudos} + a_{21}x_1),\\
g_2 \cdot w_1 &= -q_{21} (w_1 - z_1 + a_{21} \zh_{1}),&
\partial_{\futres} (w_1) &= -a_{21}x_1.
\end{align*}
Assume that \eqref{eq:w-g1-case3} holds for $n$. Then
\begin{align*}
g_1 \cdot w_{n+1} &= g_1 \cdot \left(\ad_c x_{\fudos}\right) w_n \\
&= (-1)^n q_{12}\left(\ad_c (-x_{\fudos}+x_1)\right)
 (w_n + a_{12}z_n - n\zh_{n}) \\
&= (-1)^n q_{12}\left( -w_{n+1}- a_{12}z_{n+1} + n\zh_{n+1} \right. \\
& \qquad \left.+ \zh_{n+1} + a_{12}(\ad_c x_1)z_n - n (\ad_c x_1)\zh_{n} \right) \\
&= (-1)^{n+1} q_{12}(w_{n+1}+a_{12}z_{n+1}- (n+1)\zh_{n+1} \\
&\qquad -a_{12}(\ad_c x_1)z_n + n (\ad_c x_1)\zh_{n}),
\end{align*}
and \eqref{eq:w-g1-case3} follows because $(\ad_c x_1)z_n=(\ad_c x_1)\zh_{n} =0$.
Similarly, for \eqref{eq:w-g2-case3},
\begin{align*}
g_2 \cdot w_{n+1} &= -q_{21}^{n+1}\left(\ad_c (x_{\fudos}+a_{21}x_1)\right)
(w_n - z_n + n a_{21} \zh_{n})
\\
&= -q_{21}^{n+1} \left( w_{n+1} - z_{n+1} + n a_{21} \zh_{n+1} \right. \\
&\left. \qquad + a_{21}\zh_{n+1}-a_{21}(\ad_c x_1)z_n+n a_{21}^2 (\ad_c x_1)\zh_{n} \right) \\
&= -q_{21}^{n+1} \left( w_{n+1} - z_{n+1} + (n+1) a_{21} \zh_{n+1} \right).
\end{align*}
Notice that $\partial_2(w_1)=-a_{12}(x_{\fudos}+a_{21}x_1)$, so \eqref{eq:w-partial2-case3} holds for $n=1$ since $y_0=1$. If \eqref{eq:w-partial2-case3} holds for $n$, then
\begin{align*}
\partial_{2} (w_{n+1}) &=\partial_{2} \left( x_{\fudos} w_n-(-1)^n q_{12}(w_n + a_{12}z_n - n\zh_{n}) x_{\fudos} \right) \\
& = \frac{a_{12} \mu_{n}}{a_{21}} \left( n\, x_{\fudos} y_{n-1} -(-1)^n \left( n\,  y_{n-1}(x_{\fudos}+a_{21}x_1) +a_{21} y_n -n \, y_{n} \right) \right) \\
& \qquad \times (x_{\fudos}+a_{21}x_1) \overset{\eqref{eq:yn-formula}}{=} \frac{(n+1)a_{12} \mu_{n+1}}{a_{21}} y_{n}(x_{\fudos}+a_{21}x_1).
\end{align*}
Now $\partial_{\futres}(w_1)=-a_{21}x_1$, so \eqref{eq:w-partial52-case3} holds for $n=1$. If \eqref{eq:w-partial52-case3} holds for $n$, then
\begin{align*}
\partial_{\futres} (w_{n+1}) &= \partial_{\futres} \left( x_{\fudos} w_n-(-1)^n q_{12}(w_n + a_{12}z_n - n\zh_{n}) x_{\fudos} \right) \\
&= \mu_n \left( x_{\fudos}y_n-(-1)^n y_n(x_{\fudos}+a_{21}x_1) \right) \overset{\eqref{eq:yn-formula}}{=} \mu_{n+1}y_{n+1}.
\end{align*}

For \eqref{item:2blocks-auxiliar-25}, notice that $z_{2\ghost_1+1}=w_{2\ghost_1+1}=\zh_{2\ghost_1+1}=0$ since $\mu_{2\ghost_1+1}=0$. Thus $(z_n)_{0 \le n \le 2\ghost_1}$, $(w_n)_{0 \le n \le 2\ghost_1}$ and $(\zh_{n})_{1 \le n \le 2\ghost_1}$ span the vector space $K^1$. To prove that these elements are linearly independent, it suffices to consider elements of the same degree, $z_n$, $w_n$ and $\zh_{n}$. We use now $\partial_2$, $\partial_{\futres}$ and that $\kappa_n\neq 0$ for $n\le2\ghost_1$.
\epf

We now compute the coaction \eqref{eq:coaction-K^1} given by $\delta =(\pi _{\NA (V_1)\#  \ku \Gamma}\otimes \id)\Delta _{\NA (V)\#  \ku \Gamma}$. By Lemma \ref{le:zcoact}, this is given by  \eqref{eq:coact-zn-2blocks} on $z_n$, $0 \le n \le \ghost_1$.

\begin{lemma}\label{lemma:coact-zhn-wn-case3}
 The coaction \eqref{eq:coaction-K^1} of $K^1 \in {}^{\NA (V_1)\# \ku \Gamma}_{\NA (V_1)\# \ku \Gamma}\mathcal{YD}$ is given by \eqref{eq:coact-zn-2blocks},
\begin{align} \label{eq:coact-wn-case3}
\delta (w_{n}) &= g_1^ng_2\otimes w_n + \sum_{j=0}^{n-1} \mathrm{a}_j \otimes w_j
+ \sum_{j=0}^{n-1} \mathrm{b}_j \otimes z_j + \sum_{j=1}^{n-1} \mathrm{c}_j \otimes \zh_{j},
\\ \label{eq:coact-zhn-case3}
\delta (\zh_{n}) &= g_1^ng_2\otimes \zh_n + \sum_{j=0}^{n-1} \widetilde{\mathrm{a}}_j \otimes w_j
+ \sum_{j=0}^{n-1} \widetilde{\mathrm{b}}_j \otimes z_j + \sum_{j=1}^{n-1} \widetilde{\mathrm{c}}_j \otimes \zh_{j},
\end{align}
for some $\mathrm{a}_j,\mathrm{b}_j, \mathrm{c}_j,\widetilde{\mathrm{a}}_j, \widetilde{\mathrm{b}}_j, \widetilde{\mathrm{c}}_j \in\cB(V_1)\#  \ku \Gamma$ of degree $n-j$.
\end{lemma}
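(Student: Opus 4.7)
The plan is to establish Lemma \ref{lemma:coact-zhn-wn-case3} by the same grading-plus-basis argument sketched at the end of the proof of Lemma \ref{lemma:coact-shn-wn}, but significantly streamlined: only the leading summands $g_1^n g_2 \otimes w_n$ and $g_1^n g_2 \otimes \zh_n$ need to be pinned down explicitly, since the remaining coefficients are left unspecified in the statement. The formula for $\delta(z_n)$ is already covered by Lemma \ref{le:zcoact}, so the task reduces to proving \eqref{eq:coact-wn-case3} and \eqref{eq:coact-zhn-case3}.

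\textbf{Grading expansion.} First I would observe that $\delta = (\pi \otimes \id)\Delta$ preserves both the $\N_0$-grading inherited from $\cB(V)$ and the $\Gamma$-grading of $\cB(V)\#\ku\Gamma$. Because $w_n$ and $\zh_n$ are homogeneous of $\N_0$-degree $n+1$ and $\Gamma$-degree $g_1^n g_2$, Lemma \ref{lemma:2blocks-auxiliar-caso3} \ref{item:2blocks-auxiliar-25} implies that the component of $\N_0$-degree $j+1$ of $K^1$ is spanned by $\{w_j, z_j\}$ for $j = 0$ and by $\{w_j, z_j, \zh_j\}$ for $1 \le j \le 2\ghost_1$ (and vanishes beyond). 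Expanding $\delta(w_n)$ and $\delta(\zh_n)$ in this basis then automatically produces the exact shape displayed in \eqref{eq:coact-wn-case3}--\eqref{eq:coact-zhn-case3}, with each coefficient in $\cB(V_1)\#\ku\Gamma$ of $\N_0$-degree $n-j$.

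\textbf{Leading coefficients.} It remains to determine the $j=n$ coefficients, which have $\N_0$-degree $0$ and hence lie in $\ku\Gamma$. I would post-compose $\delta$ with the algebra projection $\cB(V_1)\#\ku\Gamma \twoheadrightarrow \ku\Gamma$ killing the augmentation ideal; the resulting composite is the $\Gamma$-coaction on $K^1$ inherited from the $\Gamma$-grading of $\cB(V)\#\ku\Gamma$, since for any $\Gamma$-homogeneous $x \in \cB(V)$ of degree $g_x$ the coproduct satisfies $\Delta(x) - g_x \otimes x \in \cB(V)_{>0}\otimes \cB(V)$. Applied to $w_n$ and $\zh_n$, both of $\Gamma$-degree $g_1^n g_2$, this forces $\mathrm{a}_n = g_1^n g_2$, $\mathrm{b}_n = \mathrm{c}_n = 0$ and $\widetilde{\mathrm{c}}_n = g_1^n g_2$, $\widetilde{\mathrm{a}}_n = \widetilde{\mathrm{b}}_n = 0$, which is precisely what the lemma asserts.

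\textbf{Main obstacle.} The only mildly subtle step is the identification of the $\ku\Gamma$-component of $\delta$ with the $\Gamma$-coaction on $K^1$. This is structural rather than computational and follows from $\pi$ being a Hopf algebra map acting as the identity on $\ku\Gamma$, together with the standard fact that the lowest $\N_0$-degree summand of $\Delta(x)$ equals $g_x \otimes x$ for $\Gamma$-homogeneous $x$; no induction or explicit combinatorial recursion (in contrast to Lemmas \ref{le:zcoact} and \ref{lemma:coact-shn-wn}) is required.
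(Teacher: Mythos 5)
Your proof is correct and the overall skeleton (expand $\delta(w_n)$, $\delta(\zh_n)$ in the PBW basis of $K^1$ from Lemma \ref{lemma:2blocks-auxiliar-caso3} \ref{item:2blocks-auxiliar-25}, then use that $\delta$ respects both the $\N_0$- and $\Gamma$-gradings) matches the paper. The one place you depart is in pinning down the degree-zero coefficients $\mathrm{a}_n, \mathrm{b}_n, \mathrm{c}_n$ and $\widetilde{\mathrm{a}}_n, \widetilde{\mathrm{b}}_n, \widetilde{\mathrm{c}}_n$: the paper declares that this "follows by induction on $n$," whereas you post-compose $\delta$ with the canonical algebra projection $\cB(V_1)\#\ku\Gamma \to \ku\Gamma$ annihilating $\cB(V_1)_{>0}\#\ku\Gamma$ (your phrase "killing the augmentation ideal" is slightly imprecise, but the intent is unambiguous) and observe that the composite coincides with the $\Gamma$-grading coaction, so $(p\otimes\id)\delta(x) = g_x \otimes x$ for $\Gamma$-homogeneous $x$ of positive $\N_0$-degree. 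This does the same work non-inductively and in one stroke: since $w_n$ and $\zh_n$ are both of $\Gamma$-degree $g_1^n g_2$, comparison with $(p\otimes\id)\delta$ immediately isolates the $j=n$ terms and gives the stated values. The paper's inductive computation and your projection argument are both valid; yours is cleaner and more conceptual, at the cost of having to articulate the interaction between $p$, $\pi$, and the bosonization coproduct, which you do correctly.
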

\pf
Analogous to \eqref{eq:coact-wn}. Indeed, $\delta(w_n), \delta(\zh_n)\in \cB(V_1)\#  \ku \Gamma\otimes K^1$. As
$(z_n)_{0 \le n \le \ghost_1}$, $(w_n)_{0 \le n \le \ghost_1}$ and $(\zh_{n})_{1 \le n \le \ghost_1}$ form a basis of $K^1$ by Lemma \ref{lemma:2blocks-auxiliar-caso3} and $\delta$ is a graded map,
\begin{align*}
\delta (w_{n}) &= \sum_{j=0}^{n} \mathrm{a}_j \otimes w_j
+ \sum_{j=0}^{n} \mathrm{b}_j \otimes z_j + \sum_{j=1}^{n} \mathrm{c}_j \otimes \zh_{j}, \\
\delta (\zh_{n}) &= \sum_{j=0}^{n} \widetilde{\mathrm{a}}_j \otimes w_j
+ \sum_{j=0}^{n} \widetilde{\mathrm{b}}_j \otimes z_j + \sum_{j=1}^{n} \widetilde{\mathrm{c}}_j \otimes \zh_{j},
\end{align*}
for some $\mathrm{a}_j,\mathrm{b}_j, \mathrm{c}_j,\widetilde{\mathrm{a}}_j, \widetilde{\mathrm{b}}_j, \widetilde{\mathrm{c}}_j\in\cB(V_1)\#  \ku \Gamma$ of degree $n-j$. We claim that
\begin{align*}
\mathrm{a}_n&=\widetilde{\mathrm{c}}_n=g_1^ng_2, & \mathrm{b}_n&=\mathrm{c}_{n}=\widetilde{\mathrm{a}}_n=\widetilde{\mathrm{b}}_n=0.
\end{align*}
The proof is direct, by induction on $n$.
\epf

\emph{Proof of Lemma \ref{lemma:2blocks-caso3}.\/}
Set $N=2\ghost_1=2a_{21}$. If $x\in\{w_N,z_N,\zh_{N}\}$, then
\begin{align*}
c(x\ot  w_{N})& = g_1^Ng_2\cdot w_{N}\ot x = -q_{21}^N g_1^N \cdot (w_N-z_N + Na_{21}\zh_{N}) \ot x \\
&= -(-1)^Nq_{12}^Nq_{21}^N(w_N+ (Na_{12}-1) z_N + N(a_{21}-N)\zh_{N})  \ot x \\
&= (-w_N+ (1-2a_{21}a_{12}) z_N +2a_{21}^2\zh_{N}) \ot x , \\
c(x\ot  z_{N}) &= g_1^Ng_2\cdot z_{N}\ot x = -(-1)^Nq_{12}^Nq_{21}^N z_{N}\ot x = -z_{N}\ot x, \\
c(x\ot  \zh_{N}) &= g_1^Ng_2\cdot \zh_{N}\ot x = -(-1)^Nq_{12}^Nq_{21}^N\zh_{N}\ot x=-\zh_{N}\ot x.
\end{align*}
by Lemmas \ref{lemma:2blocks-auxiliar-caso3} and \ref{lemma:coact-zhn-wn-case3}.
Set $y_N= (1-2a_{21}a_{12}) z_N +2a_{21}^2\zh_{N}$. Then $W = \langle y_N, w_N, z_N \rangle$ is a 3-dimensional braided vector subspace of $K^1$, with braiding in the ordered basis $y_N, w_N, z_N$ given by
\begin{align}\label{eq:braiding-2blocks-caso3}
\begin{pmatrix}
- y_N \otimes y_N & (- w_N + y_N) \otimes y_N & - z_N  \otimes y_N
\\
- y_N \otimes w_N & (- w_N + y_N) \otimes w_N & - z_N  \otimes w_N
\\
- y_N \otimes z_N &  -(w_N - y_N) \otimes z_N & - z_N  \otimes z_N
\end{pmatrix}.
\end{align}
Hence $W = W_1 \oplus W_2$, where $W_1 = \langle y_N, w_N\rangle$ and $W_2 = \langle z_N\rangle$, $W_1$ is an $-1$-block, $W_2$ is a point with label $-1$, the interaction is weak and $a = -1$, cf. \eqref{eq:braiding-block-point}; thus
$\ghost =-1$ is negative, and $\GK \toba(W) = \infty$ by Lemma \ref{lemma:weak-not-discrete}.
\qed

\section{Several blocks, several points}\label{sec:yd-dim>3-2blocks}

\subsection{Notations}\label{subsubsection:YD>3-severalblocks-pts}

Let $t,\theta \in \N$, $2 \leq t < \theta$. We fix the notation on the braided vector spaces we will consider in this Section:
\begin{align}\label{eq:notacio-several-blocks}
\Idd_k &= \{k, k + \tfrac{1}{2}\},& k&\in \I_t;
\\ \Idd &= \Idd_1 \cup \dots \cup \Idd_{t} \cup \I_{t+1,\theta};
\\
g_i&\in \Gamma,\, \chi_i\in \widehat\Gamma,& i&\in \I_{\theta};
\\ \eta_k&: \Gamma \to \ku \text{ a $(\chi_k, \chi_k)$-derivation}, \,\eta_k(g_k) = 1,&  k&\in \I_t.
\end{align}
For $k\in \I_t$, we set $V_k = \cV_{g_k}(\chi_k, \eta_k) \in \ydG$  with basis $(x_h)_{h\in\Idd_k}$
and action  	 	
\begin{align}\label{equation:basis-triangular-gral-k}
\gamma\cdot x_k &= \chi_k(\gamma) x_k,& \gamma\cdot x_{\fkdos}&=\chi_k(\gamma) x_{\fkdos} + \eta_k(\gamma)x_{k},& \gamma&\in \Gamma.
\end{align}
For $i\in \I_{t+1, \theta}$ we set $V_i = \ku_{g_{i}}^{\chi_{i}}$. Then we set
\begin{align}\label{eq:braiding-blocks-points}
V &= V_1 \oplus \dots \oplus V_t \oplus V_{t+1} \oplus \dots \oplus V_{\theta},&
c_{ij} &= c_{\vert V_i \otimes V_j}: V_i \otimes V_j \to V_j \otimes V_i.
\end{align}
Thus $(x_h)_{h\in\Idd}$ is a basis of $V$.  Let
\begin{align*}
q_{ij}&= \chi_j(g_i),& i, j&\in \I_{\theta};&  a_{ik} &= q_{ik}^{-1}\eta_k(g_i),& k\in \I_t, \,i&\in \I_{\theta}.
\end{align*}
We want to know when $\GK \toba(V) < \infty$. Thence, we may  assume that
\begin{enumerate}[leftmargin=*,label=\rm{(\Alph*)}]
	\item\label{item:A} $q_{kk}^2 = 1$ hence $a_{kk} = q_{kk} =: \epsilon_k$, $k\in \I_t$, see \S \ref{sec:yd-dim2};
	
	\item\label{item:B} $q_{kl}q_{lk}=1$, $a_{kl}=0$, $k\neq l\in\I_t$, by \S \ref{section:YD>3-2blocks}.
	Hence
	\begin{align}\label{eq:several-blocks-qcommute}
	x_ix_j &= q_{\lfloor i\rfloor \lfloor j\rfloor } x_j x_i, & i,j &\in \Idd_1 \cup \dots \cup \Idd_{t}, & \lfloor i\rfloor &\neq \lfloor j\rfloor.
	\end{align}
	
	\item\label{item:C} $\widetilde{q}_{ik} := q_{ik}q_{ki} = \pm 1$ for all $k\in \I_t$, $i\in \I_{t+1,\theta}$ by Lemma \ref{le:strong}.
	
    \item\label{item:D} $\ghost_{ik}\in\N_0$ for all $k\in \I_t$, $i\in \I_{t+1,\theta}$ by Lemmas \ref{lemma:weak-not-discrete} and \ref{lemma:mild-not-discrete}, where $$ \ghost_{ik} := \begin{cases} -2a_{ik}, &\epsilon_k = 1, \\ a_{ik}, & \epsilon_k = -1.\end{cases} $$

	\item\label{item:E} The corresponding flourished diagram is connected, see \S \ref{subsec:intro-sb}.
\end{enumerate}

\subsection{Several blocks, one point}\label{subsubsection:YD>3-severalblocks-1pt}
Here we assume that $\theta = t+1$.
Let \begin{align*}
a_k&:=a_{\theta \, k} = q_{\theta \, k}^{-1}\eta_k (g_{\theta}), &
\ghost_k &= \ghost_{\theta \, k},
& k &\in\I_t.
\end{align*}

\begin{theorem}\label{thm:variosblocks-unpto}
	Let $V$ be a braided vector space as in  \eqref{eq:braiding-blocks-points} with $\theta = t+1$.
We keep all the assumptions in \S \ref{subsubsection:YD>3-severalblocks-pts}.	Then the following are equivalent:
	\begin{enumerate}
		\item\label{item:variosblocks-unpto-1} $\GK \NA (V) < \infty$.
		
		\item\label{item:variosblocks-unpto-2}  $\ghost_k\in\N$, $\widetilde{q}_{k \, \theta} = 1$ for all $k\in \I_t$, and $\epsilon_{\theta}^2=1$.
			\end{enumerate}	
			If this happens, then $\GK \toba (V)$ is computed by \eqref{eq:variosblocks-unpto-gkd}.
\end{theorem}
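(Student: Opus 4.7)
\smallskip

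\textbf{Plan of proof.}

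The plan is to prove the two implications separately, reducing to known results for a single block plus a point whenever possible, and treating the combination of several blocks via the coinvariant algebra $K=\NA(V)^{\mathrm{co}\,\NA(V_1\oplus\cdots\oplus V_t)}$.

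\emph{Forward direction} \ref{item:variosblocks-unpto-1} $\Rightarrow$ \ref{item:variosblocks-unpto-2}. For each $k\in\I_t$ the braided subspace $V_k\oplus V_\theta$ is of block-plus-point type, and $\NA(V_k\oplus V_\theta)\hookrightarrow\NA(V)$, so its Gelfand--Kirillov dimension is finite. By Theorem \ref{thm:point-block} this forces the pair $(V_k,V_\theta)$ to occur in Table \ref{tab:finiteGK-block-point}. In particular $\epsilon_k\in\{\pm1\}$ (this is already in our standing assumption \ref{item:A}) and, whenever the interaction is nontrivial, $\epsilon_\theta=q_{\theta\theta}\in\{\pm1\}$ or it is a primitive third root of unity. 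Next I would rule out the last possibility and any mild interaction using the connectedness hypothesis \ref{item:E}: since the flourished diagram contains no edges between blocks by \ref{item:B}, the only way block $k$ can be connected to the rest of the diagram is through the point $\theta$, so either $\ghost_k\in\N$ (weak, discrete) or $\widetilde{q}_{k\theta}=-1$ with $(\epsilon_k,\epsilon_\theta,\ghost_k)=(-1,-1,1)$ (mild, of type $\cyc_1$). I then show that the mild case is incompatible with the presence of a second block. Concretely, if $\widetilde{q}_{1\theta}=-1$ and $k\ge 2$ is another block connected to $\theta$, I study the subalgebra generated by $V_1\oplus V_k\oplus V_\theta$ and realise, in the coinvariant algebra $K$ for $\NA(V_1)$, a braided subspace of the form \emph{block plus point} with negative ghost (as in the argument of Lemma \ref{lemma:weak-not-discrete}), whence $\GK K=\infty$ and, via Lemma \ref{lemma:GKdim-smashproduct}, $\GK\NA(V)=\infty$. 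This step parallels Lemma \ref{lemma;several-blocks-1pt-weak} referenced in \S\ref{subsubsection:about-the-proofs}. Finally, the case $\epsilon_\theta\in\G'_3$ combined with a second block is discarded by reducing to a rank-$3$ diagonal subdiagram of affine or indefinite Cartan type, appealing to Hypothesis \ref{hyp:nichols-diagonal-finite-gkd} as in Lemma \ref{lemma:KJ-card2}. Together these force $\widetilde{q}_{k\theta}=1$ for all $k$, $\ghost_k\in\N$ for all $k$, and $\epsilon_\theta^2=1$.

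\emph{Backward direction} \ref{item:variosblocks-unpto-2} $\Rightarrow$ \ref{item:variosblocks-unpto-1}. Under the stated conditions $V$ is the Poseidon braided vector space $\pos(\bq,\ghost)$ of \S\ref{subsec:intro-sb}, and I would verify directly that the algebra defined by the presentation \eqref{eq:finiteGK-intro-4} has the prescribed finite $\GK$. The strategy is the one already used in \S\ref{subsubsec:algK-block-points}: decompose $\NA(V)\simeq K\#(\NA(V_1)\otimes\cdots\otimes\NA(V_t))$, where the right tensor factor is a tensor product of Jordan and super Jordan planes (Propositions \ref{pr:1block}, \ref{pr:-1block}) because distinct blocks commute in the braided sense by \ref{item:B}; this factor contributes $2t$ to the Gelfand--Kirillov dimension. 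The Yetter--Drinfeld module $K^1=\ad_c(\NA(V_1)\otimes\cdots\otimes\NA(V_t))(V_\theta)$ is spanned by the iterated brackets $\sch_{\bm}=(\ad_cx_1)^{m_1^0}(\ad_cx_{\fudos})^{m_1^1}\cdots(\ad_cx_t)^{m_t^0}(\ad_cx_{t+\frac12})^{m_t^1}(x_\theta)$ with bounded indices (forced by the discrete ghosts and by $\epsilon_\theta^2=1$, as in Lemmas \ref{lemma:2blocks-auxiliar-caso2}, \ref{lemma:2blocks-auxiliar-caso3}), and a direct computation using the same skew derivations as in \S\ref{subsec:2blocks-eps1}, \S\ref{subsec:2blocks-eps-1} shows that $K^1$ is of diagonal type on the reduced index set $\cA$ with braiding matrix $(p_{\bm\bn})$. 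Thus $K\simeq\NA(K^1)$ is a quantum linear space, and $\GK K=|\{\bm\in\cA:p_{\bm\bm}=1\}|$. Invoking Lemma \ref{lemma:GKdim-smashproduct} (and checking local finiteness of the actions involved, which is automatic since all block actions are locally finite and the point is finite-dimensional) yields the displayed formula
\begin{equation}\label{eq:variosblocks-unpto-gkd}
\GK\NA(V)=2t+|\{\bm\in\cA:p_{\bm\bm}=1\}|.
\end{equation}

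\emph{Main obstacle.} The delicate step is the non-mixing argument in the forward direction: ruling out mild interactions and order-$3$ values of $q_{\theta\theta}$ whenever $t\ge 2$. The single-block analysis of \S\ref{sec:yd-dim3} allows either behaviour, so the obstruction must come from interaction between two blocks acting simultaneously on the same point. I expect the cleanest route is to repeat, for each pair $(V_1,V_k,V_\theta)$, the type of computation used in the proof of Lemma \ref{lemma:superJordan-2-mild+weak}: explicitly write down enough primitive elements of $\NA(V)$ to exhibit, inside a suitable pre-Nichols algebra of diagonal type, a rank-$3$ sub-diagram forbidden by Theorem \ref{thm:nichols-diagonal-finite-gkd} or Hypothesis \ref{hyp:nichols-diagonal-finite-gkd}. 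The bookkeeping of the auxiliary elements $z_n$, $w_n$, $\zh_n$ from \S\ref{section:YD>3-2blocks}, suitably generalised to $t$ blocks, is the main technical hurdle.
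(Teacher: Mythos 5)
Your overall architecture matches the paper's: the forward reduction through Theorem~\ref{thm:point-block} plus connectedness, the decomposition $\NA(V)\simeq K\#\NA(V_1\oplus\cdots\oplus V_t)$, the identification of the generators $\sch_{\bm}$ of $K^1$, and the conclusion that $K$ is a quantum affine space when $\epsilon_\theta^2=1$, giving \eqref{eq:variosblocks-unpto-gkd}. The backward direction as you describe it is essentially what the paper does in Lemmas~\ref{lemma:several-blocks-1pt-auxiliar}, \ref{lemma:several-blocks-1pt-coaction} and \ref{lemma:braiding-K-several-blocks-point}. However, there is a genuine gap in the step you yourself flag as the main obstacle, namely ruling out $\widetilde q_{k\theta}=-1$ when $t\ge 2$ (the content of Lemma~\ref{lemma;several-blocks-1pt-weak}). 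You propose taking coinvariants with respect to $\NA(V_1)$, the block carrying the mild interaction. But then $\ad_c\NA(V_1)(V_\theta)$ is precisely the four-dimensional subspace $\langle z_0,z_1,f_0,f_1\rangle$ of \S\ref{subsection:mild}, which is \emph{not} of diagonal type, so there is no easy way to isolate a ``block plus point with negative ghost'' inside $V_k\oplus\langle z_0,z_1,f_0,f_1\rangle$ as you suggest; the fallback filtration you mention also fails here, because the ghost $a_k$ is invisible in $\gr V$ when $\widetilde q_{k\theta}=1$, so the grading detaches $V_k$ from $\theta$ entirely. The paper's key move is to split into two subcases. If a second block $V_k$ also has mild interaction, a complete filtration of $V_1\oplus V_k\oplus V_\theta$ produces a diagonal subdiagram of affine Cartan type $D_4^{(1)}$ spanned by $\Idd_1\cup\Idd_k\cup\{\theta\}$, which Theorem~\ref{thm:nichols-diagonal-finite-gkd} discards. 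If instead $V_k$ has weak interaction, the paper takes coinvariants with respect to $\NA(V_k)$ --- the \emph{weak} block, not $V_1$ --- so that the whole block $V_1$ survives untouched inside $K^1$ together with the genuine points $z_0=x_\theta$ and $z_1=\ad_c x_{k+\frac{1}{2}}(x_\theta)\neq 0$ (here $a_k\neq 0$ is used), both of which still have mild interaction with $V_1$; then Lemma~\ref{lemma:superJordan-2-mild} applies directly. Choosing coinvariants for the weak block rather than the mild one is the simplification your proposal misses.

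A smaller remark: once mild interactions are excluded and $K^1$ is diagonal by Lemma~\ref{lemma:braiding-K-several-blocks-point}, discarding $\epsilon_\theta\in\G'_3$ does not require Hypothesis~\ref{hyp:nichols-diagonal-finite-gkd}. In that case all $\epsilon_k=1$, so every vertex of $K^1$ has label $\omega$ and every edge has label $\omega^2$; since $|\cA|=2^t\ge 4$, any three vertices form an affine $A_2^{(1)}$ triangle, and Theorem~\ref{thm:nichols-diagonal-finite-gkd} suffices.
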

Assume that $\GK \NA (V) < \infty$. Then $\widetilde{q}_{k \, \theta}\in \{\pm 1\}$ by Lemma \ref{le:strong}.
Also $a_k\in\N_0$ and $\epsilon_{\theta} \in \G_2 \cup \G_3$ by \S \ref{sec:yd-dim3}.
Observe that $a_k\neq 0$; otherwise, the block $k$ would be disconnected to $\theta$ and to all other blocks.
Here is our first reduction:

\begin{lemma}\label{lemma;several-blocks-1pt-weak}
	If there is $k \in \I_t$ such that $\widetilde{q}_{k \, \theta} = -1$; then $\GK \toba(V) = \infty$.
\end{lemma}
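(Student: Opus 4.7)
The first step is to apply Theorem \ref{thm:pm1bp-mild} to the braided subspace $V_k \oplus V_\theta$: the mild interaction $\widetilde{q}_{k\theta} = -1$ forces $\epsilon_k = -1$, $a_k = 1$ and $q_{\theta\theta} = -1$, so that $V_k \oplus V_\theta \simeq \cyc_1$. By the connectivity assumption \ref{item:E} and the absence of edges between blocks (\ref{item:B}), there exists a second block $V_l$, $l \neq k$, connected to $V_\theta$; hence either $\widetilde{q}_{l\theta} = -1$ (mild) or $\widetilde{q}_{l\theta} = 1$ with $\ghost_l \geq 1$ (weak with positive ghost). Since $\toba(V_k \oplus V_l \oplus V_\theta)$ is a subalgebra of $\toba(V)$, it suffices to prove that this 5-dimensional subspace has infinite $\GK$ in each case, treated separately below.

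In the mild case, Theorem \ref{thm:pm1bp-mild} applied to $V_l \oplus V_\theta$ additionally forces $\epsilon_l = -1$ and $a_l = 1$. The plan is to consider the absolutely complete flag of Yetter-Drinfeld submodules $0 \subset \ku x_k \subset V_k \subset V_k \oplus \ku x_l \subset V_k \oplus V_l \subset V_k \oplus V_l \oplus V_\theta$, and to pass to the associated graded $\gr$, which is of diagonal type by Lemma \ref{lemma:filtered}. The $\Gamma$-characters give label $-1$ on all five vertices, and the only non-trivial interactions are $\widetilde{q}_{k\theta} = \widetilde{q}_{l\theta} = -1$; the resulting generalized Dynkin diagram is a star with central vertex $x_\theta$ and leaves $x_k, x_{\fkdos}, x_l, x_{l+\frac{1}{2}}$, which is the affine Cartan diagram $D_4^{(1)}$. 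Hence $\GK \toba(\gr) = \infty$ by Theorem \ref{thm:nichols-diagonal-finite-gkd}, and so $\GK \toba(V_k \oplus V_l \oplus V_\theta) = \infty$ by Lemma \ref{lemma:filtered}.

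In the weak case with $\ghost_l \geq 1$, use the bosonization $\toba(V_k \oplus V_l \oplus V_\theta) \simeq K \# \toba(V_l)$ with $K = \toba(K^1)$ and $K^1 = \ad_c \toba(V_l)(V_k \oplus V_\theta)$. Since the blocks commute in the braided sense (\ref{item:B}), $V_k \subset K^1$; by \S \ref{subsection:weak}, $\ad_c \toba(V_l)(V_\theta)$ has basis $\{z_n := (\ad_c x_{l+\frac{1}{2}})^n x_\theta : 0 \leq n \leq \ghost_l\}$ of $\Gamma$-eigenvectors (since $\widetilde{q}_{l\theta} = 1$). Thus $K^1 = V_k \oplus \bigoplus_n \ku z_n$ is a braided vector space of the form \eqref{eq:braiding-block-several-point}, and Theorem \ref{thm:points-block-eps-1} applies. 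Direct computation (using $a_{kl} = a_{lk} = 0$ and $a_{\theta k} = 1$) shows that each $z_n$ has mild interaction with $V_k$ with ghost $1$ and label $q_{z_n, z_n} = -\epsilon_l^n$, while mutual interactions between distinct $z_m, z_n$ are trivial. For $\epsilon_l = -1$, the point $z_1$ has label $+1$, incompatible with the only case of Theorem \ref{thm:points-block-eps-1} allowing mild interaction on a singleton (which demands label $-1$). For $\epsilon_l = 1$, each $\{z_n\}$ is a mild singleton, so the ``Furthermore'' clause of Theorem \ref{thm:points-block-eps-1} forces one connected component, contradicting the triviality of the interactions between distinct $z_n$'s. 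In either sub-case $\GK K = \infty$, hence $\GK \toba(V_k \oplus V_l \oplus V_\theta) = \infty$ by Lemma \ref{lemma:GKdim-smashproduct}.

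The main obstacle will be the explicit braiding computations in $K^1$ for the weak case: verifying that $K^1$ indeed fits the ``one block plus several points'' form of \eqref{eq:braiding-block-several-point} and that the interaction and ghost parameters with $V_k$ are exactly as claimed; these rely crucially on $a_{kl} = a_{lk} = 0$ (assumption \ref{item:B}) to ensure that $\Gamma$ acts diagonally on each $z_n$, without off-diagonal terms that would spoil the applicability of Theorem \ref{thm:points-block-eps-1}.
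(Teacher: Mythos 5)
Your proof is correct and structurally identical to the paper's: both reduce to a second block $V_l$ connected to $V_\theta$ (with $\ghost_l\ge 1$ or mild interaction, by \ref{item:E} plus \ref{item:B}), both dispatch the mild sub-case via the absolutely complete flag and the resulting $D_4^{(1)}$ obstruction, and both handle the weak sub-case by bosonizing over $V_l$ and finding a bad braided subspace of $K^1 = \ad_c\NA(V_l)(V_k\oplus V_\theta)$. The only real difference lies in the weak sub-case: the paper restricts to the $4$-dimensional subspace $U = V_k\oplus\langle z_0,z_1\rangle$ and applies Lemma~\ref{lemma:superJordan-2-mild}, whose proof is a direct filtration-to-diagonal argument; you analyze all of $K^1 = V_k\oplus\langle z_0,\dots\rangle$ and invoke the full classification Theorem~\ref{thm:points-block-eps-1}. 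Your route works, but it is heavier: Lemma~\ref{lemma:superJordan-2-mild} is an upstream lemma used \emph{in} the proof of Theorem~\ref{thm:points-block-eps-1}, and the paper's choice of working with the fixed $4$-dimensional $U$ sidesteps exactly the verification you flag as an obstacle, namely that the full $K^1$ fits the template \eqref{eq:braiding-block-several-point}. That verification can indeed be completed — the key facts are that $V_k\subset K^1$ because the blocks braided-commute (\ref{item:B}), that the higher-degree tensor factors in $\delta(z_n)$ lie in $\NA(V_l)^+$ and hence annihilate $V_k$ under $\ad_c$ (so the $V_k$--$z_n$ braiding has the block--point form with the interaction and ghost you state), and that the $z_n$'s braid diagonally among themselves by Lemma~\ref{lemma:braiding-K-weak-block-points} — but it is an extra step the paper avoids. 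One small indexing slip: for $\epsilon_l=-1$ the basis of $\ad_c\NA(V_l)(V_\theta)$ runs over $0\le n\le 2\ghost_l$, not $0\le n\le\ghost_l$ (cf.\ Remark~\ref{lemma:K-basis}); this does not affect your conclusion since $\ghost_l\ge 1$ already guarantees the existence of $z_1$.
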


\pf It is enough to deal with $k =1$.
By Theorem \ref{thm:point-block}, we may assume $q_{11} = q_{\theta \, \theta} = -1$.

Suppose $\widetilde{q}_{2 \, \theta} = -1$. Consider the filtration given by the natural order of $\Idd$.
Then $\gr V$ is of diagonal type, and the Dynkin subdiagram spanned by $\Idd_1 \cup \Idd_2 \cup \{\theta\}$
is of affine Cartan type $D_4^{(1)}$, hence Theorem \ref{thm:nichols-diagonal-finite-gkd} applies.

Suppose $\widetilde{q}_{2 \, \theta} = 1$. Let $K=\NA (V)^{\mathrm{co}\,\NA (V_2)}$ and $V'= \oplus_{j\neq 2} V_{j}$. Then
\begin{align*}
\NA(V) &\simeq K \# \NA (V_2);& K &\simeq \NA(K^1)& &\text{and}& K^1 &= \ad_c\NA (V_2) (V')
\in {}^{\NA (V_2)\# \ku \Gamma}_{\NA (V_2)\# \ku \Gamma}\mathcal{YD},
\end{align*}
with coaction \eqref{eq:coaction-K^1} and the adjoint action, cf. \S \ref{subsubsection:YD3-notation}. Let $z_0=x_{\theta}$, $z_1=(\ad_c x_{\futres}) x_{\theta}$.
Notice that $z_1\neq 0$ since $a_2\neq0$, and $x_1$, $x_{\fudos}$, $z_0$, $z_1$ are linearly independent.
Let $U$ be the subspace of $K^1$ spanned by $x_1$, $x_{\fudos}$, $z_0$, $z_1$. As
\begin{align*}
\delta(z_0)&= g_3\otimes z_0, & \delta(z_1) &= g_2g_3\otimes z_1+x_2g_3\otimes z_0
\end{align*}
and $(\ad_c x_2)x_i=0$, $i=1,\fudos$, $U$ is a braided vector subspace of type \emph{1 block and 2 points with mild interaction}, so
$\GK\cB(U)=\infty$ by Lemma \ref{lemma:superJordan-2-mild}. Thus $\GK \cB(V)=\infty$.
\epf

By the previous Lemma, we may assume that $\widetilde{q}_{k \, \theta} = 1$ for all $k\in\I_t$, i.e. all blocks have weak interaction with the point $\theta$. Once again, we consider a suitable decomposition of $V$, namely $V = W \oplus V_{\theta}$
where $W= \oplus_{1 \le j\leq t} V_{j}$. Then $K=\NA (V)^{\mathrm{co}\,\NA (W)}$ satisfies
\begin{align*}
\NA(V) &\simeq K \# \NA (W);& K &\simeq \NA(K^1)& &\text{and}& K^1 &= \ad_c\NA (W) (V_{\theta})
\in {}^{\NA (W)\# \ku \Gamma}_{\NA (W)\# \ku \Gamma}\mathcal{YD}.
\end{align*}
In order to analyze the braided vector space $K^1$, we introduce the elements
\begin{align}\label{eq:defn-sch-several-blocks}
\sch_{\bn} &:= (\ad_c x_{\fudos})^{n_1} \dots (\ad_c x_{t+\frac{1}{2}})^{n_{t}} x_{\theta}, & & \bn = (n_1,\dots,n_{t})\in\N^t_0.
\end{align}
Below, $(e_j)_{j\in \I_t}$ is the canonical basis of $\Z^t$.
Let $j\in\I_t$. As in \eqref{eq:x12}, we denote $x_{j+\frac{1}{2} \, j} = x_{j+\frac{1}{2}} x_j - \epsilon_j x_j x_{j+\frac{1}{2}}$. We define recursively a family $(\mu_n^{(j)})_{n\in\N_0}$ as in \eqref{eq:def-mu-n}
for $a=a_j$, $\epsilon=\epsilon_j$, and the following elements, see Remark \ref{rem:xk qcommutes with z_k}:
\begin{align}\label{eq:several-blocks-yjn}
y_j^{\langle n\rangle}:=\left\{ \begin{array}{ll} x_j x_{j+\frac{1}{2} \, j}^m, & n=2m+1 \text{ odd};
\\ x_{j+\frac{1}{2} \, j}^m, & n=2m \text{ even}. \end{array} \right.
\end{align}
We claim that $\{y_j^{\langle n\rangle} x_{j+\frac{1}{2}}^m: m,n\in\N_0\}$ is a basis of $\cB(V_j)$. This is clear when $\epsilon_j=-1$;
for $\epsilon_j=1$, it follows since $y_j^{\langle n\rangle}=\frac{(-1)^k}{2^k}x_j^n$, $k=\lfloor\frac{n}{2}\rfloor$.
This basis allows a unified notation in Proposition \ref{pr:poseidon} below.

\begin{lemma}\label{lemma:several-blocks-1pt-auxiliar}
\begin{enumerate}\renewcommand{\theenumi}{\alph{enumi}}\renewcommand{\labelenumi}{(\theenumi)}
	\item\label{item:several-blocks-1pt-auxiliar-1}
	Let $j\in\I_t$ and $\bn = (n_1,\dots,n_{t})\in\N^t_0$. Then
	\begin{align}\label{eq:several-blocks-1pt-auxiliar-11}
	\ad_c x_j(\sch_{\bn}) &= \ad_c x_{j+\frac{1}{2} \, j}(\sch_{\bn}) = 0, &&
	\\ \label{eq:several-blocks-1pt-auxiliar-12}
	\ad_c x_{j+\frac{1}{2}}&(\sch_{\bn}) = \prod_{i<j} q_{ji}^{n_i} \, \sch_{\bn + e_j}; &&
	\\\label{eq:several-blocks-1pt-auxiliar-2}
	g_j \cdot \sch_{\bn} &= q_{j\, \theta} \prod_{i=1}^t q_{ji}^{n_i} \, \sch_{\bn},
	&
	g_{\theta} \cdot \sch_{\bn} &= \epsilon_{\theta} \prod_{i=1}^t q_{\theta \, i}^{n_i} \sch_{\bn};
	\\	\label{eq:several-blocks-1pt-auxiliar-4}
	\partial_j(\sch_{\bn}) &= \partial_{j+\frac{1}{2}} (\sch_{\bn}) = 0;
	&
	\partial_{\theta} (\sch_{\bn}) &= \prod_{j=1}^{t} \mu_{n_j}^{(j)} \, y_1^{\langle n_1\rangle} \dots y_{t}^{\langle n_{t}\rangle}.
	\end{align}
	
\item\label{item:several-blocks-1pt-auxiliar-2}
Let $\mathcal A := \{\bn \in\N^t_0: 0 \le \bn \le \ba\}$, where $\ba = (2|a_1|, \dots 2|a_t|)$.
The elements $(\sch_{\bn})_{\bn \in \mathcal A}$ form a basis of $K^1$.
\end{enumerate}
	\end{lemma}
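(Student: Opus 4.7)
The plan is to prove (a) by induction on $|\bn| = n_1 + \cdots + n_t$, following closely the single-block arguments of Lemmas \ref{le:-1bpz} and \ref{lemma:derivations-zn}. The base case $\bn = \mathbf{0}$ gives $\sch_{\mathbf{0}} = x_\theta$, and all claimed formulas reduce to the definitions ($g_\theta \cdot x_\theta = \epsilon_\theta x_\theta$, $g_j\cdot x_\theta = q_{j\,\theta} x_\theta$, $\partial_\theta(x_\theta) = 1 = y_1^{\langle 0\rangle}\cdots y_t^{\langle 0\rangle}$, and $\partial_i(x_\theta) = 0$ for $i \ne \theta$). The inductive step consists of applying one more operator $\ad_c x_{j+\frac{1}{2}}$. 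The key observation is that by \ref{item:B}, distinct blocks commute in the braided sense, so $[x_{j+\frac{1}{2}}, x_{i+\frac{1}{2}}]_c = 0$ for $i \ne j$ by \eqref{eq:several-blocks-qcommute}; consequently the operators $\ad_c x_{j+\frac{1}{2}}$ and $\ad_c x_{i+\frac{1}{2}}$ commute up to a scalar $q_{ji}$, which when iterated across the prefix $(\ad_c x_{\fudos})^{n_1} \cdots (\ad_c x_{j-\frac{1}{2}})^{n_{j-1}}$ yields the factor $\prod_{i<j} q_{ji}^{n_i}$ appearing in \eqref{eq:several-blocks-1pt-auxiliar-12}.

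The two vanishing identities in \eqref{eq:several-blocks-1pt-auxiliar-11} are proved by the same commutation idea combined with the single-block equalities \eqref{eq:adx1-zn} and \eqref{eq:adx12-zn}: after moving $\ad_c x_j$ (respectively $\ad_c x_{j+\frac{1}{2}\,j}$) past the factors corresponding to the other blocks (which only contribute nonzero scalars), one is reduced to exactly the single-block situation of \S \ref{subsubsec:algK-block-points} and gets zero. The $\Gamma$-action formulas \eqref{eq:several-blocks-1pt-auxiliar-2} follow at once since $g_j$ acts on each $\ad_c x_{i+\frac{1}{2}}$ by multiplication by $q_{ji}$ (as $g_j \cdot x_{i+\frac{1}{2}} = q_{ji} x_{i+\frac{1}{2}}$ by \ref{item:B}) and on $x_\theta$ by $q_{j\,\theta}$. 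For the derivations in \eqref{eq:several-blocks-1pt-auxiliar-4}, $\partial_j$ and $\partial_{j+\frac{1}{2}}$ vanish on $\sch_{\bn}$ because $\sch_{\bn} \in K^1 \subset K = \cB(V)^{\mathrm{co}\,\cB(W)}$; the formula for $\partial_\theta(\sch_{\bn})$ is obtained by iterating Lemma \ref{lemma:derivations-zn} separately on each block, using that the outputs $y_j^{\langle n_j\rangle}$ belonging to different blocks commute up to a nonzero scalar (so the ordering of the product is well-defined up to a scalar which is absorbed into the definition of $\sch_{\bn}$).

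For part (b), linear independence of $(\sch_{\bn})_{\bn \in \mathcal{A}}$ follows from the last formula in \eqref{eq:several-blocks-1pt-auxiliar-4}: the monomials $y_1^{\langle n_1\rangle}\cdots y_t^{\langle n_t\rangle}$ are linearly independent in $\cB(W) \simeq \cB(V_1)\,\underline{\otimes}\,\cdots\,\underline{\otimes}\,\cB(V_t)$, and the scalar $\prod_j \mu_{n_j}^{(j)}$ is nonzero precisely when $\bn \le \ba$, cf.\ \eqref{eq:a-condition}; moreover $\sch_{\bn} = 0$ outside $\mathcal{A}$ by the same $\partial_\theta$ calculation. Spanning follows because $K^1 = \ad_c \cB(W)(x_\theta)$ and $\cB(W)$ is spanned by PBW monomials in the $x_j$ and $x_{j+\frac{1}{2}}$; by (a), $\ad_c x_j$ annihilates every $\sch_{\bn}$ while $\ad_c x_{j+\frac{1}{2}}$ sends $\sch_{\bn}$ to a scalar multiple of $\sch_{\bn + e_j}$, so every iterated bracket collapses to a scalar multiple of some $\sch_{\bn}$. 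The main technical obstacle is the clean bookkeeping of commuting the operators $\ad_c x_{j+\frac{1}{2}}$ corresponding to different blocks past one another, which produces the precise scalars $q_{ji}^{n_i}$ appearing in \eqref{eq:several-blocks-1pt-auxiliar-12}; once this is handled, the remainder of the proof is a routine adaptation of the single-block computations from \S \ref{sec:yd-dim3}.
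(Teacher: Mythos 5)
Your proposal follows essentially the same route as the paper: use the block-commutation relations \eqref{eq:several-blocks-qcommute} (together with $a_{k\ell}=0$ for $k,\ell\in\I_t$) to move the relevant operator past the factors attached to the other blocks, picking up only nonzero scalars, and then invoke the single-block computations from \S \ref{sec:yd-dim3}. For the vanishing $\partial_j(\sch_{\bn})=\partial_{j+\frac{1}{2}}(\sch_{\bn})=0$ you argue via the coinvariant characterization $K^1\subset K=\NA(V)^{\mathrm{co}\,\NA(W)}$, whereas the paper uses a recursion via Remark \ref{rem:der-often}; both are valid. One small gap: in the $\partial_\theta$ step, the parenthetical claim that the ordering scalar is ``absorbed into the definition of $\sch_{\bn}$'' does not quite parse, since both $\sch_{\bn}$ and $y_1^{\langle n_1\rangle}\cdots y_t^{\langle n_t\rangle}$ carry fixed orderings and the target coefficient is exactly $\prod_j\mu_{n_j}^{(j)}$ with no leftover cross-block factor. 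What one must verify --- as the paper's recursion on $N=\sum n_i$ does explicitly --- is that the scalars produced when commuting $x_{j+\frac{1}{2}}+a_jx_j$ through $y_{j+1}^{\langle n_{j+1}\rangle}\cdots y_t^{\langle n_t\rangle}$ cancel against the scalar $q\,q_{\theta j}$ coming from $g_\theta\cdot x_{j+\frac{1}{2}}$. This is a routine but necessary bookkeeping step; your sketch asserts it without checking.
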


If $\bk, \bn \in \N_0^t$, then $\bk \leq \bn$ means $0\le k_j\le n_j$ for all $j\in \I_t$.

\pf
For \eqref{eq:several-blocks-1pt-auxiliar-11}, we use \eqref{eq:several-blocks-qcommute} and \eqref{eq:-1block+point}:
\begin{multline}
\ad_c x_j(\sch_{\bn}) = \prod_{i<j} q_{ji}^{n_i} \prod_{i>j} q_{ji}^{n_i(n_j+1)}
\\
\left( \ad_c x_{\fudos}^{n_1} \dots x_{j-\frac{1}{2}}^{n_{j-1}} x_{j+\frac{3}{2}}^{n_{j+1}} \dots x_{t+\frac{1}{2}}^{n_{t}} \right)
(\ad_c x_j) (\ad_c x_{j+\frac{1}{2}})^{n_j} x_{\theta} =0;
\end{multline}
the other equality follows analogously, and \eqref{eq:several-blocks-1pt-auxiliar-12} is a consequence of \eqref{eq:several-blocks-qcommute}.
For the first equality in \eqref{eq:several-blocks-1pt-auxiliar-2} we use
the first one of \eqref{eq:-1block+point} and the assumption
$a_{ij}=0$ for $i,j\in\I_t$. For the second equality in \eqref{eq:several-blocks-1pt-auxiliar-2} we use the second one of
\eqref{eq:-1block+point-bis} and \eqref{eq:several-blocks-1pt-auxiliar-11}.
For  \eqref{eq:several-blocks-1pt-auxiliar-4} we argue recursively on $N=\sum n_i$.
The first equality follows using Remark \ref{rem:der-often}.
The second for $N=0$ follows at once.
Let $\sch=\sch_{\bn}$, $j:=\min \{ i: n_i\neq 0 \}$, so we can write
\begin{align*}
\sch&=x_{j+\frac{1}{2}}\widetilde{\sch}- q \, \widetilde{\sch}x_{j+\frac{1}{2}},
& q &= q_{j \,\theta}\epsilon_j \prod_{i=j}^t q_{ji}^{n_i},
& \widetilde{\sch} &= \sch_{0,\dots,n_j-1,n_{j+1}, \dots,n_{t}}.
\end{align*}
Then we make a computation similar to that in Lemma \ref{lemma:derivations-zn}:
\begin{align*}
\partial_{\theta}(\sch)&=x_{j+\frac{1}{2}} \partial_{\theta}(\widetilde{\sch})- q q_{\theta j} \partial_{\theta}(\widetilde{\sch})(x_{j+\frac{1}{2}}+a_j \, x_j) \\
&= \prod_{i=j}^{t} \mu_{n_i}^{(i)} \left(x_{j+\frac{1}{2}}  y_j^{\langle n_j\rangle}- \epsilon_j^{n_j-1} y_j^{\langle n_j\rangle}  x_{j+\frac{1}{2}} \right)
y_{j+1}^{\langle n_{j+1}\rangle}\dots y_{t}^{\langle n_{t}\rangle} \\
&= \mu_{n_j+1}^{(j)} \prod_{i=j+1}^{t} \mu_{n_i}^{(i)}  y_j^{\langle n_j+1\rangle} y_{j+1}^{\langle n_{j+1}\rangle}\dots y_{t}^{\langle  n_{t}\rangle}.
\end{align*}
\eqref{item:several-blocks-1pt-auxiliar-2}:
By \eqref{item:several-blocks-1pt-auxiliar-1} the elements
$\sch_{\bn}$, $\bn\in\N_0^t$, generate $K_1$. By \eqref{eq:several-blocks-1pt-auxiliar-4},
$\sch_{\bn}\neq 0$ iff $\bn \in \cA$; and
$(\sch_{\bn})_{\bn \in \mathcal A}$ is linearly independent since
its image by $\partial_{\theta}$ is so.
\epf

\begin{lemma}\label{lemma:several-blocks-1pt-coaction}
	The coaction \eqref{eq:coaction-K^1} on $\sch_{\bn}$, $\bn\in \N^t_0$, is given by
	\begin{align}\label{eq:several-blocks-1pt-coaction}
	\delta(\sch_{\bn}) &=  \sum_{0\le \bk\le \bn} \nu_{\bk}^{\bn} \,
	y_1^{\langle n_1-k_1\rangle}\dots y_{t}^{\langle n_t-k_t\rangle} g_1^{k_1} \dots g_{t}^{k_t}g_{\theta} \otimes \sch_{k_1,\dots,k_{t}}
	\end{align}
	for some scalars $\nu_{\bk}^{\bn}$, $0\le \bk\le \bn$, where
	$\nu_{\bn}^{\bn}=1$.
\end{lemma}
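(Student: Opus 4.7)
The plan is to proceed by induction on $|\bn|=n_1+\cdots+n_t$. The base case $\bn=0$ is immediate: $\sch_0=x_\theta$ is primitive in $\toba(V)$ modulo $\Gamma$, so $\delta(x_\theta)=g_\theta\otimes x_\theta$ and $\nu^0_0=1$. For the inductive step, I will choose some $j\in\I_t$ and use \eqref{eq:several-blocks-1pt-auxiliar-12} to write
\begin{align*}
\sch_{\bn+e_j}=\Bigl(\prod_{i<j}q_{ji}^{n_i}\Bigr)^{-1}\ad_c x_{j+\frac12}(\sch_\bn)
=\Bigl(\prod_{i<j}q_{ji}^{n_i}\Bigr)^{-1}\bigl(x_{j+\frac12}\sch_\bn-(g_j\cdot\sch_\bn)\,x_{j+\frac12}\bigr),
\end{align*}
with $g_j\cdot\sch_\bn$ given explicitly by \eqref{eq:several-blocks-1pt-auxiliar-2}. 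Then I will apply $\delta=(\pi\otimes\id)\Delta$ and expand, using the inductive hypothesis for $\delta(\sch_\bn)$ together with the primitivity relation $\Delta(x_{j+\frac12})=x_{j+\frac12}\otimes 1+g_j\otimes x_{j+\frac12}$ in the bosonization $\toba(V)\#\ku\Gamma$.

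This produces two families of terms according to where the new $x_{j+\frac12}$ lands. In terms where $x_{j+\frac12}$ stays in the first tensor factor, I will commute it past the block prefix $y_1^{\langle n_1-k_1\rangle}\cdots y_t^{\langle n_t-k_t\rangle}g_1^{k_1}\cdots g_t^{k_t}g_\theta$ using \eqref{eq:several-blocks-qcommute} and either \eqref{eq:relations B(W) - case 1} (in the Jordan case $\epsilon_j=1$) or \eqref{eq:-1block+point-bis} (in the super-Jordan case $\epsilon_j=-1$): both identities have the uniform shape $x_{j+\frac12}y_j^{\langle m\rangle}=y_j^{\langle m+1\rangle}+\alpha_{m,\epsilon_j}\,x_j y_j^{\langle m\rangle}$ for appropriate scalars, which is exactly the reason the unified notation \eqref{eq:several-blocks-yjn} was set up. In terms where $x_{j+\frac12}$ falls into the second tensor factor, the inner expression $x_{j+\frac12}\sch_{\bk}$ combines with the $g_j$-action piece and by \eqref{eq:several-blocks-1pt-auxiliar-12} yields $\sch_{\bk+e_j}$ (times a unit scalar). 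Collecting both contributions and reindexing $\bk\mapsto\bk+e_j$ in the second family, I obtain an expression of exactly the shape claimed in \eqref{eq:several-blocks-1pt-coaction}, with $\nu^{\bn+e_j}_{\bn+e_j}=\nu^{\bn}_{\bn}=1$ coming from the leading term of the first family.

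The main obstacle will be the bookkeeping in the inductive step: the two cases $\epsilon_j=\pm 1$ produce structurally different commutation patterns for $x_{j+\frac12}$ against $y_j^{\langle m\rangle}$, and one must verify that the extra lower-order pieces from each case combine correctly with the contributions of the second family (where $x_{j+\frac12}$ increments the inner coordinate), so that no terms outside the prescribed basis $(y_1^{\langle m_1\rangle}\cdots y_t^{\langle m_t\rangle}g_1^{k_1}\cdots g_t^{k_t}g_\theta\otimes\sch_{\bk})$ survive. Since the lemma only asserts existence of the scalars $\nu^{\bn}_{\bk}$ rather than a closed form, I will not need to solve the resulting recursion explicitly; it suffices that the right-hand side after collection lies in the correct spanning set and that the coefficient of $g_1^{n_1}\cdots g_t^{n_t}g_\theta\otimes\sch_{\bn+e_j}$ equals $1$, both of which follow by direct inspection.
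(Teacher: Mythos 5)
Your plan coincides with the paper's proof: both argue by induction on $N=\sum n_i$, strip off one $\ad_c x_{j+\frac12}$, write $\delta(\sch_{\bn+e_j})$ via $\Delta(x_{j+\frac12})=x_{j+\frac12}\otimes 1+g_j\otimes x_{j+\frac12}$, and separate the resulting terms into those landing in the first tensor factor and those landing in the second, which contribute $\sch_{\bk+e_j}$. The one deviation is cosmetic: the paper always picks $j$ to be the first nonzero coordinate, so that the prefactor $\prod_{i<j}q_{ji}^{n_i}$ is empty; your choice of an arbitrary $j$ works equally well but drags extra unit scalars through the bookkeeping.

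However, the intermediate identity you state is false as written. You claim $x_{j+\frac12}\,y_j^{\langle m\rangle}=y_j^{\langle m+1\rangle}+\alpha_{m,\epsilon_j}\,x_j y_j^{\langle m\rangle}$, but both terms on the right live in the span of $\{x_j^{m+1},\,x_jx_{j+\frac12\,j}^{\ell},\,x_{j+\frac12\,j}^{\ell'}\}$ of degree $m+1$ (indeed in the super‑Jordan case $x_jy_j^{\langle m\rangle}$ is either $y_j^{\langle m+1\rangle}$ or $0$), while the left side still contains a genuine $x_{j+\frac12}$; already for $m=0$ it reads $x_{j+\frac12}=y_j^{\langle 1\rangle}+\alpha\,x_j y_j^{\langle 0\rangle}$, i.e.\ $x_{j+\frac12}$ proportional to $x_j$, which is absurd. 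The identities \eqref{eq:relations B(W) - case 1} and \eqref{eq:-1block+point-bis} actually have the shape $x_{j+\frac12}\,y_j^{\langle m\rangle}=(\text{scalar})\,y_j^{\langle m\rangle}x_{j+\frac12}+(\text{scalar})\,y_j^{\langle m+1\rangle}$, and the surviving $y_j^{\langle m\rangle}x_{j+\frac12}$ piece is precisely what gets cancelled against $-q\,\delta(\widetilde\sch)(x_{j+\frac12}\otimes1)$; that cancellation, plus the group-action term $g_\bk\cdot x_{j+\frac12}=\epsilon_j^{k_j}(\cdots)\bigl(x_{j+\frac12}+(\epsilon_j k_j+a_j)x_j\bigr)$, is what makes the first family close up into scalar multiples of $y_j^{\langle m+1\rangle}$ — exactly as in \eqref{eq:yn-formula}. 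You do say in your final paragraph that the $y_j^{\langle m\rangle}x_{j+\frac12}$ pieces must recombine with the other family, so you clearly have the right picture in mind; but the inline identity as stated is incorrect and should be replaced by the correct commutation plus the explicit cancellation. Everything else — the base case, the appeal to \eqref{eq:several-blocks-1pt-auxiliar-12} for the second family, and the leading-coefficient check $\nu^{\bn+e_j}_{\bn+e_j}=1$ — is fine.
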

\pf
We argue recursively on $N=\sum n_i$ as in the proof of \eqref{eq:several-blocks-1pt-auxiliar-4}.
The statement for $N=0$ follows at once. Let $\sch=\sch_{0,\dots,0, n_j+1, n_{j+1},\dots,n_{t}}$, so
\begin{align*}
\sch&=x_{j+\frac{1}{2}}\widetilde{\sch}- q \, \widetilde{\sch}x_{j+\frac{1}{2}},
& \mbox{for }q &= q_{j \,\theta}\prod_{i=j}^t q_{ji}^{n_i}, \  \widetilde{\sch} = \sch_{0,\dots,0,n_j,n_{j+1}, \dots,n_{t}}.
\end{align*}
We may assume $j=1$. Then we compute
\begin{multline*}
\delta(\sch) = (x_{\fudos}\otimes 1+g_1\otimes x_{\fudos}) \delta(\widetilde{\sch}) - q \, \delta(\widetilde{\sch})
(x_{\fudos}\otimes 1+g_1\otimes x_{\fudos}) \\
= \sum_{0\le \bk\le \bn} \nu_{\bk}^{\bn}  \,
\left( x_{\fudos} y_1^{\langle n_1-k_1\rangle}- y_1^{\langle n_1-k_1\rangle}(\epsilon_1 x_{\fudos}+(a_1+\epsilon_1k_1)x_1 \right) \times \\
\times y_2^{\langle n_2-k_2\rangle} \dots y_{t}^{\langle n_t-k_t)\rangle} g_1^{k_1} \dots g_{t}^{k_t}g_{\theta} \otimes \sch_{k_1,\dots,k_{t}} \\
+ \sum_{0\le \bk\le \bn} \nu_{\bk}^{\bn}  \prod_{i=1}^t q_{1i}^{n_i-k_i} \,
y_1^{\langle n_1-k_1\rangle}\dots y_{t}^{\langle n_t-k_t\rangle} g_1^{k_1+1} \dots g_{t}^{k_t}g_{\theta} \\
\otimes \left( x_{\fudos} \sch_{k_1,\dots,k_{t}} - q_{1 \,\theta}\prod_{i=1}^t q_{1i}^{k_i}\sch_{k_1,\dots,k_{t}} x_{\fudos} \right) \\
\end{multline*}
\begin{multline*}= \sum_{0\le \bk\le \bn} \nu_{\bk}^{\bn}  \,
y_1^{\langle n_1+1-k_1\rangle} y_2^{\langle n_2-k_2\rangle} \dots y_{t}^{\langle n_t-k_t\rangle} g_1^{k_1} \dots g_{t}^{k_t}g_{\theta}
\otimes \sch_{k_1,\dots,k_{t}} \\
+ \nu_{\bk}^{\bn}  \prod_{i=1}^t q_{1i}^{n_i-k_i} \,
y_1^{\langle n_1-k_1\rangle}\dots y_{t}^{\langle n_t-k_t\rangle} g_1^{k_1+1} \dots g_{t}^{k_t}g_{\theta}
\otimes \sch_{k_1+1,\dots,k_{t}}.
\end{multline*}
Thus we obtain the recursive formula for
\begin{align*}
\nu_{n_1+1,k_2\dots,k_t}^{n_1+1,\dots,n_t} &= \nu_{n_1,k_2\dots,k_t}^{n_1,\dots,n_t} \prod_{i=2}^t q_{1i}^{n_i-k_i} \qquad\qquad
\nu_{0,k_2,\dots,k_t}^{n_1+1,\dots,n_t} = \nu_{0,k_2,\dots,k_t}^{n_1,\dots,n_t},
\\
\nu_{k_1,\dots,k_t}^{n_1+1,\dots,n_t} &= \nu_{k_1,\dots,k_t}^{n_1,\dots,n_t} + \nu_{k_1-1,\dots,k_t}^{n_1,\dots,n_t} \epsilon_1^{n_1-k_1+1}\prod_{i=2}^t q_{1i}^{n_i-k_i}, \qquad 1\le k_i\le n_i.
\end{align*}
In particular, $\nu_{n_1+1,\dots,n_t}^{n_1+1,\dots,n_t}=\nu_{n_1,\dots,n_t}^{n_1,\dots,n_t}=1$.
\epf

\begin{lemma}\label{lemma:braiding-K-several-blocks-point}
	The braided vector space $K^1$ is of diagonal type with respect to the basis
$(\sch_{\bn})_{\bn \in \cA}$
with braiding matrix $(p_{\bm,\bn})_{\bm, \bn\in \cA}$, where
	\begin{align}\label{eq:several-blocks-braiding-matrix}
	p_{\bm,\bn} &=\epsilon_{\theta}
	\prod_{i,j=1}^{t} q_{ij}^{m_i n_j}q_{i \, \theta}^{m_i} q_{\theta \, j}^{n_j}.
	\end{align}
	Hence, the corresponding generalized  Dynkin diagram  has labels
	\begin{align*}
	p_{\bm,\bm}&= \epsilon_1^{m_1}\cdots \epsilon_t^{m_t}\epsilon_{\theta}, &
	p_{\bm,\bn}p_{\bn,\bm} &= \epsilon_{\theta}^2, & \bm &\neq \bn.
	\end{align*}
\end{lemma}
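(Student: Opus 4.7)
The plan is to use the Majid--Radford identification of the braiding on $K^{1}$: for $u,v\in K^{1}\subset \NA(V)$, one has $c(u\otimes v)=u_{(-1)}\cdot v\otimes u_{(0)}$, where the action on the right is the braided adjoint action of $\NA(W)$ on $K^{1}$, and the coaction is the one computed in Lemma~\ref{lemma:several-blocks-1pt-coaction}. Plugging the formula for $\delta(\sch_{\bm})$ from \eqref{eq:several-blocks-1pt-coaction} into this yields
\begin{align*}
c(\sch_{\bm}\otimes\sch_{\bn})
&=\sum_{0\le\bk\le\bm}\nu^{\bm}_{\bk}\,\bigl(y_{1}^{\langle m_{1}-k_{1}\rangle}\cdots y_{t}^{\langle m_{t}-k_{t}\rangle}\,g_{1}^{k_{1}}\cdots g_{t}^{k_{t}}g_{\theta}\bigr)\cdot\sch_{\bn}\otimes\sch_{\bk}.
\end{align*}
The crucial claim is that only the term $\bk=\bm$ survives, after which the result reduces to a purely diagonal group-like action.

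The key step towards this vanishing is the following: by \eqref{eq:several-blocks-1pt-auxiliar-11} we have $\ad_{c}x_{j}(\sch_{\bn})=0$ and $\ad_{c}x_{j+\frac12\,j}(\sch_{\bn})=0$ for every $\bn\in\cA$ and every $j\in\I_{t}$. Since $(\sch_{\bn})_{\bn\in\cA}$ is a basis of $K^{1}$ by Lemma~\ref{lemma:several-blocks-1pt-auxiliar}\ref{item:several-blocks-1pt-auxiliar-2}, this says that both $x_{j}$ and $x_{j+\frac12\,j}$ act as zero on the whole of $K^{1}$ via the braided adjoint action. Now recall the general identity in a braided Hopf algebra $R$ acting on a Yetter--Drinfeld module $M$: $\ad_{c}(ab)(m)=\ad_{c}(a)\bigl(\ad_{c}(b)(m)\bigr)$ for $a,b\in R$, $m\in M$. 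Reading the monomials $y_{j}^{\langle n\rangle}$ in \eqref{eq:several-blocks-yjn} as words in the letters $x_{j}$ and $x_{j+\frac12\,j}$, we conclude that $y_{j}^{\langle n\rangle}\cdot K^{1}=0$ for every $n>0$. Consequently, in every summand with $\bk\neq\bm$ at least one factor $y_{j}^{\langle m_{j}-k_{j}\rangle}$ with $m_{j}-k_{j}>0$ appears, and the action produces zero.

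Thus only $\bk=\bm$ contributes, and since $\nu^{\bm}_{\bm}=1$ by Lemma~\ref{lemma:several-blocks-1pt-coaction}, we obtain
\begin{align*}
c(\sch_{\bm}\otimes\sch_{\bn})=\bigl(g_{1}^{m_{1}}\cdots g_{t}^{m_{t}}g_{\theta}\cdot\sch_{\bn}\bigr)\otimes\sch_{\bm}.
\end{align*}
Applying \eqref{eq:several-blocks-1pt-auxiliar-2} iteratively gives
\begin{align*}
g_{1}^{m_{1}}\cdots g_{t}^{m_{t}}g_{\theta}\cdot\sch_{\bn}
=\epsilon_{\theta}\Bigl(\prod_{i,j=1}^{t}q_{ij}^{m_{i}n_{j}}\Bigr)\Bigl(\prod_{i=1}^{t}q_{i\,\theta}^{m_{i}}\Bigr)\Bigl(\prod_{j=1}^{t}q_{\theta\,j}^{n_{j}}\Bigr)\sch_{\bn},
\end{align*}
proving that $K^{1}$ is of diagonal type with braiding matrix as in \eqref{eq:several-blocks-braiding-matrix}. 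The diagonal labels $p_{\bm,\bm}=\epsilon_{1}^{m_{1}}\cdots\epsilon_{t}^{m_{t}}\epsilon_{\theta}$ follow from $\epsilon_{j}=q_{jj}$, while $p_{\bm,\bn}p_{\bn,\bm}=\epsilon_{\theta}^{2}$ follows from the assumption $q_{ij}q_{ji}=1$ for $i\neq j\in\I_{t}$ and $\widetilde{q}_{j\,\theta}=q_{j\,\theta}q_{\theta\,j}=1$ imposed in \ref{item:B} and in the reduction preceding this lemma.

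The main obstacle is the off-diagonal vanishing: justifying that the composite braided adjoint action of monomials in $x_{j}$ and $x_{j+\frac12\,j}$ truly annihilates $K^{1}$. This rests on the compatibility of the braided adjoint action with the algebra structure of $\NA(W)$ in the Yetter--Drinfeld category of $\NA(W)\#\ku\Gamma$; once this is invoked cleanly, everything else is a direct computation from the formulas already established in Lemmas~\ref{lemma:several-blocks-1pt-auxiliar} and \ref{lemma:several-blocks-1pt-coaction}.
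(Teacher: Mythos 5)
Your proof is correct and follows the same route as the paper: compute the braiding via the Yetter--Drinfeld formula $c(u\otimes v)=u_{(-1)}\cdot v\otimes u_{(0)}$, use Lemma~\ref{lemma:several-blocks-1pt-coaction} for the coaction and \eqref{eq:several-blocks-1pt-auxiliar-11}, \eqref{eq:several-blocks-1pt-auxiliar-2} for the action. The paper compresses the off-diagonal vanishing into the citation "by Lemmas~\ref{lemma:several-blocks-1pt-auxiliar} and \ref{lemma:several-blocks-1pt-coaction}"; you have usefully unpacked that step by invoking the multiplicativity of the braided adjoint action applied to $y_{j}^{\langle n\rangle}$ as a word in $x_{j}$ and $x_{j+\frac12\,j}$, which is exactly what the citation is implicitly relying on.
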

\pf
We compute
\begin{align*}
c(\sch_{\bm} \otimes \sch_{\bn}) &=
g_1^{m_1} \dots g_{t}^{m_t}g_{\theta} \cdot \sch_{\bn} \otimes \sch_{\bm} \\
&= \epsilon_{\theta} \prod_{i,j=1}^{t} q_{ij}^{m_i n_j}q_{i \, \theta}^{m_i} q_{\theta \, j}^{n_j} \sch_{\bn} \otimes \sch_{\bm}
\end{align*}
by Lemmas \ref{lemma:several-blocks-1pt-auxiliar} and \ref{lemma:several-blocks-1pt-coaction}. Now we compute
\begin{align*}
p_{\bm,\bm}&= \epsilon_{\theta}\prod_{i=1}^{t} q_{ii}^{m_i^2} (q_{i \, \theta}q_{\theta \,i})^{m_i} =
\epsilon_1^{m_1}\cdots \epsilon_t^{m_t}\epsilon_{\theta}, \\
p_{\bm,\bn}p_{\bn,\bm} &= \epsilon_{\theta}^2
\left( \prod_{i=1}^{t} q_{ii}^{2m_in_i} (q_{i \, \theta}q_{\theta \,i})^{m_i+n_i} \right)
\left( \prod_{i=1}^{t} (q_{ij}q_{ji})^{m_in_j} \right) =\epsilon_{\theta}^2,
\end{align*}
since the interaction is weak and $\epsilon_k^2=1$ if $k\in \I_t$.
\epf

\emph{End of the proof of Theorem \ref{thm:variosblocks-unpto}. \ }
If $\epsilon_{\theta}\in\G'_3$, then we may assume $\epsilon_k=1$ for all $k\in \I_t$.
In this case the Dynkin diagram of $K^1$ contains a subdiagram of affine Cartan type $A_2^{(1)}$,
so $\GK K=\infty$.

If $\epsilon_{\theta}^2=1$, then the braiding  of $K^1$ corresponds to a quantum linear space
where the labels of the vertices are given by $p_{\bm,\bm}\in\{\pm 1\}$, $\bm\in \cA$.
Thus
\begin{align}\label{eq:variosblocks-unpto-gkd}
\qquad \GK K= \vert \{\bm\in \cA:\, p_{\bm,\bm}=1\} \vert.  \qquad\qquad \qquad\qquad \quad\qed
\end{align}

\subsection{The Nichols algebras $\pos(\bq,\ghost)$}\label{subsection:YD>3-severalblocks-1pt-poseidon}

Here we present the Nichols algebras of finite $\GK$ from the previous Subsection.
Let $t\geq 2$ and $\theta = t +1$; we keep the notation from \S \ref{subsubsection:YD>3-severalblocks-pts}. We fix
\begin{itemize}
	\item $\bq\in\ku^{\theta \times \theta}$ such that $\epsilon_i:=q_{ii}=\pm 1$, $q_{ij}q_{ji}=1$ for all $i\neq j \in \I_{\theta}$.
	\item $\ghost=(\ghost_j)\in\N^{t}$, $(a_j)$ such that $\ghost_j = \begin{cases} -2a_j, &\epsilon_j = 1, \\
	a_j, &\epsilon_j = -1;\end{cases}$ cf. \eqref{eq:discrete-ghost}.
\end{itemize}

Let $\pos(\bq,\ghost)$ be the braided vector space with basis $(x_i)_{i\in\Idd}$ and braiding
\begin{align}\label{eq:braiding-several-blocks-1pt}
c(x_i\ot x_j) &= \left\{ \begin{array}{ll}
q_{ij} \, x_j\otimes x_i, & \lfloor i\rfloor \leq t, \, \lfloor i\rfloor \neq \lfloor j\rfloor, \\
\epsilon_j \, x_j\otimes x_i, & \lfloor i\rfloor =j \leq t, \\
(\epsilon_j \, x_j+x_{\lfloor j\rfloor})\otimes x_i, & \lfloor i\rfloor \leq t, \, j=\lfloor i\rfloor +\frac{1}{2}, \\
q_{\theta j} \, x_j\otimes x_{\theta}, & i = \theta, \, j\in\I_{\theta}, \\
q_{\theta j} \, (x_j+a_j x_{\lfloor j\rfloor})\otimes x_{\theta}, & i = \theta, \, j\notin\I_{\theta}.
\end{array} \right.
\end{align}
The braided vector spaces in Theorem \ref{thm:variosblocks-unpto} with finite $\GK$ have this shape.

\smallbreak
The following relations hold in $\cB(\pos(\bq,\ghost))$, see e.g. \S \ref{subsection:point-block-presentation}:
\begin{align}\label{eq:poseidon-defrels-Jordan}
x_{i+\frac{1}{2} }&x_i -x_ix_{i+\frac{1}{2} }+\frac{1}{2}x_i^2 =0, & &i\in\I_t, \, \epsilon_{i}=1;
\\ \label{eq:poseidon-defrels-super-Jordan}
x_i^2 = 0, \, x_{i+\frac{1}{2} }&x_{i+\frac{1}{2} \, i}- x_{i+\frac{1}{2} \, i}x_{i+\frac{1}{2} } - x_ix_{i+\frac{1}{2} \, i}=0, & i&\in\I_t, \, \epsilon_{i}=-1;
\\\label{eq:poseidon-defrels-blocks-commute}
x_ix_j &= q_{ij} \, x_jx_i, & \lfloor i\rfloor &\neq \lfloor j\rfloor  \in\I_t;
\\ \label{eq:poseidon-defrels-q-commute}
x_ix_{\theta} &= q_{i \theta} \, x_{\theta} x_i, & i& \in\I_t;
\\ \label{eq:poseidon-defrels-q-Serre}
(\ad_c &x_{i+\frac{1}{2}})^{1+|2a_i|}(x_{\theta})=0, & i& \in\I_t.
\end{align}

We denote by $\sch_{\bn}$, $\bn\in\N_0^t$, the element defined by \eqref{eq:defn-sch-several-blocks}
in $T(V)$ or some quotient of $T(V)$.
Let $\mathcal A := \{\bn \in\N^t_0: 0 \le \bn \le \ba= (2|a_1|, \dots 2|a_t|)\}$.
Let $p_{\bm,\bn}$ be defined by \eqref{eq:several-blocks-braiding-matrix}
and $\epsilon_{\bn}:= \epsilon_{\theta} \prod_{i=1}^t \epsilon_i^{n_i}$.

Let $W$ be a braided vector space of diagonal type with braiding matrix
$(p_{\bm,\bn})_{\bm,\bn \in \cA}$.
Then $\cB(W)$ is presented by generators $w_{\bn}$, $\bn\in \cA$ and relations
\begin{align*}
w_{\bm}w_{\bn} &= p_{\bm,\bn} \, w_{\bn}w_{\bm}  &
\bm & \neq \bn \in \cA;& w_{\bn}^2&=0, & \bn \in \cA, \epsilon_{\bn} &= -1.
\end{align*}
We order $\cA$ lexicographically.  Then  a basis of $\cB(W)$ is:
\begin{align*}
B_W=\Big\{ \prod_{\bn \in \cA} w_{\bn}^{b_{\bn}}: \,
0\le b_{\bn} < 2 \mbox{ if }\epsilon_{\bn}=-1 \Big\}.
\end{align*}

\begin{remark}\label{rem:relations-poseidon}
	By Lemma \ref{lemma:braiding-K-several-blocks-point}, $K^1$ is isomorphic to $W$ as braided vector spaces.
	Hence there is an isomorphism of braided Hopf algebras
	$\psi:\cB(W)\to K$ such that $\psi(w_{\bn})=\sch_{\bn}$,  $\bn \in \cA$; and the following identities hold in $\cB(\pos(\bq,\ghost))$:
	\begin{align}\label{eq:poseidon-rels-K-1}
	\sch_{\bm}\sch_{\bn} &= p_{\bm,\bn} \, \sch_{\bn}\sch_{\bm}  &
	\bm & \neq \bn \in \cA;
	\\ \label{eq:poseidon-rels-K-2}
	\sch_{\bn}^2&=0, & \bn \in \cA, \, \epsilon_{\bn} &= -1;
	\end{align}
	and the following set is a basis of $K$:
	\begin{align*}
	B_K= \Big\{ \prod_{\bn \in \cA} \sch_{\bn}^{b_{\bn}}: \,
	0\le b_{\bn}< 2 \mbox{ if }\epsilon_{\bn}=-1 \Big\}.
	\end{align*}
\end{remark}

Recall the definition of $y_j^{\langle n\rangle}$ in \eqref{eq:several-blocks-yjn} for $j\in\I_t$, $n\in\N_0$.


\begin{prop} \label{pr:poseidon}
	The algebra $\cB(\pos(\bq,\ghost))$ is presented by generators $x_i$, $i\in \Idd$, and relations
	\eqref{eq:poseidon-defrels-Jordan}, \eqref{eq:poseidon-defrels-super-Jordan},
	\eqref{eq:poseidon-defrels-blocks-commute}, \eqref{eq:poseidon-defrels-q-commute},
	\eqref{eq:poseidon-defrels-q-Serre}, \eqref{eq:poseidon-rels-K-1}, \eqref{eq:poseidon-rels-K-2}.
	A basis of $\cB(\pos(\bq,\ghost))$ and the $\GK$ are given by
	\begin{align*}
	B =\big\{ y_1^{\langle m_1\rangle} x_{\fudos}^{m_2} \dots y_t^{\langle m_{2t-1} \rangle} x_{t+\frac{1}{2}}^{m_{2t}} \prod_{\bn \in \cA} \sch_{\bn}^{b_{\bn}}: \, &0\le b_{\bn}< 2 \mbox{ if }\epsilon_{\bn}=-1, \\
& b_{\bn}, m_i \in \N_0 \mbox{ if }\epsilon_{\bn}=1, \, i\in \I_t \big\}, \\
	\GK \cB(\pos(\bq,\ghost))= 2t+ \vert \{\bm \in \cA:  p_{\bm,\bm}&=1\}\vert.
	\end{align*}
\end{prop}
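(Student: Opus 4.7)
The strategy follows the pattern of Propositions \ref{pr:lstr-a(10)1}, \ref{pr:lstr-a(20)1} and \ref{pr:lstr-a-22}: first verify that the listed relations hold in $\cB(\pos(\bq,\ghost))$; second, establish that $B$ is a basis via the splitting $\cB(V) \simeq K \# \cB(V_1 \oplus \dots \oplus V_t)$; third, prove the presentation through a left-ideal argument; fourth, read the $\GK$ from the Hilbert series.

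For the first step, relations \eqref{eq:poseidon-defrels-Jordan} and \eqref{eq:poseidon-defrels-super-Jordan} hold in each $\cB(V_k)$ by Propositions \ref{pr:1block} and \ref{pr:-1block}. Relation \eqref{eq:poseidon-defrels-blocks-commute} follows from assumption \ref{item:B} in \S \ref{subsubsection:YD>3-severalblocks-pts}, and \eqref{eq:poseidon-defrels-q-commute} from $\widetilde{q}_{k\theta}=1$. The q-Serre relation \eqref{eq:poseidon-defrels-q-Serre} is the case $\bn = \ba + e_i$ of Lemma \ref{lemma:several-blocks-1pt-auxiliar} \ref{item:several-blocks-1pt-auxiliar-2}. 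Finally, \eqref{eq:poseidon-rels-K-1} and \eqref{eq:poseidon-rels-K-2} come from Remark \ref{rem:relations-poseidon}.

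For the second step, I would apply \S \ref{subsubsec:algK-block-points} iteratively: since the blocks mutually braided-commute by \ref{item:B}, one has $\cB(V_1 \oplus \dots \oplus V_t) \simeq \cB(V_1) \underline{\otimes} \cdots \underline{\otimes} \cB(V_t)$, so that $\cB(\pos(\bq,\ghost)) \simeq K \# \bigl(\cB(V_1) \underline{\otimes} \cdots \underline{\otimes} \cB(V_t)\bigr)$. Combining the PBW bases $\{y_i^{\langle m_i\rangle} x_{i+\frac{1}{2}}^{m_{2i}}\}$ of each block factor (cf. Propositions \ref{pr:1block}, \ref{pr:-1block}) with the basis $B_K$ of $K$ from Remark \ref{rem:relations-poseidon} yields exactly the set $B$.

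For the third step, let $\cBt$ denote the quotient of $T(V)$ by the listed relations; the first step gives a surjection $\cBt \twoheadrightarrow \cB(\pos(\bq,\ghost))$. I would then show that the subspace $I$ spanned by $B$ is a left ideal of $\cBt$. Multiplication on the left by the block generators $x_i, x_{i+\frac{1}{2}}$ past the block parts of a monomial in $B$ is controlled by \eqref{eq:poseidon-defrels-Jordan}, \eqref{eq:poseidon-defrels-super-Jordan} and \eqref{eq:poseidon-defrels-blocks-commute}, exactly as in the Jordan and super Jordan planes. Commuting $x_i$ and $x_{i+\frac{1}{2}}$ past a factor $\sch_{\bn}$ is controlled by Lemma \ref{lemma:several-blocks-1pt-auxiliar} \ref{item:several-blocks-1pt-auxiliar-1}, which gives
\begin{align*}
x_i \sch_{\bn} &= \epsilon_i q_{i\theta}\!\!\prod_{j=1}^{t} q_{ij}^{n_j}\, \sch_{\bn} x_i,
\\
x_{i+\frac{1}{2}} \sch_{\bn} &= \epsilon_i q_{i\theta}\!\!\prod_{j=1}^{t} q_{ij}^{n_j}\Bigl(\sch_{\bn} x_{i+\frac{1}{2}} + \lambda_i(\bn)\, \sch_{\bn} x_i\Bigr) + \mu_i(\bn)\, \sch_{\bn+e_i},
\end{align*}
for suitable scalars $\lambda_i(\bn), \mu_i(\bn)$; the q-Serre relation \eqref{eq:poseidon-defrels-q-Serre} kills the term $\sch_{\bn+e_i}$ when $n_i = |2a_i|$, closing the recursion. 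Since $x_\theta I \subseteq I$ by definition of the $\sch_{\bn}$'s, and $1 \in I$, we conclude that $\cBt$ is spanned by $B$. Comparing with step two, $\cBt \simeq \cB(\pos(\bq,\ghost))$.

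Finally, the Hilbert series of $\cB(\pos(\bq,\ghost))$ is the product of the Hilbert series of the block factors (each contributing $\GK=2$ by Theorem \ref{theorem:blocks}) and of $K$, the latter being a quantum linear space in which a vertex $\bm \in \cA$ with $\epsilon_{\bm}=1$ contributes a polynomial factor and one with $\epsilon_{\bm}=-1$ a truncated factor. By Lemma \ref{lemma:GKdim-smashproduct} one gets $\GK \cB(\pos(\bq,\ghost)) = 2t + |\{\bm \in \cA : p_{\bm,\bm}=1\}|$. The main obstacle is the bookkeeping in the left-ideal argument: one must show that $x_{i+\frac{1}{2}} \sch_{\bn}$ lies in $I$ even when $n_i = |2a_i|$, which relies precisely on the q-Serre relation, and one must track the scalars $\lambda_i(\bn)$ and $\mu_i(\bn)$ arising in mixed type (Jordan/super Jordan) situations; the rest is routine.
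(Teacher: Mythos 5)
Your proof follows essentially the same line as the paper's: reconstruct $\cB(\pos(\bq,\ghost))\simeq K\#\cB(V_1\ot\cdots\ot V_t)$, get $B$ by juxtaposing the PBW bases of the block factors with the basis $B_K$ of $K$, and show the quotient $\cBt$ of $T(V)$ by the listed relations is spanned by $B$ via a one-sided ideal argument. One small simplification you can make: since $\sch_{\bn}\in K^1$ and $g_j$ acts on it diagonally (Lemma \ref{lemma:several-blocks-1pt-auxiliar}\eqref{eq:several-blocks-1pt-auxiliar-2}), the commutation $x_{j+\frac{1}{2}}\sch_{\bn}$ has only the two terms $(g_j\cdot\sch_{\bn})x_{j+\frac{1}{2}}+\prod_{i<j}q_{ji}^{n_i}\sch_{\bn+e_j}$; the extra term $\lambda_i(\bn)\sch_{\bn}x_i$ you allow for does not actually occur.
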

\pf
We first prove that $B$ is a basis of $\cB := \cB(\pos(\bq,\ghost))$: since $\cB\simeq K\#\cB(V_1)$ and
$\cB(V_1)\simeq \otimes_{i=1}^{t} \cB(\cV_{g_i}(\chi_i, \eta_i))$, the claim follows from
Remark \ref{rem:relations lstr-a(10)1} and Propositions \ref{pr:-1block}, \ref{pr:1block}. The formula for the
$\GK \cB$ follows by computing the Hilbert series.

Relations \eqref{eq:poseidon-defrels-Jordan}, \eqref{eq:poseidon-defrels-super-Jordan}, \eqref{eq:poseidon-defrels-blocks-commute},
\eqref{eq:poseidon-defrels-q-commute}, \eqref{eq:poseidon-defrels-q-Serre}, \eqref{eq:poseidon-rels-K-1},
\eqref{eq:poseidon-rels-K-2} hold in $\cB$ as we have discussed already.
Hence the quotient $\cBt$ of $T(V)$ by these relations projects onto $\cB$.

We claim that the subspace $I$ spanned by $B$ is a left ideal of $\cBt$. Indeed, $x_j I\subseteq I$ and $x_{j+\frac{1}{2}} I\subseteq I$
for all $j\in\I_t$ by \eqref{eq:poseidon-defrels-Jordan}, \eqref{eq:poseidon-defrels-super-Jordan}, \eqref{eq:poseidon-defrels-blocks-commute}.
It remains to prove that $x_{\theta} I\subseteq I$.
By \eqref{eq:poseidon-defrels-blocks-commute}, \eqref{eq:poseidon-defrels-q-commute}, \eqref{eq:poseidon-defrels-q-Serre},
\begin{align*}
y_j^{\langle m_j\rangle}\sch_{\bn} &=  q_{j \theta}^{m_j} \prod_{i=1}^{t} q_{ji}^{m_jn_i} \, \sch_{\bn} y_j^{\langle m_j\rangle},
\\
x_{j+\frac{1}{2}}\sch_{\bn} &=  q_{j \theta}^{m_j} \prod_{i=1}^{t} q_{ji}^{m_jn_i} \, \sch_{\bn}x_{j+\frac{1}{2}}
+ \prod_{i<j} q_{ji}^{n_i} \, \sch_{n_1,\dots, n_j+1,\dots ,n_{t}},
\\
\sch_{n_1,\dots, |2a_j|+1,\dots ,n_{t}} &=0.
\end{align*}
As $\sch_{0,\dots,0}=x_{\theta}$, from the previous equations we have that
\begin{multline*}
x_{\theta}y_1^{\langle m_1\rangle} x_{\fudos}^{m_2} \dots y_t^{\langle m_{2t-1}\rangle} x_{t+\frac{1}{2}}^{m_{2t}} \sch_{\bn}^{b_{\bn}} \\
\in \sum_{t_i=0}^{\min \{ m_{2i}, |2a_i|\}} y_1^{\langle m_1\rangle} x_{\fudos}^{m_2-t_1} \dots y_t^{\langle m_{2t-1}\rangle}
x_{t+\frac{1}{2}}^{m_{2t}-t_t} \sch_{t_1,\dots,t_t} \sch_{\bn}^{b_{\bn}}.
\end{multline*}
Using this fact, \eqref{eq:poseidon-rels-K-1} and  \eqref{eq:poseidon-rels-K-2} we conclude that
$$ x_{\theta} y_1^{\langle m_1\rangle} x_{\fudos}^{m_2} \dots y_t^{\langle m_{2t-1}\rangle} x_{t+\frac{1}{2}}^{m_{2t}}
\sch_{\bn}^{b_{\bn}}\in I, $$
and the claim follows. Since $1\in I$, $\cBt$ is spanned by $B$. Thus $\cBt \simeq \cB$ since $B$ is a basis of $\cB$.
\epf

If $\epsilon_i = -1$ at least one $i$, then  $\toba(\pos(\bq,\ghost))$ is not a domain. Conversely,

\begin{prop} \label{pr:poseidon-domain}If  $\bq_{ii} = \epsilon_i =1$ for all $i\in \I_{\theta}$, then
$\toba(\pos(\bq,\ghost))$ is a domain.
\end{prop}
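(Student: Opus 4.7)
The plan is to mimic the strategy used for Proposition~\ref{pr:lstr-11disc-domain}: realize $\toba(\pos(\bq,\ghost))$ as a filtered algebra whose associated graded is a quantum polynomial ring, hence a domain. The hypothesis $\epsilon_i=1$ for all $i\in\I_\theta$ is crucial because it forces $\epsilon_{\bn}=1$ for every $\bn\in\cA$, so no relation $\sch_{\bn}^{2}=0$ appears, and in addition every Jordan block contributes a quadratic Jordan plane (itself a domain) rather than a super Jordan plane.

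First, I introduce an auxiliary algebra $\Bg$ generated by abstract symbols $X_j,X_{j+\frac12}$ ($j\in\I_t$) together with $Z_{\bn}$ ($\bn\in\cA$), where by convention $Z_{\mathbf 0}=X_\theta$. The defining relations are exactly the relations of Proposition~\ref{pr:poseidon} rewritten in these generators: the Jordan plane relation $X_{j+\frac12}X_j-X_jX_{j+\frac12}+\tfrac12 X_j^{2}=0$ for each block, the cross-block $q$-commutations \eqref{eq:poseidon-defrels-blocks-commute}, the relations $X_jZ_{\bn}=q_{j\theta}\prod_iq_{ji}^{n_i}Z_{\bn}X_j$ encoding \eqref{eq:several-blocks-1pt-auxiliar-11}--\eqref{eq:several-blocks-1pt-auxiliar-2}, the ``ladder'' relations
$X_{j+\frac12}Z_{\bn}-q_{j\theta}\prod_iq_{ji}^{n_i}Z_{\bn}X_{j+\frac12}=\prod_{i<j}q_{ji}^{n_i}Z_{\bn+e_j}$
with the truncation $Z_{\bn+e_j}=0$ whenever $n_j=2|a_j|$, and the $q$-commutations $Z_{\bm}Z_{\bn}=p_{\bm,\bn}Z_{\bn}Z_{\bm}$ from \eqref{eq:poseidon-rels-K-1}. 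The assignments $X_i\mapsto x_i$, $Z_{\bn}\mapsto \sch_{\bn}$ provide a surjective algebra homomorphism $\Bg\twoheadrightarrow\toba(\pos(\bq,\ghost))$; injectivity is forced by comparing Hilbert series, since the spanning set of $\Bg$ obtained by iteratively pushing the $X_j$'s and $X_{j+\frac12}$'s to the left of the $Z_{\bn}$'s and ordering the $Z_{\bn}$'s lexicographically is, by construction, in bijection with the PBW basis $B$ of Proposition~\ref{pr:poseidon}.

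Next, I equip $\Bg$ with the filtration in which $\deg X_j=0$ for $j\in\I_t$ and $\deg X_{j+\frac12}=\deg Z_{\bn}=1$ for all $j\in\I_t$ and $\bn\in\cA$; every defining relation is then contained in a single filtration step, so this is a genuine algebra filtration. In $\gr\Bg$ the Jordan plane relation degenerates to $[\overline{X_{j+\frac12}},\overline{X_j}]=0$ (because $\overline{X_j^{2}}$ lives in strictly lower degree), and the ladder relation collapses to the pure $q$-commutation $\overline{X_{j+\frac12}}\,\overline{Z_{\bn}}=q_{j\theta}\prod_iq_{ji}^{n_i}\overline{Z_{\bn}}\,\overline{X_{j+\frac12}}$ since $\overline{Z_{\bn+e_j}}$ drops in degree. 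All other relations persist unchanged. Consequently $\gr\Bg$ is, at most, the quantum polynomial ring $\pos^{\mathrm{diag}}$ on the $2t+|\cA|$ generators whose scalar matrix is read off from the $q_{ij}$'s and the $p_{\bm,\bn}$'s of Lemma~\ref{lemma:braiding-K-several-blocks-point}. Comparing Hilbert series of $\pos^{\mathrm{diag}}$ and of $\gr\Bg$ (both agree with that of $\Bg$, and hence with the PBW basis of Proposition~\ref{pr:poseidon}) forces the natural surjection $\pos^{\mathrm{diag}}\twoheadrightarrow\gr\Bg$ to be an isomorphism.

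A quantum polynomial ring is a domain, so $\gr\Bg$ is a domain, whence $\Bg\simeq\toba(\pos(\bq,\ghost))$ is a domain because the filtration is separated and exhaustive. The main obstacle is the identification of $\gr\Bg$ with $\pos^{\mathrm{diag}}$ in the third paragraph: one must rule out any hidden relation appearing only after passing to the associated graded. The safest route is the Hilbert series comparison sketched above, made rigorous by exhibiting on each side an ordered monomial PBW basis in the variables $X_j,X_{j+\frac12},Z_{\bn}$ and noting that both bases are indexed by the same combinatorial set $\N_0^{2t}\times\N_0^{\cA}$; this, together with Proposition~\ref{pr:poseidon}, pins down $\gr\Bg$ completely and closes the argument.
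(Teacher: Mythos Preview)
Your proof is correct and follows essentially the same route as the paper: introduce an auxiliary presentation $\Bg$ isomorphic to $\toba(\pos(\bq,\ghost))$, filter it by setting $\deg X_j=0$ for $j\in\I_t$ and all other generators in degree $1$, and identify $\gr\Bg$ with a quantum polynomial ring via a Hilbert series comparison against the PBW basis of Proposition~\ref{pr:poseidon}. The paper's proof is terser but structurally identical; your version spells out the ladder relations and the degeneration mechanism more explicitly.
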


\pf Similar to the proof of Proposition \ref{pr:lstr-11disc-domain}:
Consider the algebra $\Bg$ generated by $X_i$, $i\in \Idd$ and $(\Sch_{\bn})_{\bn \in \cA}$
with defining relations
\eqref{eq:poseidon-defrels-Jordan},
\eqref{eq:poseidon-defrels-blocks-commute}, \eqref{eq:poseidon-defrels-q-commute},
\eqref{eq:poseidon-defrels-q-Serre}, \eqref{eq:poseidon-rels-K-1} and \eqref{eq:several-blocks-1pt-auxiliar-12}  (with $X_i$ in the place of $x_i$ and $\Sch_{\bn}$ in the place of $\sch_{\bn}$).
Clearly the assignments $X_i \leftrightarrow x_i$ and $\Sch_{\bn} \leftrightarrow \sch_{\bn}$ provide an algebra isomorphism
$\Bg \simeq \cB(\pos(\bq,\ghost))$.
Consider the filtration of $\Bg$ where $\deg X_i =0$, $i \in \I_t$, and all the other defining generators having degree 1. We claim that
$\gr \Bg$ is presented by $(\overline{X}_i)_{i\in \Idd}$, $(\overline{\Sch}_{\bn})_{\bn \in \cA}$ with the relations
\begin{align}
	\label{eq:grBg-11}
	\overline{X}_{i+\frac{1}{2} }\overline{X}_i &=\overline{X}_i\overline{X}_{i+\frac{1}{2} }, & i&\in\I_t, \\
	\label{eq:grBg-12}
	\overline{X}_i\overline{X}_j &= q_{ij} \, \overline{X}_j\overline{X}_i, & \lfloor i\rfloor &\neq \lfloor j\rfloor  \in\I_t;
	\\
	\label{eq:grBg-15}
	\overline{X}_{j}\overline{\Sch}_{\bn} &= q_{k\, \theta} \prod_{i=1}^t q_{ki}^{n_i} \,\overline{\Sch}_{\bn}  \overline{X}_{j}, & j& \in\Idd_k, \quad k \in\I_t;
	\\
	\label{eq:grBg-16}
		\overline{\Sch}_{\bm}\overline{\Sch}_{\bn} &= p_{\bm,\bn} \, \overline{\Sch}_{\bn}\overline{\Sch}_{\bm}  &
		\bm & \neq \bn \in \cA.
		\end{align}
Indeed, the algebra $\widetilde \Bg$ with the mentioned presentation admits a surjective algebra homomorphism onto $\gr \Bg$.
But $\widetilde \Bg$ is a quantum polynomial ring, hence it has a PBW-basis analogous to $B$ above and the claim follows.
Now $\widetilde \Bg$ is a domain, hence so is $\Bg \simeq \cB(\pos(\bq,\ghost))$.
\epf

\subsection{Several blocks, several points}\label{subsubsection:YD>3-severalblocks-severalpoints}
	Let $V$ be a braided vector space as in  \eqref{eq:braiding-blocks-points} with $\theta = t+N$, $N \ge 2$; and $t \ge 2$ as always in this Section. Let $\X$ be the set of connected components of $V_{\diag}$.
	We keep all the assumptions in \S \ref{subsubsection:YD>3-severalblocks-pts}.
Here is our first reduction.

\begin{lemma}\label{lemma:several-blocks-connected-components}
Assume that there are $k \neq \ell\in\I_t$ (two  blocks), $J\in \X$ with $\vert J \vert \ge 2$ and  $i \in J$
such that $c_{ik}c_{ki} \neq \id$, $c_{i\ell}c_{\ell i} \neq \id$.
Then $\GK\cB(V)=\infty$.
\end{lemma}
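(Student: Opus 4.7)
The plan is to reduce to a small braided subspace and then push the problem into the diagonal setting via the splitting $\toba(W)\simeq K\#\toba(V_k\oplus V_\ell)$.

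First I would pass to the braided subspace $W:=V_k\oplus V_\ell\oplus \ku x_i\oplus \ku x_j$ of $V$, where $j\in J$ is some point adjacent to $i$ in the Dynkin diagram of $V_{\diag}$ (such $j$ exists because $|J|\ge 2$ and $J$ is connected). Since $\toba(W)\hookrightarrow\toba(V)$, it suffices to prove $\GK\toba(W)=\infty$. Applying Theorem~\ref{thm:variosblocks-unpto} to the subspace $V_k\oplus V_\ell\oplus\ku x_i$, either its Nichols algebra already has infinite $\GK$ (and we are done) or else each block has weak interaction with $x_i$ with discrete ghost $\ghost_{ik},\ghost_{i\ell}\in\N$, $\widetilde q_{ki}=\widetilde q_{\ell i}=1$ and $\epsilon_i^{\,2}=1$; assume the latter.

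Next, decompose $W=(V_k\oplus V_\ell)\oplus(\ku x_i\oplus\ku x_j)$ and use the splitting $\toba(W)\simeq K\#\toba(V_k\oplus V_\ell)$ with $K\simeq \toba(K^1)$ and $K^1=\ad_c \toba(V_k\oplus V_\ell)(\ku x_i+\ku x_j)$, as in \S\ref{subsubsection:YD>3-severalblocks-1pt}. Generalising Lemmas~\ref{lemma:several-blocks-1pt-auxiliar}--\ref{lemma:braiding-K-several-blocks-point} to two blocks, the elements
\[
\sch_{\bn}:=(\ad_c x_{k+\frac{1}{2}})^{n_1}(\ad_c x_{\ell+\frac{1}{2}})^{n_2}x_i,\qquad \bn\in\cA:=\{(n_1,n_2):0\le n_r\le\ghost_{ir}\},
\]
are nonzero, span a braided subspace of $K^1$ of diagonal type with pairwise trivial edges and labels $p_{\bn,\bn}=\epsilon_k^{n_1}\epsilon_\ell^{n_2}\epsilon_i\in\{\pm1\}$. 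The point $x_j$ lies in $K^1$ (since $x_j\in V_{\diag}$), and together with the $\sch_\bn$'s spans a braided subspace $U\subset K^1$ of diagonal type: the coaction of $x_j$ (after the projection defining $K^1$) carries the group element $g_j$, while the $\Gamma$-action of $g_j$ on $\sch_\bn$ is diagonal modulo corrections which, for the purpose of the edge labels, are absorbed by the subspace $U$ itself. A direct computation of $c$ on the generators yields $p_{j,\bn}p_{\bn,j}=(q_{jk}q_{kj})^{n_1}(q_{j\ell}q_{\ell j})^{n_2}q_{ij}q_{ji}$, which simplifies to $q_{ij}q_{ji}\neq 1$ in every sub-case (using that if $j$ is itself connected to a block then by Theorems~\ref{thm:variosblocks-unpto},~\ref{thm:point-block} the pertinent $q_{jr}q_{rj}$ equals $1$, and the mild sub-case is handled either by the same identity or by excluding it via Lemma~\ref{lemma:superJordan-2-mild+weak}).

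Consequently $x_j$ has valence $|\cA|=(\ghost_{ik}+1)(\ghost_{i\ell}+1)\ge 4$ in the generalised Dynkin diagram of $U$. But inspection of \cite{H-classif} shows that no connected finite generalised root system of diagonal type possesses a vertex of valence $\ge 4$; Hypothesis~\ref{hyp:nichols-diagonal-finite-gkd} therefore gives $\GK\toba(U)=\infty$, hence $\GK\toba(K^1)=\infty$ and $\GK\toba(W)=\GK\toba(K^1)+\GK\toba(V_k\oplus V_\ell)=\infty$, proving the lemma. The main obstacle will be the careful bookkeeping of the sub-cases in which $j$ itself is connected to one of the blocks (mild or weak interaction with discrete ghost), since the coaction of $x_j$ in $K^1$ then picks up lower-order ghost corrections; in those sub-cases one must verify either that the relevant edge labels of $U$ are unchanged, or else enlarge $U$ by the additional elements $(\ad_c x_{k+\frac{1}{2}})^{m_1}(\ad_c x_{\ell+\frac{1}{2}})^{m_2}x_j$ and check that the valence of some vertex remains $\ge 4$.
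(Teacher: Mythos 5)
Your proposal follows essentially the same route as the paper: reduce to two blocks and two points, decompose $\toba(W)\simeq K\#\toba(V_k\oplus V_\ell)$, observe that $x_j$ is joined in the Dynkin diagram of $K^1$ to the four nonzero elements $\sch_{0,0},\sch_{1,0},\sch_{0,1},\sch_{1,1}$ (a valence-$4$ vertex), and invoke Hypothesis~\ref{hyp:nichols-diagonal-finite-gkd}, the paper additionally noting that Lemma~\ref{lemma:points-trivial-braiding} already suffices when some $p_{\bn,\bn}=1$. One small correction: to rule out the sub-case where $j$ itself has mild interaction with a block you should invoke Theorem~\ref{thm:points-block-eps-1} applied to $V_k\oplus\ku x_i\oplus\ku x_j$ rather than Lemma~\ref{lemma:superJordan-2-mild+weak}, since the latter requires $q_{ij}q_{ji}=1$, which fails here by hypothesis; moreover, your worry about ghost corrections to the coaction of $x_j$ is unfounded, because $\delta(x_j)=g_j\otimes x_j$ and $g_j$ acts diagonally on each $\sch_\bn$.
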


\pf By assumption, there is $j \in \I_{t+1, \theta}$, $i\neq j$ with  $q_{ij}q_{ji}\neq 1$.
By Theorem \ref{thm:pm1bp}, we may assume that $a_{ik},a_{i\ell}\neq 0$.
It is enough to consider the case $t = N =2$: we may set $k=1$, $\ell=2$, $i=3$, $j=4$. Let $q=q_{34}q_{43}\neq 1$.
By Lemma \ref{lemma;several-blocks-1pt-weak}, $q_{13}q_{31}=q_{23}q_{32}=1$; that is, the interaction is weak. We may
assume $q_{14}q_{41}=q_{24}q_{42}=1$, with $\ghost_{41}, \ghost_{42}\in\N_0$.

We fix the decomposition $V = W \oplus W'$, where $W= V_{1}\oplus V_{2}$, $W'= V_{3}\oplus V_{4}$.
Then $K=\NA (V)^{\mathrm{co}\,\NA (W)}$ satisfies
\begin{align*}
\NA(V) &\simeq K \# \NA (W);& K &\simeq \NA(K^1)& &\text{and}& K^1 &= \ad_c\NA (W) (W')
\in {}^{\NA (W)\# \ku \Gamma}_{\NA (W)\# \ku \Gamma}\mathcal{YD}.
\end{align*}
By Lemma \ref{lemma:braiding-K-several-blocks-point}, $\sch_{0,0}:=x_3$, $\sch_{1,0}:=(\ad_c x_{\fudos})x_3$, $\sch_{0,1}:=(\ad_c x_{\futres})x_3$ and $\sch_{1,1}:=(\ad_c x_{\fudos})(\ad_c x_{\futres})x_3$ are linearly independent elements of $K^1$; the coaction and the braiding of these elements
are given by \eqref{eq:several-blocks-1pt-coaction} and \eqref{eq:several-blocks-braiding-matrix}.
Let $U$ be the subspace of $K^1$ spanned by $\sch_{0,0}$, $\sch_{1,0}$, $\sch_{0,1}$, $\sch_{1,1}$ and $x_4$.
As $y_i^{\langle n\rangle}\cdot x_4=0$ for $i=1,2$ and all $n\in\N_0$, and $\delta(x_4)=g_4\otimes x_4$, we have that
\begin{align*}
c(\sch_{m,n}\otimes x_4) &= g_1^m g_2^n g_3\cdot x_4 \otimes \sch_{m,n} = q_{14}^m q_{24}^n q_{34} \, x_4 \otimes \sch_{m,n}, \\
c(x_4\otimes \sch_{m,n}) &= g_4\cdot \sch_{m,n} \otimes x_4 = q_{41}^m q_{42}^n q_{43} \, \sch_{m,n} \otimes x_4,
\end{align*}
so $U$ is a braided vector subspace of $K^1$, whose braiding is of diagonal type with Dynkin diagram
$$ \xymatrix{ \overset{p_{(0,0),(0,0)}}{\circ} \ar  @{-}[rd]^{q}  & \overset{p_{(1,0),(1,0)}}{\circ} \ar  @{-}[d]^{q} &
	\overset{p_{(0,1),(0,1)}}{\circ} \ar  @{-}[ld]^{q} & \overset{p_{(1,1),(1,1)}}{\circ} \ar  @{-}[lld]^{q}
	\\ & \overset{q_{44}}{\circ} & & .} $$
Then $\GK\cB(U)=\infty$ either by Lemma \ref{lemma:points-trivial-braiding} if some $p_{(i,j),(i,j)}=1$, or else by Hypothesis \ref{hyp:nichols-diagonal-finite-gkd}; thus $\GK\cB(V)=\infty$.
\epf

\begin{lemma}\label{lemma:several-blocks-connected-components-2}
	Assume that there are $k \neq \ell\in\I_t$ (two  blocks), $J\in \X$ with $\vert J \vert \ge 2$ and  $i \neq j \in J$
	such that $c_{ik}c_{ki} \neq \id$, $c_{j\ell}c_{\ell j} \neq \id$.
	Then $\GK\cB(V)=\infty$.
\end{lemma}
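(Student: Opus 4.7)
The proof will follow the template of Lemma \ref{lemma:several-blocks-connected-components}. Since $\NA(V_k\oplus V_\ell \oplus V_i \oplus V_j) \hookrightarrow \NA(V)$, first I would reduce to $t=2$, $\theta=4$, and relabel the relevant four modules as $V_1, V_2, V_3, V_4$ (with $V_1=V_k$, $V_2=V_\ell$, $V_3=V_i$, $V_4=V_j$). Assuming $\GK\NA(V)<\infty$ for contradiction, Lemma \ref{lemma:several-blocks-connected-components} applied to point $3$ forces $c_{32}c_{23}=\id$, hence $q_{23}q_{32}=1$ and $a_{32}=0$; symmetrically $c_{41}c_{14}=\id$, $q_{14}q_{41}=1$, $a_{41}=0$. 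Lemma \ref{lemma;several-blocks-1pt-weak} provides the weak interactions $q_{13}q_{31}=q_{24}q_{42}=1$, while Lemma \ref{lemma:weak-not-discrete} ensures that the surviving ghosts satisfy $\ghost_{31},\ghost_{42}\in\N$.

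Next I would split $V = W \oplus V_{\diag}$ with $W = V_1\oplus V_2$, $V_{\diag}=V_3\oplus V_4$, and work with $K=\NA(V)^{\mathrm{co}\,\NA(W)}\simeq\NA(K^1)$, $K^1=\ad_c\NA(W)(V_{\diag})$, as in \S\ref{subsubsec:algK-block-points}. Set
\begin{align*}
\sch_m &:= (\ad_c x_{\fudos})^m x_3 \quad (0\le m\le \ghost_{31}), & \zh_n &:= (\ad_c x_{\futres})^n x_4 \quad (0\le n\le \ghost_{42}).
\end{align*}
Using that the two blocks braided-commute (assumption \ref{item:B}) and that $c^2|_{V_2\otimes V_3}=c^2|_{V_1\otimes V_4}=\id$ implies $\NA(V_2\oplus V_3)\simeq\NA(V_2)\underline{\otimes}\NA(V_3)$ and $\NA(V_1\oplus V_4)\simeq\NA(V_1)\underline{\otimes}\NA(V_4)$ by \cite{Grana}, I would deduce $\ad_c x_r(x_4)=0$ for $r\in\{1,\fudos\}$ and $\ad_c x_s(x_3)=0$ for $s\in\{2,\futres\}$. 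Combined with the computations of \S\ref{subsection:weak}, this gives that $(\sch_m)_{0\le m\le \ghost_{31}}$ and $(\zh_n)_{0\le n\le \ghost_{42}}$ are non-zero elements of $K^1$, linearly independent since they have distinct $\Gamma$-degrees in a principal realization; in particular $\sch_0,\sch_1,\zh_0,\zh_1$ all exist.

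The heart of the argument is to compute the braiding on $U := \langle\sch_0,\sch_1,\zh_0,\zh_1\rangle\subset K^1$ and exhibit it as a diagonal braided vector space with a bad Dynkin diagram. The coactions take the form $\delta(\sch_m)=g_1^m g_3\otimes\sch_m+\cdots$ and $\delta(\zh_n)=g_2^n g_4\otimes\zh_n+\cdots$, the dots denoting lower-order terms involving $x_1$'s (respectively $x_2$'s). The key observation is that $\ad_c x_1$ annihilates every $\zh_n$ -- because $x_1$ braided-commutes with $x_{\futres}$ by assumption \ref{item:B} and $\ad_c x_1(x_4)=0$ -- and symmetrically $\ad_c x_2$ annihilates every $\sch_m$, so these lower-order terms contribute nothing to the cross braidings. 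A further check shows that each $\sch_m$ and each $\zh_n$ is a genuine $\Gamma$-eigenvector, the would-be obstructing $a$-scalars vanishing because $a_{12}=a_{21}=a_{32}=a_{41}=0$. A direct computation then yields $U$ of diagonal type, with vertex labels $\epsilon_1^m q_{33}$ at $\sch_m$ and $\epsilon_2^n q_{44}$ at $\zh_n$, within-family edge labels $q_{33}^2$ and $q_{44}^2$, and, for every $a,b\in\{0,1\}$, cross-edge labels
\begin{align*}
p_{\sch_a,\zh_b}\,p_{\zh_b,\sch_a} &= (q_{12}q_{21})^{ab}(q_{14}q_{41})^a(q_{23}q_{32})^b(q_{34}q_{43}) = q_{34}q_{43}\neq 1.
\end{align*}

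Finally, the four cross-edges form a 4-cycle $\sch_0\!-\!\zh_0\!-\!\sch_1\!-\!\zh_1\!-\!\sch_0$ in the generalized Dynkin diagram of $U$, each edge labeled $q_{34}q_{43}\neq 1$. If some vertex label happens to equal $1$, Lemma \ref{lemma:points-trivial-braiding} gives $\GK\NA(U)=\infty$ directly, since that vertex has a non-trivial edge; otherwise every vertex label differs from $1$, but no connected rank-$4$ generalized Dynkin diagram containing such a 4-cycle with all edges non-trivial appears in the classification of \cite{H-classif}, so Hypothesis \ref{hyp:nichols-diagonal-finite-gkd} delivers $\GK\NA(U)=\infty$. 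In either case $\GK\NA(V) \ge \GK\NA(U)+\GK\NA(W) = \infty$, the desired contradiction. The main obstacle I anticipate is the careful verification that the higher-order coaction terms truly contribute nothing to the cross braidings on $U$ (so that $U$ is of diagonal type on the nose, without needing an associated-graded passage), together with the case analysis in the last step where some vertex label equals $1$.
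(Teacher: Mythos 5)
Your reduction to $t=2,\ \theta=4$ is where the argument breaks. The hypothesis only says $i\neq j\in J$, not that $i$ and $j$ are \emph{adjacent} in $J$. If $q_{ij}q_{ji}=1$ (which is allowed when $|J|>2$ and $i,j$ are joined through intermediate vertices), then the braided subspace $\NA(V_k\oplus V_\ell\oplus V_i\oplus V_j)$ of $\NA(V)$ has $V_i\oplus V_j$ \emph{disconnected} as a Dynkin diagram, so the hypothesis of the lemma is not preserved by your restriction. Concretely, in your final step the four cross-edge labels all equal $q_{34}q_{43}$, so if $q_{34}q_{43}=1$ the $4$-cycle degenerates into a disjoint union of two rank-$2$ diagonal pieces (the $\sch$'s and the $\zh$'s), each of which has finite $\GK$, and there is no contradiction. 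This is why the paper's Step 3 does not discard the intermediate points of $J$: the braided subspace of $K^1$ is taken to be spanned by $z=(\ad_c x_{\fudos})x_3$, $z'=(\ad_c x_{\futres})x_\theta$ \emph{and all} $x_h$, $h\in\I_{3,\theta}$, producing a connected diagram with a $4$-cycle (when $N=2$) or with two ramifications (when $N>2$), which is then discarded by Hypothesis~\ref{hyp:nichols-diagonal-finite-gkd}.

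A second, smaller, gap: your invocation of Lemma~\ref{lemma;several-blocks-1pt-weak} to obtain $q_{13}q_{31}=q_{24}q_{42}=1$ is not justified. That lemma (and its proof) is stated in the setting where the single point has \emph{nonzero ghost with every block}; but you have just shown $c_{32}c_{23}=\id$, so $a_{32}=0$ and the sub-case $\widetilde q_{23}=1$, $a_{2}=0$ of that lemma's proof does not apply — the proof there needs $z_1=(\ad_c x_{\futres})x_3\neq 0$, which requires $a_2\neq 0$. The paper does not treat the mild interactions by quoting that lemma; it handles them in two separate steps, first using Theorem~\ref{thm:points-block-eps-1} on the braided subspace $V_1\oplus V_{\diag}$ (one block, several points) to \emph{force} $|J|=2$ and pin down the labels, and then invoking a filtration giving $D_5^{(1)}$ (Step 1) or a block-plus-three-points subspace of $K^1$ with mild interaction (Step 2). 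Your proposal has no replacement for these steps. The weak-interaction, $i$-adjacent-to-$j$ core of your argument is essentially the paper's Step 3 with $N=2$, and the braiding computation on $U=\langle\sch_0,\sch_1,\zh_0,\zh_1\rangle$ (including the vanishing of the lower-order coaction terms via $\ad_c x_1(\zh_n)=\ad_c x_2(\sch_m)=0$) is correct; the issues are confined to the reduction and the exclusion of mild interactions.
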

\pf
For simplicity, we may assume that $t = 2$; that $V_{\diag}$ is connected, i.e.
$J = \I_{3,\theta}$, $\theta \geq 4$; and that
$k = 1$, $\ell = 2$, $i = 3$, $j = \theta$. Finally we assume $c_{1\theta}c_{\theta1}=\id$, $c_{23}c_{32}=\id$, c.f. Lemma \ref{lemma:several-blocks-connected-components}.

\smallbreak
\emph{Step 1}. $q_{13}q_{31}=q_{2 \theta}q_{\theta 2}=-1$.

As $V_{1}\oplus\big( \oplus_{3\le h \le\theta} V_{h} \big)$ is a braided vector subspace of $V$,
we may assume $\theta=4$,
$q_{33}=q_{34}q_{43}=q_{44}=-1$, cf. Theorem \ref{thm:points-block-eps-1}.
We consider the flag of braided subspaces: $0 =\cV_0 \subsetneq \cV_1  \subsetneq \cV_2 = V$, where $\cV_1$ is spanned by $(x_i)_{i\in \I_4}$.
Let  $\Bdiag := \gr \cB (V)$, a pre-Nichols algebra of  $\cV^{\textrm{diag}}$, see \S \ref{subsection:filtr-nichols}.
The Dynkin diagram of $\cV^{\textrm{diag}}$ has vertices $\{1, \fudos, 2, \futres ,3, 4 \}$
and edges as follows:
\begin{align*}
\xymatrix{ & \overset{-1}{\underset{1}{\circ}} \ar  @{-}[d]^{-1} & \overset{-1}{\underset{4}{\circ}} \ar  @{-}[d]^{-1} & \\
	\overset{-1}{\underset{\fudos}{\circ}}\ar  @{-}[r]_{-1}  & \overset{-1}{\underset{2}{\circ}} \ar  @{-}[r]_{-1}  & \overset{-1}{\underset{3}{\circ}}
\ar  @{-}[r]_{-1}  & \overset{-1}{\underset{\futres}{\circ}}.}
\end{align*}
Hence $\cV^{\textrm{diag}}$ is of Cartan type $D_5^{(1)}$, so $\GK \cB(\cV^{\textrm{diag}})=\infty$ by
Theorem \ref{thm:nichols-diagonal-finite-gkd}, and consequently $\GK\cB(V)=\infty$.

\medbreak
\emph{Step 2}.
$q_{13}q_{31}=-1$ and $q_{2 \theta}q_{\theta 2}=1$.

As $V_{1}\oplus\big( \oplus_{3\le h \le\theta} V_{h}\big)$ is a braided vector subspace of $V$, again we may assume $\theta=4$,
$q_{33}=q_{34}q_{43}=q_{44}=-1$. We fix the decomposition $V = W\oplus W'$, where $W= V_{2}$, $W'= V_{1}\oplus V_{3}\oplus V_{4}$.
Then $K=\NA (V)^{\mathrm{co}\,\NA (W)}$ satisfies
\begin{align*}
\NA(V) &\simeq K \# \NA (W);& K &\simeq \NA(K^1)& &\text{and}& K^1 &= \ad_c\NA (W) (W')
\in {}^{\NA (W)\# \ku \Gamma}_{\NA (W)\# \ku \Gamma}\mathcal{YD}.
\end{align*}
Set $x_5=(\ad_c x_{\futres})x_4\neq 0$. Then the subspace $U$ spanned by $V_1=\langle x_1,x_{\fudos}\rangle$, $x_3$, $x_4$, $x_5$ is a braided
vector subspace of $K^1$, with a decomposition \emph{one block plus three points}, where the block is $V_1$, with mild interaction with $x_3$,
and $x_3$, $x_4$, $x_5$ is a connected component of the diagonal part, with diagram $\xymatrix{\overset{-1}{\underset{4}{\circ}} \ar  @{-}[r]^{-1} & \overset{-1}{\underset{3}{\circ}} \ar  @{-}[r]^{-1} & \overset{-1}{\underset{5}{\circ}}}$.
Hence $\GK\cB(U)=\infty$ by Theorem \ref{thm:points-block-eps-1}, so $\GK\cB(V)=\infty$.

\medbreak

\emph{Step 3}.  $q_{13}q_{31}=q_{2 \theta}q_{\theta 2}=1$, $a_{13} \neq 0$, $a_{2\theta}\neq 0$.

We fix the decomposition $V = W \oplus W'$, where $W= V_{1}\oplus V_{2}$, $W'=  V_{\diag}$.
Then $K=\NA (V)^{\mathrm{co}\,\NA (W)}$ satisfies
\begin{align*}
\NA(V) &\simeq K \# \NA (W);& K &\simeq \NA(K^1)& &\text{and}& K^1 &= \ad_c\NA (W) (W')
\in {}^{\NA (W)\# \ku \Gamma}_{\NA (W)\# \ku \Gamma}\mathcal{YD}.
\end{align*}
The subspace $U$ of $K^1$ spanned by $z=(\ad_c x_{\fudos}) x_3$, $z'=(\ad_c x_{\futres})x_{\theta}$ and
$x_h$, $h\in \I_{3, \theta}$, is a braided vector subspace of diagonal type of dimension $N$
whose Dynkin diagram has either two ramifications for $N>2$, or is a 4-cycle for $N=2$.
Then $\GK\cB(U)=\infty$ by Hypothesis \ref{hyp:nichols-diagonal-finite-gkd}, and consequently $\GK\cB(V)=\infty$.
\epf

We are now ready to complete the proof of the main result of this monograph.
Let $V$ be a braided vector space as in\eqref{eq:braiding-blocks-points}; since $2 \le t$, it is not of diagonal type
(the case $t=1$ is settled in \S \ref{sec:yd-dim>3}). Let $\D$ be the flourished graph   of $V$.

\begin{theorem}\label{theorem:final}
	If  $\D$ is admissible, then $\GK \NA(V) < \infty$.
\end{theorem}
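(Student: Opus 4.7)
The plan is to establish the sufficiency direction by induction on $t$, the number of blocks, together with the bosonization splitting of Lemma~\ref{lemma:GKdim-smashproduct}\ref{item:GKdim-smashproduct-braided}. The base cases are already in hand: for $t=0$ the braided vector space $V$ is of diagonal type and Hypothesis~\ref{hyp:nichols-diagonal-finite-gkd} applies; for $t=1$ (one block with several points) Theorems~\ref{thm:points-block-eps1} and~\ref{thm:points-block-eps-1}, combined with the explicit presentations of \S\ref{subsection:points-block-presentation} and \S\ref{subsubsection:points-block-presentation-manycomponents}, give both finiteness and the value of $\GK\NA(V)$; and for $\theta=t+1$ with several blocks and one point, Theorem~\ref{thm:variosblocks-unpto} and Proposition~\ref{pr:poseidon} dispose of the Poseidon configuration.

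For the inductive step with $t\ge 2$ and $\theta>t+1$, I would choose one block $V_h$ and split $V=V_h\oplus W$, where $W=\bigoplus_{k\ne h}V_k\oplus V_{\diag}$. Setting $K=\NA(V)^{\mathrm{co}\,\NA(V_h)}$ we have $\NA(V)\simeq K\#\NA(V_h)$ and $K\simeq\NA(K^1)$ with $K^1=\ad_c\NA(V_h)(W)\in{}^{\NA(V_h)\#\ku\Gamma}_{\NA(V_h)\#\ku\Gamma}\mathcal{YD}$. Admissibility forces every ghost $\ghost_{hj}$ to be discrete, so the braided adjoint orbits $(\ad_c x_{h+\frac12})^n x_j$ truncate; this makes the $\NA(V_h)$-action on $K^1$ locally finite, and Lemma~\ref{lemma:GKdim-smashproduct} yields
\[
\GK\NA(V)=\GK\NA(K^1)+\GK\NA(V_h)=\GK\NA(K^1)+2.
\]
Thus the problem reduces to showing that $K^1$ again belongs to the class of \S\ref{subsubsec:intro-class}, that its flourished graph is admissible and has exactly $t-1$ blocks, and then invoking the induction hypothesis.

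The heart of the argument is an explicit description of $K^1$. By condition~\ref{item:noedges} there are no block-to-block edges, so for $k\ne h$ the generators $x_k,x_{k+\frac12}$ sit unchanged in $K^1$ and continue to span a block of the same type as $V_k$; points $j$ not connected to $V_h$ remain points; a point $j$ connected to $V_h$ is replaced either by the truncated sequence $(z_{j,n})_{0\le n\le\ghost_{hj}}$ of Lemma~\ref{lemma:braiding-K-weak-block-points}, by the mild Cyclop-type family of Lemma~\ref{le:-K^1}, or by the diagonal Poseidon array $(\sch_{\bn})_{\bn\in\cA}$ of Lemma~\ref{lemma:braiding-K-several-blocks-point}. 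Case-by-case matching against Table~\ref{tab:conn-comp} and the local admissibility list~\ref{item:local}--\ref{item:block} should show that the resulting flourished graph of $K^1$ is again admissible: the new generators that arise from the collapsed vertex $h$ are diagonal (so they join $\D_{\diag}$ of $K^1$ without creating new blocks), while their connections to the surviving blocks $V_k$, $k\ne h$, are controlled by the formulas in \S\ref{subsubsec:algK-block-points} and are of the forms already catalogued.

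The main obstacle will be the Poseidon cross-connectors permitted by condition~\ref{item:concom-1}: singletons $\{j\}\in\X$ with $q_{jj}\in\G_2$ that touch several blocks simultaneously. After bosonizing along $V_h$ each such $j$ sprouts a Poseidon family $\{\sch_{\bn}\}$, and one must verify that this family, together with the remaining blocks and the untouched components of $\D_{\diag}$, still assembles into an admissible diagram --- in particular, that no forbidden configuration (affine Cartan sub-diagram, ramified vertex of the kind excluded in Lemmas~\ref{lemma:several-blocks-connected-components} and~\ref{lemma:several-blocks-connected-components-2}, or pale-block sub-braiding outside Table~\ref{tab:finiteGK-intro-5}) appears. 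Granted this verification, the induction hypothesis gives $\GK\NA(K^1)<\infty$, whence $\GK\NA(V)<\infty$; the additive formula above then confirms the value announced in \eqref{eq:points-block-gkd}, \eqref{eq:points-block-gkd-1} and \eqref{eq:variosblocks-unpto-gkd}.
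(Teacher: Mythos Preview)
Your approach---inducting on $t$ by peeling off one block $V_h$ at a time and arguing that $K^1=\ad_c\NA(V_h)(W)$ again lies in the class of \S\ref{subsubsec:intro-class} with $t-1$ blocks and an admissible flourished diagram---is genuinely different from the paper's, and it has a real gap. The sentence ``case-by-case matching \dots\ should show that the resulting flourished graph of $K^1$ is again admissible'' is exactly where the content lies, and you have not carried it out. Concretely: when a singleton $\{j\}$ with $q_{jj}\in\G_2$ touches several blocks, removing $V_h$ replaces $j$ by the family $(z_{j,n})_{0\le n\le\ghost_{hj}}$, and you must verify that each $z_{j,n}$ is a point of label $\epsilon_h^{n} q_{jj}$, that its interaction with every surviving block $V_k$ is weak with ghost $\ghost_{jk}$, that the new points are pairwise disconnected and disconnected from the untouched part of $\D_{\diag}$, and that all of this matches Table~\ref{tab:conn-comp}. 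You also need to treat the mild case as a terminal base case rather than as part of the induction: the Cyclop $K^1$ of Lemma~\ref{le:-K^1} is \emph{not} of block-and-point form (its braiding is not of diagonal type, cf.\ Lemma~\ref{le:--K^1}), so your inductive hypothesis does not apply to it; admissibility axiom~\ref{item:block} forces the whole diagram to be $\cyc_1$ or $\cyc_2$, already handled directly.

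The paper avoids this bookkeeping entirely by splitting off \emph{all} blocks at once. Setting $\cV_1=\oplus_{k\le t}V_k$, one has $\NA(\cV_1)\simeq\NA(V_1)\,\underline{\otimes}\cdots\underline{\otimes}\,\NA(V_t)$ because blocks braided-commute, so $\GK\NA(\cV_1)=2t$. After disposing of the mild case (Example~\ref{example:admissible-mild}), the coinvariant generator space $K^1=\ad_c\NA(\cV_1)(V_{\diag})$ is shown to be of \emph{diagonal} type, with a decomposition $K^1=\oplus_{J\in\X}K^1_J$ in which distinct summands satisfy $c^2_{\vert K^1_J\otimes K^1_L}=\id$. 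Hence $K\simeq\NA(K^1_{J_1})\,\underline{\otimes}\cdots\underline{\otimes}\,\NA(K^1_{J_r})$, and each factor has finite $\GK$ by the one-block analysis (Tables~\ref{tab:finiteGK-block-point} and~\ref{tab:finiteGK-block-points}) or the Poseidon case (Lemma~\ref{lemma:braiding-K-several-blocks-point}). This is a single reduction to previously computed diagonal Nichols algebras, with no induction on $t$ and no need to propagate admissibility through an intermediate construction.
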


\pf Since $\D$ is admissible, all assumptions in \S \ref{subsubsection:YD>3-severalblocks-pts} are valid. Let
\begin{align}\label{eq:final-blocks-points}
\cV_1 &= V_1 \oplus  \dots \oplus V_t,&  \cV_2 &=  V_{t+1} \oplus \dots \oplus V_{\theta} = V_{\diag}.
\end{align}
By \ref{item:B}, the blocks braided commute, hence
\begin{align*}
\NA(\cV_1) &= \NA(V_1) \underline{\otimes} \dots \underline{\otimes} \NA(V_t),
\end{align*}
where $\underline{\otimes}$ means the braided tensor product of algebras, cf. \cite{Grana}.
In particular, one gets a PBW-basis of $\NA(\cV_1)$ by ordered juxtaposition of the bases of
$\NA(V_1)$, \dots, $\NA(V_t)$; hence $\GK \NA(\cV_1) = 2t$.

\medbreak
We consider  $K=\NA (V)^{\mathrm{co}\,\NA (\cV_1)}$. Then
\begin{align*}
\NA(V) &\simeq K \# \NA (\cV_1);& K &\simeq \NA(K^1)& &\text{and}& K^1 &= \ad_c\NA (\cV_1) (\cV_2)
\in {}^{\NA (\cV_1)\# \ku \Gamma}_{\NA (\cV_1)\# \ku \Gamma}\mathcal{YD}.
\end{align*}

As usual let $\X$ be the set of connected components
of the Dynkin diagram of $\cV_2 = V_{\diag}$. If $J \in \X$, then $V_J := \oplus_{j\in J} V_j$ and $K^1_J = \ad_c\NA (\cV_1) (V_J)$.

\begin{example}\label{example:admissible-mild}
	If $\D$ contains a mild interaction, i.e. an edge $\xymatrix{\ar  @{-}[r]^{(-1, 1)}  & }$, then it is either $\cyc_1$, that is $\xymatrix{\boxminus \ar  @{-}[r]^{(-1, 1)}  &\overset{-1}{\bullet}}$ or else $\cyc_2$, that is $\xymatrix{\boxminus \ar  @{-}[r]^{(-1, 1)}  &\overset{-1}{\bullet} \ar  @{-}[r]^{-1}  & \overset{-1}{\circ}}$.
	This follows from \ref{item:local} and \ref{item:block}. Thus, the Theorem follows under this assumption.
\end{example}

From now on, we assume that the interactions between a block $V_i$, $i\in \I_t$, and a component $V_J$, $J \in \X$, are all weak.
We claim:

\begin{enumerate}[leftmargin=*,label=\rm{(\alph*)}]
	\item\label{item:final-a} $K^1 = \oplus_{J \in \X} K^1_J$.
	
	\item\label{item:final-b} $K^1_J$ is a braided subspace of diagonal type of $K^1$.
	
	\item\label{item:final-c}	$c(K^1_J \otimes K^1_L) = K^1_L \otimes K^1_J$ and $c^2_{K^1_J \otimes K^1_L} = \id_{K^1_J \otimes K^1_L}$ for all $J \neq L\in \X$.
\end{enumerate}

\ref{item:final-a}: Clearly  $K^1 = \sum_{J \in \X} K^1_J$. Consider the $\zt$-grading of $V$ given by $\deg V_j = e_j$, where $(e_j)_{j\in \I_\theta}$ is the canonical basis of $\zt$; it extends to gradings in $T(V)$ and $\NA(V)$. Then $ K^1_J$ is a graded subspace whose homogeneous components have degree modulo $\Z^t$ in $\langle e_j: j \in J\rangle$. Hence the sum is direct.

\smallbreak
\ref{item:final-b}: Suppose first that $J = \{j\}$ has one element. Let $h \in \I_t$.
If $\ghost_{hj} = 0$, i.e. $j$ and $h$ are disconnected, then
$\ad_c (V_h)(V_j) =0$. Thus $K^1_J = \ad_c\NA \left(\oplus_{h\in \I_t: \ghost_{hj} \neq 0} V_h \right) (V_j)$
and this is of diagonal type by Lemma \ref{lemma:braiding-K-several-blocks-point}.
Similarly, if $\vert J \vert >1$, then there is exactly one $i \in \I_t$ by Definition \ref{def:flourished-graph-admissible} \ref{item:concom}, hence $K^1_J = \ad_c\NA \left(V_i \right) (V_j)$
is of diagonal type by Lemma \ref{lemma:braiding-K-weak-block-points}.

\smallbreak
\ref{item:final-c}: Let $J \neq L\in \X$. We analize three possibilities:

$(\alpha)$: \emph{$J$ is connected only to the block $i$, $L$ is connected only to the block $h$.}

Then $K_J^1$, respectively $K_L^1$, is of diagonal type with respect to the basis
$(z_{j,n})_{j\in J, 0\le n\le \vert 2a_{ij}\vert}$, respectively $(z_{\ell,m})_{\ell\in L, 0\le m\le \vert 2a_{h\ell}\vert}$, see Lemma \ref{lemma:braiding-K-weak-block-points}. Now, if $\epsilon_i  = 1$, then the coaction is given by \eqref{eq:coact-zjn}
is given by
\begin{align*}
\delta (z_{j,n}) &= \sum _{k=0}^n \nu_{k,n}\, x_i^{n-k}g_i^{k} g_j \otimes z_{j,k},&
\delta (z_{\ell,m}) &= \sum _{t=0}^m \nu_{t,m}\, x_h^{m - t}g_h^{t} g_\ell \otimes z_{\ell,t}.
\end{align*}
If $i=h$, then
$c(z_{j,n} \otimes z_{\ell,m}) =  q_{ji}^m q_{i\ell}^n  q_{j\ell} \, z_{\ell,m} \otimes z_{j,n},
$ as shown in the proof of Lemma \ref{lemma:braiding-K-weak-block-points}.
Thus
\begin{align*}
c^2 (z_{j,n} \otimes z_{\ell,m}) &= q_{ji}^m q_{i\ell}^n  q_{j\ell} q_{\ell i}^n q_{ij}^m  q_{\ell j}
\, z_{j,n} \otimes z_{\ell,m} = z_{j,n} \otimes z_{\ell,m}.
\end{align*}
Indeed, $q_{ji}q_{ij} =1 = q_{i\ell}   q_{\ell i}$ because the interactions are weak, while $ q_{j\ell}q_{\ell j} = 1$ because $j$ and $\ell$ live in
different connected components.
If $i\neq h$, then $\ad_c x_i (z_{\ell,m}) = \ad_c x_i (\ad_c x_{h + \hspace{-1pt}\frac{1}{2}})^m (x_{\ell})  = 0$, hence
\begin{align*}
&& c (z_{j,n} \otimes z_{\ell,m}) &= g_i^{n} g_j \cdot z_{\ell,m} \otimes z_{j,n} =  q_{jh}^m q_{i\ell}^n  q_{j\ell} \, z_{\ell,m} \otimes z_{j,n} \\
&\implies&
c^2 (z_{j,n} \otimes z_{\ell,m}) &= q_{jh}^m q_{i\ell}^n  q_{j\ell} q_{\ell i}^n q_{h j}^m  q_{\ell j}
\, z_{j,n} \otimes z_{\ell,m} = z_{j,n} \otimes z_{\ell,m}.
\end{align*}

If $\epsilon_i  = -1$ , then the argument is similar, using \eqref{eq:coact-zjn-even}, \eqref{eq:coact-zjn-odd} and \eqref{eq:-1block+point}.

\smallbreak
$(\beta)$: \emph{$J$ is connected is connected only to the block $i$ and $L =\{\ell\}$ is connected to the blocks $h_1, \dots, h_s \in \I_t$.}

Here $K_J^1$ is as before, and $K_L^1$, is of diagonal type with respect to the basis  $(\sch_{\bm})_{\bm \in \cA}$
see Lemma \ref{lemma:braiding-K-several-blocks-point}. Applying this Lemma, $\ell$ plays the role of $\theta$ and the index set is
$\{h_1, \dots, h_s\}$ instead of $\I_t$, e.g.
$\mathcal A = \{\bn \in\N^s_0: 0 \le \bn \le \ba\}$.
Now
\begin{align*}
\ad_c x_i(\sch_{\bm}) &=0 \begin{cases}
\text{by \eqref {eq:several-blocks-1pt-auxiliar-11} if } i\in \{h_1, \dots, h_s\}, \\
\text{because $\ell$ and $i$ are disconnected, if } i\notin \{h_1, \dots, h_s\}.
\end{cases}
\end{align*}
Hence
\begin{align*}
c(z_{j,n} \otimes \sch_{\bm}) &= g_i^{n} g_j \cdot \sch_{\bm} \otimes z_{j,n} = q_{j\ell}  q_{i\ell}^n \prod_{1\le a \le s} (q_{j h_a} q_{i h_a}^n)^{m_a}
\, \sch_{\bm} \otimes  z_{j,n}.
\end{align*}
On the other hand, by \eqref{eq:several-blocks-1pt-coaction}, taking into account \eqref{eq:-1block+point} when $i\in \{h_1, \dots, h_s\}$
or that all interactions are weak, concluding that
\begin{align*}
c(\sch_{\bm} \otimes  z_{j,n}) &= \prod_{1\le a \le s} g_{h_a}^{m_a} g_{\ell}\cdot z_{j,n} \otimes \sch_{\bm} = q_{\ell j}  q_{\ell i}^n 
\prod_{1\le a \le s} (q_{ h_a j} q_{h_a i}^n)^{m_a}
\, z_{j,n} \otimes \sch_{\bm}.
\end{align*}
Thus
\begin{align*}
c(z_{j,n} \otimes \sch_{\bm}) &= q_{j\ell}q_{\ell j}  q_{i\ell}^n q_{\ell i}^n    \prod_{1\le a \le s} (q_{h_a j}q_{j h_a} q_{i h_a}^n q_{h_a i}^n)^{m_a}
\, z_{j,n} \otimes \sch_{\bm} = z_{j,n} \otimes \sch_{\bm}.
\end{align*}
Indeed, $q_{j\ell}q_{\ell j}=1$ because $j$ and $\ell$ live in different connected components, 
$q_{i\ell}^n q_{\ell i}=1=q_{h_a j}q_{j h_a}$ and $q_{i h_a}q_{h_a i}=1$ if $h_a\neq i$ because the interactions are weak,
while $q_{ih_a}q_{h_a i}= \epsilon_i^2=1$ if $h_a=i$.

\smallbreak
$(\gamma)$: \emph{$J=\{j\}$ is connected to to the blocks $i_1, \dots, i_r \in \I_t$ and $L =\{\ell\}$ is connected to to the blocks $h_1, \dots, h_s \in \I_t$.}

Here $K_J^1$ and $K_L^1$ are of diagonal type with respect to bases $(\sch_{\bm})_{\bm \in \cA_J}$, $(\sch_{\bn})_{\bn \in \cA_K}$,
see Lemma \ref{lemma:braiding-K-several-blocks-point}. The proof of this case follows as the previous one 
by \eqref{eq:several-blocks-1pt-coaction}, taking into account \eqref{eq:several-blocks-1pt-auxiliar-11} when some $i_b\in \{h_1, \dots, h_s\}$,
or that all interactions are weak.

\smallbreak
Therefore $K^1$ is of diagonal type and
\begin{align*}
K = \NA(K^1) &= \NA(K^1_{J_1}) \underline{\otimes} \dots \underline{\otimes} \NA(K^1_{J_r}),
\end{align*}
where $J_1, \dots J_r$ is a numbering of $\X$.
In particular, one gets a PBW-basis of $K$ by ordered juxtaposition of the bases of
$\NA(K^1_{J_1})$, \dots, $\NA(K^1_{J_r})$; hence $\GK K = \sum_{J\in \X} \GK \NA (K^1_J)$
and
\begin{align*}
\GK \NA(V) &=  2t + \sum_{J\in \X} \GK \NA (K^1_J).
\end{align*}
\epf

\begin{theorem}\label{theorem:final-domain}
	If  $\D$ is admissible, then $\NA(V)$ is a domain if and only if all blocks are Jordan, i.e. $\boxplus$, and all $J\in \X$ are points with label 1, i.e. $\overset{1}{\bullet}$.
\end{theorem}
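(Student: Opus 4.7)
\medbreak
\noindent\textbf{Proof plan.} The strategy is to treat the two implications separately, exploiting the structural information already assembled in Theorem~\ref{theorem:final} (in particular the splitting $\NA(V)\simeq K\#\NA(\cV_1)$ and the explicit description of $K^1$ as a diagonal braided subspace). For the necessary direction, I will argue by contradiction: if any of the three forbidden features appears in $\D$, a subalgebra of $\NA(V)$ already fails to be a domain, so $\NA(V)$ cannot be one. Concretely, if some block $V_i$ is a super Jordan plane (i.e.\ $\boxminus$), then $x_i^2=0$ by Proposition \ref{pr:-1block}; if some $J\in\X$ is a singleton point with $q_{jj}\in\G_N'$ for $N>1$, then $x_j^N=0$ already in $\NA(V_j)$; and if some $J\in\X$ has $|J|\ge 2$, then an inspection of Table \ref{tab:conn-comp} (the only admissible components connected to a block or occurring in a connected $\D$ with $t>0$) shows that every such $J$ contains at least one vertex of label in $\{-1,\omega,\omega^2\}$, and the defining relations of the Nichols algebras of diagonal type listed in Tables \ref{tab:finiteGK-block-point}--\ref{tab:finiteGK-block-points} yield an element with $x^2=0$ or $x^3=0$. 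In each case $\NA(V)$ inherits a non-zero nilpotent, so it is not a domain.

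\medbreak
For the sufficiency direction, assume all blocks are Jordan ($\boxplus$) and every $J\in\X$ is a singleton $\{j\}$ with $q_{jj}=1$. Admissibility of $\D$ then forces weak interactions between blocks and points (rows of Table \ref{tab:conn-comp} involving $\overset{1}{\bullet}$), and by \ref{item:B}, blocks braid-commute pairwise with $q_{kl}q_{lk}=1$, $a_{kl}=0$. Under these conditions the structure of $\NA(V)$ is, locally at each point, that of a Poseidon Nichols algebra $\pos(\bq,\ghost)$ with all diagonal entries $1$; globally it is built from such blocks by Theorem \ref{theorem:final}. The plan is to mimic and extend the filtration arguments of Propositions \ref{pr:lstr-11disc-domain} and \ref{pr:poseidon-domain}: present $\NA(V)$ by the generators $(x_h)_{h\in\Idd_k}$ for $k\in\I_t$, the points $(x_j)_{j\in\I_{t+1,\theta}}$, and all PBW generators $\sch_{\bn}^{(j)}$ of each $K^1_{\{j\}}$ (obtained as in \eqref{eq:defn-sch-several-blocks}), and then consider the algebra filtration in which $x_k$ has degree $0$ for every block $k\in\I_t$ and every other generator has degree $1$.

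\medbreak
In this filtration the defining relations of Theorem \ref{theorem:final} (block-block commutation \eqref{eq:poseidon-defrels-blocks-commute}, Jordan relation \eqref{eq:poseidon-defrels-Jordan}, weak-interaction $q$-commutation \eqref{eq:poseidon-defrels-q-commute}, the $q$-Serre-type relations \eqref{eq:poseidon-defrels-q-Serre} killing $\sch^{(j)}$ beyond the discrete ghost, and the diagonal relations of $K$ from Lemma \ref{lemma:braiding-K-several-blocks-point}) all specialise in $\gr\NA(V)$ to pure $q$-commutation relations, since under the present hypotheses the matrix from \eqref{eq:several-blocks-braiding-matrix} has $p_{\bm,\bm}=1$ and $p_{\bm,\bn}p_{\bn,\bm}=1$ for all $\bm,\bn$, and no further diagonal relations appear in $K$. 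Hence $\gr\NA(V)$ surjects onto, and by a PBW-dimension count as in Proposition \ref{pr:poseidon-domain} is isomorphic to, a quantum polynomial ring in the chosen PBW generators; in particular it is a domain, and therefore so is $\NA(V)$.

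\medbreak
The main obstacle I foresee is bookkeeping at the last step: several singleton points may connect to overlapping sets of blocks, so the families $\{\sch^{(j)}_{\bn}\}_j$ are not mutually independent families over disjoint block subsets. I will need to verify that the cross relations between $\sch^{(j)}_{\bn}$ and $\sch^{(j')}_{\bn'}$ for $j\ne j'$ also degenerate to pure $q$-commutations in the associated graded; this reduces (by \ref{item:final-c} in the proof of Theorem \ref{theorem:final}) to the fact that $c^2_{K^1_J\otimes K^1_L}=\id$ whenever $J,L\in\X$ are distinct singletons labelled $1$, which was already established there. Once this is in hand, the PBW basis $B$ produced by Theorem \ref{theorem:final} matches the monomial basis of the quantum polynomial ring, closing the argument.
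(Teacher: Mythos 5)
Your proof is correct and follows essentially the same route as the paper: the paper dismisses the necessity direction as clear (you fill it in with the observation that a $\boxminus$ block, a point of label in $\G'_N$ with $N>1$, or a component of size $\ge 2$ always produces a nilpotent), and for sufficiency the paper appeals to the filtration argument of Proposition~\ref{pr:poseidon-domain} ($\deg x_k = 0$ for block generators, $\deg 1$ elsewhere, so that $\gr\NA(V)$ is a quantum polynomial ring), which is exactly what you carry out.
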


\pf
If $\NA(V)$ is a domain, then clearly all blocks are Jordan and all $J\in \X$ are points with label 1. Conversely,
if this holds, then the proof goes as the proof of
Proposition \ref{pr:poseidon-domain}, using the preceding arguments: consider an appropriate filtration of $\NA(V)$ and show that $\gr \NA(V)$ is a domain, having a PBW-basis with all elements of infinite height.
\epf

\section{Appendix}\label{section:bvs-abgps}

\subsection{Nichols algebras over abelian groups}\label{subsection:ydz}

\subsubsection{The context}\label{subsect:pale-context}
Let $\Gamma$ be an abelian group and $V = \oplus_{g\in \Gamma} V_g \in \ydG$, $\dim V < \infty$.
The following discussion aims to describe the general form of $V$.
Let $V_g^{(\lambda)}$ be the $(T-\lambda)$-primary component with respect to the action of $g$
on the  homogeneous component $V_g$, and
let $V_g^{\lambda} = \ker (g - \lambda \id)_{\vert V_g} \subseteq V_g^{(\lambda)}$. Then
$V_g = \oplus_{\lambda \in \ku^{\times}} V_g^{(\lambda)}$,
and
\begin{align}\label{eq:braiding-ydz}
c(V_g^{(\lambda)} \ot V_h^{(\mu)}) &= V_h^{(\mu)} \ot V_g^{(\lambda)},& g, h &\in \Gamma, \lambda,\mu  \in \ku^{\times}.
\end{align}

\begin{lemma}\label{lemma:primary-1}  Assume that $\GK \NA(V_g)< \infty$. Then
	\begin{enumerate}[leftmargin=*,label=\rm{(\alph*)}]
		\item\label{item:primary-1} If $\lambda \in \ku^{\times}$, $\lambda \notin \G_2 \cup \G_3$, then
		$V_g^{\lambda} =  V_g^{(\lambda)}$ has dimension $\leq 1$.
		
		\smallbreak
		\item\label{item:primary-2} If $\lambda \in \G'_3$, then
		$V_g^{\lambda} =  V_g^{(\lambda)}$ has dimension $\leq 2$.
		
		\smallbreak
		\item\label{item:primary-3} If $V_g^{1} \neq 0$, then either
		$V_g  =  V_g^{1}$ (i.e. $g$ acts trivially on $V_g$) or else $V_g$ has dimension 2 and $g$ acts by a Jordan block.
		
		\smallbreak
		\item\label{item:primary-4} If $V_g^{-1} \neq 0$, then either
		$V_g^{(-1)}  =  V_g^{-1}$ or else $V_g^{(-1)}$ has dimension 2 and $g$ acts by a Jordan block.
			\end{enumerate}

\end{lemma}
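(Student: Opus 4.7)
The strategy is to view each primary component $V_g^{(\lambda)}$ or eigenspace $V_g^\lambda$ as a braided subspace of $V$ whose structure has already been classified in earlier sections. Two observations drive the argument: \emph{first}, on $V_g^\lambda$ the braiding is diagonal with every matrix entry equal to $\lambda$, since $c(v\otimes w)=g\cdot w\otimes v=\lambda\,w\otimes v$; \emph{second}, every Jordan chain for $g$ on $V_g$ with eigenvalue $\lambda$ and length $\ell$ spans a braided subspace isomorphic to $\cV(\lambda,\ell)$, so by Theorem~\ref{theorem:blocks} the finiteness of $\GK\NA(V)$ forces either $\ell=1$ or the pair $(\ell,\lambda)=(2,\pm 1)$.

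For parts (a) and (b), the hypothesis $\lambda\notin\{1,-1\}$ (respectively $\lambda\in\G'_3$) rules out Jordan blocks of size $\ge 2$ inside $V_g^{(\lambda)}$ by the second observation, so $V_g^{(\lambda)}=V_g^\lambda$. To bound $\dim V_g^\lambda$ I use the diagonal braiding: a rank-two subspace has Dynkin diagram $\xymatrix{\overset{\lambda}{\circ}\ar@{-}[r]^{\lambda^2}&\overset{\lambda}{\circ}}$. In (a), either $\lambda\notin\G_\infty$ (no reflection is admissible and the root system is infinite) or $\lambda\in\G'_N$ with $N\ge 4$ (Cartan matrix $\begin{pmatrix}2&2-N\\2-N&2\end{pmatrix}$ of infinite type); in both cases Theorem~\ref{thm:nichols-diagonal-finite-gkd} yields $\GK=\infty$, so $\dim V_g^\lambda\le 1$. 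In (b) the rank-two case is of Cartan type $A_2$ and hence admissible, but three linearly independent vectors of $V_g^\omega$ would produce a triangular diagram with vertices $\omega$ and edges $\omega^2$, i.e.\ affine Cartan type $A_2^{(1)}$, contradicting Theorem~\ref{thm:nichols-diagonal-finite-gkd}. Hence $\dim V_g^\omega\le 2$.

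Part (c) is more delicate. Assume $V_g^1\neq 0$ and $V_g\neq V_g^1$. First I rule out other eigenvalues: any eigenvector $w$ with $g\cdot w=\mu w$ and $\mu\neq 1$, together with $v\in V_g^1\setminus 0$, spans a diagonal braided subspace with $q_{vv}=1$ and $q_{vw}q_{wv}=\mu\neq 1$, so $\GK=\infty$ by Lemma~\ref{lemma:points-trivial-braiding}. Hence $V_g=V_g^{(1)}$, and by Theorem~\ref{theorem:blocks} all Jordan blocks have size at most $2$, with at least one of size exactly $2$. If $\dim V_g^1\ge 2$, pick a Jordan chain $w_1,w_2$ with $g\cdot w_1=w_1$, $g\cdot w_2=w_2+w_1$, and some $v\in V_g^1\setminus\ku w_1$; then $\ku v\oplus\ku w_1\oplus\ku w_2$ is a ``block and point'' braided subspace in the sense of \eqref{eq:braiding-block-point} with $\epsilon=1$, $q_{12}=q_{21}=q_{22}=1$ (hence weak interaction) and $a=1$, so the ghost is $\ghost=-2\notin\N_0$ and $\GK=\infty$ by Lemma~\ref{lemma:weak-not-discrete}. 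This contradicts the finiteness of $\GK\NA(V)$, so $\dim V_g^1=1$, a single Jordan block exhausts $V_g$, and $V_g\simeq\cV(1,2)$. Part (d) is entirely parallel: restrict attention to $V_g^{(-1)}$, forbid blocks of size $\ge 3$ by Theorem~\ref{theorem:blocks}, and rule out an extra $(-1)$-eigenvector coexisting with a size-$2$ Jordan block by identifying the resulting three-dimensional subspace as a block-and-point with $\epsilon=q_{22}=-1$, weak interaction and $\ghost=-1\notin\N_0$, invoking Lemma~\ref{lemma:weak-not-discrete}.

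The main obstacle is not conceptual but bookkeeping: in parts (c) and (d) one must carefully transport the Yetter--Drinfeld datum $(g,\chi,\eta)$ of the mixed three-dimensional subspace into the standard normalization $\eta(g_1)=1$ used in \S\ref{subsubsection:YD3-decomp}, in order to compute the ghost with the correct sign and invoke Lemma~\ref{lemma:weak-not-discrete}. The negativity of that ghost is precisely what triggers the infinite-$\GK$ obstruction that forbids the unwanted configurations.
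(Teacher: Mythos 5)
Your proposal is correct and follows essentially the same route as the paper: Theorem \ref{theorem:blocks} to forbid Jordan blocks with $\lambda^{2}\neq 1$, the diagonal braiding on $V_g^{\lambda}$ together with Theorem \ref{thm:nichols-diagonal-finite-gkd} for (a) and (b), Lemma \ref{lemma:points-trivial-braiding} to force $V_g=V_g^{(1)}$ in (c), and the observation that a size-$2$ Jordan block together with an extra eigenvector forms a block-and-point configuration with negative ghost (so $\GK=\infty$ by Lemma \ref{lemma:weak-not-discrete}) for (c) and (d). Your version spells out the ghost calculations ($\ghost=-2$ when $\epsilon=1$, $\ghost=-1$ when $\epsilon=-1$) more explicitly than the paper, and correctly notes that the negative-ghost argument also disposes of the two-block subcase, which the paper discards separately by appealing to $c^{2}\neq\id$ and \S\ref{section:YD>3-2blocks}.
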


\pf By Theorem \ref{theorem:blocks}, $ V_g^{(\lambda)}$ could contain a block only if $\lambda^2 = 1$.
If $v_1 \in V_g^{\lambda}$ and $v_2 \in V_g^{\mu}$ are linearly independent, then the span of $v_1, v_2$ is a braided subspace of diagonal type with braiding matrix $\begin{pmatrix}
\lambda & \mu \\ \lambda & \mu
\end{pmatrix}$. Thus if $\lambda = \mu$, this is of Cartan type $A_1^{(1)}$ unless $\lambda \in \G_2 \cup \G_3$.
This shows \ref{item:primary-1}, and \ref{item:primary-2} follows similarly.
If $V_g^{(1)} \neq 0$, then $V_g  =  V_g^{(1)}$ by Lemma \ref{lemma:points-trivial-braiding} (otherwise there are $v_1$, $v_2$ as above with $\mu =1 \neq \lambda$). If $W_1$, $W_2$ are two different blocks inside $V_g$, then $c^2_{W_1 \ot W_2} \neq \id$, thus $\GK \NA(V_g) = \infty$ by the results in \S \ref{section:YD>3-2blocks}.
If $\lambda^2 = 1$ and $V_g^{(\lambda)} \supseteq W \oplus \ku x$, where $W$ has dimension 2 and $g$ acts on it by a Jordan block, then the ghost of $W \oplus \ku x$ is negative, hence $\GK \NA(V_g^{(\lambda)}) = \infty$.
So, \ref{item:primary-3} and \ref{item:primary-4} are proved.
\epf

Let us say that the braiding \eqref{eq:braiding-ydz} is \emph{pale} when it is not of the form described in \S \ref{subsubsec:intro-class}.
Therefore, if the braiding is pale, then either $\lambda  \in \G_2 \cup \G_3$ and	 $V_g^{\lambda} =  V_g^{(\lambda)}$ has dimension $>1$,
or vice versa for $\mu$, or both.
In the next \S \ we consider the smallest possible case, namely a pale block and a point, cf. \S \ref{subsubsection:YD3-3points-notdiag}.
Clearly any braided vector space with pale braiding contains \emph{a pale block and a point}, so this is a necessary first step towards the general case.
We classify all braidings of type \emph{a pale block and a point} in Theorem \ref{th:paleblock-point-resumen}. As an outcome, the pale braidings to be considered next are of the form
\begin{align}\label{eq:pale-braiding-open}
V &= V_g^{-1} \oplus V_h^{(\mu)}, & \dim V_g^{-1} \ge 3.
\end{align}

\begin{question}\label{question:paleblock}
Determine when $\GK \NA(V) < \infty$, if $V$ has the shape \eqref{eq:pale-braiding-open}.
\end{question}

\subsubsection{A pale block and a point}\label{subsubsec:pale-block}

Let
$V$ be a braided vector space of dimension 3
with braiding given in the  basis $(x_i)_{i\in\I_3}$ by
\begin{align}\label{eq:braiding-paleblock-point-bis}
(c(x_i \otimes x_j))_{i,j\in \I_3} &=
\begin{pmatrix}
\epsilon x_1 \otimes x_1&  \epsilon x_2  \otimes x_1& q_{12} x_3  \otimes x_1
\\
\epsilon x_1 \otimes x_2 & \epsilon x_2  \otimes x_2& q_{12} x_3  \otimes x_2
\\
q_{21} x_1 \otimes x_3 &  q_{21}(x_2 +  x_1) \otimes x_3& q_{22} x_3  \otimes x_3
\end{pmatrix}.
\end{align}
Let $V_1 = \langle x_1, x_2\rangle$, $V_2 = \langle x_3\rangle$. Let $\Gamma = \Z^2$ with a basis $g_1, g_2$.
We realize $V$ in $\ydG$ by  $V_1 = V_{g_1}$, $V_2 = V_{g_2}$, $g_1\cdot x_1 = \epsilon x_1$, $g_2\cdot x_1 = q_{21} x_1$,
$g_1\cdot x_2 = \epsilon x_2$, $g_2\cdot x_2 = q_{21} (x_2 +  x_1)$, $g_i\cdot x_3 = q_{i2} x_3$.
Our goal is to compute $\GK \NA(V)$. Since $V_1$ is of diagonal Cartan type, we may assume that $\epsilon \in \G_2 \cup \G_3$.

\smallbreak
As usual, let $\widetilde{q}_{12} = q_{12}q_{21}$; in particular the Dynkin diagram of the braided subspace $\langle x_1, x_3\rangle$ is  $\xymatrix{\overset{\epsilon}  {\circ} \ar  @{-}[r]^{\widetilde{q}_{12}}  & \overset{q_{22}}  {\circ} }$.
Below we consider the filtration
\begin{align}\label{eq:pale-block-filtration}
\cV_1 = \langle x_1, x_3\rangle \subset \cV_2 = V
\end{align} of $V$ and the corresponding $V^{\diag}$, cf. \S  \ref{subsection:filtr-nichols}.
We summarize the results of this Subsection in the next Theorem.

\begin{theorem}\label{th:paleblock-point-resumen}
Let $V$ be as above.
	\begin{enumerate}[leftmargin=*,label=\rm{(\alph*)}]
		\item\label{item:paleblock-point13} If $\epsilon \in \G_3$, then  $\GK \NA(V) = \infty$.
		
		\item\label{item:paleblock-point-1} If $\epsilon =-1$ and $\GK \NA(V) < \infty$, either of the following holds:
		
\begin{enumerate}[leftmargin=*,label=\rm{(\roman*)}]		
	\item\label{item:paleblock-endymion}  $\widetilde{q}_{12} = 1$ and $q_{22} = \pm 1$;  in this case $\GK \NA(V) = 1$.
	
	\item\label{item:paleblock-open} $q_{22} = -1 = \widetilde{q}_{12}$;  in this case $\GK \NA(V) = 2$.
\end{enumerate}
\end{enumerate}
\end{theorem}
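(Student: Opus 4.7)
The plan is to use the filtration \eqref{eq:pale-block-filtration} with $\cV_1 = \langle x_1, x_3\rangle$ and $\cV_2 = V$ (a direct check shows $\cV_1$ is a braided subspace since $c(\cV_1 \otimes V)\subseteq V\otimes \cV_1$ and vice versa). The associated graded $V^{\diag}$ is of diagonal type, spanned by $x_1,x_2,x_3$, with generalized Dynkin diagram
$$\xymatrix{ \overset{\epsilon}{\underset{1}{\circ}} \ar @{-}[r]^{\widetilde{q}_{12}} \ar @{-}[d]_{\epsilon^2} & \overset{q_{22}}{\underset{3}{\circ}} \ar @{-}[ld]^{\widetilde{q}_{12}} \\ \overset{\epsilon}{\underset{2}{\circ}} & }$$
and $\GK\NA(V^{\diag})\le \GK\NA(V)$ by Lemma \ref{lemma:filtered}\,\ref{obs:filtered-6}. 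This diagonal reduction is the workhorse for excluding cases.

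For case \ref{item:paleblock-point13} ($\epsilon\in \G_3'$, so $\epsilon^2\in \G_3'$), the edge $1$--$2$ is always present in $V^{\diag}$. If $\widetilde{q}_{12}\ne 1$, the diagram is a triangle with vertices labeled in $\G_3$; inspection of \cite{H-classif} shows no such triangle has finite root system, so Hypothesis \ref{hyp:nichols-diagonal-finite-gkd} forces $\GK\NA(V^{\diag})=\infty$. The remaining subcase is $\widetilde{q}_{12}=1$, where $V^{\diag}$ splits into an $A_2$ component and an isolated point and the diagonal reduction is inconclusive. To treat it I would split $V=V_1\oplus V_2$ with $V_1=\langle x_1,x_2\rangle$, $V_2=\langle x_3\rangle$, note that $\NA(V_1)$ is finite-dimensional of Cartan type $A_2$, and invoke Lemma \ref{lemma:GKdim-smashproduct} to reduce to showing $\GK K=\infty$ for $K=\NA(V)^{\mathrm{co}\,\NA(V_1)}$. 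Inside the coinvariant space $K^1=\ad_c\NA(V_1)(x_3)$, a computation analogous to that in Lemma \ref{le:strong} produces a diagonal braided subspace whose Dynkin diagram contains an $A_1^{(1)}$ or affine subdiagram, forcing $\GK\NA(K^1)=\infty$ by Theorem \ref{thm:nichols-diagonal-finite-gkd}.

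For case \ref{item:paleblock-point-1} ($\epsilon=-1$), the edge $1$--$2$ disappears in $V^{\diag}$, leaving the chain $\overset{-1}{\circ}\mathrel{\text{---}}\overset{q_{22}}{\circ}\mathrel{\text{---}}\overset{-1}{\circ}$ with both edges labeled $\widetilde{q}_{12}$. Assuming $\GK\NA(V)<\infty$, the reduction forces strong restrictions: running through rank-$3$ diagonal Nichols algebras with finite root system, the only admissible pairs $(\widetilde{q}_{12},q_{22})$ are $\widetilde{q}_{12}=1$ with $q_{22}$ unconstrained at this stage, or $\widetilde{q}_{12}=-1=q_{22}$ (Cartan type $A_3$); every other pair either makes a rank-$2$ edge have infinite root system (apply Theorem \ref{thm:nichols-diagonal-finite-gkd}) or yields a rank-$3$ diagram outside \cite{H-classif} (apply Hypothesis \ref{hyp:nichols-diagonal-finite-gkd}). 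When $\widetilde{q}_{12}=1$, I must still rule out $q_{22}\notin\{\pm 1\}$ by the same pattern as in case (a): locate a diagonal subspace of $K^1$ to which Lemma \ref{lemma:points-trivial-braiding} or Theorem \ref{thm:nichols-diagonal-finite-gkd} applies.

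The remaining new work is to verify that in the three surviving cases $(\widetilde{q}_{12},q_{22})\in\{(1,1),(1,-1),(-1,-1)\}$ the Nichols algebra $\NA(V)$ actually has finite $\GK$ with the announced values. For this I plan to exhibit the presentations by generators and relations of $\eny_+(q)$, $\eny_-(q)$, $\eny_\star(q)$ listed in Table \ref{tab:finiteGK-intro-5}, verify the relations using skew derivations $\partial_1,\partial_2,\partial_3$, and produce PBW bases from which the Hilbert series, and hence $\GK$, can be read off, following the templates of Propositions \ref{pr:-1block} and \ref{pr:nichols-mild}. The hardest part will be twofold: first, the pale perturbation (the $+x_1$ term in $c(x_3\otimes x_2)$) breaks the block/point decomposition used in the rest of the monograph, so the ``$\widetilde{q}_{12}=1$'' subcases of both (a) and (b) require ad hoc arguments where the diagonal reduction is silent; second, establishing linear independence of the PBW bases demands careful derivation computations, since the cross-terms generated by $g_2\cdot x_2=q_{21}(x_2+x_1)$ propagate through every inductive step and do not match the patterns of either the pure Jordan or the pure diagonal setting.
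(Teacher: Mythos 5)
Your plan has the right general architecture---filter via $\cV_1 = \langle x_1, x_3\rangle$, pass to $V^{\mathrm{diag}}$, exclude cases where the diagonal type has infinite root system, and establish the surviving cases by explicit presentations---but you have substantially underestimated what remains after the diagonal reduction, and you have omitted $\epsilon = 1$ from case (a) entirely, even though $1 \in \G_3$; the paper handles that case in Proposition \ref{prop:paleblock1} by analyzing $K = \NA(V)^{\mathrm{co}\,\NA(V_1)}$ with $V_1=\langle x_1,x_2\rangle$ and then invoking Lemma \ref{lemma:rosso-lemma19-gral} to show $\GK\NA(V)=\infty$ even when $\GK K=0$.

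The central gap is your assertion that, for $\widetilde{q}_{12}\neq 1$ and $\epsilon=-1$, the only pair surviving the passage to $V^{\mathrm{diag}}$ is $\widetilde{q}_{12}=q_{22}=-1$. The rank-$3$ linear diagram with endpoints labeled $-1$, middle $q_{22}$ and both edges $\widetilde{q}_{12}$ has a \emph{finite} root system for four pairs, recorded in \eqref{eq:paleblock-4possible}: besides $(-1,-1)$ also $q_{22}=q$, $\widetilde{q}_{12}=q^{-1}$ with $q^2\neq 1$ (super $A$ type), $q_{22}=-1$, $\widetilde{q}_{12}=\omega\in\G'_3$, and $q_{22}=-\omega$, $\widetilde{q}_{12}=\omega$. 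The latter three are precisely where the diagonal reduction is silent, and they are the technical core of the exclusion half of the theorem. The paper discards them in Lemmas \ref{lemma:pale-block-case2b}, \ref{lemma:pale-block-case2c}, \ref{lemma:pale-block-case2d} by quite different means: Lemma \ref{lemma:pale-block-case2b} switches to $\NA(V)^{\mathrm{co}\,\NA(V_2)}$ with $V_2=\langle x_3\rangle$ and finds a braided block $\cV(\omega,2)$ inside the associated $\cK^1$; Lemmas \ref{lemma:pale-block-case2c} and \ref{lemma:pale-block-case2d} produce a new primitive element (e.g.\ $[x_{23},x_3]_c$ or $[[x_{23},x_3]_c,x_3]_c$) in $\Bdiag=\gr\cB(V)$, prove via a careful PBW comparison that it does not lie in the expected filtration degree, and then enlarge to a rank-$4$ auxiliary diagonal braided vector space $Z$ to which Hypothesis \ref{hyp:nichols-diagonal-finite-gkd} or Lemma \ref{lemma:points-trivial-braiding} applies. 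The same phenomenon occurs in (a) for $\epsilon\in\G'_3$ and $\widetilde{q}_{12}\neq 1$: your remark that the triangle has ``vertices labeled in $\G_3$'' is unwarranted since $q_{22}$ is a priori unconstrained, and the pair $(q_{22},\widetilde{q}_{12})=(-1,\omega^2)$ \emph{does} survive inspection of \cite{H-classif}, so it too must be killed by the extra-primitive mechanism (the paper produces $z_2$ and a rank-$4$ braided vector space in the second half of \S \ref{subsubsec:pale-block-caseomega}). Your plan contains no provision for any of these finite-root-system diagonal cases, and without them the argument does not close.
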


\pf \ref{item:paleblock-point13}: If $\epsilon = 1$, then this is Proposition \ref{prop:paleblock1}.
If $\epsilon \in \G'_3$, then this is proved in \S \ref{subsubsec:pale-block-caseomega}.

\ref{item:paleblock-point-1}: Assume that $\widetilde{q}_{12} = 1$. If $q_{22} = \pm 1$, then the claim follows from Propositions \ref{prop:paleblock2} and \ref{prop:paleblock3}.  If $q_{22} \neq  \pm 1$, then this is proved in \S \ref{subsubsec:pale-block-case-1}.
 Assume finally that $\widetilde{q}_{12} \neq 1$. Then we reduce to four possible cases described in \eqref{eq:paleblock-4possible}, by means of Hypothesis \ref{hyp:nichols-diagonal-finite-gkd}.
The analysis of these cases is performed in Proposition \ref{prop:pale-block-case2a}--that takes care of \ref{item:paleblock-open}--and Lemmas \ref{lemma:pale-block-case2b}, \ref{lemma:pale-block-case2c} and \ref{lemma:pale-block-case2d}, discarding the remaining cases.
\epf

As  in many other places, we consider
$K=\NA (V)^{\mathrm{co}\,\NA (V_1)}$;
then $K = \oplus_{n\ge 0} K^n$ inherits the grading of $\NA (V)$;
$\NA(V) \simeq K \# \NA (V_1)$ and $K$ is the Nichols algebra of
$K^1= \ad_c\NA (V_1) (V_2)$.
Now $K^1\in {}^{\NA (V_1)\# \ku \Gamma}_{\NA (V_1)\# \ku \Gamma}\mathcal{YD}$ with the adjoint action
and the coaction given by \eqref{eq:coaction-K^1}, i.e.
$\delta =(\pi _{\NA (V_1)\#  \ku \Gamma}\otimes \id)\Delta _{\NA (V)\#  \ku \Gamma}$.
For later use, we introduce $\sh_{m, n} = (\ad_{c} x_{1})^m(\ad_{c} x_{2})^n x_{3}$, and particularly
\begin{align*}
w_m &= (\ad_{c} x_{1})^m x_{3} = \sh_{m,0},&
z_n &= (\ad_{c} x_{2})^n x_3 = \sh_{0, n}.
\end{align*}
 Then
\begin{align}\label{eq:paleblock-1}
g_1\cdot &\sh_{m, n}  =  q_{12} \epsilon^{m+n} \sh_{m, n}, & g_2\cdot w_m  &=  q_{21}^{m}q_{22} w_m,
\\ \label{eq:paleblock-2}
z_{n+1} &= x_2z_n - q_{12}\epsilon^{n} z_n x_2,&
\sh_{m + 1, n} &= x_1\sh_{m, n} - q_{12}\epsilon^{m + n}\sh_{m, n} x_1,
\\ \label{eq:paleblock-3}
\partial_1(&\sh_{m, n}) =0, & \partial_2(\sh_{m, n}) &=0,
\\ \label{eq:paleblock-3.5}
&&\partial_3(w_{m}) &= \prod_{0\le j \le m-1}(1 - \epsilon^j \widetilde{q}_{12}) x_1^m.
\end{align}

\noindent\emph{Proof of \eqref{eq:paleblock-3.5}.} For $m=0$, it is clear. Recursively, $\partial_3(w_{m + 1})=$
\begin{align*}
&= \partial_3(x_1w_m - q_{12}\epsilon^{m} w_m x_1) =
\prod_{0\le j \le m-1}(1 - \epsilon^j \widetilde{q}_{12}) (1 - q_{12}\epsilon^{m}q_{21}) x_1^{m+1}. \qed
\end{align*}

\subsubsection{The block has $\epsilon = 1$}\label{subsubsec:pale-block-case1}
Here $\NA (V_1) \simeq S(V_1)$ is a polynomial algebra, so that $x_1$ and $x_2$ commute, and
\begin{align}\label{eq:paleblock-4}
(\ad_{c} x_{2})^p \sh_{m, n} = \sh_{m, n + p}.
\end{align}
Consequently $\sh_{m, n}$, $m, n \in \N_0$ generate $K^1$. We claim that
\begin{align}\label{eq:paleblock-5}
g_2\cdot \sh_{m, n} &= q_{21}^{m+n}q_{22} \sum_{0\le j \le n} \binom{n}{j}  \sh_{m+j, n-j},
\end{align}
\pf By induction on $n$. If $n =0$, then \eqref{eq:paleblock-5} is the second part of \eqref{eq:paleblock-1}. Assume that \eqref{eq:paleblock-5} holds for  $n$; we prove it for $n+1$ by induction on $m$. First,
\begin{align*}
g_2\cdot \sh_{0, n + 1} &= g_2\cdot (z_{n+1}) = g_2\cdot (x_2z_n - q_{12}\epsilon^{n} z_n x_2)
\\
&= q_{21}^{n+1}q_{22} \sum_{0\le j \le n} \binom{n}{j} \Big((x_2 +  x_1)     \sh_{j, n-j}
- q_{12}\epsilon^{n}    \sh_{j, n-j}      (x_2 +  x_1)\Big)
\\
& \overset{\eqref{eq:paleblock-4}}{=} q_{21}^{n+1}q_{22} \Big(\sum_{0\le j \le n} \binom{n}{j} (\sh_{j, n+1-j} + \sh_{j+1, n-j}) \Big)
\end{align*}
and the claim follows. Next, assuming that \eqref{eq:paleblock-5} holds for $m$ and $n$, we have
\begin{align*}
g_2\cdot \sh_{m+1, n} &=  g_2\cdot (x_1 \sh_{m, n} - q_{12}\epsilon^{m + n} \sh_{m, n} x_1)
\\
&= q_{21}^{m+n+1}q_{22} \sum_{0\le j \le n} \binom{n}{j} \big(   x_1  \sh_{m+j, n-j}
- q_{12}\epsilon^{m+n}    \sh_{m+j, n-j}      x_1\big)
\end{align*}
and the claim follows by \eqref{eq:paleblock-2}. \epf

Now we may assume that  $\widetilde{q}_{12} = 1$ by Lemma \ref{lemma:points-trivial-braiding}. Then
$w_m =0$ for all $m >0$, by \eqref{eq:paleblock-3} and \eqref{eq:paleblock-3.5}; thus $\sh_{m, n} =0$ for all $m >0$ by \eqref{eq:paleblock-4}. Hence
\begin{align}\label{eq:paleblock-5bis}
g_2\cdot z_{n} &= q_{21}^{n}q_{22}   z_{n}, & n&\in \N_0.
\end{align}

\begin{lemma}\label{le:paleblock1} The following are equivalent:
	\begin{enumerate}[leftmargin=*,label=\rm{(\roman*)}]
		\item $\GK K < \infty$.
		\item $q_{22} =-1$.
	\end{enumerate}
	If this happens, then  $K \simeq \Lambda (K^1)$ has $\GK = 0$.
\end{lemma}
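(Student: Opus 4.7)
The plan is first to pin down $K^1$ as a braided vector space and then to read off the structure of $K=\NA(K^1)$ from its (diagonal) braiding.

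First I would identify the basis of $K^1$. Since $\epsilon=1$ and $\widetilde{q}_{12}=1$, formula \eqref{eq:paleblock-3.5} gives $\partial_3(w_m)=\prod_{j=0}^{m-1}(1-\widetilde{q}_{12})\,x_1^m=0$ for every $m\ge 1$, and together with \eqref{eq:paleblock-3} this forces $w_m=0$ for all $m\ge 1$. Relation \eqref{eq:paleblock-4} then propagates this to $\sh_{m,n}=(\ad_c x_2)^n w_m=0$ for all $m\ge 1$, $n\ge 0$. Hence $K^1$ is spanned by the elements $(z_n)_{n\ge 0}$. To see they are a basis I would compute $\partial_3(z_n)$ inductively: using \eqref{eq:paleblock-2}, the identity $x_1x_2=x_2x_1$ in $\NA(V_1)\simeq S(V_1)$, and $g_2\cdot x_2=q_{21}(x_2+x_1)$, one obtains $\partial_3(z_{n+1})=-\partial_3(z_n)\,x_1$, so $\partial_3(z_n)=(-1)^n x_1^n\ne 0$; together with the fact that $z_n$ is homogeneous of degree $n+1$ this gives linear independence.

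Next I would compute the braiding of $K^1$. By Lemma \ref{le:zcoact} applied in the present (pale block) setting, the coaction reads $\delta(z_n)=\sum_{k=0}^{n}\nu_{k,n}\,x_1^{n-k}g_1^k g_2\otimes z_k$ with $\nu_{n,n}=1$. The Yetter--Drinfeld action of $x_1$ on $z_m$ is precisely $\ad_c x_1(z_m)=\sh_{1,m}=0$, so in the braiding $c(z_n\otimes z_m)=\sum z_n^{(-1)}\cdot z_m\otimes z_n^{(0)}$ only the $k=n$ summand survives. Combined with $g_1^n g_2\cdot z_m=q_{12}^n q_{21}^m q_{22}\,z_m$ from \eqref{eq:paleblock-1} and \eqref{eq:paleblock-5bis}, this gives
\[ c(z_n\otimes z_m)=q_{12}^n q_{21}^m q_{22}\, z_m\otimes z_n. \]
Thus $K^1$ is of diagonal type with matrix $p_{nm}=q_{12}^n q_{21}^m q_{22}$, and the assumption $\widetilde{q}_{12}=1$ makes the diagonal labels $p_{nn}=q_{22}$ and the edge labels $p_{nm}p_{mn}=q_{22}^2$.

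The last step is the case analysis on $q_{22}$. If $q_{22}=-1$, every $p_{nn}=-1$ and every $p_{nm}p_{mn}=1$, so the generalized Dynkin diagram on any finite subset of $(z_n)_{n\ge 0}$ is totally disconnected with all vertices labeled $-1$. Therefore $\NA(K^1)$ is a braided exterior algebra in which every finitely generated subalgebra is contained in the exterior algebra of a finite-dimensional subspace and is hence finite-dimensional; by \eqref{eq:def-GKdim-gral} this gives $\GK K=0$. If $q_{22}\ne -1$, I would split into subcases and exhibit a diagonal braided subspace of $K^1$ whose Nichols algebra has infinite $\GK$: for $q_{22}=1$ the subalgebra generated by $z_0,\dots,z_N$ is a quantum polynomial ring of $\GK=N+1$ so $\GK K=\infty$; for $q_{22}\in\G'_N$ with $N\ge 4$ or $q_{22}\notin\G_\infty$, the rank-$2$ subspace $\langle z_0,z_1\rangle$ is of affine Cartan type $A_1^{(1)}$ (when $N=4$), of indefinite Cartan type (when $N>4$), or does not admit all reflections (when $q_{22}\notin\G_\infty$), so Theorem \ref{thm:nichols-diagonal-finite-gkd} applies; for $q_{22}\in\G'_3$ the rank-$3$ subspace $\langle z_0,z_1,z_2\rangle$ has Cartan matrix with diagonal $2$ and all off-diagonal entries $-1$, i.e.\ affine type $A_2^{(1)}$, again ruled out by Theorem \ref{thm:nichols-diagonal-finite-gkd}.

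The only delicate point is the computation of the braiding on $K^1$; the key input is that $\ad_c x_1$ annihilates every $z_m$, which is precisely the translation of $\widetilde{q}_{12}=1$ in the pale-block setting. Once this is in hand the argument becomes bookkeeping with the coaction formula \eqref{eq:coact-zn} and a standard distinction of cases on $q_{22}$ using the Heckenberger classification.
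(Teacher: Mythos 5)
Your overall approach matches the paper's: identify the basis $(z_n)_{n\ge 0}$ of $K^1$, exhibit the diagonal braiding on $K^1$, and conclude by inspecting the resulting Dynkin diagram and the parameter $q_{22}$.

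There is, however, a gap in the coaction step. You invoke Lemma~\ref{le:zcoact} ``in the present (pale block) setting,'' but that lemma is stated and proved for a genuine Jordan block, where $V_1=\cV_{g_1}(\chi_1,\eta)$ with $\eta(g_1)\ne 0$, so that $\NA(V_1)$ is the Jordan (or super Jordan) plane; its proof relies on the Jordan relation \eqref{eq:relations B(W) - case 1}, and the coefficients $\nu_{k,n}$ satisfy a recursion built from the parameter $a$. In \S\ref{subsubsec:pale-block-case1} the block is pale: $g_1\cdot x_2 = x_2$ with no off-diagonal term, $V_1$ is of diagonal type, and $\NA(V_1)\simeq S(V_1)$ is a commutative polynomial algebra. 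Neither the statement nor the proof of Lemma~\ref{le:zcoact} carries over. The coaction must be computed afresh, which the paper does in \eqref{eq:paleblock-7}, obtaining $\delta(z_n)=\sum_{0\le j\le n}(-1)^{n+j}\binom{n}{j}\,x_1^{n-j}g_1^{j}g_2\otimes z_j$ — the same shape as in Lemma~\ref{le:zcoact} but with different (explicit, $a$-independent) coefficients. The structural feature you actually use — that the lower-order terms of $\delta(z_n)$ carry a factor $x_1^{n-k}$ on the left, so that only the top term survives in $c(z_n\otimes z_m)$ because $\ad_c x_1$ annihilates $K^1$ — is correct, but it is precisely the content of \eqref{eq:paleblock-7} and is not free; it requires its own induction, since \emph{a priori} the degree-$(n-k)$ part of $\NA(V_1)$ is spanned by all $x_1^ix_2^j$ with $i+j=n-k$, and $\ad_c x_2$ does not kill $K^1$. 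Once \eqref{eq:paleblock-7} is supplied, the remainder of your argument — the braiding $c(z_n\otimes z_m)=q_{12}^{n}q_{21}^{m}q_{22}\,z_m\otimes z_n$, the diagram with vertex labels $q_{22}$ and edge labels $q_{22}^2$, and the case analysis on $q_{22}$ — is correct and agrees with the paper.
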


\pf 	To start with, we claim  that  for all $n\in\N_0$:
\begin{align}
\label{eq:paleblock-6}
\partial_3(z_n) &= (-1)^n x_1^n,
\\\label{eq:paleblock-7}
\delta(z_n) &= \sum_{0\le j \le n} (-1)^{n+j} \binom{n}{j} x_1^{n-j} g_1^jg_2 \otimes z_j.
\end{align}
First, \eqref{eq:paleblock-6} is clear for $n=0$. If it holds for $n$, then $\partial_3(z_{n+1})=$
\begin{align*}
\partial_3(x_2z_n - q_{12} z_n x_2) =
(-1)^n x_2 x_1^n - q_{12} (-1)^n x_1^n q_{21} (x_2 + x_1)
= (-1)^n x_1^{n+1}.
\end{align*}
By the previous remarks, \eqref{eq:paleblock-6} implies that the family $(z_n)_{n \in \N_0}$ is a basis of $K^1$. Next,  \eqref{eq:paleblock-7} for $n=0$ is just $\delta(x_{3}) = g_2 \otimes x_3$.
If \eqref{eq:paleblock-7} holds for $n$, then

\begin{align*}
\delta(z_{n+1}) &= \sum_{0\le j \le n} (-1)^{n+j} \binom{n}{j}
\big((x_2 \otimes 1 + g_1 \ot x_2)  x_1^{n-j} g_1^jg_2 \otimes z_j
\\ &\qquad - q_{12}x_1^{n-j} g_1^jg_2 \otimes z_j (x_2 \otimes 1 + g_1 \ot x_2)\big) \\
&= \sum_{0\le j \le n} (-1)^{n+j} \binom{n}{j}
\big(\underbrace{x_2 x_1^{n-j} g_1^jg_2 \otimes z_j}_{(\alpha)} + \underbrace{g_1x_1^{n-j} g_1^jg_2 \otimes x_2z_j}_{(\beta)}
\\ &\qquad - \underbrace{q_{12}x_1^{n-j} g_1^jg_2x_2 \otimes z_j}_{(\gamma)}
- \underbrace{q_{12}x_1^{n-j} g_1^jg_2g_1 \otimes z_jx_2}_{(\eta)}\big)
\end{align*}
Now
\begin{align*}
(\alpha) + (\gamma) &= \big( x_1^{n-j}x_2 g_1^jg_2 - \widetilde{q}_{12}x_1^{n-j}(x_2 + x_1) g_1^jg_2\big) \otimes z_j = -x_1^{n + 1- j} g_1^jg_2 \otimes z_j; \\
(\beta) + (\eta) &= x_1^{n-j} g_1^jg_2g_1 \otimes ( x_2z_j - q_{12} z_jx_2) = x_1^{n-j} g_1^{j+1}g_2\otimes z_{j+1}
\end{align*}
and the claim follows. Therefore, by \eqref{eq:paleblock-5bis} and \eqref{eq:paleblock-7}, we have for $p, n\in \N_0$
\begin{align*}
c(z_n \otimes z_p) &=  \sum_{0\le j \le n} (-1)^{n+j} \binom{n}{j} \ad_c (x_1^{n-j} g_1^jg_2) z_p \otimes z_j
= q_{12}^{n}q_{21}^p q_{22}  z_p \otimes z_{n}.
\end{align*}
In conclusion, $K^1$ is of diagonal type with braiding matrix $(q_{21}^{n}q_{22})_{n,p \in \N_0}$ with respect to the basis
$(z_n)_{n \in \N_0}$; thus locally $\xymatrix{\overset{q_{22}}  {\circ} \ar  @{-}[rr]^{q_{22}^2} & & \overset{q_{22}}  {\circ} }$.
Then  $\GK \NA(K^1) = \infty$ unless $q_{22} = -1$; in this last case, $\NA(K^1) = \Lambda(K^1)$.
\epf

\begin{prop}\label{prop:paleblock1}
	$\GK \NA(V) = \infty$.
\end{prop}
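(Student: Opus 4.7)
The plan is to split the argument according to the values of $\widetilde{q}_{12} = q_{12}q_{21}$ and $q_{22}$. In the easy cases where either $\widetilde{q}_{12} \neq 1$ or $q_{22} \neq -1$, the conclusion will follow immediately from results already proved. The delicate case is $\widetilde{q}_{12} = 1$ and $q_{22} = -1$, where Lemma \ref{le:paleblock1} tells us that $\GK K = 0$; the apparent paradox is that $K$ is an infinite-dimensional exterior algebra whose finitely generated subalgebras are all finite dimensional. The subtle point is that the action of $\NA(V_1) = S(V_1)$ on $K$ is not locally finite, since $(\ad_c x_2)(z_n) = z_{n+1}$, which precludes any direct application of Lemma \ref{lemma:GKdim-smashproduct} to conclude $\GK \NA(V) = \GK K + 2$; this is exactly the phenomenon illustrated by Example \ref{example:locally-finite}.

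First, if $\widetilde{q}_{12} \neq 1$, then $\langle x_1, x_3 \rangle$ is a braided subspace of diagonal type with $q_{11} = 1$ and $q_{13}q_{31} = \widetilde{q}_{12} \neq 1$, so Lemma \ref{lemma:points-trivial-braiding} yields $\GK \NA(\langle x_1, x_3 \rangle) = \infty$, hence $\GK \NA(V) = \infty$.

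Next, if $\widetilde{q}_{12} = 1$ and $q_{22} \neq -1$, Lemma \ref{le:paleblock1} gives $\GK K = \infty$. Since $K$ embeds as a subalgebra of $\NA(V) \simeq K \# \NA(V_1)$ via $k \mapsto k \# 1$, we obtain $\GK \NA(V) \geq \GK K = \infty$.

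The hard case is $\widetilde{q}_{12} = 1$ and $q_{22} = -1$. Here I plan to apply Lemma \ref{lemma:rosso-lemma19-gral} directly to the finitely generated graded algebra $\NA(V)$, taking the family $y_i := z_i$ for $i \in \N$. These are homogeneous of degree $\deg z_i = i+1$, which gives the linear bound \eqref{eq:bound-degree} with $m = p = 1$. The relation \eqref{eq:paleblock-6} gives $\partial_3(z_n) = (-1)^n x_1^n \neq 0$, so each $z_n$ is nonzero; moreover the identification $K \simeq \Lambda(K^1)$ from Lemma \ref{le:paleblock1} ensures that the products $z_{i_1} z_{i_2} \cdots z_{i_k}$ with $i_1 < \dots < i_k$ are linearly independent in $K$, hence in $\NA(V)$. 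Thus the hypotheses of Lemma \ref{lemma:rosso-lemma19-gral} are met and we conclude $\GK \NA(V) = \infty$, completing the proof.
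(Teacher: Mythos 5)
Your proposal is correct and takes essentially the same route as the paper: reduce to $\widetilde{q}_{12}=1$, $q_{22}=-1$, then apply Lemma \ref{lemma:rosso-lemma19-gral} to the family $(z_n)_n$ using that $K\simeq\Lambda(K^1)$ makes the ordered products linearly independent. Your added explanation of why Lemma \ref{lemma:GKdim-smashproduct} cannot be applied directly (non-local-finiteness, as in Example \ref{example:locally-finite}) correctly identifies the subtlety that forces this argument.
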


\pf By Lemma \ref{le:paleblock1}, it remains to consider the case  $q_{22} =-1$. Now $\NA(V)$ is a graded connected finitely generated algebra
isomorphic to $\NA(V_1) \# \Lambda(W)$, where $W$ has the basis $(z_n)_{n\in \N_0}$. Since the elements $z_{n_1} \dots z_{n_\ell}$
are linearly independent when $n_1 < \dots < n_\ell$, $\GK \NA(V) = \infty$ by Lemma \ref{lemma:rosso-lemma19-gral}.
\epf

\subsubsection{The block has $\epsilon = -1$}\label{subsubsec:pale-block-case-1}
Here $\NA (V_1) \simeq \Lambda(V_1)$ is an exterior algebra and consequently $\sh_{m, n}$, $m, n \in \{0,1\}$ generates $K^1$.
It is easy to check that
\begin{align}
\label{eq:paleblock-8}
g_2 \cdot z_1 &=  q_{21}q_{22}(z_1 + w_1), \qquad \partial_3(z_1) = (1 - \widetilde{q}_{12}) x_2 - \widetilde{q}_{12} x_1,
\\ \label{eq:paleblock-9}
\delta(z_1) &= g_1g_2 \otimes z_1 + \big((1 - \widetilde{q}_{12}) x_2 - \widetilde{q}_{12} x_1\big)g_2 \otimes x_3.
\end{align}

\medbreak
\emph{\textbf{Case 1: $\widetilde{q}_{12} = 1$.}} Then
$w_1 =0$  by \eqref{eq:paleblock-3} and \eqref{eq:paleblock-3.5}; hence $\sh_{1, 1} = -(\ad_{c} x_{2}) w_1  =0$.
In consequence, $z_0 = x_3$ and $z_1$ form a basis of $K^1$, as they are linearly independent by \eqref{eq:paleblock-8}. Then
\begin{align}
\label{eq:paleblock-10}
\begin{aligned}
c(x_3 \ot x_3) &= q_{22}x_3 \ot x_3, &  c(x_3 \ot z_1) &= q_{21}q_{22} z_1 \otimes x_3,\\
c(z_1\ot x_3) &= q_{12}q_{22} x_3 \otimes z_1, &  c(z_1 \ot z_1) &=  -q_{22} z_1 \ot z_1.
\end{aligned}
\end{align}
Thus $K^1$ is of diagonal type with Dynkin diagram $\xymatrix{ \overset{q_{22}}{\circ} \ar  @{-}[r]^{q_{22}^2} & \overset{-q_{22}}{\circ} }$.
If $q_{22}\neq \pm 1$, then this diagram does not appear in the Table 1 in \cite{H-classif} and we conclude by our Hypothesis \ref{hyp:nichols-diagonal-finite-gkd} that $\GK K = \GK \NA(V) = \infty$.

\smallbreak
For $q\in \ku^{\times}$, let $\eny_{\pm}(q) = V$ be the braided vector space as in \eqref{eq:braiding-paleblock-point-bis} under the assumptions that $\epsilon = -1$,
$q_{12} = q = q_{21}^{-1}$, $q_{22} = \pm 1$. We call $\NA(\eny_{\pm}(q))$ and the Nichols algebra $\NA(\eny_{\star}(q))$ studied in Proposition \ref{prop:pale-block-case2a} the \emph{Endymion algebras}.

\begin{prop}\label{prop:paleblock2}
	The algebra $\cB(\eny_+(q))$ is presented by generators $x_1,x_2, x_3$ and relations
	\begin{align}\label{eq:endymion-1}
	x_1^2&=0, \quad x_2^2=0, \quad x_1x_2 =- x_2x_1,
	\\ \label{eq:endymion-2}
	(x_2x_3 - q x_3x_2)^2&=0,
	\\ \label{eq:endymion-3}  x_3(x_2x_3 - q x_3x_2) &=q^{-1} (x_2x_3 - q x_3x_2)x_3,
	\\ \label{eq:endymion-4}
	x_1x_3 &= q x_3x_1.
	\end{align}
	Let $z_1 = x_2x_3 - q x_3x_2$. Then $\cB(\eny_+(q))$ has a PBW-basis
	\begin{align*}
	B=\{ x_1^{m_1} x_2^{m_2} x_3^{m_3}z_{1}^{m_{4}}: m_1, m_2, m_4 \in \{0,1\}, \, m_3 \in\N_0\};
	\end{align*}
	hence $\GK \cB(\eny_+(q)) = 1$.
\end{prop}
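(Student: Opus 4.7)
The plan is to follow the well-rehearsed template of Propositions \ref{pr:lstr-11disc} and \ref{pr:nichols-mild}: first check that \eqref{eq:endymion-1}--\eqref{eq:endymion-4} hold in $\cB(V)$, then bound $\cB(V)$ from above by $\mathrm{span}(B)$ via a right-ideal argument, and finally establish linear independence of $B$ using the braided splitting $\cB(V)\simeq K\#\NA(V_1)$ inherited from \S \ref{subsubsec:pale-block-case-1}.

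For the first step, relations \eqref{eq:endymion-1} are simply the defining relations of $\NA(V_1)=\Lambda(V_1)$, since $V_1$ is a $-1$-block with $\epsilon=-1$. Relation \eqref{eq:endymion-4} follows from $\widetilde{q}_{12}=1$: indeed \eqref{eq:paleblock-3.5} specializes to $w_1=\ad_c x_1(x_3)=0$, which is exactly $x_1x_3=qx_3x_1$. Relations \eqref{eq:endymion-2} and \eqref{eq:endymion-3} will be extracted from the Case~1 analysis in \S \ref{subsubsec:pale-block-case-1}: with $q_{22}=1$ the diagram $\xymatrix{\overset{q_{22}}{\circ}\ar@{-}[r]^{q_{22}^2}&\overset{-q_{22}}{\circ}}$ of $K^1$ becomes disconnected of the form $\xymatrix{\overset{1}{\circ}&\overset{-1}{\circ}}$, so $K=\NA(K^1)\simeq\ku[x_3]\underline{\otimes}\Lambda(z_1)$, yielding $z_1^2=0$ and $x_3z_1=q^{-1}z_1x_3$.

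For the second step, let $\cBt$ be the quotient of $T(V)$ by \eqref{eq:endymion-1}--\eqref{eq:endymion-4}, so that $\cBt\twoheadrightarrow\cB(V)$. A short calculation using only \eqref{eq:endymion-1} and \eqref{eq:endymion-4} and the definition $z_1=x_2x_3-qx_3x_2$ yields the auxiliary commutation relations
\begin{align*}
x_1z_1 &= -q\,z_1x_1, & x_2z_1 &= -q\,z_1x_2
\end{align*}
in $\cBt$. Combined with \eqref{eq:endymion-2}--\eqref{eq:endymion-4}, these identities let one reorder any monomial; in particular the subspace $I$ spanned by $B$ is closed under right multiplication by $x_1,x_2,x_3$. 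As $1\in I$, we conclude $\cBt=I$, so $B$ spans $\cB(V)$.

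For the third step, the splitting $\cB(V)\simeq K\#\NA(V_1)$ of \S \ref{subsect:pale-context} combined with the Case~1 identification $K\simeq\ku[x_3]\underline{\otimes}\Lambda(z_1)$ yields a PBW-basis $\{x_3^{m_3}z_1^{m_4}x_1^{m_1}x_2^{m_2}\}$ with $m_1,m_2,m_4\in\{0,1\}$ and $m_3\in\N_0$. The reordering relations of the previous step show that this basis and $B$ have the same Hilbert series, so $B$ is linearly independent in $\cB(V)$, hence a basis. The Hilbert series is then
\begin{align*}
H_{\cB(V)}(t)=\frac{(1+t)^2(1+t^2)}{1-t},
\end{align*}
whose coefficients stabilize at $8$ for $n\ge 4$, giving $\GK\cB(\eny_+(q))=1$. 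The only subtle point in the whole argument is the explicit identification of $K$ from the disconnected diagram $\xymatrix{\overset{1}{\circ}&\overset{-1}{\circ}}$: while the analysis in \S \ref{subsubsec:pale-block-case-1} pins down the braiding matrix of $K^1$, one has to observe separately that for a disconnected diagonal braiding with labels $1$ and $-1$ the Nichols algebra really is the (braided) tensor product $\ku[x_3]\underline{\otimes}\Lambda(z_1)$, so that the two generators of $K$ satisfy exactly \eqref{eq:endymion-2} and \eqref{eq:endymion-3} and nothing more.
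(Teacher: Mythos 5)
Your proof is correct and follows essentially the same route as the paper's: verify the relations hold (\eqref{eq:endymion-1} from $\NA(V_1)\simeq\Lambda(V_1)$, \eqref{eq:endymion-4} from $\widetilde{q}_{12}=1$, \eqref{eq:endymion-2}--\eqref{eq:endymion-3} from the Case~1 identification of $K^1$ via \eqref{eq:paleblock-10}), derive the auxiliary commutation $x_1z_1=-qz_1x_1$, $x_2z_1=-qz_1x_2$, show the span of $B$ is a right ideal, and close with the splitting $\NA(V)\simeq\NA(K^1)\#\NA(V_1)$ for linear independence and the Hilbert series for the $\GK$.
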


\pf Relations \eqref{eq:endymion-1} are 0 in $\cB(\eny_+(q))$ because $\NA (V_1) \simeq \Lambda(V_1)$; \eqref{eq:paleblock-10} implies  \eqref{eq:endymion-2}
and \eqref{eq:endymion-3}. In turn \eqref{eq:endymion-4}, i.e. $w_1 =0$, holds  by \eqref{eq:paleblock-3} and \eqref{eq:paleblock-3.5}.
Hence the quotient $\cBt$ of $T(\eny_+(q))$
by \eqref{eq:endymion-1}, \eqref{eq:endymion-2}, \eqref{eq:endymion-3} and \eqref{eq:endymion-4}
projects onto $\cB(\eny_+(q))$.
Also the following relations hold in $\cBt$ by the last identity in \eqref{eq:endymion-1} and \eqref{eq:endymion-4} via the $q$-Jacobi identity, resp. the second identity in \eqref{eq:endymion-1}:
\begin{align} \label{eq:endymion-5}
x_1 z_1 &= -q z_1x_1, & x_2z_1 &= -q z_1x_2.
\end{align}
Now the subspace $I$ spanned by $B$ is a right ideal of $\cBt$; this follows without troubles from \eqref{eq:endymion-1}, \eqref{eq:endymion-2}, \eqref{eq:endymion-3}, \eqref{eq:endymion-4} and \eqref{eq:endymion-5}.
To prove that $\cBt \simeq \cB(\eny_+(q))$, it remains to show that
$B$ is linearly independent in $\cB(\eny_+(q))$. But $\NA(V) \simeq \NA(K^1) \# \NA (V_1)$, so that $\NA(V) \simeq \NA(K^1) \ot \NA (V_1)$ as vector spaces and the claim follows by the commutation relations already exposed.
Then $B$ is a basis of $\cB(\eny_+(q))$ and $\cBt=\cB(\eny_+(q))$.
The computation of $\GK$ follows from the Hilbert series at once.
\epf

\begin{prop}\label{prop:paleblock3}
	The algebra $\cB(\eny_-(q))$ is presented by generators $x_1,x_2, x_3$ and relations \eqref{eq:endymion-1}, \eqref{eq:endymion-4},
	\begin{align}
	\label{eq:endymion-2b}
	x_3 ^2&=0,
	\\ \label{eq:endymion-3b}  x_3(x_2x_3 - q x_3x_2) &= -q^{-1} (x_2x_3 - q x_3x_2)x_3.
	\end{align}
	Let $z_1 = x_2x_3 - q x_3x_2$. Then $\cB(\eny_-(q))$ has a PBW-basis
	\begin{align*}
	B=\{ x_1^{m_1} x_2^{m_2} x_3^{m_3}z_{1}^{m_{4}}: m_1, m_2, m_3 \in \{0,1\}, \, m_4 \in\N_0\};
	\end{align*}
	hence $\GK \cB(\eny_-(q)) = 1$.
\end{prop}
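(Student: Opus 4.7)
The plan is to adapt the proof of Proposition \ref{prop:paleblock2}, exchanging the roles of $x_3$ and $z_1$: in the present case $q_{22}=-1$ forces $x_3^2=0$, while $z_1$ plays the role of a polynomial generator (its label inside $K^1$ is $-q_{22}=1$).

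First I would verify that all the listed relations vanish in $\cB(\eny_-(q))$. Relations \eqref{eq:endymion-1} hold because $\NA(V_1)\simeq\Lambda(V_1)$; relation \eqref{eq:endymion-4}, namely $w_1=0$, follows from \eqref{eq:paleblock-3} and \eqref{eq:paleblock-3.5} since $\widetilde{q}_{12}=1$; relation \eqref{eq:endymion-2b} follows from $q_{22}=-1$; and relation \eqref{eq:endymion-3b} amounts to $\ad_c x_3(z_1)=0$, which holds because the Dynkin diagram of $K^1$, read off from \eqref{eq:paleblock-10}, is $\xymatrix{\overset{-1}{\circ} \ar @{-}[r]^{1} & \overset{1}{\circ}}$, so the vertices $x_3$ and $z_1$ braided--commute inside $K\subset\NA(V)$. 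Let $\cBt$ denote the quotient of $T(V)$ by these relations; the above gives a surjection $\cBt\twoheadrightarrow\cB(\eny_-(q))$.

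Next, I would derive in $\cBt$ the auxiliary commutation rules
\begin{align*}
x_1 z_1 &= -q\, z_1 x_1, & x_2 z_1 &= -q\, z_1 x_2,
\end{align*}
directly from the definition $z_1=x_2x_3-qx_3x_2$ and from \eqref{eq:endymion-1}, \eqref{eq:endymion-4}, \eqref{eq:endymion-2b} via a short $q$-Jacobi computation. Together with \eqref{eq:endymion-3b} these relations show that the subspace $I$ spanned by $B$ is a right ideal of $\cBt$; since $1\in I$, $\cBt$ is spanned by $B$.

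To conclude, I would prove that $B$ is linearly independent in $\cB(\eny_-(q))$. By the splitting $\NA(V)\simeq K\#\NA(V_1)$, it suffices to identify $K=\NA(K^1)$. The computation in \eqref{eq:paleblock-10}, specialized to $q_{22}=-1$, shows that $K^1=\ku x_3\oplus\ku z_1$ is of diagonal type with totally disconnected diagram labelled $(-1,1)$, so $K\simeq\Lambda(\ku x_3)\,\underline{\otimes}\,\ku[z_1]$ with Hilbert series $(1+t)/(1-t^2)$ (note $\deg z_1=2$). Combined with the Hilbert series $(1+t)^2$ of $\NA(V_1)$, this matches the cardinality of $B$ in every degree; hence $B$ is a basis of $\cB(\eny_-(q))$, the surjection $\cBt\twoheadrightarrow\cB(\eny_-(q))$ is an isomorphism, and $\GK\cB(\eny_-(q))=1$. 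The arguments are routine variants of Proposition \ref{prop:paleblock2}; the only mild obstacle is keeping track of the signs and powers of $q$ when deriving the auxiliary $q$-commutation rules and checking that they suffice to close $I$ under right multiplication by each generator.
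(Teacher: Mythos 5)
Your proof is correct and follows exactly the route the paper intends; the paper's own proof of Proposition~\ref{prop:paleblock3} is literally ``adapt mutatis mutandis the proof of Proposition~\ref{prop:paleblock2},'' and you have carried out that adaptation faithfully. In particular, the auxiliary commutation rules $x_1z_1=-qz_1x_1$, $x_2z_1=-qz_1x_2$ are unchanged from the $\eny_+$ case, the identification of $K^1$ from \eqref{eq:paleblock-10} as a disconnected rank-two diagonal braiding with labels $(-1,1)$ is the right input, and the Hilbert-series bookkeeping $(1+t)^2\cdot(1+t)/(1-t^2)=(1+t)^2/(1-t)$ matches the proposed PBW basis $B$ and yields $\GK=1$.
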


\pf Adapt mutatis mutandis the proof of Proposition \ref{prop:paleblock2}.
\epf

\medbreak
\emph{\textbf{Case 2: $\widetilde{q}_{12} \neq  1$.}}
The braided vector space $V^{\diag}$ has Dynkin diagram
$$\xymatrix{ \overset{-1}{\circ} \ar  @{-}[r]^{\widetilde{q}_{12}} & \overset{q_{22}}{\circ}
	\ar	@{-}[r]^{\widetilde{q}_{12}}  & \overset{-1}{\circ} }.$$
By Hypothesis  \ref{hyp:nichols-diagonal-finite-gkd} and after inspection of the Table 2 in \cite{H-classif},
we conclude that there are 4 possible diagrams, listed next:
\begin{align}\label{eq:paleblock-4possible}
\begin{aligned}
q_{22} &=-1 = \widetilde{q}_{12}, & q_{22} &=q, \widetilde{q}_{12} = q^{-1}, q^2 \neq 1, \\
q_{22} &=-1, \widetilde{q}_{12} = \omega \in \G_3',& q_{22} &=-\omega, \widetilde{q}_{12} = \omega \in \G_3'.
\end{aligned}
\end{align}

We deal below with the different possibilities.

\medbreak

For $q\in \ku^{\times}$, let $\eny_{\star}(q) = V$ be the braided vector space as in \eqref{eq:braiding-paleblock-point-bis} under the assumptions that
$\epsilon = -1$, $q_{22}=-1$, $q_{12}=q$, $q_{21}=-q^{-1}$.  Recall \eqref{eq:xijk}.

\begin{prop}\label{prop:pale-block-case2a}
The algebra $\cB(\eny_{\star}(q))$ is presented by generators $x_1,x_2, x_3$ and relations \eqref{eq:endymion-1},
\begin{align}\label{eq:endymion-6}
x_3^2=0, \quad x_{31}^2&=0,
\\ \label{eq:endymion-7}
x_2 [x_{23},x_{13}]_c- q^2 [x_{23},x_{13}]_cx_2 &=q \, x_{13}x_{213},
\\ \label{eq:endymion-8}
x_{213}^2 &= 0.
\end{align}
Moreover $\cB(\eny_{\star}(q))$ has a PBW-basis
\begin{multline*}
B=\{ x_2^{m_1}x_{23}^{m_2} x_{213}^{m_3}[x_{23},x_{13}]_c^{m_4} x_1^{m_5}x_{13}^{m_6} x_3^{m_7} : \\
m_1, m_3, m_5, m_6, m_7 \in \{0,1\}, \, m_2, m_4 \in\N_0\};
\end{multline*}
hence $\GK \cB(\eny_{\star}(q)) = 2$.
\end{prop}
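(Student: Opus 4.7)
The plan is to follow the template used in Propositions \ref{prop:paleblock2} and \ref{prop:paleblock3}: first verify that each of the proposed defining relations vanishes in $\cB(\eny_{\star}(q))$, then show that the quotient $\cBt$ of $T(V)$ by these relations is spanned by the set $B$ by establishing enough straightening rules to make the span of $B$ a one-sided ideal containing $1$, and finally prove that $B$ is linearly independent in $\cB(\eny_{\star}(q))$ via the bosonization decomposition $\cB(V)\simeq K\#\cB(V_1)$. With $\epsilon=-1$ one has $\cB(V_1)\simeq \Lambda(V_1)$, so the first three relations \eqref{eq:endymion-1} hold automatically and give the ``$V_1$-part'' of the PBW basis, namely the factors $x_1^{m_5}$ and (implicitly) $x_2^{m_1}$ combined through the braided commutation $x_2x_1=-x_1x_2$.

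First I would verify \eqref{eq:endymion-6}--\eqref{eq:endymion-8} by applying the skew derivations $\partial_1,\partial_2,\partial_3$: since $\partial_i(x_j)=\delta_{ij}$ and the $\partial_i$ are skew derivations with the correct braiding, a direct, bounded calculation (analogous to the derivation calculations in \S\ref{subsubsec:pale-block-case-1} for $w_1=x_{13}$ and $z_1=x_{23}$, which here both survive since $\widetilde q_{12}=-1\ne1$) reduces each of $x_3^2$, $x_{31}^2$, $x_{213}^2$ and the cubic \eqref{eq:endymion-7} to elements already known to be zero in $\cB$. Granted this, $\cBt$ maps onto $\cB(\eny_{\star}(q))$. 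Next, to prove that $B$ spans $\cBt$, I would derive from the defining relations the full list of straightening identities needed to bring any monomial in $x_1,x_2,x_3$ into the ordered form $x_2^{m_1}x_{23}^{m_2}x_{213}^{m_3}[x_{23},x_{13}]_c^{m_4}x_1^{m_5}x_{13}^{m_6}x_3^{m_7}$: this amounts to computing $q$-commutators between the PBW generators using repeatedly the braided Jacobi identity together with \eqref{eq:endymion-1}, \eqref{eq:endymion-6}--\eqref{eq:endymion-8}, exactly as in the proof of Proposition \ref{pr:nichols-mild-cyc2}. Showing that the span of $B$ is a right ideal (closed under right multiplication by $x_1,x_2,x_3$) and contains $1$ gives the spanning.

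The hard part will be the linear independence of $B$ in $\cB(\eny_{\star}(q))$. My approach is to use $\cB(V)\simeq K\#\cB(V_1)$, write $\Lambda(V_1)$-basis $\{x_2^{m_1}x_1^{m_5}:m_1,m_5\in\{0,1\}\}$ on the right, and identify the rest of $B$ with a basis of $K=\cB(K^1)$. Here $K^1=\ad_c\cB(V_1)(V_2)$ is spanned by the linearly independent elements $z_0=x_3$, $w_1=x_{13}$, $z_1=x_{23}$, $\sh_{1,1}=x_{213}$ (non-triviality follows from the derivation formulas \eqref{eq:paleblock-3.5}, \eqref{eq:paleblock-8} together with a direct computation of $\partial_3(x_{213})$, using $\widetilde q_{12}=-1$). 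Computing the coaction via \eqref{eq:coaction-K^1} analogously to Lemma \ref{le:zcoact} and then the resulting braiding $c$ on $K^1$, I expect $K^1$ to be of diagonal type with a four-vertex generalized Dynkin diagram; one then matches $\cB(K^1)$ against a known list (or, since the only primary positive roots will correspond to $x_3,x_{13},x_{23},[x_{23},x_{13}]_c$ with the remaining height-$\infty$ root $x_{23}$ accounting for the $\GK=2$), yielding a PBW basis of $K$ whose ordered juxtaposition with the $\Lambda(V_1)$-basis is exactly $B$. The main obstacle is the explicit braiding computation of $K^1$ and the verification that \eqref{eq:endymion-7} (which is not a diagonal-type relation but a genuine ``mixed'' quadratic-cubic relation between $x_{213}$ and $[x_{23},x_{13}]_c$) together with \eqref{eq:endymion-8} exhausts all relations in $K$; this is reminiscent of the ad-hoc computations for $\cB(\cyc_1)$ and $\cB(\cyc_2)$ in \S\ref{subsubsection:nichols-mild} and \S\ref{subsubsection:nichols-mild-cyc2}, and I would proceed by the same strategy of producing enough derivation-witnesses to certify non-vanishing of the would-be basis elements.

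Once $B$ is a basis, the Hilbert series of $\cB(\eny_{\star}(q))$ factors as $H_{\Lambda(V_1)}(t)\cdot H_K(t) = (1+t)^2\cdot (1+t)^3\cdot(1-t)^{-2}$, with the two $(1-t)^{-1}$ factors coming from the generators $x_{23}$ and $[x_{23},x_{13}]_c$ of infinite height; hence $\GK \cB(\eny_{\star}(q))=2$, completing the proof.
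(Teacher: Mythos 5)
Your proposed argument breaks down at the linear-independence step. You write ``I expect $K^1$ to be of diagonal type with a four-vertex generalized Dynkin diagram,'' but this is false for $\eny_{\star}(q)$. The paper says so explicitly in \S\ref{subsubsection:about-the-proofs} (``$K^1$ is not always of diagonal type; for example $\cyc_1$, $\cyc_2$ and $\eny_{\star}$ are not, cf.\ \S\ref{subsubsection:nichols-mild}, \ref{subsubsection:nichols-mild-cyc2}, \ref{subsubsec:pale-block-case-1}''), and Remark \ref{rem:paleblock-conjecture-on-K} writes out the braiding on the basis $x_3,z_1,w_1,\sh_{1,1}$ of $K^1$ in full: when $\widetilde q_{12}\ne 1$ (which here is $\widetilde q_{12}=-1$, not $1$), several off-diagonal terms survive, e.g.\ $c(w_1\otimes x_3)=q_{12}q_{22}\,x_3\otimes w_1+q_{22}(1-\widetilde q_{12})\,w_1\otimes x_3$. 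Question \ref{question:paleblock-weird-bvs} even asks explicitly for a direct analysis of precisely this braided vector space because no general machinery applies. So the strategy of ``matching $\cB(K^1)$ against a known list'' of diagonal-type Nichols algebras cannot get off the ground; you would be stuck at the point where the Lemma~\ref{lemma:braiding-K-weak-block-points}-style computation is supposed to produce a $q$-matrix.

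The paper instead proves linear independence through the filtration $\cV_1=\langle x_1,x_3\rangle\subset V$ of \eqref{eq:pale-block-filtration} and the associated graded diagonal-type Nichols algebra $\Bdiag$: it shows by degree bookkeeping with the derivations that the classes of $[x_{23},x_{13}]_c$ and $x_{23}^2$ lie in $\cB^4_5\setminus\cB^4_4$ and $\cB^4_6\setminus\cB^4_5$ respectively, hence give non-zero primitives of $\Bdiag$; and since $V^{\diag}$ is of Cartan type $A_3$ at $-1$, the PBW structure of $\Bdiag$ is known, so the images of the elements of $B$ under $\gr$ are already linearly independent there. This filtered/graded comparison is genuinely different from your bosonization approach and is what makes the argument close. (One more small point: relations \eqref{eq:endymion-6} do not require a derivation calculation; they hold because $\langle x_1,x_3\rangle$ is a braided subspace of Cartan type $A_2$ at $-1$. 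Your Hilbert-series factorization is also not the one forced by the degrees of the PBW generators, although it happens to give the right $\GK$.)
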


\pf Relations \eqref{eq:endymion-1} are 0 in $\cB(\eny_{\star}(q))$ because $\NA (V_1) \simeq \Lambda(V_1)$; \eqref{eq:endymion-6}
are 0 since $x_1$, $x_3$ generate a Nichols algebra of Cartan type $A_2$ at $-1$.

Notice that $[x_{23},x_{13}]_c=x_{23}x_{13}+x_{13}x_{23}$. By \eqref{eq:endymion-1} and \eqref{eq:endymion-6},
\begin{align}\label{eq:endymion-aux-1}
x_2x_{23}&=-q \, x_{23}x_2, & x_2x_{213}&=q \, x_{213}x_2,
\\ \label{eq:endymion-aux-2}
x_{23}x_3&=-q \, x_3x_{23}, & x_{23}x_1&=-q^{-1}x_1x_{23}-q^{-1}x_{213},
\\ \label{eq:endymion-aux-3}
x_{213}x_1&=q^{-1}x_1x_{213}, & [x_{23},x_{13}]_cx_1&=-q^{-2} x_1[x_{23},x_{13}]_c,
\\ \label{eq:endymion-aux-4}
x_1x_{13}&=-q \, x_{13}x_1, & x_{213}x_3&=q[x_{23},x_{13}]_c-q^2x_3x_{213},
\\ \label{eq:endymion-aux-5}
x_{13}x_3&=-q \, x_3x_{13}, & x_{13}[x_{23},x_{13}]_c &=[x_{23},x_{13}]_c x_{13},
\\ \label{eq:endymion-aux-6}
x_{213}x_{13}&=q \, x_{13}x_{213}, & [x_{23},x_{13}]_c x_3 &=q^2 x_3 [x_{23},x_{13}]_c.
\end{align}
By direct computation, $\partial_3(x_{213})=4x_2x_1 \neq 0$ (being in $\Lambda(V_1)$), and
\begin{multline*}
\partial_3 ([x_{23},x_{13}]_c)= 2x_{23}x_1+2q^{-1} x_2x_{13}+2x_{13}x_2+2q^{-1}x_1(x_{23}+x_{13}) \\
= -2q^{-1}(x_{213}+x_{23}x_1)+2q^{-1}(x_{213}-q \, x_{13}x_2)+2x_{13}x_2+2q^{-1}x_1(x_{23}+x_{13}) \\
=2q^{-1}x_1x_{13} \neq 0, \quad \text{being in } \langle x_1, x_3\rangle \text{ as discussed above}.
\end{multline*}

Now we prove that \eqref{eq:endymion-7} holds in $\cB(V)$. Notice that $\partial_1$ annihilates all factors in each term; also,
$\partial_2$ annihilates the right-hand side, and
\begin{align*}
\partial_2(x_2 [x_{23},x_{13}]_c- q^2 [x_{23},x_{13}]_cx_2) &= g_2 \cdot [x_{23},x_{13}]_c- q^2 [x_{23},x_{13}]_c =0.
\end{align*}
Now we compute $\partial_3$. For the left-hand side,
\begin{multline*}
\partial_3 (x_2 [x_{23},x_{13}]_c- q^2 [x_{23},x_{13}]_cx_2)= 2q^{-1}x_2x_1x_{13}+2x_1x_{13}(x_2+x_1) \\
=-2q^{-1} \, x_1(x_{213}-q \, x_{13}x_2)+2 x_1x_{13}x_2 =-2q^{-1}\, x_1x_{213}+ 4 x_1x_{13}x_2,
\end{multline*}
while for the right-hand side,
\begin{multline*}
\partial_3 (x_{13}x_{213})= 2 x_1 (g_2\cdot x_{213})+4x_{13}x_1x_2= q^{-1} \big(-2q^{-1}\, x_1x_{213}+ 4 x_1x_{13}x_2).
\end{multline*}
Hence \eqref{eq:endymion-7} holds in $\cB(V)$. For \eqref{eq:endymion-8}, we also check that $\partial_i(x_{213}^2)=0$
for $i=1,2,3$. As $\partial_1$, $\partial_2$ annihilate $x_{213}$ it remains the case $i=3$:
\begin{align*}
\partial_3(x_{213}^2) &= 4x_2x_1(g_3\cdot x_{213})+4 x_{213}x_2x_1=4 \big(q^{-2} x_2x_1 x_{213}+x_{213}x_2x_1 \big)=0.
\end{align*}
Hence the quotient $\cBt$ of $T(\eny_{\star}(q))$ by \eqref{eq:endymion-1}, \eqref{eq:endymion-6},
\eqref{eq:endymion-7} and \eqref{eq:endymion-8} projects onto $\cB(\eny_{\star}(q))$.

We notice that \eqref{eq:endymion-aux-1}, \dots, \eqref{eq:endymion-aux-6} hold in $\cBt$ since they are derived from
\eqref{eq:endymion-1}, \eqref{eq:endymion-6}, \eqref{eq:endymion-7} and \eqref{eq:endymion-8}. The following relations also hold in $\cBt$:
\begin{align*}
x_{213}x_{23} &= q (x_{23}+x_{13})x_{213}, &
x_{213} [x_{23},x_{13}]_c &= q^2 [x_{23},x_{13}]_c x_{213}, \\
[x_{23},x_{13}]_c x_{23} &= (x_{23}+x_{13})[x_{23},x_{13}]_c.
\end{align*}
Thus the subspace $I$ spanned by $B$ is a right ideal of $\cBt$.  To prove that $\cBt \simeq \cB(\eny_{\star}(q))$, it remains to show that
$B$ is linearly independent in $\cB(\eny_{\star}(q))$.

Recall the filtration \eqref{eq:pale-block-filtration}.
We claim that the classes of $[x_{23},x_{13}]_c$ and $x_{23}^2$ in $\Bdiag$ are non-zero primitive elements of $\Bdiag$. By direct computation,
\begin{align*}
\Delta([x_{23},x_{13}]_c) &= [x_{23},x_{13}]_c\otimes 1 - 2x_{13}x_1\otimes x_3 - x_{13}\otimes x_{13} \\
& \qquad - 2x_1\otimes x_3x_{13} + 1 \otimes [x_{23},x_{13}]_c; \\
\Delta(x_{23}^2) &=x_{23}^2\otimes 1 - x_{13} \otimes x_{23}+(2q^{-1}x_2x_{13}-q \, x_1x_{13})\otimes x_3 \\
& \qquad -2 x_1\otimes x_{23}x_3 + 1\otimes x_{23}^2.
\end{align*}
Hence it suffices to prove that
$[x_{23},x_{13}]_c\in\cB^4_5-\cB^4_4$, $x_{23}^2\in\cB^4_6-\cB^4_5$.

Suppose that $[x_{23},x_{13}]_c\in\cB^4_4$. Then $[x_{23},x_{13}]_c$ is written as a linear combination of words in letters $x_1$, $x_3$.
As $\cB(V)$ is spanned by $B$, there exists $a\in\ku$ such that
$[x_{23},x_{13}]_c= a \, x_1x_{13}x_3$. Applying $\partial_1$ and $\partial_3$ to this equality,
\begin{align*}
0&=-q^2a \, x_{13}x_3, & 2q^{-1}\, x_1x_{13} &= a \, x_1x_{13}
\end{align*}
Hence we have a contradiction, so $[x_{23},x_{13}]_c\in\cB^4_5-\cB^4_4$.
Now suppose that $x_{23}^2\in\cB^4_5$. As $\cB(V)$ is spanned by $B$, there exist $a_i\in\ku$ such that
\begin{align*}
x_{23}^2 &=a_1\,[x_{23},x_{13}]_c+a_2\,x_{23}x_{13}+a_3\,x_{23}x_1x_3+a_4\,x_{213}x_3 \\
& \qquad +a_5\,x_{213}x_1 + a_6 \, x_1x_{13}x_3  + a_7\, x_2x_1x_{13} + a_8\, x_2 x_{13}x_3.
\end{align*}
Thus $0 = \partial_2(x_{23}^2)= qa_7\, x_1x_{13}-q^2a_8\, x_{13}x_3$, so $a_7=a_8=0$.
Applying $\partial_1$,
\begin{align*}
0 &= qa_3\,x_{23}x_3+a_5\,x_{213}-q^2 a_6 \, x_{13}x_3,
\end{align*}
so $a_3=a_5=a_6=0$. Applying $\partial_3$,
\begin{align*}
2q^{-1}x_2x_{13}-q \, x_1x_{13} &=
2q^{-1}a_1\, x_1x_{13} +q^{-1}a_2\,(2x_2+x_1)x_{13} +2a_2\,x_{23}x_1 \\
& \qquad +4a_4\,x_2x_1x_3 +a_4\,x_{213}.
\end{align*}
Using the derivations $\partial_i$ we prove that $x_2x_{13}$, $x_1x_{13}$, $x_2x_{13}$,  $x_{23}x_1$, $x_2x_1x_3$, $x_{213}$
are linearly independent. Hence $2a_2=0$ and $2q^{-1}a_2=2q^{-1}$, so we have a contradiction. Thus $x_{23}^2\in\cB^4_6-\cB^4_5$.

As $V^{\diag}$ is of type $A_3$ at $q=-1$, $\Bdiag$ has a PBW basis whose set of generators
contains $x_2$, $x_{23}$, $x_{213}$, $x_1$, $x_{13}$, $x_3$; notice that $[x_{23},x_{13}]_c$ is another PBW generator disconnected from the previous ones.
As $x_{23}^2, [x_{23},x_{13}]_c\neq 0$ in
$\Bdiag$ are primitive (the corresponding scalars are  both $1$), $x_{23}$ and $[x_{23},x_{13}]_c$ they have infinity height in $\Bdiag$;
hence the classes of the elements of $B$ are linearly independent in $\Bdiag$, so $B$ is linearly independent in $\cB(V)$.
Then $B$ is a basis of $\cB(\eny_{\star}(q))$ and $\cBt \simeq \cB(\eny_{\star}(q))$.
The computation of $\GK$ follows from the Hilbert series at once.
\epf

\begin{lemma}\label{lemma:pale-block-case2b}
If $\epsilon = -1$, $q_{22}=-1$, $\widetilde{q}_{12} = \omega \in \G_3'$,
then $\GK \NA(V) = \infty$.
\end{lemma}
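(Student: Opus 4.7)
The plan is to pass to the associated graded with respect to the flag $\cV_1 = \langle x_1, x_3\rangle \subset \cV_2 = V$ and analyze the resulting pre-Nichols algebra. The diagonal part $V^{\diag}$ has Dynkin diagram $\xymatrix{\overset{-1}{\circ} \ar @{-}[r]^{\omega} & \overset{-1}{\circ} \ar @{-}[r]^{\omega}  & \overset{-1}{\circ}}$ of type $A(1|0)_2$, and $\Bdiag = \gr \NA(V)$ is a pre-Nichols algebra of $V^{\diag}$. Since this diagram lies in Heckenberger's list of finite-type diagrams, $\NA(V^{\diag})$ is finite-dimensional and Hypothesis~\ref{hyp:nichols-diagonal-finite-gkd} cannot be invoked directly on $V^{\diag}$; instead, I would exhibit a non-trivial primitive element in $\Bdiag$ beyond $V^{\diag}$, arising from a defining relation of $\NA(V^{\diag})$ which fails to descend to $\Bdiag$.

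My first step would be to work in $\NA(V)$ with $z_1 = (\ad_c x_2)(x_3) = x_2 x_3 - q_{12} x_3 x_2$, which corresponds under the filtration to $x_{23}$ in $V^{\diag}$. Since the edge label $\widetilde{q}_{12} = \omega$ gives $x_{23}$ self-braiding $\omega \in \G_3'$ in $V^{\diag}$, the relation $x_{23}^3 = 0$ is a defining relation of $\NA(V^{\diag})$. Because $\langle x_2, x_3\rangle$ is \emph{not} a braided subspace of $V$ (the $x_1$-term in $g_2 \cdot x_2$ breaks closure), one cannot reduce to a sub-Nichols-algebra computation; instead, I would compute $\partial_3(z_1^n)$ inductively using $\partial_3(z_1) = (1-\omega) x_2 - \omega x_1$ and the Jordan action $g_2 \cdot z_1 = -q_{21}(z_1 + w_1)$ recorded in \eqref{eq:paleblock-8}, tracking the appearance of $w_1$-terms at each step. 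The aim is to verify that $z_1^3$ has non-trivial class at filtration degree $9$ (equivalently, $z_1^3 \notin \NA(V)_{{\rm filt}<9}$), yielding a non-zero primitive element $y := \overline{z_1}^3 \in \Bdiag$ of $\Gamma$-degree $g_1^3 g_2^3$. If this particular relation turns out to hold on the nose, a parallel argument with a higher commutator from the defining ideal of $\NA(V^{\diag})$ (of type $A(1|0)_2$) should supply the required primitive.

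With such a primitive element in hand, I would form the braided subspace $\widetilde V = V^{\diag} \oplus \ku y \subset \Bdiag$; it is of diagonal type since the $\Gamma$-action on $\Bdiag$ is semisimple (the filtration-lowering pieces of $g_2 \cdot x_2$ die in the associated graded). Its Dynkin diagram extends the chain $A(1|0)_2$ by one vertex with self-braiding a specific cube-root-of-unity-times-sign computed from the $\Gamma$-character of $z_1^3$, and cross-labels read off from the $\Gamma$-action. An inspection against the four-vertex entries in \cite{H-classif} should show that this diagram falls outside the classification of finite root systems (one expects it to contain either an affine Cartan or a hyperbolic subdiagram after an appropriate reflection). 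Applying Hypothesis~\ref{hyp:nichols-diagonal-finite-gkd} to $\widetilde V$ then gives $\GK \NA(\widetilde V) = \infty$; since $\NA(\widetilde V)$ is a subalgebra-quotient of $\Bdiag$ and $\GK \Bdiag \le \GK \NA(V)$ by Lemma~\ref{lemma:filtered}, we conclude $\GK \NA(V) = \infty$.

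The main obstacle will be establishing the non-vanishing of $\overline{z_1}^3$ (or of whichever alternative primitive is chosen) at the top filtration level in $\Bdiag$. Because $\langle x_2, x_3\rangle$ is not braided, the inductive derivation-computation of $z_1^n$ is genuinely intricate: the action of $g_2$ produces $w_1$-terms which feed back into lower-filtration corrections, and one must show these corrections do not entirely absorb the top-filtration piece. A secondary obstacle is the Dynkin diagram verification for $\widetilde V$, which requires a careful case-analysis of the labels computed from $q_{12}, q_{21}$ and the power $\omega^{3}$, possibly after applying reflections.
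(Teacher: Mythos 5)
Your filtration/associated-graded approach is the method the paper itself uses for the neighbouring Lemmas \ref{lemma:pale-block-case2c} and \ref{lemma:pale-block-case2d}, and you correctly observe that here $V^{\mathrm{diag}}$ alone has a finite root system, so some primitive beyond degree one must be produced. But the primitive you propose, $y = \overline{z_1}^3$, cannot do the job. The element $z_1$ has $\Gamma$-degree $g_1 g_2$, and in $\Bdiag$ its $\Gamma$-character satisfies $\chi(g_1)=-q_{12}$, $\chi(g_2)=-q_{21}$ (by \eqref{eq:paleblock-1} and \eqref{eq:paleblock-8}, the $w_1$-term dying in the associated graded). Hence $y$ sits in $\Gamma$-degree $g_1^3 g_2^3$, and one computes
\begin{align*}
q_{yy} &= (q_{12}q_{21})^9 = \omega^9 = 1, &
q_{iy}\,q_{yi} &= (q_{12}q_{21})^{3} = \omega^3 = 1 \quad (i=1,2,3),
\end{align*}
the sign contributions cancelling in pairs. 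So $y$ has trivial self-braiding \emph{and} is disconnected from all of $V^{\mathrm{diag}}$; the braided space $V^{\mathrm{diag}}\oplus\ku y$ is a finite-type diagram plus an isolated polynomial point, of finite $\GK$. This is a structural obstruction, not a loose end to be patched by computation: $\widetilde{q}_{12}^{\,3}=\omega^3=1$ wipes out every cross-label of a cube of a root vector of degree $g_1 g_2$. To salvage the argument you would have to take a primitive arising from a quantum Serre relation, e.g. $(\ad_c x_3)^2 x_1$ of $\Gamma$-degree $g_1 g_2^2$, where the cube does not trivialize the labels, and then verify both its non-vanishing in $\Bdiag$ and the infinite type of the extended diagram; none of this is carried out, and you do not commit to a specific element.

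The paper argues by a completely different and more economical route. Because $q_{22}=-1$, the Nichols algebra of the point $V_2=\ku x_3$ is $\ku[x_3]/(x_3^2)$, finite-dimensional. The paper therefore splits $\NA(V)\simeq\cK\#\NA(V_2)$ with respect to $V_2$ rather than $V_1$, so that $\cK^1=\ad_c\NA(V_2)(V_1)$ is explicit and $4$-dimensional, spanned by $x_1, x_2, x_{31}, x_{32}$. Computing the coaction and braiding, one finds that $W=\langle x_{31}, x_{32}\rangle$ is a braided subspace isomorphic to the block $\cV(\omega,2)$, and Theorem \ref{theorem:blocks} (established independently in Step \ref{le:infGK3}) gives $\GK\NA(W)=\infty$ outright. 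This avoids both the delicate non-vanishing question in the associated graded and, unlike the filtration route, any appeal to Hypothesis \ref{hyp:nichols-diagonal-finite-gkd}.
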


\pf We consider the decomposition $V = V_1\oplus V_2$ but with different projection and section, and correspondingly a different algebra of coinvariants as in \S \ref{subsubsec:pale-block}: now we have
$\cK =\NA (V)^{\mathrm{co}\,\NA (V_2)}$.
Then $\cK = \oplus_{n\ge 0} \cK^n$ inherits the grading of $\NA (V)$;
$\NA(V) \simeq \cK \# \NA (V_2)$ and $\cK$ is the Nichols algebra of $\cK^1= \ad_c\NA (V_2) (V_1)$.
As $x_3^2=0$, $\cK^1$ is spanned by $x_1$, $x_{31}$, $x_2$, $x_{32}$, recall
the notation \eqref{eq:xijk}.
Now $\cK^1\in {}^{\NA (V_2)\# \ku \Gamma}_{\NA (V_2)\# \ku \Gamma}\mathcal{YD}$ with the adjoint action
and the coaction \eqref{eq:coaction-K^1}; in this case, this is explicitly
\begin{align*}
\delta(x_1)&= g_1\otimes x_1, & \delta(x_{31})&= g_1g_2\otimes x_{31}+(1-\omega)x_3g_1\otimes x_1, \\
\delta(x_2)&= g_1\otimes x_2, & \delta(x_{32})&= g_1g_2\otimes x_{32}+(1-\omega)x_3g_1\otimes (x_2+x_1).
\end{align*}
As $x_3\cdot x_{31}=x_3\cdot x_{32}=0$, the braiding of $K^1$ satisfies:
\begin{align*}
c(x_{31}\otimes x_{31}) &= g_1g_2 \cdot x_{31} \otimes x_{31} = \omega x_{31} \otimes x_{31}, \\
c(x_{32}\otimes x_{31}) &= g_1g_2 \cdot x_{31} \otimes x_{32} = \omega x_{31} \otimes x_{32}, \\
c(x_{31}\otimes x_{32}) &= g_1g_2 \cdot x_{32} \otimes x_{31} = \omega (x_{32}+x_{31}) \otimes x_{31}, \\
c(x_{32}\otimes x_{32}) &= g_1g_2 \cdot x_{32} \otimes x_{32} = \omega (x_{32}+x_{31}) \otimes x_{32}.
\end{align*}
Hence $W=\langle x_{31},x_{32} \rangle$ is a braided vector subspace of $K^1$. Now $W$ is isomorphic to the block $\cV(\omega,2)$ (consider the basis $\omega x_{31}$, $x_{32}$), so $\GK\NA(W)=\infty$ by Theorem \ref{theorem:blocks}. Therefore $\GK \NA(V)=\infty$.
\epf

\begin{lemma}\label{lemma:pale-block-case2c}
	If $\epsilon = -1$, $q_{22}=q$, $\widetilde{q}_{12} = q^{-1}$, $q^2 \neq 1$,  then $\GK \NA(V) = \infty$.
\end{lemma}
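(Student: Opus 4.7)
The plan is to use the filtration $\cV_1 = \langle x_1, x_3\rangle \subset \cV_2 = V$ by Yetter--Drinfeld submodules (cf.\ \S\ref{subsection:filtr-nichols}) and to exhibit a non-zero primitive element of the associated graded pre-Nichols algebra $\Bdiag = \gr \NA(V)$ which, together with $x_1$, spans a rank-$2$ diagonal braided subspace whose Nichols algebra has infinite $\GK$; by Lemma~\ref{lemma:filtered} this will force $\GK \NA(V) = \infty$. Note that $V^{\diag} = \gr V$ is of super $A_3$ type with Dynkin diagram $\xymatrix{\overset{-1}{\circ} \ar@{-}[r]^{q^{-1}} & \overset{q}{\circ} \ar@{-}[r]^{q^{-1}} & \overset{-1}{\circ}}$, which appears in the list of \cite{H-classif}, so a direct diagonal reduction is insufficient: the argument must use that $\Bdiag$ properly surjects onto $\NA(V^{\diag})$.

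Set $z_2 := (\ad_c x_3)^2 x_2 \in \NA(V)$; this is primitive because $x_2, x_3$ are primitive and $\ad_c x_3$ preserves primitivity. A direct skew-derivation computation in the spirit of Lemma~\ref{le:-1bpz} gives $\partial_1(z_2) = -(1 - q^{-1})\, x_3^2$, which is non-zero since $q^2 \neq 1$ forces $q \neq \pm 1$ (so both $1 - q^{-1} \neq 0$ and $x_3^2 \neq 0$); hence $z_2 \neq 0$ in $\NA(V)$. In contrast, in $\NA(V^{\diag})$ one has $(\ad_c x_3)^2 \bar x_2 = 0$ by the Serre-type identity associated to the diagonal edge $q_{32}q_{23} = q^{-1}$. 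This discrepancy shows $z_2$ does not collapse into the lowest filtration piece $\NA(\cV_1)$ of its homogeneous component, so the class $u := [z_2] \in \Bdiag$ is a non-zero primitive element of $\Gamma$-degree $g_1 g_2^2$.

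The pair $(x_1, u)$ spans a rank-$2$ braided subspace $W \subset \Bdiag$ of diagonal type, since both are $\Gamma$-homogeneous one-dimensional eigenspaces. A direct induction using $(\ad_c x_3)^n x_1 = 0$ for $n \ge 2$ yields $g_1 \cdot u = -q_{12}^2\, u$ and $g_2 \cdot u = q_{21}\, q^2\, u$, so the braiding matrix satisfies $p_{11} = -1$, $p_{22} = -q^2$ and $p_{12}p_{21} = q_{12}^2 q_{21}^2 = q^{-2}$, with Dynkin diagram $\xymatrix{\overset{-1}{\circ} \ar@{-}[r]^{q^{-2}} & \overset{-q^2}{\circ}}$. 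The Cartan entry $c_{12} = -1$ is well-defined (from $(2)_{-1} = 0$), so the reflection $\cR^1$ is admissible, and the formulas of \S\ref{subsubsection:diagonal-type-reflections} transform the diagram into $\xymatrix{\overset{-1}{\circ} \ar@{-}[r]^{q^2} & \overset{1}{\circ}}$. Since $q^2 \neq 1$, Lemma~\ref{lemma:points-trivial-braiding} gives $\GK \NA(\cR^1 W) = \infty$, and the reflection theorem of \S\ref{subsubsection:diagonal-type-reflections} yields $\GK \NA(W) = \infty$. As the subalgebra of $\Bdiag$ generated by the primitives $x_1, u$ is a pre-Nichols algebra of $W$ and hence surjects onto $\NA(W)$, we conclude $\GK \Bdiag \ge \GK \NA(W) = \infty$, whence $\GK \NA(V) = \infty$ as required.

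The main obstacle is confirming that $u = [z_2] \neq 0$ in the correct graded piece of $\Bdiag$: although $z_2 \neq 0$ is immediate from $\partial_1$, one must also rule out that $z_2$ collapses into $\NA(\cV_1)$ under the filtration. This is the quantitative content of the contrast between $(\ad_c x_3)^2 \bar x_2 = 0$ in the diagonal quotient $\NA(V^{\diag})$ and $z_2 \neq 0$ in $\NA(V)$; thus $z_2$ is precisely the new primitive detecting the failure of quantum Serre in the pale-block extension.
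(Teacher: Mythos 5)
Your route is genuinely different from the paper's and, once completed, actually stronger. The paper produces a rank-$4$ braided subspace $Z = \langle x_1,x_2,x_3,x_4\rangle$ in $\Bdiag$ whose Dynkin diagram contains a $4$-cycle and concludes via Hypothesis \ref{hyp:nichols-diagonal-finite-gkd}. You instead extract a rank-$2$ diagonal braided subspace $W$ with diagram $\xymatrix{\overset{-1}{\circ} \ar@{-}[r]^{q^{-2}} & \overset{-q^2}{\circ}}$, reflect it, and finish with the unconditional Lemma~\ref{lemma:points-trivial-braiding}. Since your argument avoids the conjectural Hypothesis, it would improve the paper's proof if the gaps below were closed. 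Note also that your element $(\ad_c x_3)^2 x_2$ is not the same as the paper's $u=[x_{23},x_3]_c$ (yours has $\partial_1 \neq 0$, the paper's has $\partial_1 = 0$), so your choice of a genuine $g_2$-eigenvector is what makes the rank-$2$ reduction possible. Your computations of the $\Gamma$-action and of the reflection are correct.

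There are, however, three genuine gaps. \textbf{(i)} The claim that ``$z_2$ is primitive because $\ad_c x_3$ preserves primitivity'' is false. For primitives $x,y$ one has $\Delta(\ad_c x(y)) = \ad_c x(y)\otimes 1 + 1\otimes \ad_c x(y) + x\otimes y - c^2(x\otimes y)$, and since $c^2(x_3\otimes x_2) = q^{-1}x_3\otimes x_2 + q^{-1}x_3\otimes x_1 \neq x_3\otimes x_2$, the element $z_1 = \ad_c x_3(x_2)$ is already not primitive in $\NA(V)$, and neither is $z_2$. What is true (and needs to be checked) is that all the non-primitive terms in $\Delta(z_2)$ have filtration degree $< 4$, so that the class $u = [z_2]\in\gr^4\Bdiag$ is primitive. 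In fact a direct computation gives $\Delta(z_2) = z_2\otimes 1 + 1\otimes z_2 + (q^{-1}-1)x_3^2\otimes x_1 - (q+1)x_3\otimes w_1$ with $w_1 = \ad_c x_3(x_1)$, and both cross-terms have filtration $3<4$, so the claim holds — but this has to be verified, not asserted. \textbf{(ii)} The step ``this discrepancy shows $z_2$ does not collapse'' is not a proof that $[z_2]\neq 0$ in $\gr^4\Bdiag$. The Serre identity in $\NA(V^{\diag})$ only tells you that the image of $(\ad_c\bar x_3)^2\bar x_2$ under the surjection $\Bdiag\twoheadrightarrow\NA(V^{\diag})$ vanishes; it says nothing about whether the element itself is nonzero in $\Bdiag$, and $z_2\neq 0$ in $\NA(V)$ alone does not force $z_2\notin\cB^3_3$. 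The correct argument is the one the paper uses for its $u$: the space $\cB^3_3$ in $\Gamma$-degree $g_1 g_2^2$ is spanned by $x_1 x_3^2$ and $x_{13}x_3$; from $\partial_3(z_2) = 0$ one deduces (using $q\neq\pm 1$) that the coefficient of $x_{13}x_3$ vanishes and that the coefficient of $x_1 x_3^2$ also vanishes, which then contradicts $\partial_1(z_2) = -(1-q^{-1})x_3^2 \neq 0$. (Note: $\partial_2(z_2)=0$ here, so $\partial_2$ does not detect the failure; you need $\partial_3$.) \textbf{(iii)} The subalgebra of $\Bdiag$ generated by the primitives $x_1,u$ is not a pre-Nichols algebra of $W$ in the sense required, because $u$ sits in degree $3$ rather than degree $1$. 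You still need the re-filtration step that the paper performs: declare both generators to have degree $1$, pass to the associated graded $\widetilde{\cB}_2$, and observe that $\NA(W)$ is a subquotient of $\widetilde{\cB}_2$, so $\GK\NA(W) \le \GK\widetilde{\cB}_2 \le \GK\Bdiag = \GK\NA(V)$.
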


\pf
Let  $\cB := \cB (V)$ and $\Bdiag := \gr \cB (V)$ with respect to \eqref{eq:pale-block-filtration}, a pre-Nichols algebra of  $V^{\textrm{diag}}$, see \S \ref{subsection:filtr-nichols}.
Thus $\cB^n_m$ is the $m$-th term of the filtration in $\cB^n = \cB^n(V)$.
Recall $x_{13}$, $x_{23}$ as in \eqref{eq:xijk} and set $u=[x_{23},x_3]_c$. By direct computation,
\begin{align*}
\Delta(u) &= u\otimes 1 - (1+q) \, x_{13} \otimes x_3-(1+q^{-1}) \, x_1\otimes x_3^2 + 1\otimes u.
\end{align*}
Suppose that $u\in\cB^3_3$. Notice that $\cB^3_3$ is spanned by the monomials in letters $x_1$ and $x_3$, since $T(V)^3_3$ is spanned by these monomials, cf. Remark \ref{obs:filtered} and Lemma \ref{lemma:filtered}. As $x_1$, $x_3$ generate a Nichols algebra of type super $A_2$,
these monomials are written as a linear combination of $x_1 x_{13}$, $x_1x_3^2$, $x_{13}x_3$, $x_3^3$; the last term is zero if $q^3=1$. Hence
\begin{align*}
u&=a_1\,x_1 x_{13} + a_2\, x_1x_3^2 + a_3\, x_{13}x_3 + a_4 \, x_3^3 & \mbox{for some }&a_i\in\ku.
\end{align*}
Thus $0= \partial_1(u)= -a_1q_{12} \,x_{13} + a_2q_{12}^2\, x_3^2$, which says that $a_1=a_2=0$, and
\begin{align*}
- (1+q) \, x_{13} &= \partial_3(u) = a_3(q-1)\, x_1x_3 + a_3\, x_{13}+ a_4(3)_q \, x_3^2.
\end{align*}
As $x_1x_3$, $x_{13}$, $x_3^2$ are linearly independent, we have $1+q =0$, a contradiction. Thus $u\in\cB^3_4-\cB^3_3$.
Let $x_4$ be the class of $u$ in $\Bdiag$. Then $x_4$ is a non-zero primitive element in $\Bdiag$.
Let $\widetilde{\cB}_1$ be the subalgebra of $\Bdiag$ generated by $Z = \langle x_1,x_2,x_3,x_4 \rangle$,
and consider the algebra filtration of $\widetilde{\cB }_1$, such that the generators have degree one.
This is a Hopf algebra filtration, hence the associated graded algebra  $\widetilde{\cB}_2$ is a braided Hopf algebra, and
$\NA(Z)$ is a subquotient of $\widetilde{\cB}_2$.
Then $\GK \NA(Z)\le \GK \widetilde{\cB}_2\le \GK \widetilde{\cB}_1\le \GK\Bdiag =\GK \cB (V)$.
The braided vector space $Z$ has diagram
\begin{align*}
\xymatrix{ \overset{q}{\circ} \ar  @{-}[r]^{q^{-1}}\ar  @{-}[rd]^{q^3} \ar@{-}[d]_{q^{-1}}
	& \overset{-1}{\circ} \ar@{-}[d]^{q^{-2}}
	\\  \overset{-1}{\circ} \ar@{-}[r]_{q^{-2}} & \overset{-q^2}{\circ}. }
\end{align*}
Hence the diagram has a $4$-cycle and $\GK\NA(Z)=\infty$ by Hypothesis \ref{hyp:nichols-diagonal-finite-gkd}, so $\GK \cB (V)=\infty $.
\epf

In the previous proof, Theorem \ref{thm:nichols-diagonal-finite-gkd} works for many values of $q$, without appealing to Hypothesis \ref{hyp:nichols-diagonal-finite-gkd}.

\begin{lemma}\label{lemma:pale-block-case2d}
	If $\epsilon = -1$, $q_{22}=-\omega$, $\widetilde{q}_{12}=\omega\in \G_3'$,  then $\GK \NA(V) = \infty$.
\end{lemma}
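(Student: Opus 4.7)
My plan follows the strategy of Lemma \ref{lemma:pale-block-case2c}: use the filtration $\cV_1=\langle x_1,x_3\rangle\subset\cV_2=V$ and find a non-zero primitive element in $\Bdiag=\gr\cB(V)$ whose addition to $V^{\textrm{diag}}$ yields a braided vector subspace with infinite $\GK$. Recall that $V^{\textrm{diag}}$ has Dynkin diagram
$\xymatrix{\overset{-1}{\circ} \ar@{-}[r]^{\omega} & \overset{-\omega}{\circ} \ar@{-}[r]^{\omega} & \overset{-1}{\circ}}$,
so the relation $[x_2]^2=0$ holds in $\NA(V^{\textrm{diag}})$. However in $\NA(V)$, the element $u:=x_2^2$ is non-zero: $x_2$ belongs to the super Jordan block $V_1=\cV(-1,2)$ and $x_2^2$ is a PBW basis element of $\NA(V_1)\subset \NA(V)$ (see Proposition \ref{pr:-1block}). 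Moreover, expanding $\Delta(u)=(x_2\otimes 1+g_1\otimes x_2)^2$ using $g_1 x_2=(-x_2+x_1)g_1$ yields
\begin{align*}
\Delta(u) \;=\; u\otimes 1 + x_1 g_1\otimes x_2 + g_1^2\otimes u.
\end{align*}

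Next I would verify that $u\notin \cB^2_3$ (only combinations with a factor in $V_1$), arguing by inspection of the PBW basis of $\NA(V)=K\#\NA(V_1)$ that $x_2^2$ is linearly independent from the span of $\{x_1x_2, x_2x_1, x_1x_3, x_3x_1, x_3x_2, x_2x_3, x_3^2\}$. Hence $[u]\ne 0$ in $\Bdiag^{\,2}_4$. The cross term $x_1g_1\otimes x_2$ in $\Delta(u)$ has bifiltration $1+2=3<4$, so it vanishes in $\gr\cB(V)\otimes\gr\cB(V)$ at total filtration $4$; thus the class $[u]$ is primitive in $\Bdiag$, with group-like degree $g_1^2$.

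Using $g_1\cdot[x_2]=-[x_2]$ and $g_2\cdot[x_2]=q_{21}[x_2]$ in $V^{\textrm{diag}}$, I would compute the braiding of the enlarged braided vector space $Z=\langle [x_1],[x_2],[x_3],[u]\rangle$ as follows: the self-label of $[u]$ is $g_1^2\cdot[u]=[u]$, i.e.\ $p_{[u],[u]}=1$, while
\begin{align*}
c([u]\otimes[x_3])=q_{12}^2\,[x_3]\otimes[u],\qquad c([x_3]\otimes[u])=q_{21}^2\,[u]\otimes[x_3],
\end{align*}
so $p_{[u],[x_3]}p_{[x_3],[u]}=(q_{12}q_{21})^2=\omega^2\ne 1$. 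The sub-braided space $\langle [x_3],[u]\rangle$ is of diagonal type $\xymatrix{\overset{-\omega}{\circ}\ar@{-}[r]^{\omega^2}&\overset{1}{\circ}}$, so Lemma \ref{lemma:points-trivial-braiding} gives $\GK\NA(\langle[x_3],[u]\rangle)=\infty$.

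To conclude, as in the proof of Lemma \ref{lemma:pale-block-case2c}, I would form the subalgebra $\widetilde{\cB}_1\subset\Bdiag$ generated by $x_1,x_2,x_3,[u]$, refilter it by declaring all four generators to have degree one (a Hopf algebra filtration because $[u]$ is primitive), pass to the associated graded $\widetilde{\cB}_2$, and then to the Nichols algebra quotient $\widetilde{\cB}_3=\NA(Z)$. The chain of inequalities
\begin{align*}
\GK\NA(\langle[x_3],[u]\rangle)\;\le\;\GK\NA(Z)\;\le\;\GK\widetilde{\cB}_2\;\le\;\GK\widetilde{\cB}_1\;\le\;\GK\Bdiag\;\le\;\GK\NA(V)
\end{align*}
then forces $\GK\NA(V)=\infty$. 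The main obstacle is the verification that $[u]$ is both non-zero and primitive in $\Bdiag$: this requires controlling the filtration level of $x_2^2$ against the PBW structure coming from the super Jordan block, and checking that the mixed term $x_1g_1\otimes x_2$ in $\Delta(x_2^2)$ is genuinely of lower bifiltration; the remaining steps are routine, mirroring \S \ref{subsubsec:pale-block-case-1}.
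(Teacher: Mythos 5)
Your proposal breaks down at the very first step: the element $u := x_2^2$ is \emph{zero} in $\NA(V)$, so there is nothing to pass to $\Bdiag$. You write that ``$x_2$ belongs to the super Jordan block $V_1 = \cV(-1,2)$'' and that $x_2^2$ is a PBW generator. But this is precisely the point of the \emph{pale} block: looking at \eqref{eq:braiding-paleblock-point-bis}, the restriction of $c$ to $V_1 = \langle x_1, x_2\rangle$ sends $x_i \otimes x_j \mapsto \epsilon\, x_j \otimes x_i$ for all $i,j \in \{1,2\}$ (in particular $c(x_1 \otimes x_2) = \epsilon\, x_2 \otimes x_1$, not $(\epsilon x_2 + x_1)\otimes x_1$). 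Thus $V_1$ is of \emph{diagonal} type with matrix $\begin{pmatrix}-1 & -1\\ -1 & -1\end{pmatrix}$, of Cartan type $A_1 \times A_1$; the Jordan-block behaviour appears only via the action on $x_3$. As stated explicitly at the start of \S\ref{subsubsec:pale-block-case-1}, $\NA(V_1) \simeq \Lambda(V_1)$, and since $\NA(V_1)\hookrightarrow\NA(V)$, one gets $x_2^2 = 0$ in $\NA(V)$. Consequently $[u]=0$ in $\Bdiag$, and the enlarged braided vector space $Z$ you propose does not exist.

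Relatedly, your commutation rule $g_1 x_2 = (-x_2+x_1)g_1$ is incorrect for this realization: the paper takes $g_1\cdot x_2 = \epsilon x_2 = -x_2$, and it is $g_2$ (not $g_1$) that acts by a non-semisimple Jordan block on $x_2$. With the correct action, the cross terms in $\Delta(x_2^2)$ cancel and one gets $\Delta(x_2^2) = x_2^2\otimes 1 + g_1^2\otimes x_2^2$ — consistent with $x_2^2$ being primitive, but only trivially, since it vanishes. So the coproduct you computed is also wrong, and the claim that $u\notin\cB^2_3$ cannot be verified because $u=0$.

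The strategy of finding a primitive in $\Bdiag$ with a vertex labeled $1$ and then invoking Lemma~\ref{lemma:points-trivial-braiding} \emph{is} the right idea, and it is what the paper does — but the correct candidate is a degree-$4$ iterated bracket $u=[x_{23,3},x_3]_c$ with $x_{23,3}=[x_{23},x_3]_c$, sitting in $\cB^4_5\setminus\cB^4_4$; the resulting rank-$4$ diagonal braided vector space $Z$ has a vertex labeled $1$ connected to $[x_2]$ (label $\omega$). To repair your proof you would need to replace $u$ by an element that is genuinely non-zero in $\NA(V)$; any element built only from $x_1,x_2$ lies in $\Lambda(V_1)$ and is useless, so $x_3$ must appear.
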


\pf Let  $\cB := \cB (V)$, $\Bdiag := \gr \cB (V)$ with respect to \eqref{eq:pale-block-filtration}, see \S \ref{subsection:filtr-nichols},
and $\cB^n_m$  the $m$-th term of the filtration in $\cB^n = \cB^n(V)$.
Recall $x_{13}$, $x_{23}$ \eqref{eq:xijk}. Set
\begin{align*}
& x_{13,3}=[x_{13},x_3]_c,  && x_{23,3}=[x_{23},x_3]_c, && u=[x_{23,3},x_3]_c.
\end{align*}
By direct computation,
\begin{align*}
\Delta(u) &= u\otimes 1 +2\omega\, x_{13,3} \otimes x_3 +\omega \, x_{13}\otimes x_3^2 +(\omega^2-\omega) \, x_1\otimes x_3^3 + 1\otimes u.
\end{align*}
Suppose that $u\in\cB^4_4$. Notice that $\cB^4_4$ is spanned by the monomials in letters $x_1$ and $x_3$, since $T(V)^4_4$ is spanned by these monomials, cf. Remark \ref{obs:filtered} and Lemma \ref{lemma:filtered}. As $x_1$, $x_3$ generate a Nichols algebra of type super $B_2$,
these monomials are written as a linear combination of $x_1 x_{13,3}$, $x_{13,3}x_3$, $x_{13}^2$, $x_1x_{13}x_3$,
$x_{13} x_3^2$, $x_1x_3^3$, $x_3^4$. As $\partial_1(u)=0$, all the terms with a factor $x_1$ have coefficient 0. Thus
\begin{align*}
u&=a_1\,x_{13,3}x_3 + a_2\, x_{13}^2 + a_3\, x_{13} x_3^2 + a_4 \, x_3^4 & \mbox{for some }&a_i\in\ku.
\end{align*}
Hence,
\begin{align*}
2\omega\, x_{13,3} &= \partial_3(u) = a_1\,x_{13,3} + a_1(\omega^2-1)\,x_{13}x_3 + a_2q_{21}(1-\omega)\, x_1x_{13} \\
& \qquad + a_3(1-\omega)\, x_{13} x_3 + a_3(\omega^2-\omega)\, x_1 x_3^2 + a_4 (4)_{-\omega} \, x_3^3.
\end{align*}
We have a contradiction since $x_{13,3}$, $x_{13}x_3$, $x_1x_{13}$, $x_1 x_3^2$, $x_3^3$ are linearly independent.
Thus $u\in\cB^4_5-\cB^4_4$. Let $x_4$ be class of $u$ in $\Bdiag$. Then $x_4$ is a non-zero primitive element in $\Bdiag$.
Let $\widetilde{\cB}_1$ be the subalgebra of $\Bdiag$ generated by $Z = \langle x_1,x_2,x_3,x_4 \rangle$,
and consider the algebra filtration of $\widetilde{\cB }_1$, such that the generators have degree one.
This is a Hopf algebra filtration, hence the associated graded algebra  $\widetilde{\cB}_2$ is a braided Hopf algebra.
The Nichols algebra $\NA(Z)$ is a subquotient of $\widetilde{\cB}_2$.
Then $\GK \NA(Z)\le \GK \widetilde{\cB}_2\le \GK \widetilde{\cB}_1\le \GK\Bdiag =\GK \cB (V)$.
The braided vector space $Z$ is of diagonal type with diagram
\begin{align*}
\xymatrix{ \overset{-1}{\circ} \ar@{-}[r]^{\omega} & \overset{-\omega}{\circ} \ar@{-}[r]^{\omega} \ar@{-}[d]^{\omega} & \overset{-1}{\circ}
	\\  & \overset{1}{\circ}. & }
\end{align*}
Hence $\GK\NA(Z)=\infty$ by Lemma \ref{lemma:points-trivial-braiding}, so $\GK \cB (V)=\infty $.
\epf

\begin{remark}\label{rem:paleblock-conjecture-on-K}
Let us come back to the Nichols algebra $K$ described in \S \ref{subsubsec:pale-block}.
Collecting previous information or by direct computation, we have
\begin{align}
\label{eq:paleblock-11}
g_2 \cdot w_1 &=  q_{21}q_{22}  w_1, \qquad \partial_3(w_1) = (1 - \widetilde{q}_{12})  x_1,
\\ \label{eq:paleblock-12}
\delta(w_1) &= g_1g_2 \otimes w_1 + (1 - \widetilde{q}_{12}) x_1g_2 \otimes x_3,
\\\label{eq:paleblock-13}
g_2 \cdot \sh_{1, 1} &=  q_{21}^2q_{22}\sh_{1, 1}, \qquad \partial_3(\sh_{1, 1}) = (1 - \widetilde{q}_{12})^2 x_2x_1,
\\ \label{eq:paleblock-14}
&\begin{aligned}
\delta(\sh_{1, 1}) &= g_1^2g_2 \otimes \sh_{1, 1}
+ \big((1 - \widetilde{q}_{12}) x_2 - \widetilde{q}_{12}x_1\big) g_1g_2 \otimes w_1
\\ &-  (1 - \widetilde{q}_{12}) x_1 g_1g_2 \otimes z_1
+(1 - \widetilde{q}_{12})^2 x_2  x_1 g_2 \otimes x_3
\end{aligned}
\end{align}
This shows that $x_3, z_1, w_1, \sh_{1,1}$ form a basis of $K^1$. Also,
using this information, \eqref{eq:paleblock-8} and \eqref{eq:paleblock-9}, we compute
\begin{align*}
&c(x_3  \otimes x_3) = q_{22} x_3  \otimes x_3, & &c(x_3  \otimes w_1) = q_{21}q_{22} w_1 \otimes x_3, \\
&c(x_3  \otimes z_1) = q_{21}q_{22}(z_1+ w_1) \otimes x_3,& &c(x_3  \otimes \sh_{1,1})
=  q_{21}^2q_{22} \sh_{1,1} \otimes x_3,
\\
&\begin{aligned}
c(w_1  \otimes x_3) &= q_{12}q_{22} x_3  \otimes w_1 \\ &+ q_{22}(1 - \widetilde{q}_{12}) w_1 \ot x_3,
\end{aligned} & &c(w_1  \otimes w_1) = - \widetilde{q}_{12} q_{22} w_1 \otimes w_1, \\
&
\begin{aligned}
c(w_1  \otimes z_1) &= - \widetilde{q}_{12}q_{22}(z_1+ w_1) \otimes w_1 \\ + q_{21}q_{22}(1 &- \widetilde{q}_{12}) \sh_{1,1} \ot x_3,
\end{aligned}
& &c(w_1  \otimes \sh_{1,1}) =  q_{12}q_{21}^2q_{22} \sh_{1,1} \otimes w_1,
\\
&
\begin{aligned}
c(z_1  \otimes x_3) &= q_{12}q_{22} x_3 \otimes z_1 \\ + \big((1 &- \widetilde{q}_{12}) z_1 - \widetilde{q}_{12} w_1\big) \ot x_3,
\end{aligned}
& & \begin{aligned}
c(z_1  \otimes w_1) &= -\widetilde{q}_{12}q_{22} w_1 \otimes z_1
\\ - (1 &- \widetilde{q}_{12}) q_{21}q_{22} \, \sh_{1,1}\ot x_3,
\end{aligned}
\\
&
\begin{aligned}
c(z_1  \otimes z_1) &= - \widetilde{q}_{12}q_{22}(z_1+ w_1) \otimes z_1 \\ - q_{21}q_{22}(1 &- 2\widetilde{q}_{12})\, \sh_{1,1}  \ot x_3,
\end{aligned}
& &c(z_1  \otimes \sh_{1,1}) = q_{12}q_{21}^2q_{22}\, \sh_{1,1} \otimes z_1,
\\
&
\begin{aligned}
c(\sh_{1,1}  \otimes x_3) &= q_{12}^2q_{22} x_3 \otimes \sh_{1,1} \\
+ q_{12}q_{22}\big((1 &- \widetilde{q}_{12}) z_1 - \widetilde{q}_{12} w_1\big) \ot w_1 \\
& - q_{22} (1 - \widetilde{q}_{12})^2 \sh_{1,1} \otimes x_3,
\end{aligned}
& & \begin{aligned}
c(\sh_{1,1}  \otimes w_1) &= q_{12}^2q_{21}q_{22} w_1 \otimes \sh_{1,1}
\\ + \widetilde{q}_{12}q_{22} (1 &- \widetilde{q}_{12})  \, \sh_{1,1}\ot w_1,
\end{aligned}
\\
&
\begin{aligned}
c(\sh_{1,1}  \otimes z_1) &= q_{12}^2q_{21}q_{22} (z_1 + w_1) \otimes \sh_{1,1} \\ &+\widetilde{q}_{12}^2q_{22}\, \sh_{1,1}  \ot w_1,
\end{aligned}
& &c(\sh_{1,1}  \otimes \sh_{1,1}) = \widetilde{q}_{12}^2q_{22}\, \sh_{1,1} \otimes \sh_{1,1}
\end{align*}

Now Theorem \ref{th:paleblock-point-resumen} says that $\GK K = \infty$ unless $q_{22} =-1 = \widetilde{q}_{12}$, where $\GK K = 2$. We ask:

\begin{question}\label{question:paleblock-weird-bvs}
Let $K^1$ be a braided vector space with basis $x_3, z_1, w_1, \sh_{1,1}$ and braiding given just above. Prove directly the statement about the $\GK$, and in the case $q_{22} =-1 = \widetilde{q}_{12}$ find the defining relations.
Is there a class of braided vector spaces including this example?
\end{question}

\end{remark}

\subsubsection{The block has $\epsilon = \omega\in \G_3'$}\label{subsubsec:pale-block-caseomega}

\

\emph{Case 1: $\widetilde{q}_{12} = 1$.}
Let  $\Bdiag := \gr \cB (V)$, a pre-Nichols algebra of  $V^{\textrm{diag}}$, see \S \ref{subsection:filtr-nichols}; we use below the notation from that \S.
By direct computation,
\begin{align*}
\Delta(z_1) &= z_1\otimes 1 -x_1\otimes x_3+1\otimes z_1.
\end{align*}
Suppose that $z_1\in\cB^2_2$. Notice that $\cB^2_2$ is spanned by the monomials in letters $x_1$ and $x_3$, since $T(V)^2_2$ is spanned by these monomials.
As $x_3x_1= q_{21}x_1x_3$, $ z_1= a_1\,x_1^2 + a_2\, x_1 x_3 + a_3\, x_3^2$ for some $a_i\in\ku$, so
\begin{align*}
0 &= \partial_1(z_1)=  a_1(1+\omega)\, x_1 + q_{12}a_2 \, x_3,\\
-x_1 &= \partial_3(z_1)=  a_2\, x_1 + a_3(1+\omega) \, x_3,
\end{align*}
Hence we have a contradiction, and $z_1\in\cB^2_3-\cB^2_2$.

Let $x_4$ be class of $z_1$ in $\Bdiag$. Then $x_4$ is a non-zero primitive element in $\Bdiag$.
Let $\widetilde{\cB}_1$ be the subalgebra of $\Bdiag$ generated by $Z = \langle x_1,x_2,x_3,x_4 \rangle$,
and consider the algebra filtration of $\widetilde{\cB }_1$, such that the generators have degree one.
This is a Hopf algebra filtration, hence the associated graded algebra  $\widetilde{\cB}_2$ is a braided Hopf algebra.
The Nichols algebra $\NA(Z)$ is a subquotient of $\widetilde{\cB}_2$.
Then $\GK \NA(Z)\le \GK \widetilde{\cB}_2\le \GK \widetilde{\cB}_1\le \GK\Bdiag =\GK \cB (V)$.
The braided vector space $Z$ has diagram
\begin{align*}
\xymatrix{ & \overset{\omega}{\circ} \ar@{-}[ld]_{\omega^2} \ar@{-}[d]^{\omega^2} & \\
	\overset{\omega}{\circ} \ar@{-}[r]_{\omega^2} & \overset{\omega q_{22}}{\circ} \ar  @{-}[r]_{q_{22}^2}
	& \overset{q_{22}}{\circ}, }
\end{align*}
so $\GK\NA(Z)=\infty $ by Hypothesis \ref{hyp:nichols-diagonal-finite-gkd}, and then $\GK \cB (V)=\infty $.

\medbreak

\emph{Case 2: $\widetilde{q}_{12} \neq  1$.}
Here the braided vector space $V^{\diag}$ has Dynkin diagram
\begin{align*}
\xymatrix{ \overset{\omega}{\circ} \ar  @{-}[rr]^{\omega^2}\ar  @{-}[rd]_{\widetilde{q}_{12}} & & \overset{\omega}{\circ}
	\ar	@{-}[ld]^{\widetilde{q}_{12}}
	\\  & \overset{q_{22}}{\circ} &
}
\end{align*}
By Hypothesis  \ref{hyp:nichols-diagonal-finite-gkd} and after inspection of the Table 2 in \cite{H-classif}, we conclude that $q_{22} =-1$, $\widetilde{q}_{12} = \omega^2$.
Let  $\Bdiag := \gr \cB (V)$, a pre-Nichols algebra of  $V^{\textrm{diag}}$, see \S \ref{subsection:filtr-nichols}.
Set $x_{12}=(\ad_c x_1)x_2$, $x_{123}=(\ad_c x_1)z_1$. By direct computation,
\begin{align*}
\Delta(z_2) &= z_2\otimes 1 +\omega^2 (x_1^2+x_{12})\otimes x_3+\omega^2 x_1\otimes z_1 + 1\otimes z_2.
\end{align*}
Suppose that $z_2\in\cB^3_4$. Notice that $\cB^3_4$ is spanned by the monomials in letters $x_i$, $i\in \I_3$
with at most one $x_2$, since $T(V)^3_4$ is spanned by these monomials, cf. Remark \ref{obs:filtered} and Lemma \ref{lemma:filtered}.
We check that all these monomials are written as a linear combination of $x_1 x_{12}$, $x_1^2 x_2$, $x_{123}$, $x_1x_2x_3$, $x_1w_1$,
$w_1x_2$, $w_1 x_3$, $x_1z_1$, $z_1x_3$. As $\partial_1$, $\partial_2$ annihilate $z_2$, all the terms with a factor $x_1$ or $x_2$ do not appear. Thus $ z_2= a_1\,x_{123} + a_2\, w_1 x_3 + a_3\, z_1x_3$ for some $a_i\in\ku$, so
\begin{align*}
\omega^2 (x_1^2+x_{12}) &= \partial_3(z_2)=  a_1(1-\omega^2)(x_{12}+\omega x_1x_2) + a_2(w_1+(1-\omega^2)x_1x_3)\\
& \qquad + a_3 (z_1+(1-\omega^2)x_2x_3-\omega^2x_1x_3).
\end{align*}
As $x_1^2$, $x_{12}$, $x_1x_2$, $w_1$, $x_1x_3$, $z_1$, $x_2x_3$ are linearly independent, we have a contradiction. Thus $z_2\in\cB^3_5-\cB^3_4$.

Let $x_4$ be class of $z_2$ in $\Bdiag$. Then $x_4$ is a non-zero primitive element in $\Bdiag$.
Let $\widetilde{\cB}_1$ be the subalgebra of $\Bdiag$ generated by $Z = \langle x_1,x_2,x_3,x_4 \rangle$,
and consider the algebra filtration of $\widetilde{\cB }_1$, such that the generators have degree one.
This is a Hopf algebra filtration, hence the associated graded algebra  $\widetilde{\cB}_2$ is a braided Hopf algebra.
The Nichols algebra $\NA(Z)$ is a subquotient of $\widetilde{\cB}_2$.
Then $\GK \NA(Z)\le \GK \widetilde{\cB}_2\le \GK \widetilde{\cB}_1\le \GK\Bdiag =\GK \cB (V)$.
The braided vector space $Z$ has diagram
\begin{align*}
\xymatrix{ \overset{\omega}{\circ} \ar  @{-}[rr]^{\omega^2}\ar  @{-}[rd]_{\omega^2} & & \overset{\omega}{\circ} \ar	@{-}[ld]^{\omega^2} &
	\\  & \overset{-1}{\circ} \ar  @{-}[rr]^{\omega
	} & & \overset{-\omega^2}{\circ} }
\end{align*}
so $\GK\NA(Z)=\infty $ by Hypothesis \ref{hyp:nichols-diagonal-finite-gkd}, and then $\GK \cB (V)=\infty $.

\subsection{Admissible flourished diagrams}\label{subsection:examples-adm-fl-diag}
We comment on the general shape of these diagrams.
Let $\D$ be an admissible flourished graph with $t= t_+ + t_-$ blocks and $\theta$ vertices (i.e. $\theta - t$ points), cf. Definition \ref{def:flourished-graph-admissible}.
By Example \ref{example:admissible-mild}, we may assume that all interactions between points and blocks are weak, what we do in the following discussion.
As usual, let $\X$ be the set of connected components of $\D_{\diag}$ and let
\begin{align*}
\X_{\pm} &= \{J\in \X: J =\{j\}, q_{jj} = \pm 1 \},&
\X_{\text{gen}} &= \X - (\X_+ \cup \X_-).
\end{align*}
Then the axioms \ref{item:local}, \ref{item:concom} and \ref{item:concom-1} imply that
every $J \in \X_{\text{gen}}$ is connected to one and only block $\boxplus$.

Also, if $t =1$, then the classification follows from \ref{item:local}: either a block $\boxplus$ connected to various connected components as in Table \ref{tab:conn-comp} or else a block $\boxminus$ connected to various
points with label $\pm 1$.
Assume that $t > 1$. By the preceding remarks and connectedness of $\D$, the general shape of $D$ is as follows:
\begin{itemize}[leftmargin=*]
	\item For $h \in \I_{t_+}$, the $h$-th block  $\boxplus$ is linked to various connected components $J \in \X_{\text{gen}}$ as in Table \ref{tab:conn-comp}, and also to some connected components in $\X_{\pm}$; at least one of these last is linked to another block.
	
		\item For $h \in \I_{t_+ + 1, t}$, the $h$-th block  $\boxminus$ is linked to various connected components in $\X_{\pm}$; at least one of these last is linked to another block.
\end{itemize}
Here is an example:
\begin{align*}
\xymatrix{ & \overset{-1}{\bullet} \ar  @{-}[r]^{-1}  & \overset{-1}{\circ} \dots
	\overset{-1}{\circ} \ar  @{-}[r]^{-1}  & \overset{-1}{\circ} & \overset{-1}{\bullet} \ar  @{-}[r]^{-1}  & \overset{-1}{\circ}
\\ \boxplus \ar  @{-}[r]^{1}\ar  @{-}[ru]^{1} \ar @{-}[rrd]^{5}  & \overset{\omega}{\bullet} & \boxplus \ar  @{-}[r]^{60} \ar  @{-}[rru]^{2}  & \overset{-1}{\bullet}
\\ \overset{-1}{\bullet} \ar  @{-}[r]^{23} & \boxminus  \ar  @{-}[r]^{8}
&\overset{1}{\bullet} \ar  @{-}[r]^{3} \ar  @{-}[u]^{18} &
 \boxplus \ar  @{-}[r]^{1} &\overset{-1}{\bullet} \ar  @{-}[r]^{\omega^2 }  & \overset{\omega}{\circ}
}
\end{align*}

\end{document}